\documentclass[11 pt]{amsart}
\usepackage[utf8]{inputenc}
\usepackage[english]{babel}
\usepackage{amsmath}
\usepackage{amssymb}
\usepackage{amsfonts}
\usepackage{amsthm}
\usepackage{mathrsfs}
\usepackage{graphicx}
\usepackage{enumitem}
\setlist[itemize]{leftmargin=*}
\usepackage{xcolor}
\usepackage{url}
\usepackage{bm}
\usepackage[all]{xy}
\usepackage{xcolor}
\usepackage{hyperref}
\usepackage{esint}
\usepackage{xcolor}
\hypersetup{
      	colorlinks,
      	linkcolor={blue!60!black},
      	citecolor={green!60!black},
      	urlcolor={green!60!black}
}
\usepackage[capitalise,nameinlink]{cleveref}

\crefformat{equation}{#2(#1)#3}
\crefrangeformat{equation}{#3(#1)#4 to~#5(#2)#6}
\crefmultiformat{equation}{#2(#1)#3}{ and~#2(#1)#3}{, #2(#1)#3}{ and~#2(#1)#3}
\usepackage{mathtools}
\usepackage{etoolbox}

\newcommand{\N}{\mathbb{N}}
\newcommand{\Z}{\mathbb{Z}}

\newcommand{\R}{\mathbb{R}}
\newcommand{\C}{\mathbb{C}}

\newcommand{\de}{\partial}

\newcommand{\uno}{\bm{1}}
\newcommand{\nin}{\not\in}

\renewcommand{\bar}{\overline}

\DeclareMathOperator{\spt}{spt}

\newcommand{\ang}[1]{\langle #1\rangle}

\renewcommand{\epsilon}{\varepsilon}

\usepackage[top=2.5cm,bottom=2.5cm,left=2.5cm,right=2.5cm]{geometry}

\makeatletter
\setcounter{tocdepth}{3}
\renewcommand{\tocsection}[3]{%
  \indentlabel{\@ifnotempty{#2}{\bfseries\ignorespaces\makebox[35pt][l]{#1 #2\quad}}}\bfseries#3}
\renewcommand{\tocsubsection}[3]{%
  \indentlabel{\@ifnotempty{#2}{\ignorespaces\makebox[35pt][l]{#1 #2\quad}}}#3}
\newcommand\@dotsep{4}
\def\@tocline#1#2#3#4#5#6#7{\relax
  \ifnum #1>\c@tocdepth 
  \else
    \par \addpenalty\@secpenalty\addvspace{#2}%
    \begingroup \hyphenpenalty\@M
    \@ifempty{#4}{%
      \@tempdima\csname r@tocindent\number#1\endcsname\relax
    }{%
      \@tempdima#4\relax
    }%
    \parindent\z@ \leftskip#3\relax \advance\leftskip\@tempdima\relax
    \rightskip\@pnumwidth plus1em \parfillskip-\@pnumwidth
    #5\leavevmode\hskip-\@tempdima{#6}\nobreak
    \leaders\hbox{$\m@th\mkern \@dotsep mu\hbox{.}\mkern \@dotsep mu$}\hfill
    \nobreak
    \hbox to\@pnumwidth{\@tocpagenum{\ifnum#1=1\bfseries\fi#7}}\par
    \nobreak
    \endgroup
  \fi}
\AtBeginDocument{%
\expandafter\renewcommand\csname r@tocindent0\endcsname{0pt}
}
\def\l@section{\@tocline{1}{0pt}{1pc}{5pc}{}}
\def\l@subsection{\@tocline{2}{0pt}{2.5pc}{5pc}{}}
\makeatother      

\newtheorem{Th}{Theorem}
\newtheorem{Prop}[Th]{Proposition}
\newtheorem{Co}[Th]{Corollary}
\newtheorem{Lm}[Th]{Lemma}

\theoremstyle{definition}
\newtheorem{Dfi}[Th]{Definition}
\newtheorem{Rm}[Th]{Remark}

\numberwithin{equation}{section}
\numberwithin{Th}{section}
\def\thesection{\Roman{section}}

\newcommand{\be}{\begin{equation}}
\newcommand{\ee}{\end{equation}}
\newcommand{\bes}{\begin{equation*}}
\newcommand{\ees}{\end{equation*}}

\newcommand\res{\mathop{\hbox{\vrule height 7pt width .5pt depth 0pt
\vrule height .5pt width 6pt depth 0pt}}\nolimits}

\def\ti{\tilde}
\def\lf{\left}
\def\rg{\right}

\def\al{\alpha}
\def\la{\lambda}

\def\ep{\varepsilon}

\def\ds{\displaystyle}
\def\ov{\overline}
\def\Om{\Omega}
\def\om{\omega}
\def\p{\partial}

\def\r{\mathfrak r}

\def\res{\mathop{\hbox{\vrule height 7pt width .5pt 
depth 0pt\vrule height .5pt width 6pt depth 0pt}}\nolimits}

\def\Xint#1{\mathchoice
{\XXint\displaystyle\textstyle{#1}}
{\XXint\textstyle\scriptstyle{#1}}
{\XXint\scriptstyle\scriptscriptstyle{#1}}
{\XXint\scriptscriptstyle\scriptscriptstyle{#1}}
\!\int}
\def\XXint#1#2#3{{\setbox0=\hbox{$#1{#2#3}{\int}$ }
\vcenter{\hbox{$#2#3$ }}\kern-.6\wd0}}

\def\dashint{\Xint-}

\begin{document}
        \title[A variational theory for the area of Legendrian surfaces]{A variational theory for the area \\ of Legendrian surfaces}
        \author{Alessandro Pigati
        \ and\ Tristan Rivi\`ere
        }
       	\begin{abstract}
       		We study a new notion of critical point for the area of surfaces {under the Legendrian constraint}, first introduced in \cite{Riv3} and called \emph{parametrized Hamiltonian stationary Legendrian varifolds (PHSLVs)}.
            We establish several fundamental properties of these objects, including their sequential compactness 
            and an optimal regularity result, showing that they are smooth immersions away from a locally finite set of branch points and Schoen--Wolfson conical singularities.
            This generalizes in particular the regularity theory of Schoen--Wolfson for minimizers \cite{SW2} to general critical points.
            
            This theory can be used to show two new variational results: every min-max operation with the area of (closed, immersed) Legendrian surfaces  in a closed Sasakian 5-dimensional manifold
            is achieved by a Hamiltonian stationary map with this regularity;
            also, the minimal area in any given exact isotopy class of Legendrian immersions of $S^2$ is realized by such a map.
            
            Along the way, we prove an effective {monotonicity formula} for general two-dimensional stationary varifolds in the Legendrian setting, 
            as well as the closure of integral stationary varifolds among rectifiable ones, in spite of the lack of compactness of the latter.
       	\end{abstract}
    
        \maketitle
        \tableofcontents
        \frenchspacing
		\raggedbottom

\section{Introduction}

In the early 90's, Oh in \cite{Oh2} introduced the problem of studying critical points of the area among Lagrangian surfaces in an arbitrary symplectic Riemannian manifold. Such surfaces are called {\it Hamiltonian stationary} or \emph{H-minimal} surfaces. This variational problem  is motivated by natural questions such as the study of the Plateau problem in Lagrangian homology classes or the construction of calibrated minimal surfaces within given Hamiltonian isotopy classes in Calabi--Yau geometries (Thomas--Yau conjecture); more recently, the second-named author stressed the importance of the study of area variations among Lagrangian surfaces for the min-max construction of minimal surfaces in the sphere $S^n$, in relation to the Willmore conjecture, or other special ambients \cite{Riv-Will}.

\subsection{The parametric approach}

While considering variational problems for Lagrangian surfaces, it is natural at first to adopt the classical parametric approach of  Douglas and  Rad\'o, who gave at the time a successful framework for the  resolution of the {\it Plateau problem} in  a Euclidean space  or more generally in a Riemannian manifold (after the work of  Sacks and Uhlenbeck). The central objects in this parametric  approach would be, in the Lagrangian-constrained case, weakly conformal $W^{1,2}$ maps $v$ from a Riemann surface into the symplectic manifold $(M^m,g,\om)$ canceling the symplectic form: $v^\ast\om=0$. In their pioneering analytical work on the area variation under the pointwise Lagrangian constraint,  Schoen and  Wolfson \cite{SW2} adopted  this framework to prove the existence of  area minimizers in any Lagrangian homology class, enjoying also some partial regularity that we are going to make more precise below.

While the existence part can be obtained by using  relatively mild arguments, the regularity of a minimizer poses serious difficulties which eventually have been overcome by Schoen and Wolfson.   One of the new challenges posed by the Lagrangian constraint, compared  to the classical ``isotropic case'' (i.e., the unconstrained  case of Douglas, Rad\'o, and Sacks--Uhlenbeck), comes from the Euler--Lagrange equation. Even while considering the simplest  possible framework of maps  $v$ from a closed Riemann surface $\Sigma$ into ${\C}^2$,  equipped with the standard
symplectic form $\om:=dz_1\wedge dz_2+dz_3\wedge dz_4$, the constraint $v^\ast\om=0$ is generating a {\it Lagrange  multiplier} which happens to be  a map, the so-called {\it Lagrangian angle} ${\mathfrak g}:\Sigma\rightarrow S^1$, on  which absolutely nothing is known
from a function theoretic perspective. For any Lagrangian immersion $v : \Sigma\rightarrow {\C}^2$,  the Lagrangian angle is given by
\[
v^\ast(dw_1\wedge dw_2)= {\mathfrak g}\, d\operatorname{vol}_{v},
\]
where $w_1=z_1+iz_2$, $w_2=z_3+iz_4$, and $d\operatorname{vol}_{v}$ is the volume form induced by the immersion $v$. A classical result, due to Dazord (which extends to general K\"ahler--Einstein manifolds \cite{Daz}; see also \cite{Cas}), gives the expression of the mean curvature of the immersion into ${\C}^2$ in terms of the Lagrangian angle:
\[
\vec{H}_v=2^{-1}{\frak g}^{-1}\vec{\nabla} {\frak g},
\]
where $\vec{\nabla} {\frak g}$ is the intrinsic gradient of the complex-valued function ${\frak g}$ on the immersed surface (recall that, in a conformal chart $(x_1,x_2)$ for the immersion $v$, one has
${\frak g}^{-1}\vec{\nabla} {\frak g}=e^{-2\la} {\frak g}^{-1}{\nabla} {\frak g}\cdot\nabla v$, where $e^\lambda$ is the conformal factor).
Coming back to the variations of the area under Lagrangian constraint,  the corresponding Euler--Lagrange equation formally reads in isothermal coordinates as follows:
\begin{equation}
\label{0.1}
\begin{cases}
\operatorname{div}({\frak g}^{-1}\nabla v)=0\\
\operatorname{div}({\frak g}^{-1}\nabla {\frak g})=0.
\end{cases}
\end{equation}
If $v$ is a smooth conformal  immersion, this equation is  characterizing the {\it Hamiltonian stationary Lagrangian surfaces}.  From a purely variational perspective, however,   in absence of any information on ${\mathfrak g}$ and assuming only $v\in W^{1,2}(\Sigma,{\C}^2)$, one cannot even give  a meaning to the Euler--Lagrange equation (for instance, we would need at least $\mathfrak{g}\in H^{1/2}(D^2,S^1)$ in order  to give a distributional meaning to the second equation in \eqref{0.1}). In order to overcome this major obstacle and perform a PDE analysis for a  minimizer in a Lagrangian homology class,  Schoen and Wolfson restricted attention  to area variations given by infinitesimal deformations in the target  preserving the Lagrangian constraint $v^\ast\om=0$. Such compactly supported variations, called {\it Hamiltonian variations}, are of the form
\[
V_F:=J\nabla^{{\C}^2}F,
\]
where $F$ is an arbitrary compactly supported function in ${\C}^2$, called a {\it Hamiltonian} potential, and the criticality of  a conformal map $u$ can be formulated  as follows:
\begin{equation}
\label{0.2}
\int_{\Sigma}\nabla(J(\nabla^{{\C}^2}F)\circ  v)\cdot_{{\C}^2}\nabla v\, dx^2=0\quad\text{for all }F\in C^\infty_c({\C}^2).
\end{equation}
Obviously this condition by itself is not strong enough to help developing a regularity theory: even in the isotropic case (i.e., in absence of the Lagrangian constraint) a  condition of the form
$\int_{\Sigma}\nabla(X\circ  v)\cdot\nabla v\, dx^2=0$ for any compactly supported vector field $X$
does not imply that $v$ is harmonic and smooth.

\subsection{Lagrangian versus Legendrian}

As a first step, while performing  a compactness and regularity theory for a variational problem on the area allowing exclusively variations in the ambient, we look for  a \emph{monotonicity} property.   However, it has been  discovered by Minicozzi \cite[Section 3]{Min} and Schoen--Wolfson \cite{SW2} that Hamiltonian stationary Lagrangian surfaces, even in the simplest framework of ${\C}^2$ equipped with the standard symplectic form, do not enjoy a monotonicity property of the area.

Nevertheless,  Schoen and Wolfson also discovered that an monotonicity property exists for the {Legendrian lifts} (when they exist) of these surfaces, called  {\it Hamiltonian stationary Legendrian surfaces}. A Lagrangian map $v$ from a surface $\Sigma$ into ${\C}^2$ is said to \emph{admit a Legendrian lift} (or to be \emph{exact}) if there exists a global function $\varphi:\Sigma\to{\R}$ such that
\[
d\varphi_v=v_1\,dv_2-v_2\, dv_1+v_3\,dv_4-v_4\, dv_3.
\]
In other words, denoting
$$\mathbb{H}^2:=\C^2\times\R,$$
which has a natural group structure called the \emph{Heisenberg group},
the map  $u:=(v,\varphi_v):\Sigma\to\mathbb{H}^2$ is canceling the canonical contact form $\alpha$ on $\mathbb{H}^2$, i.e.,
\[
u^*\al=0,\quad\al := -d\varphi+z_1\, dz_2-z_2\,dz_1+z_3\,dz_4-z_4\, dz_3,
\]
where we use coordinates $(z,\varphi)$ on $\C^2\times\R$.
Observe moreover that the tangent map $\pi_\ast$ to the canonical projection $\pi:\mathbb{H}^2\to\C^2$ realizes an isometry from the horizontal planes $H:=\operatorname{ker}(\al)$ onto ${\C}^2$ for the metric
\[
g_{{\mathbb H}^2}=\pi^\ast g_{{\C}^2}+\al\otimes\al.
\] 
Hence, the area of any Legendrian lift coincides with the area of the Lagrangian map, and \emph{locally} the study of area variations for the former under Legendrian constraints corresponds to the study of area variations for the latter under Lagrangian constraints. 

At the Legendrian level, the  vector fields preserving infinitesimally the Legendrian constraint are called {\it Hamiltonian vector fields} and have the form
\[
2W_F:=J_H\nabla^HF-2F \p_\varphi,
\]
where $F$ is an arbitrary smooth, compactly supported function on ${\mathbb H}^2$ called a {\it Hamiltonian} potential, $\nabla^H$ is the orthogonal projection of $\nabla F$ onto $H$, and $J_H$ is the pullback by $\pi$ (on $H$) of the canonical complex structure $J$ on ${\C}^2$.
Hence, the stationarity condition at the Legendrian level reads
\begin{equation}
\label{0.2-a}
\int_{{\C}}\nabla(W_F\circ  u)\cdot_{{\mathbb H}^2}\nabla u\, dx^2=0
\quad\text{for all } F\in C_c^\infty({\mathbb H}^2).
\end{equation}
In \cite{SW2} it is proved that every $C^1$ Lagrangian  map admits locally a Legendrian lift, and that this property extends to $W^{1,2}$ area-minimizing Lagrangian maps \cite[Corollary 2.9]{SW2}.  The Hamiltonian vector fields $W_F$ at the Legendrian level are obviously more numerous than the ones at the Lagrangian level; in particular, one of the key insights of Schoen--Wolfson is that the infinitesimal action of dilations at the Lagrangian level (which is known to be the action generating monotonicity properties) is Hamiltonian at the Legendrian level but not at the Lagrangian one:
we have
\[
-2W_{\varphi}=\sum_{j=1}^4z_j\nabla^Hz_j+2\varphi \p_\varphi,
\]
while $\sum_{j=1}^4z_j\p_{z_j}$ is not equal to $J\nabla F$ for any $F:\C^2\to\R$.
This explains why a monotonicity property should hold ``upstairs'' for the Legendrian lifts while it does not hold ``downstairs'' at the Lagrangian level.

\subsection{Area minimization in Legendrian homology classes}

Due to the existence of a monotonicity formula for Legendrian stationary maps  it is then natural to pose the regularity question at the level of Legendrian maps into ${\mathbb H}^2$. Thanks to a result by Godlinski, Kopczynski, and Nurowski \cite{GKN}, the Heisenberg group ${\mathbb H}^2$ is the infinitesimal model for any Sasakian manifold; recall that a Riemannian manifold $(M^5,g,\al)$, where $\al$ is a contact form, is called a \emph{Sasakian manifold} if the cone $({\R}_+\times M^5, k:=dt^2+t^2 g)$ with the  non-degenerate symplectic form $2^{-1} d(t^2\al)$ is K\"ahler (a standard example is $S^5$). Letting ${J}$ be the compatible complex structure, the tangent vector field along $M^5$ given by
${R}:=J(t\p_t)$ has unit norm and is orthogonal to the horizontal hyperplanes given by $H=\operatorname{ker}(\al)$; such a vector field is called a {\it Reeb vector field} for the distribution $H$.

Moreover, this distribution of planes is invariant under the action of $J$ in the cone ${\R}_+\times M^5$. Thus, Sasakian manifolds are the ``odd-dimensional counterparts'' of K\"ahler manifolds (see a more detailed discussion in \cite[Section 6]{Riv3}). The main result of Schoen and Wolfson, formulated at the Legendrian level, is the following.

\begin{Th}
\label{th-SW2}\cite{SW2} Let $(M^5,g,\al)$ be a Sasakian manifold. Any Legendrian homology class in $M$ is realized by an area-minimizing Hamiltonian stationary Legendrian map from a closed Riemann surface to $M$. Moreover, this map is weakly conformal and is a smooth immersions away from finitely many branch points and isolated conical singularities.
\hfill $\Box$
\end{Th}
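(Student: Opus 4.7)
The plan is to split the argument into an existence step via a parametric direct method and a regularity step driven by the Legendrian monotonicity formula. For \emph{existence}, fix a Legendrian homology class and let $u_k:\Sigma_k\to M^5$ be a minimizing sequence of $C^1$ weakly conformal Legendrian immersions of closed Riemann surfaces representing the class. A Douglas--Rad\'o reparametrization combined with Mumford compactness controls the conformal structures of the $\Sigma_k$ up to nodal degenerations, while Sacks--Uhlenbeck bubbling handles energy concentration; after extraction this yields a weak $W^{1,2}$ limit $u_\infty$ (possibly with attached bubbles) still representing the prescribed class. The Legendrian constraint $u_k^\ast\al=0$ passes to the limit because, locally in a Darboux chart, it is equivalent to a primitive identity $d\vp_k=u_k^\ast\la_0$ whose primitives $\vp_k$ converge uniformly. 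Minimality forces the limit to satisfy the Hamiltonian stationary equation \eqref{0.2-a}, and weak conformality follows from stationarity under inner variations of the conformal structure.

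For the \emph{monotonicity formula}, the key observation is that at the Legendrian level the dilation $\sum_j z_j\p_{z_j}+2\vp\p_\vp$ is Hamiltonian, generated by $F=\vp$ in the Heisenberg model $\mathbb{H}^2$. Testing \eqref{0.2-a} with radially cut-off versions of $F$ yields an almost-monotone quantity $r\mapsto r^{-2}\int_{B_r(x)}|\nabla u|^2\,dx^2$, and hence a pointwise density $\Theta(u_\infty,x)\in[1,C]$, with $C$ controlled by the total area. Blowing up at a point $x_0$ produces a Hamiltonian stationary Legendrian tangent cone $C_0\subset\mathbb{H}^2$.

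The cone equation reduces to an ODE for the link $\Gamma=C_0\cap\Sp^3$, whose solutions are either great circles (giving a flat plane, possibly with integer multiplicity arising from a branched covering) or closed Legendrian geodesics whose period is a specific rational combination of two integers $p,q$ -- the Schoen--Wolfson conical singularities. In the multiplicity-one flat case, choosing a Coulomb moving frame on the image rewrites the Hamiltonian stationary system as a critical elliptic system with an antisymmetric potential, in the spirit of Rivi\`ere's framework for conformally invariant problems; a Morrey-type iteration then produces continuity, and standard bootstrap upgrades this to smoothness. Combining this $\eps$-regularity with the density upper bound forces the singular set to be isolated and globally finite, while a Simon-type uniqueness-of-tangent-cone argument classifies each singularity as either a branch point or a Schoen--Wolfson cone.

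The \emph{main obstacle} throughout is the a priori missing regularity of the Lagrange multiplier $\mathfrak g$ appearing in \eqref{0.1}: at $W^{1,2}$ level there is no distributional meaning for $\mathfrak g^{-1}\nabla\mathfrak g$, and without further structure the Euler--Lagrange system is analytically intractable. The whole strategy hinges on the passage to the Heisenberg lift, which turns dilations into Hamiltonian vector fields and thereby produces the monotonicity formula; this is both what preserves stationarity along the minimizing sequence in the existence step and the cornerstone of the tangent cone analysis in the regularity step.
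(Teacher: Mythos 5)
Your outline reproduces two of the four pillars of the Schoen--Wolfson argument (the monotonicity formula coming from the fact that dilations are Hamiltonian at the Legendrian level, and the classification of tangent cones via the ODE for the link), but it omits the ingredient that the present paper identifies as the linchpin of the minimizing case: the property that for area minimizers in a homology class, weak $W^{1,2}$ convergence is automatically strong, established in \cite{SW2} by a \emph{comparison argument} (replacing the map on small disks by competitors in the same class, in the spirit of Schoen--Uhlenbeck). Without this, your blow-up step does not deliver what you claim: the rescaled maps converge a priori only weakly, so the limit need not be conformal with multiplicity one, energy can be lost or concentrate, the density need not be upper semicontinuous, and the ``tangent cone'' need not be a genuine cone to which the link ODE applies. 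The same property is what makes the singular set finite and what upgrades the stationarity of the weak limit of the minimizing sequence. This is precisely the point the paper stresses when explaining why the generalization to non-minimizers is hard, so leaving it out is not a stylistic difference but a missing idea.

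The $\ep$-regularity step is also not justified as written. Writing the system as $\Delta v+i\nabla\beta\cdot\nabla v=0$ does formally give an antisymmetric potential $\Omega=-i\,d\beta$, but Rivi\`ere's scheme requires $\|\Omega\|_{L^2}$ small, i.e.\ a quantitative bound on the harmonic Lagrange multiplier $d\beta$; since $|d\beta|$ is comparable to the mean curvature, this amounts to controlling the Willmore energy, which is \emph{not} controlled by the area and is exactly the obstruction the paper highlights (``we do not have any quantitative bound on $h$ a priori''). Schoen--Wolfson instead prove a small \emph{tilt-excess} regularity theorem of geometric-measure-theoretic type and, once in the $C^{1}$ regime, linearize to the biharmonic equation; the smallness hypothesis is on the excess, not on the potential, and it is verified at small scales using monotonicity together with the strong convergence above. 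As it stands, your Morrey iteration has no small quantity to iterate on, so the continuity claim does not follow.
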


The possible existence of these conical singularities, called ``Schoen--Wolfson cones,''  is one of the main discoveries in  \cite{SW2}. It has been proved moreover in \cite{MiWo} that for some particular homology classes (in certain ambients) \emph{any} minimizer must have such singularities. These have been classified in \cite{SW2} and their blow-up is of the form
\[
u_{p,q}: {\C}\to \mathbb{C}^2,\quad (r,\theta)\mapsto \frac{r^{\sqrt{pq}}}{\sqrt{p+q}} \begin{pmatrix}\sqrt{q}e^{ip\theta}
\\ i\sqrt{p}e^{-iq\theta}\end{pmatrix},
\]
where $p,q\in \N^*$. The intersections with $S^3$ are the  $(p,q)$ torus knots. These singularities are of topological nature. The space of oriented Lagrangian planes in ${\C}^2$ is $\Lambda(2)\cong U(2)/SO(2)$. The determinant operation gives a well-defined map from $\Lambda(2)$ to $S^1$. The restriction of this map to the tangent bundle of the immersion $u_{p,q}$ on any positively oriented circle surrounding the conical singularity gives a map $S^1\to S^1$ of topological degree $p-q$, called the {\it Maslov class} at the point. 

Understanding the possible locations of the Schoen--Wolfson cones is still an open problem in general. Some progress in answering this challenging question has been made in \cite{GOR} and \cite{Gaia}.

The proof  of the regularity part of Theorem \ref{th-SW2} is based on a monotonicity formula (established for regular enough Hamiltonian stationary immersions), a  {\it small tilt-excess regularity} theorem, a classification of the tangent cones,  and the property  satisfied by minimizers in homology classes that weakly converging sequences of such maps are in fact strongly converging in $W^{1,2}$. This last property  is reminiscent of the one discovered in \cite{ScU}
by Schoen and Uhlenbeck and is established in \cite{SW2} using a comparison argument, which is not viable for general critical points.

%

\subsection{The notion of parametrized Hamiltonian stationary Legendrian varifolds (PHSLV)}
As  announced in the abstract, one of the main goals of the present paper is to extend the Schoen--Wolfson existence and regularity theory to \emph{non-minimizing} Hamiltonian stationary Legendrian surfaces. This ambition faces numerous new difficulties, among which we are listing the following two major ones:
\begin{itemize}[leftmargin=8mm]
\item[(i)] find a suitable class of Legendrian ``weak surfaces'' for which a min-max theory can be developed;
\item[(ii)] bypass the problem that the Euler--Lagrange equation \eqref{0.1} is not well-posed while studying the regularity of non-minimizing Hamiltonian stationary Legendrian ``weak surfaces,'' thus avoiding comparison arguments.
\end{itemize}

A more comprehensive discussion of new phenomena and new difficulties arising in this Legendrian setting is
provided later in the introduction.

As a chief example, we would like to stress that, in contrast with the now well-understood unconstrained case, also called {\it isotropic case} in this paper (i.e., critical points of the area without the Legendrian constraint: minimal surfaces), the possible existence of conical singularities, whose number is a priori totally uncontrolled, and the  possible formation of a ``continuum'' of those along a weakly converging sequence, is creating a completely new technical challenge, which in the minimizing case in \cite{SW2} was solved by the property that
\begin{equation}
W^{1,2}\text{ weak convergence}\quad\Longleftrightarrow\quad W^{1,2}\text{ strong convergence}. \tag{P}
\end{equation}
This last property is a priori not available anymore in the general case  and one of the main achievements of the present work is to obtain the same \emph{optimal} Schoen--Wolfson regularity result without property (P).

In \cite{Riv3} the second-named author proved that every nontrivial min-max operation on the area among Legendrian surfaces in a closed  Sasakian five-dimensional manifold $(M^5,g,\al)$ is always achieved by a {\it parametrized Hamiltonian stationary Legendrian varifold (PHSLV)}. A PHSLV is a triple $(\Sigma,u,N)$ where:

\begin{itemize}[leftmargin=8mm]
\item[(i)] $\Sigma$ is a possibly open Riemann surface without boundary;
\item[(ii)] $u\in W^{1,2}_{loc}(\Sigma, M^5)$ satisfies 
\[
u^\ast\al=0\quad\text{ and }\quad\p u\dot{\otimes}\p u=0,
\]
that is, $u$ is Legendrian and weakly conformal;
\item[(iii)] $N\in L^\infty_{loc}(\Sigma, \N^\ast)$;
\item[(iv)] for any $f\in C^\infty_c(\Sigma,{\R}_+)$ and for a.e. $t>0$, given a compactly supported Hamiltonian vector field $W_F$ on $M^5$ such that
\[
\operatorname{spt}(F)\subset\subset M\setminus u(f^{-1}(t)),
\]
there holds
$$\int_{f>t}N\nabla(W_F\circ u)\cdot\nabla u\, dx^2=0.$$
\end{itemize}

Observe that, compared to the parametric approach of Schoen and Wolfson, this class of objects differs by two main properties:
\begin{itemize}[leftmargin=8mm]
\item[(i)]
the definition of PHSLV allows for general {\it integer multiplicity} $N$,
which is needed to guarantee general compactness properties of this class, while in the minimizing case from \cite{SW2} one can restrict  to $N\equiv 1$,
since in compactness and blow-up arguments property (P) ensures that $N\equiv1$ still holds at the limit,
while this is a priori not true for general critical points;
\item[(ii)] the stationarity condition is much more general than \eqref{0.2-a}: it permits to ``localize'' the stationarity property  of the varifold associated with $(\Sigma,u,N)$ in terms of the \emph{domain} of $u$
(in \cite{SW2} the localization property is sometimes implicitly used, often in conjunction with the minimizing hypothesis,
whereas it is systematically introduced and then exploited at several crucial points in the present work).
\end{itemize}
The simpler notion of {\it parametrized stationary varifold (PSV)} has been introduced in \cite{Riv-IHES}, where the main result of \cite{Riv3} was established for the area in the isotropic case inside any closed Riemannian manifold. 
The existence of a parametrization, together with the corresponding localization property, was compensating for the absence of a PDE while considering the resulting stationary varifold, and opened the door to the optimal regularity result proved by the authors in \cite{PiRi1},
as well as a better understanding of parametrized varifolds arising variationally \cite{PiRi2,Pi}.
It has been proved by the two authors in \cite{PiRi1} that  the space of PSVs in a closed Riemannian manifold is sequentially compact in a suitable sense. One of the purposes of the first  part of the present work is to extend these facts to the Legendrian framework. 

Our ultimate motivation is to study the regularity of a PHSLV in an arbitrary {\it Sasakian} five-dimensional manifold, and in particular the realization by Hamiltonian stationary surfaces of an arbitrary nontrivial min-max value for the area among Legendrian surfaces in such a manifold. The present work is the second step after \cite{Riv3} in this program. Since we are interested in local properties of PHSLV, we will mostly work in the Heisenberg Group ${\mathbb H}^2$, which is the universal blow-up of such manifolds,
but throughout the paper we will point out the correct analogue of each important statement in a general ambient (which does not enjoy the dilation symmetry).

\subsection{Main results of the present work}
In the first part of this work, we begin with a general theory of \emph{Hamiltonian stationary Legendrian varifolds (HSLVs)},
which was missing in the literature,
providing a geometric measure theory toolbox which is going to be used heavily later on.
In particular, we show an effective monotonicity formula, generalizing \cite{Riv2} (which required having a smooth immersion)
to an arbitrary HSLV.
For simplicity, we state it in $\mathbb{H}^2\cong\C^2\times\R$, where we use coordinates $(z,\varphi)$ and let
$\r^4:=|z|^4+4\varphi^2$ and $\sigma:=2\varphi/|z|^2$ (whose arctangent is a smooth function on $\mathbb{H}^2\setminus\{0\}$).

\begin{Th}
Given a HSLV $\mathbf{v}$ on $\mathbb{H}^2$ and a suitable cut-off function $\chi:\R_+\to\R_+$, letting
\begin{align*}\Theta^\chi(q,a)&:=-\int_{G} \frac{|\nabla^{\mathcal P}\r_q|^2}{\r_q} a^{-1}\chi'(\r_q/a) \,d{\mathbf v}({\mathcal P},p)\\
&\phantom{:}\quad-\int_{G} \lf[\frac{2\varphi_q}{\r_q^3} a^{-1}\chi'(\r_q/a) \arctan\sigma_q\rg] \, d{{\mathbf v}}({\mathcal P},p)\\
&\phantom{:}\quad+\frac{1}{4}\int_{G} {\r_q^4}  \nabla^{\mathcal P}\arctan\sigma_q\cdot\nabla^{\mathcal P} [\r_q^{-3} a^{-1}\chi'(\r_q/a)  {\arctan\sigma_q} ]\, d{{\mathbf v}}({\mathcal P},p),\end{align*}
we have
$$0\le\Theta^\chi(q,a)\le\Theta^\chi(q,b)\quad\text{for all }0<a<b,$$
as well as
$$\theta^\chi(q)+\int_{0<\r_q<b}|\nabla^{\mathcal P}\arctan\sigma_q|^2\,d\mathbf{v}(\mathcal{P},p)
\le Cb^{-2}|\mathbf{v}|(B_{2b}^\r(q)\setminus \bar B_b^\r(q)),$$
where $\theta^\chi(q):=\lim_{\ep\to0}\Theta^\chi(q,\ep)\in\R_+$ exists and is called the \emph{density} at $q$. \hfill $\Box$
\end{Th}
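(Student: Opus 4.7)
The natural strategy is to exploit Hamiltonian stationarity with the family of test potentials $F_a = \varphi_q\,\chi(\r_q/a)$ centered at the point $q$, where $\varphi_q$ denotes the vertical coordinate relative to $q$. The point is that, as stressed in the introduction, $-2W_{\varphi_q}$ is precisely the generator $T_q = \sum (z_j-q_j)\partial_{z_j}+2\varphi_q\partial_\varphi$ of Heisenberg dilations based at $q$; so $W_{F_a}$ is, up to a $\chi'$ correction, a localized dilation at scale $a$, compactly supported once $\chi$ is a cut-off. Concretely, by the Leibniz rule,
\[
2W_{F_a}=-\chi(\r_q/a)\,T_q+\frac{\varphi_q\,\chi'(\r_q/a)}{a}\,J_H\nabla^H\r_q.
\]
Plugging this into the stationarity identity $\int\mathrm{div}^{\mathcal P}(W_{F_a})\,d\mathbf v=0$ and expanding term-by-term will give the basic variation formula from which the three integrals defining $\Theta^\chi(q,a)$ are assembled.

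The key computation will be to evaluate the tangential divergences $\mathrm{div}^{\mathcal P}(\chi T_q)$ and $\mathrm{div}^{\mathcal P}(\varphi_q\chi' J_H\nabla^H\r_q/a)$ along a horizontal (Legendrian) 2-plane $\mathcal P$. The first gives a bulk piece $\chi(\r_q/a)\,\mathrm{tr}_{\mathcal P}(\nabla T_q)$ and a ``flux'' piece $\chi'(\r_q/a)\,a^{-1}\,T_q\cdot\nabla^{\mathcal P}\r_q$. Using $T_q\cdot d\r_q=\r_q$ together with the identity $T_q = |z-q|^2\nabla^H\log|z-q|+2\varphi_q\partial_\varphi$ (so that the non-horizontal part of $T_q$ is encoded by the angle $\arctan\sigma_q$), we rewrite $T_q\cdot\nabla^{\mathcal P}\r_q$ in the form $|\nabla^{\mathcal P}\r_q|^2\r_q+(\text{term involving }2\varphi_q\arctan\sigma_q/\r_q^3)$, which matches the first two terms of $\Theta^\chi$. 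The second divergence, after an integration by parts transferring derivatives from the $\chi'$ factor onto $\arctan\sigma_q$, produces exactly the third term. The Legendrian condition $\mathcal P\subset H$ ensures no $\partial_\varphi$ components contribute directly to $\mathrm{tr}_{\mathcal P}$, which is what allows the bookkeeping to close.

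To get monotonicity, I would differentiate $\Theta^\chi(q,a)$ in $a$ using the variation identity: the resulting derivative organizes into a square involving $a^{-1}\chi'(\r_q/a)|\nabla^{\mathcal P}\arctan\sigma_q|^2$, so that in fact
\[
\Theta^\chi(q,b)-\Theta^\chi(q,a)\ge\int_{a<\r_q<b}|\nabla^{\mathcal P}\arctan\sigma_q|^2\,d\mathbf v(\mathcal P,p)+(\text{non-negative remainder}).
\]
This simultaneously yields $\Theta^\chi(q,a)\le\Theta^\chi(q,b)$, the non-negativity (since the right-hand side is $\ge0$ and $\Theta^\chi$ has a well-defined non-negative limit as $a\to0$), and, upon letting $a\to0$, the key estimate
\[
\theta^\chi(q)+\int_{0<\r_q<b}|\nabla^{\mathcal P}\arctan\sigma_q|^2\,d\mathbf v\le\Theta^\chi(q,b).
\]
Finally, choosing $\chi$ supported in $[0,2]$ and equal to $1$ on $[0,1]$, the factor $a^{-1}\chi'(\r_q/b)$ is supported in the annulus $\{b\le\r_q\le 2b\}$, and each integrand in $\Theta^\chi(q,b)$ is pointwise bounded by $Cb^{-2}$ times a bounded factor (using $|\nabla^{\mathcal P}\r_q|\le1$, $|\arctan\sigma_q|\le\pi/2$, and $|\varphi_q|/\r_q^2\le 1/2$); this gives the claimed bound by the mass of $\mathbf v$ on that annulus.

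The main obstacle will be to justify every integration by parts and divergence computation at the level of a general HSLV rather than a smooth immersion: the functions $\r_q$ and $\arctan\sigma_q$ are only Lipschitz/smooth on $\mathbb H^2\setminus\{q\}$, and $|\nabla^{\mathcal P}\arctan\sigma_q|$ need not be bounded near $q$, so the argument must be localized away from $q$ (by a further cut-off vanishing near $\r_q=0$) and then passed to the limit, with a careful check that the boundary pieces from that inner cut-off vanish because $\mathbf v$ has at worst a finite density at $q$; a secondary technical issue is to collect the $|\nabla^{\mathcal P}\arctan\sigma_q|^2$ term with the correct sign, which requires recognizing that the cross-terms produced by the Leibniz expansion of the third integrand of $\Theta^\chi$ combine into a complete square after using $\r_q^4|\nabla^{\mathcal P}\arctan\sigma_q|^2$ is the ``natural'' Dirichlet energy adapted to the Heisenberg gauge.
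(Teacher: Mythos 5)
Your proposal takes a genuinely different route from the paper, and as written it has a gap that I do not think can be repaired without changing the test function. The paper does \emph{not} test stationarity with the localized dilation $W_{\varphi_q\chi(\r_q/a)}$; it tests with $W_F$ for $F=(\chi(\r_q/b)-\chi(\r_q/a))\arctan\sigma_q$, i.e.\ the Hamiltonian is the \emph{phase} $\arctan\sigma_q$ (smooth and $0$-homogeneous on $\mathbb{H}^2\setminus\{q\}$) cut off between two scales. The whole point of that choice is a pointwise identity (Proposition \ref{magic.id}) showing that $\operatorname{div}_{\mathcal P}W_F$ consists \emph{only} of terms supported on the two annuli $\{a<\r_q<2a\}$ and $\{b<\r_q<2b\}$, plus the nonnegative bulk term $2|\nabla^{\mathcal P}\arctan\sigma_q|^2(\chi(\r_q/b)-\chi(\r_q/a))$. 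Stationarity then literally \emph{is} the identity $\Theta^\chi(q,b)=\Theta^\chi(q,a)+\int|\nabla^{\mathcal P}\arctan\sigma_q|^2(\chi(\r_q/b)-\chi(\r_q/a))\,d\mathbf v$; no differentiation in $a$ is ever performed. This is exactly what makes $\Theta^\chi$ a pure annulus quantity (every term carries a factor $a^{-1}\chi'(\r_q/a)$) and what makes the final bound by $Cb^{-2}|\mathbf v|(B^{\r}_{2b}(q)\setminus\bar B^{\r}_b(q))$ possible.

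The gap in your route is twofold. First, the divergence of $W_{\varphi_q\chi(\r_q/a)}$ contains the bulk piece $-\chi(\r_q/a)$ (up to a constant), so your stationarity identity is of the form $\int\chi(\r_q/a)\,d\mathbf v=(\text{annulus terms})$ — the classical ``mass equals flux'' relation. That bulk mass term has no counterpart in $\Theta^\chi$, and the annulus terms it produces are algebraic expressions in $\varphi_q,\rho_q,\r_q$ and their $\mathcal P$-gradients: the transcendental factor $\arctan\sigma_q$ appearing in the second and third terms of $\Theta^\chi$ cannot arise from differentiating $\varphi_q\chi(\r_q/a)$, so the claimed ``matching'' of your flux pieces with the terms of $\Theta^\chi$ is not a valid identity. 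What your test field can prove monotone — after the further differentiation-in-$a$ / ODE argument that you only assert (``the resulting derivative organizes into a square'') and which is delicate for a general varifold — is the normalized mass ratio, i.e.\ the weaker Schoen--Wolfson-type statement that the paper explicitly improves upon. Second, even granting the structure, the absorption of the singular cross terms near $q$ (needed both for $\Theta^\chi\ge0$ and for the finiteness of $\int_{0<\r_q<b}|\nabla^{\mathcal P}\arctan\sigma_q|^2\,d\mathbf v$) requires the specific normalization $-\chi'=\eta^2$ and a separate argument (Vitali covering plus Fatou along a sequence of nearby base points) to remove the a priori assumption that $\liminf_{\ep\to0}\int_{\ep<\r_q<2\ep}|\nabla^{\mathcal P}\arctan\sigma_q|^2\,d\mathbf v<\infty$; you correctly flag this as the main obstacle but do not resolve it. To fix the proof you should replace your Hamiltonian by $(\chi(\r_q/b)-\chi(\r_q/a))\arctan\sigma_q$ and verify the pointwise divergence identity, after which monotonicity is immediate.
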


Here $\r_q,\varphi_q,\sigma_q$ are defined by left translation, shifting the origin to $q$.
Compared to similar statements in \cite{SW2} and \cite{Riv2}, this is a true monotonicity at all scales
and works for general (Hamiltonian stationary) varifolds. In turn, this is used to show the following.
Note that monotonicity does \emph{not} hold for higher dimensional objects, as shown in \cite[Appendix B]{Orr}.

\begin{Th}
The class of rectifiable HSLVs $\mathbf{v}$ with $\frac{\theta^\chi}{2\pi}\in\N^*$ a.e. on $\operatorname{spt}|\mathbf{v}|$
is closed \emph{among rectifiable varifolds}. \hfill $\Box$
\end{Th}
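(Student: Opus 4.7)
I would split the argument into two parts: showing that a weak varifold limit $\mathbf{v}$ of a sequence of rectifiable HSLVs $\mathbf{v}_k$ with $\theta^\chi_k/(2\pi) \in \N^*$ a.e.\ on $\operatorname{spt}|\mathbf{v}_k|$ is itself a HSLV, and showing that its own density satisfies the same integrality property.

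For the first part, the Legendrian condition amounts to $\mathbf{v}$ being supported on the closed subbundle $\Lambda\subset G_2(\mathbb{H}^2)$ of $\alpha$-annihilated (horizontal) $2$-planes; this passes automatically to weak varifold limits. Hamiltonian stationarity is a family of linear conditions of the form $\int \Phi_F(x,\mathcal{P})\,d\mathbf{v}=0$, tested against fixed smooth integrands built from the Hamiltonian vector fields $W_F$ with $F\in C_c^\infty(\mathbb{H}^2)$, and is therefore preserved under weak convergence.

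For the second part, I would combine the monotonicity formula with a blow-up argument. Because $\Theta^\chi(\cdot,q,a)$ is a smooth linear integral functional of the varifold, we have $\Theta^\chi(\mathbf{v}_k,q,a)\to\Theta^\chi(\mathbf{v},q,a)$ at generic $(q,a)$; combined with $\theta^\chi(q)\le\Theta^\chi(q,a)$ from the monotonicity theorem and letting $a\to 0$, this yields the upper semicontinuity $\limsup_k \theta^\chi(\mathbf{v}_k,p_k)\le\theta^\chi(\mathbf{v},p)$ for $p_k\to p$. At $|\mathbf{v}|$-a.e.\ $p\in\operatorname{spt}|\mathbf{v}|$, rectifiability provides a tangent plane $T_p\in\Lambda$, and the Heisenberg rescalings $\mathbf{v}^{(r)}$ centered at $p$ converge as $r\to 0$ to $\theta^\chi(\mathbf{v},p)\,\mathcal{H}^2\mrestr T_p$. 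Since the Heisenberg dilations preserve the HSLV class on $\mathbb{H}^2$, a diagonal choice $k_j\to\infty$, $r_j\to 0$ produces integer-density HSLVs $\mathbf{v}_{k_j}^{(r_j)}$ converging to the same flat multiplicity-$N$ Legendrian plane, where $N:=\theta^\chi(\mathbf{v},p)/(2\pi)$. The integrality $N\in\N^*$ would then follow from a sheet-counting argument: the approximating varifolds lie close to a flat plane and, by monotonicity and the Legendrian constraint, decompose into integer-multiplicity near-graphs over $T_p$, whose total sheet-count must converge to $N$.

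The \textbf{main obstacle} is this last integrality step. Since only Hamiltonian stationarity---not the full first variation---is controlled along the sequence $(\mathbf{v}_k)$, Allard's classical closure theorem for integer rectifiable varifolds cannot be invoked directly, and the rectifiability of the limit $\mathbf{v}$ is \emph{essential} (it reduces the question to the planar-limit regime). The decisive technical input is the near-planar decomposition of HSLVs into integer-multiplicity sheets, which follows from the monotonicity formula combined with the sub-Riemannian structure of $\mathbb{H}^2$; this forces the limiting multiplicity $N$ to equal the common integer sheet-count of the approximating varifolds.
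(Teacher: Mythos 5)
Your overall architecture is the same as the paper's: the HSLV property and the density lower/upper semicontinuity pass to the limit for soft reasons, rectifiability of the limit reduces everything to the regime where the limit is a constant multiple of a Legendrian plane, and one must then show that the limiting multiplicity is the integer "sheet count" of the approximating varifolds. The problem is that the decisive step is exactly the one you leave as an assertion, and the mechanism you propose for it does not work as stated. You claim the approximating HSLVs "decompose into integer-multiplicity near-graphs over $T_p$" by "monotonicity and the Legendrian constraint." No such graphical decomposition is available: there is no Lipschitz approximation theorem for HSLVs (only Hamiltonian variations are controlled, so none of the classical Allard machinery producing graphs from small excess applies), and the approximating varifolds may carry singularities. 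The paper avoids graphs entirely: it fixes a generic fiber $\pi_{\mathcal P_0}^{-1}(x_k)$ of the projection onto the limit plane, over which the sum of the densities $\theta^\chi$ of $\mathbf{v}_k$ nearly exhausts $2\pi\theta_0$, and then shows by a multi-scale clustering argument (partitioning the finitely many chosen points into well-separated clusters at a hierarchy of "good" radii, and comparing $\Theta^\chi(q,a)$ with cylinder masses scale by scale) that these densities add up without loss, forcing $\theta_0\ge[\theta_0]+1$ if $\theta_0\notin\N$. The comparison between $\Theta^\chi$ and cylindrical masses is itself only valid up to $\ep$ under an almost-conicality hypothesis, which is why the clustering has to be organized around the intervals of radii where monotonicity guarantees small radial variation.

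The second, and deeper, missing ingredient is the rigidity statement underlying every compactness step of that argument: a HSLV whose Grassmannian part is concentrated on the fixed left-invariant plane field $\mathcal{P}_p=\operatorname{span}\{X_1(p),X_2(p)\}$ (zero tilt-excess) must be a locally finite union of left translates of $\mathcal{P}_0$ with constant multiplicity, and a blow-up with this property must be a single multiple of $\mathcal{P}_0$. In the isotropic setting this is an exercise via the constancy theorem; here it is genuinely subtle because stationarity only holds against Hamiltonian fields $W_F$ (which involve two derivatives of $F$ twisted by $J_H$), and the paper has to run an iterated blow-up producing successively more algebraic constraints on the support ($z_{2j}=-\sigma z_{2j-1}$, then linear relations among the coordinates, then the nonlinear relations these force) until a contradiction rules out any non-planar configuration. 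Your proposal does not mention this step at all, yet both the identification of the limiting blow-up as a plane with multiplicity $N$ and any version of "the sheets' multiplicities converge to $N$" rest on it. Without supplying (or citing) this rigidity lemma and replacing the near-graph decomposition with an actual density-summation scheme, the integrality conclusion is not established.
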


We observe that compactness of such integral HSLVs fails, even in a closed ambient. The following counterexample
is inspired by a similar one in $\mathbb{H}^2$ by Orriols \cite{Orr}; the varifolds $\mathbf{v}_k$
are in fact smooth Hamiltonian stationary Legendrian embeddings.

\begin{Th}
In $(S^5,g,\al)$, with $g$ the round metric and $\al$ the canonical contact form,
there exists a sequence of rectifiable HSLVs $\mathbf{v}_k$ with $\frac{\theta^\chi(\mathbf{v}_k,q)}{2\pi}\in\N^*$ for all $q\in\operatorname{spt}|\mathbf{v}_k|$, such that $\mathbf{v}_k\rightharpoonup\mathbf{v}_\infty$ for a non-rectifiable limit
$\mathbf{v}_\infty$ supported on a Hopf fiber. \hfill $\Box$
\end{Th}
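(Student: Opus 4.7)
The plan is to construct an explicit sequence of smooth homogeneous Legendrian tori $\Phi_n$ in $S^5 \subset \C^3$, arising as orbits of a two-dimensional subtorus of the diagonal $T^3$-action by phase rotations, whose images collapse onto a single Hopf fiber while the areas remain uniformly bounded. For each integer $n \ge 1$, I would set
\[
\Phi_n(u,v) := \Bigl(\sqrt{\tfrac{n}{n+2}}\, e^{iu},\ \tfrac{1}{\sqrt{n+2}}\, e^{i(-nu+v)},\ \tfrac{1}{\sqrt{n+2}}\, e^{-iv}\Bigr),
\]
viewed as a smooth map $T^2 := (\R/2\pi\Z)^2 \to S^5$, and let $\mathbf{v}_n$ be the associated integer multiplicity varifold. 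Pulling back the standard contact form $\al = \operatorname{Im}\sum_j \bar z_j\, dz_j$, one obtains $\Phi_n^* \al = (\tfrac{n}{n+2} - n\cdot\tfrac{1}{n+2})\, du + (\tfrac{1}{n+2} - \tfrac{1}{n+2})\, dv \equiv 0$, so $\Phi_n$ is Legendrian; injectivity modulo $(2\pi\Z)^2$ is clear from the first and third coordinates.

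Next I would compute the pullback metric: $|\partial_u\Phi_n|^2 = n(n+1)/(n+2)$, $|\partial_v\Phi_n|^2 = 2/(n+2)$, $\langle \partial_u\Phi_n,\partial_v\Phi_n\rangle = -n/(n+2)$, which yields area element $\sqrt{n/(n+2)}\,du\,dv$ and therefore the uniformly bounded mass $|\mathbf{v}_n|(S^5) = 4\pi^2 \sqrt{n/(n+2)} \to 4\pi^2$. Since $\Phi_n$ is a smooth embedding with multiplicity one, $\theta^\chi(\mathbf{v}_n,q)/(2\pi) = 1$ at every $q \in \spt|\mathbf{v}_n|$. For Hamiltonian stationarity, I would rely on the homogeneity of the immersion: $\Phi_n(T^2)$ is a $T^2$-orbit inside the Hamiltonian isometry group $T^3 \subset \operatorname{Isom}(S^5,\al)$, so its mean curvature vector, second fundamental form, and induced metric are $T^2$-invariant. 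The metric being flat with constant coefficients, the Dazord-type mean curvature one-form of $\Phi_n$ is automatically a constant-coefficient, hence harmonic, one-form on $(T^2, \Phi_n^* g)$; its co-closedness in the Sasaki--Einstein ambient $S^5$ is equivalent to Hamiltonian stationarity.

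The support of $\mathbf{v}_n$ lies in the three-dimensional Clifford torus $\bigl\{|z_1| = \sqrt{n/(n+2)},\ |z_2|=|z_3| = 1/\sqrt{n+2}\bigr\}$, whose Hausdorff limit as $n\to\infty$ is the Hopf fiber $F := \{(e^{iu},0,0): u \in \R\}$. By Banach--Alaoglu in the space of Radon measures on the compact Grassmann bundle $\Gr_2(S^5)$, a subsequence satisfies $\mathbf{v}_n \weakto \mathbf{v}_\infty$; by compactness of $S^5$ the total masses pass to the limit, giving $|\mathbf{v}_\infty|(S^5) = 4\pi^2 > 0$. Meanwhile $\spt|\mathbf{v}_\infty|$ projects into $F$, and since $\hau^2(F) = 0$ the varifold $\mathbf{v}_\infty$ cannot be rectifiable as a two-varifold.

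The delicate point is the rigorous verification of Hamiltonian stationarity of $\Phi_n$ against arbitrary, not just $T^2$-equivariant, Hamiltonian variations. One route is to average any contact Hamiltonian $F \in C_c^\infty(S^5)$ over the $T^2$-action, reducing by linearity of the first variation to $T^2$-invariant $F$; such invariant Hamiltonians factor through the moment map and their induced flows permute $T^2$-orbits, so criticality reduces to a finite-dimensional check, which is automatic because the imposed integer lattice and Legendrian constraints determine a unique homogeneous orbit. Alternatively, the harmonicity-of-mean-curvature argument above circumvents this reduction entirely. The overall construction is parallel to the concentration example in $\mathbb{H}^2$ from \cite{Orr}, adapted so that the ambient is compact; the choice of exponents $(1,-nu+v,-v)$ is dictated by the twin requirements that the first and third components of $\Phi_n^*\al$ vanish while the two transverse radii shrink at the same rate.
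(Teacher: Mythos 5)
Your construction is correct and takes essentially the same approach as the paper's appendix: both produce explicit homogeneous Legendrian $T^2$-orbits in $S^5$ (yours with radii $(\sqrt{n/(n+2)},1/\sqrt{n+2},1/\sqrt{n+2})$, the paper's with $(t,t,1)/\sqrt{1+2t^2}$ and a degenerating rectangular lattice) that collapse onto a Hopf fiber with bounded area, and Hamiltonian stationarity follows in both cases from the mean curvature one-form having constant coefficients, hence being harmonic, on the flat torus --- which is exactly the content of the paper's identity $\Delta u_t+u_t|\nabla u_t|^2+i\nabla\beta_t\cdot\nabla u_t=0$ with $d\beta_t$ harmonic. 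The only slight imprecision is in your averaging fallback: the Legendrian orbits of your chosen $T^2$-subgroup are not unique but form a $T^3/T^2$-circle of translates of equal area, so criticality follows from constancy of area along that family rather than from uniqueness; this does not affect the conclusion, and the harmonicity argument suffices on its own.
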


We should also mention that the previous results, namely monotonicity and closure of integral varifolds among rectifiable ones,
require some completely new ideas, as discussed more in detail later in the introduction.

After developing this general theory, we turn to the structure of \emph{parametrized} varifolds and their sequential compactness. The following theorem is one of the main achievements.

\begin{Th}
\label{th-0}
In a closed Sasakian ambient $M^5$, given $k\in\N$,
the set of varifolds induced by PHSLV$^*$s $(\Sigma,u,N)$ with  closed domain $\Sigma$
and $\operatorname{genus}(\Sigma)\le k$
is \emph{sequentially closed} under varifold convergence. \hfill $\Box$
\end{Th}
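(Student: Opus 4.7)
The plan is to adapt the sequential compactness argument for parametrized stationary varifolds (PSV) from \cite{PiRi1} to the Legendrian setting, using the effective monotonicity formula and the closure of integral HSLVs established earlier in this paper as key new inputs. First, varifold convergence provides a uniform mass bound, which by weak conformality translates into a uniform $W^{1,2}$ energy bound on the $u_k$. The bound $\operatorname{genus}(\Sigma_k)\le k$ then places the conformal structures in a fixed piece of Deligne--Mumford moduli space: up to diffeomorphism, the $\Sigma_k$ converge smoothly (outside a finite collection of collapsing necks) to a nodal Riemann surface $\Sigma_\infty$. On each component of $\Sigma_\infty$, and on each bubble domain obtained by rescaling at points of energy concentration, one extracts a weak $W^{1,2}$ limit $u_\infty$. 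The monotonicity formula from the first theorem above is crucial at this stage: its uniform density lower bound prevents ``ghost bubbles'' and rules out loss of mass along the bubble tree.

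Next I would show that the Legendrian and conformality constraints are preserved in the limit. Both conditions $u_k^\ast\alpha=0$ and $\partial u_k\,\dot\otimes\,\partial u_k=0$ are quadratic expressions in $du_k$ with coefficients in $u_k$, so combining weak $L^2$ convergence of the derivatives with Rellich strong $L^2$ convergence of the maps gives $u_\infty^\ast\alpha=0$ and weak conformality in the limit. Then I would define the limit multiplicity $N_\infty$ as the pointwise covering degree of $u_\infty$, and invoke the closure theorem for integral HSLVs from above to identify the induced varifold of $(\Sigma_\infty,u_\infty,N_\infty)$ with $\mathbf{v}_\infty$. The genus constraint survives because the arithmetic genus of $\Sigma_\infty$ (counting nodes) plus the sum of the bubble genera equals $\lim\operatorname{genus}(\Sigma_k)\le k$, and bubbles are spheres.

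The main obstacle is passing to the limit in the stationarity condition (iv) of the PHSLV definition. Given $f\in C^\infty_c(\Sigma_\infty,\R_+)$, one must build approximating cutoffs $f_k$ on $\Sigma_k$ that avoid the degenerating necks and the bubble concentration points, choose good levels $t$ using a Fatou/coarea argument so that $\{f_k=t\}$ stays away from the image of the forming singularities, and then pass to the limit in the identity $\int_{f_k>t}N_k\nabla(W_F\circ u_k)\cdot\nabla u_k=0$. This is delicate because the test vector field $W_F=\tmz J_H\nabla^H F-F\partial_\varphi$ has coefficients depending on the contact structure; in particular $J_H$ and $\nabla^H$ are defined via $\alpha$, so the algebraic identities controlling $\nabla(W_F\circ u_k)\cdot\nabla u_k$ must be manipulated using $u_k^\ast\alpha=0$ before the terms are individually amenable to weak-strong pairing. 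The localization flexibility built into the PHSLV definition — namely that stationarity is tested on arbitrary domain sublevel sets rather than through a global PDE — is exactly what makes this passage possible, as it permits one to isolate and discard contributions coming from the bubble points and nodes, verifying stationarity component by component on the limiting nodal surface and its bubble tree.
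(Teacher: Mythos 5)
Your overall skeleton (uniform energy bound, Deligne--Mumford degeneration, bubble tree, passage to the limit in the localized stationarity) matches the paper's strategy, but there are two genuine gaps. First, weak conformality does \emph{not} pass to the weak $W^{1,2}$ limit: the condition $\p u\dot{\otimes}\p u=0$ is purely quadratic in $\nabla u$ with no lower-order factor, so the weak--strong pairing you invoke applies to $u_k^*\al=0$ (which is linear in $\nabla u_k$ with coefficients in $u_k$) but not to conformality. The limit $u_\infty$ is in general only quasiconformal; one must first show $\frac{|\nabla u_\infty|^2}{2}\le N_\infty|\p_{x_1}u_\infty\wedge\p_{x_2}u_\infty|$ (which requires proving that the limit energy measure is an integer multiple of the Jacobian, via a blow-up argument on the domain) and then compose with a quasiconformal homeomorphism $\psi$ so that $\hat u_\infty=u_\infty\circ\psi^{-1}$ is weakly conformal; without this the limit triple is not a PHSLV in the sense of Definition \ref{df-PHSLV}. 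Relatedly, invoking the closure theorem for integral HSLVs to identify the limit varifold presupposes that the limit is rectifiable, which in the Legendrian setting does not come for free (Remark \ref{orr}); rectifiability must be extracted from the parametrized structure rather than from any Allard-type compactness.

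Second, and more seriously, your argument never uses the PHSLV$^*$ hypothesis: you attribute the ability to discard contributions from bubble points and nodes to the domain localization already present in the plain PHSLV definition. This cannot suffice, since Theorem \ref{th-count-ex} exhibits a sequence of genuine PHSLVs (smooth Hamiltonian stationary Legendrian tori in $S^5$) with degenerating conformal class whose varifold limit is non-rectifiable; the statement is simply false for PHSLVs. The obstruction is that isolated points are \emph{not} removable singularities for two-dimensional HSLVs, because $W_F$ contains second derivatives of $F$ and a capacity/cut-off argument fails. The removability needed to kill the contributions at bubble points and at the ends of each collar (Propositions \ref{remov.gen} and \ref{pr-VI.1}) requires testing stationarity with Hamiltonians $F$ that are \emph{locally constant}, with possibly different constant values, near $u(\p\omega)$ --- precisely the PHSLV$^*$ condition --- together with the quadratic mass bound $|\mathbf{v}|(B^\r_\ep(q))=O(\ep^2)$ supplied by monotonicity. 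Even with this, the neck analysis works for necks that disconnect the surface (bubbles) but must be handled with care for collars, where two distinct base points $q_a,q_b$ appear and one subtracts two different constants from $F$; this is the step your proposal would need to make explicit.
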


In this statement, $\Sigma$ is possibly disconnected and $\operatorname{genus}(\Sigma):=\sum_{S}\operatorname{genus}(S)$
as $S$ varies among the connected components of $\Sigma$. This result holds assuming the slightly stronger notion
of PHSLV$^*$, given in Definition \ref{strong.phslv}; for a smooth immersion in a K\"ahler--Einstein $M^5$, the latter is equivalent to the fact that the closed one-form
$\ast {\frak g}^{-1}\, d{\frak g}$ is exact.
We also have the following.

\begin{Th}
\label{th-0bis}
In a closed Sasakian ambient $M^5$, the set of varifolds induced by PHSLVs $(\Sigma,u,N)$ with a fixed closed domain $\Sigma$
and \emph{controlled conformal class}
is sequentially closed under varifold convergence. \hfill $\Box$
\end{Th}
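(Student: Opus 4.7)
The plan is to adapt the strategy of Theorem~\ref{th-0}, exploiting that the control on the conformal class prevents the Riemann surface $\Sigma$ from degenerating and thereby reduces the possible loss of mass to finitely many point concentrations. First I would pick uniformizing metrics $h_k$ on $\Sigma$ in the conformal class of $u_k$ so that, after applying diffeomorphisms of $\Sigma$ and extracting a subsequence, $h_k\to h_\infty$ smoothly. Weak conformality of $u_k$ gives
\[
\int_\Sigma |du_k|^2_{h_k}\, d\operatorname{vol}_{h_k}=2\operatorname{Area}(u_k)=2|\mathbf{v}_k|(M),
\]
which is uniformly bounded. Extracting $u_k\weakto u_\infty$ in $W^{1,2}(\Sigma,M^5)$ and $N_k\weakstarto N_\infty$ in $L^\infty$, the Legendrian constraint $u_k^\ast\al=0$ and the weak conformality $\p u_k\,\dot{\otimes}\,\p u_k=0$ pass to the limit: the former is a first-order linear condition, stable under weak convergence; the latter passes modulo a possible concentration of conformal energy, to be accounted for by bubbling.

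Next I would identify the finite concentration set $\{p_1,\ldots,p_m\}\subset\Sigma$ where the local $W^{1,2}$ energy of $u_k$ exceeds the $\e$-regularity threshold given by the general HSLV theory developed earlier in the paper. Away from this set, the small-energy regularity upgrades weak convergence to strong $W^{1,2}$ convergence. At each $p_j$, rescaling in the Heisenberg tangent model at $u_\infty(p_j)$ produces a non-trivial PHSLV in $\mathbb{H}^2$ of finite mass and integer density, thanks to the monotonicity formula. The closure of integral HSLVs among rectifiable ones, combined with a removability-of-singularity argument at infinity, yields a PHSLV bubble $(S^2,b_j,n_j)$. Assembling the main map with the extracted bubbles produces the object
\[
\Bigl(\Sigma\sqcup\bigsqcup_{j=1}^{m} S^2_j,\; u_\infty\sqcup\bigsqcup_{j}b_j,\; N_\infty\sqcup\bigsqcup_{j}n_j\Bigr),
\]
whose induced varifold, thanks to the energy decomposition identity $|\mathbf{v}_\infty|=|\mathbf{v}_{u_\infty,N_\infty}|+\sum_j|\mathbf{v}_{b_j,n_j}|$ coming from the bubble extraction, equals the varifold limit $\mathbf{v}_\infty$.

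The hard part will be verifying condition (iv) in the definition of PHSLV for this limit object. Passing the stationarity identity against a test Hamiltonian field $W_F$ localized away from $u(f^{-1}(t))$ requires strong convergence of $u_k$ and $N_k$ on the set $\{f>t\}$, which only holds outside the concentration points $p_j$. For generic levels $t>0$ chosen via Fubini to keep $f^{-1}(t)$ at positive distance from small neighborhoods $U_j$ of the $p_j$, the passage to the limit is direct on $\{f>t\}\setminus\bigcup_j U_j$, while the bubble contributions localized in $U_j$ must be handled separately through the stationarity of each blow-up at the limit scale, and reinserted in the ambient identity via a suitable gluing of test potentials $F$. A delicate auxiliary point is to rule out spurious concentrations of stationary-varifold mass not accounted for by the extracted bubbles; this is precisely where the closure of integral HSLVs among rectifiable ones, together with the effective monotonicity at all scales proved earlier in the paper, plays the crucial role.
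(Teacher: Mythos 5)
Your overall scaffold (bubble-tree decomposition, no energy in necks, reassembly of the limit varifold) matches the paper's stated strategy, but two of your intermediate claims do not survive scrutiny, and the step that actually makes the theorem true for mere PHSLVs is missing. First, the assertion that away from the concentration set ``small-energy regularity upgrades weak convergence to strong $W^{1,2}$ convergence'' is false in this setting: the failure of property (P) for non-minimizing critical points is a central theme of the paper, and the limit of $N_k\frac{|\nabla u_k|^2}{2}\,dx^2$ is in general $N_\infty|\p_{x_1}u_\infty\wedge\p_{x_2}u_\infty|\,dx^2$ with a \emph{new} integer multiplicity $N_\infty$ absorbing the oscillation loss; moreover $u_\infty$ need not be weakly conformal and must be corrected by a quasiconformal reparametrization (Theorem \ref{th-sequential-weak-closure} and Lemma \ref{lm-integer-mult}). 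The stationarity of the limit away from concentration points is obtained through the convergence of the induced varifolds, not through strong $W^{1,2}$ convergence.

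Second, and more seriously, your bubble extraction invokes ``a removability-of-singularity argument at infinity,'' but the point removability results (Proposition \ref{remov.gen}, Proposition \ref{pr-VI.1}) require testing stationarity with Hamiltonians that are merely \emph{constant}, not zero, near the puncture — i.e.\ the PHSLV$^*$ condition of Definition \ref{strong.phslv}. For a plain PHSLV this extra information is exactly what has to be produced, and Theorem \ref{th-count-ex} shows that without it the conclusion genuinely fails (there, mass escapes onto a non-rectifiable limit supported on a Hopf fiber). The paper supplies the missing input via Lemma \ref{bubbling.bis}: because a bubbling neck \emph{disconnects} the domain, one can cut an arbitrary $F\in C^\infty(M)$ with $\chi_k\circ\r_{p_k'}$ around a \emph{single} point $p_k'$ on the neck, subtract the single constant $F(p_k')$, and show the resulting error terms vanish; this simultaneously yields the enhanced stationarity of the bubble and the no-neck-energy identity \eqref{neck.claim} that you assert without proof. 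This single-point cutoff is precisely where the controlled conformal class is used (collars do not disconnect and would require two cut points with two incompatible constants, cf.\ Remark \ref{neck.bis}); your proposal never isolates this mechanism, so as written it would prove too much — it would apply equally to degenerating conformal classes, contradicting the counterexample.
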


However, we stress that, differently from the isotropic case,
the statement fails in the class of PHSLVs, as shown by the previous counterexample.

The question whether or not the weak sequential closure holds in the class of PHSLVs while assuming a control of the Legendrian Morse index is under investigation by the two authors (the latter is the dimension of the largest subspace of Hamiltonian variations where the second variation of area is negative definite).

In situations where degeneration of the conformal class do not occur,
this weak sequential closure holds in the class of PHSLVs. In the regularity theory we mostly deal
with domains such as $\mathbb{D}$ or $\C$ (and we need to consider sequences of maps with energy lower than possible bubbles,
such as blow-ups). This explains why our regularity theory assumes just the notion of PHSLV.
In particular, we can classify blow-ups of an arbitrary PHSLV.

\begin{Th}
Given a PHSLV $(\Sigma,u,N)$ in a closed Sasakian ambient $M^5$ and $x_0\in\Sigma$, there exists a notion of \emph{parametrized blow-up}
at $x_0$. The image of any such parametrized blow-up $(\C,u_{x_0},N_{x_0})$ is either a plane or a Schoen--Wolfson cone in $\C^2\subset\mathbb{H}^2$. \hfill $\Box$
\end{Th}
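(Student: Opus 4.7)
The plan is to construct the parametrized blow-up by simultaneously rescaling the domain conformally and the target via Heisenberg dilations, extract a subsequential limit using the effective monotonicity formula together with the local PHSLV compactness, and then classify this limit using the equality case of monotonicity and the Schoen--Wolfson list of Lagrangian cones.

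For the construction, first fix a conformal chart $(\mathbb{D},z)$ around $x_0$ with $z(x_0)=0$, together with local Heisenberg coordinates around $u(x_0)\in M^5$ in which the Sasakian structure coincides at the origin with the standard one on $\mathbb{H}^2=\C^2\times\R$. For a sequence $\lambda_n\downarrow 0$, define
\[
u_n(x):=\delta_{1/\lambda_n}\bigl(u(\lambda_n x)\bigr),\qquad N_n(x):=N(\lambda_n x),\qquad x\in\mathbb{D}_{1/\lambda_n},
\]
where $\delta_\mu(z,\varphi):=(\mu z,\mu^2\varphi)$ is the Heisenberg dilation. Each $(\mathbb{D}_{1/\lambda_n},u_n,N_n)$ is Legendrian and weakly conformal for the rescaled Sasakian structure, which converges smoothly on compact sets to the flat structure on $\mathbb{H}^2$, and satisfies the corresponding Hamiltonian stationarity condition. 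The $\lambda_n$ should be chosen so that $\Theta^\chi$ at the origin is normalized at a fixed intermediate scale, recovering the density $\theta^\chi$ of the original PHSLV at $x_0$.

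The effective monotonicity formula yields, uniformly in $n$, a bound on the mass of the underlying varifolds $\mathbf{v}_n$ on every Heisenberg ball $B^{\r}_R(0)$, depending only on $R$ and $\theta^\chi(x_0)$. Exhausting $\C$ by disks $\mathbb{D}_{R_k}$ (each with trivial topology and fixed conformal class) and applying the local PHSLV compactness alluded to in the paper after Theorem \ref{th-0bis}, combined with the closure of integral rectifiable HSLVs from the monotonicity package, a diagonal extraction yields a limiting PHSLV $(\C,u_{x_0},N_{x_0})$ in the flat Heisenberg group, whose underlying varifold is the varifold limit $\mathbf{v}_\infty$ of the $\mathbf{v}_n$.

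To classify $\mathbf{v}_\infty$, the normalization of $\lambda_n$ ensures $\Theta^\chi(\mathbf{v}_\infty,0,a)\equiv\theta^\chi(x_0)$ for every $a>0$, so the monotonicity inequality is saturated at every scale. Inspecting the integrand in the expression for $\Theta^\chi$ given in the monotonicity theorem, this equality forces $\nabla^{\mathcal P}\arctan\sigma_0\equiv 0$ and a pointwise radial structure for $\nabla^{\mathcal P}\r_0$ on $|\mathbf{v}_\infty|$, which is equivalent to invariance of $\mathbf{v}_\infty$ under the Heisenberg dilations $\delta_\mu$. In particular the projection $v_\infty:=\pi\circ u_{x_0}:\C\to\C^2$ is $1$-homogeneous; contracting the Legendrian identity $d\varphi_\infty=v_{\infty,1}\,dv_{\infty,2}-v_{\infty,2}\,dv_{\infty,1}+v_{\infty,3}\,dv_{\infty,4}-v_{\infty,4}\,dv_{\infty,3}$ with the radial vector field then shows that $\varphi_\infty$ is simultaneously $0$- and $2$-homogeneous, hence identically zero. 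Thus $u_{x_0}$ takes values in $\C^2\subset\mathbb{H}^2$, and $v_\infty$ is a conformal, Lagrangian, Hamiltonian stationary parametrization from $\C$ of a cone in $\C^2$; the Schoen--Wolfson classification in \cite{SW2} then forces this image to be a plane or one of the cones $u_{p,q}$. The main obstacle I foresee is the parametric part of the compactness argument on the non-compact domain $\C$: one must ensure that the whole PHSLV structure, including the integer multiplicity $N_{x_0}$ and the localized stationarity condition on every level set $\{f=t\}$ of every test function $f\in C^\infty_c(\C,\R_+)$, passes to the limit, which requires combining the cut-off test functions built into the definition of PHSLV with the effective monotonicity formula to preclude mass concentration at infinity at each finite scale.
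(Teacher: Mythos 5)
There are two genuine gaps. First, your rescaling uses the \emph{same} factor $\lambda_n$ on the domain and on the target: $u_n(x)=\delta_{1/\lambda_n}(u(\lambda_n x))$. This produces a constant limit unless the energy $\int_{B_r(x_0)}|\nabla u|^2\,dx^2$ is comparable to $r^2$, which is false precisely in the cases the theorem is about: at a branch point $u\sim z^k$ the energy scales like $r^{2k}$, and the conformal parametrization of a Schoen--Wolfson cone is $r^{\sqrt{pq}}$-homogeneous in the domain variable, not $1$-homogeneous (this also invalidates your later claim that $v_\infty$ is $1$-homogeneous). The correct construction (Proposition \ref{pr-tangent-cone}) decouples the two scales, dilating the target by $s(x_0,r)^{-1}$ with $s(x_0,r)^2:=\int_{B_r(x_0)}|\nabla u|^2\,dx^2$, and even then one needs a doubling hypothesis \eqref{adm.implicit} on the energy to control the rescaled maps on large balls; that all points satisfy such a bound (Proposition \ref{all.admissible}) is itself part of the inductive regularity argument, not an a priori fact. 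One must also account for the quasiconformal reparametrization $\psi$, since the weak $W^{1,2}$ limit need not be conformal.

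Second, and more seriously, the final step ``the Schoen--Wolfson classification then forces this image to be a plane or one of the cones $u_{p,q}$'' assumes exactly what is hard. The Schoen--Wolfson list classifies cones whose cross-section is a \emph{smooth immersed} closed curve in $S^3$ critical for length under the Legendrian constraint. After your reductions, the blow-up is only a proper, weakly conformal, $W^{1,2}$ Hamiltonian stationary Legendrian map into $\{\varphi=0\}$ with integer multiplicity and dilation-invariant image; nothing yet guarantees that its cross-section is a smooth immersed curve (for minimizers Schoen--Wolfson get this from strong convergence and comparison arguments, which are unavailable here). The paper obtains this regularity of the link only through the induction on $\sup_\Sigma N$: Proposition \ref{rigidity} forces the multiplicity to drop off the origin, Proposition \ref{adm.open} shows the inductive regularity applies on an open dense set, and Proposition \ref{par.cone} then runs a level-set and connectedness analysis to identify the image as a cone over a single critical curve. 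Your argument short-circuits this entire mechanism. (A smaller issue: your ``simultaneously $0$- and $2$-homogeneous'' derivation of $\varphi_\infty=0$ is not coherent as stated; the paper instead shows $\arctan\sigma\circ\hat u_\infty$ is constant, excludes $\pm\pi/2$, and needs the separate argument of Lemma \ref{const.sigma} to conclude the constant vanishes.)
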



The sequential compactness of PHSLVs (in low energy regime, or equivalently when $M^5$ is rescaled to resemble $\mathbb{H}^2$)
is then exploited in successive steps to reach the following \emph{optimal regularity} result,
which is the first main outcome of this work.

\begin{Th}
\label{th-01} Every PHSLV $(\Sigma,u,N)$ in a closed Sasakian manifold $M^5$ is a smooth immersion away from isolated branch points and isolated conical singularities (whose blow-ups are Schoen--Wolfson cones), with $N$  constant  on each connected component of $\Sigma$.
\hfill $\Box$
\end{Th}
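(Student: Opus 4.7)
The plan is to localize the problem to a small disk around a point $x_0\in\Sigma$, rescale so that the ambient resembles the infinitesimal Heisenberg model $\mathbb{H}^2$, and combine the three main tools already developed in the paper: the effective monotonicity formula, the sequential compactness of PHSLVs under a controlled conformal class (Theorem \ref{th-0bis}), and the blow-up classification into planes or Schoen--Wolfson cones. The density $\theta^\chi/(2\pi)$ will play the role of a discrete-valued semicontinuous stratifying function on $\Sigma$.

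The first main step is an $\epsilon$-regularity theorem: if at $x_0\in\Sigma$ the blow-up is a plane of multiplicity $N_0\in\N^\ast$, then in a neighborhood of $x_0$ the map $u$ is a smooth (branched, if $N_0\ge 2$) conformal immersion and $N\equiv N_0$. I would argue by a contradiction/compactness scheme. A sequence of PHSLVs violating the conclusion at scales $r_k\to 0$, rescaled so that the ambient looks Heisenberg, would by Theorem \ref{th-0bis} subconverge to a PHSLV on $\mathbb{H}^2$ whose blow-up at the origin has density $2\pi N_0$. By the blow-up classification the limit is a multiplicity-$N_0$ plane (the Schoen--Wolfson alternative being excluded by a density gap coming from the $\arctan\sigma_q$ correction term in the monotonicity formula, which would survive passing to the limit). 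An iterated tilt-excess decay produces Hölder decay of the tangent planes, after which the stationarity equation in conformal coordinates bootstraps to $C^\infty$, the Legendrian constraint $u^\ast\alpha=0$ being preserved at each step.

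The second main step is to prove that the singular set $\mathcal{S}\subset\Sigma$ (the complement of the above locus, i.e.\ where either the blow-up is a Schoen--Wolfson cone or where $u$ fails to be a smooth branched immersion) is discrete. By the blow-up classification the density $\theta^\chi/(2\pi)$ on $\mathcal{S}$ takes values in $\{\sqrt{pq}:(p,q)\in (\N^\ast)^2,\ (p,q)\ne (1,1)\}\cup\{N\in\N^\ast: \text{branching}\}$, and by the monotonicity formula it is upper-semicontinuous. If a sequence of points of $\mathcal{S}$ accumulated at some $x_0$, I would rescale around $x_0$ at the scale of the accumulation and, by Theorem \ref{th-0bis}, extract a limiting PHSLV on $\mathbb{C}$ whose blow-up at $0$ is a single plane or a single Schoen--Wolfson cone; the accumulating singularities would survive (by lower-semicontinuity of the density-gap relative to the regular value), producing a singularity of the limit away from $0$, which contradicts the fact that both planes and Schoen--Wolfson cones are smooth immersions outside the origin.

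Finally, on $\Sigma\setminus\mathcal{S}$ the map $u$ is a smooth conformal immersion and $N$ coincides with $\theta^\chi/(2\pi)$, hence is locally constant; since $\mathcal{S}$ is discrete, $\Sigma\setminus\mathcal{S}$ is still path-connected within each connected component of $\Sigma$, so $N$ is globally constant there, and by discreteness of $\mathcal{S}$ this extends to all of $\Sigma$. The main obstacle is the non-accumulation argument in the second step: in the minimizing setting of \cite{SW2} this was resolved via strong $W^{1,2}$ convergence (property~(P)), which is not available for general critical points. The replacement is the full compactness of PHSLVs proved earlier in the paper, together with the localization property built into condition~(iv) of the definition of PHSLV, which allows one to freeze the stationarity condition on sub-level sets and thus pass the equation to the blow-up limit without any minimizing comparison.
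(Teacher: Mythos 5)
Your outline correctly identifies the three main tools (monotonicity, sequential compactness, blow-up classification) and the overall stratification-by-density scheme, but it has a genuine gap precisely at the point the paper singles out as the central new difficulty: nothing in your argument establishes that the rescalings converge \emph{strongly} in $W^{1,2}$ to the blow-up, i.e., property (P). Theorem \ref{th-0bis} only yields weak $W^{1,2}$ plus varifold convergence, and the limiting PHSLV comes with an integer multiplicity that can a priori \emph{jump up} relative to the multiplicities along the sequence; this is exactly why the limit object carries a multiplicity $N$ at all. In your first step, without strong convergence you cannot verify the hypothesis $\int_{B_r}|\p_{x_j}u-Z_j(u)|^2\,dx^2<\ep_0r^2$ of the Schoen--Wolfson small tilt-excess theorem (Proposition \ref{ep.reg}), so the $\ep$-regularity does not close; and in your second step the assertion "the limit is a multiplicity-$N_0$ plane" is precisely what has to be proved, not a consequence of compactness. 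The paper devotes all of Section X to this: where $\mathcal{S}_{SW}=\emptyset$ locally, strong convergence is extracted from the system $\Delta v+i\nabla\beta\cdot\nabla v=0$ via a Liouville-type bound on the harmonic Lagrange multiplier $\nabla\beta$ (Proposition \ref{strong.conv}); in general one runs a continuity-in-scale argument (the sets $\mathcal{X}_k$), using the local separation of Schoen--Wolfson singularities to rule out a jump of the integer multiplicity between consecutive scales. Invoking "full compactness of PHSLVs plus localization" substitutes for neither argument.

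A second, related omission is that you treat the classification of blow-ups as an externally available input, whereas in the paper it is established \emph{in tandem} with regularity by induction on $\nu$ with $\sup\tilde N\in(\nu-1,\nu]$: the rigidity result (Proposition \ref{rigidity}) forces $\tilde N$ to attain a strict maximum at the origin of any non-planar blow-up, so the inductive regularity hypothesis applies on $\C\setminus\{0\}$, and only then can one slice by $\r\circ\hat u_0$ to identify the cross-section as that of a Schoen--Wolfson cone. Without this induction, and without first establishing the doubling/admissibility bounds (Definition \ref{adm.def}, Proposition \ref{all.admissible}) needed even to form a blow-up on all of $\C$, your steps have no base to stand on. Your final step and the non-accumulation argument are in the right spirit (the paper's Proposition \ref{sw.countable} is close to what you describe), but they become available only after the strong-convergence and induction machinery is in place.
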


In the previous statements, we tacitly assume that $u$ is not constant on any connected component of $\Sigma$.
The classification of blow-ups is obtained in tandem with the regularity theorem, proceeding by induction on $\sup_\Sigma N$
(roughly speaking). Again, there are several new difficulties compared to the isotropic case \cite{PiRi1},
whose discussion is postponed.

Let us now come to variational applications.
Given $(M^5,g,\al)$ a Sasakian manifold, and $\Sigma$ a closed oriented surface, we introduce the set $\mathfrak{M}$ of Legendrian $W^{2,4}(\Sigma,M)$ immersions  from $\Sigma$ to $M$.
It is proved in \cite{Riv3} that ${\mathfrak M}$ has the structure of Banach manifold and possesses a compatible Finsler structure for which the associated {\it Palais distance} is complete.

A collection ${\mathcal A}$ of compact subsets of ${\mathfrak M}$
is said to be an \emph{admissible family} if it is invariant under the action of homeomorphisms of ${\mathfrak M}$ isotopic to the identity
(in fact, one can also require this just for deformations that agree with the identity except near the energy level $W$ defined below). The min-max value, also called the \emph{width} associated with $\mathcal{A}$, is
\[
W(\mathcal A):=\inf_{A\in{\mathcal A}} \max_{u\in A} \operatorname{area}(u).
\]
Our second main result, which is obtained by combining Theorem \ref{th-01} with the main result of \cite{Riv3}, is then the following.

\begin{Th}
\label{th-02}
Let $(M^5,g,\al)$ be a closed Sasakian five-dimensional manifold. Let ${\mathcal A}$ be an admissible family in the Banach Manifold ${\mathfrak M}$
whose width
\[
W(\mathcal{A})>0.
\]
Then $W(\mathcal{A})$ is the area of a smooth Hamiltonian stationary Legendrian immersion $u:\Sigma'\to M$, possibly with isolated branch points and Schoen--Wolfson conical singularities, whose domain $\Sigma'$ is a possibly disconnected closed oriented surface
with $\operatorname{genus}(\Sigma')\le \operatorname{genus}(\Sigma)$.
\hfill $\Box$
\end{Th}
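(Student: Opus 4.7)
My plan is to derive Theorem \ref{th-02} as a direct combination of the min--max existence theorem from \cite{Riv3} with the optimal regularity theorem \ref{th-01} proved in the body of this paper.

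\emph{Step 1 (Existence, via \cite{Riv3}).} The Banach manifold $\mathfrak{M}$ of $W^{2,4}$ Legendrian immersions of $\Sigma$ into $(M^5,g,\al)$, equipped with its compatible complete Palais--Finsler distance, supports the min--max theory developed in \cite{Riv3}. Applied to an admissible family $\mathcal{A}$ with $W(\mathcal{A})>0$, that theory produces a PHSLV $(\Sigma',u,N)$ whose induced varifold has mass exactly $W(\mathcal{A})$. The domain $\Sigma'$ arises as the nodal limit of a minimizing sequence of sweepouts in $\mathcal{A}$: each element of the sweepout is a Legendrian immersion of the fixed $\Sigma$, and passing to the limit may cause bubbling and pinching of collapsing necks, but the standard combinatorics for conformally degenerating sequences of closed Riemann surfaces gives the genus bound $\operatorname{genus}(\Sigma')\le \operatorname{genus}(\Sigma)$, with $\Sigma'$ closed and possibly disconnected.

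\emph{Step 2 (Regularity, via Theorem \ref{th-01}).} After discarding the connected components of $\Sigma'$ on which $u$ is constant (which contribute zero area to the varifold mass), Theorem \ref{th-01} applies to each remaining component: $u$ is there a smooth weakly conformal map, an immersion away from a locally finite --- hence, by compactness of $\Sigma'$, finite --- set of branch points and Schoen--Wolfson conical singularities, and $N$ is constant on each connected component. Consequently
\[
W(\mathcal A)=\sum_{S\text{ component of }\Sigma'} N_S\cdot \operatorname{area}(u|_S),
\]
confirming that $W(\mathcal A)$ is indeed the area, counted with multiplicity, of a smooth Hamiltonian stationary Legendrian immersion with the claimed singularities and genus bound.

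The main obstacle is not in the combination itself, which is formal, but in the two ingredients: the existence statement requires the full min--max machinery of \cite{Riv3}, and the regularity statement is Theorem \ref{th-01}, whose proof occupies the bulk of the present paper (monotonicity in $\mathbb{H}^2$, classification of blow-ups as planes or Schoen--Wolfson cones, and an induction on $\sup_\Sigma N$ based on the sequential compactness of PHSLVs in low-energy regimes). The only verifications left at the combination stage are that the object produced by \cite{Riv3} satisfies the precise notion of PHSLV used in Theorem \ref{th-01}, and that varifold mass coincides with area counted with multiplicity, both of which are immediate from the weak conformality of $u$ and the definition of the induced varifold.
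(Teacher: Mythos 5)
Your proposal follows exactly the route the paper takes: Theorem \ref{th-02} is stated there as an immediate consequence of combining the min--max existence result of \cite{Riv3} (which produces a PHSLV realizing the width, with the genus bound coming from the Deligne--Mumford/bubble-tree analysis of Section VII) with the optimal regularity Theorem \ref{th-01}. The decomposition into the two ingredients, the discarding of constant components, and the identification of varifold mass with area counted with multiplicity are all as in the paper, so the proposal is correct and essentially identical in approach.
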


In order to state another application, we recall a classical notion from Legendrian co-bordism theory originally introduced by Arnol'd. A regular isotopy $u_t:\Sigma\to(M^5,g,\al)$  of Legendrian immersions is called {\it exact} if there exists a family of Hamiltonian functions 
$f_t:\Sigma\to\R$ such that the variation of $u_t$ is the Hamiltonian vector field generated by $f_t$. In conformal coordinates for $u_t$,
calling $e^\la$ the conformal factor, this reads
\[
\frac{d u}{dt}=e^{-2\la} J_H \nabla f_t\cdot\nabla u_t-2 f_t \p_\varphi.
\]
If $u_t$ is an embedding for every $t$, this notion of exact regular isotopy coincides with the classical notion of {\it Hamiltonian isotopy}.

\begin{Th}
\label{th-03}
Let $M^5$ be a closed Sasakian manifold and let ${\mathfrak C}$ be an exact regular isotopy class of Legendrian immersions of a closed surface $S^2$ into $M^5$ such that
\[
 {\mathfrak A}:=\inf_{u\in {\mathfrak C}} \operatorname{area}(u)>0.
\]
Then $ {\mathfrak A}$ equals the area of a smooth Hamiltonian stationary Legendrian immersion $u:\Sigma'\to M$, possibly with isolated branch points and Schoen--Wolfson conical singularities, whose domain $\Sigma'$ is a union of spheres.
\hfill $\Box$
\end{Th}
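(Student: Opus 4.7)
The plan is to reduce the problem to the compactness and regularity machinery developed earlier in the paper, by producing a PHSLV at area level $\mathfrak{A}$ whose domain is forced to be a union of spheres.

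First I would combine the pseudo-gradient flow constructed in \cite{Riv3} on the Banach manifold $\mathfrak{M}$ --- which preserves the exact isotopy class $\mathfrak{C}$, since its deformations are generated by compactly supported Hamiltonian potentials --- with Ekeland's variational principle applied to the complete Palais distance. This produces a minimizing sequence $u_n\in\mathfrak{C}$ with $\operatorname{area}(u_n)\downarrow\mathfrak{A}$ whose first variation of area under Hamiltonian variations tends to zero in the appropriate dual Finsler norm. In particular the parametrized triples $(S^2,u_n,1)$ become almost-PHSLV$^*$s at area $\mathfrak{A}$.

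Next I would extract a limit. Since $S^2$ is conformally rigid, no modulus can degenerate, so after normalizing by the M\"obius group the sequence has bounded conformal class and one may apply the sequential closure result (\Cref{th-0}) together with a bubbling analysis \`a la Sacks--Uhlenbeck, performed with respect to the Heisenberg-adapted rescaling of the ambient near each concentration point. Each concentration point contributes a bubble PHSLV defined on another copy of $S^2$; since the base domain has genus zero and bubbling preserves genus, the limiting domain $\Sigma'$ is a disjoint union of spheres. Then by \Cref{th-01}, the limit $u_\infty$ is a smooth immersion away from isolated branch points and Schoen--Wolfson conical singularities, with $N$ constant on each component.

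The hard part will be the \emph{area identity}: varifold mass is only lower semicontinuous, so a priori the total area of the base plus the bubbles might be strictly less than $\mathfrak{A}$, which would invalidate the conclusion that $\mathfrak{A}$ is attained. I would rule this out by proving a no-neck lemma based on the effective monotonicity formula stated in the introduction, combined with the almost-stationarity inherited from the Ekeland step. This step is the delicate one in the Legendrian setting, because the monotonicity is written in terms of the anisotropic distance $\mathfrak{r}$ and carries an extra ``twist'' term involving $\arctan\sigma$, so one has to control the corresponding angular derivative contributions uniformly on dyadic annuli in order to conclude that no area is lost in the necks between base and bubbles.
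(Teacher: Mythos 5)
Your proposal follows essentially the same route as the paper: the result is obtained by running the variational machinery of \cite{Riv3} (whose Hamiltonian pseudo-gradient deformations preserve the exact isotopy class $\mathfrak{C}$), then invoking the sequential closure of Theorem \ref{th-0} with genus bound $0$ (so the limit domain is a union of spheres, the PHSLV and PHSLV$^*$ notions coinciding on $S^2$), and finally the regularity Theorem \ref{th-01}. One small clarification on your ``hard part'': on a closed ambient, varifold convergence preserves total mass automatically, so the issue is not semicontinuity but whether the limit retains the parametrized structure; the no-neck statement is proved in the paper (Remark \ref{bubbling}, Lemma \ref{lm-energy-quant}) by showing the neck limit is a HSLV after point removability and then invoking the universal mass lower bound coming from the density bound $\theta^\chi\ge2\pi$ and monotonicity, rather than by direct dyadic-annulus estimates on the $\arctan\sigma$ term.
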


Observe that, even though it is dealing with a minimization problem, this result cannot be deduced from the main result in \cite{SW2}. Indeed, the regularity results proved in \cite{SW2} are based on comparison arguments replacing $u$ with maps within the same homology class,
but a priori not in the same isotopy class. Theorem \ref{th-03} is particularly interesting since Hamiltonian isotopy classes are known to be immensely more numerous than Lagrangian homology classes.

%

\subsection{New phenomena compared to the isotropic setting}

We now highlight some of the chief novelties of the Legendrian setting creating some of the new difficulties that we face, compared to the simpler isotropic situation. Besides these, there are several additional technical difficulties: just to mention another one, it is sometimes hard to come up with efficient proofs
which in the isotropic case just come from an intuitive choice of a vector field, since in the definition of stationarity
we are restricted to Hamiltonian vector fields $W_F$, which involve two differentiations of $F$ intertwined with the rotation $J_H$.

Broadly speaking, there are three major phenomena, appearing at increasing levels of weakness of the notion of
Hamiltonian stationary Legendrian surface that we consider. We discuss them in $\mathbb{H}^2$, for simplicity.

\begin{itemize}[leftmargin=8mm]
\item[(i)] Assuming that we have a smooth conformal immersion $u$, for the projection $v:=\pi\circ u$ we have a PDE of the form
$$\Delta v+i\nabla \beta\cdot\nabla v=0.$$
This differs from the usual Laplace equation (that one would have in the isotropic case)
by a term involving a harmonic one-form $h:=d\beta$. In spite of its qualitative smoothness, we do not have any quantitative bound on $h$ a priori. Indeed, it appears as a sort of Lagrange multiplier associated to the pointwise Legendrian constraint.
This makes it difficult to derive useful elliptic estimates from the PDE.
\item[(ii)] Removing smoothness assumptions on $u$, we face the presence of Schoen--Wolfson conical singularities,
which naturally appear even for minimizers. Since $JH$ is parallel to the cross-section for such cones \cite[Section 7]{SW2},
assuming that these singularities $x_k$ are isolated we see that, across them,
$$dh=\sum_k c_k\delta_{x_k}$$
is a sum of Dirac masses. Further, the number and location of such singularities is uncontrolled as well,
and these could even be not isolated a priori, rendering the PDE practically useless.
\item[(iii)] Finally, at the varifold level (i.e., for a HSLV),
sequences of integer rectifiable HSLVs can converge to a non-rectifiable limit,
differently from the isotropic case where Allard's compactness holds.
This reflects the highly anisotropic nature of the Carnot--Carath\'eodory metric, or the Kor\'anyi metric on $\mathbb{H}^2$,
for which curves such as Reeb integral curves have Hausdorff dimension equal to $2$ instead of $1$.
\end{itemize}

Let us now discuss very briefly how each difficulty in the previous list is overcome in our work.
The first two appear in particular while showing the crucial fact that a parametrized blow-up arises as a \emph{strong} $W^{1,2}$ limit
(thus, we manage to show property (P) at least for blow-ups).

\begin{itemize}[leftmargin=8mm]
\item[(i)] Sequential compactness of PHSLVs $(\Sigma,u,N)$, which does not use the PDE, allows to derive local doubling bounds for the Dirichlet energy measure
$|\nabla u|^2\,dx^2$, just in terms of $\Sigma$ and the total mass.
Assuming smoothness (which can be done inductively on lower multiplicity regions in the regularity proof),
we can use these doubling bounds together with a Liouville-type argument to obtain local bounds on $h=d\beta$.
\item[(ii)] After showing that such Schoen--Wolfson singularities are isolated (in suitable pinched-density sets)
by a standard dimension reduction technique, we face the issue that they could become denser and denser along a sequence
of rescalings $u_k$ giving a blow-up. However, failure of a strong $W^{1,2}$ convergence
is detected by a jump in the multiplicity in the limit, which is ruled out by a careful continuity argument
(at small scales, Schoen--Wolfson singularities are well separated, and thus here we are close to a picture with no multiplicity jump).
\item[(iii)] The last issue is circumvented
by assuming a stronger definition of stationarity compared to the initial PHSLV definition.
In turn, this yields a point removability result for limits of such varifolds,
which rules out this phenomenon in applications. Moreover, in the context of bubbling, we show that
no energy dissipates in neck regions just assuming the PHSLV definition (while for collars, appearing when the conformal structure degenerates, this fails: see Theorem \ref{th-count-ex}).
\end{itemize}

Note that the third point is related to the second derivative of $F$ appearing in $W_F$,
due to which points are not always removable singularities for a two-dimensional HSLV
(while they are for two-dimensional stationary varifolds in the isotropic setting).

\begin{Rm}
The third point also leads to the speculation that failure of compactness of integer rectifiable HSLVs
is solely due to the possible appearance of stationary \emph{Reeb integral curves} in the limit $\mathbf{v}$, such as Hopf fibers in $S^5$;
indeed, the proof of Allard's rectifiability theorem in this setting
shows that, for $|\mathbf{v}|$-a.e. $p$, the blow-up at $p$ is either a Legendrian plane or
a varifold supported on the Reeb axis of $\mathbb{H}^2$. \hfill $\Box$
\end{Rm}

\subsection{Comparison with existing regularity results}

Let us now briefly compare our optimal regularity result (see Theorem \ref{reg.thm.chart} for a precise statement)
with other ones which already appeared in the literature.
The comparison with the work of Schoen--Wolfson \cite{SW2} is quite straightforward,
in that both \cite{SW2} and the present paper deal with arbitrary $W^{1,2}$-parametrized surfaces
with the only a priori bound of having finite area,
and while \cite{SW2} considers minimizers we are able to deal with general critical points equipped with $L^\infty$ integer multiplicities.

Roughly speaking, one of the core difficulties in the regularity theory (and also in the variational construction)
of Hamiltonian stationary Legendrian parametrizations
is that the Dirichlet energy involves the same order of differentiation as the Legendrian constraint.
One of the first works dealing with a similar situation is the one by Evans--Gariepy \cite{EG.reg},
who studied area-preserving maps on the plane. Here the authors manage to obtain a partial regularity result (by transforming the situation to a scalar problem by a clever change of variables), although at the expense of considering minimizers
and assuming artificially that the map is Lipschitz.

In other works, such as \cite{BCW} by Bhattacharya--Chen--Warren,
the full regularity is obtained for Hamiltonian stationary Lagrangian submanifolds,
but with the a priori assumption that they are $C^1$.
While this leads to a full regularity, as showed also by Schoen--Wolfson in \cite[Theorem 4.1]{SW2} by linearizing the PDE
to the biharmonic equation, this assumption automatically rules out Schoen--Wolfson singularities (which could appear among minimizers), and thus is again quite artificial in a geometric variational setting.

In the work \cite{BS} by Bhattacharya--Skorobogatova, Hamiltonian stationary Lagrangian graphs are considered,
with the a priori assumption that they are Lipschitz.
In this interesting work, viewing these as graphs of gradient maps (thus generated by a $W^{2,\infty}$ function),
the authors study the resulting fourth order nonlinear scalar equation, reaching a conditional regularity result.
However, again the graphicality assumption rules out Schoen--Wolfson singularities (which are \emph{never} graphs).

Given that our work assumes only finite area (the weakest possible assumption required by the study of variations of the area, i.e., having a $W^{1,2}$ weakly conformal parametrization),
it makes a significant leap in the regularity theory, at least for two-dimensional objects.
We expect that our techniques will shed new light in similar problems, where so far the understanding is restricted
to Lipschitz graphs.

Finally, let us mention that, in higher dimension, using the intrinsic approach of currents (thus avoiding parametrizations), 
Orriols recently developed an existence and partial regularity theory of area minimizers under the Legendrian constraint \cite{Orr}. From a technical perspective the use of the monotonicity formula, which is proven to fail in higher dimension for general Hamiltonian stationary Legendrian varifolds, is replaced by the combination of comparison arguments with an isoperimetric inequality for Legendrian currents. 

\subsection{Strategy of proof and organization of the paper}
The paper is structured as follows. In Section II, after some preliminaries on the Heisenberg group and the Legendrian constraint,
we give the precise definition of PHSLV (see Definition \ref{df-PHSLV}).

In Section III we define and study general Hamiltonian stationary Legendrian varifolds (HSLV).
In particular, we prove a monotonicity formula (see Theorem \ref{mono.cor} and Corollary \ref{mono.cor2}), by carefully refining and generalizing the one originally introduced for  immersions in \cite{Riv2} to a much weaker framework, exploiting the Hamiltonian $\arctan\sigma$ (which happens to be smooth and $0$-homogeneous on $\mathbb{H}^2\setminus\{0\}$) suitably cut-off with the Folland--Kor\'anyi gauge $\r$, and we derive a number of standard consequences,
such as upper semi-continuity of the density, in space and under varifold limits.

In Section IV we prove the best possible analogue of Allard's compactness of integral stationary varifolds \cite{All},
namely we show their closure among rectifiable ones (see Theorem \ref{cpt.int}). Although the scheme of proof is standard,
one particular step turns out to be very subtle in this Legendrian setting: namely, to show the fact that a HSLV with zero tilt-excess is
a union of parallel planes (Lemma \ref{zero.exc}), we have to perform an iterated blow-up, obtaining more and more algebraic constraints until we are able to close the loop.

Section V is dedicated to a point removability result for PHSLVs (Proposition \ref{pr-VI.1}), itself deduced from an analogous result for general HSLVs (Proposition \ref{remov.gen}),
assuming in both cases a slightly stronger notion of stationarity; note that, since the second derivatives of the Hamiltonian
$F$ appear in the associated vector field $W_F$, this does not simply follow by a capacity argument.

The goal of Section VI is to prove a number of structural properties of PHSLVs, such as a universal lower bound for the density (Proposition \ref{co-dens}), a quantitative continuity of the underlying map (Proposition \ref{pr-cont}), the rectifiability of the support (Proposition \ref{pr-rect-d-K}), and the upper semi-continuity of a better representative $\tilde N$ of the multiplicity function $N$ (Proposition \ref{pr-robust}).

The proof of Theorem \ref{th-0} is the main purpose of Section VII, where a more complete formulation of the result is also given (Theorem \ref{th-sequential-weak-closure}); its proof is based on an important energy quantization result
(Lemma \ref{lm-energy-quant}). Since this is the only part of the paper
where there are significant simplifications in $\mathbb{H}^2$ compared to a closed Sasakian ambient $M^5$
(due to the symmetry by dilations, reflected in the absence of bubbling in $\mathbb{H}^2$ and in the fact that here most statements are effective at all scales),
we will often comment on what are the relevant changes in a general closed $M^5$.
We also discuss how to deal with bubbling and degenerating conformal class (see Remark \ref{bubbling}, Remark \ref{neck}, Lemma \ref{bubbling.bis}, and Remark \ref{neck.bis}).

In Section VIII we start developing the regularity theory (see Theorem \ref{reg.thm.chart} for the precise statement)
and we explain the induction process governing the proof. We also prove a rigidity result
for blow-ups (Proposition \ref{rigidity}); in the classification of tangent cones, it allows to assume that the multiplicity $\tilde N$ has a strict maximum
at the origin, thus triggering the inductive assumption on the complement.
Inspired by \cite{PiRi1}, we also define \emph{admissible} and \emph{strongly admissible} points (see Definition \ref{adm.def}),
and we complete the base case of the induction, by showing that all points are admissible in this case
and appealing to a small tilt-excess regularity theorem of Schoen--Wolfson (see Proposition \ref{ep.reg}).

In Section IX we start attacking the inductive step, classifying tangent cones at admissible points by exploiting the inductive assumption
(see Proposition \ref{par.cone} and Corollary \ref{par.cone.imm}).
Moreover, we exploit this understanding of blow-ups, and in particular the fact that there the Dirichlet energy is a doubling measure, to deduce again that in fact all points are admissible (see Proposition \ref{all.admissible}).
We also show that Schoen--Wolfson singularities cannot accumulate among points of similar multiplicity $\tilde N$ (see Proposition \ref{sw.countable}).

We finish the inductive step of the proof of regularity in Section X, by looking at a point $\bar x$ of high multiplicity,
at the boundary of a region consisting of lower multiplicity points (an idea borrowed from \cite{PiRi1}).
Roughly speaking, we can reach the conclusion that such high multiplicity points are isolated
if we can prove that any blow-up at $\bar x$ satisfies property (P), i.e., if we can upgrade the a priori weak $W^{1,2}$ convergence
of the rescalings of $u$ to a strong one (see Proposition \ref{blow.up.strong}). This is carried out first
assuming that there are no Schoen--Wolfson conical singularities, and then including this possibility,
by two different arguments, as explained before in the introduction.

Finally, in the appendix, we give an explicit example (Theorem \ref{th-count-ex}) showing that integer rectifiable HSLVs, and even PHSLVs,
can converge to a non-rectifiable limit in a closed Sasakian manifold such as $S^5$,
a phenomenon ruled out in applications by requiring a stronger notion of stationarity (see Definition \ref{strong.phslv}).

\subsection*{Acknowledgements}
The authors are grateful to Filippo Gaia and Gerard Orriols for many useful conversations.
They also wish to thank Mario Micallef and Richard Schoen for their interest in this work.

\section{Preliminaries}
\subsection{Geometry of the Heisenberg group}
We give here some fundamental notions from the Heisenberg group geometry that we will use in this work. A thorough and way more complete presentation can be found in \cite{CDPT}. 
We denote by ${\mathbb H}^2$ the Heisenberg group over ${\C}^2$. The coordinates in ${\mathbb H}^2$ will be denoted $(z_1,\dots, z_{4},\varphi)$, where the last coordinate $\varphi$ is called the {\it Legendrian coordinate}.
The canonical projection from  ${\mathbb H}^2$ onto ${\C}^2$ which consists in ``forgetting'' the Legendrian coordinate $\varphi$ will be denoted $\pi$.
\[
\pi(z_1,\dots, z_{4},\varphi)=(z_1,\dots, z_{4}).
\]
The so-called {\it horizontal hyperplanes} $H$ are spanned at every point by the following four vectors:
\[
X_j:=\frac{\p}{\p z_{2j-1}}-z_{2j}\frac{\p}{\p\varphi},\quad Y_j:=\frac{\p}{\p z_{2j}}+z_{2j-1}\frac{\p}{\p\varphi},\quad\text{for }j=1,2.
\]
We define a Riemannian metric on ${\mathbb H}^2$ by requiring that $(X_1,Y_1,X_2,Y_2,\p_\varphi)$ realizes an orthonormal basis at every point.
Thus, the tangent map $\pi_\ast:T{\mathbb H}^{2}\to T{\mathbb C}^{2}$ to the canonical projection $\pi:{\mathbb H}^{2}\to{\mathbb C}^{2}$ given by
\[
\pi_\ast X_j=\frac{\p}{\p z_{2j-1}},\quad \pi_\ast Y_j=\frac{\p}{\p z_{2j}},\quad\pi_\ast\frac{\p}{\p\varphi}=0
\]
realizes at every point an isometry between $H$ and $T{\mathbb C}^{2}$. In particular we observe that
\begin{equation}
\label{meth2}
g_{{\mathbb H}^2}=\pi^\ast g_{{\C}^2}+\al\otimes \al, \quad \al:=-d\varphi +\sum_{j=1}^2 (z_{2j-1}\,dz_{2j}-z_{2j}\,dz_{2j-1}).
\end{equation}
Observe at this stage that the metric on ${\mathbb H}^2$ is equivalent to the Euclidean metric of ${\R}^5$ on any compact  set.
Also, for an $\mathbb{H}^2$-valued map, requiring it to be in $L^\infty_{loc}$ for this metric on ${\mathbb H}^2$  is equivalent to the same requirement for the Euclidean one, a fact that will be tacitly used later on.

On $H$ we define the following complex structure:
\[
J_HX_j:=Y_j.
\]
The Riemannian manifold $({\mathbb H}^2, g_{{\mathbb H}^2})$ becomes a Lie group with the operation
\[
(z,\varphi)\ast(z',\varphi'):=\lf(z+z', \varphi+\varphi'+\sum_{j=1}^2 (z_{2j-1}z'_{2j}-z_{2j}z'_{2j-1})\rg),
\]
where the neutral element is $(0,0)$ and the inverse to any element $(z,\varphi)$ is obviously given by
\[
(z,\varphi)^{-1}=(-z,-\varphi).
\]

\begin{Rm}
The vector fields $X_j,Y_j$ and the metric $g_{\mathbb{H}^2}$, as well as $\alpha$, the hyperplane distribution, and $J_H$, are preserved by left multiplication, i.e.,
by the diffeomorphism $\ell_p(x):=p\ast x$, for any given $p\in\mathbb{H}^2$. \hfill $\Box$
\end{Rm}

We denote (as in \cite{Riv2})
\[
\rho^2:=\sum_{j=1}^4z_j^2, \quad\sigma:=\frac{2\varphi}{\rho^2},\quad\r^4:=\rho^4+4\varphi^2.
\]
The function $\sigma$ will be called the {\it phase}, while $\r$ is the {\it Folland--Kor\'anyi gauge}.

For $t\in\R$,
the \emph{dilation} map $\delta_t:\mathbb{H}^2\to\mathbb{H}^2$ given by
\begin{equation}
\label{deltat}
\delta_t(z,\varphi):=(tz,t^2\varphi)
\end{equation}
is obviously a group homomorphism. Moreover, given $A\in U(2)$, we introduce the \emph{rotation} $R_A:\mathbb{H}^2\to\mathbb{H}^2$ given by
\begin{equation}
\label{rota}
R_A(z,\varphi):=(Az,\varphi),
\end{equation}
which is again a homomorphism since
\begin{equation}
\label{ra.id}
\sum_{j=1}^2 ((Uz)_{2j-1}(Uz')_{2j}-(Uz)_{2j}(Uz')_{2j-1})=\ang{iUz,Uz'}=\ang{U(iz),Uz'}=\ang{z,z'}.
\end{equation}

We introduce the map on ${\mathbb H}^2\times {\mathbb H}^2$ given by
\begin{equation}
\label{kord}
d_{K}((z,\varphi),(z',\varphi')):=\r((z,\varphi)^{-1}\ast(z',\varphi'))
\end{equation}
and, viewing $z,z'\in\C^2$,
we compute for any choice of pair of points $p:=(v,\phi)$ and $q:=(w,\psi)$ that
\begin{align*}
\r^4(p\ast q)&=|v+w|^4+4|\phi+\psi+ v_1 w_2-v_2w_1+v_3w_4-v_4w_3|^2\\
&=||v+w|^2+2i \lf(\phi+\psi+\langle iv,w\rangle\rg)|^2\\
&=||v|^2+2i \phi+|w|^2+2i \psi+2\langle v,w\rangle+2i\langle iv,w\rangle|^2\\
&\le[||v|^2+2i \phi|+||w|^2+2i\psi|+2|\langle v,w\rangle+i\langle iv,w\rangle|]^2.
\end{align*}
Observe that the first two terms inside the square are $\r^2(p)+\r^2(q)$, while
\[
|\langle v,w\rangle+i\langle iv,w\rangle|^2=\langle v,w\rangle^2+\langle iv,w\rangle^2\le |v|^2|w|^2,
\]
as $v\perp iv$.
Since $|v|\le\r(p)$ and $|w|\le\r(q)$, we then have
$$\r^4(p\ast q)\le |\r^2(p)+\r^2(q)+ 2\r(p)\r(q)|^2\le|\r(p)+\r(q)|^4.$$
This inequality, together with the definition of $d_K$, imply immediately the following classical lemma (see for instance \cite{CDPT}).
\begin{Lm}
\label{lm-kord}
The map $d_{K}:{\mathbb H}^2\times {\mathbb H}^2\to[0,\infty)$ defines a distance,  called \emph{Kor\'anyi distance}.\hfill $\Box$
\end{Lm}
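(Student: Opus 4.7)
The plan is to verify the three axioms of a distance — non-negativity with the identification property, symmetry, and the triangle inequality — using the group structure of $\mathbb{H}^2$ and the sub-additivity estimate $\r(p\ast q)\le \r(p)+\r(q)$ that has just been established before the lemma.

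First I would handle non-negativity. Clearly $\r\ge 0$, and from the definition $\r^4(z,\varphi)=|z|^4+4\varphi^2$ we see that $\r(x)=0$ if and only if $x$ equals the neutral element $(0,0)$. Hence $d_K(p,q)=\r(p^{-1}\ast q)=0$ is equivalent to $p^{-1}\ast q=(0,0)$, i.e.\ $p=q$.

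Next I would check symmetry. A direct computation gives $(z,\varphi)^{-1}=(-z,-\varphi)$, so $\r^4(x^{-1})=|-z|^4+4(-\varphi)^2=\r^4(x)$; thus $\r$ is invariant under inversion. Combining with the identity $(p^{-1}\ast q)^{-1}=q^{-1}\ast p$ (which holds in any group), we get
\[
d_K(p,q)=\r(p^{-1}\ast q)=\r((p^{-1}\ast q)^{-1})=\r(q^{-1}\ast p)=d_K(q,p).
\]

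Finally, for the triangle inequality, the key point is to rewrite $p^{-1}\ast r$ via any intermediate point $q$ and then invoke the sub-additivity of $\r$ already derived. Namely, using associativity and $q\ast q^{-1}=0$,
\[
p^{-1}\ast r=(p^{-1}\ast q)\ast(q^{-1}\ast r),
\]
so the estimate $\r(x\ast y)\le \r(x)+\r(y)$ applied with $x=p^{-1}\ast q$ and $y=q^{-1}\ast r$ yields
\[
d_K(p,r)=\r(p^{-1}\ast r)\le \r(p^{-1}\ast q)+\r(q^{-1}\ast r)=d_K(p,q)+d_K(q,r).
\]
The only non-routine ingredient is the sub-additivity of $\r$, but this has already been proven in the paragraph preceding the statement; the rest is formal manipulation with the group law, so I do not anticipate any real obstacle.
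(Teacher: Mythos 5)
Your proof is correct and follows exactly the route the paper intends: it derives the inequality $\r(p\ast q)\le\r(p)+\r(q)$ immediately before the lemma and then asserts that this, together with the definition of $d_K$, yields the result, leaving precisely the routine verifications (definiteness of $\r$, invariance under inversion, and the group-law decomposition $p^{-1}\ast r=(p^{-1}\ast q)\ast(q^{-1}\ast r)$) that you have spelled out.
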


\begin{Rm}\label{dk.right}
Clearly, $d_K$ is left-invariant, in the sense that $$d_K(a*p,a*q)=d_K(p,q).$$ Moreover,
a straightforward computation gives $a^{-1}*p*a=p*(0,0,0,0,2\sum_{j=1}^2 (p_{2j-1}a_{2j}-p_{2j}a_{2j-1}))$;
plugging $p^{-1}*q$ in place of $p$, we obtain the bound
$$d_K(p*a,q*a)=\r(a^{-1}*(p^{-1}*q)*a)\le\r(p^{-1}*q)+2\sqrt{\rho(p^{-1}*q)\rho(a)},$$
and hence
$$d_K(p*a,q*a)\le d_K(p,q)+2\sqrt{\rho(a)}\sqrt{d_K(p,q)}$$
for all $p,q,a\in\mathbb{H}^2$. \hfill $\Box$
\end{Rm}

We will denote by ${\mathcal H}^s_K$ the $s$-dimensional Hausdorff measure constructed out of this distance.

\begin{Rm}\label{isometry}
The maps $\delta_t$ and $R_A$ satisfy $\delta_t^*\alpha=t^2\alpha$ and $R_A^*\alpha=\alpha$, thanks to \eqref{ra.id}.
In particular, they are isomorphisms preserving $H$ (for $t\neq0$). They also preserve $J_H$, since this holds at the origin
and $J_H$ is left-invariant (note that, for an isomorphism $\psi$, we have $\psi\circ\ell_p=\ell_{\psi(p)}\circ\psi$).
Moreover, we have
$$d_K(\delta_t(p),\delta_t(q))=|t| \cdot d_K(p,q),$$
while $R_A$ is an isometry. \hfill $\Box$
\end{Rm}

\subsection{Hamiltonian deformations}
Observe that
\[
d\al=2dz_1\wedge dz_2+2dz_3\wedge dz_4=2\pi^\ast\om,
\]
where $\om=dz_1\wedge dz_2+dz_3\wedge dz_4$ is the standard symplectic form on ${\C}^2$. We now consider vector fields $W$ on ${\mathbb H}^2$ such that the associated flow $\Psi_t$ preserves the kernel of $\al$.
This is equivalent to the existence of a function $f_t$ on ${\mathbb H}^2\times \R$ such that
$$\Psi_t^\ast\al= f_t\al.$$
Taking the derivative with respect to $t$ at $t=0$ and using Cartan's formula, we obtain
$$\frac{\p f_t}{\p t}\Big|_{t=0}\al(Z)={\mathcal L}_W\al(Z)=d(\al(W))(Z)+d\al(W,Z).$$
We denote by $W=W^H+W^V$ the orthogonal decomposition of $W$ along $H$ and $\p_\varphi$. Let $F(z,\varphi):=\al(W)=\al(W^V)$, so that $W^V=-F\p_\varphi$. Plugging $Z^H$ in place of $Z$ gives
\begin{align*}
0&=dF(Z^H)+2\pi^\ast\om(W,Z^H)\\
&=\langle\nabla^H F, Z^H\rangle+2\om(\pi_\ast W^H,\pi_\ast Z^H)\\
&=\langle\nabla^H F, Z^H\rangle+2\langle i\pi_\ast W^H,\pi_\ast Z^H\rangle\\
&=\langle\nabla^H F+2J_HW^H, Z^H\rangle,
\end{align*}
where $\nabla^HF:=(\nabla F)^H$ is the orthogonal projection of $\nabla F$ onto $H$. Since this holds for any choice of $Z$,
we obtain $2W^H=J_H\nabla^HF$.
Hence we conclude that
\begin{equation}
\label{I-3}
2W=J_H\nabla^H F-2F\p_{\varphi}.
\end{equation}
Starting from an arbitrary function $F$, we can also reverse the argument and conclude that $W=:W_F$ given by \eqref{I-3} generates a flow preserving the kernel of $\al$.
Since
\[
\nabla^HF=\sum_{j=1}^2[\langle dF,X_j\rangle X_j+\langle dF,Y_j\rangle Y_j]=\sum_{j=1}^2[(\p_{z_{2j-1}}F-z_{2j}\p_\varphi F)X_j+(\p_{z_{2j}}F+z_{2j-1}\p_\varphi F)Y_j],
\]
we have the expansion
\begin{align}
\label{I-4}
\begin{aligned}
2W_F&=\sum_{j=1}^2[(\p_{z_{2j-1}}F-z_{2j}\p_\varphi F)Y_j-(\p_{z_{2j}}F+z_{2j-1}\p_\varphi F)X_j]-2F\p_{\varphi}\\
&=\sum_{j=1}^2[(\p_{z_{2j-1}}F)\p_{z_{2j}}-(\p_{z_{2j}}F)\p_{z_{2j-1}}]-\p_\varphi F\sum_{k=1}^4z_k\p_{z_k}+\lf[\sum_{k=1}^4z_k\p_{z_k}F - 2F\rg]\p_{\varphi}.
\end{aligned}
\end{align}

Before defining the main object studied in this work, we need a few more basic definitions.
A (locally bounded) smooth or Sobolev map $u$ of a surface $\Sigma$ into ${\mathbb H}^2$ is called {\it Legendrian}  if it is tangent to $H$ at every point. This is equivalent to the {\it contact condition} $u^\ast\al=0$.
Composing a Legendrian map $u$ with $\pi$ gives a \emph{Lagrangian} map $v:=\pi\circ u$ into ${\C}^{2}$, namely a map satisfying $v^\ast\omega=0$.

Let $u=(u_1,u_2,u_3,u_4,u_\varphi)\in W^{1,2}(\Sigma,{\mathbb H}^2)$. Assume $u$ is weakly conformal: namely, in any local conformal chart $(x_1,x_2)$ for $\Sigma$,
a.e. we have
\begin{equation}
\label{V.1}
\begin{cases}
|\p_{x_1}u|_{{\mathbb H}^2}^2=|\p_{x_2}u|^2_{{\mathbb H}^2}\\
\p_{x_1}u\cdot_{{\mathbb H}^2}\p_{x_2}u=0.
\end{cases}
\end{equation}
We also assume that $u$ is Legendrian.
Observe that, since the canonical projection $\pi_\ast$ realizes an isometry between $H$ and ${\C}^2$, we have 
\[
|\nabla u|^2_{{\mathbb H}^2}=|\nabla v|^2_{{\C}^2},
\]
where $v:=\pi\circ u$. We have also
\begin{equation}
\label{V.19-as}
|\nabla u|_{{\R}^5}=|\nabla  v|^2+\lf|\sum_{j=1}^2 [u_{2j-1}\nabla u_{2j}-u_{2j}\nabla u_{2j-1}]  \rg|^2\le [1+|v|^2]|\nabla v|_{{\C}^2}^2.
\end{equation}
Hence, if a Lagrangian map is assumed to be in $L^\infty\cap W^{1,2}(\Sigma,{\mathbb H}^2)$, it is automatically in $L^\infty\cap W^{1,2}(\Sigma,{\R}^5)$ and there holds
\begin{equation}
\label{V.20-as}
\int_\Sigma |\nabla u|_{{\mathbb H}^2}\,dx^2\le\int_\Sigma |\nabla u|_{{\R}^5}\,dx^2\le  [1+\|v\|_\infty^2]\int_\Sigma |\nabla u|_{{\mathbb H}^2}\, dx^2.
\end{equation}

We now introduce the main variational object studied here, which is a parametrized version of a constrained critical point for the area
(the constraint being the Legendrian condition).
\begin{Dfi}
\label{df-PHSLV}
Let $\Sigma$ be a  Riemann surface and let $u\in L^\infty_{loc}\cap W_{loc}^{1,2}(\Sigma,{\mathbb H}^2)$, as well as $N\in L^\infty(\Sigma,\N^*)$, where $\N^*:=\N\setminus\{0\}$. Assume that $u$ is Legendrian and weakly conformal.
The triple $$(\Sigma,u,N)$$ is a \emph{parametrized Hamiltonian stationary Legendrian varifold (PHSLV)} if given $f\in C^\infty_c(\Sigma,{\R}_+)$, for almost every $t>0$ and for any function $F\in C^\infty_c(\mathbb{H}^2\setminus u(f^{-1}(t)))$, it holds for the associated vector field $W_F$ given by \eqref{I-4} that
\begin{equation}
\label{V.2rep}
0=\int_{f>t} N\nabla(W_F\circ u)\cdot_{{\mathbb H}^2}\nabla u\,dx^2,
\end{equation}
where we use local conformal coordinates on $\Sigma$. \hfill $\Box$
\end{Dfi}

\begin{Rm}
In this definition, we implicitly restrict to those $t>0$ such that the level set $\{f=t\}$ is a disjoint union of embedded smooth loops and the restriction $u|_{\{f=t\}}$ has a continuous representative (which holds for a.e. $t>0$), so that $u(f^{-1}(t))\subset\mathbb{H}^2$ is a compact set.
In the sequel, when we say that a property holds \emph{for a.e. domain $\omega\subset\subset\Sigma$}
we mean that it holds for $\omega:=\{f>t\}$, for any choice of $f\in C^\infty_c(\Sigma,\R_+)$ and a.e. $t>0$. \hfill $\Box$
\end{Rm}

\begin{Rm}
More generally, we say that a triple $(\Sigma,u,N)$ is a \emph{PHSLV on an open set $U\subseteq\mathbb{H}^2$}
if the previous requirement holds for all $F\in C^\infty_c(U\setminus u(\p\omega))$, for a.e. $\omega\subset\subset\Sigma$. \hfill $\Box$
\end{Rm}

\begin{Rm}\label{h.minimal}
When $N=1$ and $u$ is a Legendrian lift of a smooth Lagrangian immersion $v:\Sigma\to\C^2$ (namely, we have $\pi\circ u=v$),
then this notion is equivalent to the H-minimality introduced by Oh \cite{Oh2},
since $\pi_*(2W_F)=\sum_{j=1}^2[(\p_{z_{2j-1}}F)\p_{2j}-(\p_{z_{2j}}F)\p_{2j-1}]$ whenever $F$ depends only on $z$.
\hfill $\Box$
\end{Rm}
Since $\pi_\ast$ is an isometry, writing $X_\ell$ and $Y_\ell$ in place of $X_\ell\circ u$ and $Y_\ell\circ u$ for simplicity, there holds
\begin{equation}
\label{V.3rep}
0=\int_{f>t}N\sum_{\ell=1}^2[\lf(\nabla(W_F\circ u)\cdot_{{\mathbb H}^2}X_\ell\rg)\cdot\nabla u_{2\ell-1}+\lf(\nabla(W_F\circ u)\cdot_{{\mathbb H}^2}Y_\ell\rg)\cdot\nabla u_{2\ell}]\, dx^2.
\end{equation}
Recalling \eqref{I-3}, we have for $\ell=1,2$ that
\begin{align}
\label{V.4rep}
\begin{aligned}
&2\nabla(W_F\circ u)\cdot_{{\mathbb H}^2}X_\ell\\
&=\nabla(J_H\nabla^HF\circ u)\cdot_{{\mathbb H}^2}X_\ell\\
&=\sum_{j=1}^2\nabla[\lf(\p_{z_{2j-1}}F\circ u-u_{2j}\p_\varphi F\circ u\rg)Y_j-\lf(\p_{z_{2j}}F\circ u+u_{2j-1}\p_\varphi F\circ u\rg)X_j]\cdot_{{\mathbb H}^2}X_\ell\\
&=-\nabla\lf(\p_{z_{2\ell}}F\circ u+u_{2\ell-1}\p_\varphi F\circ u\rg),
\end{aligned}
\end{align}
where we used the fact that the differentials $\nabla X_j$ and $\nabla Y_j$ have image in the span of $\p_\varphi$, and thus orthogonal to $H$.
Similarly, we have
$$2\nabla(W_F\circ u)\cdot_{{\mathbb H}^2}Y_\ell=\nabla\lf(\p_{z_{2\ell-1}}F\circ u-u_{2\ell}\p_\varphi F\circ u\rg).$$
Hence, combining these identities, the stationarity condition becomes
\begin{align}
\label{V.2}
\begin{aligned}
0&=\int_{f>t} N\sum_{j=1}^2\lf[\nabla u_{2j}\cdot\nabla\lf[\frac{\p F}{\p {z_{2j-1}}}\circ u\rg]-\nabla u_{2j-1}\cdot\nabla\lf[\frac{\p F}{\p {z_{2j}}}\circ u\rg]\rg]\,dx^2\\
&\quad-\int_{f>t} N\sum_{k=1}^4\nabla u_k\cdot\nabla\lf[ u_k\frac{\p F}{\p\varphi}\circ u\rg]\, dx^2.
\end{aligned}
\end{align}
\section{General monotonicity formula}
In this section we consider a more general class of varifolds, defined as follows.

\begin{Dfi}
Let $\Pi:G\to\mathbb{H}^2$ denote the Grassmannian bundle of Legendrian two-dimensional planes in $\mathbb{H}^2$;
we denote elements of $G$ as $\mathcal{P}$, or as pairs $(\mathcal{P},p)$ when we want to emphasize the underlying $p\in\mathbb{H}^2$.
Given an open set $U\subseteq\mathbb{H}^2$,
a \emph{Hamiltonian stationary Legendrian varifold (HSLV)} $\mathbf{v}$ on $U$ is a Radon measure on $\Pi^{-1}(U)$ such that
$$\int_{\Pi^{-1}(U)} \operatorname{div}_{\mathcal P}W_F\,d\mathbf{v}(\mathcal{P},p)=0$$
for all Hamiltonian vector fields $W_F$ as above. \hfill $\Box$
\end{Dfi}

With a little abuse of notation, we will often write a domain of integration in $\mathbb{H}^2$ to mean its preimage under $\Pi$.

\begin{Rm}\label{hslv.from.phslv}
Given a PHSLV $(\Sigma,u,N)$ and a.e. domain $\omega=\{f>t\}\subset\subset\Sigma$, we have an induced varifold
$\mathbf{v}_\omega$ given by
$$\mathbf{v}_\omega(B):=\int_{\omega\cap u^*B}N\frac{|\nabla u|^2}{2}\,dx^2,\quad\text{for }B\subseteq G,$$
where $u^*B=\{x\,:\,(\operatorname{img}\nabla u(x),u(x))\in B\}$. This varifold $\mathbf{v}_\omega$
restricts to a HSLV on $\mathbb{H}^2\setminus u(f^{-1}(t))$ (open for a.e. $t>0$). \hfill $\Box$
\end{Rm}

\subsection{A pointwise identity}
Let ${\mathcal P}$ be a Legendrian two-dimensional plane in ${\mathbb H}^2$. Let $(Z_1,Z_2)$ be an orthonormal basis of ${\mathcal P}$.
Note that $[X_j,X_j]=[X_j,\p_\varphi]=0$, and similarly for $Y_j$; by the Koszul formula, we then have
$$\nabla_{X_j}\p_\varphi\cdot_{\mathbb H^2}X_j=\nabla_{Y_j}\p_\varphi\cdot_{\mathbb H^2}Y_j=0.$$
Thus, we can discard the vertical part $W^V$ in the computation of $\operatorname{div}_{\mathcal P}W$.
We then have by definition
$$\operatorname{div}_{\mathcal P}W= \sum_{j=1}^2\nabla_{Z_j}W^H\cdot_{{\mathbb H}^2}Z_j= \sum_{j,\ell=1}^2\nabla_{Z_j}[(W\cdot_{{\mathbb H}^2}X_\ell)X_\ell]\cdot_{{\mathbb H}^2}Z_j+\sum_{j,\ell=1}^2\nabla_{Z_j}[(W\cdot_{{\mathbb H}^2}Y_\ell)Y_\ell]\cdot_{{\mathbb H}^2}Z_j.$$
Observe that, for any choice of $\{A,B,C\}\subset\{X_1,Y_1,X_2,Y_2\}$, the commutators $[A,B]$, $[B,C]$ and $[C,A]$
are orthogonal to $H$; hence, the Koszul formula
implies that $\nabla_A B\cdot_{{\mathbb H}^2}C=0$. This gives
\begin{align}
\label{V.8-6}
\begin{aligned}
\operatorname{div}_{\mathcal P}W
&= \sum_{j,\ell=1}^2 Z_j(W\cdot_{{\mathbb H}^2}X_\ell)(X_\ell\cdot_{{\mathbb H}^2}Z_j)+\sum_{j,\ell=1}^2 Z_j(W\cdot_{{\mathbb H}^2}Y_\ell)(Y_\ell\cdot_{{\mathbb H}^2}Z_j)\\
&= \sum_{\ell=1}^2 \nabla^{\mathcal P}(W\cdot_{{\mathbb H}^2}X_\ell)\cdot_{{\mathbb H}^2}X_\ell+\sum_{\ell=1}^2 \nabla^{\mathcal P}(W\cdot_{{\mathbb H}^2}Y_\ell)\cdot_{{\mathbb H}^2}Y_\ell.
\end{aligned}
\end{align}
For $2W_F=J_H\nabla^H F-2F\p_\varphi$ as above, since $\nabla^Hz_{2\ell-1}=X_\ell$ and $\nabla^Hz_{2\ell}=Y_\ell$, there holds
\begin{align}
\label{V.8-9}
\begin{aligned}
&2\operatorname{div}_{\mathcal P}W_F\\
&=-\sum_{\ell=1}^2\nabla^{\mathcal P}(\p_{z_{2\ell}}F+z_{2\ell-1}\p_{\varphi}F)\cdot \nabla^{\mathcal P}z_{2\ell-1}+\sum_{\ell=1}^2\nabla^{\mathcal P}(\p_{z_{2\ell-1}}F-z_{2\ell}\p_{\varphi}F)\cdot \nabla^{\mathcal P}z_{2\ell}\\
&=-\sum_{\ell=1}^2\nabla^{\mathcal P}(\p_{z_{2\ell}}F)\cdot \nabla^{\mathcal P}z_{2\ell-1}+\sum_{\ell=1}^2\nabla^{\mathcal P}(\p_{z_{2\ell-1}}F)\cdot \nabla^{\mathcal P}z_{2\ell}\\
&\quad-|\nabla^{\mathcal P}z|^2\p_\varphi F-2^{-1}\nabla^{\mathcal P}(\p_\varphi F)\cdot\nabla^{\mathcal P}\rho^2.
\end{aligned}
\end{align}
Assuming now that $F$ is a function of $(\rho^2,\varphi)$, there holds
$$dF=\frac{\p F}{\p\rho^2}\, d\rho^2+\frac{\p F}{\p\varphi}\, d\varphi=2z_k\frac{\p F}{\p\rho^2}\, dz_k+\frac{\p F}{\p\varphi}\, d\varphi,$$
and hence $\frac{\p F}{\p z_k}=2z_k\frac{\p F}{\p\rho^2}$.
Inserting these identities in \eqref{V.8-9} and noting that
$$\nabla^H\varphi=\sum_{\ell=1}^2(z_{2\ell-1}Y_\ell-z_{2\ell}X_\ell)
=\sum_{\ell=1}^2(z_{2\ell-1}\nabla^Hz_{2\ell}-z_{2\ell}\nabla^Hz_{2\ell-1}),$$
we obtain
\begin{align}
\label{V.8-12}
\begin{aligned}
2\operatorname{div}_{\mathcal P}W_F&=-2\sum_{\ell=1}^2\nabla^{\mathcal P}( z_{2\ell}\p_{\rho^2}F)\cdot \nabla^{\mathcal P}z_{2\ell-1}+2\sum_{\ell=1}^2\nabla^{\mathcal P}(z_{2\ell-1}\p_{\rho^2}F)\cdot \nabla^{\mathcal P}z_{2\ell}\\
&\quad-|\nabla^{\mathcal P}z|^2\p_\varphi F-2^{-1}\nabla^{\mathcal P}(\p_\varphi F)\cdot\nabla^{\mathcal P}\rho^2\\
&=2\nabla^{\mathcal P}\lf[\frac{\p F}{\p\rho^2}   \rg]\cdot\nabla^{\mathcal P}\varphi-|\nabla^{\mathcal P}z|^2\frac{\p F}{\p \varphi}-2^{-1}\nabla^{\mathcal P}\lf[\frac{\p F}{\p \varphi}\rg]\cdot\nabla^{\mathcal P}\rho^2.
\end{aligned}
\end{align}
It can be checked that $|\nabla^{\mathcal P}z|^2=2$, but for now we keep this factor so as to keep more homogeneity in the computations below.
Let ${\mathbf v}$ be a HSLV, so that for any smooth compactly supported function $F$ depending on $(\rho^2,\varphi)$ we have by definition
$$2^{-1}\int_{G}  \nabla^{\mathcal P}\rho^2\cdot\nabla^{\mathcal P}\lf[\frac{\p F}{\p\varphi}\rg]\, d{\mathbf v}({\mathcal P},p)+\int_{G} |\nabla^{\mathcal P} z|^2\frac{\p F}{\p\varphi}\,d{\mathbf v}({\mathcal P},p)
-2\int_{G}  \nabla^{\mathcal P}\varphi\cdot\nabla^{\mathcal P} \lf[\frac{\p F}{\p \rho^2}\rg]\, d{\mathbf v}({\mathcal P},p)=0.$$
We  now consider a smooth cut-off function $\chi$ on ${\R}_+$ such that $\chi'\le0$ and
$$
\chi(t)=\begin{cases}
1&\text{for }t\le1\\
0&\text{for }t\ge2.
\end{cases}
$$

Letting $0<\ep<1$, we consider
\[
F(\rho^2,\varphi):= (\chi(\r)-\chi(\r/\ep))\arctan\sigma,
\]
where $\arctan\sigma$ is extended by continuity to $\mathbb{H}^2\setminus\{0\}$ (so that it equals $\operatorname{sgn}(\varphi)\frac{\pi}{2}$
on the $\varphi$-axis $\{\rho=0\}$).
We compute, away from $\{\rho=0\}$, that
\begin{equation}
\frac{\p F}{\p \varphi}=\frac{\p\r}{\p\varphi}( \chi'(\r)-\ep^{-1}\chi'(\r/\ep) ) \arctan\sigma+( \chi(\r)-\chi(\r/\ep) ) \frac{\p\sigma}{\p\varphi}\frac{1}{1+\sigma^2}.
\end{equation}
Away from $\{\rho=0\}$ we have
$$
\r^3\p_\varphi\r=\p_{\varphi}\varphi^2=2\varphi,\quad \frac{\p\sigma}{\p\varphi}\frac{1}{1+\sigma^2}=\frac{2}{\rho^2}\frac{1}{1+\sigma^2}=\frac{2}{\rho^2+4\rho^{-2}\varphi^2}=\frac{2\rho^2}{\r^4}.
$$
We note in passing that the previous expression is smooth on the whole ${\mathbb H}^2\setminus\{0\}$.
Hence,
\begin{equation}
\label{V.12}
\frac{\p F}{\p \varphi}=2\frac{\varphi}{\r^3}( \chi'(\r)-\ep^{-1}\chi'(\r/\ep) ) \arctan\sigma+2( \chi(\r)-\chi(\r/\ep) )\frac{\rho^2}{\r^4}.
\end{equation}
Similarly, we compute
$$
\r^3\p_{\rho^2}\r=\frac{\rho^2}{2},\quad\frac{\p\sigma}{\p\rho^2}\frac{1}{1+\sigma^2}
=-2\frac{1}{1+\sigma^2}\frac{\varphi}{\rho^4}=-2\frac{\varphi}{\r^4},
$$
and so away from $\{\rho=0\}$ we have
\begin{equation}
\label{V.15-a}
\frac{\p F}{\p \rho^2}=\frac{\rho^2}{2\r^3}( \chi'(\r)-\ep^{-1}\chi'(\r/\ep) ) \arctan\sigma-2( \chi(\r)-\chi(\r/\ep) )\frac{\varphi}{\r^4}.
\end{equation}

\begin{Rm}
This computation shows that $\arctan\sigma$ is of class $C^\infty$ outside of the origin,
and hence $F$ is of class $C^\infty$ on the whole $\mathbb{H}^2$. \hfill $\Box$
\end{Rm}

\begin{Prop}\label{magic.id}
On the open set $\mathbb{H}^2\setminus\{\rho=0\}$ we have the identity
\begin{align*}
&2^{-1}\nabla^{\mathcal P}\rho^2\cdot\nabla^{\mathcal P}\lf[\frac{\p F}{\p\varphi}\rg]
+|\nabla^{\mathcal P} z|^2\frac{\p F}{\p\varphi}
-2 \nabla^{\mathcal P}\varphi\cdot\nabla^{\mathcal P} \lf[\frac{\p F}{\p \rho^2}\rg]\\
&=2\frac{|\nabla^{\mathcal P}\r|^2}{\r} ( \chi'(\r)-\ep^{-1}\chi'(\r/\ep) )
+|\nabla^{\mathcal P} z|^2 \lf[\frac{2\varphi}{\r^3} ( \chi'(\r)-\ep^{-1}\chi'(\r/\ep) ) \arctan\sigma\rg]\\
&\quad-\frac{\r^4}{2} \nabla^{\mathcal P}\arctan\sigma\cdot\nabla^{\mathcal P} [\r^{-3} ( \chi'(\r)-\ep^{-1}\chi'(\r/\ep) ) {\arctan\sigma}  ]\\
&\quad+2 |\nabla^{\mathcal P}\arctan\sigma|^2 ( \chi(\r)-\chi(\r/\ep) ),
\end{align*}
for the function $F$ introduced above. \hfill $\Box$
\end{Prop}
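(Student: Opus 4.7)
The plan is to verify the identity by a direct pointwise computation on $\mathbb H^2\setminus\{\rho=0\}$, where $F$ is smooth. I introduce the shorthand $\phi(\r):=\chi(\r)-\chi(\r/\epsilon)$, $\tau:=\arctan\sigma$, and $A:=\r^{-3}\phi'(\r)\tau$, so that \eqref{V.12}--\eqref{V.15-a} become
\[
\p_\varphi F=2\varphi A+\frac{2\rho^2}{\r^4}\phi,\qquad \p_{\rho^2} F=\frac{\rho^2}{2}A-\frac{2\varphi}{\r^4}\phi.
\]
The LHS of the claimed identity splits accordingly into an \emph{$A$-part} and a \emph{$\phi$-part}, which I treat separately.

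For the $A$-part, the Leibniz rule yields
\[
\tfrac12\nabla^{\mathcal P}\rho^2\cdot\nabla^{\mathcal P}[2\varphi A]-2\nabla^{\mathcal P}\varphi\cdot\nabla^{\mathcal P}[\tfrac{\rho^2}{2}A]=(\varphi\nabla^{\mathcal P}\rho^2-\rho^2\nabla^{\mathcal P}\varphi)\cdot\nabla^{\mathcal P}A,
\]
while differentiating $\sigma=2\varphi/\rho^2$ and applying the chain rule for $\arctan$ gives $\nabla^{\mathcal P}\tau=\frac{2}{\r^4}(\rho^2\nabla^{\mathcal P}\varphi-\varphi\nabla^{\mathcal P}\rho^2)$. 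Hence the $A$-part combines with the algebraic term $2\varphi A\,|\nabla^{\mathcal P}z|^2$ from the LHS to reproduce exactly the second and third summands on the RHS.

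For the $\phi$-part I differentiate $\rho^2/\r^4$ and $\varphi/\r^4$ via Leibniz and use the identity $2\r^3\nabla\r=\rho^2\nabla\rho^2+4\varphi\nabla\varphi$, which follows from $\r^4=\rho^4+4\varphi^2$. The $\phi'$-contributions collect, via this same identity, into exactly $2|\nabla^{\mathcal P}\r|^2\r^{-1}\phi'$, giving the first summand on the RHS. After clearing $\r^8$, the remaining pure-$\phi$ contribution (including the piece $\frac{2\rho^2}{\r^4}|\nabla^{\mathcal P}z|^2\phi$ coming from the algebraic term of the LHS) reduces by a straightforward expansion to $2|\nabla^{\mathcal P}\tau|^2\phi$ plus the factor $\r^4[\,2\rho^2|\nabla^{\mathcal P}z|^2-|\nabla^{\mathcal P}\rho^2|^2-4|\nabla^{\mathcal P}\varphi|^2\,]$. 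The proof therefore reduces to establishing the purely algebraic identity
\[
|\nabla^{\mathcal P}\rho^2|^2+4|\nabla^{\mathcal P}\varphi|^2=2\rho^2\,|\nabla^{\mathcal P}z|^2\quad\text{on every Legendrian $2$-plane }\mathcal P.
\]

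This last identity is the main substantive step, and it is where the Legendrian hypothesis enters crucially. To prove it I pick an orthonormal basis $(Z_1,Z_2)$ of $\mathcal P\subset H$ and expand $Z_a=\sum_\ell(\alpha^a_\ell X_\ell+\beta^a_\ell Y_\ell)$. Setting $w_\ell:=z_{2\ell-1}+iz_{2\ell}$ and $c^a:=(\alpha^a_\ell+i\beta^a_\ell)_\ell\in\C^2$, the formulas $X_\ell(z_k)=\delta_{k,2\ell-1}$, $Y_\ell(z_k)=\delta_{k,2\ell}$, $X_\ell(\varphi)=-z_{2\ell}$, $Y_\ell(\varphi)=z_{2\ell-1}$ give
\[
\tfrac12 Z_a(\rho^2)=\mathrm{Re}\ang{c^a,w}_{\C^2},\qquad Z_a(\varphi)=\mathrm{Im}\ang{c^a,w}_{\C^2},
\]
together with $|\nabla^{\mathcal P}z|^2=\sum_a|c^a|^2=2$ (since each $Z_a$ is horizontal and unitary). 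Because $d\al=2\pi^*\om$ vanishes on $\mathcal P$, the plane $\pi_*\mathcal P\subset\C^2$ is Lagrangian, and combined with the real orthonormality of $(c^1,c^2)$ this forces $(c^1,c^2)$ to be a Hermitian-orthonormal basis of $\C^2$. Parseval then gives $\sum_a|\ang{c^a,w}_{\C^2}|^2=|w|^2_{\C^2}=\rho^2$, which upon multiplying by $4$ is precisely the desired identity. Assembling the three contributions closes the proof.
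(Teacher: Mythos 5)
Your proof is correct and follows essentially the same route as the paper: a direct Leibniz computation that is reduced to the key algebraic identity $|\nabla^{\mathcal P}\rho^2|^2+4|\nabla^{\mathcal P}\varphi|^2=2\rho^2|\nabla^{\mathcal P}z|^2$ on Legendrian planes, which is exactly the paper's claim \eqref{V.29} (your Hermitian--Parseval argument is equivalent to the paper's use of the real orthonormal basis $(Z_1',Z_2',iZ_1',iZ_2')$ of $\C^2$ coming from the Lagrangian condition on $\pi_*\mathcal P$). Your grouping of the left-hand side into the $A$-part and the $\phi$-part, which leaves $\nabla^{\mathcal P}A$ unexpanded so that it matches the third right-hand-side term verbatim, is a tidier bookkeeping than the paper's full expansion and subsequent matching of the $\chi$, $\chi'$, $\chi''$ coefficients, but the substantive content is identical.
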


\begin{proof}
Away from $\{\rho=0\}$ we have
\begin{align*}
&2^{-1}\nabla^{\mathcal P}\rho^2\cdot\nabla^{\mathcal P}\lf[\frac{\p F}{\p\varphi}\rg]\\
&=2^{-1} \nabla^{\mathcal P}\rho^2\cdot\nabla^{\mathcal P}\lf[ \frac{\rho^2\,\sigma}{\r^3}( \chi'(\r)-\ep^{-1}\chi'(\r/\ep) ) \arctan\sigma \rg]
+ \nabla^{\mathcal P}\rho^2\cdot\nabla^{\mathcal P}\lf[ ( \chi(\r)-\chi(\r/\ep) )\frac{\rho^2}{\r^4} \rg]\\
&=2^{-1} |\nabla^{\mathcal P}\rho^2|^2\lf[ \frac{\sigma}{\r^3} ( \chi'(\r)-\ep^{-1}\chi'(\r/\ep) )\arctan\sigma \rg]\\
&\quad+2^{-1} \nabla^{\mathcal P}\rho^2\cdot\nabla^{\mathcal P}\sigma \lf[ \frac{\rho^2}{\r^3} ( \chi'(\r)-\ep^{-1}\chi'(\r/\ep) ) \lf(\arctan\sigma+ \frac{\sigma}{1+\sigma^2}\rg) \rg]\\
&\quad+2^{-1} \nabla^{\mathcal P}\rho^2\cdot\nabla^{\mathcal P}\r\lf[ -3\frac{\rho^2}{\r^4} ( \chi'(\r)-\ep^{-1}\chi'(\r/\ep) ) \sigma \arctan\sigma \rg]\\
&\quad+2^{-1} \nabla^{\mathcal P}\rho^2\cdot\nabla^{\mathcal P}\r\lf[ \frac{\rho^2}{\r^3} ( \chi''(\r)-\ep^{-2}\chi''(\r/\ep) ) \sigma \arctan\sigma \rg]\\
&\quad+\frac{\rho^2}{\r^4} \nabla^{\mathcal P}\rho^2\cdot\nabla^{\mathcal P}\r ( \chi'(\r)-\ep^{-1}\chi'(\r/\ep) )
+\nabla^{\mathcal P}\rho^2\cdot\nabla^{\mathcal P}\lf[ \frac{\rho^2}{\r^4} \rg] ( \chi(\r)-\chi(\r/\ep) ).
\end{align*}

We also have
\begin{align*}
\ds |\nabla^{\mathcal P} z|^2\frac{\p F}{\p\varphi}
=2|\nabla^{\mathcal P} z|^2 \lf[\frac{\varphi}{\r^3} ( \chi'(\r)-\ep^{-1}\chi'(\r/\ep) ) \arctan\sigma+(\chi(\r)-\chi(\r/\ep))\frac{\rho^2}{\r^4}\rg],
\end{align*}
and finally, using the fact that $\rho^4=\frac{\r^4}{1+\sigma^2}$, we compute that
\begin{align*}
&2\nabla^{\mathcal P}\varphi\cdot\nabla^{\mathcal P} \lf[\frac{\p F}{\p \rho^2}\rg]\\
&=\nabla^{\mathcal P}(\sigma\rho^2)\cdot\nabla^{\mathcal P} \lf[  \frac{\rho^2}{2\r^3} ( \chi'(\r)-\ep^{-1}\chi'(\r/\ep) ) \arctan\sigma  \rg]
-4\nabla^{\mathcal P}\varphi\cdot\nabla^{\mathcal P} \lf[ ( \chi(\r)-\chi(\r/\ep) )\frac{\varphi}{\r^4}  \rg]\\
&=|\nabla^{\mathcal P}\rho^2|^2\lf[  \frac{1}{2\r^3} ( \chi'(\r)-\ep^{-1}\chi'(\r/\ep) ) \sigma \arctan\sigma  \rg]
+\frac{|\nabla^{\mathcal P}\sigma|^2}{(1+\sigma^2)^2} \lf[  \frac{\r}{2} ( \chi'(\r)-\ep^{-1}\chi'(\r/\ep) ) \rg]\\
&\quad+\nabla^{\mathcal P}\sigma\cdot\nabla^{\mathcal P}\rho^2 \lf[  \frac{\rho^2}{2\r^3} ( \chi'(\r)-\ep^{-1}\chi'(\r/\ep) ) \lf(\arctan\sigma +\frac{\sigma}{1+\sigma^2}\rg)  \rg]\\
&\quad+\nabla^{\mathcal P}\rho^2\cdot\nabla^{\mathcal P}\r \lf[ -\frac{3\rho^2}{2\r^4} ( \chi'(\r)-\ep^{-1} \chi'(\r/\ep) ) \sigma \arctan\sigma  \rg]\\
&\quad+\nabla^{\mathcal P}\sigma\cdot\nabla^{\mathcal P}\r \lf[ -\frac{3}{2} ( \chi'(\r)-\ep^{-1} \chi'(\r/\ep) ) \frac{\arctan\sigma}{1+\sigma^2}  \rg]\\
&\quad+\nabla^{\mathcal P}\rho^2\cdot\nabla^{\mathcal P}\r \lf[ \frac{\rho^2}{2\r^3} ( \chi''(\r)-\ep^{-2}\chi''(\r/\ep) ) \sigma \arctan\sigma  \rg]\\
&\quad+\nabla^{\mathcal P}\sigma\cdot\nabla^{\mathcal P}\r \lf[ \frac{\r}{2} ( \chi''(\r)-\ep^{-2} \chi''(\r/\ep) ) \frac{\arctan\sigma}{1+\sigma^2}  \rg]\\
&\quad-4\frac{\varphi}{\r^4} \nabla^{\mathcal P}\varphi\cdot\nabla^{\mathcal P}\r ( \chi'(\r)-\ep^{-1} \chi'(\r/\ep) )
-4\nabla^{\mathcal P}\varphi\cdot\nabla^{\mathcal P}\lf[\frac{\varphi}{\r^4}  \rg] ( \chi(\r)-\chi(\r/\ep) ).
\end{align*}

We now claim that
\begin{equation}
\label{V.29}
2^{-1}\rho^2 |\nabla^{\mathcal P}z|^2=\rho^2|\nabla^{\mathcal P}\rho|^2+|\nabla^{\mathcal P}\varphi|^2.
\end{equation}
Indeed, as above, given a Legendrian plane ${\mathcal P}\in G$, let $(Z_1,Z_2)$ be an orthonormal basis,
and let $Z_\ell':=\pi_\ast Z_\ell\in\C^2$. Since $\pi_*$ is an isometry, $(Z_1',Z_2')$ is still an orthonormal pair of vectors in ${\C}^2$, spanning a Lagrangian plane $\mathcal{P}'$.
Hence, $(Z_1',Z_2',iZ_1',iZ_2')$ is an orthonormal basis of ${\C}^2$, giving
$$\rho^2=|z|^2=|z\cdot Z_1|^2+|z\cdot Z_2|^2+|z\cdot iZ_1|^2+|z\cdot iZ_2|^2.$$
We also have
$$|\nabla^{\mathcal P}z|^2=\sum_{k=1}^4\sum_{\ell=1}^2|Z_\ell\cdot\nabla^H z_k|^2=\sum_{k=1}^4\sum_{\ell=1}^2|Z_\ell\cdot e_k|^2=2,$$
where $(e_1,e_2,e_3,e_4)$ is the canonical basis of $\C^2$, while the identity
$$2z\cdot Z_\ell'=\sum_{k=1}^4 2z_k(Z_\ell\cdot \nabla^H z_k)=Z_\ell\cdot\nabla^H\rho^2$$
gives
$$\sum_{\ell=1}^2 |z\cdot Z_\ell'|^2=\frac{|\nabla^{\mathcal P}\rho^2|^2}{4}=\rho^2|\nabla^{\mathcal P}\rho|^2.$$
Similarly, since $iZ_\ell'=\sum_{k=1}^4 (Z_\ell'\cdot e_k) ie_k$ and $\nabla^H\varphi=z_1\nabla^Hz_2-z_2\nabla^Hz_1+z_3\nabla^Hz_4-z_4\nabla^Hz_3$, we have
$$z\cdot iZ_\ell'=(Z_\ell'\cdot e_1) z_2-(Z_\ell'\cdot e_2) z_1+(Z_\ell'\cdot e_3) z_4-(Z_\ell'\cdot e_4) z_3=-Z_\ell\cdot\nabla^H\varphi,$$
and hence
$$\sum_{\ell=1}^2 |z\cdot iZ_\ell'|^2=|\nabla^{\mathcal P}\varphi|^2;$$
the claim follows by combining the previous identities.

We apply the previous claim to the following sum:
\begin{align*}
&\nabla^{\mathcal P}\rho^2\cdot\nabla^{\mathcal P}\lf(\frac{\rho^2}{\r^4}\rg) 
+2 |\nabla^{\mathcal P} z|^2 \frac{\rho^2}{\r^4}
+4 \nabla^{\mathcal P}\varphi\cdot\nabla^{\mathcal P}\lf(\frac{\varphi}{\r^4}\rg)\\
&=\frac{|\nabla^{\mathcal P}\rho^2|^2}{\r^4}
-\rho^2\nabla^{\mathcal P}\rho^2\cdot\frac{\nabla^{\mathcal P}\r^4}{\r^8}
+4\rho^2\frac{|\nabla^{\mathcal P} \rho|^2}{\r^4}
+8\frac{|\nabla^{\mathcal P}\varphi|^2}{\r^4}
-4\varphi\nabla^{\mathcal P}\varphi\cdot\frac{\nabla^{\mathcal P}\r^4}{\r^8}\\
&=\frac{1}{\r^4}\lf[2|\nabla^{\mathcal P}\rho^2|^2+8|\nabla^{\mathcal P}\varphi|^2-\rho^2\nabla^{\mathcal P}\rho^2\cdot\frac{\nabla^{\mathcal P}[\rho^4+4\varphi^2]}{\r^4}-32\varphi^2\frac{|\nabla^{\mathcal P}\varphi|^2}{\r^4}-2\nabla^{\mathcal P}\varphi^2\cdot\frac{\nabla^{\mathcal P}\rho^4}{\r^4}\rg]\\
&=\frac{1}{\r^4}\lf[ 2\lf(1-\frac{\rho^4}{\r^4}\rg) |\nabla^{\mathcal P}\rho^2|^2+8\lf(1-4\frac{\varphi^2}{\r^4}\rg)|\nabla^{\mathcal P}\varphi|^2-8 \frac{\rho^2}{\r^4}\nabla^{\mathcal P}\rho^2\cdot\nabla^{\mathcal P}\varphi^2 \rg].
\end{align*}
Recalling that $\r^4=\rho^4(1+\sigma^2)$, the previous expression equals
\begin{align*}
&\frac{1}{\r^4}\lf[ \frac{2\sigma^2}{1+\sigma^2}|\nabla^{\mathcal P}\rho^2|^2+\frac{8\rho^4}{\r^4}|\nabla^{\mathcal P}\varphi|^2-\frac{8}{1+\sigma^2}\rho^{-2}\nabla^{\mathcal P}\rho^2\cdot\nabla^{\mathcal P}\varphi^2 \rg]\\
&=\frac{8}{\r^4(1+\sigma^2)}\lf[ |{\varphi}\rho^{-2}\nabla^{\mathcal P}\rho^2|^2+ |\nabla^{\mathcal P}\varphi|^2- 2\varphi \rho^{-2}\nabla^{\mathcal P}\rho^2\cdot\nabla^{\mathcal P}\varphi \rg]\\
&=\frac{8}{\r^4(1+\sigma^2)} \lf|\nabla^{\mathcal P}\varphi-{\varphi}\rho^{-2}\nabla^{\mathcal P}\rho^2\rg|^2\\
&=\frac{8}{(1+\sigma^2)^2} \lf|\rho^{-2}\nabla^{\mathcal P}\varphi+{\varphi}\nabla^{\mathcal P}\rho^{-2}\rg|^2.
\end{align*}
Hence we have established the following identity:
\begin{equation}
\label{V.33}
\nabla^{\mathcal P}\rho^2\cdot\nabla^{\mathcal P}\lf(\frac{\rho^2}{\r^4}\rg) 
+2|\nabla^{\mathcal P} z|^2\frac{\rho^2}{\r^4}
+4\nabla^{\mathcal P}\varphi\cdot\nabla^{\mathcal P}\lf(\frac{\varphi}{\r^4}\rg)
=2|\nabla^{\mathcal P}\arctan\sigma|^2.
\end{equation}
Combining the previous computations, we see that the terms multiplying $\chi$ and $\chi''$ match in the two sides of the statement.
As for $\chi'$, the same holds once we use the simple identity
$$ \rho^2\nabla^{\mathcal P}\rho^2\cdot\nabla^{\mathcal P}\r+4\varphi\nabla^{\mathcal P}\varphi\cdot\nabla^{\mathcal P}\r=2|\nabla^{\mathcal P}\r|^2 \r^3, $$
after a number of cancellations.
\end{proof}

\subsection{Monotonicity formula and its consequences}
We start with a direct consequence of the previous proposition.

\begin{Prop}
\label{mon-for-wea} Let ${\mathbf v}$ be a Hamiltonian stationary Legendrian varifold (HSLV). Then, under the previous notation, the following identity holds:
\begin{align}
\label{V.56}
\begin{aligned}
0&=2\int_{G} \frac{|\nabla^{\mathcal P}\r|^2}{\r} ( \chi'(\r)-\ep^{-1}\chi'(\r/\ep) )\,d{\mathbf v}({\mathcal P},p)\\
&\quad+\int_{G} |\nabla^{\mathcal P} z|^2 \lf[\frac{2\varphi}{\r^3} ( \chi'(\r)-\ep^{-1}\chi'(\r/\ep) ) \arctan\sigma\rg] \, d{{\mathbf v}}({\mathcal P},p)\\
&\quad-\frac{1}{2}\int_{G} {\r^4}  \nabla^{\mathcal P}\arctan\sigma\cdot\nabla^{\mathcal P} [\r^{-3} ( \chi'(\r)-\ep^{-1}\chi'(\r/\ep) ) {\arctan\sigma} ]\, d{{\mathbf v}}({\mathcal P},p)\\
&\quad+2\int_{G}  |\nabla^{\mathcal P}\arctan\sigma|^2 ( \chi(\r)-\chi(\r/\ep) )\, d{\mathbf v}({\mathcal P},p),
\end{aligned}
\end{align}
where $G$ denotes the Grassmannian bundle of Legendrian two-planes over $\mathbb{H}^2$. \hfill $\Box$
\end{Prop}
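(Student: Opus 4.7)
The strategy is to test the HSLV stationarity condition against the explicit Hamiltonian potential
\[
F(\rho^2,\varphi) := (\chi(\r) - \chi(\r/\ep))\arctan\sigma
\]
already introduced in the discussion preceding Proposition \ref{magic.id}, and then read off \eqref{V.56} by applying the pointwise identity of that proposition to the resulting integrand. My first task will be to verify that $F \in C^\infty_c(\mathbb{H}^2)$: the factor $\chi(\r) - \chi(\r/\ep)$ vanishes identically on $\{\r \le \ep\}$ (where $\chi(\r/\ep) \equiv 1 \equiv \chi(\r)$) and on $\{\r \ge 2\}$, so the support of $F$ lies in the compact annular region $\{\ep \le \r \le 2\} \subset \mathbb{H}^2 \setminus \{0\}$. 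Combined with the Remark immediately preceding Proposition \ref{magic.id}, which asserts that $\arctan\sigma$ is of class $C^\infty$ on $\mathbb{H}^2 \setminus \{0\}$, this shows that $F$ is globally smooth on $\mathbb{H}^2$.

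With this admissible choice of $F$, the fact that $F$ depends only on $(\rho^2,\varphi)$ allows me to appeal to the formula \eqref{V.8-12} for $2\operatorname{div}_{\mathcal{P}} W_F$; combined with the defining identity $\int_G \operatorname{div}_{\mathcal{P}} W_F\,d\mathbf{v} = 0$ of a HSLV, this yields precisely the integral equation recorded in the display just above \eqref{V.12}:
\[
\frac{1}{2}\int_G \nabla^{\mathcal{P}}\rho^2 \cdot \nabla^{\mathcal{P}}[F_\varphi]\,d\mathbf{v} + \int_G |\nabla^{\mathcal{P}} z|^2 F_\varphi\,d\mathbf{v} - 2\int_G \nabla^{\mathcal{P}}\varphi \cdot \nabla^{\mathcal{P}}[F_{\rho^2}]\,d\mathbf{v} = 0.
\]
Proposition \ref{magic.id} identifies the integrand of this left-hand side, pointwise on $\mathbb{H}^2 \setminus \{\rho = 0\}$, with the sum of the four algebraic terms appearing on the right-hand side of the desired identity \eqref{V.56}, so substituting this pointwise identity into the integral will produce \eqref{V.56} directly.

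The one delicate point---which I expect to be the main (minor) obstacle---is that Proposition \ref{magic.id} is only stated on the open set $\mathbb{H}^2 \setminus \{\rho = 0\}$, whereas the varifold $\mathbf{v}$ may a priori charge the $\varphi$-axis. I will resolve this by a short continuity argument: every term appearing on both sides of the magic identity is in fact continuous on the whole of $\mathbb{H}^2 \setminus \{0\}$, since $\r$ is smooth and strictly positive outside the origin, while $\arctan\sigma$ and its intrinsic derivative $\nabla^{\mathcal{P}}\arctan\sigma$ extend to $C^\infty$ functions on $\mathbb{H}^2 \setminus \{0\}$ by the Remark cited above. Since $\mathbb{H}^2 \setminus \{\rho = 0\}$ is open and dense in $\mathbb{H}^2 \setminus \{0\}$, the identity extends by continuity across the $\varphi$-axis to all of $\mathbb{H}^2 \setminus \{0\}$, which contains the support of $F$ and all the integrands in question. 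Integrating this continuous pointwise identity against $\mathbf{v}$ then transforms the stationarity equation above into \eqref{V.56}, completing the proof.
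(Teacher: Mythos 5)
Your proposal is correct and follows essentially the same route as the paper: test stationarity with the globally smooth, compactly supported Hamiltonian $F=(\chi(\r)-\chi(\r/\ep))\arctan\sigma$, use \eqref{V.8-12} to write $\operatorname{div}_{\mathcal P}W_F$ for Hamiltonians depending only on $(\rho^2,\varphi)$, and then invoke Proposition \ref{magic.id}, extending it across $\{\rho=0\}$ by continuity of both sides on $\mathbb{H}^2\setminus\{0\}$. The paper's proof handles the $\varphi$-axis in exactly the same way, by noting that $\operatorname{div}_{\mathcal P}W_F$ and the right-hand side of the magic identity both extend continuously to all of $G$.
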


\begin{proof}
We already observed that the function $F$ considered above is smooth and compactly supported on $\mathbb{H}^2$.
Hence, for the associated Hamiltonian vector field $W_F$, we have
$$\int_G\operatorname{div}_{\mathcal P}W_F\,d\mathbf{v}(\mathcal P,p)=0.$$
Now the left-hand side of the identity stated in Proposition \ref{magic.id} is equal to $-2\operatorname{div}_{\mathcal P}W_F$.
While this identity was obtained only on $\mathbb{H}^2\setminus\{\rho=0\}$,
it is valid everywhere (once we replace the left-hand side with $-\operatorname{div}_{\mathcal P}W_F$), since both $\operatorname{div}_{\mathcal P}W_F$ and its right-hand side extend continuously to all of $G$:
indeed, recall that $\arctan\sigma$ is smooth outside of the origin. Hence, the claim follows directly from Proposition \ref{magic.id}.
\end{proof}

In the sequel, we slightly restrict the class of cut-off functions $\chi$. Namely, besides the condition that $\chi=1$ on $[0,1]$
and $\chi=0$ on $[2,\infty)$, we also require that
$$-\chi'=\eta^2,\quad\text{for some }\eta\in C^\infty_c((1,2)).$$
Given a HSLV $\mathbf{v}$ on an open set $U\subseteq\mathbb{H}^2$ and a ball $B_{2a}^\r(q)\subseteq U$, we now consider the quantity
\begin{align}\label{capital.theta.def}
\begin{aligned}
\Theta^\chi(q,a)&:=-\int_{G} \frac{|\nabla^{\mathcal P}\r_q|^2}{\r_q} a^{-1}\chi'(\r_q/a) \,d{\mathbf v}({\mathcal P},p)\\
&\phantom{:}\quad-\int_{G} \lf[\frac{2\varphi_q}{\r_q^3} a^{-1}\chi'(\r_q/a) \arctan\sigma_q\rg] \, d{{\mathbf v}}({\mathcal P},p)\\
&\phantom{:}\quad+\frac{1}{4}\int_{G} {\r_q^4}  \nabla^{\mathcal P}\arctan\sigma_q\cdot\nabla^{\mathcal P} [\r_q^{-3} a^{-1}\chi'(\r_q/a)  {\arctan\sigma_q} ]\, d{{\mathbf v}}({\mathcal P},p),
\end{aligned}
\end{align}
where we let $\r_q(x):=\r(q^{-1}*x)$, and similarly we define $\varphi_q$ and $\arctan\sigma_q$.
We will often drop the superscript $\chi$ in the sequel, writing $\Theta^\chi$ in place of $\Theta$.

The following statement is a \emph{monotonicity formula} for the area in this Legendrian setting. It constitutes one of the fundamental tools in the present work and it improves on a weaker monotonicity statement obtained by Schoen--Wolfson \cite{SW2}, both in terms of effectiveness and simplicity of proof (we just mention that in \cite{SW2} the proof involved solving a certain wave-type equation and relied on certain properties of special functions). A more similar version of it was obtained for smooth immersions in \cite{Riv2}.

\begin{Th}\label{mono.cor}
Assume that $\mathbf{v}$ is a HSLV on an open set $U\subseteq\mathbb{H}^2$. Then we have
$$0\le\Theta^\chi(q,a)\le\Theta^\chi(q,b)\quad\text{for all }0<a<b\le d_K(q,\mathbb{H}^2\setminus U)/2.$$
Moreover, the \emph{density}
$$\theta^\chi(q):=\lim_{\ep\rightarrow 0}
-\frac{1}{\ep}\int_{G}\chi'(\r_q/\ep)\lf[\frac{|\nabla^{\mathcal P}\r_q|^2}{\r_q}
+\frac{2\varphi_q}{\r_q^3}\arctan\sigma_q\rg]\,d\mathbf{v}(\mathcal P,p)$$
exists in $\R_+=[0,\infty)$ and we have
$$\theta^\chi(q)=\lim_{\ep\to0}\Theta^\chi(q,\ep),$$
as well as
$$\theta^\chi(q)+\int_{0<\r_q<b}|\nabla^{\mathcal P}\arctan\sigma_q|^2\,d\mathbf{v}(\mathcal{P},p)
\le Cb^{-2}|\mathbf{v}|(B_{2b}^\r(q)\setminus \bar B_b^\r(q)),$$
for a constant $C>0$ depending only on $\chi$. \hfill $\Box$
\end{Th}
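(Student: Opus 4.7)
The plan is to combine Proposition \ref{mon-for-wea}, in a two-scale version, with an absorption argument that controls the delicate third summand in $\Theta^\chi$. First, by left-invariance of $\alpha$, $H$, $J_H$, and the metric on $\mathbb{H}^2$ (Remark \ref{isometry}), the pushforward of $\mathbf{v}$ under $\ell_{q^{-1}}$ is again a HSLV and pulls $\r_q, \varphi_q, \arctan\sigma_q$ back to $\r, \varphi, \arctan\sigma$, so we reduce to $q = 0$. The key identity I would establish is
\begin{equation}
\label{twoscale-proposal}
\Theta^\chi(0, b) - \Theta^\chi(0, a) = \int_G |\nabla^{\mathcal P}\arctan\sigma|^2 \bigl(\chi(\r/b) - \chi(\r/a)\bigr)\,d\mathbf{v}(\mathcal{P}, p),
\end{equation}
valid for all $0 < a < b \le d_K(0,\mathbb{H}^2\setminus U)/2$. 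To obtain it, I would apply the stationarity of $\mathbf{v}$ to the Hamiltonian $F = (\chi(\r/b) - \chi(\r/a))\arctan\sigma$, which is $C^\infty$ on $\mathbb{H}^2$ (since $\arctan\sigma$ is smooth away from the origin, by the remark following \eqref{V.15-a}) and compactly supported in $U$; the pointwise identity of Proposition \ref{magic.id} applies verbatim once $\chi(\r) - \chi(\r/\varepsilon)$ is replaced by $\chi(\r/b) - \chi(\r/a)$, and integrating the stationarity condition against $\mathbf{v}$ while separating the $b$- and $a$-contributions yields \eqref{twoscale-proposal}.

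Monotonicity $\Theta^\chi(0,a) \le \Theta^\chi(0,b)$ is then immediate from \eqref{twoscale-proposal}, since $\chi$ is nonincreasing. To estimate $\Theta^\chi(0,s)$ I split it as $\tilde A_s + \tilde B_s + \tilde C_s$ according to its three summands. The first two are manifestly nonnegative ($-s^{-1}\chi'(\r/s) \ge 0$ and $\varphi \arctan\sigma \ge 0$), and the elementary bounds $|\nabla^{\mathcal P}\r| \le |\nabla^H\r| = \rho/\r \le 1$ and $2|\varphi|/\r^2 \le 1$ (both consequences of $\r^4 = \rho^4 + 4\varphi^2$) give the integrand control $\tilde A_s + \tilde B_s \le Cs^{-2}|\mathbf{v}|(B_{2s}^\r \setminus \bar B_s^\r)$.

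The main obstacle is $\tilde C_s$, since $|\nabla^{\mathcal P}\arctan\sigma|$ has no a priori pointwise bound. Expanding the gradient in its integrand by the product rule decomposes $\tilde C_s$ into a self-absorbing piece $\tfrac{1}{4}\int \r s^{-1}\chi'(\r/s)|\nabla^{\mathcal P}\arctan\sigma|^2\,d\mathbf{v}$, which is nonpositive, and a cross term bounded by $Cs^{-1}\int_{\{s<\r<2s\}}|\nabla^{\mathcal P}\arctan\sigma|\,d\mathbf{v}$ after using $|\arctan\sigma| \le \pi/2$ and $|\nabla^{\mathcal P}\r| \le 1$; Cauchy--Schwarz and Young's inequality on this cross term, combined with \eqref{twoscale-proposal} applied at scales $b, 2b$ to control $\int_{\{b<\r<2b\}}|\nabla^{\mathcal P}\arctan\sigma|^2\,d\mathbf{v}$ by $\Theta^\chi(0, 2b) - \Theta^\chi(0, b)$ and an iteration/absorption at dyadic scales, give the desired upper bound $\Theta^\chi(0, b) \le Cb^{-2}|\mathbf{v}|(B_{2b}^\r \setminus \bar B_b^\r)$. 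Letting $a \to 0$ in \eqref{twoscale-proposal} and invoking monotone convergence (noting that $|\nabla^{\mathcal P}\arctan\sigma|^2$ vanishes on the Reeb axis $\{\rho = 0\}\setminus\{0\}$, where one directly checks $d\arctan\sigma|_H = 0$) yields the existence of $\theta^\chi(0) := \lim_{a\to 0}\Theta^\chi(0,a)$ together with $\Theta^\chi(0,b) - \theta^\chi(0) = \int_{\r > 0}|\nabla^{\mathcal P}\arctan\sigma|^2\chi(\r/b)\,d\mathbf{v}$; combined with the bound above and $\chi(\r/b) \ge \mathbf{1}_{\{0<\r\le b\}}$, this gives the claimed annular estimate. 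The identification $\theta^\chi(q) = \lim_\varepsilon(\tilde A_\varepsilon + \tilde B_\varepsilon)$ reduces to $\tilde C_\varepsilon \to 0$, which follows from the just-established local integrability of $|\nabla^{\mathcal P}\arctan\sigma|^2\,d\mathbf{v}$ near $0$; finally $\Theta^\chi(0,a) \ge 0$ follows from monotonicity and $\theta^\chi(0) \ge 0$, the latter being a limit of the nonnegative $\tilde A_\varepsilon + \tilde B_\varepsilon$.
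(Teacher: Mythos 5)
Your skeleton matches the paper's: the two-scale identity \eqref{twoscale-proposal} is exactly the paper's rearrangement \eqref{mono.rearr} of Proposition \ref{mon-for-wea} (the paper normalizes $b=1$ by a dilation), monotonicity follows as you say, and the absorption of the cross term in the third summand against the first summand via Young's inequality is the paper's $\delta$-absorption. The annular upper bound on $\Theta^\chi(0,b)$ for a \emph{fixed} $b$ is in fact easier than you make it: on the support of $\chi'(\r/b)$ one has $\r\sim b$ and $|\nabla^{\mathcal P}\arctan\sigma|\le C\r^{-1}$ by homogeneity, so all three summands are trivially bounded by $Cb^{-2}|\mathbf{v}|(B_{2b}^\r\setminus\bar B_b^\r)$ with no iteration needed.

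The genuine gap is at the other end: nothing in your argument rules out $\lim_{a\to0}\Theta^\chi(0,a)=-\infty$, equivalently $\int_{0<\r<b}|\nabla^{\mathcal P}\arctan\sigma|^2\,d\mathbf{v}=+\infty$. Your chain of implications is circular there: nonnegativity of $\theta^\chi$ needs $\tilde C_\ep\to0$, which needs $\int_{\ep<\r<2\ep}|\nabla^{\mathcal P}\arctan\sigma|^2\,d\mathbf{v}\to0$, which needs the local integrability you are trying to prove. The only a priori control on the error term $\int_{a<\r<2a}|\nabla^{\mathcal P}\arctan\sigma|^2\,d\mathbf{v}$ at small scales is $Ca^{-2}|\mathbf{v}|(B_{2a}^\r)$, and the upper density bound $|\mathbf{v}|(B_{2a}^\r)\le Ca^2$ is a \emph{consequence} of this theorem (Corollary \ref{mono.cor2}), not available beforehand; your dyadic iteration only shuffles $\Theta$-differences at comparable scales and never controls this small-scale quantity. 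The paper's resolution is the one ingredient missing from your proposal: first prove everything under the hypothesis $\liminf_{\ep\to0}\int_{\ep<\r<2\ep}|\nabla^{\mathcal P}\arctan\sigma|^2\,d\mathbf{v}<\infty$ (which then self-improves to a $\limsup=0$ statement), and then remove it by observing that this hypothesis holds automatically at every point outside the exceptional set $S:=\{p:\limsup_\ep\ep^{-2}|\mathbf{v}|(B_\ep^\r(p))=\infty\}$, that $\mathcal{H}^2_K(S)=0$ by Vitali so $U\setminus S$ is dense, and finally transferring the resulting uniform bound on $\int_{0<\r_{p_j}<1/2}|\nabla^{\mathcal P}\arctan\sigma_{p_j}|^2\,d\mathbf{v}$ along $p_j\to q$ to the point $q$ itself via Fatou's lemma. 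Without this (or an equivalent device), the existence of $\theta^\chi(q)$ in $\R_+$ and the claimed annular estimate are not established.
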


\begin{Rm}
Up to harmless error terms,
a similar monotonicity formula holds in arbitrary closed Sasakian manifolds, whose infinitesimal model is $\mathbb{H}^2$ (see \cite[Section VI]{Riv3} and the references therein),
with a similar proof. Of course, in this case monotonicity is only effective at small scales.
However, in all of the following arguments, we can always work at small enough scales;
in particular, blow-ups are again varifolds on $\mathbb{H}^2$. \hfill $\Box$
\end{Rm}

\begin{Rm}
Although the integrand defining $\Theta^\chi$ is not guaranteed to be nonnegative,
we observe that the one in the definition of $\theta^\chi$ is always nonnegative, since $\chi'\le0$ and (away from $\{\rho_q=0\}$) $\varphi_q$ has the same sign as $\sigma_q$,
so that $\varphi_q\arctan\sigma_q\ge0$. \hfill $\Box$
\end{Rm}

\begin{Rm}\label{blow}
Any blow-up of $\mathbf{v}$ at $q$ has $\nabla^{\mathcal P}\arctan\sigma=0$ on its support (away from the origin):
indeed, the rescaled varifold $\mathbf{v}_{q,a}:=(\delta_{1/a}\circ\ell_{q^{-1}})_*\mathbf{v}$ satisfies
$$\int_{0<\r<R}|\nabla^{\mathcal P}\arctan\sigma|^2\,d\mathbf{v}_{q,a}(\mathcal{P},p)
=\int_{0<\r_q<Ra}|\nabla^{\mathcal P}\arctan\sigma_q|^2\,d\mathbf{v}(\mathcal{P},p)$$
for all $a,R>0$ with $2Ra<d_K(q,\mathbb{H}^2\setminus U)$, so that the left-hand side converges to zero for fixed $R>0$.
\hfill $\Box$
\end{Rm}

\begin{proof}
First, note that we have
$$|\nabla^{\mathcal P}z|^2=2$$
for any $\mathcal P\in G$: indeed, letting $(Z_1,Z_2)$ be an orthonormal basis of $\mathcal P$, we have
$$|\nabla^{\mathcal P}z|^2=\sum_{k=1}^2\sum_{\ell=1}^4|\nabla^H z_\ell\cdot Z_k|^2.$$
The claim now follows from the fact that $\nabla^Hz_{2j-1}=X_j$ and $\nabla^Hz_{2j}=Y_j$.

In the sequel, we can assume that $q=0$ and $b=1$, up to a left translation and a dilation. Assume momentarily that
\begin{equation}\label{ass.del.cazzo}
\liminf_{\ep\to0}\int_{\epsilon<\r<2\epsilon}|\nabla^{\mathcal P}\arctan\sigma|^2\,d\mathbf{v}(\mathcal P,p)<\infty.
\end{equation}

We rearrange \eqref{V.56} as
\begin{align}\label{mono.rearr}
\begin{aligned}
&2\int_{G}  |\nabla^{\mathcal P}\arctan\sigma|^2 ( \chi(\r)-\chi(\r/\ep) )\, d{\mathbf v}({\mathcal P},p)\\
&\quad-2\int_{G} \frac{|\nabla^{\mathcal P}\r|^2}{\r} \ep^{-1}\chi'(\r/\ep)\,d{\mathbf v}({\mathcal P},p)
-\int_{G} \frac{4\varphi}{\r^3} \ep^{-1}\chi'(\r/\ep) \arctan\sigma \, d{{\mathbf v}}({\mathcal P},p)\\
&\quad+\frac{1}{2}\int_{G} {\r^4}  \nabla^{\mathcal P}\arctan\sigma\cdot\nabla^{\mathcal P} [\r^{-3} \ep^{-1}\chi'(\r/\ep) {\arctan\sigma}  ]\, d{{\mathbf v}}({\mathcal P},p)\\
&=-2\int_{G} \frac{|\nabla^{\mathcal P}\r|^2}{\r} \chi'(\r)\,d{\mathbf v}({\mathcal P},p)
-\int_{G} \frac{4\varphi}{\r^3} \chi'(\r) \arctan\sigma \, d{{\mathbf v}}({\mathcal P},p)\\
&\quad+\frac{1}{2}\int_{G} {\r^4}  \nabla^{\mathcal P}\arctan\sigma\cdot\nabla^{\mathcal P} [\r^{-3} \chi'(\r) {\arctan\sigma} ]\, d{{\mathbf v}}({\mathcal P},p),
\end{aligned}
\end{align}
which proves that $\Theta^\chi(q,\ep)\le\Theta^\chi(q,1)$ for $0<\ep<1$, and thus the inequality $\Theta^\chi(q,a)\le\Theta^\chi(q,b)$ in the statement.

Since $\chi'(\r)$ vanishes outside the set $\{1<\r<2\}$, the right-hand side above is bounded by
$$C\int_{1<\r<2}1\,d\mathbf{v}(\mathcal{P},p).$$
Moreover, the first three terms in the left-hand side are nonnegative. To conclude,
we need to control the last term in the left-hand side of \eqref{mono.rearr}. A simple expansion (using also the fact that $\r\in[\ep,2\ep]$ on the support of $\chi'(\r/\ep)$) shows that it is bounded by
the integral of
$$C|\nabla^{\mathcal P}\arctan\sigma|^2|\chi'(\r/\ep)|+C\ep^{-1}|\nabla^{\mathcal P}\arctan\sigma||\nabla^{\mathcal P}\r|(|\chi'(\r/\ep)|+|\chi''(\r/\ep)|).$$
Since $-\chi'=\eta^2$, we have $|\chi''|=|(\eta^2)'|=2\eta|\eta'|\le C\eta\uno_{(1,2)}$, so that
$$C\ep^{-1}|\nabla^{\mathcal P}\arctan\sigma||\nabla^{\mathcal P}\r||\chi''(\r/\ep)|
\le -\frac{C\delta}{\ep^2}\chi'(\r/\ep)|\nabla^{\mathcal P}\r|^2+\frac{C}{\delta}\uno_{\ep<\r<2\ep}|\nabla^{\mathcal P}\arctan\sigma|^2$$
for an arbitrary $\delta>0$.
Similarly, the term $C\ep^{-1}|\nabla^{\mathcal P}\arctan\sigma||\nabla^{\mathcal P}\r||\chi'(\r/\ep)|$ obeys the same bound.
Thus,
\begin{align*}
&\int_{G} {\r^4}  |\nabla^{\mathcal P}\arctan\sigma|\cdot|\nabla^{\mathcal P}[\r^{-3} \ep^{-1}\chi'(\r/\ep) {\arctan\sigma} ]|\, d{{\mathbf v}}({\mathcal P},p)\\
&\le C\delta\int_G\frac{|\nabla^{\mathcal P}\r|^2}{\r} \ep^{-1}\chi'(\r/\ep)\,d{\mathbf v}({\mathcal P},p)
+\frac{C}{\delta}\int_{\ep<\r<2\ep}|\nabla^{\mathcal P}\arctan\sigma|^2\,d{\mathbf v}({\mathcal P},p).
\end{align*}
Choosing $\delta>0$ small enough, we can absorb the integral of $-\frac{|\nabla^{\mathcal P}\r|^2}{\r}\ep^{-1}\chi'(\r/\ep)$,
which appeared in the left-hand side of \eqref{mono.rearr}.
Once we let $\ep\to0$, we deduce that
\begin{align*}
&2\int_{0<\r<1}  |\nabla^{\mathcal P}\arctan\sigma|^2 \, d{\mathbf v}({\mathcal P},p)\\
&\le C\int_{1<\r<2}1\,d\mathbf{v}(\mathcal{P},p)+C\liminf_{\ep\to0}\int_{\epsilon<\r<2\epsilon}\ep^{-2}|\nabla^{\mathcal P}\arctan\sigma|^2\,d\mathbf{v}(\mathcal P,p),
\end{align*}
which is finite by our assumption \eqref{ass.del.cazzo}.
In particular, the integral of $|\nabla^{\mathcal P}\arctan\sigma|^2$ on $\{0<\r<1\}$ is finite, so
we can upgrade \eqref{ass.del.cazzo} to
$$\limsup_{\ep\to0}\int_{\epsilon<\r<2\epsilon}|\nabla^{\mathcal P}\arctan\sigma|^2\,d\mathbf{v}(\mathcal P,p)=0.$$
Reinserting this information in the previous computation, we deduce
\begin{align*} &\int_{0<\r<1}2|\nabla^{\mathcal P}\arctan\sigma|^2 ( \chi(\r)-\chi(\r/\ep) ) \, d{{\mathbf v}}({\mathcal P},p)\\
&\quad+\limsup_{\ep\to0}\int_{G} \lf[-2(1-C\delta) \frac{|\nabla^{\mathcal P}\r|^2}{\r} \ep^{-1}\chi'(\r/\ep)
- \frac{4\varphi}{\r^3} \ep^{-1}\chi'(\r/\ep) \arctan\sigma\rg] \, d{{\mathbf v}}({\mathcal P},p)\\
&\le C\int_{1<\r_p<2}1\,d\mathbf{v}(\mathcal{P},p) \end{align*}
for an arbitrarily small $\delta>0$. Thus, in \eqref{mono.rearr} we see that the last term in the left-hand side goes to zero as $\ep\to0$; it follows that the limit defining $\theta^\chi$ exists, and moreover we have
$$\theta^\chi(q)=\lim_{\ep\to0}\Theta^\chi(q,\ep),$$
since the third term in \eqref{capital.theta.def} goes to zero as $\ep\to0$, as we showed above.

We now remove the technical assumption \eqref{ass.del.cazzo}. Note that, by homogeneity of $\arctan\sigma$ under dilations,
we have $|\nabla^{\mathcal P}\arctan\sigma|^2\le C\r^{-2}$. In particular, the statement holds at all points
$p\in U\setminus S$ (for radii $0<2r<d_K(p,\mathbb{H}^2\setminus U)$), where the exceptional set $S$ is given by
$$S:=\lf\{p\in U\,:\,\limsup_{\ep\to0}\ep^{-2}|\mathbf{v}|(B_\ep^\r(p))=\infty\rg\}.$$
A simple application of Vitali's covering lemma shows that $\mathcal{H}^2_K(S)=0$. In particular, $U\setminus S$ is dense in $U$.
We can then take a sequence $p_j\to 0$, with $p_j\in U\setminus S$, and repeat the previous proof to obtain
$$\int_{0<\r_{p_j}<1/2}|\nabla^{\mathcal P}\arctan\sigma_{p_j}|^2\,d\mathbf{v}(\mathcal P,p)\le C\int_{1/2<\r_{p_j}<1}1\,d\mathbf{v}(\mathcal P,p),$$
which is bounded by a constant independent of $j$. By Fatou's lemma, we obtain
$$\int_{0<\r<1/2}|\nabla^{\mathcal P}\arctan\sigma_{0}|^2\,d\mathbf{v}(\mathcal P,p)<\infty,$$
showing that \eqref{ass.del.cazzo} holds also at $p=0$.
\end{proof}

\begin{Co}\label{dens.usc}
Assume that $\mathbf{v}$ is a HSLV on an open set $U\subseteq\mathbb{H}^2$.
Then the density $\theta^\chi$ of $\mathbf{v}$ is upper semi-continuous on $U$. \hfill $\Box$
\end{Co}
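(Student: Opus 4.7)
The plan is to combine the monotonicity inequality $\theta^\chi(q')\le \Theta^\chi(q',a)$ with continuity of $q'\mapsto \Theta^\chi(q',a)$ for fixed small $a$, and then let $a\to 0$. This is the standard Allard-type strategy, and the only point to verify is the continuity in $q'$.

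First, fix $q\in U$ and choose $a_0>0$ so small that the closed Kor\'anyi ball $\overline{B_{3a_0}^\r(q)}$ is contained in $U$. For any $0<a\le a_0$ and any $q'$ with $d_K(q',q)\le a_0/2$, we have $B_{2a}^\r(q')\subseteq U$, so by Theorem \ref{mono.cor} the density satisfies
$$
\theta^\chi(q')\;\le\;\Theta^\chi(q',a).
$$

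Next I claim that, for fixed $a\in(0,a_0)$, the map $q'\mapsto \Theta^\chi(q',a)$ is continuous on the neighborhood $\{d_K(\cdot,q)<a_0/2\}$ of $q$. Each of the three integrands defining $\Theta^\chi(q',a)$ in \eqref{capital.theta.def} depends on $q'$ only through the left-translated quantities $\r_{q'}$, $\varphi_{q'}$, $\arctan\sigma_{q'}$ and their tangential gradients. Since $\arctan\sigma$ is smooth on $\mathbb{H}^2\setminus\{0\}$, these integrands, viewed as functions of $(\mathcal{P},p)$, depend continuously on $q'$ at every point $p\neq q'$; moreover the presence of the factor $a^{-1}\chi'(\r_{q'}/a)$ confines the integration to $\{a\le\r_{q'}\le 2a\}$, which stays uniformly away from $q'$ itself. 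Consequently, as $q'\to q$, the integrands converge pointwise $|\mathbf{v}|$-a.e.\ to those at $q$, and they are uniformly bounded in absolute value by a constant (depending on $a$ and $\chi$) times the indicator of the fixed compact set $\overline{B_{3a}^\r(q)}\setminus B_{a/2}^\r(q)$, which has finite $|\mathbf{v}|$-mass. Dominated convergence then yields
$$
\lim_{q'\to q}\Theta^\chi(q',a)\;=\;\Theta^\chi(q,a).
$$

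Combining these two facts, for any sequence $q_n\to q$ in $U$ and any admissible $a\in(0,a_0)$ we get
$$
\limsup_{n\to\infty}\theta^\chi(q_n)\;\le\;\limsup_{n\to\infty}\Theta^\chi(q_n,a)\;=\;\Theta^\chi(q,a).
$$
Letting $a\to 0$ and using $\theta^\chi(q)=\lim_{a\to 0}\Theta^\chi(q,a)$ from Theorem \ref{mono.cor} gives $\limsup_{n}\theta^\chi(q_n)\le \theta^\chi(q)$, which is upper semi-continuity. The only minor obstacle to check is the uniform domination of the integrands above; this is not deep, but requires a careful splitting of the third integrand in \eqref{capital.theta.def} (which also involves $\chi''(\r_{q'}/a)$ after differentiating, still supported in $\{a\le\r_{q'}\le 2a\}$) together with the pointwise estimate $|\nabla^{\mathcal P}\arctan\sigma_{q'}|\le C\r_{q'}^{-1}\le C a^{-1}$ that was already used in the proof of the monotonicity formula.
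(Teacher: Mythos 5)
Your proof is correct and follows essentially the same route as the paper: monotonicity gives $\theta^\chi(q_k)\le\Theta^\chi(q_k,b)$, continuity of $q'\mapsto\Theta^\chi(q',b)$ for fixed $b$ (which the paper justifies in one line by noting the integrand is cut off near the center, exactly the dominated-convergence argument you spell out) gives $\limsup_k\theta^\chi(q_k)\le\Theta^\chi(q,b)$, and letting $b\to0$ concludes. The only difference is that you verify the domination of the integrands in more detail than the paper does.
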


\begin{proof}
Fix $q\in U$ and consider a sequence $q_k\to q$ of points in $U$. Given any $\lambda>0$, we need to show that eventually
$$\theta^\chi(q_k)\le\theta^\chi(q)+\lambda.$$
We let $b>0$ such that $B_{4b}^\r(q_k)\subseteq U$ for all $k$. Clearly, we have
$$\theta^\chi(q_k)=\lim_{a\to0}\Theta^\chi(q_k,a)\le\Theta^\chi(q_k,b),$$
as well as $\Theta^\chi(q_k,b)\to\Theta^\chi(q,b)$, since the integrand defining $\Theta^\chi$ is cut-off near the center. Thus, eventually we have
$$\theta^\chi(q_k)\le\Theta^\chi(q,b)+\frac{\lambda}{2}.$$
Finally, since $\Theta^\chi(q,b)\to\theta^\chi(q)$ as $b\to0$, we can choose $b>0$ so small that
$$\Theta^\chi(q,b)\le\theta^\chi(q)+\frac{\lambda}{2},$$
obtaining the claim.
\end{proof}

The same argument shows the following more general fact.

\begin{Co}\label{dens.limits}
Assume that $\mathbf{v}_k$ is a sequence of HSLVs on an open set $U\subseteq\mathbb{H}^2$ converging to
$\mathbf{v}$. Then $\mathbf{v}$ is a HSLV and, given any sequence $q_k\to q\in U$ (with $q_k\in U$), we have
$$\theta^\chi(\mathbf{v},q)\ge\limsup_{k\to\infty}\theta^\chi(\mathbf{v}_k,q_k).$$
In particular, if $\theta^\chi(\mathbf{v}_k,p)\ge\nu>0$ at $|\mathbf{v}_k|$-a.e. $p$ then this actually holds for all
$p\in\operatorname{spt}|\mathbf{v}_k|$ and $$\theta^\chi(\mathbf{v},p)\ge\nu>0\quad\text{for all } p\in\operatorname{spt}|\mathbf{v}|,$$ as well as
$$\operatorname{spt}|\mathbf{v}_k|\to\operatorname{spt}|\mathbf{v}|$$
in the local Hausdorff topology. \hfill $\Box$
\end{Co}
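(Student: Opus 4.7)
The plan is to adapt the proof of Corollary \ref{dens.usc} with the varying varifolds $\mathbf{v}_k$ in place of the fixed $\mathbf{v}$. To see first that $\mathbf{v}$ is itself a HSLV, for any fixed Hamiltonian potential $F\in C^\infty_c$ the integrand $(\mathcal{P},p)\mapsto\operatorname{div}_{\mathcal{P}}W_F$ is a continuous, compactly supported function on $G$, so varifold convergence $\mathbf{v}_k\rightharpoonup\mathbf{v}$ immediately yields $\int_G\operatorname{div}_{\mathcal{P}}W_F\,d\mathbf{v}=\lim_k\int_G\operatorname{div}_{\mathcal{P}}W_F\,d\mathbf{v}_k=0$.

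For the density inequality, I would fix $\lambda>0$ and choose $b>0$ with $B_{4b}^\r(q)\subseteq U$; for $k$ large also $B_{4b}^\r(q_k)\subseteq U$, so Theorem \ref{mono.cor} gives $\theta^\chi(\mathbf{v}_k,q_k)\le\Theta^\chi(\mathbf{v}_k,q_k,b)$. The integrand defining $\Theta^\chi(\cdot,q_k,b)$ is supported in the annulus $\{b\le\r_{q_k}\le 2b\}$ and, as a function on $G$, converges uniformly to the corresponding integrand centered at $q$ as $q_k\to q$; combined with varifold convergence this gives $\Theta^\chi(\mathbf{v}_k,q_k,b)\to\Theta^\chi(\mathbf{v},q,b)$. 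Choosing $b$ small enough that $\Theta^\chi(\mathbf{v},q,b)\le\theta^\chi(\mathbf{v},q)+\lambda/2$ then yields $\limsup_k\theta^\chi(\mathbf{v}_k,q_k)\le\theta^\chi(\mathbf{v},q)+\lambda$.

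To extend the lower bound $\theta^\chi(\mathbf{v}_k,\cdot)\ge\nu$ from $|\mathbf{v}_k|$-a.e.\ to all of $\operatorname{spt}|\mathbf{v}_k|$, I would use that for each fixed small $a>0$ the function $q\mapsto\Theta^\chi(\mathbf{v}_k,q,a)$ is continuous (by dominated convergence applied to the smooth, cut-off integrand). Given $p\in\operatorname{spt}|\mathbf{v}_k|$, I pick a sequence $p_j\to p$ along which $\theta^\chi\ge\nu$ holds; monotonicity gives $\Theta^\chi(\mathbf{v}_k,p_j,a)\ge\theta^\chi(\mathbf{v}_k,p_j)\ge\nu$, so passing to the limit yields $\Theta^\chi(\mathbf{v}_k,p,a)\ge\nu$, and sending $a\to 0$ gives $\theta^\chi(\mathbf{v}_k,p)\ge\nu$. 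Then for $p\in\operatorname{spt}|\mathbf{v}|$, lower semi-continuity of $|\mathbf{v}_k|$ on open balls combined with a diagonal argument produces $p_k\in\operatorname{spt}|\mathbf{v}_k|$ with $p_k\to p$, and the density inequality gives $\theta^\chi(\mathbf{v},p)\ge\limsup_k\theta^\chi(\mathbf{v}_k,p_k)\ge\nu$.

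Finally, for local Hausdorff convergence of supports, the preceding diagonal construction handles one direction. For the other, if $p_k\in\operatorname{spt}|\mathbf{v}_k|$ with $p_k\to p\in U$, the density inequality gives $\theta^\chi(\mathbf{v},p)\ge\nu>0$, and the monotonicity bound $\theta^\chi(\mathbf{v},p)\le Cb^{-2}|\mathbf{v}|(B_{2b}^\r(p))$ from Theorem \ref{mono.cor} then forces $|\mathbf{v}|(B_{2b}^\r(p))>0$ for every small $b>0$, hence $p\in\operatorname{spt}|\mathbf{v}|$. The main delicate point is the uniform-in-$k$ control in the density inequality step: one must verify that the shifted quantities $\r_{q_k},\varphi_{q_k},\arctan\sigma_{q_k}$ give an integrand converging uniformly on $G$ in a way compatible with the weak convergence $\mathbf{v}_k\rightharpoonup\mathbf{v}$, which reduces to smoothness of $\arctan\sigma$ away from the origin together with the fact that the cut-off $\chi'(\r_{q_k}/b)$ keeps us away from the singular center.
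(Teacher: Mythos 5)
Your proposal is correct and follows essentially the same route the paper intends: the paper gives no separate proof of this corollary, stating only that it follows by "the same argument" as Corollary \ref{dens.usc}, and your elaboration (weak convergence tested against the continuous, compactly supported integrands $\operatorname{div}_{\mathcal P}W_F$ and the cut-off integrand of $\Theta^\chi$, combined with monotonicity from Theorem \ref{mono.cor}) is exactly that argument, with the remaining assertions about the support handled by the standard semicontinuity considerations.
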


The following statement gives upper density bounds on the support, at macroscopic scales.
As a consequence, a $\mathcal{H}^2_K$-negligible set is also $|\mathbf{v}|$-negligible.

\begin{Co}\label{mono.cor2}
There exists a universal constant $C>0$ such that the following holds.
Assuming that $\mathbf{v}$ is a HSLV on $U\supseteq B_{2s}^\r(q)$, we have
$$\frac{|\mathbf{v}|(B_r^\r(q))}{r^2}\le C\frac{|\mathbf{v}|(B_{2s}^\r(q)\setminus B_{s}^\r(q))}{s^2}$$
for all $0<r\le s/2$, where $|\mathbf{v}|:=\Pi_*\mathbf{v}$ denotes the weight of $\mathbf{v}$. \hfill $\Box$
\end{Co}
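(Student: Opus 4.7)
The plan is to derive the bound from the monotonicity of $\Theta^\chi(q,\cdot)$ established in Theorem \ref{mono.cor}. The key intermediate step is the dyadic doubling property for the normalized annular mass,
\begin{equation}
\frac{|\mathbf{v}|(B_{2t}^\r(q) \setminus B_t^\r(q))}{t^2} \le C\,\frac{|\mathbf{v}|(B_{2s}^\r(q) \setminus B_s^\r(q))}{s^2} \qquad \text{for all } 0<t\le s/2, \tag{$\star$}
\end{equation}
from which the ball--mass inequality follows by a dyadic summation.

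To establish $(\star)$ I would prove two complementary comparabilities between $\Theta^\chi$ and the annular mass. First, the upper bound $\Theta^\chi(q,b)\le Cb^{-2}|\mathbf{v}|(B_{2b}^\r(q)\setminus B_b^\r(q))$ for all $b$ with $B_{2b}^\r(q)\subseteq U$: after expanding the gradient inside the third term of \eqref{capital.theta.def}, every integrand is supported in the annulus $\{b\le\r_q\le 2b\}$ (because $\chi'(\r_q/b)$ and $\chi''(\r_q/b)$ are). The first two integrands are controlled pointwise by $Cb^{-2}$ using $|\nabla^{\mathcal P}\r_q|\le 1$, $|\varphi_q|\le\r_q^2/2$ and $|\arctan\sigma_q|\le\pi/2$; the third is handled by splitting via Young's inequality and absorbing the resulting contribution $\int_{\r_q\le 2b}|\nabla^{\mathcal P}\arctan\sigma_q|^2\,d\mathbf{v}$ into the annular mass via the final inequality in Theorem \ref{mono.cor} (possibly at the next dyadic scale). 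Second, the matching lower comparison
$t^{-2}|\mathbf{v}|(B_{2t}^\r(q)\setminus B_t^\r(q))\le C\,\Theta^\chi(q,t)$
for $B_{2t}^\r(q)\subseteq U$: writing $A:=\nabla^{\mathcal P}\rho_q^2$ and $B:=\nabla^{\mathcal P}\varphi_q$, one has the algebraic relations
$2\r_q^3\,\nabla^{\mathcal P}\r_q=\rho_q A+2\varphi_q B$ and $\tfrac12\r_q^4\,\nabla^{\mathcal P}\arctan\sigma_q=\rho_q^2 B-\varphi_q A$, together with the Heisenberg identity $\rho_q^2=|A|^2/4+|B|^2$ from \eqref{V.29} (a direct consequence of $|\nabla^{\mathcal P}z|^2=2$ on Legendrian planes). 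These algebraic facts allow one to bound a positive combination of the three integrands in \eqref{capital.theta.def} from below by $c\,t^{-2}$ in an integrated sense, the surplus $|\nabla^{\mathcal P}\arctan\sigma_q|^2$ contribution from the third term being reabsorbed once more via the final inequality in Theorem \ref{mono.cor}. Chaining these two bounds through the monotonicity $\Theta^\chi(q,t)\le\Theta^\chi(q,s)$ yields $(\star)$.

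With $(\star)$ at hand, the conclusion follows by the dyadic decomposition
$$B_r^\r(q) = \bigsqcup_{k\ge 0} \bigl(B_{r/2^k}^\r(q) \setminus B_{r/2^{k+1}}^\r(q)\bigr)$$
and applying $(\star)$ with scale $t_k:=r/2^{k+1}\le s/4$ to each annulus; summing the geometric series $\sum_{k\ge 0} t_k^2 = r^2/3$ gives
$$|\mathbf{v}|(B_r^\r(q)) \le C\sum_{k\ge 0} t_k^2\,\frac{|\mathbf{v}|(B_{2s}^\r(q)\setminus B_s^\r(q))}{s^2} = C r^2\,\frac{|\mathbf{v}|(B_{2s}^\r(q)\setminus B_s^\r(q))}{s^2},$$
as desired. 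The main obstacle is the lower comparison entering $(\star)$: the integrand in \eqref{capital.theta.def} is not pointwise bounded below by a multiple of $t^{-2}$ on every Legendrian plane — it can even become negative on the degenerate configurations $\mathcal P=\operatorname{span}\{iz,w\}$ at points $\varphi=0$, where $\nabla^{\mathcal P}\r_q$ and $\varphi_q\arctan\sigma_q$ both vanish — and only the subtle Heisenberg algebra, together with the absorption trick driven by the final estimate of Theorem \ref{mono.cor}, closes the loop.
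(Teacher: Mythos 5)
Your proposal is correct and follows essentially the same route as the paper: the paper also lower-bounds the density integrand $\frac{|\nabla^{\mathcal P}\r_q|^2}{\r_q}+\frac{2\varphi_q}{\r_q^3}\arctan\sigma_q$ by $\r_q^{-1}\bigl(1-\frac{\r_q^2}{4}|\nabla^{\mathcal P}\arctan\sigma_q|^2\bigr)$ using exactly your Heisenberg identities (in the form $|\nabla^H\r|^2=1/\sqrt{1+\sigma^2}$ and $|\nabla^{\mathcal P^\perp}\r|^2=\frac{\r^2}{4}|\nabla^{\mathcal P}\arctan\sigma|^2$), absorbs the surplus via the final estimate of Theorem \ref{mono.cor}, transfers to the outer annulus by the rearranged monotonicity identity, and sums dyadically. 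Only cosmetic remarks: your first algebraic relation should read $2\r_q^3\nabla^{\mathcal P}\r_q=\rho_q^2A+4\varphi_qB$, and at the top scale $b=s$ no absorption is needed (hence no excursion outside $B_{2s}^\r(q)$) since $|\nabla^{\mathcal P}\arctan\sigma_q|\le C\r_q^{-1}$ already makes the third term of $\Theta^\chi(q,s)$ pointwise of size $s^{-2}$ on the annulus.
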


\begin{proof}
By a left translation and dilation, we can assume that $q=0$ and $s=1$.
By Theorem \ref{mono.cor} we have
$$\int_{0<\r<1}|\nabla^{\mathcal{P}}\arctan\sigma|^2\,d\mathbf{v}(\mathcal P,p)\le C|\mathbf{v}|(B_2^\r(0)\setminus B_1^\r(0)).$$
Thus, employing the same absorption used in the proof of Theorem \ref{mono.cor}, we see that
$$I(\ep):=-2\int_{G} \frac{|\nabla^{\mathcal P}\r|^2}{\r} \ep^{-1}\chi'(\r/\ep)\,d{\mathbf v}({\mathcal P},p)
-\int_{G} \frac{4\varphi}{\r^3} \ep^{-1}\chi'(\r/\ep) \arctan\sigma \, d{{\mathbf v}}({\mathcal P},p)$$
is bounded by the same quantity (for a possibly different $C>0$), for all $0<\ep\le 1/2$.

Now we compute that
$$J_H\nabla^H\frac{\rho^2}{2}=J_H\sum_{j=1}^4z_j\nabla^Hz_j=J_H(z_1X_1+z_2Y_1+z_3X_2+z_4Y_2)=z_1Y_1-z_2X_1+z_3Y_2-z_4X_2
=\nabla^H\varphi,$$
and hence
\begin{equation}\label{r.sigma.j}
\r^3 J_H(\nabla^H\r)=J_H\nabla^H\frac{\rho^4}{4}+J_H\nabla^H\varphi^2
=\rho^2\nabla^H\varphi-2\varphi\nabla^H\frac{\rho^2}{2}=\frac{\rho^4}{2}\nabla^H\sigma,
\end{equation}
an identity that we will use later on.
Since $\r^3\nabla^H\r=\rho^2\nabla^H\frac{\rho^2}{2}+2\varphi\nabla^H\varphi$ and $\nabla^H\frac{\rho^2}{2}\perp\nabla^H\varphi$
are two vectors with the same norm $\rho$,
we have
\begin{equation}\label{nabla.r.norm}
|\nabla^H\r|^2=\frac{\rho^4}{\r^6}\lf|\nabla^H\frac{\rho^2}{2}\rg|^2+\frac{4\varphi^2}{\r^6}|\nabla^H\varphi|^2
=\frac{\rho^4+4\varphi^2}{\r^6}\cdot\rho^2=\frac{\rho^2}{\r^2}=\frac{1}{\sqrt{1+\sigma^2}}.
\end{equation}
Moreover, we have $\frac{2\varphi}{\r^2}=\sigma\frac{\rho^2}{\r^2}=\frac{\sigma}{\sqrt{1+\sigma^2}}$ off the $\varphi$-axis $\{\rho=0\}$,
and actually (discarding the intermediate equalities) this holds also on $\{\rho=0\}\setminus\{0\}$, provided we interpret $\frac{\sigma}{\sqrt{1+\sigma^2}}=\operatorname{sgn}(\varphi)$ here; indeed, recall that $\sigma=\operatorname{sgn}(\varphi)\cdot(+\infty)$ on this set.
Thus,
we obtain
$$I(\ep)=-\frac2\ep\int_G\chi'(\r/\ep)\r^{-1}\lf[\frac{1+\sigma\arctan\sigma}{\sqrt{1+\sigma^2}}-|\nabla^{\mathcal P^\perp}\r|^2\rg]\,d\mathbf{v}(\mathcal P,p).$$

Now an elementary computation shows that
$$1\le\frac{1+\sigma\arctan\sigma}{\sqrt{1+\sigma^2}}\le\frac\pi2,$$
while thanks to \eqref{r.sigma.j} we have
$$|\nabla^{\mathcal P^\perp}\r|^2=\frac{\rho^8}{4\r^6}|\nabla^{\mathcal P}\sigma|^2=\r^2\frac{|\nabla^{\mathcal P}\sigma|^2}{4(1+\sigma^2)^2}
=\frac{\r^2}{4}|\nabla^{\mathcal P}\arctan\sigma|^2,$$
again an identity which extends to all of $\mathbb{H}^2\setminus\{0\}$. Hence,
$$\frac{2}{\ep}\int_G|\chi'(\r/\ep)|\cdot\r^{-1}|\nabla^{\mathcal P^\perp}\r|^2\,d\mathbf{v}(\mathcal P,p)
\le C(\chi)\int_{\ep<\r<2\ep}|\nabla^{\mathcal P}\arctan\sigma|^2\,d\mathbf{v}(\mathcal P,p).$$
Since this term obeys the desired bound, we obtain
$$-\frac2\ep\int_G\chi'(\r/\ep)\r^{-1}\le C|\mathbf{v}|(B_2^\r(0)\setminus B_1^\r(0))$$
for all $\ep\in(0,1/2]$. Taking $\chi$ such that $\chi'<0$ on $[4/3,5/3]$, we deduce
$$|\mathbf{v}|(B_{5\ep/3}^\r(0)\setminus B_{4\ep/3}^\r(0))\le C\ep^2|\mathbf{v}|(B_2^\r(0)\setminus B_1^\r(0)),$$
from which the conclusion easily follows.
\end{proof}

\section{Closure of integral varifolds among rectifiable ones}
Using the monotonicity formula from the previous section, we now prove a suitable version of Allard's compactness of integral stationary varifolds in this setting,
a statement which is perhaps interesting on its own.
The proof is essentially a very careful adaptation of the original argument by Allard \cite{All}, although we follow
more closely the presentation from \cite[Section 6]{DPP}
and some steps are much subtler in the present setting (see, e.g., Lemma \ref{zero.exc} below).

\begin{Th}\label{cpt.int}
Assume that $\mathbf{v}_k$ is a sequence of rectifiable HSLVs on an open set $U\subseteq\mathbb{H}^2$,
converging to a rectifiable varifold $\mathbf{v}_\infty$ here. If $\theta^\chi(\mathbf{v}_k,p)\in2\pi\N^*$
for $|\mathbf{v}_k|$-a.e. $p$, then the same holds for the limit varifold. \hfill $\Box$
\end{Th}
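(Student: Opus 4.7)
The plan is to follow Allard's strategy \cite{All} for proving that integrality of the density is preserved under varifold limits of integer rectifiable stationary varifolds, adapted to the Heisenberg--Legendrian setting using the tools developed in Sections III and IV: the monotonicity formula (Theorem \ref{mono.cor}, Corollary \ref{mono.cor2}), upper semi-continuity of density under varifold convergence (Corollary \ref{dens.limits}), and the ``zero tilt excess'' principle (Lemma \ref{zero.exc}).

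First, since $\mathbf{v}_\infty$ is rectifiable, at $|\mathbf{v}_\infty|$-a.e.\ point $p$ there exists a unique Legendrian tangent plane $\mathcal{P}_p$ with positive density $\theta_\infty(p) := \theta^\chi(\mathbf{v}_\infty, p)/(2\pi)$, and the rescalings $(\mathbf{v}_\infty)_{p,a}$ converge as $a \to 0$ to $\theta_\infty(p)\mathbf{v}_{\mathcal{P}_p}$; it suffices to prove $\theta_\infty(p) \in \N^*$ at such typical $p$. A standard diagonal argument then produces $k_j \to \infty$ and $a_j \to 0$ such that $\mathbf{w}_j := (\mathbf{v}_{k_j})_{p,a_j} \to \theta_\infty(p)\mathbf{v}_{\mathcal{P}_p}$ as varifolds on every fixed ball. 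Each $\mathbf{w}_j$ is a rectifiable HSLV on a ball whose radius tends to infinity, with density in $2\pi\N^*$ at $|\mathbf{w}_j|$-a.e.\ point (since $\theta^\chi$ is preserved under left translations and dilations); by Corollary \ref{dens.limits}, this propagates to $\theta^\chi(\mathbf{w}_j,q) \ge 2\pi$ at every $q \in \operatorname{spt}|\mathbf{w}_j|$, and the supports $\operatorname{spt}|\mathbf{w}_j|$ converge to $\mathcal{P}_p$ locally in Hausdorff distance.

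The core quantitative step is to combine the fact that the tilt excess of $\mathbf{w}_j$ relative to $\mathcal{P}_p$ on any fixed ball tends to zero (as the limit is a constant multiple of $\mathbf{v}_{\mathcal{P}_p}$) with a quantitative refinement of Lemma \ref{zero.exc}: on a tube over a disk in $\mathcal{P}_p$, the varifold $\mathbf{w}_j$ should be well approximated by a finite union $\sum_\ell n_\ell^{(j)} \mathbf{v}_{\ell}^{(j)}$ of translates of $\mathcal{P}_p$ with integer multiplicities $n_\ell^{(j)} \in \N^*$ (these multiplicities are read off at $|\mathbf{w}_j|$-a.e.\ point of the support via rectifiability of $\mathbf{w}_j$, and propagated across the tube using monotonicity). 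Passing to the limit, the fact that the approximating sheets must all collapse onto $\mathcal{P}_p$ (otherwise the limit would charge more than one plane) forces $\sum_\ell n_\ell^{(j)} \to \theta_\infty(p)$ along a subsequence, so $\theta_\infty(p) \in \N^*$.

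The main obstacle is the quantitative passage from ``small tilt excess plus a.e.\ integer density'' to ``integer linear combination of parallel Legendrian planes'': the anisotropic Kor\'anyi metric obstructs direct projection-and-slicing arguments from Allard's proof, and one crucially needs the monotonicity at all scales (Theorem \ref{mono.cor}) to propagate integrality from typical rectifiability points to every point of $\operatorname{spt}|\mathbf{w}_j|$ and to read off the integer multiplicities of the limiting sheets. These delicate details are presumably also what necessitates the iterated blow-up alluded to in the introduction as being peculiar to the proof of Lemma \ref{zero.exc} in this setting.
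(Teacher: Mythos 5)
Your reduction to a typical point of $\mathbf{v}_\infty$, the diagonal rescaling $\mathbf{w}_j\to\theta_\infty(p)\cdot\mathcal{P}_p$, and the propagation of the lower density bound to the whole support via Corollary \ref{dens.limits} match the beginning of the paper's proof. The gap is in your core step: you invoke a ``quantitative refinement of Lemma \ref{zero.exc}'' asserting that a rectifiable HSLV with small (but positive) tilt excess over a cylinder decomposes, up to small error, into a finite union of translates of $\mathcal{P}_p$ with integer multiplicities $n_\ell^{(j)}$. No such sheeting statement is proved in the paper, and it is exactly the kind of statement that the anisotropic geometry obstructs: Lemma \ref{zero.exc} is a rigidity result at \emph{exactly zero} excess (its proof by iterated blow-up yields no quantitative modulus), so there is no available mechanism for ``reading off'' global integer sheet multiplicities from a.e.\ integrality and ``propagating them across the tube''; monotonicity controls densities at individual points, not a decomposition of the varifold. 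You flag this passage yourself as the main obstacle, but leave it unresolved, and the final counting ($\sum_\ell n_\ell^{(j)}\to\theta_\infty(p)$) rests entirely on it.

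The paper circumvents any sheeting statement by a contradiction argument: assuming $\theta_0\nin\N$, it uses the projection $\pi_{\mathcal P_0}$, the area formula and the a.e.\ integrality of $\theta^\chi(\mathbf{v}_k,\cdot)$ to select a single fiber $\pi_{\mathcal P_0}^{-1}(x_k)$ carrying finitely many points whose densities (each in $2\pi\N^*$) sum to at least $2\pi([\theta_0]+1)$, and then shows that this sum bounds from below the mass of $\mathbf{v}_k$ in a fixed cylinder, contradicting $\mathbf{v}_k\rightharpoonup\theta_0\cdot\mathcal{P}_0$. The transfer from a sum of point densities to a mass lower bound is where the real work lies: the fiber points may cluster at many different mutual scales, and the paper handles this with a stopping-time partition into clusters over ``good radii'', the excess maximal estimate of Lemma \ref{exc.max}, and the compactness Lemmas \ref{expansion} and \ref{theta.tilde.theta} (each proved by contradiction, with Lemma \ref{zero.exc} identifying the limit as a finite union of parallel disks), combined with monotonicity of $\Theta$. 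Your proposal does not address this multi-scale clustering issue, which is precisely where the Legendrian/Kor\'anyi difficulties have to be confronted, so as written the argument does not close.
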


\begin{Rm}\label{orr}
Differently from the isotropic situation, the rectifiability of the limit has to be assumed and does \emph{not} come for free.
A counterexample was found in \cite[Appendix B]{Orr}; we also refer to Theorem \ref{th-count-ex},
where we give a counterexample in a closed ambient, namely in the sphere $S^5$.
Rephrasing slightly the example from \cite{Orr}, we consider
$$\Sigma:=\C/2\pi\Z(1-i)=\R^2/2\pi\Z(1,-1)$$
and $u:\Sigma\to\mathbb{H}^2$ given by
$$u(x)=u(x_1,x_2):=(\cos(x_1),\sin(x_1),\cos(x_2),\sin(x_2),x_1+x_2),$$
which has
$$\p_{x_j}u(x)=-\sin(x_j)X_j+\cos(x_j)Y_j.$$
Thus, the differential is a linear isometry at each point and $u$ is a Legendrian lift of the map $\pi\circ u$, which parametrizes the Clifford torus.
It is easy to check that $u$ is a proper embedding inducing a HSLV on $\mathbb{H}^2$ (cf. \cite[Theorem 2.7]{Oh2} and Remark \ref{h.minimal}), whose blow-down is
$$\mathbf{v}(\mathcal P,p)=2\pi\cdot\mu(\mathcal P)\otimes
(\mathcal{H}^1\res L)(p),$$
where $L:=\{z=0\}$ and $\mu$ is the uniform measure on the torus of Legendrian planes
$$(e^{ia},e^{ib})\mapsto\mathcal{P}^{(a,b)}:=\operatorname{span}\{\cos(a)X_1+\sin(a)Y_1,\cos(b)X_2+\sin(b)Y_2\}.$$
Moreover, observing that $|\nabla^H\r|=0$ (by \eqref{nabla.r.norm}) and $\r=\sqrt{2|\varphi|}$ on $\operatorname{spt}|\mathbf{v}|$, we see that
\begin{align*}
\theta^\chi(\mathbf{v},0)&=\lim_{\ep\to0}-\frac1\ep\int_{G}\chi'(\sqrt{2|\varphi|}/\ep)\frac{\pi|\varphi|}{\sqrt{8|\varphi|^3}}\,d|\mathbf{v}|\\
&=\lim_{\ep\to0}-\frac2\ep\int_0^\infty\chi'(\sqrt{2\varphi}/\ep)\frac{\pi}{\sqrt{8\varphi}}\cdot2\pi\,d\varphi\\
&=-2\pi^2\int_0^\infty\chi'(t)\,dt\\
&=2\pi^2,
\end{align*}
so that $\mathbf{v}$ is not rectifiable and on $\operatorname{spt}|\mathbf{v}|$ we have $\frac{\theta^\chi}{2\pi}=\pi\nin\N$.
\hfill $\Box$
\end{Rm}

The following is a useful observation showing that two rectifiability conditions based on the Euclidean and Heisenberg geometries agree.

\begin{Lm}\label{equiv.rect}
Given an open set $U\subseteq\mathbb{H}^2$ and a HSLV $\mathbf{v}$ on $U$ with $\theta^\chi(p)\ge\nu>0$ for $|\mathbf{v}|$-a.e. $p$, the varifold $\mathbf{v}$ is rectifiable
if and only if, for $|\mathbf{v}|$-a.e. $p$, any anisotropic blow-up of the form
$$\lim_{r\to0}(\delta_{1/r}\circ\ell_{p^{-1}})_*\mathbf{v}$$
along a sequence of radii $r\to0$ equals $\frac{\theta^\chi(p)}{2\pi}$ times a Legendrian plane, depending only on $p$
(recall that $\ell_{p^{-1}}(x):=p^{-1}*x$).
If either holds, then at $|\mathbf{v}|$-a.e. $p$ the isotropic blow-up (in terms of $g_{\mathbb{H}^2}$) agrees with the anisotropic blow-up. \hfill $\Box$
\end{Lm}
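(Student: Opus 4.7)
The key observation is that at the origin, $H_0$ coincides with $\C^2\times\{0\}\subset T_0\mathbb{H}^2$, and on any horizontal plane through $0$ the dilation $\delta_r$ acts precisely as Euclidean multiplication by $r$. Since left translation $\ell_{p^{-1}}$ is a $g_{\mathbb{H}^2}$-isometry sending $p$ to $0$ that preserves the Legendrian distribution and conjugates the anisotropic rescaling at $p$ to $\delta_{1/r}$ at $0$, the whole argument reduces to the case $p=0$. A direct computation then gives, for $x=ry$ (Euclidean rescaling $\psi_r$), that $\delta_{1/r}(x)=(y_z, y_\varphi/r)=L_r(\psi_r(x))$, where the linear map $L_r(y_z,y_\varphi):=(y_z,y_\varphi/r)$ fixes $\C^2\times\{0\}$ pointwise and stretches everything else to infinity as $r\to0$.

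\textbf{$(\Rightarrow)$ direction.} If $\mathbf{v}$ is rectifiable then, using the equivalence of $g_{\mathbb{H}^2}$ with the Euclidean metric on compact sets, at $|\mathbf{v}|$-a.e.\ $p$ there is a standard approximate tangent plane $T_p$ with multiplicity $m(p)$. Since $\mathbf{v}$ is concentrated on the Legendrian Grassmannian, $T_p$ is horizontal, and after translating $p=0$ we have $T_0\subset\C^2\times\{0\}$. A $C^1$ Legendrian piece tangent to $T_0$ at the origin, parametrized over $T_0$ as $f(t)=(t,f^\perp(t),f_\varphi(t))$ with $\nabla f^\perp(0)=0$, has $df_\varphi=\sum(t_{2j-1}df_{2j}^\perp-f_{2j}^\perp dt_{2j-1})$ by $f^*\alpha=0$; integrating yields the quadratic-gain estimate $f_\varphi(t)=o(|t|^2)$. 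Under $L_r$ this deviation contributes $o(r^2)/r=o(r)\to0$, so $(\delta_{1/r})_*\mathbf{v}/r^2$ concentrates on $T_0$ and the isotropic and anisotropic blow-ups coincide. The explicit computation of $\theta^\chi$ on an $m$-fold horizontal Legendrian plane through $0$ (using $|\nabla^{\mathcal P}\r|=1$ on $\mathcal P$, $\varphi\equiv 0$, and polar coordinates) gives $\theta^\chi=2\pi m$, identifying $m(p)=\theta^\chi(p)/(2\pi)$.

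\textbf{$(\Leftarrow)$ direction.} Assume for $|\mathbf{v}|$-a.e.\ $p$ that every convergent anisotropic blow-up is $\tfrac{\theta^\chi(p)}{2\pi}[\mathcal P_p]$ for a single horizontal Legendrian plane. Reducing to $p=0$, weak convergence of $(\delta_{1/r})_*\mathbf{v}/r^2$ to $\tfrac{\theta^\chi(0)}{2\pi}[\mathcal P_0]$ implies, for each $\epsilon>0$,
\[
r^{-2}|\mathbf{v}|\bigl(\delta_r(B_1^\r(0)\setminus U_\epsilon^{\mathrm{eucl}}(\mathcal P_0))\bigr)\xrightarrow{r\to0}0.
\]
Since $(r z,r^2\varphi)$ is at Euclidean distance at most $\epsilon r$ from $\mathcal P_0$ whenever $(z,\varphi)$ is at Euclidean distance at most $\epsilon$ from $\mathcal P_0$, we have $\delta_r(U_\epsilon^{\mathrm{eucl}}(\mathcal P_0))\subseteq U_{\epsilon r}^{\mathrm{eucl}}(\mathcal P_0)$, and the above becomes
\[
r^{-2}|\mathbf{v}|\bigl(B_r^\r(0)\setminus U_{\epsilon r}^{\mathrm{eucl}}(\mathcal P_0)\bigr)\to 0.
\]
Combined with the upper density bound from Corollary \ref{mono.cor2} and the fact that $\mathcal P_0\subset\C^2\times\{0\}$ is contained in $B_r^\r(0)$ on the Euclidean ball of radius $r$, this gives a genuine Euclidean approximate tangent plane $\mathcal P_0$ with multiplicity $\theta^\chi(p)/(2\pi)$ at $|\mathbf v|$-a.e.\ $p$. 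Standard rectifiability criteria then deliver that $\mathbf v$ is rectifiable. The same computation, read at the level of test functions via $\int f\,d(\psi_r)_*\mathbf v=\int f\circ L_r^{-1}\,d(\phi_r)_*\mathbf v$ together with $L_r^{-1}(y_z,\tilde y_\varphi)=(y_z,r\tilde y_\varphi)\to(y_z,0)$ on the support of the limit, shows that the isotropic and anisotropic blow-ups agree whenever either exists.

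\textbf{Main obstacle.} The one subtle point is converting anisotropic concentration into a Euclidean approximate tangent plane, since a Koranyi ball of radius $r$ contains a $\varphi$-slab of width $r^2$ while the Euclidean ball only a slab of width $r$. This is precisely where the horizontality of the blow-up plane is used crucially: on $\mathcal P_0\subset\C^2\times\{0\}$ the two distances coincide, so the inclusion $\delta_r(U_\epsilon^{\mathrm{eucl}}(\mathcal P_0))\subseteq U_{\epsilon r}^{\mathrm{eucl}}(\mathcal P_0)$ captures all the mass concentrated near $\mathcal P_0$ without losing any in the $\varphi$-direction.
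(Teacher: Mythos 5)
Your proposal has the right overall shape (reduce to $p=0$, compare the Euclidean rescaling with $\delta_{1/r}$, exploit a quadratic gain in the $\varphi$-direction), but both implications contain genuine gaps, and in the forward direction you are missing the key ingredient that the paper needs.

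In the $(\Rightarrow)$ direction, your quadratic-gain computation $f_\varphi(t)=o(|t|^2)$ is carried out for a \emph{$C^1$ Legendrian graph}, but a rectifiable varifold is only covered, up to $\mathcal{H}^2$-negligible sets, by $C^1$ surfaces $M_j$ whose tangent planes are horizontal merely at $\mathcal{H}^2$-a.e.\ point of $M_j\cap\operatorname{spt}|\mathbf{v}|$; the surfaces themselves are not Legendrian, so $f^*\alpha$ need not vanish along the paths over which you integrate $df_\varphi$, and the estimate $f_\varphi=o(|t|^2)$ does not follow. Moreover, even granting that the support of the rescalings concentrates on the horizontal plane $\mathcal{P}_0$, you never show that the limit measure is a \emph{constant} multiple of $\mathcal{H}^2\res\mathcal{P}_0$: your identification of the multiplicity via ``$\theta^\chi$ on an $m$-fold plane equals $2\pi m$'' presupposes exactly this. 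Note that your argument nowhere uses the stationarity hypothesis in this direction, whereas the paper's proof of this implication runs through Lemma \ref{zero.exc} (a HSLV with Grassmannian part concentrated on a single plane field is a constant multiple of a plane), which is precisely the subtle rigidity statement requiring the iterated blow-up and the algebraic Lemmas \ref{const.sigma}--\ref{too.many.bis}. Without some substitute for that constancy/rigidity step, the forward implication is not proved.

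In the $(\Leftarrow)$ direction, the inclusion you use goes the wrong way for the point that actually matters. The Kor\'anyi ball $B_r^\r(0)$ is contained in a Euclidean ball of comparable radius, but the converse fails badly near the $\varphi$-axis: a point with $|z|$ small and $|\varphi|\sim r$ lies in $B_r^{\eucl}(0)$ yet has $\r\sim\sqrt{r}\gg r$. Hence convergence of the anisotropic rescalings on every $B_R^\r$ does not control the mass that the \emph{Euclidean} blow-up sees in the slab $\{|\varphi|\gg r^2\}\cap B_r^{\eucl}(0)$, and ``standard rectifiability criteria'' cannot be invoked before this mass is excluded. The paper closes this gap by first upgrading varifold convergence of the blow-ups to Hausdorff convergence of supports (using the density lower bound $\theta^\chi\ge\nu$ together with Corollary \ref{dens.limits}, hence stationarity again), which yields the pointwise statement $\varphi(q)=o(\rho^2(q))$ for $q\in\operatorname{spt}|\mathbf{v}|$ near $0$; only then do the Euclidean and Kor\'anyi densities and blow-ups coincide. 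Your ``main obstacle'' paragraph identifies a containment in the harmless direction (along $\mathcal{P}_0$ the two metrics agree) but does not address the $\varphi$-axis issue, which is where the anisotropy actually bites.
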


\begin{proof}
Note that $\theta^\chi\ge\nu$ on $\operatorname{spt}|\mathbf{v}|$, by Corollary \ref{dens.usc}.
We disintegrate $\mathbf{v}(\mathcal P,p)=\mu_p(\mathcal P)\otimes|\mathbf{v}|(p)$, where $\mu_p$
is a probability measure on $\Pi^{-1}(p)$. By left-invariance of $G$, we can identify $\mu_p$ with a probability measure on $G_0:=\Pi^{-1}(0)$.
We now consider the set $A$ of approximate continuity points for $p\mapsto\mu_p$ (with respect to the weak-$^*$ topology on probabilities), in terms of $d_K$-balls: namely, $p\in\operatorname{spt}|\mathbf{v}|$ belongs to $A$ if, for any continuous $f:G_0\to\R$ (or equivalently for a countable dense collection of such functions), we have
$$\int_{B_\ep^\r(p)}|f(\mu_q)-f(\mu_p)|\,d|\mathbf{v}|(q)=o(|\mathbf{v}|(B_\ep^\r(p)))\quad\text{as }\ep\to0,$$
or equivalently if this integral is $o(\ep^2)$, in view of Corollary \ref{mono.cor2}.
We have $p\in A$ for $|\mathbf{v}|$-a.e. $p$: indeed, note that $|\mathbf{v}|$ is locally a doubling measure on the metric space $(\operatorname{spt}|\mathbf{v}|,d_K)$.

If $\mathbf{v}$ is rectifiable, then for $|\mathbf{v}|$-a.e. $p$ we have $\mu_p=\delta_{\mathcal Q(p)}$ for some Legendrian plane $\mathcal Q(p)\in G_0$. Assuming also $p\in A$, any blow-up $\mathbf{w}$ satisfies the assumptions of Lemma \ref{zero.exc} below (up to a rotation), which tells us
that $\mathbf{w}$ is a constant multiple of $\mathcal Q(p)$. Since $\theta^\chi(\mathbf{w},0)=\theta^\chi(\mathbf{v},p)$, this
proves one implication.

To see the reverse implication, we consider a point $p\in A$ where any blow-up is $\frac{\theta^\chi(\mathbf{v},p)}{2\pi}$ times a plane $\mathcal{Q}(p)$.
In particular, we have $\mu_p=\delta_{\mathcal Q(p)}$.
Up to a translation and a rotation, we can assume that $p=0$ and $\mathcal{Q}(0)=\operatorname{span}\{X_1(0),X_2(0)\}$.
By a straightforward compactness argument (using Corollary \ref{dens.limits}), we see that $\varphi(q)=o(\rho^2(q))$ as $q\to0$ in $\operatorname{spt}|\mathbf{v}|$.
Hence, we easily deduce that
$$\lim_{\ep\to0}\frac{|\mathbf{v}|(B_\ep(0))}{\ep^2}=\lim_{\ep\to0}\frac{|\mathbf{v}|(B^\r_\ep(0))}{\ep^2}=\theta^\chi(\mathbf{v},0),$$
as well as
$$\lim_{\ep\to0}\ep^{-2}(\tilde\delta_{1/\ep})_*|\mathbf{v}|=\lim_{\ep\to0}\ep^{-2}(\delta_{1/\ep})_*|\mathbf{v}|=\frac{\theta^\chi(\mathbf{v},0)}{2\pi}\mathcal{H}^2\res\mathcal Q(0),$$
where $\tilde\delta_{1/\ep}(q):=\ep^{-1}q$ is the Euclidean dilation. Thus, $|\mathbf{v}|$ is a rectifiable measure
and $\mathbf{v}(\mathcal P,p)=\delta_{\mathcal Q(p)}(\mathcal P)\otimes|\mathbf{v}|(p)$.
\end{proof}

The proof of Theorem \ref{cpt.int} is based on the following lemma, whose analogue in the Euclidean space is a simple exercise
but which turns out to be quite subtle for HSLVs in the Heisenberg group.

\begin{Lm}\label{zero.exc}
Given a HSLV $\mathbf{v}$ on an open set $U\subseteq\mathbb{H}^2$, assume that its density
$\theta^\chi(p)\ge \nu>0$ for all $p\in\operatorname{spt}|\mathbf{v}|$ and that
\begin{equation}\label{tensor.v}\mathbf{v}(\mathcal P,p)=\delta_{\mathcal P_p}(\mathcal P)\otimes|\mathbf{v}|(p),\end{equation}
where we set $\mathcal P_p:=\operatorname{span}\{X_1(p),X_2(p)\}$. Then $\mathbf{v}$
is locally a finite union of left translates of the plane $\mathcal{P}_0\subset\mathbb{H}^2$, with constant multiplicity.
Moreover, if $\mathbf{v}$ is an (anisotropic) blow-up then it is a constant multiple of $\mathcal{P}_0$.
 \hfill $\Box$
\end{Lm}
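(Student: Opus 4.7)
The plan is to prove the blow-up rigidity first (which is the heart of the argument) and then upgrade to the local structure statement via Lemma \ref{equiv.rect}. As a preliminary observation, since $\nabla^{\mathcal P_p}z_{2j}=0$ and $\nabla^{\mathcal P_p}z_{2j-1}=X_j$, formula \eqref{V.8-9} collapses to
$$2\operatorname{div}_{\mathcal P_p}W_F=-(X_1Y_1F+X_2Y_2F),$$
so the stationarity of $\mathbf v$ becomes $\int(X_1Y_1F+X_2Y_2F)\,d|\mathbf v|=0$ for all $F\in C_c^\infty(U)$. A direct computation gives $[X_1,X_2]=0$, hence the horizontal distribution $p\mapsto\mathcal P_p$ is Frobenius-integrable; its integral leaves are exactly the (abelian) left translates $q*\mathcal P_0$, on each of which $z_2,z_4$ and $\Psi:=\varphi+z_1z_2+z_3z_4$ are constant. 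This preliminary observation identifies the only candidate leaves and will be used at the very end to recognize the support.

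For the blow-up statement, let $\mathbf w$ be an (anisotropic) blow-up of $\mathbf v$ at a point $p\in\operatorname{spt}|\mathbf v|$. Since $q\mapsto\mathcal P_q$ is preserved by left translations (by left-invariance of $X_j$) and by the dilations (Remark \ref{isometry}), $\mathbf w$ is a dilation-invariant HSLV still satisfying \eqref{tensor.v}. By Remark \ref{blow} combined with \eqref{r.sigma.j}, $\nabla^H\r\in\mathcal P_q$ at every $q\in\operatorname{spt}|\mathbf w|\cap\{\rho>0\}$, which expanding in the $(X_1,Y_1,X_2,Y_2)$-basis amounts to the algebraic constraints
$$\rho^2z_2+2\varphi z_1=0\quad\text{and}\quad\rho^2z_4+2\varphi z_3=0.$$
The main work is then to further constrain $\operatorname{spt}|\mathbf w|$ by \emph{iterated blow-up}: pick $q_0\in\operatorname{spt}|\mathbf w|$ and blow up once more, getting $\mathbf w'$. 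Then $\mathbf w'$ satisfies its own nonlinear constraints and, in addition, the two linear equations $L_1=L_2=0$ coming from the linearization of $\mathbf w$'s constraints at $q_0$ (two explicit linear forms in the horizontal coordinates, obtained by expanding $dF_1|_{q_0}$, $dF_2|_{q_0}$ in the left-invariant coframe). In the generic case $\sigma(q_0),z_1(q_0),z_3(q_0)\neq 0$, substituting $z_2^p=-\sigma'z_1^p$, $z_4^p=-\sigma'z_3^p$ into $L_1,L_2$ produces the algebraic identity $(\sigma'-\sigma_0)(\sigma'-\sigma_0/3)=0$ together with a rank-one condition on $(z_1^p,z_3^p)$, confining $\operatorname{spt}|\mathbf w'|\cap\{\rho>0\}$ to an $\mathcal H^2_K$-negligible horizontal curve, hence $|\mathbf w'|$-negligible by Corollary \ref{mono.cor2}; parallel (simpler) computations in the degenerate cases $z_1(q_0)z_3(q_0)=0$ and $\rho(q_0)=0$ with $q_0\neq 0$ similarly force $\operatorname{spt}|\mathbf w'|$ into the $\varphi$-axis. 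In all three sub-cases, testing stationarity of $\mathbf w'$ against $F=z_1z_2\phi(\varphi)\chi(z)$ with $\chi\equiv1$ near $0$ (for which $(X_1Y_1+X_2Y_2)F\equiv\phi(\varphi)\chi(0)$ on the axis) yields $\int\phi\,d\mu=0$ for the $\varphi$-pushforward $\mu$ of $|\mathbf w'|$, forcing $\mathbf w'=0$ — a contradiction with $\theta^\chi(\mathbf w',0)\geq\nu$. Hence necessarily $q_0\in\mathcal P_0$, and $\operatorname{spt}|\mathbf w|\subseteq\mathcal P_0$; dilation-invariance together with the tangent plane $\mathcal P_0$ on $\mathcal P_0$ then force $\mathbf w=c\,\mathcal H^2\res\mathcal P_0$ for some $c\geq 0$, proving the blow-up statement.

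For the general claim, every blow-up of $\mathbf v$ is now a constant multiple of $\mathcal P_0$ after left translation, so by Lemma \ref{equiv.rect} the varifold $\mathbf v$ is rectifiable; its support $S$ is a $2$-rectifiable set with approximate tangent $\mathcal P_p$ at every $p\in S$. The Frobenius integrability from the preliminary observation, combined with a standard Rademacher-type argument for rectifiable sets tangent to an involutive distribution, places $S$ locally inside a countable union of left translates of $\mathcal P_0$, and this union is locally finite thanks to the mass bound in Corollary \ref{mono.cor2} and the density lower bound $\nu$. Restricting stationarity to a single leaf, where $X_1,X_2$ are tangent and agree with $\partial_{z_1},\partial_{z_3}$ in the natural parametrization, one obtains $\partial_{z_1}N=\partial_{z_3}N=0$ by integration by parts, so the multiplicity $N$ is constant on each connected component, completing the proof. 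The main obstacle is the algebraic closing step in the iterated blow-up: both layers of constraints must be combined just tightly enough to collapse the support without losing all varifold mass, and the special cases on $\{z_1z_3=0\}$ and on the $\varphi$-axis — where the Koranyi geometry allows anomalous concentration — must be ruled out by the separate test-function argument.
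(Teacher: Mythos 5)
Your reduction of the blow-up case to algebra is partly sound (the quadratic you obtain is indeed $3x^2-4x+1=0$ in $x=\sigma'/\sigma_0$, giving $\sigma'\in\{\sigma_0,\sigma_0/3\}$, with a rank-one kernel condition on $(z_1,z_3)$), but the step where you discard the resulting set is where the proof breaks. The candidate support you are left with off the $\varphi$-axis consists of dilation-orbits $t\mapsto(tz^0,t^2\varphi^0)$ with $\varphi^0\neq0$ whenever $\sigma_0\neq0$; these curves are \emph{not} horizontal (a curve $t\mapsto(tz^0,t^2\varphi^0)$ is horizontal iff $\varphi^0=0$), and for the Kor\'anyi metric they have Hausdorff dimension $2$ with locally finite, positive $\mathcal{H}^2_K$-measure — exactly like the $\varphi$-axis. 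Hence Corollary \ref{mono.cor2} gives no conclusion, and your final test with $F=z_1z_2\phi(\varphi)\chi(z)$ only kills mass on the axis, not on these tilted curves. That one-dimensional sets of this type can genuinely carry HSLV mass is precisely the anisotropic pathology illustrated by Remark \ref{orr} and the remark after Definition \ref{strong.phslv}, so this possibility must be excluded by an argument, not by negligibility. This is exactly where the paper's proof has to work hardest: it shows (Lemmas \ref{const.sigma}, \ref{too.many.constraints}, \ref{too.many.bis}) via a further layer of blow-ups and a consistency argument that additional \emph{nonlinear} constraints \eqref{nonlin.constr} must hold on the support, and the case ``$\sigma$ equal to a nonzero constant'' is ruled out not by measure-theoretic negligibility but by testing with the concave Hamiltonian $\psi(\rho^2)-1$, which forces the support into $\{z=0\}$ before the axis is excluded by the $z_1z_2f(\varphi)$ test. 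Without an argument of this kind your contradiction in the ``generic case'' (and likewise in the degenerate case $\rho(q_0)=0$, $q_0\neq0$, where the blow-up could a priori be a nontrivial axis-supported varifold before you have excluded off-axis mass) is not established.

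Two secondary points. First, the claimed dilation invariance of the blow-up $\mathbf w$ is not justified by anything quoted (and is not needed: the paper only uses $\nabla^{\mathcal P}\arctan\sigma=0$ on the support, from Remark \ref{blow}). Second, in the general (non-blow-up) case, the appeal to ``a standard Rademacher-type argument for rectifiable sets tangent to an involutive distribution'' is not a standard fact: almost-everywhere approximate tangency to the leaves of a foliation does not place a rectifiable set inside countably many leaves, and stationarity must be used. The paper instead runs a quantitative chaining argument: the classification of blow-ups gives a uniform cone-type condition \eqref{cone.claim} at every support point, from which Lipschitz horizontal paths with prescribed $\pi_{\mathcal P_0}$-projection are built, and constancy of the leaf-coordinate map $f$ (with $X_j(f)=0$) along these paths yields the product structure $\operatorname{spt}|\mathbf{v}|=S*D_a$; local finiteness of $S$ then again uses the blow-up classification. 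Your use of Lemma \ref{equiv.rect} is not circular (its proof only needs the blow-up half of the present lemma), but the leaf-decomposition step as written is a gap on top of the main one above.
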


Note that we can equivalently require that $\theta^\chi(p)\ge \nu>0$ for $|\mathbf{v}|$-a.e. $p$,
by Corollary \ref{dens.usc}.
We will implicitly use the fact that this density assumption is stable under varifold limits, and in particular under blow-ups,
by Corollary \ref{dens.limits}.
We also observe that, under the assumption \eqref{tensor.v}, stationarity can be conveniently rewritten as
\begin{equation}\label{tensor.v.bis}\int_{U}[X_1(Y_1(F))+X_2(Y_2(F))]\,d|\mathbf{v}|=0\quad\text{for all }F\in C^\infty_c(U).\end{equation}

\begin{proof}
First we show the statement for blow-ups, namely we assume that $U=\mathbb{H}^2$ and $\nabla^{\mathcal P}\sigma$ vanishes
on $\Pi^{-1}(\mathbb{H}^2\setminus\{z=0\})\cap\operatorname{spt}(\mathbf{v})$, as seen in Remark \ref{blow}.
In particular, by \eqref{tensor.v}, for $p\in\operatorname{spt}|\mathbf{v}|$ we have
$$\nabla^H\sigma(p)\cdot X_j(p)=0\quad\text{for }j=1,2.$$
Computing $\nabla^H\sigma=2\rho^{-2}(\nabla^H\varphi-\sigma\nabla^H\frac{\rho^2}{2})$ and recalling that
$$\nabla^H\varphi=z_1Y_1-z_2X_1+z_3Y_2-z_4X_2,\quad \nabla^H\frac{\rho^2}{2}=z_1X_1+z_2Y_1+z_3X_2+z_4Y_2,$$
we deduce that $z_2=-\sigma z_1$ and $z_4=-\sigma z_3$ on $\operatorname{spt}|\mathbf{v}|\setminus\{z=0\}$.

We now claim that $\sigma=0$ on $\operatorname{spt}|\mathbf{v}|\setminus\{z=0\}$, so that
$$\operatorname{spt}|v|\subseteq \{\varphi=0\}\cup\{z=0\}.$$
Then, taking any Hamiltonian of the form $F(z,\varphi):=z_1z_2\psi(\varphi)$, with $\psi\in C^\infty_c(\R\setminus\{0\})$,
and recalling \eqref{tensor.v.bis}, it is easy to deduce that actually $\operatorname{spt}|\mathbf{v}|\subseteq\{\varphi=0\}$.
Taking into account that $z_{2j}=-\sigma z_{2j-1}=0$ off the $\varphi$-axis, we obtain $\operatorname{spt}|\mathbf{v}|\subseteq\mathcal P_0$.
Finally, taking $$F(z,\varphi):=a(z_1,z_3)z_2+b(z_1,z_3)z_4$$ for $a,b\in C^\infty_c(\R^2)$, we deduce that $\mathbf{v}$ is a stationary varifold in the usual sense, and hence by the constancy theorem it has constant multiplicity, as desired.

To check the previous claim, assume by contradiction that $\bar p\in\operatorname{spt}|\mathbf{v}|\setminus(\mathcal P_0\cup\{z=0\})$
and consider a blow-up $\mathbf{w}$ at $\bar p$, i.e., a limit of rescalings $(\delta_{1/r}\circ\ell_{\bar p^{-1}})_*\mathbf{v}$ along a sequence $r\to0$.
Writing $\bar p=(\bar z,\bar\varphi)$ and taking $p'=(z',\varphi')\in\operatorname{spt}|\mathbf{v}|$, we get
$$z_{2j}'-\bar z_{2j}=-(\sigma(p')z_{2j-1}'-\sigma(\bar p)\bar z_{2j-1})=-\sigma(\bar p)(z_{2j-1}'-\bar z_{2j-1})
-(\sigma(p')-\sigma(\bar p))z_{2j-1}',$$
and hence for any point $p=(z,\varphi)\in\operatorname{spt}|w|$ we have the linearized equation
$$z_{2j}=-\sigma(\bar p)z_{2j-1}-\bar z_{2j-1}[Y_1(\sigma)(\bar p)z_2+Y_2(\sigma)(\bar p)z_4]$$
(recall that $X_j(\sigma)$ vanishes at $\bar p$).
Abbreviating $\bar\rho:=\rho(\bar p)$ and $\bar\sigma:=\sigma(\bar p)$, we also have
$$Y_\ell(\sigma)(\bar p)=\frac{2}{\bar\rho^2}(\bar z_{2\ell-1}-\bar\sigma\bar z_{2\ell})=\frac{2(1+\bar\sigma^2)}{\bar\rho^2}\bar z_{2\ell-1},$$
which gives
$$
-\bar\sigma z_1=\lf(1+\frac{2(1+\bar\sigma^2)\bar z_1^2}{\bar\rho^2}\rg) z_2+\frac{2(1+\bar\sigma^2)\bar z_1\bar z_3}{\bar\rho^2} z_4,
$$
and a similar equation interchanging the indices $(1,2)$ with $(3,4)$. Since $\bar\sigma=\frac{2\bar\varphi}{\bar\rho^2}\neq0$, we obtain
\begin{equation}\label{lineariz}
z_1=-\frac{\bar\rho^2+2\bar z_1^2+2\bar z_2^2}{2\bar\varphi} z_2-\frac{\bar z_1\bar z_3+\bar z_2\bar z_4}{\bar\varphi} z_4,
\end{equation}
and a similar equation expressing $ z_3$ as a constant linear combination of $z_2,z_4$, valid on $\operatorname{spt}|\mathbf{w}|$.
We now get a contradiction by using Lemma \ref{too.many.bis} below (applied to $\mathbf{w}$).

Let us turn to the general case.
Up to shrinking and translating $U$, we can assume that $U=\mathcal{C}_a=V_a*D_a$ (see the definition \eqref{cyl.def} below).
We claim that
$$\operatorname{spt}|\mathbf{v}|=S*D_a$$
for some closed subset $S\subseteq V_a$. Note that, once this is shown, then $S$ is automatically a locally finite set,
since otherwise we could find a converging sequence $p_k\to\bar p$ in $V_a\cap\operatorname{spt}|\mathbf{v}|$,
giving a contradiction after a blow-up at $\bar p$ (using the previously established fact that blow-ups are multiples of $\mathcal{P}_0$
and the consequential convergence $\delta_{1/r}\circ\ell_{\bar p^{-1}}(\operatorname{spt}|\mathbf{v}|)\to\mathcal P_0$,
by Corollary \ref{dens.limits}).
The conclusion then follows by the previous constancy argument.

To prove the previous claim, we consider the map $f:\mathcal{C}_a\to V_a$ given by
$$f(z,\varphi):=(0,z_2,0,z_4,\varphi+z_1z_2+z_3z_4),$$
which associates with every $p\in \mathcal{C}_a$ the unique point $q\in V_a$ such that $p\in q*D_a$.
We observe that $X_j(f)=0$, as expected. Given $x\in D_a$, we let
$$A_x:=\operatorname{spt}|\mathbf{v}|\cap\pi_{\mathcal P_0}^{-1}(x)),$$
where $\pi_{\mathcal P_0}(z,\varphi):=(z_1,0,z_3,0,0)$.
Assume that
for some $q\in V_a$ and $x\in D_a$ we have
$$q*x\in A_x.$$
In order to show the claim, it suffices to show that,
given another point $x'\in D_a$, we have $q*x'\in A_{x'}$.
We now construct a Lipschitz function
$$h:[0,1]\to\operatorname{spt}|\mathbf{v}|$$
such that $h(0)=q*x$ and $\pi_{\mathcal P_0}\circ h(t)=(1-t)x+tx'$.
Once this is done, we then see that $$h'(t)\in\mathcal{P}_{h(t)}$$ for all $t\in[0,1]$ where the derivative exists, by a straightforward
blow-up analysis. Since $X_j(f)=0$, we deduce that $f\circ h$ is constant. Since $f\circ h(0)=q$, we have $f\circ h(1)=q$ as well,
and hence $h(1)\in(q*D_a)\cap A_{x'}$ (as $\pi_{\mathcal P_0}(h(1))=x'$). Since $(q*D_a)\cap\pi_{\mathcal P_0}^{-1}(x')=\{q*x'\}$,
we deduce that $h(1)=q*x'$, and thus $q*x'\in A_{x'}$, as desired.

In order to construct $h$, we make the following observation, which is again a simple consequence of the classification of blow-ups:
given $\bar p\in U$ and $\ep\in(0,1)$, there exists a radius $r_0(\bar p)\in(0,1)$ such that for $0<r<r_0(\bar p)$ we have
$$d_K(q,\operatorname{spt}|\mathbf{v}|\cap B_r^\r(\bar p))<\ep r\quad\text{for all }q\in(\bar p*\mathcal P_0)\cap B_r^\r(\bar p).$$
In particular, given any $v\in\mathcal P_0$ with $|v|<r$, we can find $p\in \operatorname{spt}|\mathbf{v}|\cap B_r^\r(\bar p)$
such that
$$\r(\bar p*v)^{-1}*p)<\ep r,$$
which easily implies that
$$|p-(\bar p*v)|<C\ep r$$
and in turn that
\begin{equation}\label{cone.claim}
|z-(\bar z+\pi(v))|<\ep r,\quad|\varphi-\bar\varphi|<Cr
\end{equation}
for a possibly different $C=C(\bar p)>0$, locally uniform in $\bar p$.
With this in hand, for fixed $\ep>0$ and $C'>0$, we consider the maximum $\tau\in[0,1]$ such that
there exist times $$t_0=0<t_1<\dots<t_k=\tau\le 1$$ and points $p_0=q*x,p_1,\dots,p_k\in\operatorname{spt}|\mathbf{v}|$ with
$$t_{i+1}-t_i\le\ep,\quad |z_{i+1}-z_i-(t_{i+1}-t_i)(x'-x)|\le\ep (t_{i+1}-t_i)|x'-x|,
\quad|\varphi_{i+1}-\varphi_i|\le C'(t_{i+1}-t_i),$$
where we identify $x,x'\in D_a$ with points in $\C^2$.
We observe that (up to removing some intermediate times) we can replace any such collection with another one in which $t_{i+2}-t_i>\ep$, so that $k\le 2\ep^{-1}+1$ is bounded and, by compactness, the maximum $\tau$ does indeed exist. By applying \eqref{cone.claim},
we see that we must have $\tau=1$, since otherwise starting with a collection with $t_k=\tau$ we could add an additional pair $(t_{k+1},p_{k+1})$ with $t_{k+1}>\tau$, provided that $C'$ is taken large enough (depending only on $x,x'\in D_a$ and $q\in V_a$).

Let us fix a collection $\Gamma_\ep:=\{(t_i,p_i)\mid i=0,\dots,k\}$ as above, with $t_k=1$.
As $\ep\to0$, we can extract a limit $\Gamma=\lim_{\ep\to0}\Gamma_\ep$ in the Hausdorff topology, up to a subsequence, and it is immediate to check that
$\Gamma$ is the graph of a Lipschitz function $h:[0,1]\to U$ such that
$$h(0)=q*x,\quad\pi\circ h(t)=\pi(q)+x+t(x'-x).$$
In particular, we also have
$$\pi_{\mathcal P_0}\circ h(t)=x+t(x'-x),$$
as desired.
\end{proof}

In the following lemmas, we tacitly assume that the varifold $\mathbf{v}$ in the statement has
density $\theta^\chi\ge\nu>0$ on its support and that \eqref{tensor.v} holds
(as already observed, both conditions are stable under limits and thus under blow-ups).

\begin{Lm}\label{const.sigma}
Assume that $\mathbf{v}$ is a blow-up such that, on $\mathbb{H}^2\setminus\{z=0\}$, we have $\sigma=c$
for some constant $c\in\R$. Then $c=0$ and $\mathbf{v}$ is a constant multiple of $\mathcal{P}_0$. \hfill $\Box$
\end{Lm}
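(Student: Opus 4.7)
The plan is to split into the two cases $c=0$ and $c\neq 0$, treating the former by a direct reduction to the Euclidean constancy theorem on $\mathcal{P}_0$ and the latter by a blow-up argument triggering Lemma \ref{too.many.bis}.

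\textbf{Case $c=0$.} Since $\mathbf{v}$ is a blow-up, Remark \ref{blow} gives $\nabla^{\mathcal P}\sigma=0$ on $\operatorname{spt}|\mathbf v|\setminus\{z=0\}$; combined with the tensor form \eqref{tensor.v}, this yields the pointwise identities $z_2=-\sigma z_1$ and $z_4=-\sigma z_3$ (as computed at the start of the proof of Lemma \ref{zero.exc}). Setting $\sigma=c=0$, and using that $\sigma=c$ is equivalent to $2\varphi=c\rho^2$, these force $z_2=z_4=\varphi=0$ on this set, so $\operatorname{spt}|\mathbf{v}|\setminus\{z=0\}\subseteq\mathcal{P}_0$. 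To rule out mass on $\{z=0\}\setminus\{0\}$, I would test \eqref{tensor.v.bis} against Hamiltonians of the form $F(z,\varphi):=z_1z_2\,\psi(\varphi)\,\chi(z_3,z_4)$ with $\psi\in C^\infty_c(\R\setminus\{0\})$ and $\chi$ a bump equal to $1$ near $(0,0)$; a short direct calculation shows $(X_1Y_1+X_2Y_2)F\equiv\psi(\varphi)$ on $\{z=0\}$, so the disintegration of $|\mathbf{v}|\res(\{z=0\}\setminus\{0\})$ in the Reeb coordinate vanishes. The lower density bound $\theta^\chi\ge\nu>0$ then rules out any atom at the origin, yielding $\operatorname{spt}|\mathbf{v}|\subseteq\mathcal{P}_0$. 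On $\mathcal{P}_0$ one has $z_2=z_4=0$, so the assigned plane $\mathcal{P}_p=\operatorname{span}\{X_1(p),X_2(p)\}$ collapses to the constant horizontal plane $\mathcal{P}_0$, making $\mathbf{v}$ a classical Euclidean stationary $2$-varifold on $\mathcal{P}_0$ with constant tangent; the constancy theorem gives $\mathbf{v}=k\cdot\mathcal{P}_0$.

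\textbf{Case $c\neq 0$.} Plugging $\sigma=c$ into the same identities, $\operatorname{spt}|\mathbf{v}|\setminus\{z=0\}$ is confined to the dilation-invariant smooth $2$-surface
\[
S:=\Bigl\{(z,\varphi)\;:\;z_2=-cz_1,\ z_4=-cz_3,\ \varphi=\tfrac{c(1+c^2)}{2}(z_1^2+z_3^2)\Bigr\},
\]
which meets $\mathcal{P}_0\cup\{z=0\}$ only at the origin. The lower density bound together with Corollary \ref{mono.cor2} forces $\operatorname{spt}|\mathbf{v}|$ to contain a generic point $\bar p\in S$ with $z_1(\bar p)z_3(\bar p)\neq 0$: indeed, on the remaining locus $(\{z_1z_3=0\}\cap S)\cup\{z=0\}$ a further blow-up lands on the Reeb axis $\{z=0\}$, where the argument of Case $c=0$ forces the varifold to vanish, contradicting $\theta^\chi\ge\nu>0$. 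At such a $\bar p$, I would take an anisotropic blow-up $\mathbf{w}=\lim_{r\to 0}(\delta_{1/r}\circ\ell_{\bar p^{-1}})_*\mathbf{v}$; by the preceding remarks, $\mathbf{w}$ inherits \eqref{tensor.v}, the bound $\theta^\chi(\mathbf{w},0)\ge\nu>0$, and (by Remark \ref{blow}) the identities $z_{2j}=-\sigma z_{2j-1}$ on $\operatorname{spt}|\mathbf{w}|\setminus\{z=0\}$. Linearizing the relations defining $S$ at $\bar p$ yields precisely the system \eqref{lineariz} appearing in the proof of Lemma \ref{zero.exc}, so $\mathbf{w}$ satisfies the hypotheses of Lemma \ref{too.many.bis} (whose applicability is guaranteed by $z_1(\bar p)z_3(\bar p)\neq 0$); the conclusion $\mathbf{w}=0$ contradicts $\theta^\chi(\mathbf{w},0)\ge\nu>0$, ruling out $c\neq 0$ and bringing us back to Case $c=0$.

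The main obstacle is Case $c\neq 0$, specifically extracting from $\operatorname{spt}|\mathbf{v}|$ a generic blow-up point $\bar p\in S$ with $z_1(\bar p)z_3(\bar p)\neq 0$ and checking that the linearization around it matches exactly the algebraic hypotheses of Lemma \ref{too.many.bis}; this requires careful bookkeeping of the interplay between the monotonicity formula, the anisotropic Kor\'anyi geometry, and the fact that the assigned planes $\mathcal{P}_p$ are not tangent to $S$ when $c\neq 0$. The rest of the argument (including the Reeb-axis elimination via $F=z_1z_2\psi(\varphi)\chi(z_3,z_4)$ in Case $c=0$) is essentially routine.
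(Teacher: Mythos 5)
Your Case $c=0$ is essentially the paper's endgame (kill the Reeb axis with $z_1z_2\psi(\varphi)$-type Hamiltonians, confine the support to $\mathcal{P}_0$, then reduce to the isotropic constancy theorem via $F=a(z_1,z_3)z_2+b(z_1,z_3)z_4$), and that part is fine. The problem is Case $c\neq0$, where your argument is circular: you invoke Lemma \ref{too.many.bis}, but in the paper's logical architecture Lemma \ref{too.many.bis} is proved \emph{after} and \emph{from} Lemma \ref{const.sigma} (its proof repeatedly falls back on Lemma \ref{too.many.constraints}, whose proof consists precisely of reducing to the constant-$\sigma$ situation and citing Lemma \ref{const.sigma}). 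So you cannot use it here without supplying an independent proof, which you do not.

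Moreover, even granting the blow-up strategy, it does not produce the hypotheses you need. The cross-terms in \eqref{lineariz} come from the first-order variation of $\sigma$ along the support (the $Y_\ell(\sigma)(\bar p)$ coefficients in the proof of Lemma \ref{zero.exc}), i.e., from the fact that there $\sigma$ is only known to have vanishing \emph{tangential} derivative. In your setting $\sigma\equiv c$ exactly on the support, so $\sigma(p')-\sigma(\bar p)=0$ and the linearization at $\bar p\in S$ degenerates to $w_2=-cw_1$, $w_4=-cw_3$; combined with $w_{2j}=-\sigma w_{2j-1}$ for the new blow-up (Remark \ref{blow}), this forces $\sigma=c$ again on $\operatorname{spt}|\mathbf{w}|\setminus\{w=0\}$. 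The iterated blow-up therefore reproduces verbatim the hypothesis of the lemma and makes no progress. The paper's proof avoids blow-ups entirely at this stage: one tests stationarity with $F=\psi(\rho^2)-1$ for a concave $\psi$ with $\psi''<0$ on $(\ep,R)$ (admissible because its support meets the paraboloid $\{2\varphi=c\rho^2\}$, and hence $\operatorname{spt}|\mathbf{v}|$, in a compact set), obtaining $\int\psi''(\rho^2)(z_1z_2+z_3z_4)\,d|\mathbf{v}|=0$; since $z_1z_2+z_3z_4=-\frac{c}{1+c^2}\rho^2$ on the support, this forces $\operatorname{spt}|\mathbf{v}|\subseteq\{z=0\}$, hence $\{0\}$, and a final test with $z_1z_2f(\varphi)$, $f(0)=1$, gives the contradiction. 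This global, sign-based device is the missing idea in your proposal.
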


\begin{proof}
We assume by contradiction that $c\neq0$, so that
$$\operatorname{spt}|\mathbf{v}|\subseteq\{z=0\}\cup\{2\varphi=c\rho^2\}.$$
Since $\{2\varphi\neq c\rho^2\}\cap\{z=0\}=\{z=0,\ \varphi\neq0\}=:L$,
the restriction $\mathbf{v}\res\Pi^{-1}(L)$ gives a HSLV on $\mathbb{H}^2\setminus\{0\}$. Taking a Hamiltonian of the form
$z_1z_2f(\varphi)$, with $f\in C^\infty_c(\R\setminus\{0\})$, it is immediate to conclude that this restriction vanishes, so that
$$\operatorname{spt}|\mathbf{v}|\subseteq\{2\varphi=c\rho^2\}.$$
We take any $0<\ep<1<R$ and consider a concave smooth function
$\psi:\R_+\to[0,1]$ such that $\psi(t)=t$ for $t\in[0,\ep]$, $\psi(t)=1$ for $t\in[R,\infty)$, and $\psi''(t)<0$ for $t\in(\ep,R)$.
Then the support of the Hamiltonian $\psi(\rho^2)-1$ intersects $\operatorname{spt}|\mathbf{v}|$ in a compact set.
As a consequence, we can use it in \eqref{tensor.v.bis}, obtaining
$$\int_{\mathbb{H}^2}\psi''(\rho^2)(z_1z_2+z_3z_4)\,d|\mathbf{v}|=0.$$
Recalling that $z_2=-\sigma z_1=-cz_1$ and similarly $z_4=-cz_3$, we obtain
$$z_1z_2+z_3z_4=-c(z_1^2+z_3^2)=-\frac{c}{1+c^2}\rho^2,$$
and hence
$$\int_{\mathbb{H}^2}\psi''(\rho^2)\rho^2\,d|\mathbf{v}|=0.$$
Since $\psi''<0$ on $(\ep,R)$ and $\psi''=0$ elsewhere, we deduce that $\operatorname{spt}|\mathbf{v}|\subseteq\{z=0\}$,
and thus $\operatorname{spt}|\mathbf{v}|=\{0\}$.
However, using the previous Hamiltonian $z_1z_2f(\varphi)$ with $f\in C^\infty_c(\R)$ and $f(0)=1$, we reach a contradiction.
We then have $c=0$ and hence $z_{2j}=-cz_{2j-1}=0$ on $\operatorname{spt}|\mathbf{v}|\setminus\{z=0\}$.
The conclusion follows as at the beginning of the previous proof.
\end{proof}

\begin{Lm}\label{too.many.constraints}
For a blow-up $\mathbf{v}$ there cannot exist three constants $\alpha,\beta,\gamma\in\R$
such that $$z_1=\alpha z_4,\ z_2=\beta z_4,\ z_3=\gamma z_4\quad\text{on }\operatorname{spt}|\mathbf{v}|.$$
Similarly, it cannot happen that each of $z_1,z_2,z_4$ is a constant multiple of $z_3$. \hfill $\Box$
\end{Lm}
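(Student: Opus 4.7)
The plan is to combine the algebraic constraints from the hypothesis with the key identities for blow-ups that were already extracted in the proof of Lemma \ref{zero.exc}. Since $\mathbf{v}$ is a blow-up, Remark \ref{blow} gives $\nabla^{\mathcal P}\arctan\sigma=0$ on $\operatorname{spt}|\mathbf{v}|\setminus\{z=0\}$, and using $\mathcal{P}_p=\operatorname{span}\{X_1(p),X_2(p)\}$ from \eqref{tensor.v} this was shown to translate (via explicit computation of $\nabla^H\sigma$) into the pointwise relations
\[
z_2=-\sigma z_1,\qquad z_4=-\sigma z_3 \qquad \text{on } \operatorname{spt}|\mathbf{v}|\setminus\{z=0\}.
\]

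Assume by contradiction that there exist $\alpha,\beta,\gamma\in\R$ with $z_1=\alpha z_4$, $z_2=\beta z_4$, $z_3=\gamma z_4$ on $\operatorname{spt}|\mathbf{v}|$. Substituting into the two relations above, I would obtain on $\operatorname{spt}|\mathbf{v}|\setminus\{z=0\}$ the identities
\[
(1+\sigma\gamma)\,z_4=0,\qquad (\beta+\sigma\alpha)\,z_4=0.
\]
On this set $z_4\neq 0$, since $z_4=0$ would force $z_1=z_2=z_3=0$, contradicting $z\neq 0$. Hence $\sigma\gamma=-1$ and $\sigma\alpha=-\beta$ throughout $\operatorname{spt}|\mathbf{v}|\setminus\{z=0\}$. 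In particular $\gamma\neq 0$ and $\sigma$ is the constant $-1/\gamma$ on the whole of $\operatorname{spt}|\mathbf{v}|\setminus\{z=0\}$. Applying Lemma \ref{const.sigma} then forces $\sigma=0$ on this set, contradicting $\sigma\gamma=-1$.

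The only remaining possibility is $\operatorname{spt}|\mathbf{v}|\setminus\{z=0\}=\emptyset$, i.e., $\operatorname{spt}|\mathbf{v}|\subseteq\{z=0\}$. I would rule this out directly via the stationarity identity \eqref{tensor.v.bis}: a quick computation shows that at points where $z=0$ one has $X_j(Y_j(F))|_{z=0}=\partial_{z_{2j-1}}\partial_{z_{2j}}F+\partial_\varphi F$. Plugging in the Hamiltonian $F(z,\varphi)=\chi(z)\,z_1 z_2\,\psi(\varphi)$ with $\chi\in C^\infty_c(\mathbb{H}^2)$ equal to $1$ near $\{z=0\}$ and arbitrary $\psi\in C^\infty_c(\R)$, the integrand $X_1(Y_1F)+X_2(Y_2F)$ equals $\psi(\varphi)$ on $\{z=0\}$. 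Stationarity then yields $\int\psi\,d|\mathbf{v}|_{\{z=0\}}=0$ for every $\psi$, so $\mathbf{v}=0$; this contradicts the tacit nontriviality assumption on the blow-up (coming from the lower density bound $\theta^\chi\ge\nu>0$ on $\operatorname{spt}|\mathbf{v}|$, which is preserved under blow-ups by Corollary \ref{dens.limits}).

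The second statement is proved by the same argument after swapping the roles of the index pairs $(1,2)$ and $(3,4)$: if $z_1=\alpha z_3$, $z_2=\beta z_3$, $z_4=\gamma z_3$, then $z_4=-\sigma z_3$ gives $(\gamma+\sigma)z_3=0$ and $z_2=-\sigma z_1$ gives $(\beta+\sigma\alpha)z_3=0$, leading to an analogous dichotomy. The only subtle point in the whole argument is handling the degenerate branch $\operatorname{spt}|\mathbf{v}|\subseteq\{z=0\}$, which is not amenable to Lemma \ref{const.sigma} and must be ruled out by the explicit stationarity test above.
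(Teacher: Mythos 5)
Your proof is correct and follows essentially the same route as the paper: substitute the linear constraints into the blow-up relations $z_2=-\sigma z_1$, $z_4=-\sigma z_3$ to force $\sigma$ to be constant on $\operatorname{spt}|\mathbf{v}|$ off the $\varphi$-axis, then invoke Lemma \ref{const.sigma}. Your use of the additional relation $(1+\sigma\gamma)z_4=0$ is in fact slightly tidier than the paper's argument, which works only with $\beta z_4=-\alpha\sigma z_4$ and hence splits on $\alpha\neq0$; and your explicit treatment of the degenerate branch $\operatorname{spt}|\mathbf{v}|\subseteq\{z=0\}$ (which the paper leaves to the internals of Lemma \ref{const.sigma}) is sound, up to the cosmetic point that $\chi$ should be a compactly supported function of $z$ alone equal to $1$ near $z=0$, compact support of $F$ then coming from $\psi$. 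The one place you are too quick is the second statement when $\gamma=0$: there $\sigma=-\gamma=0$ is not by itself contradictory, and you must add that Lemma \ref{const.sigma} then makes $\mathbf{v}$ a nonzero multiple of $\mathcal{P}_0$, whose support contains points with $z_3=0$ and $z_1\neq0$, violating $z_1=\alpha z_3$; this is precisely the closing step the paper uses for the first statement.
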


\begin{proof}
By assumption $\{z_4=0\}\cap\operatorname{spt}|\mathbf{v}|=\{z=0\}\cap\operatorname{spt}|\mathbf{v}|$.
Moreover, as above we have $z_2=-\sigma z_1$ and $z_4=-\sigma z_3$ on $\operatorname{spt}|\mathbf{v}|\setminus\{z=0\}$, and thus
$$\beta z_4=z_2=-\sigma z_1=-\alpha \sigma z_4.$$
If $\alpha\neq0$, it follows that off the $\varphi$-axis $\{z=0\}$ we have $\sigma=-\frac{\beta}{\alpha}$.
Hence, by Lemma \ref{const.sigma}, $\mathbf{v}$ is a multiple of $\mathcal P_0$.
In particular, $z_4=0$ on its support, and hence also $z=0$, a contradiction.
\end{proof}

\begin{Lm}\label{too.many.bis}
For a blow-up $\mathbf{v}$ we cannot have
\begin{equation}\label{lin.constr} z_1=az_2+bz_4,\ z_3=cz_2+dz_4\quad\text{on }\operatorname{spt}|\mathbf{v}|\end{equation}
for constant numbers $a,b,c,d\in\R$. \hfill $\Box$
\end{Lm}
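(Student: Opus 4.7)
The plan is to substitute the relations $z_2=-\sigma z_1$ and $z_4=-\sigma z_3$, which hold on $\operatorname{spt}|\mathbf{v}|\setminus\{z=0\}$ for any blow-up (as recalled at the start of the proof of Lemma~\ref{zero.exc}), into the hypothesized identities \eqref{lin.constr}. This produces the homogeneous $2\times 2$ linear system with coefficient matrix $M(\sigma):=\bigl(\begin{smallmatrix}1+a\sigma & b\sigma \\ c\sigma & 1+d\sigma\end{smallmatrix}\bigr)$, so that at every point of $\operatorname{spt}|\mathbf{v}|\setminus\{z=0\}$ either $(z_1,z_3)=0$ (which forces $z=0$) or $\sigma$ must be a real root of $q(\sigma):=(ad-bc)\sigma^2+(a+d)\sigma+1$. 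A crucial observation I will exploit throughout is that $q(0)=1$, so zero is never a root.

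If $q$ has no real roots, the dichotomy above immediately gives $\operatorname{spt}|\mathbf{v}|\subseteq\{z=0\}$. Otherwise, letting $\sigma_i$ denote a nonzero real root, the kernel of $M(\sigma_i)$ is a fixed one-dimensional subspace $\R(\alpha_i,\beta_i)$; since then $z_2,z_4$ are determined by $z_1,z_3$ via $-\sigma_i$ and $\varphi=\sigma_i|z|^2/2$, the portion of the support lying on $\{\sigma=\sigma_i\}$ is contained in a single Euclidean one-dimensional parabolic curve $\gamma_i(t):=(tv_i,c_it^2)$, where $v_i:=(\alpha_i,-\sigma_i\alpha_i,\beta_i,-\sigma_i\beta_i)$ and $c_i:=\sigma_i|v_i|^2/2\neq 0$. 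I would then rule out any $\bar p=\gamma_i(T)$ with $T\neq 0$ from the support by blowing up at $\bar p$: a short computation in group-translated coordinates gives $\bar p^{-1}*\gamma_i(T+s)=(sv_i,c_i(2Ts+s^2))$, so under $\delta_{1/r}$ with the reparametrization $s=r^2\eta$ the $z$-component vanishes in the limit while $\varphi\to 2c_iT\eta$. Using $T,c_i\neq 0$, the Hausdorff limit of $\gamma_i$ is therefore exactly the vertical axis $\{z=0\}$; combined with Corollary~\ref{dens.limits}, this yields a blow-up $\mathbf{w}$ of $\mathbf{v}$ with $\operatorname{spt}|\mathbf{w}|\subseteq\{z=0\}$ and $\theta^\chi(\mathbf{w},0)=\theta^\chi(\mathbf{v},\bar p)\geq\nu>0$.

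Both branches of the argument thus reduce to ruling out any HSLV $\mathbf{u}$ on $\mathbb{H}^2$ with the tensor structure \eqref{tensor.v}, $\theta^\chi(\mathbf{u},0)>0$, and $\operatorname{spt}|\mathbf{u}|\subseteq\{z=0\}$. To close this final step, I would test stationarity in the form \eqref{tensor.v.bis} with the Hamiltonian $F(z,\varphi):=z_1z_2f(\varphi)$ for arbitrary $f\in C_c^\infty(\R)$: a direct calculation yields $X_1Y_1(F)+X_2Y_2(F)=f(\varphi)+O(z)$, so restricting to $\{z=0\}$ and letting $f$ range freely forces $\int f(\varphi)\,d|\mathbf{u}|=0$ for all such $f$, whence $|\mathbf{u}|$ vanishes on the $\varphi$-axis, contradicting the positive density at the origin. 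Applying this first to the blow-up $\mathbf{w}$ in the two-root case gives $\operatorname{spt}|\mathbf{v}|\subseteq\{z=0\}$, and then applying the same argument to $\mathbf{v}$ itself closes the proof. The hard part will be the anisotropic blow-up step: since $\gamma_i$ is parabolic and has Kor\'anyi Hausdorff dimension two away from the origin (being essentially vertical), the upper density bound of Corollary~\ref{mono.cor2} alone does not force $|\mathbf{v}|(\gamma_i\setminus\{0\})=0$, and one must explicitly identify the correct parabolic scaling that collapses the tangent structure onto the vertical axis.
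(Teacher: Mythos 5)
Your argument is essentially correct and follows a genuinely different route from the paper's. The paper never identifies the putative support explicitly: it compares the linear constraint \eqref{lin.constr} with the constraint satisfied by a further blow-up at a point of $\mathbb{H}^2\setminus(\{z=0\}\cup\{\varphi=0\})$, forces the nonlinear coefficient identities \eqref{nonlin.constr} to hold pointwise on the support, divides them to conclude that $z_1^2$ is locally proportional to $z_3^2$, and closes the loop with Lemma \ref{too.many.constraints}. You instead solve the system explicitly: substituting $z_{2j}=-\sigma z_{2j-1}$ turns \eqref{lin.constr} into $M(\sigma)(z_1,z_3)^T=0$ with $\det M(\sigma)=q(\sigma)$ and $q(0)=1$, so off $\{z=0\}$ the support lives on at most two level sets $\{\sigma=\sigma_i\}$ and (generically) on two explicit parabolic curves $\gamma_i$; a second, parabolically scaled blow-up at $\gamma_i(T)$, $T\neq0$, collapses onto the Reeb axis $\{z=0\}$ (your computation $\bar p^{-1}*\gamma_i(T+s)=(sv_i,c_i(2Ts+s^2))$ and the reparametrization $s=r^2\eta$ are correct, and since $\gamma_i$ is non-horizontal, Corollary \ref{mono.cor2} alone indeed cannot exclude it); finally the Hamiltonian $z_1z_2f(\varphi)$, cut off in $\rho$, kills any HSLV with the structure \eqref{tensor.v} supported on $\{z=0\}$ with positive density, exactly as the paper does at the analogous points of Lemmas \ref{zero.exc} and \ref{const.sigma} (one checks $[X_1Y_1+X_2Y_2](z_1z_2f(\varphi))=f(\varphi)$ on $\{z=0\}$). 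Your route is more geometric and makes visible that the only possible failure mode is concentration on a Reeb integral curve; the paper's is shorter because it recycles Lemma \ref{too.many.constraints} as the terminal contradiction instead of re-running the vertical-axis exclusion.

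There is one genuine gap: the claim that $\ker M(\sigma_i)$ is always one-dimensional fails when $M(\sigma_i)=0$, which happens precisely for $b=c=0$ and $a=d\neq0$ (then $q(\sigma)=(1+a\sigma)^2$ and $\sigma_1=-1/a$ is a double root with $M(\sigma_1)\equiv0$). In that case the portion of the support on $\{\sigma=\sigma_1\}$ is not contained in a single curve but a priori in a whole paraboloid, and your collapsing argument does not apply. The case is easily repaired — there $\sigma$ is constant and nonzero on $\operatorname{spt}|\mathbf{v}|\setminus\{z=0\}$, so Lemma \ref{const.sigma} gives the contradiction directly — but as written the proposal does not cover it, and the lemma is stated for arbitrary $a,b,c,d$.
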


\begin{proof}
Assume moreover that $z_1=\eta z_3$ on $\operatorname{spt}|\mathbf{v}|$ for some constant $\eta$.
Then, since $z_2=-\sigma z_1$ and $z_4=-\sigma z_3$ off the $\varphi$-axis, we also have $z_2=\eta z_4$ (on the full support of $|\mathbf{v}|$),
and we reach a contradiction by the previous lemma. Similarly, $z_3$ cannot be a constant multiple of $z_1$.
Now let
$$U:=\mathbb{H}^2\setminus(\{z=0\}\cup\{\varphi=0\}).$$
We claim that we must have
\begin{equation}\label{nonlin.constr}a=-\frac{\rho^2+2(z_1^2+z_2^2)}{2\varphi},\quad d=-\frac{\rho^2+2(z_3^2+z_4^2)}{2\varphi}\end{equation}
on $U\cap\operatorname{spt}|\mathbf{v}|$. Given $p$ in this set, obviously any blow-up $\mathbf{w}$ at $p$ keeps satisfying \eqref{lin.constr}. However, if equations \eqref{nonlin.constr} fail at $p$,
then $\mathbf{w}$ also satisfies \eqref{lin.constr} for a different set of coefficients $(a',b',c',d')$ (as seen while deriving \eqref{lineariz}).
Assume for instance that $(a,b)\neq(a',b')$:
since for $(\tilde z,\tilde\varphi)\in\operatorname{spt}|\mathbf{w}|$ we have
$$a\tilde z_2+b\tilde z_4=\tilde z_1=a'\tilde z_2+b'\tilde z_4,$$
we obtain that one between $\tilde z_2$ and $\tilde z_4$ is a constant multiple of the other, and we obtain a contradiction from the previous lemma. The case where $(c,d)\neq(c',d')$ is completely analogous.

Having established \eqref{nonlin.constr}, assume that $U\cap\operatorname{spt}|\mathbf{v}|\neq\emptyset$. Observing that necessarily $a,d\neq0$, we obtain
$$\rho^2+2(z_1^2+z_2^2)=\frac{a}{d}[\rho^2+2(z_3^2+z_4^2)].$$
Dividing by $1+\sigma^2$ and recalling that $z_1^2+z_2^2=(1+\sigma^2)z_1^2$, we arrive at
$$3z_1^2+z_3^2=\frac{a}{d}[z_1^2+3z_3^2].$$
Thus, either $z_1^2$ is a constant multiple of $z_3^2$ (on $U\cap\operatorname{spt}|\mathbf{v}|$) or the reverse holds.
In the first case, we note that $z_3\neq0$ (everywhere on $U\cap\operatorname{spt}|\mathbf{v}|$), 
since otherwise we find a point where $z_1=z_3=0$ and thus $z_{2j}=-\sigma z_{2j-1}=0$, impossible since $z\neq0$ on $U$. Hence, locally $z_1$ is a constant multiple of $z_3$.
However, any blow-up at a point $p\in U\cap\operatorname{spt}|\mathbf{v}|$ would give a contradiction, by the first part of the proof. The second case is analogous.

We then conclude that $U\cap\operatorname{spt}|\mathbf{v}|=\emptyset$, which as usual implies that $\mathbf{v}$
is a multiple of $\mathcal{P}_0$, contradicting the assumptions.
\end{proof}

In order to prove Theorem \ref{cpt.int},
we consider a point $p_0\in\operatorname{spt}|\mathbf{v}_\infty|$ where a tangent plane exists, as in Remark \ref{equiv.rect}.
We claim that $\theta_0:=\theta^\chi(\mathbf{v}_\infty,p_0)\in 2\pi\N$. Without loss of generality, we can assume that $p_0=0$. Since $(\delta_{1/r})_*\mathbf{v}_\infty$ converges to a Legendrian plane $\mathcal{P}_0$ with constant multiplicity $\theta_0$, by means of a simple diagonal argument we can find suitable rescalings $(\delta_{1/r_k})_*\mathbf{v}_k$ converging to this plane. Thus, we can assume that $\mathbf{v}_\infty$
in fact coincides with the plane $\mathcal{P}_0$, with multiplicity $\theta_0$. Up to a rotation, we can also assume that $\mathcal{P}_0=\operatorname{span}\{\p_{z_1},\p_{z_3}\}$.
By Corollary \ref{dens.usc}, we have
$$\theta^\chi(\mathbf{v}_k,\cdot)\ge 2\pi\quad\text{on }\operatorname{spt}|\mathbf{v}_k|,$$
and by Corollary \ref{dens.limits} we deduce that the same holds for $\mathbf{v}_\infty$, as well as
$$\operatorname{spt}|\mathbf{v}_k|\to\operatorname{spt}|\mathbf{v}_\infty|=\mathcal{P}_0$$
in the local Hausdorff topology on $\mathbb{H}^2$ (after our rescaling operation, the varifolds $\mathbf{v}_k$ are defined and HSLV on open sets $U_k$ increasing to $\mathbb{H}^2$).

We have to show that $\theta_0\in\N$.
Before starting the actual proof, we need to introduce some notation.
In the sequel, we will use the map
$$\pi_{\mathcal P_0}:\mathbb H^2\to\mathcal P_0,\quad \pi_{\mathcal P_0}(z,\varphi):=(z_1,0,z_3,0,0)$$
and, given a radius $a>0$, we will consider the sets
$$D_a:=B_a^\r(0)\cap\mathcal{P}_0,\quad V_a:=B_a^\r(0)\cap V,\quad V:=\{z_1=z_3=0\},$$
as well as the cylinders
\begin{equation}\label{cyl.def}\mathcal{C}_{a,b}:=V_b*D_a=\{p''*p'\mid p'\in D_a,\ p''\in V_b\},\quad \mathcal{C}_a:=\mathcal{C}_{a,a}.
\end{equation}
Given $p\in\mathbb{H}^2$, we will also denote
$$\mathcal{C}_{a,b}(p):=p*\mathcal{C}_{a,b},\quad\mathcal{C}_a(p):=p*\mathcal{C}_{a}.$$
Note that $\pi_{\mathcal P_0}(p)$ is the unique point $p'\in\mathcal P_0$ such that $p\in V*p'$.

Given $p\in\mathbb{H}^2$, we let $\mathcal{P}_p:=\operatorname{span}\{X_1(p),X_2(p)\}$
and, given a cylinder $\mathcal{C}_a(q)$, we define the excess-like quantity
$$E_k(q,a):=a^{-2}\int_{\mathcal{C}_a(q)}\|\mathcal{P}-\mathcal{P}_{p}\|^2\,d\mathbf{v}_k(\mathcal P,p),$$
where we identify $\mathcal{P}$ and $\mathcal{P}_{p}$ with the orthogonal (in $T_p\mathbb{H}^2$) projection matrices and use the Hilbert--Schmidt norm of their difference.

\begin{Lm}\label{exc.max}
There exist sets $S_k\subseteq \mathcal{C}_{100}$ such that $|\mathbf{v}_k|(S_k)\le C\eta_k$ and
$$E_k(q,a)<\eta_k\quad\text{for all }\mathcal{C}_a(q)\subseteq\mathcal{C}_{100}\text{ with }q\nin S_k,$$
for a sequence $\eta_k\to0$. \hfill $\Box$
\end{Lm}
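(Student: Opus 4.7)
The plan is to run a Vitali maximal-function argument driven by the fact that the \emph{global excess}
\[
e_k := \int_{\mathcal{C}_{100}} \|\mathcal{P} - \mathcal{P}_p\|^2 \, d\mathbf{v}_k(\mathcal{P},p)
\]
tends to $0$ as $k \to \infty$. To verify this, I would first observe that along $\mathcal{P}_0 = \operatorname{span}\{\p_{z_1}, \p_{z_3}\}$ one has $z_2 = z_4 = 0$, hence $X_j(p) = \p_{z_{2j-1}}$ and $\mathcal{P}_p = T_p\mathcal{P}_0$ for every $p \in \mathcal{P}_0$. The continuous bounded integrand $(\mathcal{P},p) \mapsto \|\mathcal{P} - \mathcal{P}_p\|^2$ therefore vanishes identically on $\operatorname{spt}|\mathbf{v}_\infty|$, and weak varifold convergence $\mathbf{v}_k \to \theta_0 \mathcal{P}_0$ (combined with the equi-boundedness of $|\mathbf{v}_k|$ on compacts supplied by Corollary \ref{mono.cor2}) gives $e_k \to 0$.

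I would then set $\eta_k := \sqrt{e_k} \to 0$ and declare
\[
S_k := \{ q \in \mathcal{C}_{100} \,:\, E_k(q,a) \ge \eta_k \text{ for some } a > 0 \text{ with } \mathcal{C}_a(q) \subseteq \mathcal{C}_{100}\},
\]
so that the required inequality $E_k(q,a) < \eta_k$ automatically holds for all admissible radii whenever $q \notin S_k$. To control $|\mathbf{v}_k|(S_k)$, for each $q \in S_k$ I would pick a witness cylinder $\mathcal{C}_{a(q)}(q)$ and associate the Kor\'anyi ball $B^\r_{2a(q)}(q) \supseteq \mathcal{C}_{a(q)}(q)$, a one-line consequence of $d_K(q, q*v*d) \le \r(v) + \r(d)$ from Lemma \ref{lm-kord}. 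The Vitali $5r$-covering lemma in $(\mathbb{H}^2, d_K)$ extracts a countable disjoint subfamily $\{B^\r_{2a_i}(q_i)\}$ whose fivefold enlargements still cover $S_k$; the upper-density bound from Corollary \ref{mono.cor2} gives $|\mathbf{v}_k|(B^\r_{10 a_i}(q_i)) \le C a_i^2$ uniformly in $k$, and combining with the disjointness of the nested cylinders $\mathcal{C}_{a_i}(q_i)$ yields
\begin{align*}
|\mathbf{v}_k|(S_k) &\le \sum_i |\mathbf{v}_k|(B^\r_{10 a_i}(q_i)) \le C \sum_i a_i^2 \\
&\le C \eta_k^{-1} \sum_i \int_{\mathcal{C}_{a_i}(q_i)} \|\mathcal{P} - \mathcal{P}_p\|^2 \, d\mathbf{v}_k \le C \eta_k^{-1} e_k = C \eta_k,
\end{align*}
which is the desired bound.

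The step I anticipate to be most delicate is ensuring that the upper-density bound of Corollary \ref{mono.cor2} may be applied uniformly in $k$ at the full macroscopic scale used in the Vitali covering, since the domain of definition of $\mathbf{v}_k$ may only exhaust $\mathbb{H}^2$ as $k \to \infty$; this is handled by discarding finitely many $k$ and retaining only those so large that a fixed enlargement of $\mathcal{C}_{100}$ (large enough to accommodate all the $B^\r_{10 a_i}(q_i)$) lies inside the domain, which is possible because $a_i$ is a priori bounded by the constraint $\mathcal{C}_{a_i}(q_i) \subseteq \mathcal{C}_{100}$.
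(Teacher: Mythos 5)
Your proof is correct and follows essentially the same route as the paper's: choose $\eta_k^2$ dominating the global excess (which vanishes by varifold convergence to a multiple of $\mathcal{P}_0$), define $S_k$ as the failure set, and run a Vitali covering in $(\mathbb{H}^2,d_K)$ combining the disjointness of the witness cylinders with the uniform upper-density bound of Corollary \ref{mono.cor2} to get $\sum_i a_i^2\le\eta_k$ and hence $|\mathbf{v}_k|(S_k)\le C\eta_k$. The only additions are explicit justifications (why the excess tends to zero, why $a_i$ is a priori bounded, and the domain-exhaustion caveat) that the paper leaves implicit.
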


\begin{proof}
We take any vanishing sequence $\eta_k>0$ such that
$$\int_{\mathcal{C}_{100}}\|\mathcal P-\mathcal{P}_p\|^2\,d\mathbf{v}_k(\mathcal P,p)\le\eta_k^2.$$
This can be done since the left-hand side converges to zero, thanks to the varifold convergence of $\mathbf{v}_k$
to a multiple of $\mathcal{P}_0$.
Calling $S_k$ the set of points where the statement fails, by Vitali's covering lemma we can cover it with balls
$B_{10a_j}^\r(q_j)$ (depending also on $k$) such that $q_j\in S_k$ and the smaller balls $B_{2a_j}^\r(q_j)$ are disjoint, with
$$\int_{\mathcal{C}_{a_j}(q_j)}\|\mathcal P-\mathcal{P}_p\|^2\,d\mathbf{v}_k(\mathcal P,p)\ge\eta_k a_j^2.$$
In particular, the cylinders $\mathcal{C}_{a_j}(q_j)$ are disjoint. Summing over $j$, we obtain
$$\sum_j a_j^2\le\eta_k.$$
The claim now follows from Corollary \ref{mono.cor2}, which gives the bound $|\mathbf{v}_k|(B_{10a_j}^\r(q_j))\le Ca_j^2$
uniformly in $k$, so that $|\mathbf{v}_k|(S_k)\le\sum_j|\mathbf{v}_k|(B_{10a_j}^\r(q_j))\le C\eta_k$.
\end{proof}

We let $\Theta_k$ denote the quantity $\Theta$ defined in \eqref{capital.theta.def} for the varifold $\mathbf{v}_k$.
Recall that, by Theorem \ref{mono.cor}, we have
$$\Theta_k(q,a)\le\Theta_k(q,b)\quad\text{for }0<a\le b<100$$
and
$$\theta^\chi(\mathbf{v}_k,q)=\lim_{a\to0}\Theta_k(q,a).$$

\begin{proof}[Proof of Proposition \ref{cpt.int}]
The varifold convergence $\mathbf{v}_k\rightharpoonup\theta_0\cdot\mathcal{P}_0$ gives
$$(\pi_{\mathcal P_0})_*(\mathbf{v}_k\res\Pi^{-1}(\mathcal{C}_{100}))\rightharpoonup \theta_0\mathcal{H}^2\res D_{100}.$$
Moreover, as we saw in Lemma \ref{equiv.rect}, we have
$$d|\mathbf{v}_k|=\frac{\theta^\chi(\mathbf{v}_k,\cdot)}{2\pi}\,d(\mathcal{H}^2\res\operatorname{spt}|\mathbf{v}_k|)$$
and $\theta^\chi(\mathbf{v}_k,q)\in2\pi\N^*$ for $\mathcal{H}^2$-a.e. $q\in\operatorname{spt}|\mathbf{v}_k|$
(recall that, by Theorem \ref{mono.cor} and standard properties of Hausdorff measures, a $|\mathbf{v}_k|$-negligible subset of $\operatorname{spt}|\mathbf{v}_k|$ is also $\mathcal{H}^2_K$-negligible, and thus
$\mathcal{H}^2$-negligible).

Letting $E_k:=(\mathcal{C}_{100}\cap\operatorname{spt}|\mathbf{v}_k|)\setminus S_k$, by the area formula we then have
$$\int_{D_{a}}\sum_{q\in E_k\cap\pi_{\mathcal{P}_0}^{-1}(x)}
\frac{\theta^\chi(\mathbf{v}_k,q)}{2\pi}\,d\mathcal{H}^2(x)\to\theta_0\mathcal{H}^2(D_{a})$$
for any $a>0$. Thus, by a diagonal argument, we can select a sequence of points $x_k\to0$ such that
$$\sum_{q\in E_k\cap\pi_{\mathcal{P}_0}^{-1}(x_k)}\theta^\chi(\mathbf{v}_k,q)\ge 2\pi\theta_0-\eta_k,$$
up to modifying the sequence $\eta_k\to0$, and such that each term in the sum belongs to $2\pi\N^*$.
Assuming by contradiction that $\theta_0\nin\N$, we can then find a finite subset
$$F_k\subseteq E_k\cap\pi_{\mathcal{P}_0}^{-1}(x_k),$$
consisting of at most $[\theta_0]+1$ points, such that
$$\liminf_{k\to\infty}\sum_{q\in F_k}\theta^\chi(\mathbf{v}_k,q)\ge 2\pi([\theta_0]+1).$$
Up to translating each $\mathbf{v}_k$, we can assume that $x_k=0$.
We now let $$\zeta_a(p):=-\frac{\chi'(|\pi_{\mathcal P_0}(p)|/a)}{|\pi_{\mathcal P_0}(p)|/a}$$ and we claim that
\begin{equation}\label{int.pre.claim}
\int_{\mathcal{C}_{4a,4}}\zeta_a\,d|\mathbf{v}_k|\ge a^2\sum_{q\in F_k}\theta^\chi(\mathbf{v}_k,q)-\ep_ka^2\quad\text{for all }a\in(0,1],
\end{equation}
for another sequence $\ep_k\to0$. In particular, for $a=1$, in the limit this gives
$$2\pi\theta_0=\lim_{k\to\infty}\int_{\mathcal{C}_{4}}\zeta_1\,d|\mathbf{v}_k|\ge\limsup_{k\to\infty}\sum_{q\in F_k}\theta^\chi(\mathbf{v}_k,q),$$
which gives the desired contradiction.

To prove this key claim, we fix $\ep>0$ and let $\lambda>0$ be given by Lemma \ref{theta.tilde.theta} below.
In the sequel, we drop the subscript $k$ to simplify notation, even if the next constructions depend on $k$.
We call $a\in(0,1]$ a \emph{good} radius if $F=F_k$ can be partitioned as
$$F=F_{(1)}\sqcup\dots\sqcup F_{(\ell)}$$
with $\operatorname{diam}_K(F_{(j)})\le a$, $d_K(F_{(j)},F_{(j')})\ge 16a$ for $j\neq j'$,
and \eqref{almost.conical.int} for all $q\in F$.
Clearly, by Theorem \ref{mono.cor}, the set of good radii includes a collection of intervals
$$(0,s_0]\cup[r_1,s_1]\cup\dots\cup[r_m,s_m]\subset(0,1],$$
with $s_i<r_{i+1}$ and $m\le|F|$, as well as $r_{i+1}\le C(|F|,\lambda)s_i$ and $s_m\ge c(|F|,\lambda)>0$.
We can also require that the previous partition is constant on each interval $I_i$ and that
the partition for $I_i$ is a refinement of the partition for $I_{i+1}$ (roughly speaking, at a larger scale, some clusters might merge into a single one). For each $I_i$ we choose a collection $R_i\subset F$ of representative points, one in each set $F_{(1)},\dots,F_{(\ell)}$.

The partition for $I_0$ is just given by the singletons $\{q\}$ for each $q\in F$.
By Lemma \ref{theta.tilde.theta} below, for $a\in(0,s_0]$ we have
$$a^{-2}\int_{\mathcal{C}_{4a}(q)}\zeta_a\,d|\mathbf{v}|\ge\Theta(q,a)-\ep\ge\theta^\chi(q)-\ep.$$
Assume now that we have a bound of the form
\begin{equation}\label{int.claim}
\sum_{q\in R_i} s_i^{-2}\int_{\mathcal{C}_{4s_i}(q)}\zeta_{s_i}\,d|\mathbf{v}|\ge\sum_{q\in F}\theta^\chi(q)-C\ep
\end{equation}
for some $i=0,\dots,m-1$ (we just proved this for $i=0$), and let us show that a similar bound holds for $i+1$, with a larger $C$.
Indeed, given a set $F''\subseteq F$ in the partition for the interval of good radii $I_{i+1}$, we can write
$$F''=F_{(1)}'\sqcup\dots\sqcup F_{(n)}'$$
for suitable sets $F_{(1)}',\dots,F_{(n)}'$ in the partition for $I_i$. By construction, taking representatives
$$\{q_1'\}=F_{(1)}'\cap R_i,\quad\dots,\quad\{q_n'\}=F_{(n)}'\cap R_i,\quad\{q''\}=F''\cap R_{i+1},$$
the cylinders $\mathcal{C}_{4s_i}(q')$ are disjoint as $q'$ varies in $\{q_1',\dots,q_n'\}$. Moreover, we have
$$\bigcup_{q'}\mathcal{C}_{4s_i}(q')\subseteq\mathcal{C}_{4s_i,4s_i+r_{i+1}}(q''),$$
since $\operatorname{diam}_K(\{q_1',\dots,q_n',q''\})\le\operatorname{diam}(F'')\le r_{i+1}$.
We apply Lemma \ref{expansion} below to deduce
$$r_{i+1}^{-2}\int_{\mathcal{C}_{4r_{i+1},4r_{i+1}}(q'')}\zeta_{r_{i+1}}\,d|\mathbf{v}|\ge s_i^{-2}\int_{\mathcal{C}_{4s_i,4s_i+ r_{i+1}}(q'')}\zeta_{s_i}\,d|\mathbf{v}|-\ep$$
(clearly, we can assume that $4s_i\le r_{i+1}$).
Moreover, by Lemma \ref{theta.tilde.theta} again, we obtain
$$\Theta(q'',r_{i+1})\ge r_{i+1}^{-2}\int_{\mathcal{C}_{4r_{i+1},4 r_{i+1}}(q'')}\zeta_{r_{i+1}}\,d|\mathbf{v}|-\ep.$$
By monotonicity of $\Theta$, we also have
$$\Theta(q'',s_{i+1})\ge\Theta(q'',r_{i+1}),$$
and as before it holds that
$$s_{i+1}^{-2}\int_{\mathcal{C}_{4s_{i+1}}(q'')}\zeta_{s_{i+1}}\,d|\mathbf{v}|\ge\Theta(q'',s_{i+1})-\ep.$$
Combining these inequalities and summing over $q''\in R_{i+1}$, we get
$$\sum_{q''\in R_{i+1}} s_{i+1}^{-2}\int_{\mathcal{C}_{4s_{i+1}}(q'')}\zeta_{s_{i+1}}\,d|\mathbf{v}|\ge\sum_{q\in F}\theta^\chi(q)-C\ep,$$
obtaining \eqref{int.claim} for $i+1$ in place of $i$.
Finally, it is clear that an analogous argument proves our initial claim \eqref{int.pre.claim}.
\end{proof}

\begin{Lm}\label{expansion}
Given $\Lambda>1$ and $\ep>0$, for $k$ large enough we have
$$\frac{1}{b^2}\int_{\mathcal{C}_{4b,4b}(q)}\zeta_b\,d|\mathbf{v}_k|
>\frac{1}{a^2}\int_{\mathcal{C}_{4a,2b}(q)}\zeta_a\,d|\mathbf{v}_k|-\ep,$$
for any $q\in F_k$ and any two radii $0<a\le b\le1$ such that $b\le \Lambda a$.
\hfill $\Box$
\end{Lm}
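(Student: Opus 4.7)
After a left-translation we may assume $q=0$. The plan is to test stationarity of $\mathbf{v}_k$ against a carefully chosen Hamiltonian vector field $W_F$ whose divergence along the Legendrian plane $\mathcal{P}_p=\operatorname{span}\{X_1(p),X_2(p)\}$ produces the desired cylindrical monotonicity identity, and then to control the defect $\operatorname{div}_{\mathcal{P}_p}W_F-\operatorname{div}_{\mathcal{P}}W_F$ by the small-excess hypothesis at $q$. Two uniform ingredients are in hand: the mass bound $|\mathbf{v}_k|(\mathcal{C}_{4b,4b}(0))\le Cb^2$ from Corollary~\ref{mono.cor2}, and the tilt bound
$$\int_{\mathcal{C}_{4b,4b}(0)}\|\mathcal{P}-\mathcal{P}_p\|^2\,d\mathbf{v}_k\le Cb^2\eta_k$$
coming from $q\notin S_k$ together with $b\le\Lambda a$ and Lemma~\ref{exc.max}.

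I would take the separated-variable Hamiltonian
$$F(z,\varphi)\;=\;-[z_1z_2+z_3z_4]\,\Phi(r)\,\Psi_b(s),\qquad \Phi(r):=r^{-2}\bigl[\chi(r/a)-\chi(r/b)\bigr],$$
where $r(p)=|\pi_{\mathcal{P}_0}(p)|=\sqrt{z_1^2+z_3^2}$, $s(p)=\r(\pi_V(p))$ is the Kor\'anyi gauge of the $V$-component, and $\Psi_b$ is a smooth cutoff equal to $1$ on $\{s\le 2b\}$ and supported in $\{s\le 4b\}$. The crucial feature of this choice of $s$ is the identity $X_1(s)=X_2(s)=0$, which follows from a direct computation using $X_j(s^4)=\partial_{z_{2j-1}}s^4-z_{2j}\partial_\varphi s^4=0$. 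Combined with the Koszul observation of Section~III (so that $\nabla_AB\perp C$ for $A,B,C\in\{X_1,Y_1,X_2,Y_2\}$, and hence the $Y_\ell$-components of $W_F^H$ drop out of $\operatorname{div}_{\mathcal{P}_p}$) and with the algebraic identity $r\Phi'(r)+2\Phi(r)=\tfrac{1}{b^2}\zeta_b-\tfrac{1}{a^2}\zeta_a$ (immediate from $r^2\Phi=\chi(r/a)-\chi(r/b)$), a direct expansion yields the pointwise identity
$$2\operatorname{div}_{\mathcal{P}_p}W_F\;=\;\Bigl[\tfrac{1}{b^2}\zeta_b(p)-\tfrac{1}{a^2}\zeta_a(p)\Bigr]\Psi_b(s)\;+\;\mathcal{R}(p),$$
with a remainder $\mathcal{R}$ supported in $\{2b\le s\le 4b\}\cap\{a\le r\le 2b\}$ (generated entirely by $\Psi_b'$) and pointwise bounded by $C/a^2$.

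With the identity in hand, stationarity of $\mathbf{v}_k$ yields
$$\int 2\operatorname{div}_{\mathcal{P}_p}W_F\,d|\mathbf{v}_k|\;=\;\int 2\bigl[\operatorname{div}_{\mathcal{P}_p}-\operatorname{div}_{\mathcal{P}}\bigr]W_F\,d\mathbf{v}_k.$$
Since $|\nabla W_F|\le C/a^2$ on $\operatorname{spt} W_F\subseteq\mathcal{C}_{4b,4b}(0)$, the right-hand integrand is pointwise dominated by $(C/a^2)\|\mathcal{P}-\mathcal{P}_p\|$; Cauchy--Schwarz against the two bounds above produces a defect of size $C\Lambda^2\sqrt{\eta_k}$. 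The remainder contribution $\int|\mathcal{R}|\,d|\mathbf{v}_k|\le Ca^{-2}|\mathbf{v}_k|(\{s\ge 2b\}\cap\mathcal{C}_{4b,4b}(0))$ also vanishes as $k\to\infty$: indeed, the varifold convergence $\mathbf{v}_k\rightharpoonup\theta_0\,\mathcal{P}_0\subset\{s=0\}$, strengthened at small scales into a quantitative height control by a standard Allard-type argument from the tilt bound, forces the mass in $\{s\ge 2b\}$ to shrink faster than any fixed power of $b$. Finally, $\Psi_b\equiv 1$ on $\{s\le 2b\}$ together with $\zeta_a$ being supported in $\{r\le 2a\}\subseteq\{r\le 4a\}$ gives $\int\zeta_a\Psi_b\,d|\mathbf{v}_k|\ge\int_{\mathcal{C}_{4a,2b}(0)}\zeta_a\,d|\mathbf{v}_k|$, while $\Psi_b\le 1$ and $\zeta_b$ being supported in $\{r\le 2b\}$ give $\int\zeta_b\Psi_b\,d|\mathbf{v}_k|\le\int_{\mathcal{C}_{4b,4b}(0)}\zeta_b\,d|\mathbf{v}_k|$; combined with the identity, this yields the strict inequality for $k$ large enough.

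The main technical obstacle is the derivation of the clean divergence identity: although the choice $s=\r(\pi_V(p))$ kills the $X_j(s)$ terms and the Koszul cancellation kills the $Y_j$-components, one still has to track the $\Psi_b'$-contributions generated by $Y_j(s)\neq 0$ (which feed into the $X_j$-components of $W_F^H$ via $\partial_{z_{2j}}F+z_{2j-1}\partial_\varphi F$) and verify that they can be packaged into a remainder $\mathcal{R}$ supported where the varifold mass is controllably small. A backup plan, should the explicit computation prove too unwieldy, is to add an extra tangential correction Hamiltonian chosen so that its divergence exactly cancels the unwanted $\Psi_b'$-terms, paying only in a larger multiplicative constant in the final error.
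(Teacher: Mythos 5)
Your overall scheme (test stationarity with an explicit Hamiltonian, compare $\operatorname{div}_{\mathcal P}$ with $\operatorname{div}_{\mathcal P_p}$ via the excess) is genuinely different from the paper, which instead argues by contradiction: rescale by $1/b_k$, use Lemma \ref{exc.max} to make the excess vanish, invoke Lemma \ref{zero.exc} to identify the limit as a finite union of $V$-translates $q*D_8$ of the disk with constant multiplicities, and observe that on such a configuration $a^{-2}\int\zeta_a$ is exactly scale-invariant (the asymmetric vertical radii $2b$ versus $4b$ ensuring the favourable inclusion). Your pointwise identity is in fact correct: with $s=\r(\pi_V(p))$ one has $X_j(s)=0$, $2\operatorname{div}_{\mathcal P_p}W_F=-\sum_\ell X_\ell(Y_\ell F)$, and the algebra $r\Phi'+2\Phi=\frac{1}{b^2}\zeta_b-\frac{1}{a^2}\zeta_a$ goes through; the tilt-defect term is also fine, since $q\in F_k$ implies $q\nin S_k$, so Lemma \ref{exc.max} plus Corollary \ref{mono.cor2} and Cauchy--Schwarz give $C\Lambda^{4}\sqrt{\eta_k}\to0$.

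The genuine gap is the remainder $\mathcal R$, i.e.\ all the terms carrying $\Psi_b'(s)$. You need $a^{-2}|\mathbf v_k|(\{2b\le s\le 4b\}\cap\{r\le 2b\})\to0$ uniformly in $q\in F_k$ and $b\le\Lambda a$, and your justification (unit-scale varifold convergence to $\theta_0\mathcal P_0$, upgraded by ``a standard Allard-type height estimate from the tilt bound'') is not available and, worse, the claim itself is false in exactly the situation the lemma is meant for. Small tilt-excess at all scales is perfectly compatible with several parallel sheets of the form $q''*\mathcal P_0$ sitting at $V$-distance comparable to $b$ from $q$ (this is precisely the conclusion of Lemma \ref{zero.exc}, and in the application of Lemma \ref{expansion} the clusters $F''$ being merged consist of points at mutual $d_K$-distance up to $r_{i+1}=b$, so such sheets genuinely occur). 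A sheet with $\r(\pi_V)$-offset in $(2b,4b)$ has $s$ constant in the band where $\Psi_b'\neq0$ and carries mass $\sim b^2$ there, so $\int|\mathcal R|\,d|\mathbf v_k|$ is of order one rather than $o(1)$; note also that no Allard-type height/Caccioppoli estimate is ``standard'' in this Legendrian setting — the paper needs the delicate iterated blow-up of Lemma \ref{zero.exc} even for the qualitative zero-excess rigidity. The statement of the lemma survives multiple sheets because extra sheets only help the left-hand side (this is why the right-hand cylinder has vertical radius $2b$ and the left-hand one $4b$), but your error estimate does not, and the proposed ``correction Hamiltonian'' backup is not substantiated. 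To salvage a direct argument you would have to either localize $\Psi_b$ to a vertical window chosen (by a pigeonhole in $s$) where the mass of $\mathbf v_k$ in the transition band is provably $o(b^2)$, or revert to the compactness argument of the paper.
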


\begin{proof}
Assume by contradiction that the claim fails for some $q_k\in F_k$ and two radii $a_k,b_k$, along a subsequence.
After a left translation by $q_k^{-1}$ and a dilation by a factor $1/b_k$, we obtain new HSLVs
$\mathbf{v}_k'$ on $\mathcal{C}_8(0)$ such that
$$\int_{\mathcal{C}_{8}(0)}\|\mathcal{P}-\mathcal{P}_{p}\|^2\,d\mathbf{v}_k'(\mathcal P,p)\le C\eta_k\to0,$$
thanks to Lemma \ref{exc.max}. Up to a subsequence, we then get a limit $\mathbf{v}=\lim_{k\to\infty}\mathbf{v}_k'$ (on $\mathcal{C}_8(0)$) satisfying
the assumptions of Lemma \ref{zero.exc}; this varifold is then a finite union of disks of the form $q*D_8$, with $q\in V_8$, each with constant multiplicity.

Letting $\alpha_k:=\frac{a_k}{b_k}\in[\Lambda^{-1},1]$, after rescaling we obtain
$$\int_{\mathcal{C}_{4,4}}\zeta_{1}\,d|\mathbf{v}_k'|\le\frac{1}{\alpha_k^2}\int_{\mathcal{C}_{4\alpha_k,2}}\zeta_{\alpha_k}\,d|\mathbf{v}_k'|-\ep.$$
Calling $\alpha:=\lim_{k\to\infty}\alpha_k$ (up to a subsequence), we deduce that
$$\int_{\mathcal{C}_{4,4}}\zeta_{1}\,d|\mathbf{v}|\le\frac{1}{\alpha^2}\int_{\bar{\mathcal{C}}_{4\alpha,2}}\zeta_{\alpha}\,d|\mathbf{v}|-\ep.$$
However, by the structure of $\mathbf{v}$ and the definition of $\zeta_a$, we clearly have
$$\frac{1}{\alpha^2}\int_{\bar{\mathcal{C}}_{4\alpha,2}}\zeta_{\alpha}\,d|\mathbf{v}|
=\frac{1}{\alpha^2}\int_{\bar{V}_{2}* D_{4\alpha}}\zeta_{\alpha}\,d|\mathbf{v}|
=\int_{\bar{V}_{2}* D_{4}}\zeta_{1}\,d|\mathbf{v}|,$$
yielding a contradiction since $\bar{V}_{2}* D_{4}\subseteq V_{4}*D_4=\mathcal{C}_{4,4}$.
\end{proof}

\begin{Lm}\label{theta.tilde.theta}
Given $\ep>0$, there exists $\lambda\in(0,1)$ such that the following holds for $k$ large enough:
for any $q\in F_k$ and any radius $0<a\le\lambda$, if
\begin{equation}\label{almost.conical.int}\int_{\lambda a<\r_q<a/\lambda}|\nabla^{\mathcal P}\arctan\sigma_q|^2\,d\mathbf{v}_k(\mathcal P,p)\le\lambda\end{equation}
then
$$\lf|\Theta_k(q,a)-\frac{1}{a^2}\int_{\mathcal{C}_{4a}(q)}\zeta_a\,d|\mathbf{v}_k|\rg|<\ep$$
holds true. \hfill $\Box$
\end{Lm}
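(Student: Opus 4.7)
\emph{Setup by contradiction.} Suppose the statement fails: for some $\ep_0>0$ we extract a subsequence in $k$ together with $\lambda_k\downarrow 0$, $q_k\in F_k$ and $a_k\in(0,\lambda_k]$ such that the displayed integral bound holds but
$$\lf|\Theta_k(q_k,a_k)-\tfrac{1}{a_k^2}\int_{\mathcal{C}_{4a_k}(q_k)}\zeta_{a_k}\,d|\mathbf{v}_k|\rg|\ge\ep_0.$$
Set $\tilde{\mathbf{v}}_k:=(\delta_{1/a_k}\circ\ell_{q_k^{-1}})_*\mathbf{v}_k$; these are HSLVs on expanding subsets of $\mathbb{H}^2$, with locally uniform mass bounds by Corollary \ref{mono.cor2}. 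By the left-translation and dilation invariance of $\Theta^\chi$ and a direct change of variables (using that $\pi_{\mathcal P_0}(q_k\ast v\ast d)=d$ for $q_k\in F_k\subseteq V$ whenever $v\in V$ and $d\in\mathcal P_0$),
$$\Theta_k(q_k,a_k)=\Theta^\chi(\tilde{\mathbf v}_k,0,1),\qquad \tfrac{1}{a_k^2}\int_{\mathcal C_{4a_k}(q_k)}\zeta_{a_k}\,d|\mathbf{v}_k|=\int_{\mathcal C_4}\zeta_1\,d|\tilde{\mathbf v}_k|.$$
Extract a subsequential varifold limit $\tilde{\mathbf v}$ on $\mathbb H^2$, still a HSLV.

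\emph{Identification of the limit.} Since $q_k\in F_k\subseteq E_k$ avoids $S_k$, Lemma \ref{exc.max} gives $E_k(q_k,Ra_k)<\eta_k\to 0$ for every fixed $R>0$, which after rescaling says the excess of $\tilde{\mathbf v}_k$ over $\mathcal C_R(0)$ tends to zero. The limit $\tilde{\mathbf v}$ therefore satisfies \eqref{tensor.v} with the planes $\mathcal P_p:=\operatorname{span}\{X_1(p),X_2(p)\}$. The rescaled form of the standing hypothesis reads $\int_{\lambda_k<\r<1/\lambda_k}|\nabla^{\mathcal P}\arctan\sigma|^2\,d\tilde{\mathbf v}_k\le\lambda_k\to 0$, and lower semi-continuity under varifold convergence forces $\nabla^{\mathcal P}\arctan\sigma=0$ on $\operatorname{spt}\tilde{\mathbf v}$ off $\{z=0\}$. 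Moreover, $\theta^\chi(\tilde{\mathbf v},0)\ge 2\pi$ by Corollary \ref{dens.limits}, since $\theta^\chi(\mathbf v_k,q_k)\ge 2\pi$. These are exactly the hypotheses of the \emph{blow-up case} in the proof of Lemma \ref{zero.exc} (where $U=\mathbb H^2$), so $\tilde{\mathbf v}=\theta\cdot\mathcal P_0$ for some constant $\theta\ge 2\pi$.

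\emph{Explicit computation and passage to the limit.} On $\mathcal P_0$ we have $\varphi\equiv 0$, hence $\sigma\equiv 0$, $\r=\rho$ and $|\nabla^{\mathcal P_0}\r|\equiv 1$. The last two terms of $\Theta^\chi(\tilde{\mathbf v},0,1)$ vanish, and polar integration yields
$$\Theta^\chi(\tilde{\mathbf v},0,1)=-\theta\int_{\mathcal P_0}\tfrac{\chi'(\rho)}{\rho}\,d\mathcal H^2=2\pi\theta\int_0^\infty(-\chi'(\rho))\,d\rho=2\pi\theta,$$
and analogously $\int_{\mathcal C_4}\zeta_1\,d|\tilde{\mathbf v}|=2\pi\theta$ since $\pi_{\mathcal P_0}$ is the identity on $\mathcal P_0$ and $\mathcal C_4\cap\mathcal P_0=D_4$. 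Passing to the limit in $k$, the first two terms in $\Theta^\chi(\tilde{\mathbf v}_k,0,1)$ have bounded continuous integrands supported in $\{1<\r<2\}$, hence converge by varifold convergence; the third term expands into pieces bounded pointwise by $C|\nabla^{\mathcal P}\arctan\sigma|^2+C|\nabla^{\mathcal P}\arctan\sigma|$ on the support of $\chi'$, and vanishes in the limit by the rescaled integral bound and Cauchy--Schwarz. Similarly $\int_{\mathcal C_4}\zeta_1\,d|\tilde{\mathbf v}_k|\to 2\pi\theta$ (the boundary of $\mathcal C_4$ is $|\tilde{\mathbf v}|$-negligible). Both quantities converge to $2\pi\theta$, contradicting the lower bound $\ep_0$.

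\emph{Main obstacle.} The delicate step is identifying $\tilde{\mathbf v}$ as a \emph{constant} multiple of $\mathcal P_0$: \emph{a priori} the general conclusion of Lemma \ref{zero.exc} only yields a locally finite union of left translates of $\mathcal P_0$ along $V$. Eliminating this $V$-spread requires the dedicated blow-up analysis in the proof of Lemma \ref{zero.exc}, which leverages the global assumption $U=\mathbb H^2$ together with the vanishing of $\nabla^{\mathcal P}\sigma$ and the use of specific test Hamiltonians within stationarity to collapse the support onto a single plane.
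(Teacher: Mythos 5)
Your proposal is correct and follows essentially the same route as the paper's own (much terser) proof: contradiction, left-translation and dilation by $a_k^{-1}$, identification of the limit as a constant multiple of $\mathcal{P}_0$ via the excess bound from Lemma \ref{exc.max}, the rescaled hypothesis \eqref{almost.conical.int}, and the blow-up case of Lemma \ref{zero.exc}, followed by the explicit identity $\Theta^\chi(\theta\mathcal{P}_0,0,1)=2\pi\theta=\int_{\mathcal{C}_4}\zeta_1\,d|\theta\mathcal{P}_0|$ and continuity of both functionals under varifold convergence. Your closing remark correctly pinpoints why one must invoke the blow-up branch of Lemma \ref{zero.exc} rather than its general conclusion, which is precisely what the paper's parenthetical ``(and its proof)'' is alluding to.
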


\begin{proof}
Let us fix $\ep>0$ and, by contradiction, using a diagonal argument, assume that the claim fails along a subsequence with centers $q_k\in F_k$ and radii $0<a_k\le\lambda_k\to0$.
After a left translation by $q_k^{-1}$ and a dilation by a factor $a_k^{-1}$,
we obtain varifolds $\mathbf{v}_k'$ which, up to a subsequence,
converge to a varifold $\mathbf{v}$ on $\mathbb{H}^2$ satisfying
the assumptions of Lemma \ref{zero.exc} and such that $\nabla^{\mathcal P}\arctan\sigma=0$ on $\operatorname{spt}(\mathbf{v})\cap\{\r>0\}$.
Thus, as shown by Lemma \ref{zero.exc} (and its proof), $\mathbf{v}$ is a constant multiple of $\mathcal P_0$, giving
$$\Theta(\mathbf{v},0,1)=\int_{\mathcal{C}_{4}(0)}\zeta_1\,d|\mathbf{v}|.$$
Thus, for $k$ large enough the statement was true, a contradiction.
\end{proof}

\section{A point removability result for PHSLVs}
In this section we show that if we have a PHSLV on $\mathbb{H}^2$,
defined on a punctured Riemann surface $\Sigma\setminus S$ for a locally finite set $S$, then it extends to a PHSLV
defined on $\Sigma$, provided that some technical assumptions are satisfied.
Among them, we assume a slightly stronger notion of stationarity, as follows.

\begin{Dfi}\label{strong.phslv}
We say that $(\Sigma,u,N)$ is a \emph{PHSLV$^*$} if,
for a.e. $\omega\subset\subset\Sigma\setminus S$, we can test
stationarity with all Hamiltonian vector fields $W_F$ associated with an
$F\in C^\infty_c(\mathbb{H}^2)$ which is \emph{locally constant} near $u(\p\omega)$.
\hfill $\Box$
\end{Dfi}

\begin{Rm}
Note that $(W_F)^H$, which appears in $\operatorname{div}_{\mathcal P}W_F=\operatorname{div}_{\mathcal P}(W_F)^H$ in the definition of stationarity, is still compactly supported in $\mathbb{H}^2\setminus u(\p\omega)$.
Also, since $(W_F)^H$ does not change if we add a constant to $F$, we can equivalently consider
all functions $F\in C^\infty(\mathbb{H}^2)$ locally constant near $u(\p\omega)$ and constant near infinity (i.e., outside of a compact set).
 \hfill $\Box$
\end{Rm}

\begin{Rm}
This stronger assumption is quite natural, for the following reason. Denoting by $L$ the positive $\varphi$-axis and taking $\mathcal{Q}_X(p):=\operatorname{span}\{X_1(p),X_2(p)\}$
and $\mathcal{Q}_Y(p):=\operatorname{span}\{Y_1(p),Y_2(p)\}$, it can be checked that the varifold
$$\mathbf{v}(\mathcal{P},p):=\frac{\delta_{\mathcal{Q}_X(p)}(\mathcal P)+\delta_{\mathcal{Q}_Y(p)}(\mathcal P)}{2}\otimes(\mathcal{H}^1\res L)(p)$$
is a HSLV on $\mathbb{H}^2\setminus\{0\}$. However, it is not a HSLV on $\mathbb{H}^2$, and indeed it does not satisfy the stronger stationarity
condition obtained by taking any $F\in C^\infty_c(\mathbb{H}^2)$ constant near $0$. \hfill $\Box$
\end{Rm}

We start with a simple observation, exploiting some tools from the next section.

\begin{Prop}
Let $u\in W^{1,2}_{loc}(\Sigma)$ and $N\in L^\infty_{loc}(\Sigma,\N^*)$.
Assume that $(\Sigma\setminus S,u,N)$ is a PHSLV on $\mathbb{H}^2$. Then $u$ has a continuous representative on $\Sigma$.
\hfill $\Box$
\end{Prop}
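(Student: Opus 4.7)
The plan is to reduce to the case of an isolated point of $S$ and then combine a Courant--Lebesgue argument on small circles with the monotonicity formula and the universal density lower bound from Section VI.

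Since $(\Sigma\setminus S,u,N)$ is a PHSLV, Proposition \ref{pr-cont} provides a continuous representative of $u$ on $\Sigma\setminus S$, which I fix from now on; the task is then to build a continuous extension at each isolated point $x_0\in S$. In a conformal chart identifying a neighborhood with $B_R\subset\C$, with $x_0=0$ and $B_R\cap S=\{x_0\}$, the classical Courant--Lebesgue lemma applied to $u\in W^{1,2}(B_R,\mathbb{H}^2)$ (using that $d_K$ is locally comparable with the Euclidean metric) yields a sequence of radii $\rho_k\to 0$ such that
$$\epsilon_k:=\operatorname{diam}_K u(\partial B_{\rho_k})\longrightarrow 0;$$
fixing any $p_k\in u(\partial B_{\rho_k})$, one has $u(\partial B_{\rho_k})\subset B^\r_{\epsilon_k}(p_k)$.

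Next, for each $0<r<\rho_k$ I would look at the HSLV $\mathbf{v}_{r,k}$ associated via Remark \ref{hslv.from.phslv} with the compact annulus $A_{r,\rho_k}:=B_{\rho_k}\setminus\bar B_r$: this is stationary on $\mathbb{H}^2\setminus u(\partial A_{r,\rho_k})$ and has total mass
$$|\mathbf{v}_{r,k}|(\mathbb{H}^2)\le\|N\|_\infty\int_{B_{\rho_k}}\frac{|\nabla u|^2}{2}\,dx^2=:M_k,\qquad M_k\to 0\text{ as }k\to\infty.$$
By Proposition \ref{co-dens}, one has $\theta^\chi(\mathbf{v}_{r,k},\cdot)\ge\nu>0$ on $\operatorname{spt}|\mathbf{v}_{r,k}|$. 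For any $q\in\operatorname{spt}|\mathbf{v}_{r,k}|$ at distance $d:=d_K(q,u(\partial A_{r,\rho_k}))>0$, the monotonicity formula (Theorem \ref{mono.cor}, applied with $b=d/2$) gives
$$\nu\le\theta^\chi(\mathbf{v}_{r,k},q)\le Cd^{-2}M_k,$$
so $d\le\epsilon_k':=C'\sqrt{M_k/\nu}\to 0$, uniformly in $r$. Hence $\operatorname{spt}|\mathbf{v}_{r,k}|$ is contained in the $\epsilon_k'$-neighborhood of $u(\partial A_{r,\rho_k})$.

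The final and most delicate step, which I expect to be the main obstacle, is to transfer this control from the supports to the entire image $u(B_{\rho_k})$, since a priori points of the image where $|\nabla u|$ vanishes throughout a whole neighborhood need not lie in any $\operatorname{spt}|\mathbf{v}_{r,k}|$. I would split $B_{\rho_k}\setminus\{x_0\}$ as $W\sqcup\Omega$, where $W$ is the open set of points admitting a neighborhood on which $|\nabla u|=0$ a.e.; on $\Omega$ every point is approximated by points where $u$ lies in some $\operatorname{spt}|\mathbf{v}_{r,k}|$, so by continuity $u(\Omega)$ is contained in the $\epsilon_k'$-neighborhood of $\bigcup_r u(\partial A_{r,\rho_k})$, while on each connected component $W'$ of $W$ the map $u$ equals a constant $c$, and a case analysis on whether the topological boundary of $W'$ in the punctured disk meets $\partial B_{\rho_k}$, meets $\Omega$, or reduces to $\{x_0\}$ (the last case forcing $W'$ to be the whole punctured disk and $u$ to be outright constant on $B_{\rho_k}$) shows that $c$ still lies in that same controlled set. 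Letting $r\to 0$ through radii along which $\operatorname{diam}_K u(\partial B_r)\to 0$ (again by Courant--Lebesgue), one concludes that $\operatorname{diam}_K u(B_{\rho_k})\le C(\epsilon_k+\epsilon_k')\to 0$, so $u$ admits a continuous extension at $x_0$.
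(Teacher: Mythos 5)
Your toolkit is the right one (Proposition \ref{pr-cont} away from $S$, Courant--Lebesgue circles, the density lower bound of Proposition \ref{co-dens}, and the monotonicity formula), and the reduction to an isolated puncture plus the mass estimate $\theta^\chi(q)\le Cd^{-2}M_k$ with $M_k\to0$ are all fine. Two remarks before the main point. First, the step you flag as ``the main obstacle'' is a non-issue: Proposition \ref{co-dens} gives $\theta^\chi(p)\ge2\pi$ for \emph{every} $p\in u(\omega)\setminus u(\p\omega)$, not merely for points of $\operatorname{spt}|\mathbf{v}_\omega|$ (its proof shows any such $p$ is a limit of points of $u(\omega\cap\mathcal{G}_u^f)$ and then uses upper semi-continuity of the density), so the whole $W\sqcup\Omega$ decomposition and the case analysis on components of $W$ can be discarded.

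The genuine gap is in the last step, and it is a circularity. What you prove is that $u(A_{r,\rho_k})$ lies in the $\ep_k'$-neighborhood of $u(\p A_{r,\rho_k})=u(\p B_{\rho_k})\cup u(\p B_r)$; but $u(\p B_r)$ is part of the very set you are trying to locate, and knowing that $\operatorname{diam}_K u(\p B_r)$ is small says nothing about \emph{where} this small set sits --- it could a priori drift arbitrarily far from $p_k$ (indeed off to infinity, since $u$ is only assumed $L^\infty_{loc}$ on $\Sigma\setminus S$), so ``$\operatorname{diam}_K u(B_{\rho_k})\le C(\ep_k+\ep_k')$'' does not follow from what you wrote. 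The fix is either a connectedness argument --- $u(\bar A_{r,\rho_k})$ is connected, contained in the union of the closed $\ep_k'$-neighborhoods of the two boundary circles, and meets both, so these neighborhoods must intersect and hence $d_K(u(\p B_r),u(\p B_{\rho_k}))\le2\ep_k'$ --- or the paper's route: take the dichotomy that the circles $u(\p B_{r_k})$ either converge to a point $p$ or escape to infinity; in the first case derive a contradiction by applying Proposition \ref{co-dens} and Theorem \ref{mono.cor} at a hypothetical point $u(x_k)$, $x_k\in B_{r_k}\setminus B_{r_{k+1}}$, staying at positive distance from $p$ (the energy of these consecutive annuli tends to $0$); in the second case observe that every $F\in C^\infty_c(\mathbb{H}^2)$ vanishes near $u$ of a punctured neighborhood of $x_0$, so the PHSLV condition extends across $x_0$ and Proposition \ref{pr-cont} applied on all of $\Sigma$ yields a contradiction. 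Either repair is short, but as written the proof does not close.
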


\begin{proof}
First of all, continuity holds away from $x_0$, by Proposition \ref{pr-cont} below.
We now assume without loss of generality that $\Sigma$ is an open set in $\C$ and $S=\{x_0\}$.
As in the proof of Proposition \ref{pr-cont}, we can find a decreasing sequence of radii $r_k\to0$ such that
$$\operatorname{diam}_K u(\p B_{r_k}(x_0))\to0.$$
Up to a subsequence, we can assume that the sets $u(\p B_{r_k}(x_0))$ either converge to a point $p$
or go off to infinity. In the first case, we must have $u(x)\to p$ as $x\to x_0$: if not, up to a further subsequence,
we could find points $x_k\in A_k:=B_{r_k}(x_0)\setminus B_{r_{k+1}}(x_0)$ such that
$\liminf_{k\to\infty}d_K(u(x_k),p)>0$. However, this contradicts Proposition \ref{co-dens} below, together with Theorem \ref{mono.cor},
which would imply that the induced varifold $\mathbf{v}_{A_k}$ has a lower bound on the mass, while clearly
$$\limsup_{k\to\infty}\int_{A_k}N|\nabla u|^2\,dx^2=0.$$
Thus, in this case we are done.
In the second case, an analogous argument gives $\r\circ u(x)\to\infty$ as $x\to x_0$.
Thus, for any $F\in C^\infty_c(\mathbb{H}^2)$, the composition $F\circ u$ vanishes in a neighborhood of $x_0$.
It is then clear that $(\Sigma,u,N)$ satisfies the definition of PHSLV, since if $F$ vanishes near $u(\p\omega)$ then it also vanishes
near $u(\p(\omega\setminus\bar B_r(x)))$ for $r>0$ small enough. However, the discontinuity of $u$ at $x_0$ contradicts Proposition \ref{pr-cont}.
\end{proof}

\begin{Prop}
\label{pr-VI.1}
Let $(\Sigma,u,N)$ be as in the previous statement. Assume that
$(\Sigma\setminus S,u,N)$ is a PHSLV$^*$ on $\mathbb{H}^2$,
with
\begin{equation}\label{sup.s}\liminf_{\ep\to0}\ep^{-2} \int_{\omega\cap\{\r_{u(x_0)}\circ u<\ep\}}N |\nabla u|^2\,dx^2<\infty\quad\text{for all }x_0\in S,\end{equation}
for any neighborhood $x_0\in\omega\subset\subset\Sigma$.
Then 
$({\Sigma},u,N)$ is a PHSLV on $\mathbb{H}^2$. \hfill $\Box$
\end{Prop}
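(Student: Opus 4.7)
The plan is to verify the stationarity identity
$$\int_\omega N\,\nabla(W_F\circ u)\cdot_{\mathbb{H}^2}\nabla u\,dx^2=0$$
for a fixed a.e.\ domain $\omega\subset\subset\Sigma$ and a fixed test function $F\in C^\infty_c(\mathbb{H}^2\setminus u(\p\omega))$. By the previous proposition, $u$ extends continuously to $\Sigma$, and hence the finite set $S\cap\bar\omega=\{x_1,\dots,x_m\}$ (after perturbing $\omega$ to ensure $S\cap\p\omega=\emptyset$) is sent to finitely many points $p_i:=u(x_i)\in\mathbb{H}^2$. The idea is to excise small neighborhoods of the $x_i$'s and apply the PHSLV$^*$ property on $\omega_r:=\omega\setminus\bigcup_i\bar B_r(x_i)$, which for a.e.\ $r>0$ can be realized as $\{f_r>t\}$ for some $f_r\in C^\infty_c(\Sigma\setminus S,\R_+)$ (obtained by multiplying the defining function of $\omega$ by a standard radial cutoff that vanishes near each $x_i$). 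The obstacle is that $F$ itself is not locally constant near $u(\p B_r(x_i))$, so the original $F$ cannot be used directly; it has to be modified near each $p_i$.

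To this end, for each sufficiently small $\ep>0$ (so that the Kor\'anyi balls $B_\ep^\r(p_i)$ are pairwise disjoint and disjoint from $u(\p\omega)$), I would pick cutoffs $\chi_{\ep,i}\in C^\infty_c(B_\ep^\r(p_i))$ satisfying $\chi_{\ep,i}=1$ on $B_{\ep/2}^\r(p_i)$ and $\|\chi_{\ep,i}\|_{C^k}\le C\ep^{-k}$, and set
$$F_\ep:=F-\sum_{i=1}^m\chi_{\ep,i}\cdot(F-F(p_i)).$$
Thus $F_\ep=F$ outside $\bigcup_i B_\ep^\r(p_i)$ and $F_\ep\equiv F(p_i)$ on $B_{\ep/2}^\r(p_i)$. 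A direct computation using the smoothness of $F$ and the Lipschitz bound $|F-F(p_i)|\le C\,d_K(\cdot,p_i)\le C\ep$ on $B_\ep^\r(p_i)$ gives $\|F-F_\ep\|_{C^2(\mathbb{H}^2)}\le C\ep^{-1}$, hence $\|W_{F-F_\ep}\|_{C^1}\le C\ep^{-1}$. By continuity of $u$, for $r$ sufficiently small (depending on $\ep$) we have $u(\p B_r(x_i))\subseteq B_{\ep/4}^\r(p_i)$, so $F_\ep$ is locally constant on a neighborhood of $u(\p\omega_r)=u(\p\omega)\cup\bigcup_i u(\p B_r(x_i))$. The PHSLV$^*$ hypothesis then yields $\int_{\omega_r}N\,\nabla(W_{F_\ep}\circ u)\cdot_{\mathbb{H}^2}\nabla u\,dx^2=0$; sending $r\to0$ with $\ep$ fixed, dominated convergence (using the $\ep$-dependent bound $\|DW_{F_\ep}\|_{C^0}\le C_\ep$) produces $\int_\omega N\,\nabla(W_{F_\ep}\circ u)\cdot_{\mathbb{H}^2}\nabla u\,dx^2=0$.

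The last step is to show that the error $\int_\omega N\,\nabla(W_{F-F_\ep}\circ u)\cdot_{\mathbb{H}^2}\nabla u\,dx^2$ can be made to vanish. Since this integrand is supported in $u^{-1}\bigl(\bigcup_i B_\ep^\r(p_i)\bigr)$ and pointwise bounded by $C\ep^{-1}N|\nabla u|^2$, the error is controlled by
$$\frac{C}{\ep}\sum_{i=1}^m\int_{\omega\cap\{\r_{p_i}\circ u<\ep\}}N|\nabla u|^2\,dx^2.$$
The hard part is that \eqref{sup.s} only produces, for each $x_i$ separately, a possibly $i$-dependent sequence $\ep_k^{(i)}\to 0$ along which the $i$-th integral is $O\bigl((\ep_k^{(i)})^2\bigr)$, and these sequences need not share a common subsequence. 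I would bypass this by redoing the construction with \emph{individual} scales, replacing $F_\ep$ by $F_k:=F-\sum_i\chi_{\ep_k^{(i)},i}\cdot(F-F(p_i))$; the error then splits into a finite sum, the $i$-th piece bounded by $C\ep_k^{(i)}$, and each piece tends to zero along the corresponding sequence as $k\to\infty$. The use of the \emph{quadratic} (not merely superlinear) energy decay built into \eqref{sup.s} is essential here: since $\operatorname{div}_{\mathcal P}W_F$ involves two derivatives of $F$, the norm $\|W_{F-F_\ep}\|_{C^1}$ necessarily blows up like $\ep^{-1}$, and a softer capacity-type argument would not close the estimate.
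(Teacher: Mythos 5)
Your proof follows essentially the same route as the paper's. The paper also decomposes $F=\tilde F+\sum_q F_q$ with $\tilde F$ vanishing near the images of the singular points (so that the PHSLV$^*$ property on the excised domain $\omega_r$ handles $\tilde F$ directly), and then treats each localized piece $F_q$ by exactly the cut-off-at-scale-$\ep$ argument you describe; the paper merely packages that estimate as a separate varifold-level removability statement (Proposition \ref{remov.gen}), whose proof is your final estimate, including the key observation that the $\ep^{-1}$ blow-up forced by the two derivatives of $F$ in $W_F$ is absorbed by the quadratic mass decay along the sequence provided by \eqref{sup.s}. Your device of using individual scales $\ep_k^{(i)}$ for each singular point corresponds to the paper applying the removability lemma to each $p_i$ separately, so the decoupling issue you raise is handled the same way.

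Two points to repair in your write-up. First, a cutoff adapted to the Kor\'anyi ball $B^\r_\ep(p_i)$ cannot satisfy $\|\chi_{\ep,i}\|_{C^k}\le C\ep^{-k}$ in the Euclidean sense: the vertical thickness of $B^\r_\ep$ is of order $\ep^2$, so the $\varphi$-derivative of any such cutoff is of order $\ep^{-2}$, and consequently $\|F-F_\ep\|_{C^2(\mathbb{H}^2)}\le C\ep^{-1}$ is false as stated. What is true, and what the argument actually needs, is that the \emph{horizontal} derivatives of $\chi(\r_{p_i}/\ep)$ behave as claimed (since $|\nabla^H\r|\le1$ and $|X_\ell(Y_\ell(\r))|\le C\r^{-1}$), and that in the stationarity identity \eqref{V.2} the vertical derivatives of $F-F_\ep$ only ever appear multiplied by $u_k-z_k(p_i)$ or by $\nabla(\varphi_{p_i}\circ u)$, which supply compensating powers of $\ep$ on $\{\r_{p_i}\circ u<\ep\}$ after left-translating $p_i$ to the origin; with this correction the pointwise bound $C\ep^{-1}N|\nabla u|^2$ on the error integrand, and hence your conclusion, stands. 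Second, perturbing $\omega$ so that $S\cap\p\omega=\emptyset$ does not prevent some $p_i=u(x_i)$ from lying in $u(\p\omega)$, in which case your requirement that $B^\r_\ep(p_i)$ be disjoint from $u(\p\omega)$ fails; but then $F\equiv0$ near $p_i$, so no modification is needed there (the paper sidesteps this by only excising the points of $u(S\cap\omega)\setminus u(\p\omega)$).
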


\begin{Rm}
Recall that $\r_{u(x_0)}(q)=\r(u(x_0)^{-1}*q)=d_K(u(x_0),q)$
and that $u$ is continuous, so that $u(x_0)$ is defined.
Note that the last assumption is simply requiring that the mass of $\mathbf{v}_\omega$ in the ball of center $u(x_0)$ and radius $\ep$,
with respect to the distance $d_K$, is bounded by $O(\ep^2)$ for a sequence $\ep\to0$.
In a closed ambient, this assumption holds automatically for \emph{bubbles} defined on $\C=\hat \C\setminus\{\infty\}=S^2\setminus\{x_0\}$, in the context of the bubbling phenomenon, as a consequence of monotonicity. \hfill $\Box$
\end{Rm}

\begin{proof}
We fix a conformal metric $h$ on $\Sigma$.
Since $u$ is continuous, given $\om\subset\subset\Sigma$ it is easy to see that the stronger definition of stationarity is satisfied for the domain
$\om_r:=\om\setminus\bigcup_{x\in S\cap\omega}\bar B_r(x)$, for any $r>0$ small enough.
We define the finite set
$$S':=u(S\cap\omega)\setminus u(\p\omega)$$
and, for each $q\in S'$, we consider a smooth cut-off function $\chi_q:\mathbb{H}^2\to\R_+$ equal to $1$ near $q$
and supported in a bounded open set $U_q\subset\subset\mathbb{H}^2\setminus u(\p\omega)$,
such that $U_q\cap U_{q'}=\emptyset$ for $q,q'\in S'$ distinct.

Given any $F\in C^\infty_c(\mathbb{H}^2\setminus u(\p\omega))$, we need to show that the induced varifold $\mathbf{v}_\om$ satisfies
$$\int_G \operatorname{div}_{\mathcal P}W_F\,d\mathbf{v}_\omega(\mathcal P,p)=0.$$
We let
$$F_q:=\chi_q F,\quad \tilde F:=F-\sum_{q\in S'}F_q.$$
Since $\tilde F$ vanishes near $S'\cup u(\p\om)$, for $r$ small we have
$$\int_G \operatorname{div}_{\mathcal P}W_{\tilde F}\,d\mathbf{v}_\om(\mathcal P,p)=\int_G \operatorname{div}_{\mathcal P}W_{\tilde F}\,d\mathbf{v}_{\om_r}(\mathcal P,p)=0.$$
Finally, we can use Proposition \ref{remov.gen} below, applied to the varifold $\mathbf{v}:=\mathbf{v}_\om$ on $U_q$:
indeed, given another function $\hat F\in C^\infty_c(U_q)$ constant near $q$, we see that
$\hat F$ is locally constant on $u(\p\omega_r)$
for all $r>0$ small (as $\hat F$ vanishes near $u(\p\omega)\cup (S'\setminus\{q\})$), so that
$$\int_G \operatorname{div}_{\mathcal P}W_{\hat F}\,d\mathbf{v}_\om(\mathcal P,p)
=\int_G \operatorname{div}_{\mathcal P}W_{\hat F}\,d\mathbf{v}_{\om_r}(\mathcal P,p)=0.$$
Thus, by Proposition \ref{remov.gen}, we can conclude that
$$\int_G \operatorname{div}_{\mathcal P}W_{F_q}\,d\mathbf{v}_\om(\mathcal P,p)=0$$
holds as well.
\end{proof}


We used the following singularity removability for general varifolds, which
will also be useful to rule out energy dissipation in neck regions.

\begin{Prop}\label{remov.gen}
Assume that $\mathbf{v}$ is a varifold on an open set $U\subseteq{\mathbb H}^2$, restricting to a HSLV on $U\setminus\{q\}$ for some $q\in U$.
Assume also that we can test its stationarity with any Hamiltonian vector field $W_F$ generated by a function
$F\in C^\infty_c(U)$ constant near $q$, as well as
$$\liminf_{\ep\to0}\ep^{-2}|\mathbf{v}|(B_\ep^\r(q))<\infty.$$
Then $\mathbf{v}$ is a HSLV on $U$. \hfill $\Box$
\end{Prop}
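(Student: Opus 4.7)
The plan is to approximate any given $F\in C_c^\infty(U)$ by functions $F_\ep$ which are locally constant near $q$, so that the strengthened stationarity assumption gives $\int_G\operatorname{div}_{\mathcal P}W_{F_\ep}\,d\mathbf{v}=0$, and then pass to the limit along a sequence $\ep_k\to 0$ selected by the mass hypothesis. The natural cutoff is one scaled according to the Heisenberg dilation: fix $\tilde\chi\in C^\infty(\mathbb{H}^2)$ with $\tilde\chi\equiv 0$ on $B_1^\r(0)$ and $\tilde\chi\equiv 1$ outside $B_2^\r(0)$, and set
$$\chi_\ep(p):=\tilde\chi(\delta_{1/\ep}(q^{-1}*p)).$$
Since left translation by $q^{-1}$ preserves the left-invariant fields $X_j,Y_j$, while the pushforward under $\delta_{1/\ep}$ rescales them by $\ep^{-1}$ (cf. Remark \ref{isometry}), one obtains the scale-invariant bounds
$$|X_{i_1}\cdots X_{i_m}\chi_\ep|+|Y_{i_1}\cdots Y_{i_m}\chi_\ep|\le C_m\ep^{-m}$$
uniformly in $\ep$, with analogous bounds for any iterated horizontal derivative, while $\chi_\ep\equiv 0$ on $B_\ep^\r(q)$ and $\chi_\ep\equiv 1$ outside $B_{2\ep}^\r(q)$. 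Define $F_\ep:=F(q)+\chi_\ep(F-F(q))$: this is locally constant ($\equiv F(q)$) on $B_\ep^\r(q)$, and for $\ep$ small it belongs to $C_c^\infty(U)$, since outside $B_{2\ep}^\r(q)$ it agrees with $F$, so $\operatorname{spt}(F_\ep)\subseteq\operatorname{spt}(F)\cup \bar B_{2\ep}^\r(q)\subset U$. Hence the hypothesis yields $\int_G\operatorname{div}_{\mathcal P}W_{F_\ep}\,d\mathbf{v}=0$.

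It thus suffices to show that $\int_G\operatorname{div}_{\mathcal P}W_{G_\ep}\,d\mathbf{v}\to 0$ along a suitable sequence $\ep_k\to0$, where $G_\ep:=F-F_\ep=(1-\chi_\ep)(F-F(q))$ is supported in $B_{2\ep}^\r(q)$. The crucial observation is that $\operatorname{div}_{\mathcal P}W_G$ admits a reformulation involving only horizontal derivatives of $G$. Starting from \eqref{V.8-6}, using $J_HX_\ell=Y_\ell$, $J_HY_\ell=-X_\ell$ and the orthonormality of the horizontal frame, a direct computation gives $W_G\cdot_{\mathbb{H}^2}X_\ell=-(Y_\ell G)/2$ and $W_G\cdot_{\mathbb{H}^2}Y_\ell=(X_\ell G)/2$, leading to the clean identity
$$2\operatorname{div}_{\mathcal P}W_G=\sum_{\ell=1}^2\bigl[-\nabla^{\mathcal P}(Y_\ell G)\cdot X_\ell+\nabla^{\mathcal P}(X_\ell G)\cdot Y_\ell\bigr],$$
which is equivalent to \eqref{V.8-9} after expansion. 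In particular $|\operatorname{div}_{\mathcal P}W_G|$ is pointwise bounded by a universal constant times $\max_{a,b}(|X_aX_bG|+|X_aY_bG|+|Y_aX_bG|+|Y_aY_bG|)$.

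Two bounds control the horizontal Hessians of $G_\ep$ on $B_{2\ep}^\r(q)$: first, on a fixed bounded neighborhood of $q$ the Euclidean diameter of $B_{2\ep}^\r(q)$ is $O(\ep)$, hence $|F-F(q)|\le C\ep$ there; second, all horizontal derivatives $|X_jF|$, $|Y_jF|$, $|X_aX_bF|,\dots$ are uniformly bounded on $\operatorname{spt}(F)$. Applying the Leibniz rule to $G_\ep=(1-\chi_\ep)(F-F(q))$, each term produced by a second horizontal derivative of $G_\ep$ has size at most $C\ep^{-1}$: two derivatives landing on $\chi_\ep$ give $O(\ep^{-2})\cdot O(\ep)$, one derivative on $\chi_\ep$ gives $O(\ep^{-1})\cdot O(1)$, and no derivative on $\chi_\ep$ is $O(1)$. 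Therefore
$$|\operatorname{div}_{\mathcal P}W_{G_\ep}|\le C\ep^{-1}\quad\text{on }B_{2\ep}^\r(q).$$
The assumption $\liminf_{\ep\to 0}\ep^{-2}|\mathbf{v}|(B_\ep^\r(q))<\infty$ lets one pick $\ep_k\to0$ with $|\mathbf{v}|(B_{2\ep_k}^\r(q))\le C\ep_k^2$, so
$$\lf|\int_G\operatorname{div}_{\mathcal P}W_{G_{\ep_k}}\,d\mathbf{v}\rg|\le C\ep_k^{-1}\cdot C\ep_k^2=O(\ep_k)\to 0,$$
whence $\int_G\operatorname{div}_{\mathcal P}W_F\,d\mathbf{v}=\lim_k\int_G\operatorname{div}_{\mathcal P}W_{F_{\ep_k}}\,d\mathbf{v}=0$, as required.

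The main obstacle is the sharp pointwise estimate $|\operatorname{div}_{\mathcal P}W_{G_\ep}|\lesssim\ep^{-1}$. If one worked directly from \eqref{V.8-9}, one would be forced to differentiate the cutoff in the pure $\varphi$-direction, where $|\partial_\varphi\r_q|\sim\r_q^{-1}\sim\ep^{-1}$ blows up near $q$, degrading the estimate to $O(\ep^{-2})$ and exactly matching (rather than beating) the mass bound $O(\ep^2)$, yielding only a bounded integral. The horizontal-only reformulation of $\operatorname{div}_{\mathcal P}W_G$, paired with a cutoff built intrinsically from Heisenberg dilations so that $|X_j\r_q|,|Y_j\r_q|\le 1$ globally and all iterated horizontal derivatives scale correctly as $\ep^{-m}$, is precisely what bypasses this anisotropy and gains the extra factor of $\ep$ needed to close the argument.
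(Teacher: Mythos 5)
Your proof is correct and follows essentially the same route as the paper's: you decompose $F$ into a piece constant near $q$ (handled by the strengthened stationarity hypothesis) plus a remainder supported in $B_{2\ep}^\r(q)$, observe that $\operatorname{div}_{\mathcal P}W_G$ involves only horizontal second derivatives of $G$, bound these by $C\ep^{-1}$ using the gauge-adapted cutoff together with $|F-F(q)|=O(\ep)$, and conclude via the $O(\ep^2)$ mass bound along the sequence provided by the $\liminf$ hypothesis. The paper's cutoff is the specific choice $\chi(\r_q/\ep)$ (whose horizontal derivatives are controlled via the homogeneity of $\r$, e.g.\ $|X_\ell(Y_\ell(\r))|\le C\r^{-1}$), but this is only a cosmetic difference from your dilation-scaled construction.
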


\begin{proof}
We assume without loss of generality that $q=0$. Given $F\in C^\infty_c(U)$, we have to show that
\begin{equation}\label{remov.claim.bis}
\int_G\operatorname{div}_{\mathcal P}W_F\,d\mathbf{v}(\mathcal P,p)=0.
\end{equation}
Let $\chi:\R_+\to\R$ be a smooth decreasing function with $\chi=1$ on $[0,1/2]$ and $\chi=0$ on $[3/4,1]$, and denote $\chi_\ep(t):=\chi(t/\ep)$.
Letting
$$\tilde F:=(1-\chi_\ep\circ\r)F+(\chi_\ep\circ\r) F(0),$$
we decompose $W_F=W_{\tilde F}+W'$, where
$$W':=W_{(\chi_\ep\circ\r)(F-F(0))}.$$
Moreover, since $\tilde F$ is as in the statement, \eqref{remov.claim.bis} holds with $W_{\tilde F}$ in place of $W_F$.
Hence, to prove the claim it suffices to show that, along a suitable sequence $\ep\to0$, we have
$$\int_G\operatorname{div}_{\mathcal P}W'\,d\mathbf{v}(\mathcal P,p)=O(\ep).$$

Given a Legendrian plane $\mathcal{P}$ spanned by an orthonormal basis $(Z_1,Z_2)$, recall that
$$\operatorname{div}_{\mathcal P}W'=\sum_{j,\ell=1}^2 [(\nabla_{X_\ell}W'\cdot X_\ell) (X_\ell\cdot Z_j)+(\nabla_{Y_\ell}W'\cdot Y_\ell)(Y_\ell\cdot Z_j)].$$
Thus, thanks to the assumption $|\mathbf{v}|(B_\ep^\r(0))=O(\ep^2)$ along a sequence $\ep\to0$,
it suffices to show that
$$|\nabla_{X_\ell}W'\cdot X_\ell|+|\nabla_{Y_\ell}W'\cdot Y_\ell|\le C\ep^{-1}$$
for $\ep$ small. We check this only for the first term, since for the second one the computation is analogous.
Since
$$2(W')^H=\ep^{-1}\chi'(\r/\ep)(F-F(0))\sum_{\ell=1}^2[X_\ell(\r)Y_\ell-Y_\ell(\r)X_\ell]
+\chi(\r/\ep)\sum_{\ell=1}^2[X_\ell(F)Y_\ell-Y_\ell(F)X_\ell],$$
we can compute
\begin{align*}
-2\nabla_{X_\ell} W'\cdot X_\ell
&=\ep^{-2}\chi''(\r/\ep)(F-F(0))X_\ell(\r)Y_\ell(\r)\\
&\quad+\ep^{-1}\chi'(\r/\ep)(F-F(0))X_\ell(Y_\ell(\r))\\
&\quad+\ep^{-1}\chi'(\r/\ep)[X_\ell(F)Y_\ell(\r)+Y_\ell(F)X_\ell(\r)]\\
&\quad+\chi(\r/\ep)X_\ell(Y_\ell(F)).
\end{align*}
By homogeneity of $\r$, we have $X_\ell(\r)\le C$ (in fact, we have the more precise bound \eqref{nabla.r.norm})
and $|X_\ell(Y_\ell(\r))|\le C\r^{-1}\le C\ep^{-1}$ on the support of $\chi'(\r/\ep)$.
Since $|F-F(0)|\le C\ep$ here (by smoothness of $F$), we see that all the terms in the expansion are bounded by $C\ep^{-1}$,
as desired.
\end{proof}

\section{Basic properties of PHSLVs}


\subsection{A universal lower bound for the density} 

Let $(\Sigma,u, N)$ be a PHSLV and fix a decreasing cut-off function $\chi:\R_+\to\R$ with $\chi=1$ on $[0,1]$ and $\chi=0$ on $[2,\infty)$.
We let
$${\mathcal G}_u:=\{x\in\Sigma\,:\,x\text{ is a Lebesgue point for $u$ and $\nabla u$}\}$$
and 
$${\mathcal G}^f_u:=\{x\in{\mathcal G}_u\,:\,|\nabla u|(x)\ne 0\}.$$
Note that, for $x\in\mathcal{G}_u^f$, the differential $\nabla u(x)$ is an injective, linear conformal map, with values in a Legendrian two-plane
$\mathcal{P}\subset H_{u(x)}$.
For a.e. $\omega\subset\subset\Sigma$, we consider the induced varifold $\mathbf{v}_\omega$.
We now establish the following lemma for the density of $\mathbf{v}_\omega$.
\begin{Prop}\label{dens.lb}
\label{lm-IV.3} Given $p\in u(\omega\cap{\mathcal G}_u^f)\setminus u(\p\omega)$, we have
\begin{equation}
\label{IV.3}
\theta^\chi(p)=\lim_{\ep\rightarrow 0}-\frac{1}{\ep}\int_{\omega} \chi'\lf(\frac{\r_p}{\ep}\rg) \lf[N\frac{|\nabla\r_p|^2}{\r_p}+N\frac{\varphi_p}{\r_p^3}\arctan\sigma_p |\nabla u|^2\rg]\, dx^2\ge 2\pi,
\end{equation}
where we write $\r_p$ in place of $\r_p\circ u$ (and similarly for $\varphi_p$ and $\arctan\sigma_p$), as well as
\begin{equation}
\label{IV.3-a}
\liminf_{\ep\rightarrow 0}\frac{1}{\ep}\int_{\omega\cap\{\ep\le\r_p<2\ep\}} N\frac{|\nabla\r_p|^2}{\r_p}\, dx^2\ge 2\pi,
\end{equation}
for any cut-off function $\chi$ as above. \hfill $\Box$
\end{Prop}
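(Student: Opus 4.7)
The plan is to combine a direct substitution that turns Theorem~\ref{mono.cor} into a statement on $\omega$ with a blow-up analysis at a Lebesgue point $x_0\in\mathcal G_u^f$ lying above $p$. Such an $x_0$ exists by the hypothesis $p\in u(\omega\cap\mathcal G_u^f)$. After a left translation of $\mathbb H^2$ and a translation in a local conformal chart around $x_0$, I assume $x_0=0$ and $p=0$.

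First, I would derive \eqref{IV.3} by pulling back the formula for $\theta^\chi$ from Theorem~\ref{mono.cor} through $u$. At every $x\in\mathcal G_u^f$, the tangent plane $\mathcal P(x):=\operatorname{img}\nabla u(x)\subset H_{u(x)}$ is Legendrian and, letting $e^{\lambda(x)}:=|\p_{x_1}u(x)|$, weak conformality gives $|\nabla u(x)|^2=2e^{2\lambda(x)}$. Using $\{e^{-\lambda}\p_{x_j}u\}_{j=1,2}$ as an orthonormal frame of $\mathcal P(x)$, a direct check yields the pointwise identity $(|\nabla^{\mathcal P(x)}f|^2)\circ u=e^{-2\lambda}|\nabla(f\circ u)|^2$ for every smooth $f$ on $\mathbb H^2$, while on $\omega$ one has $d|\mathbf v_\omega|=Ne^{2\lambda}\,dx^2$. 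Taking $f=\r_p$ turns the first summand in the defining limit for $\theta^\chi$ into $-\ep^{-1}\int_\omega N|\nabla\r_p|^2\r_p^{-1}\chi'(\r_p/\ep)\,dx^2$, and the purely scalar $\arctan\sigma_p$ summand pulls back with an additional factor $|\nabla u|^2$; the third summand of $\Theta^\chi$ vanishes in the limit $\ep\to0$, exactly as in the proof of Theorem~\ref{mono.cor}. This yields \eqref{IV.3}.

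For the lower bound I would perform a blow-up at $x_0$. I set $u^\ep(y):=\delta_{1/\ep}(u(\ep y))$ and $N^\ep(y):=N(\ep y)$. By the Lebesgue point property of $\nabla u$ at $0$, the $z$-components satisfy $z(\ep y)/\ep\to A_zy$ in $W^{1,2}_{loc}$, where $A_z:=\pi_*\nabla u(0)$ is conformal and injective onto a Lagrangian plane in $\C^2$. Integrating the Legendrian identity $d\varphi=\langle iz,dz\rangle$ along the radial segment from $0$ to $\ep y$ gives
\[\varphi(\ep y)=\ep\int_0^1\langle iz(t\ep y),\p_t z(t\ep y)\rangle\,dt,\]
whose leading order is $t\ep^2\omega(A_zy,A_zy)=0$, since $A_z$ has Lagrangian image; hence $\varphi(\ep y)/\ep^2\to 0$ uniformly on compact sets in $y$. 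Thus $u^\ep\to u^\infty$ in $W^{1,2}_{loc}$ with $u^\infty(y)=Ay$, where $A:\R^2\to\mathcal P_0\subset H_0$ is the Legendrian conformal extension of $A_z$. Together with $N^\ep\to N(x_0)$ in $L^1_{loc}$, this yields the varifold convergence $\mathbf v^\ep:=\ep^{-2}(\delta_{1/\ep})_*\mathbf v_\omega\to\mathbf v^\infty$ on $\mathbb H^2$, where $\mathbf v^\infty$ is the Legendrian plane $\mathcal P_0$ carrying the constant multiplicity $N(x_0)$.

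Finally I would evaluate the density on this planar limit. On $\mathcal P_0$ we have $\varphi\equiv0$, so $\sigma\equiv0$ and $\arctan\sigma\equiv0$; moreover, after rotating so that $\mathcal P_0=\operatorname{span}(X_1,X_2)$, a short computation gives $X_j\sigma=0$ on $\mathcal P_0$, so $\nabla^H\sigma$ is pointwise $H$-orthogonal to $\mathcal P_0$ and $\nabla^{\mathcal P_0}\arctan\sigma\equiv0$. Only the first term of $\Theta^\chi(\mathbf v^\infty,0,a)$ survives, and polar coordinates on $\mathcal P_0$ (where $|\nabla^{\mathcal P_0}\r|\equiv 1$ and $\r$ coincides with the Euclidean radius) give $\Theta^\chi(\mathbf v^\infty,0,a)=2\pi N(x_0)$ for every $a>0$. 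The scale-invariance $\Theta^\chi(\mathbf v_\omega,0,\ep a)=\Theta^\chi(\mathbf v^\ep,0,a)$, together with the varifold convergence and the boundedness and continuity of the integrand of $\Theta^\chi$ away from the origin (thanks to the $\chi'(\r/a)$ cut-off), gives $\Theta^\chi(\mathbf v_\omega,0,\ep a)\to 2\pi N(x_0)$. Combined with $\theta^\chi(p)=\lim_{s\to 0}\Theta^\chi(\mathbf v_\omega,0,s)$ from Theorem~\ref{mono.cor}, we conclude $\theta^\chi(p)=2\pi N(x_0)\ge 2\pi$. An analogous rescaling turns the left-hand side of \eqref{IV.3-a} into $\int_{\{\r\circ u^\ep\in[1,2]\}}N^\ep\,|\nabla(\r\circ u^\ep)|^2/(\r\circ u^\ep)\,dy^2$, whose $\liminf$ as $\ep\to 0$ is $\ge 2\pi N(x_0)\ge 2\pi$ by Fatou's lemma and the explicit computation on the planar limit. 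The main technical obstacle is justifying the blow-up convergence $u^\ep\to Ay$ from the sole Lebesgue-point data on $(u,\nabla u)$; in particular, the compatibility between the Legendrian (integral) constraint on $u$ and the Lagrangian (algebraic) constraint on $A$ is what forces $\varphi^\ep\to 0$ and hence closes the argument.
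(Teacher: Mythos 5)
The pullback identity giving the first equality in \eqref{IV.3} is fine and is exactly what the paper means by ``follows directly from the definition of induced varifold.'' The lower bound, however, has two genuine gaps.

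First, the convergence $\varphi(\ep y)/\ep^2\to0$ is not established. Your radial-integration argument treats $u$ as if it were differentiable along segments: for a $W^{1,2}$ map the restriction to a fixed segment need not even be defined, and the ``leading order'' substitution $z(t\ep y)\approx t\ep A_z y$, $\p_t z\approx \ep A_z y$ is a pointwise statement that Lebesgue-point data does not give. You flag this as ``the main technical obstacle'' but do not resolve it, and it is precisely where the real work lies. The paper's route is to use the Sobolev identity $\nabla(\varphi\circ u)=u_1\nabla u_2-u_2\nabla u_1+u_3\nabla u_4-u_4\nabla u_3$, Cauchy--Schwarz against the $L^2$-smallness of $u_2,u_4$ to get $\dashint_{B_r}|\nabla(\varphi\circ u)|=o(r)$, then Sobolev--Poincar\'e for the oscillation and a separate ODE-type argument ($\frac{d}{dr}\dashint_{B_r}f=\frac1r\dashint_{B_r}\nabla f\cdot x$) for the mean, yielding only $\dashint_{B_r}|\varphi\circ u|^2=o(r^4)$ --- an averaged statement, not uniform convergence. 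Your subsequent steps (varifold convergence, Fatou on the annulus $\{\r\circc u^\ep\in[1,2]\}$) must therefore be run with convergence in measure/a.e.\ along subsequences, which is doable but is exactly the bookkeeping the paper carries out with the sets $\tilde A_r$ and \eqref{r.is.x.in.meas}--\eqref{diff.ann}.

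Second, the identification $\mathbf v^\ep\to N(x_0)\cdot\mathcal P_0$ and the resulting equality $\theta^\chi(p)=2\pi N(x_0)$ are wrong in general. The blow-up of $\mathbf v_\omega$ at $p$ sees the entire fiber $\omega\cap u^{-1}(p)$, which the hypotheses do not restrict to the single point $x_0$; indeed Proposition \ref{pr-int-dens} later shows $\theta^\chi(p)=2\pi\sum_{x\in u^{-1}(p)}N(x)$ under extra assumptions, so your equality contradicts the paper whenever the fiber has several points. Moreover $N^\ep\to N(x_0)$ requires $x_0$ to be a Lebesgue point of $N$, which membership in $\mathcal G_u^f$ does not provide. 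Both issues are harmless for the \emph{inequality} $\ge2\pi$ --- the integrand in \eqref{IV.3} is nonnegative, so one may discard the contribution of the rest of the fiber and replace $N$ by $1$, which is what the paper implicitly does --- but your argument as written proves a false equality rather than the stated inequality, and should be reformulated as a one-sided estimate (localize near $x_0$, or use lower semicontinuity of mass under varifold convergence). There is also a small unaddressed point in \eqref{IV.3-a}: the paper must show that the \emph{normal} part $(\nabla^H\r)^\perp$ contributes $o(r)$, which it does by converting it into $|\nabla^{\mathcal P}\arctan\sigma|^2$ and invoking the finiteness statement of Theorem \ref{mono.cor}; your planar-limit computation silently assumes this tangentiality in the limit but does not control the error along the sequence.
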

\begin{proof}
The formula for $\theta^\chi(p)$ follows directly from the definition of induced varifold.
In the sequel, up to a left translation, we assume that $p=0$.
We can find a local conformal chart where $0\in{\mathcal G}^f_u$ and
$u(0)=0$.
Modulo a rotation (see Remark \ref{isometry}), we can further assume that
$$\p_{x_1}u(0)=e^{\la_0} X_1(0)=(e^{\la_0},0,0,0,0),\quad\p_{x_2}u(0)=e^{\la_0} X_2(0)=(0,0,e^{\la_0},0,0).$$
We will show only \eqref{IV.3-a}. The same proof will show that
$$\lim_{\ep\rightarrow 0}-\frac{1}{\ep}\int_{\omega} \chi'\lf(\frac{\r_p}{\ep}\rg) N\frac{|\nabla\r_p|^2}{\r_p}\,dx^2\ge2\pi,$$
proving also \eqref{IV.3}.

Since $0$ is a Lebesgue point for $\nabla u$, we have
$$\lim_{r\rightarrow 0} \dashint_{B_r(0)}\sum_{j=1}^2 |\p_{x_j} u(x)-e^{\la_0} X_j(0)|^2\, dx^2=0.$$
This implies in particular
$$\lim_{r\rightarrow 0} \dashint_{B_r(0)}[|\nabla u_1- e^{\la_0} \p_{x_1}|^2+|\nabla u_2|^2+|\nabla u_3- e^{\la_0} \p_{x_2}|^2+|\nabla u_4|^2]\, dx^2=0.$$
Thanks to \cite[Theorem 6.1]{EG}, the map $u$ is approximately differentiable at $0$ in the sense that
$$\lim_{r\rightarrow 0} r^{-2} \dashint_{B_r(0)}|u(x)-e^{\la_0} X_1(0) x_1-e^{\la_0} X_2(0) x_2|^2\, dx^2=0.$$
This implies in particular
\begin{equation}
\label{IV.6-0a}
\lim_{r\rightarrow 0} r^{-2} \dashint_{B_r(0)}[|u_1(x)-e^{\la_0}x_1|^2+ |u_2(x)|^2+|u_3(x)-e^{\la_0}x_2|^2+|u_4(x)|^2]=0.
\end{equation}
Recalling that $\nabla(\varphi\circ u)= u_1\nabla u_2-u_2\nabla u_1+u_3\nabla u_4-u_4\nabla u_3$ and using the previous bounds, together with Cauchy--Schwarz,
we get
$$\dashint_{B_r(0)}|\nabla(\varphi\circ u)|\, dx^2=o(r).$$
We claim that, in fact,
\begin{equation}
\label{varphi.r.4}
\dashint_{B_r(0)}|\varphi\circ u|^2\, dx^2=o(r^4).
\end{equation}
Indeed, the
Sobolev--Poincar\'e inequality gives
$$\sqrt{\dashint_{B_r(0)} \lf|\varphi\circ u-\dashint_{B_r(0)} (\varphi\circ u)\, dx^2\rg|^2\, dx^2}\le Cr^{-1}\int_{B_r(0)}|\nabla(\varphi\circ u)|\, dx^2=o(r^2),$$
thanks to the previous integral bound on $\nabla(\varphi\circ u)$. It remains to bound the average of $\varphi\circ u$ on $B_r(0)$.
Given $f\in W^{1,1}(\mathbb{D})$, we have
$$\frac{d}{dr}\dashint_{B_r(0)} f\,dx^2=\frac{d}{dr}\dashint_{B_1(0)}f(ry)\, dy^2=\dashint_{B_1(0)}\nabla f(ry)\cdot y\,dy^2=\frac{1}{r}\dashint_{B_r(0)}\nabla f(x)\cdot x\, dx^2.$$
Applying this to $f:=\varphi\circ u$, we obtain
$$\dashint_{B_r(0)} (\varphi\circ u)\, dx^2-\dashint_{B_s(0)} (\varphi\circ u)\, dx^2=\int_{s}^r \frac{dt}{t}\dashint_{B_t(0)}\nabla(\varphi\circ u)(x)\cdot x\, dx^2=o(r^2)$$
for all $0<s<r$. Letting $s\to0$ we deduce that
$$\dashint_{B_r(0)} (\varphi\circ u)\, dx^2=o(r^2),$$
and hence the claimed bound \eqref{varphi.r.4}.

This implies that
$$\dashint_{B_r(0)}\lf|\r^4\circ u(x)-\sum_{j=1}^4u_j^4\rg|^{1/2}\, dx^2=2 \dashint_{B_r(0)}|\varphi\circ u|\, dx^2=o(r^2),$$
and hence
\begin{align*}
&\dashint_{B_r(0)}\lf|\r^4\circ u(x)- e^{4\la_0} |x|^4\rg|^{1/2}\, dx^2\\
&\le\dashint_{B_r(0)}\lf|\sum_{j=1}^4u_j^4- e^{4\la_0}|x|^4\rg|^{1/2}\, dx^2+o(r^2)\\
&\le\dashint_{B_r(0)}[|u_1(x)^4-e^{4\la_0}x_1^4|^{1/2}+u_2^2+|u_3(x)^4-e^{4\la_0}x_2^4|^{1/2}+u_4^2]\, dx^2+o(r^2)\\
&\le\sum_{j=1}^2\lf(\dashint_{B_r(0)}|u_{2j-1}(x)^2-e^{2\la_0}x_j^2|\,dx^2\rg)^{1/2}\lf(\dashint_{B_r(0)}|u_{2j-1}(x)^2+e^{2\la_0}x_j^2|\,dx^2\rg)^{1/2}+o(r^2)\\
&=o(r^2).
\end{align*}
This implies that
\begin{equation}\label{r.is.x.in.meas}
\lim_{r\rightarrow 0} r^{-2} |\{x\in B_r(0)\,:\,|\r^4\circ u(x)- e^{4\la_0} |x|^4|>\ep r^4\}|=0\quad\text{for all }\ep>0,
\end{equation}
so that
\begin{equation}\label{diff.ann}
\int_{B_r(0)\setminus B_{r/2}(0)} |1-{\mathbf 1}_{e^{\la_0} r/2\le \r\circ u< e^{\la_0} r}|\, dx^2=o(r^2).
\end{equation}

We now introduce the sets
$$A^\r_{e^{\la_0}r}(0):=B^\r_{e^{\la_0}r}(0)\setminus B^\r_{e^{\la_0}r/2}(0),\quad A_r(0):=B_r(0)\setminus B_{r/2}(0),$$
where $B^\r_s(p):=\{q\in\mathbb{H}^2\,:\,\r_p(q)<s\}=\{q\in\mathbb{H}^2\,:\,d_K(p,q)<s\}$ is the ball of center $p$ and radius $s$ with respect
to the distance $d_K$.
Letting $\tilde A_r:=u^{-1}(A^\r_{e^{\la_0}r}(0))\cap A_r(0)$, we clearly have
\begin{align*}
\int_{\tilde A_r}\frac{1}{\sqrt{1+\sigma^2}}\frac{|\nabla u|^2}{\r}\, dx^2
&=2e^{2\la_0}\int_{\tilde A_r}\frac{\r^{-1}}{\sqrt{1+\sigma^2}}\,dx^2
+\int_{\tilde A_r}\frac{\r^{-1}}{\sqrt{1+\sigma^2}}|\nabla u(x)-\nabla u(0)|^2\, dx^2\\
&\quad+2 \int_{\tilde A_r}\frac{\r^{-1}}{\sqrt{1+\sigma^2}}\nabla u(0)\cdot (\nabla u(x)-\nabla u(0))\, dx^2
\end{align*}
(where we write $\r$ and $\sigma$ in place of $\r\circ u$ and $\sigma\circ u$, and we let $\sigma\circ u:=\operatorname{sgn}(\varphi\circ u)(+\infty)$ when $\rho\circ u=0$; note that on $\tilde A_r$ we never have $\rho\circ u=\varphi\circ u=0$).
Since on $\tilde A_r$ we have $\r\circ u\in(e^{\la_0}r/2,e^{\la_0}r)$, the second term on the right-hand side is $o(r)$;
the same holds for the third one, by Cauchy--Schwarz. Hence,
$$\int_{\tilde A_r}\frac{1}{\sqrt{1+\sigma^2}}\frac{|\nabla u|^2}{\r}\, dx^2
=2e^{2\la_0}\int_{\tilde A_r}\frac{\r^{-1}}{\sqrt{1+\sigma^2}}\,dx^2+o(r).$$
Because of \eqref{r.is.x.in.meas}, for any $\ep>0$ we have
$$\int_{\tilde A_r}\frac{\r^{-1}}{\sqrt{1+\sigma^2}}{\mathbf 1}_{|\r(x)-e^{\la_0}|x||>\ep r}\, dx^2
\le C\frac{|\tilde A_r\cap\{|\r(x)-e^{\la_0}|x||>\ep r\}|}{r}\, dx^2=o(r),$$
as well as
\begin{align*}
&2 e^{2\la_0}\int_{\tilde A_r} \frac{\r^{-1}}{\sqrt{1+\sigma^2}}{\mathbf 1}_{|\r(x)-e^{\la_0}|x||\le\ep r}\, dx^2\\
&=2 e^{\la_0}\int_{\tilde A_r} \frac{|x|^{-1}}{\sqrt{1+\sigma^2}}{\mathbf 1}_{|\r(x)-e^{\la_0}|x||\le\ep r}\, dx^2+O(\ep r)\\
&=2 e^{\la_0}\int_{\tilde A_r} \frac{|x|^{-1}}{\sqrt{1+\sigma^2}}\, dx^2+O(\ep r)+o(r).
\end{align*}
Combining the previous bounds with a simple diagonal argument, we finally obtain
\begin{align}
\label{IV.9-g}
\begin{aligned}
&\int_{\tilde A_r}\frac{1}{\sqrt{1+\sigma^2}}\frac{|\nabla u|^2}{\r}\, dx^2\\
&=2e^{\la_0}\int_{\tilde A_r}\frac{1}{\sqrt{1+\sigma^2}}|x|^{-1}\,dx^2+o(r).
\end{aligned}
\end{align}

Moreover, the integral bound \eqref{varphi.r.4} gives
$$|\{x\in\tilde A_r\,:\,|\varphi\circ u|(x)>\ep \r^2\circ u(x)\}|=o(r^2)$$
for any given $\ep>0$. Recalling that $\frac{1}{\sqrt{1+\sigma^2}}=\frac{\rho^2}{\r^2}=\sqrt{1-4\r^{-4}\varphi^2}$,
we deduce that
$$2e^{\la_0}\int_{\tilde A_r} \lf|1-\frac{1}{\sqrt{1+\sigma^2}}\rg||x|^{-1}\,dx^2=O(\ep r)+o(r).$$
Thus, using again a diagonal argument and recalling also \eqref{diff.ann}, the previous bounds give
\begin{align}
\label{IV.9-k}
\begin{aligned}
\int_{\tilde A_r}\frac{1}{\sqrt{1+\sigma^2}}\frac{|\nabla u|^2}{\r}\, dx^2
&=2e^{\la_0}\int_{\tilde A_r}|x|^{-1}\, dx^2+o(r)\\
&=2e^{\la_0}\int_{A_r(0)} |x|^{-1}\, dx^2+o(r)\\
&=4\pi e^{\la_0} r+o(r).
\end{aligned}
\end{align}

For $x\in \mathcal{G}_u^f$ such that $\r\circ u(x)\neq0$, at the point $u(x)$ we decompose
\[
\nabla^H\r=(\nabla^H\r)^T+(\nabla^H\r)^\perp,
\]
where $(\nabla^H\r)^T$ denotes the orthogonal projection of $\nabla^H \r$ onto the Lagrangian plane $\operatorname{span}\{\p_{x_1}u,\p_{x_2}u\}$ (this depends not only on $u(x)$ but also on $x$). Letting $2e^{2\lambda}:=|\nabla u(x)|^2$, by conformality of $\nabla u(x)$ we have
$$(\nabla^H\r)^T\circ u=e^{-2\la} \sum_{k=1}^2 \p_{x_k}(\r\circ u)\p_{x_k}u.$$
Away from $\{\rho\circ u=0\}$ we similarly decompose $\nabla^H\sigma=(\nabla^H\sigma)^T+(\nabla^H\sigma)^\perp$.
Recalling \eqref{r.sigma.j} and the fact that $J_H$ realizes an isometry from $\operatorname{span}\{\p_{x_1}u,\p_{x_2}u\}$ to its orthogonal inside $H$, we obtain
$$\r^3J_H[(\nabla^H\r)^T]=\frac{\rho^4}{2}(\nabla^H\sigma)^\perp,\quad\r^3J_H[(\nabla^H\r)^\perp]=\frac{\rho^4}{2}(\nabla^H\sigma)^T.$$
Hence, for $x\in\mathcal{G}_u^f$ such that $\rho\circ u(x)>0$, we have
$$\frac{\rho^4}{2}e^{-2\la}\sum_{k=1}^2\p_{x_k}(\sigma\circ u)\p_{x_k}u
=\r^3e^{-2\la}\sum_{k=1}^2\langle(\nabla^H\r)^\perp,J_H\p_{x_k}u\rangle \p_{x_k}u.$$
Since $e^{-\la}(J_H\p_{x_1}u,J_H\p_{x_2}u)$ is an orthonormal basis of the normal plane, we deduce that
$$\rho^8|\nabla(\sigma\circ u)|^2=2\r^8\frac{|(\nabla^H\r)^\perp|^2}{\r^2} |\nabla u|^2.$$
In particular, writing again $\sigma$ in place of $\sigma\circ u$, we have
$$|\nabla\arctan\sigma|^2=\frac{|\nabla\sigma|^2}{(1+\sigma^2)^2}=\frac{\rho^8}{\r^8}|\nabla\sigma|^2=2\frac{|(\nabla^H\r)^\perp|^2}{\r^2}|\nabla u|^2.$$
Recall that $\arctan\sigma$ extends to a smooth function on $\mathbb{H}^2\setminus\{0\}$
and note that this identity (discarding intermediate equalities) is valid also at any $x\in\mathcal{G}_u^f$ such that $u(x)\in\{\rho=0\}\setminus\{0\}$,
since both sides vanish (as $\nabla^H\arctan\sigma=0$ and $\r^3\nabla^H\r=\nabla^H\varphi^2=\varphi J_H\nabla^H\rho^2=0$ here).
By Theorem \ref{mono.cor}, we then have
$$\lim_{r\rightarrow 0}\frac1r\int_{\tilde A_r}N\frac{|(\nabla^H\r)^\perp|^2}{\r}|\nabla u|^2\, dx^2=0.$$
Since $|\nabla\r|=|(\nabla^H\r)^T|e^\lambda$, the left-hand side of \eqref{IV.3-a} is
\begin{align*}
\lim_{r\rightarrow 0}\frac{1}{e^{\lambda_0}r}\int_{\omega\cap\{e^{\lambda_0}r\le\r<2e^{\lambda_0}r\}}N\frac{|\nabla\r|^2}{\r}\,dx^2
&\ge e^{-\lambda_0}\lim_{r\rightarrow 0}\frac1r\int_{\tilde A_r}N\frac{|\nabla\r|^2}{\r}\,dx^2\\
&=e^{-\lambda_0}\lim_{r\rightarrow 0}\frac1r\int_{\tilde A_r}N\frac{|\nabla^H\r|^2}{\r}\frac{|\nabla u|^2}{2}\,dx^2.
\end{align*}
Recalling \eqref{nabla.r.norm}, the conclusion follows from \eqref{IV.9-k}.
\end{proof}

\subsection{Continuity of the underlying map}
We now show that the map $u$ is in fact continuous, in a quantitative way.

\begin{Prop}
\label{pr-cont}
Let $(\Sigma,u, N)$ be a PHSLV. Then $u$ admits a continuous representative and there exists a universal constant $C_1>0$ such that, in any conformal parametrization $\phi:B_1(0)\rightarrow \Sigma$,
$$
\operatorname{diam}_K^2 u\circ\phi(B_{1/2}(0))\le C_1 \int_{\phi(B_1(0))}N|\nabla u|^2\, dx^2.
$$
Here $\operatorname{diam}_K$ denotes the diameter with respect to $d_K$. \hfill $\Box$
\end{Prop}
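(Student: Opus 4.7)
The plan is first to prove the quantitative Kor\'anyi diameter bound; the existence of a continuous representative will then follow by running the same estimate on shrinking concentric subdiscs. Identify $u$ with $u\circ\phi$ on $B_1(0)$, set $E:=\int_{B_1(0)}N|\nabla u|^2\,dx^2$, and proceed in three steps: (i) by Fubini and Cauchy--Schwarz, pick a good radius $r_0\in(1/2,1)$ where $u|_{\partial B_{r_0}(0)}$ is a Legendrian loop of $g_{\mathbb{H}^2}$-length $O(\sqrt E)$; (ii) translate this into a Kor\'anyi diameter bound on $u(\partial B_{r_0})$, using that the Kor\'anyi length of a Legendrian arc is controlled by its $g_{\mathbb{H}^2}$-length; (iii) use the Legendrian monotonicity formula together with the density lower bound $\theta^\chi\ge 2\pi$ to bound the Kor\'anyi distance from any interior image point $u(x)$ to the set $u(\partial B_{r_0})$.

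For (i), for a.e.\ $r$ the trace $u|_{\partial B_r(0)}$ is an absolutely continuous Legendrian loop of $g_{\mathbb{H}^2}$-length $L(r)$ satisfying $L(r)^2\le 2\pi r^2\int_0^{2\pi}|\nabla u|^2(r\cos\theta,r\sin\theta)\,d\theta$; integrating against $dr/r$ and applying Fubini yields $\int_{1/2}^1 L(r)^2\,\tfrac{dr}{r}\le 2\pi E$, so some $r_0\in(1/2,1)$ satisfies $L(r_0)^2\le CE$. For (ii), from $u^*\alpha=0$ and the explicit Heisenberg group law one sees that the vertical component of $\gamma(0)^{-1}*\gamma(1)$ for a Legendrian arc $\gamma$ equals the symplectic area of the loop obtained by closing $\pi\circ\gamma$ by a chord in $\C^2$, hence is at most $CL(\gamma)^2$ by the isoperimetric inequality; combined with the bound $|\pi\circ\gamma(1)-\pi\circ\gamma(0)|\le L(\gamma)$ on the horizontal displacement, this gives $d_K(\gamma(0),\gamma(1))\le CL(\gamma)$ for any Legendrian curve, and hence $\operatorname{diam}_K u(\partial B_{r_0})\le C\sqrt E$.

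For (iii), let $\mathbf{v}:=\mathbf{v}_{B_{r_0}(0)}$ be the varifold induced by the PHSLV as in Remark~\ref{hslv.from.phslv}: it has total mass $|\mathbf{v}|(\mathbb{H}^2)\le E/2$ and is a HSLV on $U:=\mathbb{H}^2\setminus u(\partial B_{r_0})$. For any Lebesgue point $x\in\mathcal{G}_u^f\cap B_{r_0}$ with image $p:=u(x)$ and $D:=d_K(p,u(\partial B_{r_0}))$, the open ball $B_D^\r(p)$ is contained in $U$; Proposition~\ref{dens.lb} gives $\theta^\chi(p)\ge 2\pi$, and Theorem~\ref{mono.cor} applied at scale $b=D/2$ yields
\[
2\pi\le\theta^\chi(p)\le Cb^{-2}|\mathbf{v}|(B_{2b}^\r(p)\setminus\bar B_b^\r(p))\le \frac{CE}{D^2},
\]
so $D\le C'\sqrt E$. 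Together with (ii) this bounds the Kor\'anyi diameter of $u(\mathcal{G}_u^f\cap B_{r_0})\cup u(\partial B_{r_0})$, and in particular of $u(\mathcal{G}_u^f\cap B_{1/2}(0))$, by a universal multiple of $\sqrt E$. To extend to the full $B_{1/2}(0)$ and obtain a continuous representative, we run the same scheme on shrinking balls around each $x_0\in\Sigma$: finiteness of the total energy produces $r_k\to 0$ with $\operatorname{diam}_K u(B_{r_k}(x_0))\to 0$, giving continuity and, by density, the quantitative bound on all of $B_{1/2}(0)$ (on the residual open set where $u$ is a.e.\ locally constant, the diameter bound is trivial).

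The main obstacle is step (iii): in the isotropic setting this would follow from the standard Euclidean monotonicity for stationary varifolds, but here the anisotropic Heisenberg geometry forces us to rely on the sharper Legendrian monotonicity of Theorem~\ref{mono.cor} (itself resting on the algebraic identity of Proposition~\ref{magic.id}), and the Legendrian hypothesis exploited in step (ii) is what makes the Kor\'anyi diameter---rather than a much weaker H\"older-type quantity---directly controllable by the $g_{\mathbb{H}^2}$-Dirichlet energy of $u$.
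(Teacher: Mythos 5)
Your steps (i)--(iii) follow essentially the same route as the paper: a good slice via Fubini, the passage from Riemannian length of the horizontal boundary loop to a Kor\'anyi diameter bound, and then the combination of the density lower bound $\theta^\chi\ge2\pi$ (Proposition \ref{lm-IV.3}) with the mass bound in Theorem \ref{mono.cor} to control $d_K(u(x),u(\p B_{r_0}))$ for $x\in\mathcal{G}_u^f$. Your step (ii) is in fact a little more self-contained than the paper's, which simply invokes the equivalence $d_{CC}\sim d_K$; your isoperimetric computation of the vertical component of $\gamma(0)^{-1}*\gamma(1)$ is correct and amounts to a proof of that equivalence along Legendrian curves.

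There is, however, a genuine gap in the final extension step. Your estimate controls $\r_q\circ u$ only on $\mathcal{G}_u^f\cap B_{r_0}$, and you dismiss the complement with the remark that ``on the residual open set where $u$ is a.e.\ locally constant, the diameter bound is trivial.'' But $\{\nabla u=0\}$ is a measurable set of possibly positive measure that need contain no open set, and $u$ need not be locally constant on it; a priori the $L^\infty$ function $u$ could take values far from $u(\p B_{r_0})$ on a positive-measure subset of $B_{r_0}\setminus\mathcal{G}_u^f$, which would destroy both the diameter bound and the existence of a continuous representative. The paper closes exactly this gap with a chain-rule argument: setting $\bar C:=C_1^4E^2$, for any $\psi\in C^\infty_c((\bar C,\infty))$ one has $\nabla(\psi\circ\r_q^4\circ u)=0$ a.e.\ on $B_s(0)$ (either $\psi'(\r_q^4\circ u)=0$ because $x\in\mathcal{G}_u^f$ and your step (iii) applies, or $\nabla u=0$), so $\psi\circ\r_q^4\circ u$ is a.e.\ constant on the connected set $B_s(0)$, and the constant is $0$ since $q$ lies in the image of a Lebesgue point on $\p B_s(0)$. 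This yields $\|\r_q^4\circ u\|_{L^\infty(B_s(0))}\le\bar C$ on the whole disc, from which both the stated diameter bound and (by iteration on shrinking balls, as you propose) the continuous representative follow. You need to insert an argument of this type; as written, your conclusion does not follow from steps (i)--(iii).
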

\begin{Rm}
\label{rm-IV.1} Since the Carnot--Carath\'eodory distance is equivalent to the Kor\'anyi distance (by left-invariance and homogeneity with respect to the dilations $\delta_t$), we can replace $\operatorname{diam}_K$ with $\operatorname{diam}_{CC}$. The Carnot--Carath\'eodory distance is obviously larger than the distance $d_{{\mathbb H}^2}$ induced by $g_{{\mathbb H}^2}$. Hence the stated inequality holds as well (with a possibly different constant $C_1$) if we measure the diameter with respect to $d_{\mathbb{H}^2}$. \hfill $\Box$
\end{Rm}
\begin{Rm}
\label{rm-IV.2} 
The control of the modulus of continuity will be important to pass to the limit the fact that $u\in L^\infty_{loc}$ while considering sequences of PHSLVs with uniformly bounded masses.\hfill $\Box$
\end{Rm}

\begin{proof}
With a slight abuse of notation, we write $u$ in place of $u\circ\phi$.
Using Fubini and the mean value theorem we obtain an $s\in(1/2,1)$ such that $u\in W^{1,2}(\p B_s(0))\hookrightarrow C^0(\p B_s(0))$ and
$$\lf(\int_{\p B_s(0)}|\nabla u|\,d\mathcal{H}^1\rg)^2\le 2\pi s\cdot \int_{\p B_s(0)}|\nabla u|^2\,d\mathcal{H}^1
\le 4\pi s\cdot \int_{B_1(0)}|\nabla u|^2\,dx^2,$$
as well as
$$\mathcal{H}^1(\p B_s(0)\setminus\mathcal{G}_u)=0.$$
This gives
$$\operatorname{diam}^2_{{\mathbb H}^2}u(\p B_s(0))\le 4\pi \int_{B_1(0)}|\nabla u|^2\,dx^2,$$
where the diameter is taken with respect to the metric given by $g_{{\mathbb H}^2}$. Observe that, since the rectifiable curve $u(\p B_s(0))$ is horizontal, we have as well
$$\operatorname{diam}^2_{CC}u(\p B_s(0))\le 4\pi \int_{B_1(0)}|\nabla u|^2\,dx^2.$$
Since the Carnot--Carath\'eodory  distance $d_{CC}$ is comparable with $d_K$, there exists a universal constant $C_1'>0$ such that 
$$\operatorname{diam}_{K}u(\p B_s(0))\le C_1' \sqrt{\int_{B_1(0)}|\nabla u|^2\,dx^2}.$$

We now fix $q\in u(\mathcal{G}_u\cap\p B_s(0))$. We claim that, for any $x\in B_s(0)\cap\mathcal{G}_u^f$, we have
$$\r_q(u(x))\le C_1\sqrt{\int_{\phi(B_1(0))}N|\nabla u|^2\, dx^2}$$
for another universal constant $C_1>0$. Once this claim is proved, we will have
\begin{equation}\label{cont.claim.bis}
\|\r_q^4\circ u\|_{L^\infty(B_s(0))}\le C_1^4\lf[\int_{\phi(B_1(0))}N|\nabla u|^2\, dx^2\rg]^2=:\bar C.
\end{equation}
Indeed, $\r_q^4$ is smooth and we have $\r_q^4\circ u\le\bar C$ on $B_s(0)\cap\mathcal{G}_u^f$,
while $\nabla u=0$ a.e. on $B_s(0)\setminus\mathcal{G}_u^f$. Thus, given any $\psi\in C^\infty_c((\bar C,\infty))$,
we see that $\nabla(\psi\circ\r_q^4\circ u)=0$ on $B_s(0)$, obtaining $\psi\circ\r_q^4\circ u=0$ here (as $q$ is the image of a point in $\mathcal{G}_u\cap\p B_s(0)$) and thus \eqref{cont.claim.bis}.
In turn, this implies the statement with $C_1$ replaced by $(2C_1)^2$.

In order to prove the previous claim, let $p:=u(x)$. If $p\in u(\p B_s(0))$ then the claim follows from the bound for $\operatorname{diam}_K u(\p B_s(0))$
(as usual, $u(\p B_s(0))$ denotes the image of the continuous representative of $u|_{\p B_s(0)}$).
Assuming then $p\nin u(\p B_s(0))$, we let $\omega:=B_s(0)$ and consider the induced varifold $\mathbf{v}_\omega$,
which restricts to a HSLV on $\mathbb{H}^2\setminus u(\p B_s(0))$. Letting
$$2r:=d_K(p, u(\p B_s(0))),$$
by Theorem \ref{mono.cor} we have
$$\theta^\chi(p)\le Cr^{-2}\int_{\omega\cap\{r<\r_p\circ u<2r\}}N\frac{|\nabla u|^2}{2}\,dx^2\le Cr^{-2}\int_\omega N\frac{|\nabla u|^2}{2}\,dx^2,$$
and from Proposition \ref{dens.lb} it follows that
$$r^2\le\frac{C}{4\pi}\int_\omega N |\nabla u|^2\,dx^2,$$
proving the claim.
\end{proof}

From now on, we always replace $u$ with its continuous representative.

\begin{Rm}
It is immediate to check that the requirement in the definition of PHSLV now holds for \emph{every} open set $\omega\subset\subset\Sigma$
(rather than for a.e. domain $\omega\subset\subset\Sigma$). \hfill $\Box$
\end{Rm}

\subsection{Properties of the density}
We consider an arbitrary smooth cut-off function $\chi:\R_+\to\R$ satisfying the previous assumptions, namely
$\chi=1$ on $[0,1]$, $\chi=0$ on $[2,\infty)$, $\chi'\le0$, and also $\sqrt{-\chi'}\in C^\infty_c((1,2))$.
The present subsection is devoted to the proof of the following proposition, itself a consequence of the upper semi-continuity of $\theta^\chi$ for general HSLVs.

\begin{Prop}
\label{co-dens}
Assume that $(\Sigma,u,N)$ is a PHSLV on $\mathbb{H}^2$ and let $\omega\subset\subset\Sigma$.
Then the induced varifold $\mathbf{v}_\omega$ satisfies
$$\theta^\chi(p)\ge\limsup_{k\to\infty}\theta^\chi(p_k)\quad\text{whenever }p_k\to p\nin u(\p\omega).$$
Moreover, we have
$$\theta^\chi(p)\ge2\pi$$
for all $p\in u(\omega)\setminus u(\de\omega)$.
\hfill $\Box$
\end{Prop}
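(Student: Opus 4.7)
My plan is to deduce both assertions from the general theory of HSLVs developed in Section III. Throughout, I will use the continuity of $u$ (Proposition \ref{pr-cont}) and the tacit assumption from the introduction that $u$ is not constant on any connected component of $\Sigma$.

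For the upper semi-continuity statement, I would observe that $u(\partial\omega)$ is a compact subset of $\mathbb{H}^2$, so $U:=\mathbb{H}^2\setminus u(\partial\omega)$ is open; by Remark \ref{hslv.from.phslv}, the induced varifold $\mathbf{v}_\omega$ restricts to a HSLV on $U$, and the claim follows at once from Corollary \ref{dens.usc} applied on $U$.

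For the lower bound $\theta^\chi(p)\ge 2\pi$, I would fix $p\in u(\omega)\setminus u(\partial\omega)$ and pick some $x_0\in\omega$ with $u(x_0)=p$. The key reduction is to the case in which $u$ is not locally constant near $x_0$. If instead $u\equiv p$ on a neighborhood of $x_0$, I would let $U_0$ be the maximal open connected neighborhood of $x_0$ on which $u\equiv p$, and let $\Sigma_0$ be the connected component of $\Sigma$ containing $x_0$. Since $u$ is non-constant on $\Sigma_0$, one has $\partial U_0\cap\Sigma_0\ne\emptyset$; any $y_0$ in this set satisfies $u(y_0)=p$ by continuity and has $u$ non-constant on every neighborhood. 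Moreover, $y_0\in\omega$, for otherwise $y_0\in\partial\omega$ would give $p=u(y_0)\in u(\partial\omega)$, contradicting the choice of $p$. Replacing $x_0$ with $y_0$, I may then assume $u$ is non-constant in every neighborhood of $x_0$.

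Under this reduction, for every $\varepsilon>0$ the set $\{|\nabla u|>0\}\cap B_\varepsilon(x_0)$ has positive Lebesgue measure, since otherwise the $W^{1,2}$ map $u$ would be constant on $B_\varepsilon(x_0)$; as $\mathcal{G}_u$ has full measure in $\Sigma$, the set $\mathcal{G}_u^f\cap B_\varepsilon(x_0)$ is non-empty. Picking $x_k\in\mathcal{G}_u^f\cap B_{1/k}(x_0)$, continuity gives $u(x_k)\to p$, and $u(x_k)\nin u(\partial\omega)$ for large $k$ because $u(\partial\omega)$ is closed. Proposition \ref{dens.lb} then yields $\theta^\chi(u(x_k))\ge 2\pi$, and the upper semi-continuity established in the first part implies $\theta^\chi(p)\ge\limsup_k\theta^\chi(u(x_k))\ge 2\pi$. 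The main subtle point is precisely this reduction step: Proposition \ref{dens.lb} cannot be invoked at $x_0$ directly when $u$ is locally constant there, and it is the maximality argument together with the non-constancy hypothesis on each connected component that produces the nearby good point at which the density lower bound can be applied.
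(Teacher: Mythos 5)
Your proof is correct and uses the same two ingredients as the paper (Corollary \ref{dens.usc} for the first assertion, Proposition \ref{dens.lb} plus upper semi-continuity for the second), but the mechanism by which you locate points of $\mathcal{G}_u^f$ near the fiber of $p$ differs. The paper argues entirely in the target: taking $\psi\in C^\infty_c(\mathbb{H}^2\setminus u(\p\omega))$ with $\psi(p)=1$, the composition $\psi\circ u$ vanishes on $\p\omega$ and equals $1$ somewhere in $\omega$, hence has nonzero gradient somewhere, and the chain rule produces a point of $\omega\cap\mathcal{G}_u^f$ mapping into $\operatorname{spt}(\psi)$; shrinking $\operatorname{spt}(\psi)$ shows $p\in\overline{u(\omega\cap\mathcal{G}_u^f)}$ with no case analysis. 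You instead work in the domain, first reducing to a point $x_0\in u^{-1}(p)\cap\omega$ at which $u$ is not locally constant (via the maximal constancy region $U_0$ and the standing non-constancy assumption), and then extracting $x_k\in\mathcal{G}_u^f\cap B_{1/k}(x_0)$. Your route is more hands-on and makes the reliance on the tacit non-constancy assumption explicit, at the price of the case split; the paper's bump-function trick absorbs that split automatically.

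One step is stated too quickly: you write that $y_0\in\p U_0\cap\Sigma_0$ must lie in $\omega$ ``for otherwise $y_0\in\p\omega$,'' but the alternative to $y_0\in\omega$ is $y_0\nin\omega$, which a priori includes $y_0\in\Sigma\setminus\bar\omega$. To justify the dichotomy you should first observe that $U_0$ is connected, open, contains $x_0\in\omega$, and cannot meet $\p\omega$ (any such point would satisfy $u=p$, giving $p\in u(\p\omega)$); hence $U_0\subseteq\omega$, so $\p U_0\subseteq\bar\omega=\omega\cup\p\omega$ and your dichotomy applies. This is a one-line fix, not a genuine gap.
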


\begin{proof}
The first assertion follows immediately from Corollary \ref{dens.usc}.
Let us now fix a point $p\in u(\omega)\setminus u(\de\omega)$.
Given any $\psi\in C^\infty_c(\mathbb{H}^2\setminus u(\p\omega))$ with $\psi(p)=1$, the function
$\psi\circ u$ cannot be constant on $\omega$, since it vanishes at the boundary and equals $1$ on $\omega\cap u^{-1}(p)\neq\emptyset$. Hence,
$$\int_\omega|\nabla(\psi\circ u)|^2\,dx^2>0.$$
By the chain rule, we can then find $x\in\omega\cap\mathcal{G}_u^f$ such that $u(x)\in\operatorname{spt}(\psi)$.
This shows that $p$ belongs to the closure of $u(\omega\cap\mathcal{G}_u^f)\setminus u(\p\omega)$, and the second assertion follows from Proposition \ref{dens.lb}.
\end{proof}

\subsection{Rectifiability of the image}
By classical results on Sobolev functions (see, e.g., the appendix in \cite{PiRi1}), the image $u(\mathcal{G}_u)$ is rectifiable and ${\mathcal H}^2$-measurable, with respect to the Euclidean distance or equivalently the distance
induced by $g_{{\mathbb H}^2}$. 
We now show the stronger result that this holds also with respect to the finer distance $d_K$ (see also Remark \ref{cpb} below).

\begin{Prop}
\label{pr-rect-d-K}
Assume that $(\Sigma,u,N)$ is a PHSLV on $\mathbb{H}^2$. Then we have
$$\mathcal{H}^2_K(u(\Sigma\setminus\mathcal{G}_u^f))=0$$
and the image $u(\mathcal{G}_u)$ is a countable union of Lipschitz images, namely
we can cover $\mathcal{G}_u\subseteq\bigcup_jF_j$ with Borel sets such that
$$u|_{F_j}:F_j\to\mathbb{H}^2$$
is Lipschitz (endowing $\Sigma$ with a reference conformal metric $h$ and $\mathbb{H}^2$ with $d_K$).
\hfill $\Box$
\end{Prop}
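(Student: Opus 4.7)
The plan is to derive both claims as consequences of the quantitative $d_K$-modulus of continuity supplied by Proposition \ref{pr-cont}, combined with a Vitali covering and Hardy--Littlewood maximal function argument.

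\textbf{First claim.} Decompose $\Sigma\setminus\mathcal{G}_u^f = (\Sigma\setminus\mathcal{G}_u) \cup (\mathcal{G}_u\setminus\mathcal{G}_u^f)$. The set $\Sigma\setminus\mathcal{G}_u$ of non-Lebesgue points has Lebesgue measure zero; by absolute continuity of the measure $N|\nabla u|^2\,dx^2$, for any $\varepsilon>0$ one finds an open neighborhood $V$ of it with $\int_V N|\nabla u|^2 < \varepsilon$, covers $V$ by a Vitali family of Euclidean disks with bounded overlap after dilation, and applies Proposition \ref{pr-cont} disk by disk to get $\sum_i\operatorname{diam}^2_K u(B_i) \le C\int_V N|\nabla u|^2 < C\varepsilon$, giving $\mathcal{H}^2_K(u(\Sigma\setminus\mathcal{G}_u)) = 0$. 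For the second piece, any $x_0 \in \mathcal{G}_u \setminus \mathcal{G}_u^f$ has $\nabla u(x_0) = 0$ in the $L^2$-Lebesgue sense, so $\int_{B_r(x_0)}N|\nabla u|^2 = o(r^2)$, and Proposition \ref{pr-cont} yields $\operatorname{diam}_K u(B_{r/2}(x_0)) = o(r)$. Given $\varepsilon>0$, a Vitali extraction of disjoint disks $B_{r_i}(x_i)$ (whose $5$-enlargements cover the set) with $\operatorname{diam}_K u(B_{5r_i/2}(x_i)) < \varepsilon r_i$ then shows $\mathcal{H}^2_K(u(\mathcal{G}_u\setminus\mathcal{G}_u^f)) \le C\varepsilon^2 \sum_i r_i^2 \le C\varepsilon^2 |\Sigma|$, arbitrarily small.

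\textbf{Second claim.} Working in a local Euclidean chart on $\Sigma$, define
$$F_j := \{x \in \mathcal{G}_u : \mathcal{M}(N|\nabla u|^2)(x) \le j^2\},$$
where $\mathcal{M}$ denotes the Hardy--Littlewood maximal function; the $F_j$ are Borel. For $x,y \in F_j$ with $|x-y|$ sufficiently small, applying Proposition \ref{pr-cont} to the conformal disk of radius $3|x-y|$ centered at $x$ (which contains $y$ in its inner half) yields
$$d_K(u(x),u(y)) \le \operatorname{diam}_K u(B_{3|x-y|/2}(x)) \le C\sqrt{\int_{B_{3|x-y|}(x)} N|\nabla u|^2\,dx^2} \le C j |x-y|,$$
so $u|_{F_j}$ is $Cj$-Lipschitz with respect to the Euclidean metric on $\Sigma$ and $d_K$ on $\mathbb{H}^2$. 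The residual set $\mathcal{G}_u \setminus \bigcup_j F_j = \{x \in \mathcal{G}_u : \mathcal{M}(N|\nabla u|^2)(x) = \infty\}$ has Lebesgue measure zero, so by the first claim its image under $u$ is $\mathcal{H}^2_K$-null; such a remainder does not affect the $d_K$-rectifiable description of $u(\mathcal{G}_u)$ and is absorbed as an extra Borel piece whose image is negligible.

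\textbf{Main obstacle.} The delicate ingredient is Proposition \ref{pr-cont}, which controls the genuine $d_K$-diameter of $u$-images---not merely the Euclidean one---by the local Dirichlet energy; this is essential, since the Kor\'anyi distance is anisotropic and a Euclidean bound would be too weak to control $\mathcal{H}^2_K$. With that modulus of continuity already in hand at this stage, the proof reduces to a standard maximal-function/Vitali decomposition à la Lusin.
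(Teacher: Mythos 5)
Your treatment of the Lipschitz decomposition and of the piece $\mathcal{G}_u\setminus\mathcal{G}_u^f$ is sound and close in spirit to the paper's. The paper defines $F_j'$ by a truncated maximal condition $\sup_{0<r<1/j}\dashint_{B_{2r}(x)}|\nabla u|^2<j$ and derives the Lipschitz bound from Proposition \ref{pr-cont} exactly as you do; note only that the estimate holds for pairs at mutual distance below a threshold, so each $F_j$ must still be partitioned into countably many pieces of small diameter before $u$ restricted to each piece is genuinely Lipschitz, and that with a truncated (local) maximal function every $L^2$-Lebesgue point of $\nabla u$ lies in some $F_j$, so your residual set is empty and need not be ``absorbed'' (a Borel piece with negligible image is not a piece on which $u$ is Lipschitz, so that fallback would not satisfy the statement anyway). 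For $\mathcal{G}_u\setminus\mathcal{G}_u^f$ the paper instead invokes Kirchheim's area formula for Lipschitz maps into $(\mathbb{H}^2,d_K)$ applied to the pieces $F_j$, on which the Jacobian vanishes; your direct $5r$-covering, using the pointwise decay $\int_{B_r(x_0)}N|\nabla u|^2=o(r^2)$ together with disjointness of the \emph{small} balls to bound $\sum_i r_i^2$, is a legitimate and more elementary alternative.

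The genuine gap is in the estimate of $\mathcal{H}^2_K(u(\Sigma\setminus\mathcal{G}_u))$. You need balls $B_i$ that cover the null set while the dilated balls $2B_i$ (on which Proposition \ref{pr-cont} measures the energy) have bounded overlap inside $V$; no standard covering theorem delivers this. Disjointness, or Besicovitch-bounded overlap, of a family of balls is not preserved under dilation when the radii vary (for instance the disjoint planar balls $B((10^{-k},0),10^{-k}/2)$ all meet the origin once doubled), while applying Besicovitch directly to the dilated family destroys the covering property of the undilated balls. Since for this piece there is no pointwise $o(r^2)$ decay to fall back on, the sum $\sum_i\int_{2B_i}N|\nabla u|^2$ is simply not controlled by $\int_V N|\nabla u|^2$ as written. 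The paper's fix is a small but essential extra idea: for every $x$ there are arbitrarily small radii $r$ with $\int_{B_{2r}(x)}|\nabla u|^2\le 8\int_{B_r(x)}|\nabla u|^2+(2r)^2$ (otherwise the energy of $B_{2^{-j}}(x)$ would decay like $2^{-3j}$ while being bounded below by $2^{-2j}$, a contradiction); choosing such radii, one applies Besicovitch to the \emph{small} balls and uses this doubling inequality to bound $\sum_i\operatorname{diam}_K^2 u(B_{r_i}(x_i))$ by $C\int_W|\nabla u|^2\,dx^2+C|W|$, both of which can be made small. You should either incorporate this radius-selection step or replace it by an equal-radius or Whitney-type covering; as stated, this step of your argument fails.
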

\begin{proof}
Assume without loss of generality that $\Sigma=\mathbb{D}$. We introduce the sets
$$\Om_j:=\{x\in \mathbb{D}\,:\,\operatorname{dist}(x,\p\mathbb{D})>2/j\}$$
and
$$F_j':=\lf\{ x\in \Om_j\,:\,\sup_{0<r<1/j}\dashint_{B_{2r}(x)}|\nabla u|^2<j \rg\}.$$
Let $x,x'\in F_j'$ such that $r:=|x-x'|\le1/j$. Using Proposition \ref{pr-cont} (after rescaling $B_{2r}(x)$ to the unit ball) we have
$$d_{K}(u(x),u(x'))\le C\sqrt{\int_{B_{2r}(x)}|du|^2\,dx^2}\le C\sqrt{j}r=C\sqrt{j}|x-x'|.$$
Hence, $u_{S}$ is Lipschitz for any subset $S\subseteq F_j'$ of diameter at most $1/j$. This proves the second assertion.

Thanks to the area formula for Lipschitz maps from subsets of $\R^2$ into $({\mathbb H}^2,d_K)$ (see \cite{Kir} or more specifically \cite[Theorem 6.8]{CDPT}, keeping in mind that ${\R}^2$ can be viewed as a Carnot group of step $1$), for any measurable subset $A\subseteq {\mathcal G}_u$ we have
$$\int_{A} \frac{|\nabla u(x)|^2}{2}\,dx^2=\int_{{\mathbb H}^2} N(u,A,p)\, d{\mathcal  H}^2_K(p),$$
where $N(u,A,p)$ denotes the cardinality of $\{ x\in A\,:\,u(x)=p \}$.
Applying the formula to $A:={\mathcal G}_u\setminus {\mathcal G}_u^f$, on which $\nabla u=0$, we obtain
$${\mathcal  H}^2_K({\mathcal G}_u\setminus {\mathcal G}_u^f)=0.$$
To conclude, let  $W\subseteq\mathbb{D}$ be an arbitrary open set including $\mathbb{D}\setminus {\mathcal G}_u$. We claim that for any $x\in W$ there exists
an arbitrarily small radius $r>0$ such that
$$\int_{B_{2r}(x)}|\nabla u|^2\, dx^2\le 8\int_{B_{r}(x)}|\nabla u|^2\,dx^2+ (2r)^2.$$
If not, then we could find $j_0\in\N$ large such that for any $j\ge j_0$ we have in particular
$$\int_{B_{2^{-j}}(x)}|\nabla u|^2\,dx^2\ge 2^{-2j},\quad\int_{B_{2^{-j}}(x)}|\nabla u|^2\,dx^2\ge 8\int_{B_{2^{-j-1}}(x)}|\nabla u|^2\,dx^2,$$
giving the contradiction
$$2^{-2j}\le\int_{B_{2^{-j}}(x)}|\nabla u|^2\,dx^2=O(2^{-3j}).$$
Given any $\delta>0$, for any $x\in W$ we choose a radius $r_x>0$ such that $B_{2r_x}(x)\subseteq W$, the previous claim holds, and
$\int_{B_{2r_x}(x)}|\nabla u|^2\,dx^2\le \delta^2$.
From the cover $\{B_{r_x}(x)\mid x\in W\}$, we can extract a Besicovitch subcover, denoted $\{B_{r_j}(x_j)\mid j\in I\}$, such that 
$${\mathbf 1}_{W}\le\sum_{j\in I}{\mathbf 1}_{B_{r_j}(x_j)}\le C_0{\mathbf 1}_{W},$$
where $C_0>0$ is a universal constant. Using again Proposition \ref{pr-cont} we deduce
$$\operatorname{diam}^2_K u( B_{r_j}(x_j))\le C_1\delta$$
and
\begin{align*}
\sum_{j\in I} \operatorname{diam}^2_K u( B_{r_j}(x_j))&\le C_1\sum_{j\in I}\int_{B_{2r_j}(x_j)}|\nabla u|^2\,dx^2\\
&\le 8C_1\sum_{j\in I}\int_{B_{r_j}(x_j)}|\nabla u|^2\,dx^2+\sum_{j\in I}(2r_j)^2\\
&\le 8C_1C_0 \int_W |\nabla u|^2\,dx^2+\frac4\pi\sum_{j\in I}|B_{r_j}(x_j)|\\
&\le 8C_1C_0 \int_W |\nabla u|^2\,dx^2+\frac4\pi C_0|W|.
\end{align*}
This holds for any open set $W$ containing $\mathbb{D}\setminus {\mathcal G}_u$. Since
$\mathbb{D}\setminus {\mathcal G}_u$ is negligible, we can make the last right-hand side as small as we want,
and the first assertion follows.
\end{proof}

\subsection{Structure of fibers}
In order to understand how the multiplicity $\frac{\theta^\chi}{2\pi}$ in the target
is related to the domain multiplicity $N$, and more specifically to produce a more appropriate domain counterpart (which will be denoted by $\tilde N$), it is important to study the structure of fibers. This is the content of the next statement.
\begin{Prop}
\label{pr-fiber}
Assume that $(\Sigma,u,N)$ is a PHSLV on $\mathbb{H}^2$.
Given $\omega\subset\subset\Sigma$ and
$p\nin u(\de\omega)$, the number of connected components of $\omega\cap u^{-1}(p)$ is finite. Moreover, for any $x\in {\mathcal G}_u^f$, the connected component of $u^{-1}(u(x))$ containing $x$ is just to $\{x\}$. 
\hfill $\Box$
\end{Prop}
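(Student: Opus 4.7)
For the first assertion, I would start by noting that $K := \omega \cap u^{-1}(p)$ is a compact subset of $\omega$, by the continuity of $u$ on $\overline{\omega}$ and since $p \notin u(\partial \omega)$. The plan is to show that if $K$ admits at least $k$ distinct connected components $C_1, \ldots, C_k$, then each contributes at least $2\pi$ to $\theta^\chi(\mathbf{v}_\omega, p)$, so that $k \le \theta^\chi(\mathbf{v}_\omega, p)/(2\pi) < \infty$ by Theorem \ref{mono.cor}. Concretely, the equivalence of components and quasi-components in the compact Hausdorff space $K$ yields a clopen partition $K = K_1 \sqcup \cdots \sqcup K_k$ with $C_i \subseteq K_i$; then, choosing $d > 0$ smaller than the pairwise distances among the $K_i$ and than $d(K, \partial \omega)$, the open sets $U_i := \{y \in \omega : d(y, K_i) < d/3\}$ are pairwise disjoint, with $\overline{U_i} \subset\subset \omega$ and $U_i \cap u^{-1}(p) = K_i$, so in particular $p \notin u(\partial U_i)$. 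Each induced varifold $\mathbf{v}_{U_i}$ is then a HSLV near $p$ with $p \in u(U_i) \setminus u(\partial U_i)$, and Proposition \ref{co-dens} gives $\theta^\chi(\mathbf{v}_{U_i}, p) \ge 2\pi$. The nonnegativity of the integrand defining $\theta^\chi$, together with $\mathbf{v}_\omega \ge \sum_i \mathbf{v}_{U_i}$ (the $U_i$ being disjoint subsets of $\omega$), then yields $\theta^\chi(\mathbf{v}_\omega, p) \ge 2\pi k$; since this is finite by Theorem \ref{mono.cor}, $k$ is bounded.

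For the second assertion, let $x \in \mathcal{G}_u^f$. The strategy is to argue by contradiction. Assuming the connected component $C$ of $x$ in $u^{-1}(u(x))$ contains some $y \neq x$, connectivity of $C$ forces $C \cap \partial B_r(x) \neq \emptyset$ for every $0 < r < |y - x|$ in a conformal chart, so one can pick $y_r \in \partial B_r(x)$ with $u(y_r) = u(x)$. After a left-translation and a rotation $R_A$ in the target (both preserving the PHSLV class), we may normalize so that $x = 0$, $u(0) = 0$, and $\nabla u(0) = e^{\lambda_0}(X_1(0), X_2(0))$ for some $\lambda_0 \in \R$. I would then consider the rescalings
\[
w_r(z) := \delta_{1/(e^{\lambda_0} r)}(u(rz)),
\]
which are PHSLVs on balls of increasing radius in $\R^2$. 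By the $L^2$-Lebesgue point property of $\nabla u$ at $x$ (implicit in the definition of $\mathcal{G}_u$ and used in the proof of Proposition \ref{lm-IV.3}), together with the bound $\int_{B_r(0)} u_\varphi^2 = o(r^6)$ derived there from the Legendrian constraint, one checks that $w_r \to u_0$ in $W^{1,2}_{loc}(\R^2)$, where $u_0(z_1, z_2) := (z_1, 0, z_2, 0, 0)$.

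The plan is then to upgrade this to uniform convergence: the $L^2_{loc}$ convergence of $\nabla w_r$ gives uniform integrability of $|\nabla w_r|^2$ on bounded sets, and coupling this with the conformally invariant modulus estimate from Proposition \ref{pr-cont} applied to the PHSLV $w_r$, namely $\diam_K^2 w_r(B_{\delta/2}(z)) \le C \|N\|_\infty \int_{B_\delta(z)} |\nabla w_r|^2$, yields an equicontinuity bound on $\overline{B_2(0)}$ uniform in small $r$. Arzel\`a--Ascoli then gives $w_{r_n} \to u_0$ uniformly on $\overline{B_2(0)}$ along any sequence $r_n \to 0$. Extracting a further subsequence with $z_{r_n} := y_{r_n}/r_n \to z_* \in \partial B_1(0)$, uniform convergence and continuity of $u_0$ force $w_{r_n}(z_{r_n}) \to u_0(z_*)$; but $w_{r_n}(z_{r_n}) = \delta_{1/(e^{\lambda_0} r_n)}(u(y_{r_n})) = 0$, whereas $|u_0(z_*)| = |z_*| = 1$, the desired contradiction. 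The main obstacle will be establishing the uniform convergence of the rescalings, which crucially couples the modulus of continuity from Proposition \ref{pr-cont} with the uniform integrability of $|\nabla w_r|^2$ afforded by the $L^2$-Lebesgue point property at $x$.
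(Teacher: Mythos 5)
Your proposal is correct, and the first assertion is handled exactly as in the paper: decompose the compact fiber into finitely many clopen pieces, enclose them in disjoint subdomains $\omega_j\subset\subset\omega$ with $p\nin u(\p\omega_j)$, apply the density lower bound of Proposition \ref{co-dens} to each induced varifold $\mathbf{v}_{\omega_j}$, and sum against the finiteness of $\theta^\chi(\mathbf{v}_\omega,p)$ guaranteed by Theorem \ref{mono.cor}.

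For the second assertion the strategy is also the same (blow up at the Lebesgue point, identify the limit with a nondegenerate linear Legendrian map, conclude the fiber component is trapped in arbitrarily small balls), but the way you upgrade the convergence differs from the paper. The paper only needs one-dimensional information: by Fatou's lemma it selects a single circle $\p B_s(0)$ on which the rescaled projections $v_r=r^{-1}\pi\circ u(r\cdot)$ converge weakly in $W^{1,2}(\p B_s(0))$, hence uniformly by the compact embedding $W^{1,2}(S^1)\hookrightarrow C^0(S^1)$, which already forces $0\nin v_r(\p B_s(0))$ and thus $K_x\subset B_{rs}(x)$. You instead establish full two-dimensional uniform convergence of the rescalings on $\bar B_2(0)$ by coupling the modulus-of-continuity estimate of Proposition \ref{pr-cont} (applied to the rescaled PHSLVs, which is legitimate since $\delta_{1/t}$ preserves the class) with the uniform integrability of $|\nabla w_r|^2$ coming from the $L^2$-Lebesgue point property, and then argue by Arzel\`a--Ascoli at the points $y_r/r$. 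Both routes are sound; yours costs a bit more machinery (and requires the $o(r^6)$ bound on $\int_{B_r}|\varphi\circ u|^2$ to identify the vertical component of the limit), while the paper's trace argument is more economical but yields less, namely control on a single circle rather than uniform convergence on a disk.
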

\begin{proof}Let $p\nin u(\p\omega)$ and, denoting by $\theta^\chi$ the density function for the varifold $\mathbf{v}_\omega$, let
$$M:=\frac{\theta^\chi(p)}{2\pi}.$$
We claim that the number of connected components of $u^{-1}(p)$ is not larger than $M$. If this were not the case, we could find $[M]+1$ disjoint compact sets $K_1,\dots,K_{[M]+1}$ whose union is $\omega\cap u^{-1}(p)$, where
$[M]$ is the integer part of $M$.
Let $\om_1,\dots,\om_{[M]+1}\subset\subset\omega$ be disjoint open sets such that $K_j\subseteq\omega_j$;
note in particular that $p\nin u(\p\omega_j)$. The induced varifold $\mathbf{v}_{\om_j}$ then restricts to a HSLV on $\mathbb{H}^2\setminus u(\p\omega_j)$ and, thanks to Proposition \ref{co-dens}, we have
$$\lim_{\ep\rightarrow 0}\int_{\omega_j}-\frac{\chi'(\r_p/\ep)}{\ep}N\lf[\frac{|\nabla\r_p|^2}{\r_p}+N\frac{\varphi_p}{2\r_p^3}\arctan\sigma_p|\nabla u|^2\rg]\,dx^2\ge 2\pi.$$
Summing over $j=1,\dots,[M]+1$ gives
$$\theta^\chi(p)=\lim_{\ep\rightarrow 0}\int_{\omega}-\frac{\chi'(\r_p/\ep)}{\ep}N\lf[\frac{|\nabla\r_p|^2}{\r_p}+N\frac{\varphi_p}{2\r_p^3}\arctan\sigma_p|\nabla u|^2\rg]\,dx^2\ge 2\pi([M]+1),$$
contradicting the definition of $M$. Hence, the number of connected components of $u^{-1}(p)$ is not larger than $[M]$.

Consider now a point $x\in {\mathcal G}_u^f$ and denote by $K_x$ the connected component of $u^{-1}(u(x))$ containing $x$. We claim that $K_x=\{x\}$.  We choose a local conformal chart $\phi$ centered at $x$ and, by abuse of notation, we write $u$ in place of $u\circ\phi^{-1}$.
We can assume that $u(0)=0$ and $\frac{|\nabla u(0)|^2}{2}=1$, up to a translation and a dilation in $\mathbb{H}^2$.
Since $u$ is conformal and Legendrian, $(\p_{x_1}u(0),\p_{x_2}u(0))$ defines an orthonormal basis of a Legendrian plane at the origin. Modulo a rotation (see also Remark \ref{isometry}), we can assume that
$$\p_{x_1}u(0)=(1,0,0,0,0),\quad \p_{x_2}u(0)=(0,0,1,0,0).$$
As in the proof of Proposition \ref{dens.lb}, we have
$$\int_{B_r(0)}\sum_{j=1}^2 |\p_{x_j} u(x)-X_j(0)|^2\, dx^2=o(r^2),$$
as well as 
$$\dashint_{B_r(0)}[|u_1(x)-x_1|^2+ |u_2(x)|^2+|u_3(x)-x_2|^2+|u_4(x)|^2]\, dx^2=o(r^2).$$
This is saying that $v_r(x):=r^{-1}\pi\circ u(rx)$ converges to $v_0(x):=(x_1,0,x_2,0)$ in $W^{1,2}(\mathbb{D})$ as $r\to0$,
where $\pi:\mathbb{H}^2\to\C^2$ is the canonical projection. By Fatou's lemma, we have
$$\int_0^1\liminf_{r\to0}\int_{\p B_s(0)} [|v_r-v_0|^2+|\nabla v_r-\nabla v_0|^2]\,d\mathcal{H}^1\,ds
\le\lim_{r\to0}\int_{\mathbb{D}}[|v_r-v_0|^2+|\nabla v_r-\nabla v_0|^2]\,dx^2=0.$$
Hence, we can select an $s\in(0,1)$ such that each $v_r$ restricts to a function in $W^{1,2}(\p B_s(0))$,
converging weakly to $v_0$ in this space, along a subsequence. Since this space compactly embeds in $C^0(\p B_s(0))$,
eventually we have $0\nin v_r(\de B_s(0))$. It follows that $K_0\subset B_{rs}(0)$, and thus $K_0=\{0\}$.
\end{proof}

\subsection{\texorpdfstring{The integer nature of the density $\bm{{\mathcal H}^2_K}$-a.e.}{The integer nature of the density ${\mathcal H}^2_K$-a.e.}}
We now establish the fact that $\frac{\theta^\chi(p)}{2\pi}\in\N$ for $\mathcal{H}^2_K$-a.e. $p\in\mathbb{H}^2$.

\begin{Prop}
\label{pr-int-dens}
Given a PHSLV $(\Sigma,u,N)$ on $\mathbb{H}^2$ and $\omega\subset\subset\Sigma$, the following holds for the induced varifold
$\mathbf{v}_\omega$.
For  ${\mathcal H}^2_K$-a.e. $p\nin u(\p\omega)$ there holds
\begin{equation}
\label{V.88}
\theta^\chi(p)\in 2\pi\N.
\end{equation}
Moreover, assuming also that $u^{-1}(p)\subseteq\mathcal{G}_u^f$ and that $u^{-1}(p)$ consists exclusively of Lebesgue points for $N$, we have
\begin{equation}
\label{V.88-a}
\theta^\chi(p)=2\pi \sum_{x\in u^{-1}(p)}N(x)
\end{equation}
(recall that the sum is finite by the previous proposition).
\hfill $\Box$
\end{Prop}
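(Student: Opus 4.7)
The plan is to reduce the claim to a local computation near each preimage point, via a generic-point argument ensuring that these preimages are of ``good'' type. Covering $\mathcal{G}_u$ by the Borel sets $F_j$ of Proposition \ref{pr-rect-d-K}, on which $u$ is Lipschitz with respect to $d_K$, the area formula in $({\mathbb H}^2, d_K)$ (see \cite[Theorem 6.8]{CDPT}) implies that $u$ sends $\mathcal{H}^2$-null subsets of each $F_j$ to $\mathcal{H}^2_K$-null sets; combined with the fact that $u(\omega \setminus \mathcal{G}_u^f)$ is itself $\mathcal{H}^2_K$-negligible (also Proposition \ref{pr-rect-d-K}) and that $\mathcal{H}^2$-a.e.\ point of $\Sigma$ is a Lebesgue point for $N \in L^\infty(\Sigma, \N^*)$, this yields that for $\mathcal{H}^2_K$-a.e.\ $p \notin u(\partial\omega)$ the set $u^{-1}(p) \cap \omega$ consists entirely of Lebesgue points of $N$ lying inside $\mathcal{G}_u^f$. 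Proposition \ref{pr-fiber} then guarantees that this set is finite, say $\{x_1, \ldots, x_m\}$.

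Fix such a generic $p$. Since $u$ is continuous (Proposition \ref{pr-cont}) and $p \notin u(\partial\omega)$, one may pick pairwise disjoint open sets $x_i \in U_i \subset\subset \omega$ with $U_i \cap u^{-1}(p) = \{x_i\}$ and such that $\overline{\omega} \setminus \bigcup_i U_i$ has image disjoint from some Kor\'anyi ball $B_{r_0}^\r(p)$. For $\epsilon < r_0/2$ the integrand in \eqref{IV.3} is supported in $\bigcup_i U_i$, so the integral defining $\theta^\chi(p)$ splits into a finite sum of integrals over the $U_i$, once each of these subintegrals is shown to converge.

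To compute the $i$-th subintegral, I would repeat the analysis of Proposition \ref{dens.lb} around $x_i$: after a Heisenberg left translation sending $u(x_i) = p$ to $0$ and a conformal rotation aligning $\partial_{x_j} u(x_i)$ with $e^{\lambda_i} X_j(0)$, the asymptotic \eqref{IV.9-k} holds verbatim. The additional input is the Lebesgue point property at $x_i$, which allows one to replace $N$ by the constant $N(x_i)$ on the relevant shrinking annuli up to $o(r)$ errors; meanwhile, the monotonicity formula (Theorem \ref{mono.cor}) forces the contribution of the $\arctan\sigma_p$ term to vanish in the limit, via the orthogonal decomposition $\nabla^H \r = (\nabla^H \r)^T + (\nabla^H \r)^\perp$ already employed in Proposition \ref{dens.lb}. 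One then obtains the exact value $2\pi N(x_i)$ for each subintegral, whence \eqref{V.88-a}; formula \eqref{V.88} is an immediate consequence since $N$ is integer-valued. The main technical subtlety lies precisely in this last step: Proposition \ref{dens.lb} only produced the lower bound $\ge 2\pi$ (with the factor $N \ge 1$ implicitly discarded), so one has to upgrade its proof to a genuine two-sided asymptotic that tracks $N(x_i)$ through the Lebesgue differentiation theorem.
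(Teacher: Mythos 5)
Your overall strategy coincides with the paper's: reduce to a generic $p$ whose fiber consists of finitely many Lebesgue points of $N$ lying in $\mathcal{G}_u^f$ (via the Lipschitz decomposition of Proposition \ref{pr-rect-d-K} and the area formula), split the density into contributions from disjoint neighborhoods of the fiber points, and compute each contribution as $2\pi N(x_i)$ by refining the blow-up analysis of Proposition \ref{dens.lb}. The genericity reduction and the splitting are correct and match the paper.

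The gap sits exactly in the step you flag as the ``main technical subtlety,'' and the ingredients you list (Lebesgue differentiation for $N$, monotonicity, the decomposition $\nabla^H\r=(\nabla^H\r)^T+(\nabla^H\r)^\perp$, and \eqref{IV.9-k}) do not close it. The asymptotic \eqref{IV.9-k} computes the integral over $\tilde A_r=u^{-1}(A^\r_{e^{\la_0}r}(0))\cap A_r(0)$, i.e., over the part of the preimage of the \emph{image} annulus that happens to lie in the \emph{domain} annulus $A_r(0)$. Discarding the rest is harmless for the lower bound of Proposition \ref{dens.lb}, because the integrand is nonnegative; but for the exact value one must show that the remaining part of $u^{-1}(\{\ep\le\r_p\circ u<2\ep\})\cap U_i$, lying outside the domain annulus at scale $\ep e^{-\la_0}$ around $x_i$, contributes $o(1)$ to $-\ep^{-1}\int\chi'(\r_p/\ep)[\cdots]$. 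Approximate differentiability of $u$ at $x_i$ only controls the bad sets \emph{in measure at each fixed domain scale} (cf. \eqref{r.is.x.in.meas}); summed over the dyadic scales between $\ep$ and the a priori unknown diameter of the preimage, these errors need not be $o(\ep^2)$, while the integrand there is of size $\ep^{-2}|\nabla u|^2$. What is needed is the pointwise localization \eqref{rho.detach}, namely $\rho\circ u(x)/|x-x_i|\to1$, which forces $u^{-1}(B^\r_{2\ep}(p))\cap U_i\subseteq B_{(2+o(1))\ep e^{-\la_0}}(x_i)$ and thus identifies the support of $\chi'(\r_p\circ u/\ep)$ with a genuine domain annulus up to negligible error. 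The paper proves this by a separate argument (selecting circles on which the rescaled maps $v_r$ converge uniformly, via the compact embedding $W^{1,2}(\p B_s)\hookrightarrow C^0$, combined with the diameter bound of Proposition \ref{pr-cont} applied to rescalings); nothing of this kind appears in your proposal, so the two-sided asymptotic you invoke is not established. A smaller omission: even after localizing to $B_{C\ep}(x_i)$, you must still dispose of the inner ball $B_{\delta\ep}(x_i)$, whose contribution is $O(\delta^2)$ by the Lebesgue point property of $\nabla u$, followed by a diagonal argument in $\delta$.
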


\begin{proof}
Clearly, we have $\theta^\chi(p)=0$ for any $p\nin u(\bar\omega)$.
Let $S$ be the set of points which are not Lebesgue for either $\nabla u$ or $N$ (recall that $u$ is continuous).
The same proof used in Proposition \ref{pr-rect-d-K} shows that
$$\mathcal{H}^2_K(u(S))=0.$$
In the sequel, we can then consider $p\in u({\mathcal G}_u^f)\setminus u(\p\omega\cup S)$
and we are left to show that \eqref{V.88-a} holds.

Thanks to Proposition \ref{pr-fiber}, we have $u^{-1}(p)=\{x_1,\dots, x_n\}\subseteq\omega\cap S$.
We fix disjoint neighborhoods $\om_1,\dots,\om_n$ conformally equivalent to $\mathbb{D}$.
Since $\omega\cap\{\r_p\circ u<2\ep\}\subseteq\bigcup_j\omega_j$ for $\ep>0$ small, it suffices to show that
for each $j$ the varifold $\mathbf{v}_{\om_j}$ has density
$$\theta^\chi(p)=2\pi N(x_j).$$
By replacing $\om$ with $\om_j$, we can assume in the sequel that $n=1$ and write $\mathbb{D}$ in place of $\omega$.
As in the previous proofs, we can also assume that our reference point is $0\in\mathbb{D}$, that $u(0)=p=0$, and that
$$\p_{x_1}u(0)=(1,0,0,0,0),\quad \p_{x_2}u(0)=(0,0,1,0,0).$$

We now claim that
\begin{equation}\label{rho.detach}\lim_{x\to0}\frac{\rho\circ u(x)}{|x|}=1.\end{equation}
Indeed, defining $v_r$ as in the previous proof, given any $y\in S^1$ and $t\in(0,1/2)$
we can select a small radius $s\in(t,2t)$ such that $v_r\to v_0$ uniformly on $\p B_s(y)$, along a subsequence (again, this can be done by finding $s$ such that $\liminf_{r\to0}\|v_r-v_0\|_{W^{1,2}(\p B_s(y))}=0$).
Moreover, since $\pi:\mathbb{H}^2\to\C^2$ is $1$-Lipschitz, applying Proposition \ref{pr-cont} to the maps $\delta_{1/r}\circ u(r\cdot)$ we obtain
$$\operatorname{diam}^2 v_r(B_s(y))\le C_1\int_{B_{2s}(y)}N|\nabla v_r|^2\,dx^2\to C_1\int_{B_{2s}(y)}N|\nabla v_0|^2\,dx^2\le Cs^2.$$
In particular, since $v_r\to v_0$ on $\p B_s(y)$, we obtain
$$\limsup_{r\to0}\||v_r|-1\|_{L^\infty(B_t(y))}\le Cs\le Ct$$
along the subsequence.
Since this can be done for any initially chosen subsequence $r_k\to0$, we deduce that the last inequality holds for $r\to0$, and the claim follows.

Since $\r\ge\rho$, the previous claim implies that
\begin{align*}
\theta^\chi(0)&=\lim_{\ep\to0}\int_{\mathbb{D}}-\frac{1}{\ep}\chi'\lf(\frac{\r}{\ep}\rg) \lf[N\frac{|\nabla\r|^2}{\r}+N\frac{\varphi}{\r^3}\arctan\sigma |\nabla u|^2\rg]\,dx^2\\
&=\lim_{\ep\to0}\int_{B_{2\ep}(0)}-\frac{1}{\ep}\chi'\lf(\frac{\r}{\ep}\rg) \lf[N\frac{|\nabla\r|^2}{\r}+N\frac{\varphi}{\r^3}\arctan\sigma |\nabla u|^2\rg]\,dx^2.
\end{align*}
Since $S_\ep:=\{x\in B_{2}(0)\,:\,N(\ep x)\neq N(0)\}$ has $|S_\ep|\to0$ and $\nabla v_\ep\to\nabla v_0$,
we have
$$\ep^{-2}\int_{B_{2\ep}(0)\cap\{N\neq N(0)\}}|\nabla u|^2\,dx^2=\int_{S_\ep}|\nabla v_\ep|^2\,dx^2\to0$$
as $\ep\to0$. Since the integrands above are bounded by
$$C\ep^{-1}\r^{-1}|\nabla u|^2\le C\ep^{-2}|\nabla u|^2$$
(as $\r$ is comparable with $\ep$ on the support of $\chi'(\r/\ep)$),
it follows that in the previous formula we can replace $N$ with $N(0)$.

Moreover, fixing $\delta\in(0,1)$, we let
$$\tilde A_\ep^\delta:=\{x\in B_{2\ep}(0)\setminus B_{\delta\ep}(0)\,:\,\chi'(\r\circ u(x)/\ep)\neq0\}.$$
As we saw along the proof of Proposition \ref{dens.lb}, we have
$$|\{x\in \tilde A_\ep^\delta\,:\, |\varphi\circ u(x)|>\lambda\r^2\circ u(x)\}|=o(\ep^2)$$
for any fixed $\delta,\lambda>0$, as $\ep\to0$. Since $\frac{\sigma^2}{1+\sigma^2}=\frac{4\varphi^2}{\r^4}$ outside the origin
(the left-hand side is understood to be $1$ on the $\varphi$-axis minus the origin), we obtain
$$|\{x\in \tilde A_\ep^\delta\,:\, |\sigma\circ u(x)|>\lambda\}|=o(\ep^2)$$
for any fixed $\delta,\lambda>0$. Since $|\varphi|\le\r^2$, it follows that
$$\lim_{\ep\to0}-\frac{1}{\ep}\int_{\tilde A_\ep^\delta}\chi'\lf(\frac{\r}{\ep}\rg) \frac{\varphi}{\r^3}\arctan\sigma |\nabla u|^2\,dx^2
=0.$$
On the other hand, exactly as in the proof of Proposition \ref{dens.lb}, we have
\begin{align*}
\lim_{\ep\to0}-\frac{1}{\ep}\int_{\tilde A_\ep^\delta}\chi'\lf(\frac{\r}{\ep}\rg) \frac{|\nabla\r|^2}{\r}\,dx^2
&=\lim_{\ep\to0}-\frac{1}{\ep}\int_{\tilde A_\ep^\delta}\chi'\lf(\frac{\r}{\ep}\rg)\frac{|\nabla u|^2}{2\r\sqrt{1+\sigma^2}}\,dx^2\\
&=\lim_{\ep\to0}-\frac{1}{\ep}\int_{B_{2\ep}(0)\setminus B_{\delta\ep}(0)}\frac{\chi'(|x|/\ep)}{|x|}\,dx^2\\
&=2\pi.
\end{align*}

Finally, we have
$$-\frac1\ep\int_{B_{\delta\ep}(0)}\chi'\lf(\frac{\r}{\ep}\rg) \lf[\frac{|\nabla\r|^2}{\r}+\frac{\varphi}{\r^3}\arctan\sigma |\nabla u|^2\rg]\,dx^2
\le C\ep^{-2}\int_{B_{\delta\ep}(0)}|\nabla u|^2\,dx^2
= C\delta^2\int_{B_1(0)}|\nabla_{\delta\ep}|^2\,dx^2,$$
which converges to $C\delta^2\cdot 2\pi$. By a diagonal argument, it follows that
$$\theta^\chi(0)=N(0)\lim_{\ep\to0}-\frac{1}{\ep}\int_{B_{2\ep}(0)}\chi'\lf(\frac{\r}{\ep}\rg)\,dx^2 \lf[\frac{|\nabla\r|^2}{\r}+\frac{\varphi}{\r^3}\arctan\sigma |\nabla u|^2\rg]\,dx^2
=2\pi N(0),$$
as desired.
\end{proof}

\subsection{\texorpdfstring{A robust representative of $\bm{N}$ and its upper semi-continuity}{A robust representative of $N$ and its upper semi-continuity}}

Following \cite{PiRi1}, we now introduce the following $L^\infty_{loc}$ function on a subset of $\Sigma$, which is a sort of domain counterpart of $\frac{\theta^\chi}{2\pi}$.

\begin{Dfi}\label{robust.def}
We let $\tilde\Sigma\subseteq\Sigma$ denote the open set of points $x$ such that, for some open $\omega\subset\subset\Sigma$, we have $x\in\omega$
and $u(x)\nin u(\p\omega)$. We define $\tilde N:\tilde\Sigma\to[0,\infty)$ as follows:
$$\tilde{N}(x):=\inf_\om \lim_{\ep\rightarrow 0}-\frac{1}{2\pi\ep}\int_{\omega}\chi'\lf(\frac{\r_{u(x)}}{\ep}\rg) \lf[N\frac{|\nabla\r_{u(x)}|^2}{\r_{u(x)}}+\frac{\varphi_{u(x)}^2}{\r_{u(x)}^3}\arctan\sigma_{u(x)}|\nabla u|^2\rg]\,dx^2,$$
where $\omega$ ranges among open subsets $\omega\subset\subset\Sigma$ such that $x\in\omega$ and $u(x)\nin u(\p\omega)$. \hfill $\Box$
\end{Dfi}

Note that a priori this function might not be integer-valued.
\begin{Rm}
As shown in the proof of Proposition \ref{pr-fiber},
we have $\mathcal{G}_u^f\subseteq\tilde\Sigma$. \hfill $\Box$
\end{Rm}

\begin{Rm}
\label{rm-rob-rep}
Observe that the infimum in the definition of $\tilde{N}(x)$ is in fact a minimum. Indeed,
given $\omega$ as above, the compact set $u^{-1}(u(x))\cap\omega$ has finitely many connected components.
Calling $K_x$ the one containing $x$, the minimum is achieved (for instance) for any open set
$K_x\subseteq\omega'\subseteq\omega$ disjoint from the remainder $[u^{-1}(u(x))\cap\omega]\setminus K_x$. \hfill$\Box$
\end{Rm}

\begin{Prop}
\label{pr-robust}
The function $\ti{N}$ is upper semi-continuous and 
\begin{equation}
\label{N-bound}
1\le \ti{N}(x)\le C_0\ell^{-2}\int_{u^{-1}(B^\r_\ell(u(x))\cap\omega} N|du|^2\,dx^2,\quad\ell:=\frac{d_K(u(x),u(\p\omega))}{2}
\end{equation}
for a universal constant $C_0>0$, for any $\omega$ as above. Moreover, $\ti{N}=N$ a.e. on ${\mathcal G}^f_u$.
\hfill $\Box$
\end{Prop}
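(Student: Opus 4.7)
\medskip

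\noindent\textbf{Proof plan.} The strategy is to leverage the three technical tools developed earlier: the upper semi-continuity of the density for general HSLVs (Corollary \ref{dens.usc}), the monotonicity formula (Theorem \ref{mono.cor}), and the structural results for PHSLVs (Propositions \ref{co-dens}, \ref{pr-fiber}, \ref{pr-int-dens}), together with the key observation from Remark \ref{rm-rob-rep} that the infimum defining $\tilde N(x)$ is attained.

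\smallskip

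First I would establish the \emph{upper semi-continuity}. Given $x\in\tilde\Sigma$ and a sequence $x_k\to x$ in $\tilde\Sigma$, I choose by Remark \ref{rm-rob-rep} an admissible $\omega\subset\subset\Sigma$ realizing the minimum, so that $\tilde N(x)=\theta^\chi_\omega(u(x))/(2\pi)$, where $\theta^\chi_\omega$ denotes the density of the induced varifold $\mathbf{v}_\omega$. Since $u$ is continuous (Proposition \ref{pr-cont}) and $u(\partial\omega)$ is closed, the condition $u(x)\notin u(\partial\omega)$ is open in $\omega$; thus for $k$ large we have $x_k\in\omega$ and $u(x_k)\notin u(\partial\omega)$, so $\omega$ is admissible also for $x_k$ and
$$\tilde N(x_k)\le\frac{\theta^\chi_\omega(u(x_k))}{2\pi}.$$
Applying Corollary \ref{dens.usc} to the HSLV $\mathbf{v}_\omega$ on $\mathbb{H}^2\setminus u(\partial\omega)$ along the sequence $u(x_k)\to u(x)$ yields $\limsup_k\theta^\chi_\omega(u(x_k))\le\theta^\chi_\omega(u(x))$, giving $\limsup_k\tilde N(x_k)\le\tilde N(x)$.

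\smallskip

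For the two-sided bound \eqref{N-bound}, the lower bound $\tilde N(x)\ge 1$ is immediate from Proposition \ref{co-dens}, which gives $\theta^\chi_\omega(u(x))\ge 2\pi$ for every admissible $\omega$. For the upper bound, I apply the monotonicity formula of Theorem \ref{mono.cor} to $\mathbf{v}_\omega$ on $U:=\mathbb{H}^2\setminus u(\partial\omega)$, noting that $d_K(u(x),\mathbb{H}^2\setminus U)=2\ell$. Choosing $b=\ell/2$, the last displayed inequality of Theorem \ref{mono.cor} gives
$$\theta^\chi_\omega(u(x))\le C(\ell/2)^{-2}|\mathbf{v}_\omega|(B_\ell^{\r}(u(x))\setminus \bar B_{\ell/2}^{\r}(u(x)))\le 4C\ell^{-2}|\mathbf{v}_\omega|(B_\ell^{\r}(u(x))),$$
and \eqref{N-bound} then follows from $\tilde N(x)\le\theta^\chi_\omega(u(x))/(2\pi)$ and the definition of $\mathbf{v}_\omega$.

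\smallskip

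Finally, for the identification $\tilde N=N$ a.e.\ on $\mathcal{G}_u^f$, I consider $x\in\mathcal{G}_u^f$ that is also a Lebesgue point for $N$ (this excludes a negligible set). By Proposition \ref{pr-fiber}, the connected component of $u^{-1}(u(x))$ containing $x$ inside any admissible $\omega$ reduces to $\{x\}$, so I may choose an admissible $\omega'$ so small that $u^{-1}(u(x))\cap\omega'=\{x\}$. The hypotheses of \eqref{V.88-a} in Proposition \ref{pr-int-dens} are then satisfied for $\mathbf{v}_{\omega'}$ at $p=u(x)$, yielding $\theta^\chi_{\omega'}(u(x))=2\pi N(x)$, and hence $\tilde N(x)\le N(x)$. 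Conversely, for any admissible $\omega$, I can shrink inside $\omega$ to a small admissible $\omega'\subseteq\omega$ as above; since $\mathbf{v}_{\omega'}\le\mathbf{v}_\omega$ as measures, we have $\theta^\chi_\omega(u(x))\ge\theta^\chi_{\omega'}(u(x))=2\pi N(x)$, proving $\tilde N(x)\ge N(x)$.

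\smallskip

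\noindent\emph{Expected obstacle.} The routine steps are the two bounds, which follow almost directly from the monotonicity formula and Proposition \ref{co-dens}. The subtlest point is ensuring that the $\omega$ picked for $x$ remains admissible for a whole sequence $x_k\to x$ in the upper semi-continuity argument; this requires combining the continuity of $u$ (Proposition \ref{pr-cont}) with the closedness of $u(\partial\omega)$ and the minimality property from Remark \ref{rm-rob-rep}, but no genuinely new idea is needed.
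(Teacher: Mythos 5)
Your proof is correct and follows essentially the same route as the paper's: upper semi-continuity via a minimizing $\omega$ plus Corollary \ref{dens.usc}/Proposition \ref{co-dens}, the lower bound from Proposition \ref{co-dens}, the upper bound from the last inequality of Theorem \ref{mono.cor} (with $b=\ell/2$), and the identification $\tilde N=N$ on $\mathcal{G}_u^f$ from \eqref{V.88-a} after isolating the fiber via Proposition \ref{pr-fiber}. You merely supply details (e.g., shrinking $\omega'$ so that $u^{-1}(u(x))\cap\omega'=\{x\}$, which is exactly the content of Remark \ref{rm-rob-rep}) that the paper leaves implicit.
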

\begin{proof}
Let $x\in\tilde\Sigma$ and fix $\omega$ realizing the infimum in the definition of $\tilde N$.
We consider the induced varifold $\mathbf{v}_\omega$.
Thanks to Proposition \ref{co-dens} we have the lower bound $\ti{N}(x)=\frac{\theta^\chi(u(x))}{2\pi}\ge1$,
while the upper bound follows from Theorem \ref{mono.cor}.

Moreover, given a sequence of points $x_k\to x$, eventually we have $x_k\in\omega$ and the points $p_k:=u(x_k)$ converge to $p:=u(x)\nin u(\p\omega)$, by continuity of $u$.
Hence, eventually we have $x_k\in\tilde\Sigma$ and $p_k\nin u(\p\omega)$. Hence, by Proposition \ref{co-dens} again, we have
$$\limsup_{k\to\infty}2\pi\tilde N(x_k)\le\limsup_{k\to\infty}\theta^\chi(p_k)\le\theta^\chi(p)=2\pi\tilde N(p).$$

The fact that $\ti{N}=N$ at a.e. $x\in{\mathcal G}^f_u$ (specifically, at any $x\in\mathcal{G}_u^f$ which is also a Lebesgue point for $N$) is a direct consequence of \eqref{V.88-a}.
\end{proof}

\section{Sequential compactness of PHSLVs}

We now show a fundamental compactness property of the class of parametrized varifolds studied in this work.
This property will be one of the key tools in the regularity theory developed in the second part of the paper.

\begin{Th}
\label{th-sequential-weak-closure}
Let $(\Sigma, [h_k])$ be a sequence of Riemann surfaces (for a fixed connected $\Sigma$) and assume that the sequence of metrics $h_k$ is pre-compact in $C^\infty_{loc}$.
Assume that $(\Sigma,u_k,N_k)$ is a sequence of PHSLVs in $\mathbb{H}^2$ such that $u_k(\bar x)$ stays bounded, for a fixed reference point $\bar x\in\Sigma$,
and that
$$\limsup_{k\to\infty}\int_\omega N_k|\nabla u_k|_{h_k}^2\,d\operatorname{vol}_{h_k}<\infty\quad\text{for all }\omega\subset\subset\Sigma,$$
as well as the existence of $C(\omega)>0$ such that
\begin{equation}
\label{teofrasto}
\limsup_{k\to\infty}\int_{\omega\cap\{\r_p\circ u_k<R\}} N_k|\nabla u_k|_{h_k}^2\,d\operatorname{vol}_{h_k}
\le C(\omega)R^2
\end{equation}
for all $\omega\subset\subset\Sigma$, $p\in\mathbb{H}^2$, and $R>0$.
Then, along a subsequence, the limit
$$u_\infty:=\lim_{k\to\infty}u_k\quad\text{exists in }C^0_{loc}\text{ and weakly in }W^{1,2}_{loc}$$
and, for a suitable new conformal class $[\hat h_\infty]$, there exist
a limit PHSLV
$$(\Sigma,\hat u_\infty,\hat N_\infty)$$
and a (locally) quasiconformal homeomorphism
$\psi:(\Sigma,[h_\infty])\to(\Sigma,[\hat h_\infty])$
such that
$$u_\infty=\hat u_\infty\circ\psi.$$
Moreover, we have the limit of Radon measures
\begin{equation}\label{rad.limit}\lim_{k\to\infty}N_k\frac{|\nabla u_k|_{h_k}^2}{2}\,d\operatorname{vol}_{h_k}=(\psi^{-1})_*
\lf[\hat N_\infty\frac{|\nabla \hat u_\infty|_{\hat h_\infty}^2}{2}\,d\operatorname{vol}_{\hat h_\infty}\rg]\end{equation}
and the induced varifolds $\mathbf{v}_{k,\omega}$ (see Remark \ref{hslv.from.phslv}) satisfy
$$\mathbf{v}_{\infty,\psi(\omega)}=\lim_{k\to\infty}\mathbf{v}_{k,\omega}\quad\text{on }\mathbb{H}^2\setminus u_\infty(\p\om)$$
for any $\omega\subset\subset\Sigma$. \hfill $\Box$
\end{Th}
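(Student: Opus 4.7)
The plan is to proceed in three stages.

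\emph{Basic compactness.} Using Proposition \ref{pr-cont} in local conformal charts for $h_k$ — legitimate since $[h_k]$ is pre-compact in $C^\infty_{loc}$ — the maps $u_k$ enjoy a uniform modulus of continuity with respect to $d_K$ on each compact subset of $\Sigma$. Coupled with the assumed bound on $u_k(\bar x)$, the Ascoli--Arzel\`a theorem yields a subsequence converging in $C^0_{loc}$ to some $u_\infty$; the uniform energy bound promotes this to weak $W^{1,2}_{loc}$ convergence, and Banach--Alaoglu together with a diagonal argument supplies weak Radon-measure limits both for $N_k|\nabla u_k|^2_{h_k}\,d\operatorname{vol}_{h_k}$ and for the induced varifolds $\mathbf{v}_{k,\omega}$ restricted to $\mathbb{H}^2\setminus u_\infty(\p\omega)$, for a.e. $\omega\subset\subset\Sigma$. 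For any Lebesgue point $x\in\mathcal{G}_{u_k}^f$ of $N_k$, Proposition \ref{pr-robust} together with \eqref{teofrasto} gives $N_k(x)=\tilde N_k(x)\le C(\omega)$ a.e. on $\omega$, so $N_k$ is also uniformly bounded in $L^\infty_{loc}$ and converges weak-$^\ast$ along a further subsequence.

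\emph{Quasiconformal reparametrization.} Weak $W^{1,2}$ convergence does not in general preserve the weak conformality of $u_k$ for $h_k$, so $u_\infty$ need not be conformal for $h_\infty$. To repair this I would pass to the weak limit $g_\infty$ of the pulled-back metrics $u_k^\ast g_{\mathbb{H}^2}=e^{2\lambda_k}h_k$ and extract its measurable conformal-class content; the associated Beltrami coefficient relative to $h_\infty$ lies in the open unit ball of $L^\infty$ wherever $u_\infty$ is nondegenerate, so the measurable Riemann mapping theorem produces a quasiconformal homeomorphism $\psi:(\Sigma,[h_\infty])\to(\Sigma,[\hat h_\infty])$ with $\hat u_\infty\circ\psi=u_\infty$ and $\hat u_\infty$ both Legendrian and weakly conformal for $\hat h_\infty$. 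Passing to the limit in the PHSLV stationarity identity \eqref{V.2rep} is then permissible: $W_F\circ u_k$ converges uniformly on compact sets, $N_k\nabla u_k$ converges weakly in $L^2$ on any region where $F\circ u_k$ is compactly supported, and the $L^\infty$ control on $N_k$ enables the required weak-strong pairings. Proposition \ref{pr-int-dens} and Corollary \ref{dens.limits} then ensure that the limit multiplicity function $\hat N_\infty$ is a.e. integer-valued.

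\emph{Main obstacle.} The subtlest step is establishing the Radon-measure identity \eqref{rad.limit} without loss of mass: no energy may be hidden at the level of $u_\infty(\p\omega)$, at concentration points, or along degenerating ``neck'' regions of the sequence. The tools to rule this out are the integer-density closure (Theorem \ref{cpt.int}) to preserve rectifiability of the limit varifold, together with the removable-singularities statement Proposition \ref{pr-VI.1} applied at each potential bubble point, where the local mass bound \eqref{teofrasto} supplies exactly the hypothesis \eqref{sup.s}. The pre-compactness of $[h_k]$ in $C^\infty_{loc}$ prevents collar-type degeneration that could otherwise let mass escape along a pinching handle (cf. Remark \ref{neck}), and the combination of these ingredients closes the argument, yielding simultaneously the PHSLV property of $(\Sigma,\hat u_\infty,\hat N_\infty)$, the mass-preserving Radon-measure limit \eqref{rad.limit}, and the asserted varifold convergence $\mathbf{v}_{k,\omega}\rightharpoonup \mathbf{v}_{\infty,\psi(\omega)}$ on $\mathbb{H}^2\setminus u_\infty(\p\omega)$.
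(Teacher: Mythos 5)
There are two genuine gaps. The first is at the very start: Proposition \ref{pr-cont} bounds $\operatorname{diam}_K^2\, u_k(B_{1/2})$ by the energy of $u_k$ on the larger ball $B_1$, which yields equicontinuity only if the energy of $u_k$ on small balls is \emph{uniformly} small — and nothing in the hypotheses rules out concentration of Dirichlet energy at a point a priori. The $C^0_{loc}$ convergence is therefore not a consequence of Ascoli--Arzel\`a; in the paper it is deduced from the energy quantization Lemma \ref{lm-energy-quant} (together with Remark \ref{no.c.star}), whose proof tests stationarity with the vector field $W_{(\zeta\circ\r_{p'})F_p}$ built from the monotonicity Hamiltonian cut off near the boundary image, and which is the technical heart of the whole section. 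Your proposal contains no substitute for this step, and the same lemma also underlies the absolute continuity of $\nu_\infty$ (Lemma \ref{lm-abs-cont}), so its absence propagates.

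The second gap is the passage to the limit in \eqref{V.2rep}. The integrand $N_k\nabla(W_F\circ u_k)\cdot\nabla u_k$ is \emph{quadratic} in $\nabla u_k$ (since $\nabla(W_F\circ u_k)$ contains terms of the form $(\p^2F\circ u_k)\nabla u_k$), so weak $W^{1,2}$ convergence plus uniform convergence of $u_k$ does not permit the ``weak--strong pairing'' you invoke: a defect measure can survive. The paper circumvents this by proving that the limit of the induced varifolds $\mathbf{v}_{k,\omega}$ equals the varifold induced by $(u_\infty,N_\infty)$, which requires (i) the structure theorem $d\nu_\infty=N_\infty|\p_{x_1}u_\infty\wedge\p_{x_2}u_\infty|\,dx^2$ with $N_\infty$ integer-valued (Lemma \ref{lm-integer-mult}, itself a nontrivial blow-up argument using the test Hamiltonian $\xi(z_1,z_3)[z_1z_2+z_3z_4-\varphi]$ to kill the normal components), and (ii) a proof that the limit varifold is rectifiable. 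Invoking Theorem \ref{cpt.int} for (ii) is circular: that theorem only gives closure \emph{among} rectifiable varifolds, and Remark \ref{orr} warns explicitly that rectifiability of the limit does not come for free; the paper establishes it through the parametrization, by a blow-up analysis at $|\mathbf{v}_\infty|$-a.e. point exploiting the finite fiber structure. Finally, Proposition \ref{pr-VI.1} is not available under the hypotheses of the theorem, since it requires the stronger PHSLV$^*$ stationarity; fortunately it is also not needed, because in $\mathbb{H}^2$ no bubbling occurs once Lemma \ref{lm-energy-quant} is in place.
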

\begin{Rm}
A similar result holds on a closed Sasakian manifold $M^5$, except that the convergence
in $C^0_{loc}$ is guaranteed only away from a locally finite set, and the last equality could become an inequality (if $\bar\omega$ intersects this set), due to possible bubbling (cf. Remark \ref{no.c.star} below).
Nonetheless, assuming the slightly stronger condition given in Definition \ref{strong.phslv},
we can recover an equality by a standard bubble-tree analysis, as explained in Remark \ref{bubbling},
and when $\Sigma$ is closed we can even remove the assumption of having a controlled conformal class (see Remark \ref{neck}; in both cases, $\Sigma$ becomes a possibly disconnected Riemann surface in the limit).
However, as shown by the counterexample of Theorem \ref{th-count-ex}, this cannot be done with
the initial definition of PHSLV.
\hfill $\Box$
\end{Rm}

\begin{Rm}\label{teofrasto.mild}
The upper bound \eqref{teofrasto} is a mild assumption which is typically satisfied in practice,
e.g., when $\Sigma$ and the ambient are closed, or while studying parametrized blow-ups, as a consequence of monotonicity.
\hfill $\Box$
\end{Rm}

For simplicity, we assume that the conformal class is independent of $k$.
In the general case, given any smooth $\omega\subset\subset\Sigma$,
we can find diffeomorphisms $\psi_k:\bar\omega\to\psi(\bar\omega)$ converging smoothly to the identity
such that $\psi_k^*[h_k]=[h_\infty]$, thus reducing to this situation up to using heavier notation.

We then endow $\Sigma$ with a metric $h$ inducing the fixed conformal class $[h]=[h_k]=[h_\infty]$.
We can assume that $u_k(\bar x)\to0$, up to a subsequence.
By covering each $\omega\subset\subset\Sigma$ with a connected union of conformal disks,
Proposition \ref{pr-cont} gives
$$\limsup_{k\to\infty}\|\r\circ u_k\|_{L^\infty(\omega)}<\infty.$$
Thus, in view of the local equivalence between the $\R^5$ and $\mathbb{H}^2$ Riemannian metrics,
we obtain that $(u_k)$ is bounded in $W^{1,2}(\omega,\R^5)$. Up to a subsequence, we can then extract a limiting map
$u_\infty:\Sigma\to\mathbb{H}^2$ such that
$$u_k\rightharpoonup u_\infty\quad\text{weakly in }W^{1,2}_{loc}.$$
We observe that the map $u_\infty$ is still $L^\infty_{loc}$ and Legendrian, since the condition $u_k^*\alpha=0$ is stable under weak limits
in $W^{1,2}_{loc}(\Sigma,\R^5)$.
%
%
We introduce the Radon measures $$d\nu_k:=N_k\frac{|\nabla u_k|^2}{2}\,dx^2$$ on $\Sigma$.
By assumption, we can extract a limit Radon measure $\nu_\infty$, up to a subsequence.
Note that, for any $\om\subset\subset\Sigma$, the pushforward of $\nu_k\res\omega$ via $u_k$ is simply the weight of the induced varifold $\mathbf{v}_{k,\om}$, namely we have
$$ (u_k)_*(\nu_k\res\om)=|\mathbf{v}_{k,\om}|. $$
Moreover, as seen in Remark \ref{hslv.from.phslv}, the varifold $\mathbf{v}_{k,\om}$ restricts
to a HSLV on $\mathbb{H}^2\setminus u_k(\p\om)$.
We postpone the actual proof of Theorem \ref{th-sequential-weak-closure}, since we first need another key result.

\subsection{Energy quantization}
Before continuing the proof, we will establish the following lemma, which is an \emph{energy quantization} result.
In its statement, we fix a conformal reference metric $h$ on $\Sigma$, used to define balls on the domain.
\begin{Lm}
\label{lm-energy-quant}
There exist two universal constants $c_\ast,C_2>0$ such that the following holds.
Given $\omega\subset\subset\Sigma$ open, assume that (along a subsequence) the maps $u_k\to u_\infty$ uniformly on $\de \omega$.
Taking any $a>2\ell>0$ such that
$$a\ge\lim_{k\to\infty}\operatorname{diam}_K(u_k(\omega )),$$
$$\ell\ge\lim_{k\to\infty}\operatorname{diam}_K(u_k(\p \omega ))=\operatorname{diam}_K(u_\infty(\p \omega )),$$
we have either
\begin{equation}
\label{concl-1}
\nu_\infty(\omega )\ge c_\ast a^2
\end{equation}
or
\begin{equation}
\label{concl-2}
\operatorname{diam}_K(u_k(\omega ))\le C_2\ell+C_2 \ell^{-1}\max_{p'\in u_k(\p \omega )}|\mathbf{v}_{k,\omega }|(B^\r_{4\ell}(p'))
\le C\ell
\end{equation}
for $k$ large enough, where $C$ depends only on the sequence and $\omega$.
\hfill $\Box$
\end{Lm}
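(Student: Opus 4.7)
The plan is to prove the contrapositive: if \eqref{concl-2} fails (with $C_2\ge 16$, say), then \eqref{concl-1} must hold. Working along a subsequence where the failure occurs and assuming $\omega$ connected (treating components separately), write $D_k:=\operatorname{diam}_K u_k(\omega)$; the failure forces $D_k>C_2\ell$. Since $u_k(\omega)$ is a connected subset of $\mathbb H^2$ containing $u_k(\partial\omega)$, which fits in a ball of Kor\'anyi radius $\ell$, a short triangle-inequality argument (using two points nearly realizing the diameter and the smallness of the boundary image) produces a point $q_k\in u_k(\omega)$ with $d_K(q_k,u_k(\partial\omega))\ge D_k/6$. The induced varifold $\mathbf v_{k,\omega}$ is a HSLV on $\mathbb H^2\setminus u_k(\partial\omega)$ with $\theta^\chi(\mathbf v_{k,\omega},q_k)\ge 2\pi$ by Proposition~\ref{co-dens}, so applying the effective monotonicity of Theorem~\ref{mono.cor} at $q_k$ at scale $b=D_k/12$ yields
$$|\mathbf v_{k,\omega}|(B^{\r}_{D_k/6}(q_k))\;\ge\;c\,D_k^2$$
for a universal $c>0$.

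The second half transfers this image-side bound to a domain-side one that survives weak-$*$ passage. Since $B^{\r}_{D_k/6}(q_k)$ has Kor\'anyi distance $\ge D_k/6\ge 4\ell$ from $u_k(\partial\omega)$, it is disjoint from every $B^{\r}_{4\ell}(p')$, $p'\in u_k(\partial\omega)$. Combining the uniform convergence $u_k\to u_\infty$ on $\partial\omega$ with the quantitative continuity of Proposition~\ref{pr-cont} on a finite cover of $\partial\omega$ by small conformal disks, and exploiting that the failure of \eqref{concl-2} forces $M_k\le C_2^{-1}\ell D_k$ (preventing too much energy from concentrating along $\partial\omega$), one can fix an open neighborhood $V\supset\partial\omega$ in $\bar\omega$ with $u_k(V\cap\omega)\subset\bigcup_{p'\in u_k(\partial\omega)}B^{\r}_{D_k/8}(p')$ for all large $k$. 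Consequently $u_k^{-1}(B^{\r}_{D_k/6}(q_k))\cap\omega\subset K:=\overline{\omega\setminus V}\subset\subset\omega$, and since $(u_k)_*(\nu_k\res\omega)=|\mathbf v_{k,\omega}|$, the mass bound transfers to $\nu_k(K)\ge c D_k^2$. Upper semicontinuity of Radon measures on compact sets under weak-$*$ convergence then gives
$$\nu_\infty(\omega)\;\ge\;\nu_\infty(K)\;\ge\;\limsup_{k\to\infty}\nu_k(K)\;\ge\;c\lim_{k\to\infty}D_k^2,$$
which is \eqref{concl-1} with $c_*:=c$ in the meaningful regime where $a$ is comparable to $\lim_k D_k$ (for $a$ much larger, the boundedness of $\nu_\infty(\omega)$ forces \eqref{concl-2} independently through the $C_2\ell^{-1}M_k$ term).

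The principal obstacle is the localization step in the second paragraph: extending the uniform boundary convergence from $\partial\omega$ itself to a \emph{fixed} neighborhood $V$, so that the monotonicity pocket sits in a compact subset of $\omega$ and the mass does not leak out along $\partial\omega$ in the limit. The critical inputs are the $D_k\gg\ell$ buffer forced by the failure of \eqref{concl-2} and the $L^2$-to-Kor\'anyi bound of Proposition~\ref{pr-cont}. Conversely, the correction term $C_2\ell^{-1}M_k$ in \eqref{concl-2} is precisely what absorbs the complementary borderline case in which the bubble forms within Kor\'anyi distance $4\ell$ of some $p'\in u_k(\partial\omega)$: for such a configuration monotonicity produces mass $\gtrsim \ell^2$ inside $B^{\r}_{4\ell}(p')$, so $M_k\gtrsim \ell^2$ and $C_2\ell^{-1}M_k\gtrsim\ell$, and together with the geometric setup this keeps \eqref{concl-2} valid for $C_2$ taken large.
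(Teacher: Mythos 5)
Your interior step is sound as far as it goes: picking $q_k\in u_k(\omega)$ with $d_K(q_k,u_k(\p\omega))\gtrsim D_k$ and applying Theorem \ref{mono.cor} together with Proposition \ref{co-dens} does give $|\mathbf{v}_{k,\omega}|(B^\r_{D_k/6}(q_k))\ge cD_k^2$, and the "transfer" you single out as the principal obstacle is actually a non-issue — since $(u_k)_*(\nu_k\res\omega)=|\mathbf{v}_{k,\omega}|$, you get $\nu_k(\omega)\ge cD_k^2$ for free, with no need for a fixed compact $K$ (only the passage from $\nu_k$ to $\nu_\infty$ requires a word, and that is minor). The genuine gap is elsewhere: what you prove is the dichotomy "$\nu_\infty\gtrsim(\lim_kD_k)^2$ or \eqref{concl-2}", whereas the lemma asserts "$\nu_\infty(\omega)\ge c_*a^2$ or \eqref{concl-2}" for \emph{every} $a\ge\lim_k\operatorname{diam}_K(u_k(\omega))$. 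For $a\gg\lim_kD_k$ your bound does not yield \eqref{concl-1}, and your parenthetical fix is not an argument: \eqref{concl-2} makes no reference to $a$, so "boundedness of $\nu_\infty(\omega)$" cannot "force \eqref{concl-2} through the $C_2\ell^{-1}M_k$ term". In fact, since $\nu_\infty(\omega)<\infty$, the lemma for large $a$ is logically equivalent to the assertion that \eqref{concl-2} \emph{always} holds in $\mathbb{H}^2$ — this is exactly Remark \ref{no.c.star}, and it is the content Lemma \ref{lm-abs-cont} needs in order to exclude atoms of $\nu_\infty$ and get $C^0_{loc}$ convergence. Your weaker dichotomy cannot rule out an atom: it only gives $\lim_kD_k\le\max(C\ell,\sqrt{\nu_\infty(\bar B_\rho)/c})$, and the second term does not go to zero with $\rho$ if $\nu_\infty$ charges the point.

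The missing ingredient is the boundary-localized stationarity identity, which interior monotonicity at $q_k$ up to scale $D_k$ cannot see. The paper tests the varifold with $W_{(\zeta\circ\r_{p'})F_p}$, where $F_p=(\chi(\r_p/a)-\chi(\r_p/\ep))\arctan\sigma_p$, $p$ is the deep point, $p'\in u_k(\p\omega)$ is nearest to it, and $\zeta\circ\r_{p'}$ cuts off on $B^\r_{2\ell}(p')$. Letting $\ep\to0$ produces $\theta^\chi(p)\ge2\pi$ on one side; the commutator terms generated by $\zeta$ are supported in $B^\r_{4\ell}(p')$ and are controlled by $C\ell^{-1}|\mathbf{v}_{k,\omega}|(B^\r_{4\ell}(p'))$, crucially because $\arctan\sigma_p$ is bounded, $0$-homogeneous, and has oscillation $O(\ell/d_K(p,p'))$ on that ball; and when $a\ge\operatorname{diam}_K(u_k(\omega))$ the outer-cutoff terms involving $\chi'(\r_p/a)$ contribute at most $Ca^{-2}\nu_k(\omega)$ (indeed vanish for $a$ large). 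This yields $2\pi\le C\ell^{-1}d_K(p,p')^{-1}M_k+Ca^{-2}\nu_k(\omega)$, which is precisely the stated dichotomy, with the $C_2\ell^{-1}M_k$ term in \eqref{concl-2} arising from the cutoff near $p'$ rather than, as in your heuristic closing paragraph, from a separate "borderline bubble near the boundary" case. Without this global Hamiltonian test your proof establishes a strictly weaker lemma that does not support the paper's main application of the statement.
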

\begin{Rm}\label{no.c.star}
In the Heisenberg group, this statement can be immediately improved.
Indeed, by taking $a>0$ large enough, we can obviously falsify \eqref{concl-1}, so that \eqref{concl-2} always holds true.
However, in a closed Sasakian manifold $M^5$, an analogous statement holds only for bounded scales $a\le a_0(M)$;
as a consequence of possible bubbling, \eqref{concl-2} might fail in general, even for small $\ell$
(but in this case \eqref{concl-1} holds with a right-hand side $c(M)=c_*a_0(M)^2>0$). \hfill $\Box$
\end{Rm}

\begin{proof}
We select $p_k\in u_k(\omega )$ maximizing the $d_K$-distance from $u_k(\de \omega )$
and fix $p_k'\in u_k(\p \omega )$ such that
$$d_K(p_k,p_k')=d_K(p_k,u_k(\de \omega )).$$
We observe that eventually
$$\operatorname{diam}_K(u_k(\omega ))\le2\max_{p\in u_k(\omega )}d_K(p,u_k(\de \omega ))+2\ell
=2d_K(p_k,p_k')+2\ell;$$
hence, we can assume without loss of generality that
$$d_K(p_k,p_k')>8\ell.$$
Recall that $\chi=1$ on $[0,1]$ and $\chi=0$ on $[2,\infty)$ and let
$$\zeta(t):=1-\chi\lf(\frac{t}{2\ell}\rg),$$
so that $\zeta=0$ on $[0,2\ell]$ and $\zeta=1$ on $[4\ell,\infty)$.
In the sequel, we will often write $p$ and $p'$ in place of $p_k$ and $p_k'$, for simplicity.
 
We test stationarity of $\mathbf{v}_{k,\omega}$ with $W_{(\zeta\circ\r_{p'})F_p}$,
where
$$F_p:=(\chi(\r_p/a)-\chi(\r_p/\ep))\arctan\sigma_p$$
(note that eventually $(\zeta\circ\r_{p'})F_p$ vanishes on $u_k(\p \omega )\subseteq B_{2\ell}^\r(p')$).
Using \eqref{V.3rep}--\eqref{V.2} and recalling from Proposition \ref{magic.id} that
\begin{align*}
-2\operatorname{div}_{\mathcal P}W_{F_p}&=2\frac{|\nabla^{\mathcal P}\r_p|^2}{\r_p} \lf( \frac{\chi'(\r_p/a)}{a}-\frac{\chi'(\r_p/\ep)}{\ep} \rg)
+|\nabla^{\mathcal P} z|^2 \lf[\frac{2\varphi_p}{\r_p^3} \lf( \frac{\chi'(\r_p/a)}{a}-\frac{\chi'(\r_p/\ep)}{\ep} \rg) \arctan\sigma_p\rg]\\
&\quad-\frac{\r_p^4}{2} \nabla^{\mathcal P}\arctan\sigma_p\cdot\nabla^{\mathcal P} \lf[\r_p^{-3} \lf( \frac{\chi'(\r_p/a)}{a}-\frac{\chi'(\r_p/\ep)}{\ep} \rg) {\arctan\sigma_p} \rg],
\end{align*}
we obtain
\begin{align*}
&2\int_{\omega } N_k (\zeta\circ\r_{p'})(\chi(\r_p/a)-\chi(\r_p/\ep)) |\nabla\arctan\sigma_p|^2\,dx^2\\
&\quad-\frac{2}{\ep}\int_{\omega } N_k (\zeta\circ\r_{p'})\chi'(\r_{p}/\ep)\lf[\frac{|\nabla\r_{p}|^2}{\r_{p}}+\frac{\varphi_p}{2\r_{p}^3} |\nabla u_k|^2\arctan\sigma_{p}\rg]\,dx^2\\
&=-\frac{2}{a} \int_{\omega }N_k (\zeta\circ\r_{p'}) \chi'(\r_{p}/a)\lf[\frac{|\nabla\r_{p}|^2}{\r_{p}}+\frac{\varphi_p}{2\r_{p}^3} |\nabla u_k|^2\arctan\sigma_{p}\rg]\,dx^2\\
&\quad+\frac{1}{2a}\int_{\omega } N_k (\zeta\circ\r_{p'}) \r_{p}^4 \nabla\lf[ \arctan\sigma_{p} \frac{\chi'(\r_p/a)}{\r_{p}^3} \rg] \cdot \nabla\arctan\sigma_p\,dx^2\\
&\quad-\frac{1}{2\ep}\int_{\omega } N_k (\zeta\circ\r_{p'}) \r_{p}^4 \nabla\lf[ \arctan\sigma_{p} \frac{\chi'(\r_{p}/\ep)}{\r_{p}^3} \rg]\cdot \nabla\arctan\sigma_p\,dx^2\\
&\quad+A+B
\end{align*}
(as usual, some compositions with $u_k$ are omitted), where
\begin{align*}
A&:=\int_{{\omega }} N_k \sum_{j=1}^2 \nabla (u_k)_{2j}\cdot \nabla\lf[\lf(\nabla^H(\zeta\circ\r_{p'})\cdot X_j\rg) F_p\rg]\,dx^2\\
&\phantom{:}\quad-\int_{{\omega }} N_k \sum_{j=1}^2 \nabla (u_k)_{2j-1}\cdot \nabla\lf[\lf(\nabla^H(\zeta\circ\r_{p'})\cdot Y_j\rg) F_p\rg]\,dx^2
\end{align*}
and
\begin{align*}
B&:=\int_{{\omega }} N_k \sum_{j=1}^2 \nabla (u_k)_{2j}\cdot \nabla(\zeta\circ\r_{p'}) \lf(\nabla^H F_p\cdot X_j\rg)\,dx^2\\
&\phantom{:}\quad-\int_{{\omega }} N_k \sum_{j=1}^2 \nabla (u_k)_{2j-1}\cdot \nabla(\zeta\circ\r_{p'}) \lf(\nabla^H F_p\cdot Y_j\rg)\,dx^2.
\end{align*}

We first bound $A$. We rewrite it as
\begin{align*}
A&=\int_{{\omega }} N_k \sum_{j=1}^2 \nabla (u_k)_{2j}\cdot \nabla\lf[\lf(\nabla^H(\zeta\circ\r_{p'})\cdot X_j\rg) (F_p-F_p(p'))\rg]\,dx^2\\
&-\int_{{\omega }} N_k \sum_{j=1}^2 \nabla (u_k)_{2j-1}\cdot \nabla\lf[\lf(\nabla^H(\zeta\circ\r_{p'})\cdot Y_j\rg) (F_p-F_p(p'))\rg]\,dx^2,
\end{align*}
where we tested stationarity with $W_{\zeta\circ\r_{p'}}$ in order to subtract the constant $F_p(p')$ from $F_p$.
Expanding
\begin{align*}
&\nabla^H\lf(\nabla^H(\zeta\circ \r_{p'})\cdot X_j\rg)\\
&=(\zeta''\circ\r_{p'}) (\nabla^H \r_{p'}\cdot X_j) \nabla^H\r_{p'}
+(\zeta'\circ\r_{p'}) \nabla^H(\nabla^H \r_{p'}\cdot X_j)
\end{align*}
we get
$$|\nabla^H(\nabla^H(\zeta\circ \r_{p'})\cdot X_j)|\le \frac{C}{\ell^2}{\mathbf 1}_{B^\r_{4\ell}(p')\setminus B_{2\ell}^\r(p')},$$
and similarly for $Y_j$.
Since $d_K(p,p')>8\ell$, using the equivalence between the Carnot--Carath\'eodory distance and the Kor\'anyi distance $d_K$, we have
$$\lf\|F_p-F_p(p')\rg\|_{L^\infty(B^\r_{4\ell}(p'))}\le C \|\nabla ^H F_p\|_{L^\infty(B^\r_{4\ell}(p'))}\cdot 4\ell\le \frac{C\ell}{a}$$
(note that on $B_{4\ell}^\r(p')$ we have $\chi(\r_p/\ep)=0$ for $\epsilon$ small, namely for $4\ell+2\ep<8\ell<d_K(p',p)$, so that eventually the balls $B_{4\ell}^\r(p')$ and $B_{2\ep}^\r(p)$ are disjoint). Combining the previous bounds gives
$$|A|\le \frac{C}{\ell a}\int_{\omega \cap\{\r_{p'}<4\ell\}} N_k |\nabla u_k|^2\,dx^2.$$
As for $B$, a similar (simpler) argument implies exactly the same bound.

Thus, denoting by $\theta^\chi_k$ the density of $\mathbf{v}_{k,\omega}$ and letting $\ep\to0$, we obtain
\begin{align*}
&\theta^\chi_k(p)- \frac{C}{\ell a}\int_{\omega \cap\{r_{p'}<4\ell\}} N_k |\nabla u_k|^2\,dx^2\\
&\le-\frac{2}{a} \int_{\omega } N_k (\zeta\circ\r_{p'}) \chi'(\r_{p}/a) \lf[\frac{|\nabla\r_{p}|^2}{\r_{p}}
+\frac{\varphi_p}{2\r_{p}^3} |\nabla u_k|^2\rg]\,dx^2\\
&\quad+\frac{1}{2a}\int_{\omega } N_k (\zeta\circ\r_{p'}) \r_{p}^4\nabla\lf[ \arctan\sigma_{p} \frac{\chi'(\r_{p}/a)}{\r_{p}^3} \rg] \cdot \nabla\arctan\sigma_p\,dx^2\\
&\le\frac{C}{a^2}\int_{\omega }N_k |\nabla u_k|^2\,dx^2.
\end{align*}
Using Proposition \ref{co-dens} to lower bound $\theta^\chi_k(p)\ge2\pi$, we obtain
$$2\pi- \frac{C}{\ell a} \int_{\omega \cap\{\r_{p'}<4\ell\}} N_k |\nabla u_k|^2\,dx^2\le \frac{C\nu_k(\omega )}{a^2},$$
where $C>0$ is universal. Calling $C'$ this constant, we deduce that either
$$\int_{\omega \cap\{\r_{p'}<4\ell\}} N_k |\nabla u_k|^2\,dx^2\ge\frac{\pi}{C'}\ell a$$
or
$$\nu_k(\omega )\ge\frac{\pi}{C'}a^2.$$
Since eventually $\operatorname{diam}_K(u_k(\omega ))<2a$ and $p'$ stays bounded in $\mathbb{H}^2$, we can use \eqref{teofrasto} to bound the last integral, and the statement follows.
\end{proof}

\subsection{Conclusion of the proof}
The previous quantization result is exploited in the proof of the following lemma (cf. \cite[Lemma III.5]{Riv-IHES} and \cite[Lemma 4.2]{PiRi1}).

\begin{Lm}
\label{lm-abs-cont}
We have the uniform convergence $$u_k\to u_\infty\quad\text{in }C^0_{loc}(\Sigma).$$
Moreover, the limiting measure $\nu_\infty$ is absolutely continuous with respect to $\operatorname{vol}_h$ and the density vanishes a.e. on $\{\nabla u_\infty=0\}$.
\hfill $\Box$
\end{Lm}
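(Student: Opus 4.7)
The plan is to pattern the proof on \cite[Lemma 4.2]{PiRi1} and \cite[Lemma III.5]{Riv-IHES}, adapting it to the Legendrian setting and exploiting the improvement in $\mathbb{H}^2$ pointed out in Remark~\ref{no.c.star}. I would split the proof into two parts: first the uniform $C^0_{loc}$ convergence, then the absolute continuity and the vanishing of the Radon--Nikodym derivative on $\{\nabla u_\infty=0\}$.

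For uniform convergence, Rellich gives $u_k\to u_\infty$ in $L^2_{loc}$ and pointwise a.e.\ along a subsequence, and the goal is to upgrade this to uniform convergence via equicontinuity. At a non-atomic point $x_0$ (i.e.\ $\nu_\infty(\{x_0\})=0$), I would pick $r$ so small that $\nu_\infty(\bar B_{2r}(x_0))<\epsilon$; upper semicontinuity of Radon masses on closed sets gives eventually $\nu_k(\bar B_{2r}(x_0))<2\epsilon$, and applying Proposition~\ref{pr-cont} to every conformal disk $B_r(y)\subset B_{2r}(x_0)$ yields uniform equicontinuity of $\{u_k\}$ in a neighborhood of $x_0$, hence uniform convergence by Arzel\`a--Ascoli. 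At an atomic $x_0$ I would invoke Lemma~\ref{lm-energy-quant} applied to $\omega=B_{r_n}(x_0)$, with $r_n\downarrow 0$ extracted via a dyadic Fubini argument on the integrable function $r\mapsto\int_{\p B_r(x_0)}|\nabla u_\infty|^2\,d\mathcal{H}^1$ so that $r_n\int_{\p B_{r_n}}|\nabla u_\infty|^2\to 0$, which gives $\ell_n:=\operatorname{diam}_K u_\infty(\p B_{r_n}(x_0))\to 0$ via Cauchy--Schwarz. A further diagonal extraction using the compact embedding $W^{1,2}(\p B_{r_n})\hookrightarrow C^0(\p B_{r_n})$ ensures $u_k\to u_\infty$ uniformly on $\p B_{r_n}$. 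By Remark~\ref{no.c.star}, in $\mathbb{H}^2$ one can take $a$ arbitrarily large, falsifying alternative (A) of Lemma~\ref{lm-energy-quant} once $\nu_\infty(B_{r_n}(x_0))<c_\ast a^2$, and alternative (B) then delivers $\operatorname{diam}_K u_k(B_{r_n}(x_0))\le C\ell_n\to 0$. Hence uniform convergence holds near $x_0$ as well.

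For absolute continuity and the vanishing on $\{\nabla u_\infty=0\}$, with uniform convergence secured the induced varifolds $\mathbf{v}_{k,\omega}$ converge to a HSLV $\mathbf{v}_\infty$ on $\mathbb{H}^2\setminus u_\infty(\p\omega)$ and $(u_k)_\ast(\nu_k\res\omega)\to|\mathbf{v}_\infty|$. The diameter bound from Lemma~\ref{lm-energy-quant} places $u_k(B_{r_n}(x_0))$ inside a Kor\'anyi ball $B^\r_{C\ell_n}(q_k)$, and combining with the monotonicity of Corollary~\ref{mono.cor2} (applied on a fixed larger cylinder $\omega=B_R(x_0)$ where $u_\infty|_{\p B_R}$ is suitably separated from $u_\infty(x_0)$, a case always achievable after ruling out bubble formation via Remark~\ref{no.c.star}) bounds $|\mathbf{v}_{k,\omega}|(B^\r_{C\ell_n}(q_k))\le C\ell_n^2$ uniformly in $k$. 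Passing to the limit on closed sets yields
\[\nu_\infty(\bar B_{r_n}(x_0))\le C\ell_n^2\le Cr_n\int_{\p B_{r_n}(x_0)}|\nabla u_\infty|^2\,d\mathcal{H}^1.\]
A Vitali covering argument combined with Lebesgue differentiation of the integral $r\mapsto\int_{B_r(x_0)}|\nabla u_\infty|^2$ then gives, at every Lebesgue point $x_0$ of $|\nabla u_\infty|^2$, the pointwise upper bound $\limsup_{n\to\infty}\nu_\infty(B_{r_n}(x_0))/r_n^2\le C|\nabla u_\infty(x_0)|^2$, from which both the absolute continuity of $\nu_\infty$ and the vanishing of its Radon--Nikodym density on $\{\nabla u_\infty=0\}$ follow.

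The main technical obstacle will be the last stage: extracting the Fubini radii $r_n(x_0)$ in a sufficiently measurable (or densely chosen) way to legitimately run a Vitali covering argument, and justifying the separation $d_K(q_k,u_k(\p B_R(x_0)))$ needed to apply monotonicity with a controllable constant. The Legendrian flavor enters precisely here: the Kor\'anyi metric on the target is only H\"older-comparable to the Euclidean one, so pulling back the target-side monotonicity bound to the domain through the merely continuous map $u_\infty$ demands care, particularly at points where $u_\infty$ is near-constant and the natural separation scale threatens to degenerate.
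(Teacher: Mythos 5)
Your proposal is correct and follows essentially the same route as the paper: select good circles $\p B_\rho(x_0)$ on which $u_\infty$ has small trace diameter and $u_k\to u_\infty$ uniformly, apply Lemma \ref{lm-energy-quant} together with Remark \ref{no.c.star} to kill alternative \eqref{concl-1} and obtain $\operatorname{diam}_K u_k(B_\rho(x_0))\le C\ell$, and then convert the resulting bound $\nu_\infty(B_\rho(x_0))\le C\ell^2\le C\int_{B_{2r}(x_0)}|\nabla u_\infty|^2\,dx^2$ into absolute continuity by a covering argument. The only real deviation is your detour through Corollary \ref{mono.cor2} for the mass bound: this is unnecessary, since once the image $u_k(B_\rho(x_0))$ sits in a Kor\'anyi ball of radius $C\ell$ the hypothesis \eqref{teofrasto} bounds $\nu_k(B_\rho(x_0))$ by $C\ell^2$ directly, so the ``separation'' obstacle you flag at the end does not actually arise; likewise the atomic/non-atomic case split is superfluous, as the circle-selection argument works at every point.
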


\begin{Rm}
In a closed Sasakian ambient $M^5$, the correct analogue is that $\nu_\infty$ decomposes as an absolutely continuous part,
plus a locally finite sum $\sum_{j\in J}c_j\delta_{x_j}$ of atoms, each with $c_j\ge c(M)>0$ (an energy concentration reflecting bubbling).
Moreover, the uniform convergence holds locally on $\Sigma\setminus\{x_j\mid j\in J\}$. \hfill $\Box$
\end{Rm}

\begin{proof}
Given any $x_0\in\Sigma$ and $\ell>0$, as in the proof of Proposition \ref{pr-cont} we can select $\rho>0$ such that
$u_\infty$ is in $W^{1,1}(\de B_\rho(x_0))$, with image (of the continuous representative)
having diameter less than $\ell$. As in the proof of Proposition \ref{pr-fiber}, we can also assume that
$u_k$ converges uniformly to $u_\infty$ on $\de B_\rho(x_0)$. Thus, by the previous result and Remark \ref{no.c.star},
we have
$$\limsup_{k\to\infty}\operatorname{diam}_K u_k(B_\rho(x_0))\le C\ell$$
along a subsequence. Since this could be applied to any initially chosen subsequence, we immediately deduce uniform convergence.

To see that $\nu_\infty$ is absolutely continuous with respect to $\operatorname{vol}_h$,
take $x_0\in\Sigma\cap\operatorname{spt}(\nabla u_\infty)$ and
note that the previous argument, in conjunction with \eqref{teofrasto}, gives that for any $r>0$ small we can find $\rho\in(r,2r)$ and $\ell>0$ such that
$$\nu_\infty(B_\rho(x_0))\le C\ell^2,\quad \ell^2\le C\int_{B_{2r}(x_0)}|\nabla u_\infty|^2\,dx^2,$$
where $C=C(\om)$ (for any fixed $\om\subset\subset\Sigma$ containing $x_0$).
The claim now follows from a standard covering argument (similar to the one used in Proposition \ref{pr-rect-d-K}) and the absolute continuity of the measure $|\nabla u_\infty|^2\,d\operatorname{vol}_h$.
\end{proof}

In the sequel, we replace $u_\infty$ with its continuous representative.
We now obtain a much more precise structure for $\nu_\infty$.

 \begin{Lm}
\label{lm-integer-mult}
There exists a function
$N_\infty\in L^\infty_{loc}(\Sigma,\N^*)$ such that
\[
d\nu_\infty=N_\infty|\p_{x_1}u_\infty\wedge\p_{x_2}u_\infty|\,dx^2
\]
in any local conformal chart on $\Sigma$.
\hfill $\Box$
\end{Lm}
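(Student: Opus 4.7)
The plan is to define $N_\infty$ via Radon--Nikodym differentiation on the locus where $J_{u_\infty}:=|\partial_{x_1}u_\infty\wedge\partial_{x_2}u_\infty|$ is positive, and to establish integrality pointwise by blowing up the induced varifolds at Lebesgue points and invoking Theorem~\ref{cpt.int}.

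By Lemma~\ref{lm-abs-cont}, $\nu_\infty$ is absolutely continuous with respect to $dx^2$ with density vanishing on $\{\nabla u_\infty=0\}$. A first, easier step is to upgrade this vanishing to the (a priori larger) locus $Z:=\{J_{u_\infty}=0\}$ via a blow-up argument at a.e. Lebesgue point of $\nabla u_\infty$ lying in $Z\cap\{\nabla u_\infty\neq 0\}$: at such a point, the approximate differential of $u_\infty$ has rank one, so rescaling the limit HSLV $\bar{\mathbf{v}}:=\lim_k\mathbf{v}_{k,\omega}$ (which exists by Corollary~\ref{dens.limits}) at $u_\infty(x_0)$ would yield a blow-up whose weight concentrates on a horizontal line segment; combined with the integer density structure surviving from Propositions~\ref{pr-rect-d-K}--\ref{pr-int-dens}, a diagonal application of Theorem~\ref{cpt.int} rules out positive Lebesgue density of $\nu_\infty$ here. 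One may then define
$$N_\infty:=\frac{d\nu_\infty/dx^2}{J_{u_\infty}}\quad\text{a.e. on }\Sigma\setminus Z,\qquad N_\infty:=1\quad\text{on }Z,$$
so that the identity of the lemma holds automatically; what remains is to show $N_\infty\in\N^*$ a.e. on $\Sigma\setminus Z$.

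Fix a Lebesgue point $x_0\in\Sigma\setminus Z$ of both $\nabla u_\infty$ and $N_\infty$. Up to ambient left translation, rotation (see Remark~\ref{isometry}), and dilation, assume $u_\infty(x_0)=0$ and that the approximate differential $L:=du_\infty(x_0)$ is a linear isomorphism of $\R^2$ onto the Legendrian plane $\mathcal{P}_0:=\operatorname{span}\{X_1(0),X_2(0)\}$. Mimicking the proof of Proposition~\ref{pr-fiber} through Fatou on concentric circles, I select a small radius $s>0$ and a subsequence along which $u_k\to u_\infty$ uniformly on $\partial B_s(x_0)$ with $0\nin u_\infty(\partial B_s(x_0))$; then $\mathbf{v}_{k,B_s(x_0)}$ converges (up to a further extraction) to a HSLV $\bar{\mathbf{v}}$ on $\mathbb{H}^2\setminus u_\infty(\partial B_s(x_0))$ with weight $|\bar{\mathbf{v}}|=(u_\infty)_*(\nu_\infty\res B_s(x_0))$. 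By the area formula applied to $L$ together with the Lebesgue property of $N_\infty$ at $x_0$, any blow-up $\bar{\mathbf{v}}'$ of $\bar{\mathbf{v}}$ at $0$ has weight $N_\infty(x_0)\,\mathcal{H}^2\res\mathcal{P}_0$. A diagonal extraction realizes $\bar{\mathbf{v}}'$ as the varifold limit of rescaled rectifiable HSLVs $(\delta_{1/r_j})_*\mathbf{v}_{k_j,B_s(x_0)}$ whose densities $\theta^\chi/(2\pi)$ lie in $\N^*$ on their supports. Invoking the rigidity of Lemma~\ref{zero.exc} to pin down the Grassmannian structure of $\bar{\mathbf{v}}'$ as the plane $\mathcal{P}_0$, so that $\bar{\mathbf{v}}'$ is rectifiable and equals $N_\infty(x_0)\cdot\mathcal{P}_0$ as a varifold, Theorem~\ref{cpt.int} then forces $N_\infty(x_0)\in\N^*$.

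The main obstacle is the passage from the weight identity $|\bar{\mathbf{v}}'|=N_\infty(x_0)\,\mathcal{H}^2\res\mathcal{P}_0$ to rectifiability of $\bar{\mathbf{v}}'$ as a varifold, needed to apply Theorem~\ref{cpt.int}: controlling the Grassmannian structure of the blow-up (and not merely its weight) requires combining the Lebesgue-point approximate linearity of $u_\infty$ at $x_0$ with the blow-up rigidity established in the proof of Lemma~\ref{zero.exc}, since the Orriols-type examples show that rectifiability of the limit varifold cannot come for free from rectifiability of its weight alone; the preliminary step showing $\nu_\infty(Z)=0$ relies on a cleaner variant of the same obstruction.
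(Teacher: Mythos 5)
Your overall strategy (blow up at a Lebesgue point, identify the limit varifold as an integer multiple of a Legendrian plane, conclude via the integrality/constancy machinery) is the same as the paper's, but two steps are carried by the wrong tools and one of them is the heart of the lemma.

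The decisive gap is in the rank-two case. You correctly isolate "the passage from the weight identity $|\bar{\mathbf{v}}'|=N_\infty(x_0)\,\mathcal{H}^2\res\mathcal{P}_0$ to rectifiability of $\bar{\mathbf{v}}'$ as a varifold" as the main obstacle, but the resolution you propose does not work: Lemma~\ref{zero.exc} takes the tensor structure \eqref{tensor.v} (Grassmannian part equal to $\delta_{\mathcal{P}_p}$) as a \emph{hypothesis}, so it cannot be used to derive the Grassmannian structure of the blow-up; and approximate linearity of $u_\infty$ at $x_0$ says nothing about $\operatorname{img}\nabla u_k$ for finite $k$, since the convergence $u_k\rightharpoonup u_\infty$ is only weak in $W^{1,2}$ and the Grassmannian part of the limit varifold records the limit of $\operatorname{img}\nabla u_k$, not $\operatorname{img}\nabla u_\infty$. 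The paper's actual argument is the quantitative statement \eqref{vfd.conv.pre}, namely $\int_{B_1}[|\nabla(u_k)_2|^2+|\nabla(u_k)_4|^2+|\nabla(u_k)_\varphi|^2]\,dx^2\to0$, obtained by testing stationarity with the specific Hamiltonian $F(z,\varphi)=\xi(z_1,z_3)[z_1z_2+z_3z_4-\varphi]$ and exploiting the Legendrian identity for $\nabla(u_k)_\varphi$. This upgrades the weak convergence to strong convergence of the ``normal'' components and is what collapses the Grassmannian part onto $\mathcal{P}_\infty$; once that is done, the constancy theorem (via Hamiltonians $-a(z_1,z_3)z_2-b(z_1,z_3)z_4$) and Theorem~\ref{cpt.int} finish the proof as you indicate. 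Without an argument of this type your proof does not close, precisely because (as the Orriols example shows) a limit HSLV can have weight supported on a nice set while its Grassmannian part is spread over a whole family of planes.

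A second, more easily repaired, misattribution: in the rank-one (or $J_{u_\infty}=0$) case you invoke Theorem~\ref{cpt.int} to rule out positive density, but that theorem is a closure statement \emph{under the assumption} that the limit is rectifiable and rules nothing out for limits concentrating on a line. The correct and much simpler argument is the covering bound: the image of the rank-one linear map is contained in a line, which is covered by $O(s^{-1})$ balls of radius $s$, each carrying mass $O(s^2)$ by \eqref{teofrasto}, so the total limiting mass is $O(s)\to0$. Finally, note that the paper first performs a diagonal rescaling so that $u_\infty$ \emph{equals} the linear map $L$ on the whole unit ball; in your formulation the blow-up of $(u_\infty)_*(\nu_\infty\res B_s(x_0))$ at $0$ could receive contributions from other points of the fiber $u_\infty^{-1}(0)\cap B_s(x_0)$, which you would need to separate before asserting that the blow-up weight is exactly $N_\infty(x_0)\,\mathcal{H}^2\res\mathcal{P}_0$.
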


\begin{proof}
We tacitly work in a local chart, itself included in a fixed domain $\om\subset\subset\Sigma$. Writing $d\nu_\infty=f\,dx^2$, we can assume that $0$ is a Lebesgue point for $f$
and that $0\in\mathcal{G}_{u_\infty}$, as well as $\nabla u_\infty(0)\neq0$. Up to a left translation, we can also assume that $u_\infty(0)=0$.
We first observe that, as $r\to0$, the rescaled maps
$$u_\infty^{(r)}(x):=\delta_{1/r}\circ u_\infty(rx)$$
converge weakly in $W^{1,2}_{loc}(\C,\R^5)$ to the linear map
$$L(x)=x_1\p_{x_1}u_\infty(0)+x_2\p_{x_2}u_\infty(0).$$
Indeed, for any $R>0$, we have
\begin{align*}
\lim_{r\to0}\lim_{k\to\infty}\int_{B_{2R}(0)}N_k|\nabla u_k^{(r)}|^2\,dx^2
&=\lim_{r\to0}\lim_{k\to\infty}\int_{B_{2R}(0)}N_k|\nabla (\pi\circ u_k^{(r)})|^2\,dx^2\\
&=\lim_{r\to0}\lim_{k\to\infty}r^{-2}\int_{B_{2Rr}(0)}N_k|\nabla (\pi\circ u_k)|^2\,dx^2\\
&=(2R)^2\lim_{r\to0}\frac{2\nu_\infty(B_{2Rr}(0))}{(2Rr)^2}\\
&=8\pi R^2 f(0)
\end{align*}
(the maps $u_k^{(r)}$ are defined on $B_{2R}(0)$ for $r$ small enough).
Thus, taking into account Proposition \ref{pr-cont}, we see that
$$\limsup_{r\to0}\limsup_{k\to\infty}\operatorname{diam}_K^2 u_k^{(r)}(B_{R}(0))\le CR^2,$$
and hence
$$\limsup_{r\to0}\operatorname{diam}_K^2 u_\infty^{(r)}(B_{R}(0))\le CR^2.$$
Since $u_\infty^{(r)}(0)=0$, this gives
\begin{equation}\label{l.infty.bd.u.infty}\limsup_{r\to0}\|\r\circ u_\infty^{(r)}\|_{L^\infty(B_R(0))}\le CR\end{equation}
for all $R>0$. Moreover,
$$\int_{B_R(0)}|\nabla u_\infty^{(r)}|^2\,dx^2=\int_{B_R(0)}|\nabla(\pi\circ u_\infty^{(r)})|^2\,dx^2
=r^{-2}\int_{B_{Rr}(0)}|\nabla(\pi\circ u_\infty)|^2\,dx^2\to \pi R^2|\nabla u_\infty|^2(0).$$
By the local equivalence between the $\mathbb{H}^2$ and $\mathbb{R}^5$ metrics, we deduce that
$u_\infty^{(r)}$ converges weakly in $W^{1,2}_{loc}(\C,\R^5)$ to a limit map $u_\infty^{(0)}$, up to a subsequence.
Moreover, we clearly have $\pi\circ u_\infty^{(r)}\to\pi\circ L$ in $W^{1,2}_{loc}$, as well as $u_\infty^{(0)}(0)=0=L(0)$ by \eqref{l.infty.bd.u.infty}.
Further, $u_\infty^{(0)}$ is Legendrian, and it is easy to check that $L$ is Legendrian as well; since $\pi\circ u_\infty^{(0)}=\pi\circ L$, this forces $u_\infty^{(0)}=L$, as claimed.

Moreover, calling $\nu_k^{(r)}$ the domain measure associated with $u_k^{(r)}$, we have
$$\lim_{r\to0}\lim_{k\to\infty}\nu_k^{(r)}=\lim_{r\to0}\nu_\infty^{(r)}=f(0)\mathcal{L}^2.$$
By a diagonal argument, replacing each $u_k$ with a suitable rescaling $\delta_{1/r_k}\circ u_k(r_k\cdot)$,
we can then assume that $u_\infty=L$ and the claim becomes that
\begin{equation}\label{int.new.claim}
\nu_\infty(B_1(0))=\pi|L(e_1)\wedge L(e_2)|\cdot n,
\end{equation}
for an integer $n\ge1$ bounded solely in terms of $\omega$.

If $L$ has rank $1$, then $L(\bar B_1(0))$ is included in a line, itself included in $\{\varphi=0\}$.
Covering this set with $O(s^{-1})$ balls $B_j$ of radius $s$, we have
$$\limsup_{k\to\infty}\int_{B_1(0)\cap u_k^{-1}(B_j)}N_k|\nabla u_k|^2\,dx^2\le Cs^2$$
by \eqref{teofrasto} (which is preserved by our rescaling operation). Summing over $j$ and using the fact that
eventually $u_j(\bar B_1(0))\subseteq \bigcup_jB_j$ by the $C^0_{loc}$ convergence established in the previous result,
we obtain
$$\limsup_{k\to\infty}\int_{B_1(0)}N_k|\nabla u_k|^2\,dx^2=O(s^2)\cdot O(s^{-1})=O(s).$$
Since $s$ was arbitrary, the claim follows in this case.

Assume now that $|L(e_1)\wedge L(e_2)|\ne 0$ and denote by
\[
{\mathcal E}_\infty:=L(\bar B_1(0))
\]
the (filled) ellipse obtained as the image of $B_1(0)$ through $L$ (which might not be conformal a priori).
Moreover, let
$$\mathcal{P}_\infty:=L(\C)$$
be the Legendrian two-plane spanned by $L$. Up to a rotation, we can assume that
$$\mathcal{P}_\infty=\operatorname{span}\{X_1(0),X_2(0)\}=\{(\alpha,0,\beta,0,0)\mid\alpha,\beta\in\R\}.$$

We now claim that
\begin{equation}\label{vfd.conv.pre}
\int_{B_1(0)}[|\nabla (u_k)_2|^2+|\nabla (u_k)_4|^2+|\nabla(u_k)_\varphi|^2]\,dx^2\to0.
\end{equation}
Since $\nabla(u_k)_\varphi=(u_k)_1\nabla(u_k)_2-(u_k)_2\nabla(u_k)_1+(u_k)_3\nabla(u_k)_4-(u_k)_4\nabla(u_k)_3$,
it suffices to show that
$$\int_{B_1(0)}[|\nabla (u_k)_2|^2+|\nabla (u_k)_4|^2]\,dx^2\to0.$$
We consider an arbitrary cut-off $\xi:\mathcal{P}_\infty\to\R$ supported in the interior of $\mathcal{E}_\infty$, as well as
$$F(z,\varphi):=\xi(z_1,z_3) [z_1z_2+z_3z_4-\varphi].$$
Since $u_k\to L$ in $C^0(\bar B_1(0))$, eventually ${u}_k(\p B_1(0))\cap \operatorname{spt}(\xi)=\emptyset$. The stationarity condition  \eqref{V.2} then gives
\begin{align*}
0&=\int_{B_1(0)}\sum_{j=1}^2 N_k \nabla (u_k)_{2j}\cdot\nabla[(\xi\circ u_k) (u_k)_{2j}
+(\p_{z_{2j-1}}\xi\circ u_k) ((u_k)_1(u_k)_2+(u_k)_3(u_k)_4-(u_k)_\varphi)]\,dx^2\\
&\quad-\int_{B_1(0)}\sum_{j=1}^2 N_k \nabla (u_k)_{2j-1}\cdot\nabla[(\xi\circ u_k) (u_k)_{2j-1}]\,dx^2
+\int_{B_1(0)}N_k \sum_{\ell=1}^4\nabla (u_k)_\ell\cdot\nabla[(\xi\circ u_k) (u_k)_\ell]\,dx^2.
\end{align*}
Since $(u_k)_{2j}\to0$, the contributions from the terms containing $(u_k)_2$, $(u_k)_4$, or $(u_k)_\varphi$ (not differentiated) go to zero in the limit. Hence, the term on the first line equals
\begin{align*}&\sum_{j=1}^2\int_{B_1(0)}N_k(\xi\circ u_k)|\nabla (u_k)_{2j}|^2\,dx^2+\sum_{j,\ell=1}^2\int_{B_1(0)}N_k(\p_{z_{2j-1}}\xi\circ u_k)
(u_k)_{2\ell-1}\nabla(u_k)_{2j}\cdot\nabla(u_k)_{2\ell}\,dx^2\\
&\quad-\sum_{j=1}^2\int_{B_1(0)}N_k(\p_{z_{2j-1}}\xi\circ u_k)
\nabla(u_k)_{2j}\cdot\nabla(u_k)_\varphi\,dx^2+o(1)\\
&=\sum_{j=1}^2\int_{B_1(0)}N_k(\xi\circ u_k)|\nabla (u_k)_{2j}|^2\,dx^2\\
&\quad+\sum_{j,\ell=1}^2\int_{B_1(0)}N_k(\p_{z_{2j-1}}\xi\circ u_k)
(u_k)_{2\ell}\nabla(u_k)_{2j}\cdot\nabla(u_k)_{2\ell-1}\,dx^2+o(1)\\
&=\sum_{j=1}^2\int_{B_1(0)}N_k(\xi\circ u_k)|\nabla (u_k)_{2j}|^2\,dx^2+o(1),\end{align*}
thanks to the Legendrian condition $\nabla(u_k)_\varphi=\sum_{\ell=1}^2[(u_k)_{2\ell-1}\nabla(u_k)_{2\ell}-(u_k)_{2\ell}\nabla(u_k)_{2\ell-1}]$,
while the second line above equals
$$\sum_{j=1}^2\int_{B_1(0)}N_k(\xi\circ u_k)|\nabla (u_k)_{2j}|^2\,dx^2+o(1).$$
This proves \eqref{vfd.conv.pre}.

Along a subsequence, the induced varifolds $\mathbf{v}_{k,B_1(0)}$ converge to a varifold $\mathbf{v}_\infty$ (whose weight is) supported on $\mathcal{E}_\infty$.
Moreover, $\mathbf{v}_\infty$ restricts to a HSLV on $\mathbb{H}^2\setminus\p\mathcal{E}_\infty$, where $\p\mathcal{E}_\infty:=L(\p B_1(0))$.
Thanks to \eqref{vfd.conv.pre}, we also have
$$\mathbf{v}_\infty(\mathcal P,p)=\delta_{\mathcal P_\infty}(\mathcal P)\otimes|\mathbf{v}_\infty|(p).$$
For arbitrary $a,b\in C^\infty_c(\mathcal{P}_\infty)$ supported in the interior of $\mathcal{E}_\infty$, we take
$$F(z,\varphi):= -a(z_1,z_3)z_2-b(z_1,z_3)z_4,$$
for which the associated Hamiltonian vector field is
$$2W_F=J_H \nabla^H F-2 F\p_{\varphi}=a(z_1,z_3) \p_{z_1}+b(z_1,z_3) \p_{z_3}\quad\text{on }\mathcal P_\infty.$$
Hence,
${\mathbf v}_\infty$ is stationary in the classical isotropic sense, away from $\p\mathcal{E}_\infty$.
By the constancy theorem, there exists a constant $\theta_0>0$ such that
$$d|{\mathbf v}_\infty|=\theta_0\, d{\mathcal H}^2\res {\mathcal E}_\infty.$$
In fact, we could also have deduced this from the fact that
$$|\mathbf{v}_\infty|=\lim_{k\to\infty}|\mathbf{v}_{k,B_1(0)}|=\lim_{k\to\infty}(u_k)_*(\uno_{B_1(0)}\nu_k)
=L_*(\uno_{B_1(0)}\nu_\infty)$$
and the fact that $\nu_\infty$ is a constant multiple of $\mathcal{L}^2$.
Since $\mathcal{E}_\infty$ has area $\pi|L(e_1)\wedge L(e_2)|$,
we obtain
$$\nu_\infty(B_1(0))=|{\mathbf v}_\infty|(\mathcal{E}_\infty)=\theta_0\cdot\pi|L(e_1)\wedge L(e_2)|.$$
Using \eqref{teofrasto}, we see that $\theta_0$ is bounded by a constant $C(\omega)$.
Finally, from Theorem \ref{cpt.int} we deduce that $\theta_0\in\N$ (see also Lemma \ref{equiv.rect} and Remark \ref{cpb}).
\end{proof}

\begin{proof}[Proof of Theorem \ref{th-sequential-weak-closure}]
Recall that we already established the following:
we have the $C^0_{loc}$ convergence $u_k\to u_\infty$ and the measures $d\nu_k=N_k\frac{|\nabla u_k|^2}{2}\,dx^2$
converge to a limit of the form $d\nu_\infty=N_\infty|\p_{x_1}u_\infty\wedge\p_{x_2}u_\infty|\,dx^2$ (in any conformal chart).
We now fix $\om\subset\subset\Sigma$ and consider the induced varifolds $\mathbf{v}_{k,\om}$.

We claim that, on the complement of $u_\infty(\p\om)$, any subsequential limit $\mathbf{v}_\infty$ coincides with the varifold induced by $u_\infty$ with the multiplicity $N_\infty$, so that in particular the latter restricts to a HSLV on $\mathbb{H}^2\setminus u_\infty(\p\om)$. Indeed, it is straightforward to deduce that
$$|\mathbf{v}_\infty|=\lim_{k\to\infty}|\mathbf{v}_{k,\om}|=\lim_{k\to\infty}(u_k)_*(\nu_k\res\om)=(u_\infty)_*(\nu_\infty\res\om)$$
on $\mathbb{H}^2\setminus u_\infty(\p\om)$. To obtain the claim, we just have to show that $\mathbf{v}_\infty$ is rectifiable (as a varifold in $\R^5$),
so that it is uniquely determined by its own weight $|\mathbf{v}_\infty|$. Let $T$ be the $\mathcal{H}^2$-negligible set of points
such that $|\mathbf{v}_\infty|$ has a tangent plane (with respect to Euclidean dilations) at any $p\in\operatorname{spt}|\mathbf{v}_\infty|\setminus (T\cup u_\infty(\p\om))$. Further, let $T'$ be the $|\mathbf{v}_\infty|$-negligible
set of points such that any $p\in\operatorname{spt}|\mathbf{v}_\infty|\setminus (T'\cup u_\infty(\p\om))$ is an approximate continuity point of the Grassmannian part (in the disintegration of $\mathbf{v}_\infty$ with respect to $\Pi:G\to\mathbb{H}^2$), both in terms of Euclidean and anisotropic balls, as discussed while proving Proposition \ref{equiv.rect}.

As in the proof of Proposition \ref{pr-rect-d-K} (see also the proof of Lemma \ref{lm-abs-cont}), we see that $u_\infty$ carries negligible sets to $\mathcal{H}^2_K$-negligible sets and
$$\mathcal{H}^2_K(u_\infty(S))=0,$$
where $S$ is the set of points which are not in $\mathcal{G}_{u_\infty}^f$ or which are not Lebesgue points for $N_\infty$.
By \eqref{teofrasto}, we conclude that
$$|\mathbf{v}_\infty|(u_\infty(S))=0,$$
as well. For $|\mathbf{v}_\infty|$-a.e. $p\in\operatorname{spt}|\mathbf{v}_\infty|$ (with $p\nin u_\infty(\p\om)$), we then have $p\nin u_\infty(S)$. Moreover, it is a classical fact (similar to the proof of Proposition \ref{pr-rect-d-K})
that we can write $\Sigma\setminus S$ as a disjoint union of sets $E_0,E_1,E_2,\dots$,
such that $\operatorname{vol}_h(E_0)=0$ and, for $j\ge1$, $u_\infty$ maps $(E_j,d_h)$ in a bi-Lipschitz way to a subset of $(M_j,d_{\R^5})$, for a $C^1$ embedded surface $M_j\subset\R^5$, in such a way that the image of $\nabla u_\infty(x)$ is precisely $T_xM_j$ for all $x\in E_j$. Since $T_pM_j=T_pM_{j'}$ for $\mathcal{H}^2$-a.e. $p\in M_j\cap M_{j'}$, we can then find $S'\supseteq S$ such that
$\mathcal{L}^2(S'\setminus S)=0$ and $$\operatorname{img} \nabla u_\infty(x)=\operatorname{img} \nabla u_\infty(x')\quad\text{whenever }x,x'\nin S'\text{ and }u_\infty(x)=u_\infty(x'),$$
as well as
$$u_\infty(x)\nin T\cup T'\quad\text{for all }x\nin S'.$$
For $q\in u_\infty(\om)\setminus u_\infty(S'\cup\p\om)$, we may then call $\mathcal{P}_q$ the image of $\nabla u_\infty(x)$ for any $x\in u_\infty^{-1}(q)$.
Note that we still have $\mathcal{H}^2_K(u_\infty(S'))=0$, and thus
$$|\mathbf{v}_\infty|(u_\infty(S'))=0.$$

For $|\mathbf{v}_\infty|$-a.e. $p\nin u_\infty(\p\om)$, which we now fix, we then have $p\in\operatorname{spt}|\mathbf{v}_\infty|\setminus  u_\infty(S\cup S')$. We assume $p=0$,
up to a left translation. 
We deduce that any anisotropic blow-up $\mathbf{w}$, i.e., any limit of rescalings $(\delta_{1/r})_*\mathbf{v}_\infty$
along a sequence $r\to0$, has the form
$$\mathbf{w}(\mathcal P,p)=\mu(\mathcal P)\otimes|\mathbf{w}|(p).$$
To reach the claim, it suffices to show that $\mu=\delta_{\mathcal P_0}$: indeed, once this is done,
by definition of $T$ and $T'$ the Euclidean blow-up of $\mathbf{v}_\infty$ at $0$ is a constant multiple of $\mathcal{P}_0$, as desired.

As in the proof of Proposition \ref{pr-fiber}, the fiber
$$\om\cap u_\infty^{-1}(0)=\{x_1,\dots,x_n\}$$
is finite and is made of points in $\om\setminus S\subseteq\mathcal{G}_{u_\infty}^f$. By construction,
the image of $\nabla u_\infty$ at each of these points is the same Legendrian plane $\mathcal{P}_0$.
Arguing as in the proof of \eqref{rho.detach} (see also the proof of Lemma \ref{lm-abs-cont}),
we see that
$$\omega\cap u_\infty^{-1}(B_a^\r(0))\subseteq\bigcup_{j=1}^n B_{Ca}(x_j),$$
for small $a>0$. Thus, on each ball $B_R^{\r}(0)$,
the varifold $\mathbf{w}$ is a limit of suitable rescalings
$$\sum_{j=1}^n (\delta_{1/r_k})_*\mathbf{v}_{k,B_{2CRr_k}(x_j)}.$$
As in the first part of the proof of Lemma \ref{lm-integer-mult}, we can also arrange that each rescaled map $\delta_{1/r_k}\circ u_k\circ\phi_j^{-1}(r_k\cdot)$ converges in $W^{1,2}_{loc}\cap C^0_{loc}$
to a linear map with image $\mathcal{P}_0$, where each $\phi_j$ is a conformal chart centered at $x_j$.
Thus, as we saw along that proof, $\mathbf{w}$ does indeed coincide with a positive multiple of $\mathcal{P}_0$ on $B_R^\r(0)$.
Since $R>0$ was arbitrary, this establishes the claim that $\mu=\delta_{\mathcal P_0}$.

Finally, by lower semi-continuity of the $L^2$-norm, the convergence of $\nu_k$ to $\nu_\infty$ and Lemma \ref{lm-integer-mult} give
$$\frac{|\nabla u_\infty|^2}{2}\le N_\infty |\p_{x_1}u_\infty\wedge\p_{x_2}u_\infty|$$
a.e. in any conformal chart. On each conformal disk $D\subset\subset\Sigma$,
recall from the proof of Lemma \ref{lm-integer-mult} that we have
\begin{equation}\label{dist.bd}
\pi N_\infty\le C(D),
\end{equation}
for the constant $C(D)$ (i.e., $C(\omega)$ with $\omega:=D$) from \eqref{teofrasto}. Hence, arguing exactly as in \cite[pp. 2013--2014]{PiRi1},
we can construct a $\frac{C(D)^2}{\pi^2}$-quasiconformal homeomorphism $\psi:D\to\mathbb{D}$ such that $u_\infty\circ \psi^{-1}$ is weakly conformal.
Moreover, the chain rule for such maps (see, e.g., \cite[Lemma III.6.4]{LV}) shows that $\psi'\circ\psi^{-1}$ is conformal for any two of them. Hence, they give an atlas for a new smooth and conformal structure on the topological surface $\Sigma$. We now invoke the classical fact that
these two smooth structures are diffeomorphic to each other to conclude.
\end{proof}

\begin{Rm}\label{cpb}
The last proof shows also the following fact: given a PHSLV $(\Sigma,u,N)$ and an open set $\om\subset\subset\Sigma$ such that $|\mathbf{v}_\om|(B_R^\r(p))\le CR^2$ (for any ball $B_R^\r(p)\subset\mathbb{H}^2$),
for $|\mathbf{v}_\om|$-a.e. $p\in\operatorname{spt}|\mathbf{v}_\om|\setminus u(\p\om)$ the dilations around $p$, namely
$$(\delta_{1/r}\circ\ell_{p^{-1}})_*\mathbf{v}_\om,$$
converge as varifolds to a Legendrian plane with constant density (recall that $\ell_{p^{-1}}(x):=p^{-1}*x$).
It also shows that, for a PHSLV $(\Sigma,u,N)$, a set $F\subseteq u(\Sigma)$ is $\mathcal{H}^2$-negligible if and only if it is $\mathcal{H}^2_K$-negligible. This last fact is false for subsets of the support of $|\mathbf{v}|$,
for a general HSLV $\mathbf{v}$ (see the example in Remark \ref{orr} and note that $\mathcal{H}^2_K\res\{z=0\}$ is a nontrivial Radon measure). \hfill $\Box$
\end{Rm}

\begin{Rm}\label{bubbling}
In a closed ambient $M^5$, in case of bubbling, we can extract limit bubbles in the standard way, by using Proposition \ref{pr-VI.1}
to represent them as PHSLVs defined on $\hat{\mathbb{C}}=S^2$, provided the slightly stronger stationarity condition given in Definition \ref{strong.phslv} holds.
We also get a (possibly constant) limit PHSLV defined on $\Sigma$ in the same way.
We can rule out energy dissipation in \emph{neck regions} as follows: assuming (by restriction) that we have a sequence of PHSLVs
$$(S^1\times(0,R_k),u_k,N_k)$$
satisfying \eqref{teofrasto} with $\om:=S^1\times(0,R_k)$,
with the usual neck region assumption that $R_k\to\infty$ and
\begin{equation}\label{neck.ass}\sup_{a\in(0,R_k-1)}\int_{S^1\times(a,a+1)}N_k|\nabla u_k|^2\,dx^2\to0,\end{equation}
we claim that in fact
\begin{equation}\label{neck.claim}\int_{S^1\times(0,R_k)}N_k|\nabla u_k|^2\,dx^2\to0.\end{equation}
Indeed, given any two sequences $a_k,b_k\in(0,R_k)$ with $a_k<b_k$, thanks to \eqref{neck.ass} we can select $a_k'<b_k'$ such that
$|a_k'-a_k|+|b_k'-b_k|\le1$, and such that both
$u_k(S^1\times\{a_k'\})$ and $u_k(S^1\times\{b_k'\})$ have vanishing diameter. Thus, the two sets converge to two points $q_a,q_b$, respectively. Hence, up to another subsequence,
the limit
$$\mathbf{v}:=\lim_{k\to\infty}\mathbf{v}_{S^1\times(a_k,b_k)}=\lim_{k\to\infty}\mathbf{v}_{S^1\times(a_k',b_k')}$$
exists and is a HSLV on $M\setminus\{q_a,q_b\}$. Further, we can also test its stationarity
with Hamiltonian vector fields $W_F$ generated by an $F\in C^\infty(M)$ constant near $q_a$ and near $q_b$.
Hence, by Proposition \ref{remov.gen}, $\mathbf{v}$ is a HSLV on $M$.
If \eqref{neck.claim} does not hold, we can select $a_k$ and $b_k$ such that the limit
$\mathbf{v}$ has a positive, arbitrarily small mass; however, $\mathbf{v}$ has density $\theta^\chi\ge2\pi$
on its support by Proposition \ref{co-dens} and Corollary \ref{dens.limits},
and hence it obeys a universal lower bound $|\mathbf{v}|(M)\ge c(M)>0$ on the mass by Theorem \ref{mono.cor},
a contradiction.
\hfill $\Box$
\end{Rm}

\begin{Rm}\label{neck}
When $\Sigma$ is closed,
the previous argument also holds for
neck regions called ``collars'' appearing due to a \emph{degenerating conformal structure} (see \cite{Hum} for a concise treatment
of the Deligne--Mumford compactification of the space of closed Riemann surfaces).
If we do not assume the stronger definition of PHSLV$^*$,
then compactness of PHSLVs plainly fails, as shown in Theorem \ref{th-count-ex}. \hfill $\Box$
\end{Rm}

We finally complete the proof of Theorem \ref{th-0bis}. In its statement, we just have the PHSLV assumption.
Since the conformal class is controlled, the theorem follows from a standard bubble-tree analysis
and the following lemma (used inductively along the tree).

\begin{Lm}\label{bubbling.bis}
Assume that $(\Sigma_k,u_k,N_k)$ is a sequence of PHSLVs in a closed Sasakian ambient $M^5$,
satisfying \eqref{teofrasto} (with $\Sigma_k$ in place of $\omega$) and such that,
up to a conformal equivalence, we can write
$$\Sigma_k=\Sigma_k'\cup(S^1\times[0,R_k])\cup\Sigma_k'',\quad R_k\to\infty,$$
for two compact Riemann surfaces with boundary $\de\Sigma_k'=S^1\times\{0\}$ and $\de\Sigma_k''=S^1\times\{R_k\}$,
such that \eqref{neck.ass} holds.
Then, for any open $\omega_k\subseteq\Sigma_k''$ including the boundary $\p\Sigma_k''$,
any subsequential limit
$$\mathbf{v}:=\lim_{k\to\infty}\mathbf{v}_{k,\omega_k}$$
is a HSLV away from the subsequential limit $\lim_{k\to\infty} u_k(\p\omega_k)$
(where $\p\omega_k$ denotes the topological boundary, i.e., does not include $\p\Sigma_k''$).
Moreover, no energy is dissipated in the neck region, meaning that \eqref{neck.claim} holds.
\hfill $\Box$
\end{Lm}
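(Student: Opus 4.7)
The plan is to prove the two claims in sequence. The first (HSLV property of the limit) follows by passing to the limit in the stationarity identities, while the second (absence of neck energy loss) relies on a contradiction argument in the spirit of Remark \ref{bubbling}, whose main technical difficulty lies in adapting the point-removability step to the weaker PHSLV (as opposed to PHSLV$^*$) hypothesis.

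For the first claim, set $L := \limsup_k u_k(\p\omega_k)$ for the subsequential Hausdorff upper limit of the boundary images. Since $\omega_k\subseteq\Sigma_k''$ contains $\p\Sigma_k''$ in its interior, the topological boundary $\p\omega_k$ sits strictly inside $\Sigma_k''$ away from the neck, so $u_k(\p\omega_k)$ is a compact subset of $M^5$ with limit $L$. Given any $F\in C^\infty_c(M^5\setminus L)$, for $k$ large $F$ vanishes on a neighborhood of $u_k(\p\omega_k)$, and hence the PHSLV condition (available for every open $\omega\subset\subset\Sigma_k$ by the remark after Proposition \ref{pr-cont}) yields $\int_G \operatorname{div}_{\mathcal P}W_F\,d\mathbf{v}_{k,\omega_k}=0$; passing to the varifold limit gives the corresponding identity for $\mathbf{v}$, since $\operatorname{div}_{\mathcal P}W_F$ has compact support in $\Pi^{-1}(M\setminus L)$.

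For the no-neck-energy-loss, I argue by contradiction. Assume $\limsup_k\int_{S^1\times(0,R_k)}N_k|\nabla u_k|^2\ge c_0>0$. For any prescribed $\delta\in(0,c_0)$, continuity of the partial energy $s\mapsto\int_{S^1\times(0,s)}N_k|\nabla u_k|^2$ together with \eqref{neck.ass} allows me, after a mean-value adjustment on adjacent unit sub-annuli using Proposition \ref{pr-cont}, to extract endpoints $a_k<b_k$ with $a_k,R_k-b_k,b_k-a_k\to\infty$, $\int_{C_k}N_k|\nabla u_k|^2=\delta$ on $C_k:=S^1\times(a_k,b_k)$, and $\operatorname{diam}_K u_k(S^1\times\{a_k\})$, $\operatorname{diam}_K u_k(S^1\times\{b_k\})\to 0$. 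Subsequentially, $u_k(S^1\times\{a_k,b_k\})\to\{q_a,q_b\}\subseteq M^5$, and by the first part of the lemma $\mathbf{v}^*:=\lim\mathbf{v}_{k,C_k}$ is a HSLV on $M\setminus\{q_a,q_b\}$ of total mass $\delta/2$, with $\theta^\chi\ge 2\pi$ on $\operatorname{spt}|\mathbf{v}^*|\setminus\{q_a,q_b\}$ by Proposition \ref{co-dens} and Corollary \ref{dens.limits}; the assumption \eqref{teofrasto} also passes to the limit, ruling out atoms at $q_a$ or $q_b$. The contradiction will come from a universal lower bound $|\mathbf{v}^*|(M)\ge c(M)>0$, independent of $\delta$.

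The easy subcase is when $\operatorname{spt}|\mathbf{v}^*|$ contains a point at macroscopic $d_K$-distance $r^*>0$ from $\{q_a,q_b\}$: the Sasakian analogue of Theorem \ref{mono.cor}, applied at this point with radius $r^*/2$, directly gives $|\mathbf{v}^*|(M)\ge c_1(r^*)^2$, closing the argument. The hard part, and main obstacle, is the degenerate subcase where $u_k(C_k)$ collapses to a single point $q=q_a=q_b$ and $\operatorname{spt}|\mathbf{v}^*|$ concentrates arbitrarily close to $q$; here, unlike in the PHSLV$^*$ context of Remark \ref{bubbling}, Proposition \ref{remov.gen} is not directly available, because the PHSLV hypothesis only licenses Hamiltonians vanishing (not merely constant) near $\{q_a,q_b\}$. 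My plan to circumvent this is to perform an anisotropic Heisenberg rescaling of $(u_k,\mathbf{v}_{k,C_k})$ centered at a point $q_k^*\in u_k(C_k)$ realizing $r_k^*:=\max_{p'\in u_k(\p C_k)}d_K(q_k^*,p')$: in the blow-up, the rescaled boundary image lies at $d_K$-distance $\sim 1$ from the origin, so any Hamiltonian $F\in C^\infty_c(\mathbb{H}^2)$ supported in $B_{1/2}^{\r}(0)$ is automatically compatible with the PHSLV vanishing-near-boundary condition; the rescaled varifold is then a HSLV on $B_{1/2}^{\r}(0)\subset\mathbb{H}^2$ with $\theta^\chi\ge 2\pi$ on its support (which contains the origin by construction), and Theorem \ref{mono.cor} yields $|\tilde{\mathbf{v}}_\infty|(B_{1/2}^{\r}(0))\ge c_1$. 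Rescaling back gives $|\mathbf{v}^*|(M)\ge c_1(r_k^*)^2/4$; combined with the fact that the support-concentration argument forces $r_k^*\to 0$ precisely when $\operatorname{spt}|\mathbf{v}^*|$ shrinks to $\{q\}$, a diagonal iteration of the rescaling across decreasing scales $r_k^*$ (each producing a nontrivial mass contribution of order $(r_k^*)^2$) must eventually exhaust the total mass $\delta/2$ with a minimum unit, yielding the desired universal lower bound and closing the contradiction.
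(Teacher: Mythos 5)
Your proposal has genuine gaps in both halves, and it misses the one structural idea that makes the lemma true. For the first claim, the nontrivial content is that the limit of $\mathbf{v}_{k,\omega_k}$ is a HSLV \emph{at} the limit point of $u_k(\p\Sigma_k'')=u_k(S^1\times\{R_k\})$ — the statement explicitly excludes only $\lim u_k(\p\omega_k)$ with $\p\omega_k$ \emph{not} containing the neck-end circle. Your argument tests stationarity only with $F$ vanishing near the inner boundary $u_k(\p\omega_k)$, but the PHSLV definition for the domain $\omega_k\subset\Sigma_k$ requires $F$ to vanish near the image of the \emph{full} boundary, which includes a circle at the neck end whose image shrinks to a point $q$. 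So your first part either misapplies the definition or only yields the HSLV property away from $L\cup\{q\}$, which is strictly weaker than claimed. The paper instead extends the domain into the neck, $\tilde\omega_k:=[S^1\times(a_k,R_k)]\cup\Sigma_k''$, cutting at a \emph{single} circle whose image collapses to $p_k'$, and proves that $\int_G\operatorname{div}_{\mathcal P}W_F\,d\mathbf{v}_{k,\tilde\omega_k}\to0$ for \emph{every} $F\in C^\infty(M)$: one splits $F$ with a cutoff $\chi_k\circ\r_{p_k'}$ at scale $\ell_k\to0$, subtracts the single constant $F(p_k')$ (legitimate because $\chi_k\circ\r_{p_k'}$ differs from a function vanishing near the boundary image by a constant), and bounds the error by $C\ell_k^{-1}\int_{\{\r_{p_k'}\circ u_k<2\ell_k\}}N_k|\nabla u_k|^2\le C\ell_k$ via \eqref{teofrasto}. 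This is exactly the point of Remark \ref{neck.bis}: one cut point means one constant to subtract, which is why the argument works for bubbles and fails for collars.

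For the second claim, you correctly identify that Proposition \ref{remov.gen} is unavailable under the bare PHSLV hypothesis, but your workaround does not close the argument. The rescaling at an interior point $q_k^*$ yields only $|\mathbf{v}^*|(M)\ge c_1(r_k^*)^2$, which degenerates precisely in the problematic regime $r_k^*\to0$ where the support collapses onto $\{q_a,q_b\}$; the concluding "diagonal iteration across decreasing scales ... must eventually exhaust the total mass with a minimum unit" is not an argument — there is no reason the relevant sum of $(r_j^*)^2$ is bounded below, and indeed the collar counterexample of Theorem \ref{th-count-ex} exhibits exactly this collapse with nonvanishing neck mass, so any purely local argument near interior points of the neck image must fail. (Even your "easy subcase" gives $c_1(r^*)^2$ with $r^*$ uncontrolled, not a universal constant.) The paper sidesteps removability altogether: the neck varifold $\mathbf{v}_{k,S^1\times(a_k,b_k)}$ is the difference of two varifolds of the form $\mathbf{v}_{k,\tilde\omega_k(\cdot)}$, each asymptotically stationary for all Hamiltonians by the first part, so its limit is a HSLV on all of $M$ with no singular points, and the universal mass lower bound $c(M)>0$ from density $\ge2\pi$ plus monotonicity at a fixed scale then applies directly.
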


\begin{proof}
The proof is similar to the one of Lemma \ref{lm-energy-quant}.
For simplicity, we assume $\omega_k=\Sigma_k''$; the general case is a trivial modification of the following argument.

We pick $p_k'\in u_k(S^1\times\{a_k\})$, with $a_k\in[0,R_k]$ chosen such that $u_k(S^1\times\{a_k\})$ has vanishing diameter, and let $$\tilde\omega_k:=[S^1\times(a_k,R_k)]\cup\Sigma_k''.$$
Given $F\in C^\infty(M)$, considering the associated Hamiltonian vector field $W_F$,
we claim that
$$\int_{G}\operatorname{div}_{\mathcal P}W_F\,d\mathbf{v}_{k,\tilde\omega_k}\to0$$
as $k\to\infty$, where $\mathbf{v}_{k,\tilde\omega_k}$ denotes the varifold induced by $(u_k,\Sigma_k,N_k)$ and the domain $\tilde\omega_k\subset\Sigma_k$.
Once this is done, we can conclude that $\mathbf{v}_{k,\Sigma_k''}$
converges to a HSLV, up to a subsequence. Also, by subtraction, as in Remark \ref{bubbling}
we see that any subsequential limit of $\mathbf{v}_{k,S^1\times(a_k,b_k)}$ is a HSLV with density $\theta^\chi\ge2\pi$ on its support, and thus \eqref{neck.claim} must hold.

To check this claim, we consider a vanishing sequence $\ell_k\to0$ such that
$$\operatorname{diam}_K(u_k(S^1\times\{a_k\}))<\ell_k,$$
as well as
\begin{equation}\label{teofrasto.ennesimo}
\int_{\r_{p_k'}\circ u_k<2\ell_k}N_k|\nabla u_k|^2\,dx^2\le C\ell_k^2,
\end{equation}
and we let $\chi_k(t):=\chi(t/\ell_k)$.
Since $(1-\chi_k\circ\r_{p_k'})F$ vanishes near $u_k(\p\tilde\omega_k)=u_k(S^1\times\{a_k\})$,
we obviously have
$$\int_{G}\operatorname{div}_{\mathcal P}W_{(1-\chi_k\circ\r_{p_k'})F}\,d\mathbf{v}_{k,\tilde\omega_k}=0.$$
Hence, in order to conclude, we just have to prove that
$$\int_{G}2\operatorname{div}_{\mathcal P}W_{(\chi_k\circ\r_{p_k'})F}\,d\mathbf{v}_{k,\tilde\omega_k}=A_k+B_k+C_k$$
goes to zero, where
\begin{align*}
A_k&:=\int_{{\tilde\omega_k }} N_k \sum_{j=1}^2 \nabla (u_k)_{2j}\cdot \nabla\lf[\lf(\nabla^H(\chi_k\circ\r_{p_k'})\cdot X_j\rg) F\rg]\,dx^2\\
&\phantom{:}\quad-\int_{{\tilde\omega_k }} N_k \sum_{j=1}^2 \nabla (u_k)_{2j-1}\cdot \nabla\lf[\lf(\nabla^H(\chi_k\circ\r_{p_k'})\cdot Y_j\rg) F\rg]\,dx^2
\end{align*}
(we omit composition with $u_k$) and
\begin{align*}
B_k&:=\int_{{\tilde\omega_k }} N_k \sum_{j=1}^2 \nabla (u_k)_{2j}\cdot \nabla(\chi_k\circ\r_{p_k'}) \lf(\nabla^H F\cdot X_j\rg)\,dx^2\\
&\phantom{:}\quad-\int_{{\tilde\omega_k }} N_k \sum_{j=1}^2 \nabla (u_k)_{2j-1}\cdot \nabla(\chi_k\circ\r_{p_k'}) \lf(\nabla^H F\cdot Y_j\rg)\,dx^2,
\end{align*}
as well as $$C_k:=\int_{G}(\chi_k\circ\r_{p_k'})\operatorname{div}_{\mathcal P}W_{F}\,d\mathbf{v}_{k,\tilde\omega_k}.$$

We first bound $A_k$. We rewrite it as
\begin{align*}
A&=\int_{{\tilde\omega_k }} N_k \sum_{j=1}^2 \nabla (u_k)_{2j}\cdot \nabla\lf[\lf(\nabla^H(\chi_k\circ\r_{p_k'})\cdot X_j\rg) (F-F(p_k'))\rg]\,dx^2\\
&-\int_{{\tilde\omega_k }} N_k \sum_{j=1}^2 \nabla (u_k)_{2j-1}\cdot \nabla\lf[\lf(\nabla^H(\chi_k\circ\r_{p_k'})\cdot Y_j\rg) (F-F(p_k'))\rg]\,dx^2,
\end{align*}
where we tested stationarity with $W_{\chi_k\circ\r_{p_k'}}=-W_{(1-\chi_k)\circ\r_{p_k'}}$ in order to subtract the constant $F(p_k')$ from $F$.
We have the bound
$$|\nabla^H(\nabla^H(\zeta\circ \r_{p_k'})\cdot X_j)|\le \frac{C}{\ell_k^2}{\mathbf 1}_{B^\r_{2\ell_k}(p_k')\setminus B_{\ell_k}^\r(p_k')},$$
and similarly for $Y_j$.
Using the equivalence between the Carnot--Carath\'eodory distance and the Kor\'anyi distance $d_K$, we also have
$$\lf\|F-F(p_k')\rg\|_{L^\infty(B^\r_{2\ell_k}(p_k'))}\le C\ell_k \|\nabla^H F\|_{L^\infty(B^\r_{2\ell_k}(p_k'))}\le C\ell_k.$$
Combining the previous bounds gives
$$|A_k|\le \frac{C}{\ell_k}\int_{\{\r_{p_k'}\circ u_k<2\ell_k\}} N_k |\nabla u_k|^2\,dx^2.$$
As for $B_k$, a similar (simpler) argument implies exactly the same bound, while obviously
$$|C_k|\le C\int_{\{\r_{p_k'}\circ u_k<2\ell_k\}} N_k |\nabla u_k|^2\,dx^2.$$
Hence, by \eqref{teofrasto.ennesimo} we have
$$|A_k|+|B_k|+|C_k|\le C\ell_k\to0,$$
as desired.
\end{proof}

\begin{Rm}\label{neck.bis}
It is important to point out why the previous proof does \emph{not} work for collars,
in the context of degenerating conformal class.
The fundamental difference is that a collar does \emph{not} disconnect $\Sigma_k$. Hence, we would need to cut it at two
different places (rather than just one), corresponding to two points $p_k',p_k''\in M$. However, the previous proof breaks down
when we subtract a constant from $F$ in the term $A_k$ (since the two constants
$F(p_k')$ and $F(p_k'')$ might be far from each other). And indeed the lemma is false for collars,
as shown in Theorem \ref{th-count-ex}.
\hfill $\Box$
\end{Rm}
\subsection{Tangent cones to PHSLVs}

In this part we consider a PHSLV $(\Sigma,u,N)$ and we fix a point $x_0\in\Sigma$.
We now show that a notion of \emph{parametrized blow-up} exists at $x_0$, under suitable assumptions.
The first one is the technical assumption that $x_0\in\tilde\Sigma$.
Recall that this means that we have $u(x_0)\nin u(\p\om)$ for a suitable $\om\subset\subset\Sigma$ containing $x_0$.
This assumption is typically satisfied in practice, as the next criterion shows (cf. Remark \ref{teofrasto.mild}).

\begin{Prop}\label{in.tilde.omega}
If $u$ is not constant in any neighborhood of $x_0$ and for some $\ep>0$ we have
\begin{equation}\label{teofrasto.bis}
\int_{B_\ep(x_0)\cap\{\r_{u(x_0)}\circ u<R\}}N|\nabla u|^2\,dx^2\le CR^2
\end{equation}
for all radii $R>0$, then $x_0\in\tilde\Sigma$.
 \hfill $\Box$
\end{Prop}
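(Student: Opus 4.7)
My plan is to proceed by contradiction, assuming $x_0 \notin \tilde\Sigma$. Working in a fixed conformal chart centered at $x_0$ and taking $\omega := B_r(x_0)$ for arbitrarily small $r > 0$, this hypothesis yields, for every small $r > 0$, a point $y_r \in \partial B_r(x_0)$ with $u(y_r) = u(x_0)$. The strategy is to turn this forced accumulation of preimages of $u(x_0)$ into a contradiction via a blow-up at $x_0$, relying on the compactness result of Theorem \ref{th-sequential-weak-closure}.

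First, I would select a sequence of scales $r_k \to 0$ together with target scales $s_k := \sup_{\bar B_{r_k}(x_0)} d_K(u(\cdot), u(x_0))$, which are positive (since $u$ is non-constant on every neighborhood of $x_0$) and vanishing (by continuity of $u$). I would then define the rescaled maps
\[ \tilde u_k(x) := \delta_{1/s_k}\bigl(u(x_0)^{-1} * u(x_0 + r_k x)\bigr), \qquad \tilde N_k(x) := N(x_0 + r_k x) \]
on $\mathbb{D}$. By Remark \ref{isometry}, each triple $(\mathbb{D}, \tilde u_k, \tilde N_k)$ is a PHSLV in $\mathbb{H}^2$, and \eqref{teofrasto.bis} (applied with $R = s_k$ and $R = R s_k$ for generic $R > 0$) gives both the uniform bound $\int_\mathbb{D} \tilde N_k |\nabla \tilde u_k|^2 \le C$ and the target control $\int_{\mathbb{D} \cap \{\r \circ \tilde u_k < R\}} \tilde N_k |\nabla \tilde u_k|^2 \le CR^2$ required by Theorem \ref{th-sequential-weak-closure}. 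Applying that result, along a subsequence I would extract a limit $u_\infty \in C^0_{loc}(\mathbb{D}) \cap W^{1,2}_{loc}(\mathbb{D})$ together with a PHSLV $(\mathbb{D}, \hat u_\infty, \hat N_\infty)$ and a quasiconformal homeomorphism $\psi$ with $u_\infty = \hat u_\infty \circ \psi$ and $u_\infty(0) = 0$. Moreover, the contradiction hypothesis applied to $\omega = B_{\rho r_k}(x_0)$ for each $\rho \in (0, 1)$ yields rescaled points $\tilde y_\rho^{(k)} := (y_{\rho r_k} - x_0)/r_k \in \partial B_\rho(0)$ with $\tilde u_k(\tilde y_\rho^{(k)}) = 0$; by compactness and the uniform convergence on $\partial B_\rho(0)$, these admit subsequential limits $\tilde y_\rho^{(\infty)} \in \partial B_\rho(0)$ with $u_\infty(\tilde y_\rho^{(\infty)}) = 0$, so $\hat u_\infty^{-1}(0)$ accumulates at $\psi(0)$.

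To extract a contradiction from this accumulation, I would pick a small $\omega' \subset\subset \mathbb{D}$ containing $\psi(0)$ with $0 \notin \hat u_\infty(\partial\omega')$ (which exists for a generic small radius, since $\hat u_\infty$ is continuous and non-constant), so that the induced varifold $\hat{\mathbf{v}}_{\omega'}$ is a HSLV on a neighborhood of $0$. The monotonicity formula of Theorem \ref{mono.cor} then gives an upper bound on the density $\theta^\chi(0)$, while the lower bound of Proposition \ref{co-dens} combined with Proposition \ref{pr-fiber} shows that $\hat u_\infty^{-1}(0) \cap \omega'$ has only finitely many connected components, contradicting the accumulation at $\psi(0)$ (any nontrivial continuum in the fiber would, by density in its support, contain a point of $\mathcal{G}_{\hat u_\infty}^f$, forbidden by the second half of Proposition \ref{pr-fiber}). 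The hard part will be two-fold: first, to rule out the degenerate scenario in which the supremum defining $s_k$ is attained near $\partial B_{r_k}(x_0)$, producing a trivial limit $u_\infty \equiv 0$ (for which the above contradiction mechanism breaks down); and second, to correctly identify $\omega'$ so that the contradiction mechanism applies. For the first point, \eqref{rad.limit} and Proposition \ref{pr-cont} together force $s(r_k/4) = o(s_k)$ in the degenerate case, allowing me to replace the sequence by a refined one $r_k' = r_k/2^{j(k)}$ with $j(k) \to \infty$ chosen via a diagonal extraction, so that the new supremum is attained inside $\bar B_{1/2}(0)$ and the new limit is therefore non-constant. For the second, a Sard-type perturbation of the radius, combined with the non-constancy of $\hat u_\infty$ and the structure of its zero set inherited from Proposition \ref{pr-rect-d-K}, is expected to suffice.
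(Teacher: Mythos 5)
Your strategy is genuinely different from the paper's, and it contains a circularity that I do not see how to repair. The contradiction you aim for requires a domain $\omega'\ni\psi(0)$ with $0\nin\hat u_\infty(\p\omega')$ in order to invoke Proposition \ref{pr-fiber}. But the existence of such an $\omega'$ is precisely the assertion that $\psi(0)$ lies in ``$\tilde\Sigma$'' for the limit PHSLV, i.e.\ the very property you are trying to prove, transferred from $u$ to $\hat u_\infty$. Continuity and non-constancy do not provide it, and the scenario you are trying to exclude --- the fiber $\hat u_\infty^{-1}(0)$ meeting $\psi(\p B_\rho(0))$ for \emph{every} $\rho$ --- is exactly one in which no circle around $\psi(0)$ need avoid the fiber; a ``Sard-type perturbation of the radius'' does not help, since the set of bad radii is the image of the closed set $\hat u_\infty^{-1}(0)$ under the distance function and can a priori be a full interval. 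Moreover, even granting $\omega'$, no contradiction follows: Proposition \ref{pr-fiber} allows the fiber to consist of a single connected component which is a nontrivial continuum through $\psi(0)$ containing all your accumulation points, and your claim that such a continuum must contain a point of $\mathcal{G}_{\hat u_\infty}^f$ is unjustified (a Lebesgue-null continuum need not meet $\mathcal{G}_{\hat u_\infty}^f$; the second half of Proposition \ref{pr-fiber} only constrains components through points \emph{already known} to lie in $\mathcal{G}_{\hat u_\infty}^f$). Two further issues: hypothesis \eqref{teofrasto} of Theorem \ref{th-sequential-weak-closure} is required for all centers $p\in\mathbb{H}^2$, whereas \eqref{teofrasto.bis} only controls balls centered at $u(x_0)$, and without already knowing $x_0\in\tilde\Sigma$ you cannot appeal to monotonicity to cover the other centers; and your normalization by the sup of $d_K(u(\cdot),u(x_0))$ makes the non-degeneracy of the limit a real problem whose proposed fix is only sketched.

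For comparison, the paper's proof is a direct, single-scale quantitative argument requiring no blow-up and no compactness theorem. Setting $s^2:=\int_{B_r(x_0)}|\nabla u|^2\,dx^2$, one uses \eqref{teofrasto.bis} and Fubini to find $\rho\in(r,\Lambda r)$ such that the restriction of $u$ to $\p B_\rho(x_0)\cap\{\r_{u(x_0)}\circ u<s\}$ has total variation at most $\ell:=Cs/\sqrt{\log\Lambda}$. Since $|\nabla(\r_{u(x_0)}\circ u)|\le|\nabla u|$, if the circle met $\{\r_{u(x_0)}\circ u<\ell\}$ it would lie entirely in $\{\r_{u(x_0)}\circ u<2\ell\}$; then Lemma \ref{lm-energy-quant} bounds $\operatorname{diam}_K u(B_\rho(x_0))$ by $C\ell$, and \eqref{teofrasto.bis} gives $s^2\le C\ell^2=Cs^2/\log\Lambda$, absurd for $\Lambda$ large. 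Hence $\r_{u(x_0)}\circ u\ge\ell>0$ on $\p B_\rho(x_0)$, which is exactly $x_0\in\tilde\Sigma$. If you want to salvage a compactness-based route, you would need to first establish a quantitative separation of this kind anyway, at which point the blow-up becomes superfluous.
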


\begin{proof}
Since the claim is local, we can assume that $\Sigma=\mathbb{D}$, $x_0=0$, and $\ep=1$.
Let $\Lambda\ge2$ large, to be found later. Up to a translation, we can assume $u(0)=0$. We fix a radius $r\in(0,1/\Lambda)$
and let
$$s^2:=\int_{B_{r}(0)}|\nabla u|^2\,dx^2>0.$$
Since by \eqref{teofrasto.bis}, for $r<\ep$, we have
$$\int_{B_r(0)\cap\{\mathfrak{r}\circ u<s\}}|\nabla u|^2\,dx^2\le Cs^2,$$
we can find $\rho\in(r,\Lambda r)$ such that $u\in W^{1,2}(\de B_\rho(0))$ and, for a possibly different $C$, we have
$$\int_{\{\mathfrak{r}\circ u<s\}\cap\partial B_\rho(0)}|\nabla u|^2\,d\mathcal{H}^1\le \frac{Cs^2}{\rho\log\Lambda}.$$
Then Cauchy--Schwarz gives
$$\int_{\{\mathfrak{r}\circ u<s\}\cap\partial B_\rho(0)}|\nabla u|\,d\mathcal{H}^1\le \frac{Cs}{\sqrt{\log\Lambda}}=:\ell.$$
By \eqref{nabla.r.norm} we have $|\nabla(\mathfrak{r}\circ u)|\le|\nabla u|$ on $\{u\neq0\}$. Hence, if $\{\mathfrak{r}\circ u<\ell\}\cap\partial B_\rho(0)\neq\emptyset$ and $2\ell\le s$, then the previous inequality gives
$$\partial B_\rho(0)\subseteq\{\mathfrak{r}\circ u<2\ell\}$$
(since the image of $\mathfrak{r}\circ u$ on this circle
cannot include the full interval $(\ell,2\ell)$).
By Lemma \ref{lm-energy-quant} (applied to a constant sequence) and Remark \ref{no.c.star}, this gives
$$\operatorname{diam}_K u(B_\rho(0))\le C\ell+C\ell^{-1}|\mathbf{v}_{B_\rho(0)}|(B_{6\ell}^\r(0))\le C\ell,$$
where we used \eqref{teofrasto.bis}. In turn, this implies
$$s^2\le \int_{B_\rho(0)}|\nabla u|^2\,dx^2\le C\ell^2,$$
again by \eqref{teofrasto.bis}. Both $s<2\ell$ and the last inequality
are impossible for $\Lambda$ large enough.
Hence, for this radius $\rho\in(r,\Lambda r)$ we have $\mathfrak{r}\circ u\ge\ell$ on $\partial B_\rho(0)$,
and in particular $u(0)=0\nin u(\partial B_\rho(0))$.
\end{proof}
\begin{Prop}
\label{pr-tangent-cone}
Let $(\Sigma, u,N)$ be a PHSLV and let $x_0\in\tilde\Sigma$ such that $u$ is not constant in any neighborhood of $x_0$. Moreover, in a conformal chart centered at $x_0$, assume that for a sequence of radii $r_k\to0$ we have
\begin{equation}\label{adm.implicit}
\limsup_{k\to\infty}\frac{\int_{B_{Rr_k}(0)} |\nabla u|^2\, dx^2}{\int_{B_{r_k}(0)} |\nabla u|^2\, dx^2}\le C(R)
\end{equation}
for any given $R>1$, where as usual we use the metric $g_{\mathbb{H}^2}$ to measure the Dirichlet energy. Letting
$$u_k(y):= \delta_{1/s_k} \circ \ell_{u(0)^{-1}} \circ u(r_ky),\quad s_k^2:=\int_{B_{r_k}(0)} |\nabla u|^2\, dx^2,$$
and $N_k(y):=N(r_ky)$, up to extracting a subsequence,
we have $u_k\to u_\infty$ in $C^0_{loc}$ for a suitable map $u_\infty:\C\to\mathbb{H}^2$
and a limit PHSLV $(\C,\hat u_\infty,\hat N_\infty)$, such that
$$u_\infty=\hat u_\infty\circ\psi$$
for a quasiconformal homeomorphism $\psi:\C\to\C$ with $\psi(0)=0$, with the same conclusions as Theorem \ref{th-sequential-weak-closure}.
Moreover, the maps $u_\infty$ and $\hat u_\infty$ are proper, with
$$u_\infty^{-1}(0)=\hat u_\infty^{-1}(0)=\{0\},$$
and we have
$$\varphi\circ u_\infty=\varphi\circ \hat u_\infty=0,$$
as well as
$$\tilde N_\infty\le\tilde N_\infty(0)=\tilde N(x_0),$$
where $\tilde N_\infty$ denotes the function built in \ref{robust.def} for the limit.
\hfill $\Box$
\end{Prop}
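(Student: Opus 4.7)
First I would apply Theorem \ref{th-sequential-weak-closure} to the rescaled triples $(\C, u_k, N_k)$ restricted to any fixed disk $B_A(0) \subset \C$. The PHSLV property is preserved under the composition of the Heisenberg dilation $\delta_{1/s_k}$ and the left translation $\ell_{u(x_0)^{-1}}$ (both preserve the contact structure and Hamiltonian vector fields, cf.~Remark \ref{isometry}). The uniform $W^{1,2}$ bound $\int_{B_A(0)}|\nabla u_k|^2\,dy^2 = s_k^{-2}\int_{B_{Ar_k}(0)}|\nabla u|^2\,dx^2 \le C(A)$ comes directly from the doubling assumption \eqref{adm.implicit}, while the pointwise mass bound \eqref{teofrasto} follows from Corollary \ref{mono.cor2} applied to the original varifold $\mathbf{v}_{u,\omega_0}$ at $u(x_0)$ (where $\omega_0 \ni x_0$ is the neighborhood witnessing $x_0 \in \tilde\Sigma$), via the scaling relation $|\mathbf{v}_{k,B_A(0)}|(B_b^\r(q)) = s_k^{-2}|\mathbf{v}_{u,B_{Ar_k}(0)}|(B_{bs_k}^\r(\delta_{s_k}(q)))$. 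A diagonal argument over $A_j \to \infty$ then produces $u_\infty$, the limit PHSLV $(\C, \hat u_\infty, \hat N_\infty)$, and the quasiconformal homeomorphism $\psi\colon \C \to \C$ with $u_\infty = \hat u_\infty \circ \psi$; since $u_k(0) = 0$, we have $u_\infty(0) = 0$.

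Next I would identify $\theta^\chi(\mathbf{v}_\infty, 0) = 2\pi \tilde N(x_0)$: by scale-invariance of $\Theta^\chi$ one has $\Theta^\chi(\mathbf{v}_k, 0, b) = \Theta^\chi(\mathbf{v}_{u,\omega_0}, u(x_0), bs_k)$, which for each fixed $b$ converges to $\theta^\chi(\mathbf{v}_{u,\omega_0}, u(x_0)) = 2\pi\tilde N(x_0)$ (as $bs_k \to 0$), and Corollaries \ref{dens.usc} and \ref{dens.limits} close the loop. From this I would derive $\hat u_\infty^{-1}(0) = \{0\}$ (and hence $\psi(0) = 0$): mimicking the argument in the proof of Proposition \ref{pr-fiber}, disjoint neighborhoods in $\C$ of the finitely many points $\{y_0 = 0, y_1, \ldots, y_m\} = \hat u_\infty^{-1}(0)$ induce varifolds whose density contributions at $0$ sum via Proposition \ref{co-dens} to give $\theta^\chi(\mathbf{v}_\infty, 0) \ge 2\pi\sum_j \tilde N_\infty(y_j) \ge 2\pi\tilde N_\infty(0) + 2\pi m$; since $\tilde N_\infty(0) \ge \tilde N(x_0)$ by Corollary \ref{dens.limits}, this forces $m = 0$ and $\tilde N_\infty(0) = \tilde N(x_0)$. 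Properness of $\hat u_\infty$ then follows by the same scheme applied at every $p \in \mathbb{H}^2$, together with Lemma \ref{lm-energy-quant} to exclude sequences $y_n \to \infty$ in $\C$ with $\hat u_\infty(y_n)$ bounded.

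To establish $\varphi \circ \hat u_\infty \equiv 0$ I would exploit the $\delta_t$-invariance of $\arctan\sigma$: the exact identity
\[
\int_{B_R(0)} N_k |\nabla(\arctan\sigma \circ u_k)|^2\,dy^2 = \int_{B_{Rr_k}(x_0)} N |\nabla(\arctan\sigma_{u(x_0)}\circ u)|^2\,dx^2 \longrightarrow 0
\]
holds, since the right-hand side is the integral of an $L^1$ function (finite by Theorem \ref{mono.cor}) on a shrinking domain. Lower semi-continuity in $W^{1,2}$ then gives $\nabla(\arctan\sigma \circ \hat u_\infty) \equiv 0$, so $\arctan\sigma \circ \hat u_\infty \equiv c$ is constant on the connected set $\C\setminus\{0\}$. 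The main obstacle is upgrading this to $c = 0$, since Lemma \ref{const.sigma} assumes the tensor property \eqref{tensor.v} which we do not a priori have for a parametric blow-up. My approach would be to iterate the blow-up procedure: taking a further anisotropic blow-up of $\hat{\mathbf{v}}_\infty$ at a $|\hat{\mathbf{v}}_\infty|$-Lebesgue point $\hat u_\infty(y_0)$, Lemma \ref{equiv.rect} and Theorem \ref{cpt.int} give a multiple of a Legendrian plane through the origin, on which $\arctan\sigma \equiv 0$; pairing this with the scaling behavior of $\arctan\sigma$ and a continuity argument along $\operatorname{spt}|\hat{\mathbf{v}}_\infty|$ connecting $\hat u_\infty(y_0)$ to a point with $\r \to 0$ forces $c = 0$.

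Finally, for $\tilde N_\infty \le \tilde N_\infty(0) = \tilde N(x_0)$ on the whole of $\C$, I would combine $\varphi \circ \hat u_\infty \equiv 0$ (so the image lies in $\C^2 \subset \mathbb{H}^2$ and $\r = \rho$ on it) with large-scale monotonicity: for any $y \neq 0$ and large $b$, $\tilde N_\infty(y) \le \Theta^\chi(\hat{\mathbf{v}}_\infty, \hat u_\infty(y), b)/(2\pi)$, and comparing concentric balls centered at $\hat u_\infty(y)$ with ones centered at $0$ via the Kor\'anyi triangle inequality (Lemma \ref{lm-kord}) together with the asymptotic mass bound $|\hat{\mathbf{v}}_\infty|(B_b^\r(0)) \le 2\pi\tilde N(x_0) b^2 (1 + o(1))$ (inherited from the doubling and the density identity at $0$) shows $\Theta^\chi(\hat{\mathbf{v}}_\infty, \hat u_\infty(y), b) \to 2\pi\tilde N(x_0)$ as $b \to \infty$, concluding the proof.
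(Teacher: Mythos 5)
Your overall architecture (rescaling, applying Theorem \ref{th-sequential-weak-closure} with the mass bound \eqref{teofrasto} supplied by Corollary \ref{mono.cor2}, the fiber-counting argument for $\hat u_\infty^{-1}(0)=\{0\}$ and $\tilde N_\infty(0)=\tilde N(x_0)$, the properness argument, and the center-comparison of $\Theta^\chi$ at large scales for $\tilde N_\infty\le\tilde N_\infty(0)$) matches the paper's proof. The genuine gap is in your argument that the constant $c$ with $\arctan\sigma\circ\hat u_\infty\equiv\arctan c$ equals zero. Your proposed route --- a second anisotropic blow-up at a $|\hat{\mathbf{v}}_\infty|$-Lebesgue point $q=\hat u_\infty(y_0)$ producing a Legendrian plane, combined with ``the scaling behavior of $\arctan\sigma$ and a continuity argument'' --- does not close. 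The blow-up at $q$ gives a Legendrian plane through $q$ (i.e., $q*\mathcal{P}$), on which $\arctan\sigma_q\equiv0$, not $\arctan\sigma\equiv0$; what you can extract from it is only that $q*\mathcal{P}$ is tangent at $q$ to the hypersurface $M_c:=\{2\varphi=c\rho^2\}$ containing the image. That tangency is a single linear condition on a Legendrian plane in the four-dimensional horizontal space (it says $\pi_*(\mathcal P)\perp(i-c)z(q)$), which is satisfiable for \emph{every} $c$, so no contradiction arises locally. Moreover $M_c$ is invariant under the dilations $\delta_t$, so following the support toward the origin or rescaling gains nothing: the constraint $\sigma=c$ is scale-invariant. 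You also do not rule out the degenerate constant $\pm\frac{\pi}{2}$ (image in the $\varphi$-axis), though that is easy since it would force $|\nabla\hat u_\infty|^2=|\nabla(\pi\circ\hat u_\infty)|^2=0$.

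What is actually needed here is a global (integral) argument, and the paper supplies it by running the computation of Lemma \ref{const.sigma} on the \emph{parametrized} varifold: for the Hamiltonian $F=\psi(\rho^2)-1$ with $\psi$ concave and suitably cut off, \eqref{V.8-12} gives $2\operatorname{div}_{\mathcal P}W_F=2\psi''(\rho^2)\,\nabla^{\mathcal P}\rho^2\cdot\nabla^{\mathcal P}\varphi$, and on the support $\nabla^{\mathcal P}\sigma=0$ forces $\nabla^{\mathcal P}\varphi=\frac{c}{2}\nabla^{\mathcal P}\rho^2$ with $|\nabla^{\mathcal P}\rho^2|^2=\frac{4\rho^2}{1+c^2}>0$ off the $\varphi$-axis; stationarity then yields $\frac{c}{1+c^2}\int\psi''(\rho^2)\rho^2\,d\mathbf{v}=0$, and the strict concavity of $\psi$ forces $c=0$. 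Note that this adaptation sidesteps the objection you correctly raised about Lemma \ref{const.sigma} requiring the tensor hypothesis \eqref{tensor.v}: the hypothesis is only used there to reduce $\operatorname{div}_{\mathcal P}W_F$ to $X_1(Y_1(F))+X_2(Y_2(F))$, and for a parametrized blow-up the identity $\nabla^{\mathcal P}\varphi=\frac{c}{2}\nabla^{\mathcal P}\rho^2$ plays the role of the algebraic relations $z_{2j}=-cz_{2j-1}$. A secondary, repairable imprecision: in the final step you compare normalized \emph{masses} of concentric Kor\'anyi balls, whereas $\Theta^\chi$ is not the normalized mass; the correct comparison is the pointwise Lipschitz estimate $|f_{q',a}-f_{q'',a}|\le Cd_K(q',q'')a^{-3}$ on the monotonicity integrand together with the constancy of $R\mapsto\Theta^\chi(0,R)$ (which follows from $\nabla^{\mathcal P}\arctan\sigma=0$ on the support), as in \eqref{rig.pre}.
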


\begin{Rm}
In fact, a posteriori \eqref{adm.implicit} is always satisfied, provided that a bound of the form \eqref{upper.bd.assumption} holds at least locally, as a consequence of the regularity theory developed in the second part of the paper. \hfill $\Box$.
\end{Rm}

\begin{Rm}
Note that this result holds without changes in the case of a closed ambient.
Indeed, once we magnify it at smaller and smaller scales, Remark \ref{no.c.star} applies;
in particular, we cannot have bubbling in this blow-up setting. \hfill $\Box$
\end{Rm}
\begin{proof} Without loss of generality, we can assume that $u(x_0)=0$.
Since
$$(\delta_t)_*X_j=tX_j,\quad (\delta_t)_*Y_j=tY_j$$
and $\nabla u$ takes values in horizontal planes, we see that for any $R'>1$ we have
$$\int_{B_{R'}(0)}|\nabla u_k|^2\,dx^2=s_k^{-2}\int_{B_{R'r_k}(0)}|\nabla u|^2\,dx^2\le C(R')$$
in the conformal chart.
Also, by assumption there exists $\omega_0\subset\subset\Sigma$ such that $0=u(x_0)\nin u(\p\omega_0)$.
Hence, for any $R,R'>0$ we have
\begin{align}\label{teofrasto.tris}
\begin{aligned}
\limsup_{k\to\infty}\int_{B_{R'}(0)\cap\{\r\circ u_k<R\}}N_k|\nabla u_k|^2\,dx^2
&=\limsup_{k\to\infty}s_k^{-2}\int_{B_{R'r_k}(0)\cap\{\r\circ u<Rs_k\}}N|\nabla u|^2\,dx^2\\
&\le\limsup_{k\to\infty}s_k^{-2}|\mathbf{v}_{\omega_0}|(B_{Rs_k}^\r(0))\\
&\le CR^2,
\end{aligned}
\end{align}
by Corollary \ref{mono.cor2} and the fact that $\mathbf{v}_{\omega_0}$ restricts to a HSLV on $\mathbb{H}^2\setminus u(\p\omega_0)$,
which is an open set containing the origin. Recalling that $N\in L^\infty_{loc}$, we see that all the assumptions
of Theorem \ref{th-sequential-weak-closure} are satisfied, except for the fact that we apply it with domains increasing to $\C$, which makes no difference in the proof.

We then obtain a limit map $u_\infty$ and a limit PHSLV $(\C,\hat u_\infty,\hat N_\infty)$ up to a subsequence.
Crucially, $u_\infty$ and $\hat u_\infty$ are not constant thanks to \eqref{rad.limit}.
This convergence of measures, together with \eqref{teofrasto.tris}, also gives
\begin{equation}\label{teofrasto.quadris}\int_{\r\circ\hat u_\infty<R}\hat N_\infty|\nabla\hat u_\infty|^2\,dx^2\le CR^2.\end{equation}
We now show that $\hat u_\infty$ is proper. By arguing exactly as in the previous proof,
we can find an increasing sequence of radii $\tau_j\to\infty$ such that
$$\inf_{\de B_{\tau_j}(0)}\r\circ\hat u_\infty\to\infty\quad\text{as }j\to\infty.$$
If $\hat u_\infty$ is not proper, then we can find points $x_j\in A_j:= B_{\tau_{j+1}}(0)\setminus\bar B_{\tau_j}(0)$
such that $\hat u_\infty(x_j)$ stays bounded, up to a subsequence. Applying Theorem \ref{mono.cor} to the varifold $\mathbf{v}_{\infty,A_j}$ (induced by the limit PHSLV and the domain $A_j$),
as well as Proposition \ref{co-dens}, we would get
$$|\mathbf{v}_{\infty,A_j}|(B_1^\r(\hat u_\infty(x_j)))\ge c>0$$
for $j$ large enough, since $B_1^\r(\hat u_\infty(x_j))$ is disjoint from $\hat u_\infty(\p A_j)$ eventually. Taking
$$R:=1+\sup_{j\to\infty}\r\circ \hat u_\infty(x_j),$$
we then get
$$\int_{\r\circ u_\infty<R}|\nabla\hat u_\infty|^2\,dx^2\ge\sum_j|\mathbf{v}_{\infty,A_j}|(B_1^\r(\hat u_\infty(x_j)))=\infty,$$
contradicting \eqref{teofrasto.quadris}.

In particular, since $\hat u_\infty$ (and thus also $u_\infty$) is proper, the function $\tilde N_\infty$ is defined on all of $\C$.
Also, a proof similar to the one used in Proposition \ref{pr-robust}, relying on Corollary \ref{dens.limits},
shows that
$$\tilde N_\infty(0)\ge\limsup_{k\to\infty}\tilde N_k(0)=\tilde N(x_0).$$
We can assume without loss of generality that $2\pi\tilde N(x_0)=\theta^\chi(\mathbf{v}_{\omega_0},x_0)$.
Moreover, since $\mathbf{v}_{\om_0}$ is a HSLV near the origin, Corollary \ref{mono.cor2} gives the bound
$$\limsup_{k\to\infty}s_k^{-2}|\mathbf{v}_{\om_0}|(B_{Rs_k}^\r(0))\le CR^2,$$
so that up to a subsequence the limit HSLV
$$\mathbf{w}:=\lim_{k\to\infty}(\delta_{1/s_k})_*\mathbf{v}_{\om_0}$$
exists and satisfies
\begin{equation}\label{w.mass.bd}
|\mathbf{w}|(B_R^\r(0))\le CR^2.
\end{equation}
The same derivation of \eqref{teofrasto.tris} shows that
\begin{equation}\label{grass.ub}
\mathbf{v}_{\infty,\psi(B_{R'}(0))}\le\mathbf{w}
\end{equation}
for any given $R'>0$. Since $\theta^\chi(\mathbf{w},0)=2\pi\tilde N(x_0)$, we conclude that also the reverse inequality
$\tilde N_\infty(0)\le\tilde N(x_0)$ holds. Moreover, the same argument used in the proof of Proposition \ref{pr-fiber},
together with \eqref{grass.ub} and the equality $\theta^\chi(\mathbf{w},0)=2\pi\tilde N_\infty(0)$, proves that
$\hat u_\infty^{-1}(0)$ is a compact connected set containing $0$. However, taking $x$ on its topological boundary, the argument used in the previous proof
gives arbitrarily small radii $r>0$ such that $\r\circ \hat u_\infty>0$ on $\p B_r(x)$, proving that
$$\hat u_\infty^{-1}(0)=\{0\}.$$

Let us now show that $2\pi\tilde N_\infty(x)\le\theta^\chi(\mathbf{w},0)=2\pi\tilde N_\infty(0)$ for all $x\in\C$. By \eqref{grass.ub} and the definition of $\tilde N_\infty$,
it suffices to show that $\theta^\chi(\mathbf{w},p)\le\theta^\chi(\mathbf{w},0)$ for any given $p\in\mathbb{H}^2$.
Using the notation \eqref{capital.theta.def} and calling $f_{q,a}$ the integrand in \eqref{capital.theta.def},
we clearly have
$$|f_{0,1}-f_{q,1}|\le C|q|\le Cd_K(0,q)\quad\text{for all }q\in B^\r_{1/2}(0).$$
By a simple scaling and translation-invariance, we obtain
$$|f_{q',a}-f_{q'',a}|\le C\frac{d_K(q',q'')}{a^3}\quad\text{for all }q',q''\in \mathbb{H}^2\text{ and }a>2d_K(q',q'').$$
Hence, letting $R_0:=d_K(0,p)$, by monotonicity and the last bound we have
\begin{equation}\label{rig.pre}
\Theta(\mathbf{w},p,\ep)\le\Theta(\mathbf{w},p,R)\le\Theta(\mathbf{w},0,R)+C\frac{R_0}{R^3}|\mathbf{w}|(B_{2R+2R_0}^\r(0))
\end{equation}
for all $0<\ep<R$ with $R>2R_0$. Hence, letting $\ep\to0$ and using \eqref{w.mass.bd}, we obtain
$$\theta^\chi(\mathbf{w},p)\le\Theta(\mathbf{w},0,R)+O(R^{-1}).$$
Since $\nabla^{\mathcal P}\arctan\sigma=0$ on $\spt(\mathbf{w})\setminus\Pi^{-1}(0)$ (see Remark \ref{blow}), monotonicity
implies that $R\mapsto\Theta(\mathbf{w},0,R)$ is constant and thus equal to $\theta^\chi(\mathbf{w},0)$, and the claim follows
once we let $R\to\infty$.

It remains to show that $\varphi\circ\hat u_\infty=0$. By \eqref{grass.ub} 
we have
$$\nabla^{\mathcal P}\arctan\sigma=0\quad\text{for all }(\mathcal P,p)\in\operatorname{spt}(\mathbf{v}_{\infty,\C})\setminus\Pi^{-1}(0).$$
Since $\hat u_\infty(x)\neq 0$ for $x\neq0$, we deduce that
$$\nabla(\arctan\sigma\circ\hat u_\infty)=0\quad\text{a.e. on }\C\setminus\{0\},$$
and hence $\arctan\sigma\circ\hat u_\infty$ is constant on $\C\setminus\{0\}$.
This constant cannot be $\pm\frac{\pi}{2}$, since in this case $\hat u_\infty$
would take values in $\{z=0\}$ and hence we would have $\nabla \hat u_\infty=0$, a contradiction since $\hat u_\infty$
is proper (and thus not constant).

Thus, for $x\neq0$ we have $\rho\circ \hat u_\infty(x)>0$ and $\sigma\circ \hat u_\infty$ is constant.
Since $\hat u_\infty$ vanishes only at $0$,
taking an arbitrary $r>0$ and $0<R<\min_{\p B_r(0)}\rho\circ\hat u_\infty$,
we see that the induced varifold $\mathbf{v}_{\infty,B_r(0)}$ is a HSLV on $\{\rho<R\}$,
and by applying the same proof of Lemma \ref{const.sigma} we conclude that the constant is zero.
\end{proof}

\section{Regularity: inductive setup and base case}

\subsection{Inductive setup}
From now on, up to working in a local chart,
we assume that we have a PHSLV varifold $(\Omega,u,N)$, with $\Omega\subseteq\C$ a connected open set and $u$ nonconstant (i.e., $\nabla u\not\equiv0$), such that for some constant $C>0$ we have
\begin{equation}\label{upper.bd.assumption}
\int_{u^{-1}(B_R(p))}N|\nabla u|^2\,dx^2\le C_0R^2
\end{equation}
for all $p\in\mathbb{H}^2$ and all radii $R>0$. Our long-term goal is the following regularity theorem.

\begin{Th}\label{reg.thm.chart}
There exist two disjoint, locally finite sets of points $\mathcal{S}_{SW},\mathcal{S}_{branch}\subset\Omega$ such that:
\begin{itemize}[leftmargin=8mm]
\item[(i)] for any $x_0\in \mathcal{S}_{SW}$ and any sequence $r_j\to0$, the maps
$$x\mapsto \delta_{1/s_j}\circ\ell_{u(x_0)^{-1}}\circ u(x_0+r_jx),\quad s_j^2:=\int_{B_{r_j}(x_0)}|\nabla u|^2\,dx^2$$
converge in $C^0_{loc}\cap W^{1,2}_{loc}$ to a map $\C\to\{\varphi=0\}$ whose image is a non-flat Schoen--Wolfson cone, up to a subsequence;
\item[(ii)] on $\Omega\setminus\mathcal{S}_{SW}$, the map $u$ is smooth (in fact, a branched immersion with branch points at $\mathcal{S}_{branch}$);
\item[(iii)] on $\Omega\setminus(\mathcal{S}_{SW}\cup\mathcal{S}_{branch})$, the map $u$ is a smooth immersion;
\item[(iv)] $N$ is a.e. constant. \hfill $\Box$
\end{itemize}
\end{Th}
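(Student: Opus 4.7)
The plan is to prove this by strong induction on $M := \lceil \sup_\Omega \tilde N \rceil$. Note that $\tilde N$ is a bounded function on $\Omega$, thanks to the estimate \eqref{N-bound} in Proposition \ref{pr-robust} combined with the mass bound \eqref{upper.bd.assumption}, so this integer is well-defined. The base case $M=1$ amounts to $\tilde N \equiv 1$. In this regime, for the induced varifold on any relatively compact subdomain the density equals $2\pi$ exactly, so no Schoen--Wolfson cone can appear as a parametrized blow-up (since the $(p,q)$-cone carries density $\pi(p+q) \ge 2\pi \cdot 2$ whenever $p+q \ge 4$, and in any case density strictly larger than $2\pi$). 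Consequently every parametrized blow-up produced by Proposition \ref{pr-tangent-cone} is a flat Legendrian plane with unit multiplicity, every point is \emph{strongly admissible}, and the small tilt-excess regularity theorem of Schoen--Wolfson (Proposition \ref{ep.reg}) yields smoothness of $u$ at each such point, leaving at most isolated branch points as prescribed in (ii)--(iii). The constancy of $N$ on connected components (claim (iv)) follows from $\tilde N = N$ a.e. on $\mathcal{G}_u^f$ together with smoothness.

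For the inductive step, assume the statement holds on every chart where the corresponding $\tilde N$ is bounded by $M-1$. The set $\Omega_{<M} := \{x \in \Omega : \tilde N(x) < M\}$ is open by upper semi-continuity (Proposition \ref{pr-robust}), and on each of its connected components the inductive hypothesis applies, giving the decomposition asserted in (i)--(iv) there. It remains to analyse the closed set $\mathcal{S}_M := \Omega \setminus \Omega_{<M}$ and show that it is locally discrete, with each point either a smooth immersion point (multiplicity $M$), a branch point, or a Schoen--Wolfson conical singularity. Fixing $\bar x \in \mathcal{S}_M$, Proposition \ref{pr-tangent-cone} produces a parametrized blow-up $(\C, \hat u_\infty, \hat N_\infty)$ of finite mass with $\tilde N_\infty \le \tilde N_\infty(0) = M$ and image in $\{\varphi = 0\} \subset \C^2$. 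Using the strict-maximum rigidity from Proposition \ref{rigidity} at the origin of the blow-up (obtained by comparing $\tilde N_\infty(0)$ with lower multiplicity strata, where the inductive hypothesis grants the regularity needed to conclude), one concludes that the image of the blow-up is either a single plane with multiplicity $M$ or a Schoen--Wolfson cone, as in Proposition \ref{par.cone} and Corollary \ref{par.cone.imm}. Simultaneously, Proposition \ref{all.admissible} gives a doubling property for the Dirichlet energy around $\bar x$ and Proposition \ref{sw.countable} shows that within $\mathcal{S}_M$ the Schoen--Wolfson singularities cannot accumulate.

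The heart of the argument, carried out in Section X, is the upgrade of the $W^{1,2}$ weak convergence of the rescalings $u_k \rightharpoonup u_\infty$ producing the blow-up into \emph{strong} $W^{1,2}$ convergence at any $\bar x \in \partial \Omega_{<M} \cap \mathcal{S}_M$ (Proposition \ref{blow.up.strong}), i.e.\ property (P) for blow-ups. Once strong convergence is available, no spurious multiplicity can appear in the limit, so $\hat N_\infty(0) = M$ is a strict maximum and a standard contradiction argument — using the inductive regularity on $\C \setminus \{0\}$ where $\tilde N_\infty \le M-1$ — forces $\bar x$ to be isolated in $\mathcal{S}_M$; the classification of its blow-up then places $\bar x$ in $\mathcal{S}_{SW}$, in $\mathcal{S}_{branch}$, or at a smooth immersion point of multiplicity $M$, completing the induction.

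The principal obstacle, and the step in which the Legendrian setting departs sharply from the isotropic theory of \cite{PiRi1}, is the proof of Proposition \ref{blow.up.strong}. The new difficulty is twofold: first, the Euler--Lagrange equation for the projection $v = \pi \circ u$ involves an uncontrolled harmonic one-form $h = d\beta$ that behaves like a Lagrange multiplier for the Legendrian constraint and across which Schoen--Wolfson cones can contribute Dirac masses $\sum c_k \delta_{x_k}$; second, along the rescaling sequence these conical singularities can in principle become denser and denser, preventing a direct passage to a smooth limit. The strategy to overcome this is first to establish the strong convergence in the \emph{absence} of Schoen--Wolfson singularities in blow-ups, by combining the energy doubling from Proposition \ref{all.admissible} with a Liouville-type argument to obtain local a priori bounds on $h$, and then to extend this to the general case via a careful continuity argument: Proposition \ref{sw.countable} ensures that at small scales the Schoen--Wolfson points are well-separated, so any jump in multiplicity of the weak limit would contradict the no-neck-energy estimate, itself controlled through the effective monotonicity formula Theorem \ref{mono.cor} together with the point removability result Proposition \ref{pr-VI.1}.
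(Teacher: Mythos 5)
Your overall architecture --- induction on the integer bound for $\sup_\Omega\tilde N$, regularity on the lower-multiplicity stratum via the inductive hypothesis, classification of blow-ups at a boundary point of that stratum, and the upgrade of weak to strong $W^{1,2}$ convergence of the rescalings (Proposition \ref{blow.up.strong}) as the decisive step --- coincides with the paper's, and your description of how property (P) is obtained (Liouville-type bounds on the harmonic one-form when no Schoen--Wolfson cones are present, then a continuity argument exploiting their separation) is essentially faithful to Section X.

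There is, however, a genuine gap in the way you set up the induction. You assert that the inductive hypothesis applies on each connected component of $\Omega_{<M}=\{\tilde N<M\}$. This fails because $\tilde N$ is \emph{not} integer-valued at Schoen--Wolfson points: the density of a non-flat $(p,q)$-cone at its vertex is $2\pi\sqrt{pq}$ (not $\pi(p+q)$, incidentally), so $\tilde N$ can take values of the form $k\sqrt{pq}$ lying in the open interval $(M-1,M)$. At such points one has $\tilde N<M$ but $\tilde N>M-1$, so the hypothesis ``$\sup\tilde N\le M-1$'' is simply not available on $\Omega_{<M}$; in the extreme case $\sup_\Omega\tilde N=\sqrt2$ your reduction becomes circular, since then $\Omega_{<2}=\Omega$ and nothing has been gained. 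The paper repairs exactly this with a \emph{nested} induction: Proposition \ref{sw.countable} produces a gap $\gamma>0$ such that Schoen--Wolfson points with $\tilde N$ in a band of width $\gamma$ are locally finite in the corresponding sublevel set, Proposition \ref{gamma.small} shows that $\tilde N\le\nu-1+\gamma$ forces $\tilde N\le\nu-1$, and one then climbs through the sets $\Omega_\ell=\{\tilde N<\nu-1+\ell\gamma\}$ one band at a time until $\Omega_\ell=\Omega$. Without this quantization-and-band structure, the region on which you may legitimately invoke the inductive regularity is strictly smaller than you claim, and the selection of the boundary point $\bar x$ and the ensuing contradiction cannot be carried out as written. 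A secondary inaccuracy: the no-multiplicity-jump step in the general case of Proposition \ref{blow.up.strong} is not obtained from a neck-energy estimate via Proposition \ref{pr-VI.1}, but from the continuity of the scale map $r\mapsto F_j(r)$ into a disjoint union of compact sets $\mathcal{X}_k$ together with Proposition \ref{strong.conv} applied at the separation scale $\rho_j$ of the nearest Schoen--Wolfson point.
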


Recall that $\tilde\Omega\subseteq\Omega$ is the (open) set of points $x\in\Omega$
such that $u(x)\not\in u(\partial\omega)$ for some open set $x\in\omega\subset\subset\Omega$.
We let $\Omega_{nc}\subseteq\Omega$ be the (relatively closed) distributional support of $\nabla u$;
in other words, $\Omega\setminus\Omega_{nc}$ is the largest open subset of $\Omega$ where $u$ is locally constant.

\begin{Prop}\label{nc.is.omega}
We have $\Omega_{nc}=\tilde\Omega=\Omega$. \hfill $\Box$
\end{Prop}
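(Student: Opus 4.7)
My plan is to reduce the statement to the nontrivial equality $\Omega_{nc}=\Omega$. Indeed, once this is established, any $x\in\Omega=\Omega_{nc}$ satisfies the hypotheses of Proposition \ref{in.tilde.omega}: $u$ is not constant in any neighborhood of $x$ by the definition of $\Omega_{nc}$, and the local bound \eqref{teofrasto.bis} at $x$ is inherited from the global assumption \eqref{upper.bd.assumption} (since Euclidean and Kor\'anyi balls are comparable near $u(x)$). Hence $x\in\tilde\Omega$, and as $\tilde\Omega\subseteq\Omega$ trivially, $\tilde\Omega=\Omega$ follows.

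For $\Omega_{nc}=\Omega$, I argue by contradiction. Suppose $U:=\Omega\setminus\Omega_{nc}$ is nonempty. Connectedness of $\Omega$ and global nonconstancy of $u$ force $U\subsetneq\Omega$, so I pick a boundary point $y_0\in\partial U\cap\Omega$, which lies in $\Omega_{nc}\subseteq\tilde\Omega$. The plan is to perform a parametrized blow-up at $y_0$: a standard dyadic pigeonhole on $a_r:=\int_{B_r(y_0)}|\nabla u|^2\,dx^2$ yields scales $r_k\to 0$ satisfying the doubling condition \eqref{adm.implicit}, and Proposition \ref{pr-tangent-cone} then produces a limit PHSLV $(\C,\hat u_\infty,\hat N_\infty)$ with rescalings $u_k\to u_\infty=\hat u_\infty\circ\psi$ in $C^0_{loc}$ and the crucial rigidity $\hat u_\infty^{-1}(0)=\{0\}$.

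To close the contradiction, I pick $z_n\in U$ with $z_n\to y_0$, each in a component $V_n\subseteq U$ on which $u\equiv c_n$, with $c_n\to u(y_0)=:c$ by continuity. Matching the inradius $\rho_n:=\operatorname{dist}(z_n,\partial V_n)$ with the blow-up scale $r_k$---so that $\rho_{n(k)}/r_k$ stays bounded away from $0$ along a subsequence---the rescaled map $u_k$ is identically equal to $q_k:=\delta_{1/s_k}(c^{-1}\ast c_{n(k)})$ on a ball of definite positive radius in rescaled coordinates, and in the limit $u_\infty\equiv q_\infty:=\lim q_k$ on a nonempty open set in $\C$. The case $q_\infty=0$ immediately contradicts $\hat u_\infty^{-1}(0)=\{0\}$; to handle the case $q_\infty\neq 0$, I iterate the blow-up construction at a boundary point of the (now nontrivial) locally constant set of $\hat u_\infty$, exploiting that $\hat u_\infty$ is a PHSLV into $\{\varphi=0\}\cong\C^2$ still satisfying \eqref{upper.bd.assumption} (by Proposition \ref{pr-tangent-cone} and the agreement of Euclidean and Kor\'anyi balls on $\C^2$), until the favorable case $q=0$ is reached.

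The main obstacle is both the scale-matching step and the termination of the iteration. When $U$ has a ``thin tentacle'' geometry at $y_0$ (inradii $\rho_n\ll|z_n-y_0|$), a naive rescaling collapses the locally constant plateau to a point, so $z_n$ must be chosen to maximize the inradius within $V_n\cap B_{r_k}(y_0)$ and the blow-up scale must be tuned accordingly (a covering/pigeonhole argument, supplemented by the fiber-finiteness of Proposition \ref{pr-fiber} to rule out pathologically oscillating configurations of the values $c_n$). When $q_\infty\neq 0$, the iteration requires a monotone quantity---such as the densities $\tilde N(0)$ at successive blow-up origins, or a finer invariant controlling the geometry of the locally constant sets---to guarantee termination with the contradictory $q=0$ case.
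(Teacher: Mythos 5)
Your reduction is fine and matches the paper: $\Omega_{nc}\subseteq\tilde\Omega$ follows from Proposition \ref{in.tilde.omega} (with \eqref{teofrasto.bis} supplied by \eqref{upper.bd.assumption}), so everything hinges on $\Omega_{nc}=\Omega$. But your argument for that equality has genuine gaps. First, the blow-up at $y_0$ is not available for free: the "standard dyadic pigeonhole" does \emph{not} produce scales satisfying \eqref{adm.implicit} at an arbitrary point --- that is precisely the definition of admissibility, which may fail a priori (Proposition \ref{zero.dim} only shows that the \emph{image} of the non-admissible set is small; admissibility of all points is established only much later, in Proposition \ref{all.admissible}, and its proof uses the present proposition). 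Second, the two obstacles you name yourself --- the scale-matching when the locally constant components form thin tentacles, and the termination of the iteration when $q_\infty\neq0$ --- are left unresolved, and no monotone quantity is exhibited that would make the iteration stop. As written, the proof does not close.

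The paper's argument is far shorter and needs no blow-up at all. Let $\Omega'$ be a connected component of $\Omega\setminus\Omega_{nc}$, so $u\equiv c$ on $\Omega'$; normalize $c=0$. Since $\Omega$ is connected and $\Omega_{nc}\neq\emptyset$ (because $u$ is nonconstant), the relative boundary $\Omega\cap\partial\Omega'$ contains a point $x$, which lies in $\Omega_{nc}$ and satisfies $u(x)=0$ by continuity. The proof of Proposition \ref{in.tilde.omega} (not just its statement) yields arbitrarily small radii $\rho>0$ with $\mathfrak{r}\circ u>0$ on $\partial B_\rho(x)$, i.e., $\partial B_\rho(x)\cap u^{-1}(0)=\emptyset$, hence $\partial B_\rho(x)\cap\Omega'=\emptyset$. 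Since $\Omega'$ is connected and accumulates at $x$, this traps $\Omega'\subseteq B_\rho(x)$ for arbitrarily small $\rho$, and since $\Omega'$ is open and $x\nin\Omega'$, we get $\Omega'=\emptyset$, a contradiction. You should replace your blow-up scheme by this direct use of the circle-separation property already contained in the proof of Proposition \ref{in.tilde.omega}.
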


\begin{proof}
Proposition \ref{in.tilde.omega} shows the inclusion $\Omega_{nc}\subseteq\tilde\Omega$.
Assume now that $\Omega_{nc}\neq\Omega$ and let $\Omega'$ be a connected component of the open set $\Omega\setminus\Omega_{nc}$.
Then $u$ is constant on $\Omega'$; up to a translation, we can assume that $u|_{\Omega'}=0$.

Since $\Omega$ is connected and the closed set $\Omega_{nc}\neq\emptyset$ by assumption,
the relative boundary $\Omega\cap\partial\Omega'$ is not empty. Taking an arbitrary $x$ on this boundary, and thus in $\Omega_{nc}$,
the proof of Proposition \ref{in.tilde.omega} gives arbitrarily small radii $\rho>0$ such that $0\nin u(\partial B_\rho(x))$.
Thus, the circle $\partial B_\rho(x)$ is disjoint from $\Omega'$. Since $\Omega'$ is connected and $x$ belongs to its closure, this implies that $\Omega'\subseteq B_\rho(x)$,
and hence $\Omega'=\emptyset$ (as $\rho$ was arbitrarily small and $x\nin\Omega'$), a contradiction.
\end{proof}

In particular, $\tilde N:\Omega\to[1,\infty)$ is defined at every point of $\Omega$.
From the definition of $\tilde N$ and \eqref{upper.bd.assumption}, we see that
$$\sup_{\Omega}\tilde N<\infty.$$
We let $\nu\in\N^*$ be such that
\begin{equation}\label{def.nu}\sup_{\Omega}\tilde N\in(\nu-1,\nu].\end{equation}
We will prove the regularity theorem by induction on $\nu$.
As in \cite{PiRi1}, we now define \emph{admissible} points.

\begin{Dfi}\label{adm.def}
We say that $x_0\in\Omega$ is \emph{strongly admissible} if the bound
$$\limsup_{r\to0}\frac{\int_{B_{2r}(x_0)}|\nabla u|^2\,dx^2}{\int_{B_r(x_0)}|\nabla u|^2\,dx^2}<\infty$$
holds true. We say that $x_0$ is \emph{admissible} if there exist $\Lambda>0$ and a sequence $r_k\to0$ such that
$$\int_{B_{2^jr_k}(x_0)}|\nabla u|^2\,dx^2\le 2^{2\Lambda j}\int_{B_{r_k}(x_0)}|\nabla u|^2\,dx^2\quad\text{for }j=1,\dots,k$$
holds true. \hfill $\Box$
\end{Dfi}

It is important that, while $\Lambda$ depends on $x_0$, it is independent of $k$.
This will allow us to have a blow-up defined on the full complex plane $\C$.
Indeed, thanks to Proposition \ref{nc.is.omega} we have $x_0\in\tilde\Omega$
for all $x_0\in\Omega$. Moreover, thanks to the previous proposition,
we can always form the blow-up at an admissible point, along a \emph{suitable} sequence of radii $r_k\to0$
(as opposed to \emph{any} sequence for a strongly admissible one),
in the sense that Proposition \ref{pr-tangent-cone} always applies.

\begin{Prop}\label{zero.dim}
The image through $u$ of the set of non-admissible points has $d_K$-Hausdorff dimension zero. \hfill $\Box$
\end{Prop}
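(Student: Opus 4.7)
The plan is to fix $s > 0$ arbitrarily and establish $\mathcal{H}^s_K(u(E)) = 0$, where $E$ denotes the set of non-admissible points, by taking $\Lambda = \Lambda(s)$ large and controlling the image through the parameter $\Lambda$. Writing $E_\Lambda$ for the set of $\Lambda$-non-admissible points, we have $E = \bigcap_{\Lambda > 0} E_\Lambda$, so it suffices to prove $\mathcal{H}^s_K(u(E_\Lambda \cap K)) = 0$ for an arbitrary compact $K \subset \Omega$, provided $\Lambda$ is chosen sufficiently large.

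The first substantive step is to unpack the definition of non-$\Lambda$-admissibility into a form amenable to covering arguments. Denoting $f_{x_0}(r) := \int_{B_r(x_0)}|\nabla u|^2\,dx^2$, applying the definition to the sequence $r_k = r_0\cdot 2^{-k}$ yields the multi-scale criterion that, for every $r_0 > 0$, there exist dyadic scales $r < R \leq r_0$ with $f_{x_0}(R) > (R/r)^{2\Lambda}f_{x_0}(r)$. A pigeonhole on the $\log_2(R/r)$ dyadic steps upgrades this to a single-scale criterion: for every $\rho > 0$ there exists $\tilde r < \rho$ with $f_{x_0}(2\tilde r)/f_{x_0}(\tilde r) > 2^{2\Lambda}$. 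Iterating this criterion from $\tilde r_0 = \delta$, always requiring $\tilde r_n < \tilde r_{n-1}/10$, produces for each $x_0 \in E_\Lambda$ an infinite sequence $\tilde r_n(x_0) \to 0$ with $f_{x_0}(\tilde r_n) \leq 2^{-2\Lambda n}M$, where $M$ denotes the total (finite) Dirichlet energy bound coming from \eqref{upper.bd.assumption} combined with Proposition \ref{pr-cont}. In turn, Proposition \ref{pr-cont} gives the Hölder-type scale inequality
\[
u(B_{\tilde r_n(x_0)/2}(x_0)) \;\subseteq\; B^\r_{CM^{1/2}\,2^{-\Lambda n}}(u(x_0)),
\]
so that $u$ is effectively ``$\Lambda/\log_2 10$-Hölder'' at an accumulation of scales at each $x_0 \in E_\Lambda$.

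The remaining step is to convert this infinitely-many-scale Hölder information into a Hausdorff dimension bound on $u(E_\Lambda)$. At each level $n$, I will cover $E_\Lambda \cap K$ by the domain balls $\{B_{\tilde r_n(x_0)/10}(x_0)\}_{x_0}$, apply Vitali's $5r$-covering theorem to extract a disjoint subfamily $\{B_{\tilde r_{n,i}/10}(x_i)\}$ whose $5$-expansion still covers $E_\Lambda \cap K$, and then push forward to get a cover of $u(E_\Lambda \cap K)$ by $d_K$-balls of radius $\rho_n := CM^{1/2}2^{-\Lambda n}$. Applying Vitali again in the target, together with the uniform lower density bound $|\mathbf{v}_{\omega}|(B^\r_{\rho_n}(u(x_i))) \geq c\rho_n^2$ from Proposition \ref{co-dens} and monotonicity (Theorem \ref{mono.cor}), will bound the number of distinct image balls by $O(\rho_n^{-2})$.

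The main obstacle is the quantitative bookkeeping in the final summation: the target density bound alone gives $\sum \rho_n^s \lesssim \rho_n^{s-2}$, which vanishes as $n \to \infty$ only for $s > 2$, so to reach dimension $0$ one must combine the target-side counting with the domain-side area constraint $\sum \tilde r_{n,i}^2 \leq C|K|$ and the fast geometric decay $\tilde r_n \leq \delta/10^n$. The careful combination, together with choosing $\Lambda$ large compared with $\max(1/s, \log_2 10)$, yields $\mathcal{H}^s_K(u(E_\Lambda \cap K)) = 0$ for any fixed $s > 0$; letting $s \to 0$ then gives the claimed vanishing of the $d_K$-Hausdorff dimension. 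The delicate point is handling that $\tilde r_{n,i}$ varies with $i$ and has no a priori uniform lower bound: this is resolved by stratifying $E_\Lambda \cap K$ according to the size of $\tilde r_n(x_0)$ within dyadic brackets and summing the resulting dyadic contributions, which is reminiscent of the analogous argument in \cite{PiRi1}.
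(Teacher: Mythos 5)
Your strategy does prove the statement, but it takes a genuinely different route from the paper in the reduction step, and the covering step as you frame it needs a correction. The paper gets from non-admissibility to quantitative decay in one stroke: setting $a_\ell:=\log_2 f_{x_0}(2^{-\ell})+2\Lambda\ell$, either $a_\ell\ge\Lambda\ell$ eventually --- in which case a running-maximum argument on $b_\ell:=\max\{a_\ell,\dots,a_{\ell+k-1}\}$ shows $x_0$ is admissible --- or, for every $\Lambda$, one has $f_{x_0}(2^{-\ell})<2^{-\Lambda\ell}$ for infinitely many $\ell$. The decay is thus expressed directly as a power of the Euclidean radius at dyadic scales, and the covering argument of \cite[Lemma 5.3]{PiRi1} applies verbatim. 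Your route (negate admissibility, pigeonhole to a single-scale failure $f_{x_0}(2\tilde r)>2^{2\Lambda}f_{x_0}(\tilde r)$, iterate with gaps $\tilde r_n<\tilde r_{n-1}/10$) only yields decay in the \emph{step count}, $f_{x_0}(\tilde r_n)\le 2^{-2\Lambda n}M$, which is useless until you convert it via $n\ge\log_{10}(\delta/\tilde r_n)$ into $f_{x_0}(\tilde r_n)\le M\delta^{-2\Lambda''}\tilde r_n^{2\Lambda''}$ with $\Lambda'':=\Lambda/\log_2 10$; this is exactly where your $\log_2 10$ enters, and with this conversion your reduction becomes equivalent to the paper's, at the cost of also verifying the single-scale criterion by contraposition (if no doubling failure occurs below $\rho$, a test sequence such as $r_k=\rho\,4^{-k}$ witnesses admissibility).

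The part of your plan that does not work as described is the final summation. The target-side Vitali extraction and the density lower bound from Proposition \ref{co-dens} buy you nothing here: as you compute yourself, counting image balls by mass gives only $O(\rho_n^{-2})$ balls and hence dimension $\le 2$, and there is no useful "combination" of that count with the domain-side constraint, since $\sum_i\tilde r_{n,i}^2\le C|K'|$ gives no upper bound on the number of balls (the $\tilde r_{n,i}$ have no lower bound). The argument that closes is purely domain-side and counts no balls at all: with $\Lambda''>2/s$, Proposition \ref{pr-cont} and the power-law decay give
$$\sum_i\bigl(\operatorname{diam}_K u(B_{\tilde r_{n,i}/2}(x_i))\bigr)^s\le C\delta^{-s\Lambda''}\sum_i\tilde r_{n,i}^{s\Lambda''}\le C\delta^{-s\Lambda''}\bigl(\sup_i\tilde r_{n,i}\bigr)^{s\Lambda''-2}\sum_i\tilde r_{n,i}^2\le C\delta^{-2}10^{-n(s\Lambda''-2)}|K'|,$$
which tends to zero as $n\to\infty$ while the diameters of the covering sets also tend to zero; no stratification in $i$ is needed. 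So drop the target-side step entirely, state the conversion $n\ge\log_{10}(\delta/\tilde r_n)$ explicitly, and the proof is complete (note also that one does not "let $s\to0$": dimension zero means $\mathcal{H}^s_K$ vanishes for every fixed $s>0$, which is what the above gives once $\Lambda$ is chosen large in terms of $s$).
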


\begin{proof}
Let us fix $\Lambda>0$ and let
$a_\ell:=\log_2[\int_{B_{2^{-\ell}}(x_0)}|\nabla u|^2\,dx^2]+2\Lambda\ell$, which is defined for $\ell$ large enough.
We assume that eventually
$$a_\ell\ge \Lambda\ell$$
and claim that $x_0$ is admissible.
If not, then we can find $k,\ell_0\in\N^*$ such that, for any $\ell\ge\ell_0$, there exists $j(\ell)\in\{1,\dots,k\}$ such that
$$a_{\ell-j}>a_\ell.$$
This implies that
$$b_\ell:=\max\{a_{\ell},a_{\ell+1},\dots,a_{\ell+k-1}\}=\max\{a_{\ell},a_{\ell+1},\dots,a_{\ell+k-1},a_{\ell+k}\}\ge b_{\ell+1}$$
forms a decreasing sequence for $\ell\ge\ell_0$. However, this contradicts the fact that
$b_\ell\ge \Lambda\ell$.

If instead, for any $\Lambda>0$, we have $a_\ell<\Lambda\ell$ for infinitely many indices $\ell$, then for these we have
$$\int_{B_{2^{-\ell}}(x_0)}|\nabla u|^2\,dx^2<2^{-\Lambda\ell}.$$
The image of such points has Hausdorff dimension zero, by Proposition \ref{pr-cont} (cf. \cite[Lemma 5.3]{PiRi1}).
\end{proof}

\begin{Rm}
In \cite{PiRi1} we showed that, in the isotropic setting, non-admissible points are removable singularities.
The analogous result here seems challenging, due to the lack (even a posteriori) of an elliptic PDE. Rather, we will show that, in fact,
non-admissible points do not exist; the drawback is that the latter has to be established \emph{in tandem} with the induction used to show regularity.
In fact, admissibility of all points is also stated in \cite[Proposition 4.2]{SW2},
with a different proof. Here we prefer an argument which looks much more natural, exploiting the principle
that the doubling bounds required in Definition \ref{adm.def} are satisfied in a blow-up.
\hfill $\Box$
\end{Rm}

The following tools will be used in the sequel.
We start from a fact which follows easily from the work carried out previously by the two authors in the isotropic setting \cite{PiRi1}.

\begin{Prop}\label{planar}
For a PHSLV $(\Omega,u,N)$ taking values in a Lagrangian plane $\mathcal P\subset\C^2\times\{0\}$, the map $u$ is holomorphic up to a suitable linear isometric identification
$\mathcal P\cong\C$, and moreover $N$ is constant a.e. \hfill $\Box$
\end{Prop}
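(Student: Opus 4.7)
My plan is to reduce the problem to a parametrized stationary varifold (PSV) in the planar isotropic setting and then invoke the regularity theorem of \cite{PiRi1}. By Remark \ref{isometry}, rotations $R_A$ for $A\in U(2)$ preserve the class of PHSLVs (they preserve $\alpha$, $H$, $J_H$, and the metric), so after such a rotation I may assume $\mathcal P=\operatorname{span}\{\p_{z_1},\p_{z_3}\}\subset\mathbb{H}^2$; then $u_2=u_4=u_\varphi\equiv 0$ on $\Omega$ and, since $\alpha|_{\mathcal P}=0$, the restriction $g_{\mathbb{H}^2}|_{\mathcal P}$ agrees with the Euclidean metric on $\operatorname{span}\{\p_{z_1},\p_{z_3}\}\cong\R^2$. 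Consequently, weak conformality of $u$ into $\mathbb{H}^2$ translates to weak conformality of $v:=(u_1,u_3):\Omega\to\R^2$.

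Next I will reduce the PHSLV stationarity to the classical isotropic PSV condition for $v$. Given $\omega\subset\subset\Omega$ and arbitrary $a,b\in C^\infty_c(\R^2)$ with $(\supp a)\cup(\supp b)\subseteq\R^2\setminus v(\p\omega)$, I will test stationarity against the Hamiltonian
\[F(z,\varphi):=-\lf[a(z_1,z_3)\,z_2+b(z_1,z_3)\,z_4\rg]\chi(z_2,z_4,\varphi),\]
where $\chi\in C^\infty_c(\R^3)$ equals $1$ in a neighborhood of the origin. The support of $F$ lies in $[(\supp a)\cup(\supp b)]\times\supp\chi$, which is compact in $\mathbb{H}^2$ and disjoint from $u(\p\omega)\subset\mathcal P$. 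Since $u$ takes values in $\mathcal P$, where $\chi\equiv 1$ with vanishing first derivatives, a direct computation yields $(\p_{z_{2j-1}}F)\circ u=0=(\p_\varphi F)\circ u$, $(\p_{z_2}F)\circ u=-a\circ v$, and $(\p_{z_4}F)\circ u=-b\circ v$; combined with $\nabla u_2=\nabla u_4=0$, the stationarity formula \eqref{V.2} collapses to
\[0=\int_{f>t}N\lf[\nabla u_1\cdot\nabla(a\circ v)+\nabla u_3\cdot\nabla(b\circ v)\rg]\,dx^2,\]
which is exactly the PSV condition for $(v,N)$ tested against the vector field $X=(a,b)$.

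Having established that $(\Omega,v,N)$ is a parametrized stationary varifold in $\R^2$, the regularity theorem of \cite{PiRi1} will imply that $v$ is a branched conformal immersion with $N$ a.e. constant on each connected component of $\Omega$. A weakly conformal branched immersion into $\C\cong\R^2$ is either holomorphic or antiholomorphic on each such component; post-composing the identification $\mathcal P\cong\C$ with a complex conjugation if needed (still a linear isometry) makes $u$ holomorphic throughout. The only nontrivial step is the reduction in the second paragraph: despite the rigid structure of Hamiltonian vector fields $2W_F=J_H\nabla^HF-2F\p_\varphi$, in which two derivatives of $F$ are coupled through $J_H$, the chosen family of Hamiltonians produces, after evaluation on $\mathcal P$, precisely the arbitrary horizontal test vector fields $a\p_{z_1}+b\p_{z_3}$ tangent to $\mathcal P$ needed to recover the full planar PSV condition; the transverse cut-off $\chi$ ensures compact support in $\mathbb{H}^2$ without altering the integral, because $u$ probes $\chi$ only where it is locally constant.
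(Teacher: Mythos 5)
Your proof is correct and follows essentially the same route as the paper: rotate so that $\mathcal P=\operatorname{span}\{\p_{z_1},\p_{z_3}\}$, test stationarity with $F=-a(z_1,z_3)z_2-b(z_1,z_3)z_4$ to recover the isotropic parametrized-stationary-varifold condition for $(v,N)$, and invoke \cite{PiRi1}. Your transverse cut-off $\chi$ is a harmless (indeed welcome) refinement ensuring $F\in C^\infty_c(\mathbb{H}^2)$, and attributing the a.e. constancy of $N$ to the regularity theorem of \cite{PiRi1} is equivalent to the paper's constancy-theorem argument.
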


Note that a Lagrangian plane $\mathcal P\subset\C^2\times\{0\}$ is the same as a Legendrian plane
(namely, a two-dimensional linear subspace $\mathcal{P}\subset\R^5$ such that $T_0\mathcal{P}\subset T_0\mathbb{H}^2$
is horizontal, i.e., belongs to $G$; it is automatic that the same holds at all points of $\mathcal{P}$).

\begin{proof}
Up to a rotation, we can assume that $\mathcal P=\operatorname{span}\{\partial_{z_1},\partial_{z_3}\}$.
Given any open set $\omega\subset\subset\Omega$ and any two smooth maps
$$a,b\in C^\infty_c(\mathcal P\setminus u(\partial\omega)),$$
we can take $F(z,\varphi):=-a(z_1,z_3)z_2-b(z_1,z_3)z_4$, which vanishes near $u(\p\omega)$ and has
$$2\operatorname{div}_{\mathcal P}W_F=\partial_{z_1}a+\partial_{z_3}b.$$
Thus, the fact that $(\Omega,u,N)$ is a PHSLV implies that it is also a parametrized stationary varifold, as defined in \cite{PiRi1}. The fact that $u$ is holomorphic then follows from \cite[Theorem 3.7]{PiRi1}
(see also \cite[Theorem 3.3]{PiRi1} for the simpler case of a proper map).
Working away from the locally finite set $\{\nabla u=0\}$, we can consider an open set $\omega\subset\subset\Omega$ such that $u|_\omega$ is a diffeomorphism with its image.
The induced varifold $\mathbf{v}_\omega$ has constant density $\theta_0\in\N^*$ by the constancy theorem.
From the definition of $\tilde N$, it is straightforward to conclude that $\tilde N=\theta_0$ on $\omega$.
\end{proof}

The following is an important observation which essentially follows from the previous two results.

\begin{Prop}\label{rigidity}
Assume that the PHSLV $(\C,\hat u_\infty,\hat N_\infty)$ is a blow-up at an admissible point $x_0$,
as in Proposition \ref{pr-tangent-cone},
and recall that in this case $\tilde N_\infty(x)\le\tilde N_\infty(0)$ for all $x\in\C$.
If equality holds at some point $x\neq0$, then $\hat u_\infty$ is a holomorphic map with values in a Lagrangian plane
$\mathcal P\subset\C^2\times\{0\}$, and $\hat N_\infty$ is constant a.e. \hfill $\Box$
\end{Prop}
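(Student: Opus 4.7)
The strategy is to transfer the density equality from $\tilde N_\infty$ up to the blow-up Hamiltonian stationary Legendrian varifold $\mathbf w$ constructed in the proof of Proposition \ref{pr-tangent-cone}, use the strict form of the monotonicity formula to force a geometric rigidity on $\operatorname{spt}|\mathbf w|$, and then invoke Proposition \ref{planar}.

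First, I would set $q := \hat u_\infty(x)$, which lies in $(\C^2\times\{0\})\setminus\{0\}$ by Proposition \ref{pr-tangent-cone}. Picking an open set $x\in\omega'\subset\subset\C$ with $q\nin \hat u_\infty(\p\omega')$ realizing the minimum in the definition of $\tilde N_\infty(x)$, the inclusion $\mathbf v_{\infty,\omega'}\le \mathbf w$ (established in the proof of Proposition \ref{pr-tangent-cone}, see \eqref{grass.ub}) gives
\[\theta^\chi(\mathbf w,q)\ge\theta^\chi(\mathbf v_{\infty,\omega'},q)=2\pi\tilde N_\infty(x)=2\pi\tilde N_\infty(0)=\theta^\chi(\mathbf w,0).\]
The reverse inequality $\theta^\chi(\mathbf w,q)\le\theta^\chi(\mathbf w,0)$ is precisely what the estimate \eqref{rig.pre} yields after sending $R\to\infty$, so equality holds.

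Second, applying monotonicity (Theorem \ref{mono.cor}) together with \eqref{rig.pre} to the equality case, I would conclude that $R\mapsto\Theta^\chi(\mathbf w,q,R)$ is constant for all $R>0$. By the strict form of the monotonicity formula \eqref{mono.rearr}, this forces $|\nabla^{\mathcal P}\arctan\sigma_q|^2\equiv 0$ on $\operatorname{spt}\mathbf w\setminus\Pi^{-1}(q)$, so $\arctan\sigma_q$ is locally constant on $\operatorname{spt}|\mathbf w|$ off $q$. Writing $q=(z^0,0)$ and using $\operatorname{spt}|\mathbf w|\subset\{\varphi=0\}$, we have on the support
\[\sigma_q(z,0)=-\frac{2\,\omega(z^0,z)}{|z-z^0|^2},\]
with $\omega$ the standard symplectic form on $\C^2$. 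Since $\mathbf w$ is a Heisenberg cone at $0$ (by the same proposition) and $\sigma_q(0,0)=0$, evaluating the locally constant value along rays approaching $0$ shows the constant is zero (the argument parallels the reasoning used in Lemma \ref{const.sigma}). Therefore $\omega(z^0,\cdot)\equiv 0$ on $\operatorname{spt}|\mathbf w|$, i.e., $\operatorname{spt}|\mathbf w|\subset H:=(Jz^0)^{\perp}\subset\C^2$, a three-dimensional real subspace.

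Third, a direct check shows that every Lagrangian $2$-plane contained in $H$ must contain the line $\R z^0$, and that such planes form a one-parameter family $\{\mathcal P_\theta\}_{\theta\in S^1}$ pairwise intersecting exactly along $\R z^0$. Since $\hat u_\infty(\C)\subset\operatorname{spt}|\mathbf w|\subset H$ and $\hat u_\infty$ is weakly conformal and Legendrian, at each $x\in\mathcal G^f_{\hat u_\infty}$ the tangent plane $\operatorname{img}\nabla\hat u_\infty(x)$ is a Lagrangian $2$-plane in $H$, hence equals some $\mathcal P_{\theta(x)}$. The continuity of $\nabla\hat u_\infty$ on the dense open set $\mathcal G^f_{\hat u_\infty}\setminus \hat u_\infty^{-1}(\R z^0)$, combined with the fact that this complement is connected (the preimage of the line $\R z^0$ under a nonconstant weakly conformal Legendrian map from $\C$ to $\C^2$ being a discrete set by a transversality argument), then forces $\theta$ to be globally constant. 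Consequently $\hat u_\infty(\C)$ is contained in a single Lagrangian plane $\mathcal P\subset \C^2\times\{0\}$, at which point Proposition \ref{planar} gives both holomorphicity of $\hat u_\infty$ (after a linear isometric identification $\mathcal P\cong\C$) and the a.e.\ constancy of $\hat N_\infty$. The main obstacle is the last connectedness step: showing that $\hat u_\infty^{-1}(\R z^0)$ is a discrete subset of $\C$, which is where the weakly conformal Legendrian structure of $\hat u_\infty$ must be used in an essential way to rule out that the map degenerates onto this line over a large set.
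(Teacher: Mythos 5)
Your first two steps are sound and essentially reproduce the paper's argument: the density transfer $\theta^\chi(\mathbf{w},q)=\theta^\chi(\mathbf{w},0)$ via \eqref{grass.ub} and \eqref{rig.pre}, the vanishing of $\nabla^{\mathcal P}\arctan\sigma_q$ forced by the equality case of monotonicity, and the conclusion $\varphi_q\circ\hat u_\infty=0$, hence $\hat u_\infty(\C)\subseteq (Jz^0)^\perp\cap\{\varphi=0\}$. (Two small remarks: it is cleaner to phrase the constancy on the domain, i.e.\ $\arctan\sigma_q\circ\hat u_\infty$ is constant on the connected set $\C\setminus\hat u_\infty^{-1}(q)$, rather than "locally constant on $\operatorname{spt}|\mathbf{w}|$", since the support is merely a closed set; and your observation that the constant is then $\arctan\sigma_q(\hat u_\infty(0))=\arctan\sigma_q(0)=0$ is a legitimate shortcut that replaces the paper's appeal to the proof of Lemma \ref{const.sigma}.)

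The final step, however, contains a genuine gap, and it is exactly the one you flag. You cannot invoke "continuity of $\nabla\hat u_\infty$ on the dense open set $\mathcal G^f_{\hat u_\infty}\setminus\hat u_\infty^{-1}(\R z^0)$": the set $\mathcal{G}^f_{\hat u_\infty}$ is a set of Lebesgue points, not an open set, and $\nabla\hat u_\infty$ is at this stage only an $L^2$ function -- continuity of the tangent plane is part of what the regularity program is trying to establish, so using it here is circular. Likewise, the discreteness of $\hat u_\infty^{-1}(\R z^0)$ is not available a priori. The paper closes this loop differently, in two moves. First, writing $v=\pi\circ\hat u_\infty$ (which, after a rotation sending $z^0$ to $e_3$, takes values in $\{z_4=0\}$), the Legendrian condition together with $\varphi\circ\hat u_\infty=0$ gives the \emph{pointwise a.e.} identity $v_1\nabla v_2-v_2\nabla v_1=0$; this is a distributional statement, and it implies that the $S^1$-valued map $(v_1,v_2)/|(v_1,v_2)|$ has vanishing Sobolev gradient on the \emph{open} set $\{(v_1,v_2)\neq0\}$, hence is locally constant there -- no continuity of $\nabla\hat u_\infty$ is needed. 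Second, rather than proving that the bad set $\{v_1=v_2=0\}$ is discrete up front, the paper applies Proposition \ref{planar} on \emph{each} connected component $\Omega'$ of $\{(v_1,v_2)\neq0\}$ to produce holomorphic functions $L(\Omega')\circ v$ (with the isometries normalized by $te_3\mapsto it$), glues them with the value $0$ on $\{v_1=v_2=0\}$ into a single $W^{1,2}$ function $h$ satisfying the Cauchy--Riemann equations a.e.\ (using that $\nabla v=0$ a.e.\ on $\{v_1=v_2=0\}$ by weak conformality), and concludes that $h$ is entire; only \emph{then} does discreteness of $\{v_1=v_2=0\}=\{h=0\}$ follow, forcing a single component and a single Lagrangian plane. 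Some version of this a posteriori gluing argument is needed to complete your proof.
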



\begin{proof}
We use the same notation of the proof as in Proposition \ref{pr-tangent-cone}.
Assuming that $\tilde N_\infty(x)=\tilde N_\infty(0)$ for some $x\neq0$, then letting $p:=\hat u_\infty(x)\neq0$ (as $\hat u_\infty^{-1}(0)=\{0\}$) and recalling that
$$\Theta(\mathbf{w},0,R)=\theta^\chi(\mathbf{w},0)=2\pi\tilde N_\infty(0)\quad\text{for all }R>0,$$
as in the derivation of \eqref{rig.pre} by monotonicity we get
\begin{align*}
&2\pi\tilde N_\infty(x)+\int_{G\setminus\Pi^{-1}(p)}\chi(\r_p/R)|\nabla^{\mathcal P}\arctan\sigma_p|^2\,d\mathbf{w}(\mathcal P,p) \\
&\le\theta^\chi(\mathbf{w},p)+\int_{G\setminus\Pi^{-1}(p)}\chi(\r_p/R)|\nabla^{\mathcal P}\arctan\sigma_p|^2\,d\mathbf{w}(\mathcal P,p) \\
&\le\Theta(\mathbf{w},p,R) \\
&\le\Theta(\mathbf{w},0,R)+O(R^{-1}) \\
&=2\pi\tilde N_\infty(0)+O(R^{-1}).
\end{align*}
Letting $R\to\infty$, we deduce that
$$2\pi\tilde N_\infty(x)+\int_{G\setminus\Pi^{-1}(p)}|\nabla^{\mathcal P}\arctan\sigma_p|^2\,d\mathbf{w}(\mathcal P,p)
\le2\pi\tilde N_\infty(0),$$
and hence, thanks to the assumption that $\tilde N_\infty(x)=\tilde N_\infty(0)$,
we deduce that
$$\int_{\hat u_\infty\neq p}|\nabla\arctan(\sigma_p\circ\hat u_\infty)|^2\,dx^2=0.$$
Using the properness of $\hat u_\infty$ (and the fact that $\hat u_\infty$ cannot take values into $\{z_p=0\}=\{z=z(p)\}$),
we conclude exactly as in the proof of Proposition \ref{pr-tangent-cone} that
$$\varphi_p\circ \hat u_\infty=0.$$

In the sequel, we let $v:=\pi\circ\hat u_\infty$. Recalling that $\varphi\circ\hat u_\infty=0$, we then have $\hat u_\infty=(v,0)$.
Writing
$$p=\hat u_\infty(x)=(\hat z,0)\neq 0,$$
the fact that $\varphi_p\circ\hat u_\infty=0$ says that
$$0=\varphi\circ(p^{-1}*v)=-\hat z_1 v_2+\hat z_2 v_1-\hat z_3 v_4+\hat z_4 v_3,$$
or equivalently $J\hat z\perp v$ at every point, where $J(z):=iz$ in $\C^2$. Hence, assuming up to a rotation that $\hat z=e_3$, we obtain that $\hat u_\infty$ takes values in the three-dimensional space $\{z_4=\varphi=0\}$.

Recall that, by the Legendrian condition, the fact that $\varphi\circ\hat u_\infty=0$ says also that the vector
$Jv(x)=(-v_2(x),v_1(x),0,v_3(x))$ is perpendicular to the image of $\nabla v(x)$ at a.e. $x\in\C$. In other words, $(-v_2,v_1)$
is a.e. perpendicular to the image of $\nabla(v_1,v_2)$. Hence, the map
$$\hat{v}:=\frac{(v_1,v_2)}{|(v_1,v_2)|},$$
defined on the open set $\C\setminus\{v_1=v_2=0\}$, is locally constant.
Thus, on each connected component $\Omega'$, calling $a(\Omega')\in\R^2\subset\R^5$ the constant value, the map $v$ takes values in the Lagrangian plane
$\mathcal{P}(\Omega'):=\operatorname{span}\{a(\Omega'),e_3\}$.

By Proposition \ref{planar}, we can find a linear isometry $L(\Omega'):P(\Omega')\to\C$ such that $te_3\mapsto it$ for all $t\in\R$ and
such that $L(\Omega')\circ v$ is holomorphic. It is immediate to check that the map $h:\C\to\C$
given by this composition on each $\Omega'$ and by $0$ on $\{v_1=v_2=0\}$
is still $W^{1,2}$ and satisfies the Cauchy--Riemann equations.
Hence, $h$ is holomorphic. It follows that the set $\{v_1=v_2=0\}=\{h=0\}$ is discrete
and that, in fact, there is only one connected component $\Omega'$. Thus, $\hat u_\infty$ takes values in a Lagrangian plane and,
by Proposition \ref{planar}, $\hat N_\infty$ is constant a.e.
\end{proof}

We conclude with a sort of $\ep$-regularity result, which follows from an analogous statement in the work of Schoen--Wolfson \cite{SW2},
in which it was one of the fundamental tools for the regularity theory of minimizers.

\begin{Prop}\label{ep.reg}
There exists a universal constant $\ep_0>0$ with the following property.
Assume that $L:\R^2\to\mathcal{P}$ is a linear isometry taking values in a Legendrian plane $\mathcal{P}$ and assume that on a ball $B_r(x_0)\subset\Omega$ we have
$$\sup_{x\in B_r(x_0)}d_K(\ell_{u(x_0)^{-1}}\circ u(x),L(x))^2<\ep_0 r^2,$$
as well as
$$\int_{B_r(x_0)}[|\p_{x_1}u(x)-Z_1(u(x))|^2+|\p_{x_2}u(x)-Z_2(u(x))|^2]\,dx^2<\ep_0 r^2,$$
where $Z_1,Z_2$ are orthonormal, left-invariant vector fields such that $\mathcal{P}=\operatorname{span}\{Z_1(0),Z_2(0)\}$.
Moreover, assume that $N$ is a.e. constant on $\mathcal{G}_u^f$.
Then $u$ is a smooth embedding on $B_{r/2}(x_0)$. \hfill $\Box$
\end{Prop}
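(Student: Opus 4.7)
The plan is to reduce the statement to the small tilt-excess regularity theorem for Hamiltonian stationary Legendrian surfaces of Schoen--Wolfson \cite{SW2}, after normalizing and verifying that the PHSLV structure collapses to their classical single-sheeted setting when $N$ is a.e. constant.

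First, I normalize. By a left translation $\ell_{u(x_0)^{-1}}$, a rotation $R_A$ with $A \in U(2)$ chosen so that $(Z_1(0), Z_2(0))$ is sent to $(X_1(0), X_2(0))$, and a dilation $\delta_{1/r}$ (see Remark \ref{isometry}), I may assume $x_0 = 0$, $u(0) = 0$, $r = 1$, $Z_j = X_j$, and $L(x) = (x_1, 0, x_2, 0, 0)$. Writing $N \equiv N_0 \in \N^*$ on $\mathcal{G}_u^f$, the PHSLV stationarity condition \eqref{V.2rep} becomes a constant multiple of the classical Hamiltonian stationarity of the triple $(B_1(0), u, 1)$, since the integrand in \eqref{V.2rep} is supported on $\{\nabla u \neq 0\}$ and $N \equiv N_0$ there. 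Thus, it suffices to treat the case $N_0 = 1$.

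Second, I argue that in this reduced setting, $u(B_{3/4}(0))$ is realized as a single-sheeted Legendrian graph over $\mathcal{P}$. The $C^0$-closeness of $u$ to the isometry $L$ combined with the $L^2$-smallness of $\nabla u - \nabla L$ and the Legendrian condition $u^*\al = 0$ implies, via Proposition \ref{pr-cont} applied on conformal subdisks, that the projection $v := \pi \circ u : B_{3/4}(0) \to \C^2$ is a bi-Lipschitz homeomorphism of $B_{3/4}(0)$ onto its image with distortion tending to $1$ as $\ep_0 \to 0$. Consequently, $u(B_{3/4}(0))$ is the Legendrian lift of the graph of a small function $f: \tilde{D} \subset \mathcal{P} \to \C \times \R$, and the PHSLV condition matches the classical Hamiltonian stationarity of that graph. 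The single-sheetedness is ensured by the constancy of $N$ together with Proposition \ref{pr-int-dens} and the density identity \eqref{V.88-a}, which rule out extra sheets at small excess.

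Third, I invoke the small tilt-excess regularity theorem of Schoen--Wolfson \cite[Section 4]{SW2}. In that small-excess regime, their argument linearizes the Euler--Lagrange system \eqref{0.1} near the flat graph to a fourth-order biharmonic-type equation for the Lagrangian potential, and an iteration on shrinking balls bootstraps to smoothness on $B_{1/2}(0)$. The $C^0$-closeness to the isometry $L$ then forces $u$ to be injective on $B_{1/2}(0)$, hence a smooth embedding there.

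The main obstacle is the second step: the rigorous upgrade of the weak $C^0 + L^2$ smallness to a genuine bi-Lipschitz graph representation, ruling out additional sheets that the $L^2$ excess alone would not detect. This delicate point crucially uses the a.e. constancy of $N$, the quantitative continuity estimate of Proposition \ref{pr-cont}, and the explicit density formula of Proposition \ref{pr-int-dens}; once the graph representation is secured, the rest of the argument is essentially a citation of Schoen--Wolfson.
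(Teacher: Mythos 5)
Your core strategy coincides with the paper's proof, which is very short: since the stationarity condition \eqref{V.2rep} only sees $N$ on $\mathcal{G}_u^f$ and is unchanged under multiplying $N$ by a constant, one reduces to $N\equiv1$ and then the statement follows directly from the proof of \cite[Theorem 4.1]{SW2}, the one point requiring care being that the minimizing hypothesis of \cite{SW2} is not used in that particular proof (you implicitly address this by noting their argument is a linearization to a biharmonic-type equation plus iteration, which is indeed minimality-free).

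The difference is your second step, which is both unnecessary and overclaimed. Unnecessary, because Schoen--Wolfson's Theorem 4.1 is formulated for a $W^{1,2}$ weakly conformal Hamiltonian stationary Legendrian map under exactly the two smallness hypotheses of the present proposition ($C^0$-closeness to a linear isometry and small $L^2$ tilt-excess); no prior graph or single-sheet representation is required as input --- whatever graphicality is used is produced inside their proof. Overclaimed, because the assertion that $\pi\circ u$ is a bi-Lipschitz homeomorphism ``with distortion tending to $1$ as $\ep_0\to0$'' does not follow from the stated tools: $L^2$-smallness of $\p_{x_j}u-Z_j(u)$ gives no pointwise Lipschitz control, and Proposition \ref{pr-cont} only yields a H\"older-$1/2$-type modulus ($\operatorname{diam}_K^2$ of the image of a disk controlled by the energy on a larger disk), not a lower Lipschitz bound. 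If you wanted to keep that intermediate step you would need a genuinely different argument for it; but the cleanest fix is simply to delete it and apply \cite[Theorem 4.1]{SW2} directly after the reduction to $N\equiv1$.
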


\begin{proof}
Recall that, in the definition of stationarity for a PHSLV, the value of $N$ matters only on $\mathcal{G}_u^f$.
Since stationarity is not affected if we multiply $N$ by a constant, we can then assume without loss of generality that
$N=1$ on all of $\Omega$. The result now follows from the proof of \cite[Theorem 4.1]{SW2}
(note carefully that, although the regularity theory in \cite{SW2} deals with minimizers, the minimality assumption is not used in the proof of this result).
\end{proof}

\subsection{\texorpdfstring{Base case of the induction: $\bm{\nu=1}$}{Base case of the induction}}

We now deal with the base case of the inductive argument where $\nu=1$. Recalling \eqref{def.nu} and the fact that $\tilde N(x)\ge1$ for all $x\in\tilde\Omega=\Omega$,
this means that
$$\tilde N(x)=1\quad\text{for all }x\in\Omega.$$
Recalling that $u$ is not constant and $\Omega$ is connected, we claim that the map $u$ is a smooth immersion.
Obviously, to prove this, it is enough to show that $u$ is a smooth embedding near any point $x\in\Omega$. We begin with a simpler case.

\begin{Prop}\label{adm.smooth}
If $x_0\in\Omega$ is admissible, then $u$ is a smooth embedding near $x_0$. \hfill $\Box$
\end{Prop}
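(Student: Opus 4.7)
The plan is to exploit the blow-up machinery at admissible points together with the rigidity of Proposition~\ref{rigidity} to reduce to a small tilt-excess situation, where Proposition~\ref{ep.reg} takes over. Since $\nu = 1$ we have $\tilde N \equiv 1$ throughout $\Omega$, and Proposition~\ref{pr-robust} gives $N \equiv 1$ a.e.\ on $\mathcal{G}_u^f$; because stationarity only sees $N$ on this set, we may replace $N$ by the constant $1$. Admissibility of $x_0$ provides a sequence $r_k \to 0$ along which \eqref{adm.implicit} holds for every $R > 1$, so by Proposition~\ref{pr-tangent-cone} the rescaled maps $u_k$ converge in $C^0_{loc}$ and weakly in $W^{1,2}_{loc}$ to $u_\infty = \hat u_\infty \circ \psi$, where $(\C, \hat u_\infty, \hat N_\infty)$ is a PHSLV with $\tilde N_\infty(0) = 1$ and $\tilde N_\infty \le 1$ everywhere; combined with the universal lower bound $\tilde N_\infty \ge 1$ from Proposition~\ref{pr-robust} this forces $\tilde N_\infty \equiv 1$.

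Applying Proposition~\ref{rigidity} at any $x \ne 0$ (where the maximum $\tilde N_\infty(0) = \tilde N_\infty(x) = 1$ is attained), the limit $\hat u_\infty$ is a holomorphic map into a Lagrangian plane $\mathcal{P} \subset \C^2 \times \{0\}$ and $\hat N_\infty$ is a.e.\ constant equal to $1$. Being proper with $\hat u_\infty^{-1}(0) = \{0\}$ and of multiplicity one, $\hat u_\infty$ corresponds, after identifying $\mathcal{P} \cong \C$ via a linear isometry, to an injective holomorphic self-map of $\C$ fixing the origin, hence the linear conformal map $z \mapsto \alpha z$ for some $\alpha \in \C^*$. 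The Radon measure convergence from Theorem~\ref{th-sequential-weak-closure} reads $|\nabla u_k|^2\, dx^2 \rightharpoonup 2|\partial_{x_1} u_\infty \wedge \partial_{x_2} u_\infty|\, dx^2$; combining this with the lower semicontinuity $\int |\nabla u_\infty|^2 \le \liminf \int |\nabla u_k|^2$ and the pointwise inequality $|\nabla u_\infty|^2 \ge 2|\partial_{x_1} u_\infty \wedge \partial_{x_2} u_\infty|$ forces equality everywhere, so $u_\infty$ is weakly conformal and $u_k \to u_\infty$ \emph{strongly} in $W^{1,2}_{loc}$. Since $\hat u_\infty$ is complex-linear, the weak conformality of $u_\infty = \alpha \cdot \psi$ transfers to $\psi$; the quasiconformal homeomorphism $\psi$ thus becomes a biholomorphism $z \mapsto c z$ of $\C$ fixing the origin, and $u_\infty$ is itself the linear conformal map $y \mapsto \alpha c(y_1 Z_1 + y_2 Z_2)$ for a suitably rotated orthonormal basis $(Z_1, Z_2)$ of $\mathcal{P}$.

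A further dilation of the domain by the factor $|\alpha c|$ converts the blow-up into a new PHSLV $\tilde u_k$ whose limit is the linear isometry $L(y) = y_1 Z_1 + y_2 Z_2$ onto $\mathcal{P}$, preserving strong $C^0_{loc} \cap W^{1,2}_{loc}$ convergence $\tilde u_k \to L$. Fix a small $r > 0$ for which the systematic error $\int_{B_r(0)} \sum_j |Z_j(0) - Z_j \circ L|^2\, dy^2 = O(r^4)$ lies below $\epsilon_0 r^2 / 2$ (using that $|Z_j(q) - Z_j(0)| \lesssim |q|$ on bounded sets); then for $k$ sufficiently large, the triangle inequality combined with the strong $W^{1,2}$ and $C^0$ convergence delivers both the Kor\'anyi $L^\infty$-closeness and the tilt-excess bounds required by Proposition~\ref{ep.reg} on $B_r(0)$. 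That proposition then declares $\tilde u_k$ a smooth embedding on $B_{r/2}(0)$, and undoing the two rescalings exhibits $u$ as a smooth embedding on a small disk around $x_0$. The main obstacle lies in upgrading the a priori \emph{weak} $W^{1,2}_{loc}$ convergence of the blow-up to \emph{strong} convergence, which demands carefully marrying the Radon measure convergence with the weak conformality inequality; without this step one could not control the tilt-excess entering Proposition~\ref{ep.reg}, and the final $\epsilon$-regularity reduction would fail.
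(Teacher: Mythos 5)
Your proposal is correct and follows essentially the same route as the paper's proof: blow-up at the admissible point, the rigidity of Proposition \ref{rigidity} to identify the tangent map as an injective (hence linear) holomorphic map into a Lagrangian plane with multiplicity one, the upgrade from weak to strong $W^{1,2}_{loc}$ convergence via the Radon measure limit combined with lower semi-continuity of the Dirichlet energy, and finally the small tilt-excess regularity of Proposition \ref{ep.reg}. The only cosmetic difference is that you justify injectivity of $\hat u_\infty$ by classifying proper holomorphic maps with a single zero rather than by the paper's explicit density-splitting argument at a double point, and you spell out the $O(r^4)$ tilt-excess estimate in slightly more detail; both are equivalent in substance.
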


\begin{proof}
Up to a left translation in $\mathbb{H}^2$, we can assume that $u(x_0)=0$.
We consider a parametrized blow-up at $x_0$, provided by Proposition \ref{pr-tangent-cone},
taken along a suitable sequence $r_k\to0$. Let $(\C,\hat u_\infty,\hat N_\infty)$ be the resulting PHSLV
and recall that
$$u_\infty=\hat u_\infty\circ\psi,$$
where $\psi:\C\to\C$ is a quasiconformal homeomorphism and $u_\infty$ is a $C^0_{loc}$ limit of rescalings $u_k$ of the map $u$.

Recalling that $\tilde N_\infty\ge1$ by Proposition \ref{pr-robust}, we also know from Proposition \ref{pr-tangent-cone} that
$$1\le \tilde N_\infty\le \tilde N_\infty(0)=\tilde N(x_0)=1.$$
Hence, $\tilde N_\infty=1$ is constant. By Proposition \ref{rigidity}, it follows that $\hat N_\infty$ is a.e. constant and
$\hat u_\infty$ is a proper holomorphic map taking values in a Lagrangian plane $\mathcal{P}\subset\C^2\times\{0\}$.
Since $\hat N_\infty=\tilde N_\infty=1$ a.e. on $\mathcal{G}_{\hat u_\infty}^f$ by Proposition \ref{pr-robust},
we have $\hat N_\infty=1$ a.e.

Moreover, the map $\hat u_\infty$ is injective, since if $p:=\hat u_\infty(x')=\hat u_\infty(x'')\neq0$
for two points $x'\neq x''$ then, using the same notation as in the proof of Proposition \ref{pr-tangent-cone}
(arguing as in the proof of Proposition \ref{pr-fiber} and noting that the fibers of $\hat u_\infty$ are finite sets), we see that
$$2\pi\tilde N_\infty(x')+2\pi\tilde N_\infty(x'')\le\theta^\chi(\mathbf{w},p)\le\theta^\chi(\mathbf{w},0)=2\pi\tilde N_\infty(0),$$
contradicting the fact that $\tilde N_\infty(x')=\tilde N_\infty(x'')=\tilde N_\infty(0)=1$.
Since $\hat u_\infty$ is an injective holomorphic map with $\hat u_\infty(0)=0$, it is linear.

Finally, recall from the proof of Theorem \ref{th-sequential-weak-closure} that we have the convergence of Radon measures
$$N_k\frac{|\nabla u_k|^2}{2}\,dx^2\rightharpoonup N_\infty|\partial_{x_1}u_\infty\wedge\partial_{x_2}u_\infty|\,dx^2,$$
where $N_\infty=\hat N_\infty\circ\psi=1$ a.e. (as quasiconformal homeomorphisms preserve negligibility of sets).
Since $N_k=\tilde N_k=1$ a.e. on $\mathcal{G}_{u_k}^f$, we conclude that
$$\frac{|\nabla u_k|^2}{2}\,dx^2\rightharpoonup |\partial_{x_1}u_\infty\wedge\partial_{x_2}u_\infty|\,dx^2
\le\frac{|\nabla u_\infty|^2}{2}\,dx^2.$$
By lower semi-continuity of the Dirichlet energy, this implies that the last inequality is an equality
and that the weak convergence $u_k\rightharpoonup u_\infty$ in $W^{1,2}_{loc}$ is in fact a strong one.

Hence, $u_\infty$ is already weakly conformal, in which case $\psi$ can be taken to be the identity (cf. the proof of Theorem \ref{pr-tangent-cone}). Thus, we have
$$u_\infty=\hat u_\infty.$$
Since $u_k\to u_\infty$ in $C^0_{loc}\cap W^{1,2}_{loc}(\C,\mathbb{H}^2)$, and since $u_k$ is a linear conformal map,
we conclude that (a rescaling of) $u_k$ eventually satisfies the assumptions of
Proposition \ref{ep.reg}. Thus, $u_k$ is a smooth embedding near the origin, for $k$ large enough,
which implies that $u$ is a smooth embedding near the point $x_0$.
\end{proof}

We now turn to the general case. The following argument will be useful also in the inductive step,
revealing that, in fact, all points in $\Omega_{nc}$ are strongly admissible.

\begin{Prop}\label{nc.full.stop}
Any point $x_0\in\Omega$ is strongly admissible, and thus
the map $u$ is a smooth immersion on all of $\Omega$. \hfill $\Box$
\end{Prop}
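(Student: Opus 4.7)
To show every $x_0\in\Omega$ is strongly admissible, by Proposition~\ref{adm.smooth} it suffices to prove that every point is admissible: near a smooth immersion $E(r):=\int_{B_r(x_0)}|\nabla u|^2 \sim cr^2$, so strong admissibility is automatic there. I argue by contradiction: suppose $x_0\in\Omega$ is non-admissible. Pick $\omega\ni x_0$ compactly contained in $\Omega$ with $u(x_0)\notin u(\partial\omega)$ (available via Proposition~\ref{in.tilde.omega}). Under the base-case hypothesis $\tilde N\equiv 1$, the density of the induced varifold $\mathbf{v}_\omega$ satisfies $\theta^\chi\equiv 2\pi$ on $\operatorname{spt}|\mathbf{v}_\omega|\setminus u(\partial\omega)$: the bound $\theta^\chi\ge 2\pi$ comes from Proposition~\ref{co-dens}, while the upper bound follows from $\theta^\chi(p)=2\pi\sum_{x\in u^{-1}(p)}N(x)$ at generic $p$ (Proposition~\ref{pr-int-dens}) together with upper semi-continuity (Corollary~\ref{dens.usc}).

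\textbf{Classification of the blow-up and regularity of the map.} The next step is to classify any blow-up $\mathbf{v}_*$ of $\mathbf{v}_\omega$ at $u(x_0)$. Corollary~\ref{mono.cor2} gives mass bounds, so along a sequence $a_k\to 0$ the rescalings $(\delta_{1/a_k})_*(\ell_{u(x_0)^{-1}})_*\mathbf{v}_\omega$ converge, up to a subsequence, to a $\delta_t$-invariant HSLV $\mathbf{v}_*$ on $\mathbb H^2$ with $\theta^\chi\equiv 2\pi$ on its support. The conical invariance $\Theta^\chi(0,a)\equiv\theta^\chi(0)=2\pi$ combined with the monotonicity formula (Theorem~\ref{mono.cor}) forces $\nabla^{\mathcal P}\arctan\sigma\equiv 0$ on $\operatorname{spt}|\mathbf{v}_*|\setminus\{0\}$, and an adaptation of the argument of Lemma~\ref{const.sigma} confines $\operatorname{spt}|\mathbf{v}_*|$ inside $\{\varphi=0\}=\C^2$. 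The resulting conical integral stationary Lagrangian varifold in $\C^2$ with constant density $2\pi$ must, by the classical constancy theorem, be a single Legendrian plane $\mathcal P_*\subset\C^2\times\{0\}$ with multiplicity one. To transfer this rigidity to $u$, let $C_k$ be the connected component of $\omega\cap u^{-1}(B_{a_k}^\r(u(x_0)))$ containing $x_0$, choose $\rho_k>0$ such that $B_{\rho_k}(x_0)\subseteq C_k\subseteq B_{2\rho_k}(x_0)$ (so $\rho_k\to 0$ by continuity), and consider the rescalings
$$u_k(y):=\delta_{1/a_k}\circ\ell_{u(x_0)^{-1}}\circ u(\rho_k y).$$
These have bounded and bounded-away-from-zero Dirichlet energy on $B_1$, and the supports of their induced varifolds converge in local Hausdorff to (a subset of) $\mathcal P_*$ by Corollary~\ref{dens.limits}. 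Theorem~\ref{th-sequential-weak-closure} produces a limit PHSLV $(B_1,\hat u_\infty,\hat N_\infty)$ with image in $\mathcal P_*$; by Proposition~\ref{planar} it is holomorphic with $\hat N_\infty\equiv 1$ (since $\tilde N_\infty\le 1$ by upper semi-continuity). Arguing exactly as at the end of the proof of Proposition~\ref{adm.smooth}, the convergence $u_k\to u_\infty$ upgrades to strong $W^{1,2}_{loc}$ and $u_\infty=L$ is an injective linear isometric parametrization of a subset of $\mathcal P_*$. Hypothesis (c) of Proposition~\ref{ep.reg} holds since Proposition~\ref{pr-robust} gives $N\equiv 1$ a.e.\ on $\mathcal G^f_u$, while (a) and (b) follow from the strong convergence $u_k\to L$; thus $u_k$ is a smooth embedding on $B_{1/2}$ for $k$ large, so $u$ is a smooth embedding on $B_{\rho_k/2}(x_0)$, contradicting the non-admissibility of $x_0$.

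\textbf{Main obstacle.} The delicate point is the choice of the radii $\rho_k$ ensuring that the rescalings $u_k$ have both bounded and bounded-away-from-zero Dirichlet energy on $B_1$. The lower bound, crucial to guarantee that the weak limit $u_\infty$ is non-trivial (so that Proposition~\ref{planar} applies nontrivially), requires comparing $E(\rho_k)$ with $|\mathbf{v}_\omega|(B_{a_k}^\r(u(x_0)))$: the latter is $\ge c a_k^2$ by the density bound, but the former a priori captures only the mass of $C_k$. Exploiting that the blow-up is a single plane with multiplicity one via Corollary~\ref{dens.limits} (which forces $\operatorname{spt}|\mathbf{v}_\omega|\cap B_{a_k}^\r(u(x_0))$ to be Hausdorff-close to a flat disk of diameter $\sim a_k$), any other preimage component in $\omega\cap u^{-1}(B_{a_k}^\r(u(x_0)))$ is either empty or separated from $x_0$ by distance comparable to $\rho_k$ at small scales, so its mass contribution is negligible; this closes the comparison $E(\rho_k)\asymp a_k^2$ needed to run the compactness argument.
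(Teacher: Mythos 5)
Your route is genuinely different from the paper's: you try to classify the \emph{varifold} blow-up of $\mathbf{v}_\omega$ at $u(x_0)$ and then transfer that rigidity back to the domain, whereas the paper never forms a varifold blow-up at the non-admissible point. It instead uses the contradiction hypothesis (an energy drop $\int_{B_{r_j/2}}|\nabla u|^2=\ep_j^4s_j^2$ with $s_j^2=\int_{B_{r_j}}|\nabla u|^2$) to select the intermediate target scale $\tau_j\sim\ep_js_j$, takes the component $D_j$ of $\{\r\circ u<\tau_j\}$ through $x_0$, uniformizes it by the Riemann mapping theorem, and extracts a nonconstant limit PHSLV near $0$, contradicting the assumed decay. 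Your first gap is the blow-up classification itself: you treat $\mathbf{v}_*$ as "a conical integral stationary Lagrangian varifold", but rectifiability (let alone integrality) of a limit of HSLVs is exactly what fails in this setting -- Theorem \ref{cpt.int} closes integral HSLVs only \emph{among rectifiable varifolds}, and Theorem \ref{th-count-ex} and Remark \ref{orr} exhibit non-rectifiable limits. Moreover, Lemma \ref{const.sigma}, which you invoke to confine the support to $\{\varphi=0\}$, assumes the tensor structure \eqref{tensor.v}, which is not available for $\mathbf{v}_*$. (A smaller inaccuracy: $\theta^\chi\equiv2\pi$ on $\operatorname{spt}|\mathbf{v}_\omega|$ is false at points of $u(\omega)$ with several preimage components; what one can get, choosing $\omega$ to realize $\tilde N(x_0)=1$ and using the $\Theta$-comparison from the proof of Proposition \ref{pr-tangent-cone}, is density $\equiv2\pi$ on the support of the blow-up.)

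The second gap is the one you flag as the "main obstacle", and your sketch does not close it. The component $C_k$ of $\omega\cap u^{-1}(B^\r_{a_k}(u(x_0)))$ need not satisfy $B_{\rho_k}(x_0)\subseteq C_k\subseteq B_{2\rho_k}(x_0)$ for any $\rho_k$ (it has no reason to be round), so even the upper energy bound for $u_k$ on balls larger than $B_1(0)$, and the identification of the limit of the induced varifolds with a restriction of $\mathbf{v}_*$, are not secured. More seriously, the lower bound $\int_{B_{\rho_k}(x_0)}|\nabla u|^2\gtrsim a_k^2$ does not follow from $|\mathbf{v}_\omega|(B^\r_{a_k}(u(x_0)))\ge ca_k^2$: that mass may be carried by parts of $u^{-1}(B^\r_{a_k}(u(x_0)))$ outside $B_{\rho_k}(x_0)$ (other components accumulating at $x_0$, or the part of $C_k$ outside the inscribed ball), and monotonicity yields a lower bound only for a domain whose boundary image stays at distance comparable to $a_k$ from $u(x_0)$ -- a property $\partial B_{\rho_k}(x_0)$ need not have, since the inscribed ball touches $\partial C_k$ at a single point. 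The paper resolves precisely these issues by (i) the Courant--Lebesgue selection squeezing $D_j$ between $B_{r_j/4}$ and $B_{Cr_j}$, which is only possible because $\tau_j$ is tied to the energy drop; (ii) replacing roundness by the conformal uniformization $\phi_j$ together with Lemma \ref{lm-sep-lev}; and (iii) obtaining the mass lower bound from monotonicity on the inner domain $\Delta_j$ with $\r\circ u>\ep_js_j/4$ on $\partial\Delta_j$. Without analogues of these steps, your compactness argument does not produce a nonconstant limit near the origin, and the contradiction via Proposition \ref{ep.reg} does not go through.
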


\begin{proof}
Fix $x_0\in\Omega$ and assume without loss of generality that $x_0=0$ and $u(x_0)=0$.
Let us assume by contradiction that $0\in\Omega$ is \emph{not} a strongly admissible point.
This means that we can find a sequence of radii
$r_j\to0$ and numbers $\ep_j\to0$ such that
\begin{equation}\label{too.fast.decay}\int_{B_{r_j/2}(0)}|\nabla u|^2\,dx^2=\ep_j^4s_j^2,\quad s_j^2:=\int_{B_{r_j}(0)}|\nabla u|^2\,dx^2.\end{equation}

Given any sequence $\tau_j\in[\ep_js_j,2\ep_js_j]$, let $D_j'=D_j'(\tau_j)$ be the connected component of $\{\mathfrak{r}\circ u<\tau_j\}$ containing $0$.
We claim that
\begin{equation}\label{incl.claim}B_{r_j/4}(0)\subseteq D_j'\subseteq B_{Cr_j}(0)\end{equation}
eventually,
for a constant $C>1$ independent of $j$.
The first inclusion follows directly from Proposition \ref{pr-cont} and our assumption, which gives
$$\operatorname{diam}_K u(B_{r_j/4}(0))\le C\ep_j^2s_j=o(\ep_js_j),$$
while the second one is obtained as in the proof of Proposition \ref{in.tilde.omega}:
for a large enough $\Lambda\ge2$, we can find $\rho_j\in (r_j,\Lambda r_j)$ such that
$$\r\circ u\ge\frac{s_j}{\sqrt{\log\Lambda}}\quad\text{on }\p B_{\rho_j}(0),$$
and the latter is greater than $2\ep_js_j\ge\tau_j$ eventually, giving
$$D_j'\subseteq B_{\rho_j}(0)\subseteq B_{\Lambda r_j}(0).$$

We now select a smooth domain $D_j$ in the following way:
for $j$ large enough we have $D_j'(2\ep_js_j)\subset\subset\Omega$
and, considering the compact set
$$K_j:=D_j'(2\ep_js_j)\cap\{\r\circ u\le\ep_js_j\},$$
we take a smooth domain $K_j\subset\omega_j\subset\subset D_j'(2\ep_js_j)$
and set $\Gamma_j':=\de\omega_j$.
Note that $\Gamma_j'$ a priori consists of one or more loops; we let $\Gamma_j$ denote the outer loop
and let $D_j\supseteq \omega_j\supseteq D_j'(\ep_js_j)$ be the domain enclosed by $\Gamma_j$, for which clearly we still have the inclusions
$$B_{r_j/4}(0)\subseteq D_j\subseteq B_{Cr_j}(0).$$

%

Moreover, by Lemma \ref{lm-energy-quant} (applied to a constant sequence) and Remark \ref{no.c.star}, since $\mathfrak{r}\circ u<2\ep_js_j$ on $\Gamma_j$,
we have $\mathfrak{r}\circ u\le C\ep_js_j$ on $D_j$, which by \eqref{upper.bd.assumption} gives
\begin{equation}\label{dj.ub}\int_{D_j}N|\nabla u|^2\,dx^2\le C(\ep_js_j)^2.\end{equation}

We now consider a conformal diffeomorphism
$$\phi_j:B_1(0)\to r_j^{-1}D_j\quad\text{with }\phi_j(0)=0.$$
By the classical Carathéodory theorem, $\phi_j$ extends to a homeomorphism $\partial B_1(0)\to\Gamma_j$.
We claim that $\phi_j$ converges (in $C^\infty_{loc}$) to a limit diffeomorphism $\phi_\infty:B_1(0)\to \phi_\infty(B_1(0))$.

Indeed, a limit in $C^\infty_{loc}$ exists since the maps $\phi_j$ are holomorphic and equi-bounded.
In order to conclude that $\phi_\infty$ is a diffeomorphism, it is enough to show that it is nonconstant
(see Lemma \ref{rouche}).
Assuming by contradiction that $\phi_\infty=x_0$ is constant, we observe that $\phi_j$ is bounded in $W^{1,2}$ since it is a sequence of injective conformal maps whose image has bounded area. Hence, they converge weakly to $\phi_\infty$ in $W^{1,2}$. Since the traces converge weakly in $H^{1/2}$,
they converge strongly in $L^2$ to $x_0$. Thus, the classical representation formula with the Poisson kernel shows that
$$\phi_j\to x_0\quad\text{in }C^\infty_{loc}(B_1(0)).$$
Since $\phi_j(0)=0$, we must have $x_0=0$. However, this contradicts the strong $L^2$ convergence of traces and the fact that $|\phi_j|\ge 1/4$ at the boundary $\partial B_1(0)$.

In the same way, we now consider another smooth domain
$0\in\Delta_j\subset\subset D_j$ diffeomorphic to the unit disk, such that
$$\mathfrak{r}\circ u\in(\ep_js_j/4,\ep_js_j/2)\quad\text{on }\partial \Delta_j.$$
Crucially, the next lemma gives also the inclusion
$$\phi_j^{-1}(r_j^{-1}\Delta_j)\subseteq B_{\alpha}(0)$$
eventually, for some $\alpha\in(0,1)$. Moreover, by monotonicity applied to the induced varifold $\mathbf{v}_{\Delta_j}$, we see that
$$\int_{\Delta_j}N|\nabla u|^2\,dx^2\ge|\mathbf{v}_{\Delta_j}|(B_{\ep_js_j/4}^\r(0))\ge c(\ep_js_j)^2.$$
As a consequence of this and \eqref{dj.ub},
there exist two other constants $0<c<C$ such that
$$c (\ep_js_j)^2\le\int_{B_{\alpha}(0)}[N|\nabla u|^2]\circ(r_j\phi_j)\frac{|r_j\nabla\phi_j|^2}{2}\,dx^2\le\int_{B_1(0)}[N|\nabla u|^2]\circ(r_j\phi_j)\frac{|r_j\nabla\phi_j|^2}{2}\,dx^2\le C(\ep_js_j)^2.$$
Setting
$$u_j:=\delta_{1/(\ep_js_j)}\circ u\circ(r_j\phi_j),\quad N_j:=N\circ(r_j\phi_j),$$
we then have
$$c\le\int_{B_\alpha(0)}N_j|\nabla u_j|^2\,dx^2\le\int_{B_1(0)}N_j|\nabla u_j|^2\,dx^2\le C.$$
We can then apply Theorem \ref{th-sequential-weak-closure}
and take a (subsequential) limit PHSLV $(B_1(0),\hat u_\infty,\hat N_\infty)$.
Note carefully that the limit map $\hat u_\infty$ is not constant, since \eqref{rad.limit}
implies that
$$\int_{\psi(B_\alpha(0))}|\nabla\hat u_\infty|^2\,dx^2>0$$
for a suitable quasiconformal homeomorphism $\psi$.

Recalling that $\tilde \Omega=\Omega$ and taking $0\in\omega_0\subset\subset\Omega$ such that $0\nin u(\p\omega_0)$,
as in the proof of Proposition \ref{pr-tangent-cone} we see that the induced limit varifold
$\mathbf{v}_\infty$ has support included in a blow-up of $\mathbf{v}_{\omega_0}$.
Moreover, for the (subsequential) Hausdorff limit
$$T:=\lim_{j\to\infty}\phi_j^{-1}(r_j^{-1}\p\Delta_j)\subseteq\bar B_\alpha,$$
we have $0\nin T$ since $u_\infty|_T$ takes values in $[1/4,1/2]$,
and moreover the connected component of $B_1(0)\setminus T$ containing $0$ is compactly included in $B_1(0)$,
since the same holds along the sequence (with $\phi_j^{-1}(r_j^{-1}\p\Delta_j)$ in place of $T$).
This shows that $0$ belongs to the domain of $\tilde N_\infty$.

Hence, arguing exactly as in the proof of Proposition \ref{pr-tangent-cone},
we obtain that $\hat u_\infty$ is not constant in any neighborhood of $0$.
Recalling \eqref{rad.limit},
this means that, for any fixed $\rho\in(0,1)$, there exists a constant $c(\rho)>0$ such that
$$\int_{B_\rho(0)}|\nabla u_j|^2\,dx^2>c(\rho)$$
eventually. Since $\phi_j$ converges to a limit diffeomorphism $\phi_\infty$, we also have
$\phi_j(B_\rho(0))\subseteq B_{C_0\rho}(0)$ eventually, for a constant $C_0>0$ independent of $\rho$ and $j$.
This immediately gives
$$\int_{B_{C_0\rho r_j}(0)}|\nabla u|^2\,dx^2\ge c(\rho)(\ep_js_j)^2.$$
Taking $\rho>0$ so small that $C_0\rho\le1/2$, we reach a contradiction with \eqref{too.fast.decay}.
\end{proof}

\begin{Rm}
At the end of the previous proof, we could have also concluded as in the proof of Proposition \ref{adm.smooth}:
since $\tilde N_\infty=1$ near $0$, we have $u_j\to u_\infty$ strongly in $W^{1,2}$ here and thus $\hat u_\infty=u_\infty$ is an injective holomorphic map near $0$,
which is then close to a linear conformal map at a small scale. Hence, Proposition \ref{ep.reg} applies to $u_j$, for $j$ large enough, showing that $u$ is a smooth embedding near $0$, a contradiction.
However, we preferred an argument which better generalizes to the situation of the inductive step. \hfill $\Box$
\end{Rm}
%

\begin{Lm}
\label{lm-sep-lev}
In the situation of the previous proof, there exists a constant $\alpha\in(0,1)$ such that
$$\phi_j^{-1}(r_j^{-1}\Delta_j)\subseteq B_\alpha(0)$$
for $j$ large enough. \hfill $\Box$
\end{Lm}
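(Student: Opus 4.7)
The plan is to derive a uniform positive lower bound on the conformal modulus of the annulus $D_j \setminus \bar\Delta_j$ via an extremal length argument, and then to translate this bound, through the conformal diffeomorphism $\phi_j^{-1}\circ r_j^{-1}$, into the containment $\phi_j^{-1}(r_j^{-1}\Delta_j) \subseteq B_\alpha(0)$. The extremal length estimate itself is immediate from the setup; the hard part will be the last step, where one must invoke a classical but nontrivial geometric function theory fact to conclude that a lower bound on the modulus forces the inner simply-connected component to stay uniformly away from $\p B_1(0)$.

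To begin, by construction $\r\circ u \in (\ep_j s_j, 2\ep_j s_j)$ on $\Gamma_j$ (since $\Gamma_j \subseteq \p\omega_j \subseteq D_j'(2\ep_j s_j) \setminus K_j$) and $\r\circ u \in (\ep_j s_j/4, \ep_j s_j/2)$ on $\p\Delta_j$. From \eqref{nabla.r.norm} we have $|\nabla^H\r| \le 1$, and therefore $|\nabla(\r\circ u)| \le |\nabla u|$ almost everywhere, so any rectifiable curve $\gamma \subset D_j \setminus \bar\Delta_j$ joining $\p\Delta_j$ to $\Gamma_j$ satisfies
\[
\int_\gamma |\nabla u|\, ds \ge \int_\gamma |\nabla(\r\circ u)|\, ds \ge \frac{\ep_j s_j}{2}.
\]
Hence $\rho_j := 2|\nabla u|/(\ep_j s_j)$ is admissible for the family of connecting curves in the annulus $D_j \setminus \bar\Delta_j$, while by \eqref{dj.ub} together with $N \ge 1$ we have
\[
\int_{D_j \setminus \bar\Delta_j} \rho_j^2\, dx^2 \le \frac{4}{(\ep_j s_j)^2}\int_{D_j} N|\nabla u|^2\, dx^2 \le C
\]
for a universal constant $C > 0$. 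The variational characterization of the conformal modulus then yields $\operatorname{mod}(D_j \setminus \bar\Delta_j) \ge 1/C$, and by conformal invariance under $\phi_j^{-1}\circ r_j^{-1}$ the same lower bound holds for the modulus of $A_j^\ast := B_1(0) \setminus \overline{\phi_j^{-1}(r_j^{-1}\Delta_j)}$.

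The set $A_j^\ast$ is a doubly-connected domain of $B_1(0)$ whose outer boundary is $\p B_1(0)$ and whose bounded complementary component $\phi_j^{-1}(r_j^{-1}\Delta_j)$ is a topological disk containing $0$. A classical estimate from geometric function theory---obtained, for instance, by uniformizing $A_j^\ast$ to a round annulus $\{\hat r < |z| < 1\}$ with $\hat r \le e^{-2\pi/C}$ and applying Teichm\"uller's extremal modulus inequality, or equivalently the Koebe distortion theorem to the Riemann map of $\phi_j^{-1}(r_j^{-1}\Delta_j)$ normalized at the origin---ensures that such a lower bound on the modulus forces $\phi_j^{-1}(r_j^{-1}\Delta_j) \subseteq B_\alpha(0)$ for some universal $\alpha \in (0,1)$, as desired. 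This last step is the only nontrivial ingredient; the rest of the argument reduces to the oscillation of $\r\circ u$ across the annulus combined with the a priori energy bound \eqref{dj.ub}.
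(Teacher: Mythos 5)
Your proof is correct, and it takes a genuinely different route from the paper. The paper also reduces the lemma to a lower bound on the conformal modulus of the ring $B_1(0)\setminus\overline{\phi_j^{-1}(r_j^{-1}\Delta_j)}$, but it establishes that bound by uniformizing with a map $f_j$ from a round annulus $B_1(0)\setminus B_{\xi_j}(0)$ and running a Cauchy--Schwarz/oscillation argument on a radial segment to exclude $\xi_j\to1$; it then converts the modulus bound into the containment by a compactness argument specific to the sequence (normal families, the lower bound $|\phi_j\circ f_j|\ge1/4$, a degree count, and the maximum modulus principle). Your version packages the first step as an extremal length estimate with the explicit admissible metric $2|\nabla u|/(\epsilon_j s_j)$ --- the same underlying mechanism (oscillation of $\r\circ u$ against the energy bound \eqref{dj.ub}), but quantitative and at fixed $j$ --- and the second step as a single classical modulus theorem, which yields a universal $\alpha$ rather than one extracted by contradiction along the sequence. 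Two small points. First, since $u$ is only $W^{1,2}$, the admissibility of $\rho_j$ holds for all connecting curves outside a subfamily of zero modulus (Fuglede's theorem / the ACL property), which suffices; this is worth a word. Second, the classical fact you need in the last step is \emph{Gr\"otzsch's} modulus theorem, not Teichm\"uller's: the extremal ring in $B_1(0)$ separating $\{0,p\}$ from $\partial B_1(0)$ is $B_1(0)\setminus[0,|p|]$, whose modulus $\tfrac{1}{2\pi}\mu(|p|)$ tends to $0$ as $|p|\to1$, so a uniform lower bound $1/C$ forces $|p|\le\mu^{-1}(2\pi/C)<1$. Teichm\"uller's theorem only records one finite point of the unbounded complementary component and gives $\operatorname{mod}\le\tau(1/|p|)\to\tau(1)>0$ as $|p|\to1$, which does not by itself exclude the inner component approaching $\partial B_1(0)$ when $1/C<\tau(1)$. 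With Gr\"otzsch in place of Teichm\"uller the argument closes.
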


\begin{proof}
Let $A_j:=\bar B_1(0)\setminus {\phi_j^{-1}(r_j^{-1}\Delta_j)}$, which is diffeomorphic to a closed annulus.
It is well known (see, e.g., \cite[Theorem A.1]{DLP}) that there exist $\xi_j\in(0,1)$ and a conformal diffeomorphism
$$f_j:\bar B_1(0)\setminus B_{\xi_j}(0)\to A_j$$
which maps $\p B_{\xi_j}(0)\to \phi_j^{-1}(r_j^{-1}\p\Delta_j)$ and $\p B_1(0)\to\p B_1(0)$.

We claim that
$$\beta:=\limsup_{j\to\infty}\xi_j<1.$$
Indeed, assuming by contradiction that $\xi_j\to1$ along a subsequence,
by \eqref{dj.ub} we have
$$\int_{B_1(0)}|\nabla\tilde u_j|^2\,dx^2\le C(\ep_js_j)^2,$$
where $\tilde u_j:=u\circ(r_j\phi_j)\circ f_j$, and thus by Cauchy--Schwarz we deduce that
$$\int_{B_1(0)\setminus B_{\xi_j}(0)}|\nabla\tilde u_j|\,dx^2=o(\ep_js_j).$$
In particular, we can select $\theta_j\in\R/2\pi$ such that $\tilde u_j$
restricts to a $W^{1,1}$ function along the segment $S_j:=[\xi_je^{i\theta_j},e^{i\theta_j}]$,
with
$$\int_{S_j}|\tilde u_j'|\,d\mathcal{H}^1=o(\ep_js_j).$$
Recalling \eqref{nabla.r.norm}, this implies that the oscillation of
$\r\circ\tilde u_j|_{S_j}$ is $o(\ep_js_j)$. This contradicts the fact that the oscillation is
at least $\r\circ\tilde u_j(e^{i\theta_j})-\r\circ\tilde u_j(\xi_je^{i\theta_j})>\ep_js_j/2$
(recall that $\r\circ u>\ep_js_j$ on $\de D_j$ and $\r\circ u<\ep_js_j/2$ on $\de\Delta_j$).

Now, taking any $\beta'\in(\beta,1)$, we claim that
\begin{equation}\label{no.thinner}\limsup_{j\to\infty}\max_{x\in\p B_{\beta'}(0)}|f_j(x)|<1.\end{equation}
Indeed, assume that $|f_j(x_j)|\to1$ along a subsequence, for some points $x_j\in\p B_{\beta'}(0)$. Up to a further subsequence, we can assume that $x_j\to x_\infty$
and $f_j\to f_\infty$ locally uniformly, for a holomorphic map $f_\infty:B_1(0)\setminus \bar B_\beta(0)\to\C$.
Since $B_{r_j/4}(0)\subseteq\Delta_j$, we have
$$|\phi_j\circ f_j|\ge\frac{1}{4}.$$
If $f_\infty=0$ then, since $\phi_j\to\phi_\infty$ uniformly near $0$, we would have $\phi_j\circ f_j\to\phi_\infty(0)=0$,
a contradiction.
%
Since $f_j/|f_j|$ has degree $1$ from $\p B_1(0)$ to itself,
it also has degree $1$ from $\p B_{\beta'}(0)$ to $\p B_1(0)$, showing that $f_j$ cannot converge to a nonzero constant on $\p B_{\beta'}(0)$ either.

Thus, $f_\infty$ is not constant, and from Lemma \ref{rouche} we deduce that it is a conformal diffeomorphism.
Since $|f_j|\le1$, we have $|f_\infty|\le1$ as well, and hence $|f_\infty|<1$ by the maximum modulus principle for nonconstant holomorphic maps. Hence, we have
$$|f_j(x_j)|\to|f_\infty(x_\infty)|<1,$$
contradicting the assumption that $|f_j(x_j)|\to1$. Finally, using the fact that $f_j$ is an orientation-preserving diffeomorphism,
it is easy to deduce from \eqref{no.thinner} that
$$\limsup_{j\to\infty}\max_{x\in\p B_{\xi_j}(0)}|f_j(x)|<1,$$
as desired.
\end{proof}

We also used the following well-known fact.

\begin{Lm}\label{rouche}
Assume that we have a sequence of conformal diffeomorphisms $f_j:U_j\to f_j(U_j)$, with $U_j\subseteq\C$ open,
converging to $$f_\infty:U_\infty\to\C$$ locally uniformly, for an open connected $\emptyset\neq U_\infty\subseteq\C$ (meaning that, for any compact $K\subset U$,
we have $K\subseteq U_j$ eventually and $f_j|_K\to f_\infty|_K$ uniformly). If $f_\infty$ is not constant,
then $f_\infty$ is a conformal diffeomorphism. \hfill $\Box$
\end{Lm}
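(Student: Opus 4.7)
The plan is to reduce the claim to two classical facts from one complex variable: Weierstrass's theorem on local uniform limits of holomorphic functions, and Hurwitz's theorem on the persistence of zeros under such limits.

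First I would check that $f_\infty$ is holomorphic on $U_\infty$. Given any $z_0 \in U_\infty$, take a closed disk $\bar B_r(z_0) \subset U_\infty$; by assumption $\bar B_r(z_0) \subset U_j$ for $j$ large and $f_j \to f_\infty$ uniformly on $\bar B_r(z_0)$, so by Weierstrass's theorem $f_\infty$ is holomorphic on $B_r(z_0)$. Since $z_0$ was arbitrary, $f_\infty$ is holomorphic on the connected open set $U_\infty$.

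The core step is to show that $f_\infty$ is injective, which is where I would use Hurwitz's theorem. Suppose for contradiction that $z_1 \neq z_2$ in $U_\infty$ satisfy $f_\infty(z_1) = f_\infty(z_2)$. Define $g_j := f_j - f_j(z_2)$ and $g := f_\infty - f_\infty(z_2)$; note that $g_j$ is defined on $U_j$ for $j$ large (since $z_2 \in U_j$ eventually) and $g_j \to g$ locally uniformly on $U_\infty$. Since $f_\infty$ is not constant on the connected set $U_\infty$, neither is $g$, so $g$ has an isolated zero at $z_1$. Choose $\rho > 0$ small enough that $\bar B_\rho(z_1) \subset U_\infty$, $z_2 \notin \bar B_\rho(z_1)$, and $g \neq 0$ on $\partial B_\rho(z_1)$. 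By Hurwitz's theorem, for all sufficiently large $j$ the function $g_j$ has a zero $\zeta_j \in B_\rho(z_1)$. Then $\zeta_j \neq z_2$ yet $f_j(\zeta_j) = f_j(z_2)$, contradicting the injectivity of $f_j$.

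Finally, an injective holomorphic function on a connected open subset of $\C$ necessarily has nowhere-vanishing derivative (otherwise $f_\infty - f_\infty(w_0)$ would have a zero of order at least two at some $w_0$, and a second application of Hurwitz would produce two distinct zeros of $f_j - f_j(w_0)$ in a small disk around $w_0$ for large $j$, contradicting injectivity of $f_j$). Hence $f_\infty$ is a local biholomorphism; combined with global injectivity and the open mapping theorem, this shows that $f_\infty : U_\infty \to f_\infty(U_\infty)$ is a conformal diffeomorphism. There is no serious obstacle here beyond the standard Hurwitz argument; the only thing worth highlighting is the role of the nonconstancy assumption, which is exactly what prevents $g$ from vanishing identically and thereby makes Hurwitz applicable.
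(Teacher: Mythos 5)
Your proof is correct and follows essentially the same route as the paper: both rest on the persistence of zeros of $f_j-c$ under local uniform convergence to contradict the injectivity of the $f_j$. The only cosmetic difference is that you invoke Hurwitz's theorem on a small disk around one of the two colliding points, whereas the paper applies Rouch\'e's theorem (the argument principle) on a single Jordan curve enclosing both points; your variant sidesteps the minor topological step of constructing such a curve inside $U_\infty$, and your closing verification that $f_\infty'$ never vanishes is standard and equally fine.
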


\begin{proof}
Since the maps $f_j$ are holomorphic, the limit $f_\infty$ is also holomorphic.
Assuming that $f_\infty$ is not constant, in order to prove the statement it suffices to show that
$f_\infty$ is injective. Assuming that $f_\infty(x_0)=f_\infty(x_1)$ for two points $x_0\neq x_1$,
up to subtracting a constant we can assume that
$$f_\infty(x_0)=f_\infty(x_1)=0.$$
Since $U_\infty$ is connected and $f_\infty^{-1}(0)$ is discrete, we can enclose $\{x_0,x_1\}$ with a smooth Jordan curve $\gamma$, bounding a subset of $U_\infty$ and whose image avoids the zeros of $f_\infty$.
Using complex notation, the classical Rouch\'e's theorem then gives the contradiction
$$1\ge\frac{1}{2\pi i}\int_\gamma \frac{f_j'(z)}{f_j(z)}\,dz\to\frac{1}{2\pi i}\int_\gamma \frac{f_\infty'(z)}{f_\infty(z)}\,dz\ge 2,$$
since the two integrals count the number of zeros (with multiplicity) enclosed by $\gamma$,
for the two functions $f_j$ and $f_\infty$, respectively.
\end{proof}

\section{Inductive step: preparation}

\subsection{Classification of tangent cones}

Recalling Proposition \ref{nc.is.omega} and \eqref{def.nu},
we now assume that we have
$$\sup_{\Omega}\tilde N\in(\nu-1,\nu]\quad\text{for some }\nu\ge2$$
and we assume inductively that
the regularity result, i.e., Theorem \ref{reg.thm.chart} holds for any PHSLV such that this supremum is at most $\nu-1$.

\begin{Dfi}
From now on, we will use the following notation for the rescalings:
\begin{equation}\label{not.res}
u_{x_0,r}(x):=\delta_{1/s(x_0,r)}\circ\ell_{u(x_0)^{-1}}\circ u(x_0+rx),\quad s(x_0,r)^2:=\int_{B_r(x_0)}|\nabla u|^2\,dx^2.
\end{equation}
Similarly, we let $N_{x_0,r}(x):=N(x_0+rx)$. These two functions $u_{x_0,r}$ and $N_{x_0,r}$ are defined on the dilated set $\Omega_{x_0,r}:=r^{-1}(\Omega-x_0)$.
We will often omit the subscript $x_0$, when the reference point $x_0$ is clear from the context. \hfill $\Box$
\end{Dfi}

In this part we want to show the following structure theorem for tangent cones,
which can be either flat planes or non-flat Schoen--Wolfson cones, described in \cite[Section 7]{SW2}.

\begin{Prop}\label{reg.tg.cone}
Assume that $x_0\in\Omega$ is an admissible point and consider
a blow-up PHSLV $(\C,\hat u_0,\hat N_0)$ arising as the limit of $(\Omega_r,u_r,N_r)$ along a sequence $r\to0$,
as in Proposition \ref{pr-tangent-cone}.
Then its image is either a Lagrangian plane $\mathcal{P}\subset\C^2\times\{0\}$ or a non-flat Schoen--Wolfson cone.
In the first case, $\hat u_0$ is a polynomial of the form
$$\hat u_0(z)=cz^k\quad \text{with }k\in\{1,\dots,\nu\},$$
after we suitably identify $\mathcal P\cong\C$,
while in the second case it is smooth on $\C\setminus\{0\}$ and it is an immersion here,
outside a locally finite subset of $\C\setminus\{0\}$. \hfill $\Box$
\end{Prop}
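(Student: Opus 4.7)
My plan is to use the structural information from Proposition \ref{pr-tangent-cone} together with the rigidity statement in Proposition \ref{rigidity} as a dichotomy, and in the non-flat branch invoke the inductive hypothesis to reduce to a smooth classification.

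First, I would record what the blow-up construction gives: $(\C,\hat u_0,\hat N_0)$ is a PHSLV with $\hat u_0$ proper, $\hat u_0^{-1}(0)=\{0\}$, $\varphi\circ\hat u_0=0$, and $\tilde N_{\hat u_0}(x)\le\tilde N_{\hat u_0}(0)=\tilde N(x_0)\le\nu$ for all $x$. Moreover, by monotonicity (Theorem \ref{mono.cor}) combined with the vanishing of $\nabla^{\mathcal P}\arctan\sigma$ on $\operatorname{spt}(\mathbf w)\setminus\Pi^{-1}(0)$ (Remark \ref{blow}), the function $R\mapsto\Theta(\mathbf w,0,R)$ is constant; hence the induced varifold $\mathbf w$ is a Heisenberg cone, invariant under the dilations $\delta_t$. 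I then split on whether the strict maximum of $\tilde N_{\hat u_0}$ is attained away from the origin.

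\emph{Flat case.} If $\tilde N_{\hat u_0}(x_1)=\tilde N_{\hat u_0}(0)$ for some $x_1\neq 0$, Proposition \ref{rigidity} forces $\hat u_0$ to be a holomorphic map into a Lagrangian plane $\mathcal P\subset\C^2\times\{0\}$ with $\hat N_0$ a.e.\ constant. Choosing a linear isometric identification $\mathcal P\cong\C$ as in Proposition \ref{planar}, properness of $\hat u_0$ together with $\hat u_0^{-1}(0)=\{0\}$ and $\hat u_0(0)=0$ forces $\hat u_0(z)=cz^k$ for some $c\in\C^*$ and $k\in\N^*$; the density at the origin gives $k=\tilde N(x_0)\in\{1,\dots,\nu\}$.

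\emph{Non-flat case.} Otherwise, $\tilde N_{\hat u_0}(x)<\tilde N(x_0)$ for every $x\neq 0$, and I would upgrade this pointwise strict inequality to the uniform bound $\sup_{\C\setminus\{0\}}\tilde N_{\hat u_0}\le\nu-1$. To do so, at any admissible point $x_1\neq 0$ I would form a second-generation blow-up: this is a PHSLV whose density at the origin equals $\tilde N_{\hat u_0}(x_1)<\tilde N(x_0)$, so its maximum multiplicity is strictly smaller than that of the first blow-up; applying the flat alternative above to this second blow-up (whenever it is flat) integralises $\tilde N_{\hat u_0}(x_1)\in\N^*$, and the strict inequality then forces $\tilde N_{\hat u_0}(x_1)\le\nu-1$. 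Combining this with the upper semicontinuity of $\tilde N_{\hat u_0}$ (Proposition \ref{pr-robust}) and the density of admissible points (Proposition \ref{zero.dim}) yields the uniform bound. Once the bound is in place, the inductive hypothesis (Theorem \ref{reg.thm.chart} at level $\nu-1$) applies to $(D,\hat u_0|_D,\hat N_0|_D)$ on every conformal disk $D\subset\subset\C\setminus\{0\}$, so $\hat u_0$ is a smooth branched immersion on $\C\setminus\{0\}$ with isolated Schoen--Wolfson singularities, and $\hat N_0$ is constant on $\C\setminus\{0\}$. The cone property of $\mathbf w$ together with the inclusion of the image in $\{\varphi=0\}\cong\C^2$ then shows that $\pi\circ\hat u_0$ parametrises a Hamiltonian stationary Lagrangian cone in $\C^2$; the Schoen--Wolfson classification in \cite[Section 7]{SW2} identifies its image with one of the cones $u_{p,q}(\C)$, and non-flatness rules out $(p,q)=(1,1)$ and its symmetric variants, giving a genuine Schoen--Wolfson cone.

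The main obstacle is the uniform bound $\tilde N_{\hat u_0}\le\nu-1$ on $\C\setminus\{0\}$ in the non-flat case: bridging the pointwise strict inequality to an integer gap compatible with the inductive assumption requires extracting integer-valuedness of $\tilde N_{\hat u_0}$ at sufficiently many points away from the origin, which I would achieve through the iterated-blow-up argument sketched above rather than through a direct PDE computation (none being available).
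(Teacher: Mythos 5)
Your overall architecture (flat alternative via Proposition \ref{rigidity}; otherwise reduce the multiplicity and invoke the induction on $\C\setminus\{0\}$; conclude with the Schoen--Wolfson classification of the cross-section) is the same as the paper's, and your iterated-blow-up argument for integrality of $\tilde N_{\hat u_0}$ at admissible points away from the origin is exactly the paper's first lemma in this section. However, there is a genuine gap in the non-flat branch, precisely at the step you yourself identify as the main obstacle. You claim that the pointwise bound $\tilde N_{\hat u_0}\le\nu-1$ on admissible points, "combined with the upper semicontinuity of $\tilde N$ and the density of admissible points, yields the uniform bound" on all of $\C\setminus\{0\}$. Upper semicontinuity goes the wrong way for this: it gives $\tilde N(p)\ge\limsup_{x\to p}\tilde N(x)$, so it permits an upward jump at a non-admissible point $p$ approached by admissible points where $\tilde N\le\nu-1$; nothing prevents $\tilde N(p)=\nu$ there. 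Since the inductive hypothesis requires $\sup_D\tilde N\le\nu-1$ on the whole disk $D$, and non-admissible points of the blow-up cannot be excluded at this stage (Proposition \ref{all.admissible} is proved only \emph{after} the classification of tangent cones, as a consequence of it), your application of the induction on "every conformal disk $D\subset\subset\C\setminus\{0\}$" is not justified.

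The paper circumvents this differently: it proves (Proposition \ref{adm.open}) that the set $\mathcal{A}_0\setminus\{0\}$ of admissible points of the blow-up is \emph{open} and consists of strongly admissible points — a nontrivial compactness argument based on propagating the pinched-density condition \eqref{pinching.cond} and the almost-constancy of $J\hat u_0/|J\hat u_0|$ from one scale to the next — and applies the inductive hypothesis only on this open set. The residual non-admissible set is then handled not by bounding $\tilde N$ there, but by choosing the level $\tau$ of $\r\circ\hat u_0$ so that the level set avoids the image of $\C\setminus\mathcal{A}_0$ entirely (possible since that image has Hausdorff dimension zero, Proposition \ref{zero.dim}), after which the foliation/cross-section argument runs on the regular part alone. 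Separately, your final step ("the cone property identifies the image with a Schoen--Wolfson cone") compresses a substantial portion of the paper's Proposition \ref{par.cone}: one must still show that the chosen level set $\Gamma$ is connected (the paper uses a convex-hull-type test with the Hamiltonian $\varphi\chi(\rho^2)$) and that the sublevel set has no other components, so that the image is a single cone rather than a union of cones; this should not be taken for granted.
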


Let $\mathcal{A}_0$ denote the set of admissible points for $\hat u_0$.
We now make the following observation, which exploits our previous understanding of the flat case.

\begin{Lm}
If $x\in\mathcal{A}_0\setminus\{0\}$, then $\tilde N_0(x)\in\N$ and, for any blow-up at $x$, the parametrization is a homogeneous polynomial. \hfill $\Box$
\end{Lm}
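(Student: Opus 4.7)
The plan is to study the blow-up PHSLV $(\C, \hat u_1, \hat N_1)$ at $x$ given by Proposition~\ref{pr-tangent-cone}, showing its image is forced to be a Lagrangian plane so that Proposition~\ref{planar} yields the desired polynomial structure.

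First, I would dispatch the degenerate case via Proposition~\ref{rigidity}: if $\tilde N_0(x) = \tilde N_0(0)$, then $\hat u_0$ is a proper holomorphic map into a Lagrangian plane with constant multiplicity $m \in \N^*$, so $\hat u_0(z) = c z^k$ for some $k \ge 1$ after a suitable isometric identification; the density computation gives $\tilde N_0(0) = mk$ at the origin while $\tilde N_0(x) = m$ at $x \ne 0$, forcing $k = 1$. Hence $\hat u_0$ is linear, and the conclusion is trivial at any admissible $x \ne 0$.

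In the general case $\tilde N_0(x) < \tilde N_0(0)$, Proposition~\ref{pr-tangent-cone} produces a blow-up $(\C, \hat u_1, \hat N_1)$ with $\hat u_1$ proper, $\hat u_1^{-1}(0) = \{0\}$, $\varphi \circ \hat u_1 = 0$, and $\tilde N_1(0) = \tilde N_0(x)$. The crux is to show that $\hat u_1$ takes values in a Lagrangian plane. On one hand, at each regular point $(z,0)$ of the image, the combination of the Legendrian condition with $\varphi \circ \hat u_1 = 0$ constrains the tangent to sit inside the $3$-plane $z^{\omega\perp} \times \{0\} \subset T_{(z,0)}\mathbb{H}^2$; since $\omega$ restricted to $z^{\omega\perp}$ has kernel $\R z$, every Lagrangian $2$-subspace of this $3$-plane must contain $\R z$, and a flow argument (made rigorous in the Sobolev/varifold sense, aided by Proposition~\ref{pr-rect-d-K}) shows that $\hat u_1(\C)$ is a cone through the origin in $\{\varphi = 0\} \cong \C^2$. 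On the other hand, the ambient varifold $\mathbf{w}$ arising in the blow-up of $u$ at $x_0$ is $\delta_t$-invariant at the origin of $\mathbb{H}^2$; dominating the varifold of $\hat u_0$ by $\mathbf{w}$ and then blowing up at $(z_x,0) := \hat u_0(x)$, the resulting ambient cone for $\hat u_1$ is translation-invariant along the one-parameter subgroup $\R(z_x, 0)$, and hence so is $\hat u_1(\C)$. A $2$-dimensional cone in $\C^2$ invariant under translation by $\R z_x$ is necessarily the $2$-plane $\R z_x + \R v$ for some $v \in z_x^{\omega\perp}$, which is automatically Lagrangian since $\omega(z_x, v) = 0$.

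With this in hand, Proposition~\ref{planar} applies: $\hat u_1$ is holomorphic after identifying the target Lagrangian plane with $\C$, and $\hat N_1 \equiv m \in \N^*$ a.e. Properness together with $\hat u_1(0) = 0$ and $\hat u_1^{-1}(0) = \{0\}$ forces $\hat u_1(z) = c z^k$ for some $c \ne 0$ and $k \ge 1$, a homogeneous polynomial; the induced varifold has density $mk$ at the origin, so $\tilde N_0(x) = \tilde N_1(0) = mk \in \N$. The main obstacle is rigorously establishing the translation invariance of the blow-up of $\mathbf{v}_{\hat u_0}$ along $\R z_x$, since $\mathbf{v}_{\hat u_0}$ itself is not \emph{a priori} a cone; this is overcome by comparing with the genuinely conical $\mathbf{w}$ at the appropriate scale, via an inequality analogous to $\mathbf{v}_\infty \le \mathbf{w}$ from the proof of Proposition~\ref{pr-tangent-cone}, exploiting that both varifolds have support in $\{\varphi = 0\}$.
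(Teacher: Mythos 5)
Your overall architecture (blow up at $x$, show the iterated blow-up lands in a Lagrangian plane, then invoke Proposition~\ref{planar} and the fact that a proper holomorphic map vanishing only at the origin is $cz^k$) matches the paper's, and your endgame and density bookkeeping are correct. The problem is the central step: you try to prove planarity of $\hat u_1$ by first showing its image is a cone and then showing the cone is translation-invariant along $\R z_x$, and both halves have genuine gaps. For the conicality, the pointwise fact that a.e.\ tangent plane of $\hat u_1$ is a Lagrangian subspace of $z^{\omega\perp}$ and hence contains $\R z$ is correct, but promoting ``tangent contains position a.e.'' to ``the image is a cone'' for a map that is a priori only $W^{1,2}$ and weakly conformal is precisely the hard part: the paper only achieves this later (Proposition~\ref{par.cone}), after the inductive regularity hypothesis yields smoothness of $\hat u_0$ away from a locally finite set --- and that regularity input is only available on $\mathcal{A}_0\setminus\{0\}$ because the present lemma has already been used to establish \eqref{adm.nu.minus.one} and Proposition~\ref{adm.open}. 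So your ``flow argument'' either needs regularity you do not yet have or is circular; rectifiability of the image (Proposition~\ref{pr-rect-d-K}) is far from enough to integrate the tangent condition.

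The translation-invariance step is worse. You propose to compare with ``the genuinely conical $\mathbf{w}$'', but $\mathbf{w}$ is not known to be a cone: the equality case of the Legendrian monotonicity formula (Theorem~\ref{mono.cor}) only forces $\nabla^{\mathcal P}\arctan\sigma=0$ on the support, i.e.\ $\varphi=0$, and unlike the isotropic setting this carries no radial information in the $\C^2$ directions. Even granting conicality, the standard ``blow-up of a cone at a non-vertex point splits off a line'' argument requires the monotonicity formula recentred along the translated points, and the relevant translations in $\mathbb{H}^2$ are the non-commutative left translations $\ell_q$, which do not preserve $\{\varphi=0\}$ and do not interact simply with Euclidean lines; this is exactly the difficulty that makes Lemma~\ref{zero.exc} so delicate. (Moreover, a two-dimensional cone invariant under translation by a line need not be a single plane --- it can be a union of half-planes through that line --- so even your final algebraic step needs the constancy theorem plus connectedness of the image.) The paper sidesteps all of this with the same pointwise ingredient you identified, fed into a continuity argument instead: since $\hat u_0$ is continuous and $\hat u_0(x)\neq0$, the a.e.\ orthogonality $\nabla\hat u_0\perp J\hat u_0$ together with $J\hat u_0\approx J\hat u_0(x)$ near $x$ shows directly that the blow-up at $x$ takes values in the linear $3$-space $(J\hat u_0(x))^\perp\cap\{\varphi=0\}$; one then repeats the argument of Proposition~\ref{rigidity} inside this $3$-space (the normalized vector $(-v_2,v_1)/|(v_1,v_2)|$ is locally constant, each piece is planar and holomorphic by Proposition~\ref{planar}, and patching via the Cauchy--Riemann equations forces a single plane). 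You should replace your conicality/splitting scheme by this continuity argument.
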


\begin{proof}
Note that \eqref{upper.bd.assumption} is inherited by the blow-up. Since $\hat u_0$ is not constant,
we can apply Proposition \ref{nc.is.omega} and deduce that we can form blow-ups of $(\C,\hat u_0,\hat N_0)$ at all points of $\mathcal{A}_0$.

Let $x_0'\in\mathcal{A}_0\setminus\{0\}$ and consider a blow-up at $x_0'$.
We claim that the limit PHSLV, which we denote by $(\C,\hat u_{0,0},\hat N_{0,0})$, takes values in a Lagrangian plane $\mathcal{P}\subset\C^2\times\{0\}$. Once this is done, we conclude as in the proof of Proposition \ref{rigidity} that this iterated blow-up is a parametrized stationary varifold,
whose corresponding varifold is supported on $\mathcal P$ and has constant integer density $\theta_0$ (recall that $\hat u_{0,0}$ is proper), and hence
$$\tilde N_0(x_0')=\tilde N_{0,0}(0)=\theta_0\in\N.$$
Since proper holomorphic maps $\C\to\C$ are polynomials and $\hat u_0^{-1}(0)=\{0\}$, the statement follows.

To prove the claim, recall that a blow-up takes values into $\{\varphi=0\}$ and the Legendrian condition then says that
$J\hat u_0(x)$ is orthogonal to the image of $\nabla \hat u_0(x)$.
Thus, we see that on any given compact set of $\C$ we have
$$|\Pi_{J\hat u_0(x_0')}\circ\nabla (\hat u_0)_{x_0',r}|\le\ep(r)|\nabla (\hat u_0)_{x_0',r}|,$$
for a vanishing function $\ep(r)\to0$, where $\Pi_{J\hat u_0(x_0')}$ is the orthogonal projection onto $\operatorname{span}\{J\hat u_0(x_0')\}$.

We deduce that the limit PHSLV $(\C,\hat u_{0,0},\hat N_{0,0})$ takes values into $\{\varphi=0\}\cap J\hat u_0(x_0')^\perp$.
Up to a rotation, let us assume that $J\hat u_0(x_0')=e_4$, so that $\hat u_{0,0}$ takes values into $\{z_4=\varphi=0\}$.
Moreover, the vector $J\hat u_{0,0}(x)$ is perpendicular to the image of $\nabla \hat u_{0,0}$, as well.
We now conclude exactly as in the proof of Proposition \ref{rigidity}.
\end{proof}

Crucially, thanks to the previous proposition, if
$\tilde N_0(x)>\nu-1$ at a point $x\in\mathcal{A}_0\setminus\{0\}$
then $\tilde N_0(x)\ge\nu$, and thus $\tilde N_0(x)=\nu$ since $\nu$ is the highest possible value.
Thus, by Proposition \ref{rigidity}, the statement of
Proposition \ref{reg.tg.cone} holds in this case.
Hence, in the sequel we can assume that
\begin{equation}\label{adm.nu.minus.one}
\tilde N_0\le\nu-1\quad\text{on }\mathcal{A}_0\setminus\{0\}.
\end{equation}

\begin{Prop}\label{adm.open}
The set $\mathcal{A}_0\setminus\{0\}$ is open and consists of strongly admissible points. \hfill $\Box$
\end{Prop}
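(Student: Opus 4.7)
The strategy is to apply the inductive hypothesis locally near each point of $\mathcal{A}_0\setminus\{0\}$, where the density bound \eqref{adm.nu.minus.one} ensures its applicability, and then to translate the resulting full regularity into pointwise strong admissibility.

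Fix $x\in\mathcal{A}_0\setminus\{0\}$. By \eqref{adm.nu.minus.one} we have $\tilde N_0(x)\le\nu-1$, and the upper semi-continuity of $\tilde N_0$ (Proposition \ref{pr-robust}) provides a connected open neighborhood $W\subseteq\C\setminus\{0\}$ of $x$ on which $\tilde N_0\le\nu-1$; shrinking $W$ if necessary, $\hat u_0|_W$ is not constant, since $\hat u_0$ cannot be locally constant at the admissible point $x$ (by Proposition \ref{nc.is.omega} applied to $\hat u_0$). Because Proposition \ref{pr-tangent-cone} guarantees that $\hat u_0$ satisfies \eqref{upper.bd.assumption} on $\C$, the restriction $(W,\hat u_0|_W,\hat N_0|_W)$ is a PHSLV satisfying the same hypothesis, with $\sup_W\tilde N_0\le\nu-1$. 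Hence, applying the inductive hypothesis, i.e., Theorem \ref{reg.thm.chart} at level $\nu-1$, to this restriction, we obtain disjoint locally finite subsets $\mathcal{S}_{br},\mathcal{S}_{SW}\subseteq W$ such that $\hat u_0$ is a smooth immersion on $W\setminus(\mathcal{S}_{br}\cup\mathcal{S}_{SW})$, is smooth on $W\setminus\mathcal{S}_{SW}$, and has, at each point of $\mathcal{S}_{SW}$, subsequential blow-ups of the form of a non-flat Schoen--Wolfson cone.

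With this in hand, we verify strong admissibility at every $y\in W$. If $y\notin\mathcal{S}_{br}\cup\mathcal{S}_{SW}$, then $\hat u_0$ is a smooth immersion near $y$, so $\int_{B_r(y)}|\nabla\hat u_0|^2\,dx^2\sim cr^2$ and the doubling ratio tends to $4$. If $y\in\mathcal{S}_{br}$, the Taylor expansion of the smooth map $\hat u_0$ at $y$ has a first nonzero term given by a homogeneous polynomial $P_k$ of order $k\ge2$, whose gradient is a nonzero homogeneous polynomial of degree $k-1$; a direct computation then yields $\int_{B_r(y)}|\nabla\hat u_0|^2\,dx^2\sim c_k r^{2k}$, with doubling ratio tending to $4^k$. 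If $y\in\mathcal{S}_{SW}$, then by the inductive regularity result any sequence $r_j\to0$ admits a subsequence along which the rescalings $(\hat u_0)_{y,r_j}$ converge in $C^0_{loc}\cap W^{1,2}_{loc}$ to a (rescaled) non-flat Schoen--Wolfson cone $u_{p_j,q_j}$, with $p_j+q_j$ controlled by the density $\tilde N_0(y)\le\nu-1$, so only finitely many pairs $(p_j,q_j)$ are possible; using the normalization $\int_{B_1(0)}|\nabla(\hat u_0)_{y,r_j}|^2\,dx^2=1$ and the homogeneity of the limit cone, we get $\int_{B_2(0)}|\nabla(\hat u_0)_{y,r_j}|^2\,dx^2\to 2^{2\sqrt{p_jq_j}}$ along the subsequence, a quantity uniformly bounded in $j$. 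This yields $\limsup_{r\to0} s(y,2r)^2/s(y,r)^2<\infty$, i.e., strong admissibility at $y$.

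Since strong admissibility trivially implies admissibility, every $y\in W$ belongs to $\mathcal{A}_0\setminus\{0\}$; hence $\mathcal{A}_0\setminus\{0\}$ contains an open neighborhood of each of its points, so it is open and consists of strongly admissible points. The main technical difficulty lies in the verification of the doubling bound at a Schoen--Wolfson singularity, where $\hat u_0$ is not smooth and tangent cones need not be unique; this is resolved by combining the inductive regularity statement (which classifies all subsequential blow-ups as Schoen--Wolfson cones) with the a priori density bound $\tilde N_0\le\nu-1$, which restricts the possible scaling exponents $\sqrt{pq}$ to a finite set.
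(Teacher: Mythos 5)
Your argument breaks down at the very first step. You claim that, for $x\in\mathcal{A}_0\setminus\{0\}$ with $\tilde N_0(x)\le\nu-1$, upper semi-continuity of $\tilde N_0$ provides an open neighborhood $W$ on which $\tilde N_0\le\nu-1$. Upper semi-continuity only gives, for every $\delta>0$, a neighborhood on which $\tilde N_0<\nu-1+\delta$; the set $\{\tilde N_0\le\nu-1\}$ is closed, not open, so when $\tilde N_0(x)=\nu-1$ exactly (which is the only nontrivial case, since $\{\tilde N_0<\nu-1\}\subseteq\mathcal{A}_0$ already follows from the inductive hypothesis) you cannot rule out nearby points $y$ with $\tilde N_0(y)\in(\nu-1,\nu]$. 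You cannot bridge this gap by integrality either: $\tilde N_0$ is a priori not integer-valued (see the remark after Definition \ref{robust.def}), and its integrality is only established at \emph{admissible} points via the classification of their blow-ups — which is exactly what is at stake here, so invoking it would be circular. Consequently the inductive hypothesis, which requires $\sup_W\tilde N_0\le\nu-1$, cannot be applied on any neighborhood of $x$, and the rest of your argument (regularity on $W$, hence doubling bounds at smooth, branch, and Schoen--Wolfson points) has nothing to stand on. This is precisely why the paper does not argue this way: it instead propagates three quantitative open conditions — the doubling bound \eqref{many.radii}, the density pinching \eqref{pinching.cond}, and the closeness \eqref{J.close} of $J\hat u_0$ to $J\hat u_0(x)$ — from a scale $r$ down to a scale $r'\in(\eta r,r/2)$ by a compactness argument, iterates to get strong admissibility, and then obtains openness by checking that these conditions are open along sequences $x_k\to x_0$ restricted to $\{\tilde N_0=\nu-1\}$.

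A secondary (and fixable) inaccuracy: in your treatment of Schoen--Wolfson points, the doubling exponent is not $\sqrt{p_jq_j}$ but the homogeneity degree $k\kappa$ of the parametrization $h^{-1}(cz^k)$ with $h^{-1}(re^{i\theta})=r^{\kappa}\tilde\eta(\kappa\theta)$ from \eqref{h.expl}; the finiteness of the possible exponents still follows from the density bound, so the idea there is sound. But without repairing the first step, the proof does not go through.
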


\begin{proof}
%
By a simple compactness argument, outlined below for completeness (cf. also the proof of Lemma \ref{ahlfors}), we can show the following fact: given $\lambda>0$,
there exists $\eta\in(0,1/2)$ (depending only on $\lambda$, $\nu$, and the constant $C_0$ appearing in \eqref{upper.bd.assumption}) such that
if
\begin{equation}\label{many.radii}
\int_{B_{r/2}(x)}|\nabla \hat u_0|^2\,dx^2>\lambda\int_{B_{r}(x)}|\nabla \hat u_0|^2\,dx^2,
\end{equation}
as well as the pinching
\begin{equation}\label{pinching.cond}
\Theta(\mathbf{v}_{0,\om},\hat u_0(x),s(x,r)/\eta)<2\pi\tilde N_0(x)+\eta
\end{equation}
for some open $x\in\om\subset\subset\C$ such that $\bar B_{2s(x,r)/\eta}^\r(\hat u_0(x))\cap \hat u_0(\p\om)=\emptyset$,
where we use the notation \eqref{capital.theta.def}, so that by monotonicity
$$\theta^\chi(\mathbf{v}_{0,\om},\hat u_0(x))\le\Theta(\mathbf{v}_{0,\om},\hat u_0(x),R)<\theta^\chi(\mathbf{v}_{0,\om},\hat u_0(x))+\eta$$
for all $0<R<s(x,r)/\eta$, and
\begin{equation}\label{J.close}|J\hat u_0-J\hat u_0(x)|<\eta|J\hat u_0(x)|\quad\text{on }B_r(x),\end{equation}
then the same bounds hold for a smaller radius $r'\in(\eta r,r/2)$, as long as $\lambda>\lambda_0(\nu,C_0)>0$.

To check this claim we argue by contradiction, rescaling $B_r(x)$ to $B_1(0)$ and replacing $\hat u_0$ with $(\hat u_0)_{x,r}$. In the limit $\eta\to0$ we end up with a PHSLV $(B,\hat w,\hat N_{\hat w})$, where $B=\psi(B_1(0))$ for a suitable quasiconformal homeomorphism.
This PHSLV is nontrivial thanks to \eqref{many.radii} and \eqref{rad.limit}, and the induced varifold $\mathbf{v}_{\hat w,B}$ satisfies
$$\mathbf{v}_{\hat w,B}\le\tilde{\mathbf{v}},$$
where $\tilde{\mathbf{v}}$ is a HSLV which has $\Theta(\tilde{\mathbf{v}},0,R)$ constant in $R\in(0,\infty)$ and equal to the limit (along the sequence $\eta\to0$) of $2\pi\tilde N_0(x)$.
Since $2\pi\tilde N_{\hat w}(0)$ is at least this limit by Corollary \ref{dens.limits},
by arguing as in Proposition \ref{pr-fiber} we see that $\hat w^{-1}(0)=\{0\}$ (recall also Proposition \ref{nc.is.omega}).

Moreover, we have $\nabla^{\mathcal P}\arctan\sigma=0$
on $\operatorname{spt}(\tilde{\mathbf{v}})\setminus\Pi^{-1}(0)$ (see Remark \ref{blow}). Hence, we have
$$\nabla\arctan(\sigma\circ \hat w)=0\quad\text{on }B\setminus\{\hat w=0\}$$
and, as at the end of the proof of Proposition \ref{pr-tangent-cone}, we conclude that $\varphi\circ\hat w=0$.

Note that, since $J\hat u_0$ is perpendicular to the image of $\nabla \hat u_0$ at all points, \eqref{J.close} implies that
$x\neq0$ and
$$|\Pi_{J\hat u_0(x)}\circ\nabla \hat u_0|\le\eta|\nabla \hat u_0|\quad\text{on }B_r(x).$$
Thus, the image of $\nabla \hat w$ is orthogonal to the limit (along the sequence $\eta\to0$) of $\frac{J\hat u_0(x)}{|\hat u_0(x)|}$.
Hence, $\hat w$ is a holomorphic map taking values in a plane, by the same argument used in the previous proof. Since $\mathbf{v}_{\hat w,B}$ has density at most $2\pi\nu$ at the origin,
the degree of $\hat w$ at $0$ is at most $\nu$. We obtain the desired contradiction if $\hat w$ is a strong $W^{1,2}_{loc}$ limit,
since \eqref{many.radii} clearly holds with $\hat u_0$ replaced by $z\mapsto z^k$, for some $k\in\{1,\dots,\nu\}$,
as long as $\lambda\ge\lambda_0(\nu)$.

In general, we can use \eqref{rad.limit} together with the distortion bounds from Lemma \ref{distortion} (recall from \eqref{dist.bd} that  $\psi$ is $\frac{C_0^2}{\pi^2}$-quasiconformal, where $C_0$ is the constant appearing in \eqref{upper.bd.assumption}).

Moreover, with the same proof, we can find another constant $0<\eta'<\eta$ such that, whenever the previous conditions are satisfied with $\eta'$ in place of $\eta$, we have
$$\int_{B_{\eta r}}|\nabla \hat u_0|^2\,dx^2\ge\eta'\int_{B_{r}}|\nabla \hat u_0|^2\,dx^2.$$

By iterating the previous two facts, it follows that any $x\in\C\setminus\{0\}$ satisfying these conditions (with $\eta'$) is strongly admissible.
Clearly, any point in $\mathcal{A}_0\setminus\{0\}$ fits the previous conditions for some $\lambda>0$ and $r>0$.
Hence, $\mathcal{A}_0\setminus\{0\}$ consists of strongly admissible points.

Moreover, the set $\{\tilde N_0<\nu-1\}$ is open (by upper semi-continuity of $\tilde N_0$) and here, by inductive assumption,
the regularity theorem holds; in particular, all points in this set are strongly admissible, i.e.,
$$\{\tilde N_0<\nu-1\}\subseteq\mathcal{A}_0.$$
To conclude, thanks to \eqref{adm.nu.minus.one} and the last inclusion, it suffices to show the following: given $x_0\in\mathcal{A}_0\cap\{\tilde N_0=\nu-1\}\setminus\{0\}$ and a sequence $x_k\to x_0$,
we have $x_k\in\mathcal{A}_0$ for $k$ large enough. If $\tilde N_0(x_k)<\nu-1$, this holds by the previous inclusion.
We can then assume that
$$\tilde N_0(x_k)=\nu-1\quad\text{for all }k.$$
Since $x_0$ is admissible, it satisfies \eqref{many.radii} for some fixed $\lambda>0$, for any $r>0$ small enough (with $\eta'$ in place of $\eta$), as well as \eqref{J.close}. Moreover, we can select $\omega$ such that \eqref{pinching.cond} holds with $\eta'$ in place of
$\eta$, again for $r>0$ small (cf. the proof of Proposition \ref{nc.is.omega}). Since these are open conditions on the set $\{\tilde N_0=\nu-1\}$,
we conclude that $x_k$ satisfies them eventually, and hence is strongly admissible.
\end{proof}

\begin{Lm}\label{distortion}
Given a $K$-quasiconformal homeomorphism $\psi:A\to A'$ between two open sets $A,A'\subseteq\C$, there exists a constant $C_0(K)$ such that the following holds:
for any disk $B_r(x)$ with $B_{C_0(K)r}(x)\subseteq A$ we have
$$B_{r'}(x')\subseteq\psi(B_r(x))\subseteq\psi(B_{2r}(x))\subseteq B_{C_0(K)r'}(x'),$$
where $x':=\psi(x)$ and $r':=\min_{y\in\p B_r(x)}|\psi(y)-x'|$. \hfill $\Box$
\end{Lm}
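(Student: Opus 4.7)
The plan is to dispense with the two inclusions on the outside quickly, as they are essentially tautological, and then focus on the third inclusion, which is the substantive content and follows from the local quasisymmetry of $K$-quasiconformal planar homeomorphisms.

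First, the inclusion $B_{r'}(x')\subseteq\psi(B_r(x))$ is immediate from the definitions: by construction, $r'=\dist(x',\psi(\p B_r(x)))$, and since $\psi$ is a homeomorphism the open connected set $\psi(B_r(x))$ contains $x'$ and has topological boundary $\psi(\p B_r(x))$; any point within distance $r'$ of $x'$ is therefore in $\psi(B_r(x))$. The middle inclusion $\psi(B_r(x))\subseteq\psi(B_{2r}(x))$ is trivial by monotonicity.

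For the third inclusion I will invoke the standard local quasisymmetry for planar $K$-quasiconformal maps: there exist a constant $\Lambda(K)>1$ and a distortion function $\eta_K:[1,\infty)\to[1,\infty)$ (depending only on $K$) such that, whenever $\psi:B_{\Lambda(K)r}(x)\to\C$ is $K$-quasiconformal, one has the three-point estimate
\[
\frac{|\psi(y)-\psi(x)|}{|\psi(z)-\psi(x)|}\le \eta_K\!\left(\frac{|y-x|}{|z-x|}\right)
\qquad\text{for all }y,z\in B_{2r}(x)\setminus\{x\}.
\]
Choosing $C_0(K):=\max\{\Lambda(K),\eta_K(2)\}$, we then fix a point $y_0\in\p B_r(x)$ realizing $|\psi(y_0)-x'|=r'$ and apply the inequality with $z=y_0$ and $y$ ranging over $\p B_{2r}(x)$; since $|y-x|/|z-x|=2$, this yields $|\psi(y)-x'|\le \eta_K(2)\,r'\le C_0(K)r'$, i.e., $\psi(\p B_{2r}(x))\subseteq\bar B_{C_0(K)r'}(x')$. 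Because $\psi(B_{2r}(x))$ is the bounded component of $\C\setminus\psi(\p B_{2r}(x))$ containing $x'$, this gives the third inclusion.

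The main obstacle is really in citing (or re-deriving) the quasisymmetry bound on the right scale. The cleanest justification uses the conformal modulus: the Grötzsch/Teichmüller extremal annuli give an explicit lower bound on $\operatorname{mod}(B_{\Lambda r}(x)\setminus E)$ in terms of $\operatorname{diam}(E)/(\Lambda r)$ for continua $E\ni x$, and $K$-quasiconformality multiplies moduli by at most $K$; transporting this inequality through $\psi$ translates a hypothetical large ratio $|\psi(y)-x'|/r'$ (with $y\in\p B_{2r}(x)$) into a lower bound on the modulus of $\psi(B_{\Lambda r}(x))\setminus \psi(\overline{B_r(x)})$ that contradicts the corresponding upper bound obtained by applying $K$-quasiconformality to the annulus $B_{\Lambda r}(x)\setminus\overline{B_r(x)}$, provided $\Lambda=\Lambda(K)$ is taken large enough. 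Once this quasisymmetry is granted, the rest of the argument is formal; since this is a standard fact in planar quasiconformal theory (see, e.g., Heinonen's monograph), I would simply cite it rather than reproduce the modulus computation here.
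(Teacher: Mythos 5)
Your proof is correct, but it takes a genuinely different route from the paper's. The paper first observes that the statement for $A=\C$ is a known consequence of Teichm\"uller's modulus theorem, and then handles the local case by a compactness argument: assuming a sequence of counterexamples $\psi_k$, it factors $\psi_k=f_k\circ F^{\mu_k}$ through normal solutions of the Beltrami equation, controls the conformal factors $f_k$ via Koebe's distortion theorem and Lemma \ref{rouche}, and extracts a limiting entire quasiconformal map $F_\infty$ to reach a contradiction. You instead localize directly by invoking the local quasisymmetry (``egg-yolk'') principle for planar $K$-quasiconformal maps, applying the three-point estimate with ratio $2$ to get $\psi(\p B_{2r}(x))\subseteq \bar B_{\eta_K(2)r'}(x')$ and then concluding by the Jordan curve theorem (the bounded component of the complement of a compact set lies in its convex hull). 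Your argument is more direct and quantitative --- it produces an explicit $C_0(K)=\max\{\Lambda(K),\eta_K(2)\}$ with no compactness --- at the cost of citing a stronger off-the-shelf distortion theorem, whose proof is itself the localized version of the very modulus estimates the paper cites for the entire case; the two approaches therefore rest on the same underlying machinery. Your treatment of the first inclusion (via $\p\psi(B_r(x))=\psi(\p B_r(x))$ and connectedness) matches the standard argument and is fine. Two trivial points to tidy: the quasisymmetry inequality as you state it is for $y,z$ in the open ball $B_{2r}(x)$ but you apply it with $y\in\p B_{2r}(x)$ (state it on the closed ball or pass to the limit), and you should note that $C_0(K)>2$ is needed so that $\overline{B_{2r}(x)}\subset A$, ensuring $\psi(B_{2r}(x))$ is bounded with boundary $\psi(\p B_{2r}(x))$.
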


\begin{proof}
It is a well-known consequence of Teichm\"uller's modulus theorem (see, e.g., the last part of the proof of \cite[Lemma 5.4]{PiRi1})
that the statement holds when $A=\C$.

In the more general form written here, it follows from the entire version by a standard compactness argument, which we now detail.
Assume by contradiction that the statement fails for a sequence of
maps $\psi_k$, with $k=1,2,\dots$ in place of $C_0(K)$. For each of these counterexamples, we can assume
without loss of generality that $x=\psi_k(x)=0$ and $r=1$, as well as $\psi_k(1)=1$.

We now take the Beltrami coefficients $\mu_k:B_k(0)\to\C$ such that
$$\p_{\bar z}\psi_k=\mu_k\p_z\psi_k$$
and extend them by zero on $\C\setminus B_k(0)$, and we consider the normal solution
$F^{\mu_k}:\C\to\C$ to the same equation (see, e.g., \cite[Theorem 4.24]{IT}).
By the chain rule (see, e.g., \cite[Lemma III.6.4]{LV}), we see that
$$f_k:=\psi_k\circ (F^{\mu_k})^{-1}:F^{\mu_k}(B_k(0))\to\psi_k(B_k(0))$$
is a conformal diffeomorphism with $f_k(0)=0$ and $f_k(1)=1$.
By standard compactness properties (see, e.g., \cite[Lemma A.3]{PiRi2}), along a subsequence
we have the local uniform convergence $F^{\mu_k}\to F_\infty$, and the same for the inverses, for a limit quasiconformal homeomorphism $F_\infty:\C\to\C$ (see however \cite[Remark A.5]{PiRi2}).

Likewise, for any fixed $R>1$, eventually we have $(F^{\mu_k})^{-1}(\bar B_R(0))\subset B_k(0)$
(as $(F^{\mu_k})^{-1}\to F_\infty^{-1}$ locally uniformly), and thus eventually $f_k$ is defined on $B_R(0)$.
Moreover, by Koebe's distortion theorem, the functions $\frac{f_k(Rz)}{Rf_k'(0)}$ are a normal family
(i.e., pre-compact in the topology of local uniform convergence on $\mathbb{D}=B_1(0)$), with univalued subsequential limits (by Lemma \ref{rouche}); since $f_k(1)=1$, it follows that
$$0<c\le|f_k'(0)|\le C.$$
Thus, the functions $f_k$ are a normal family on $B_R(0)$, for any $R>1$, and we can then extract a subsequential limit $f_\infty:\C\to\C$.
Since $f_\infty(0)=0$ and $f_\infty(1)=1$, by Lemma \ref{rouche} this map is a conformal diffeomorphism, and hence it is the identity.
As a consequence, we have
$$\psi_k=f_k\circ F^{\mu_k}\to f_\infty\circ F_\infty=F_\infty.$$
Since $F_\infty$ satisfies the statement, we obtain a contradiction.
\end{proof}

As a consequence, we can apply the inductive assumption: in particular, we know that on the open set $\mathcal{A}_0\setminus\{0\}$
the map $\hat u_0$ is a smooth immersion, away from a locally finite set of points.

\begin{Prop}\label{par.cone}
The image of $\hat u_0$ is either a Lagrangian plane $\mathcal{P}\subset\C^2\times\{0\}$ or a non-flat Schoen--Wolfson cone. \hfill $\Box$
\end{Prop}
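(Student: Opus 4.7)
The plan is to show that $\hat u_0(\C)$ is a Euclidean cone lying inside the Lagrangian $4$-plane $\C^2\times\{0\}$, and then to invoke the Schoen--Wolfson classification of Hamiltonian stationary Lagrangian cones in $\C^2$ carried out in \cite[Section 7]{SW2}.

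First I would exploit the output of Proposition \ref{pr-tangent-cone}: $\hat u_0$ is proper, $\hat u_0^{-1}(0)=\{0\}$, $\varphi\circ\hat u_0=0$, and the induced varifold $\mathbf{v}_{\hat u_0,\C}$ satisfies $\mathbf{v}_{\hat u_0,\C}\le\mathbf{w}$ for the tangent-cone HSLV $\mathbf{w}$, with $\theta^\chi(\mathbf{v}_{\hat u_0,\C},0)=\theta^\chi(\mathbf{w},0)=2\pi\nu$. Being itself a $\delta_s$-limit, $\mathbf{w}$ is $\delta_s$-invariant, so $\Theta(\mathbf{w},0,r)\equiv 2\pi\nu$; combined with $\mathbf{v}_{\hat u_0,\C}\le\mathbf{w}$ and Theorem \ref{mono.cor}, this forces $\Theta(\mathbf{v}_{\hat u_0,\C},0,r)\equiv 2\pi\nu$ as well. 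Revisiting the proof of Theorem \ref{mono.cor} and the identity \eqref{V.56}: since $\varphi\circ\hat u_0=0$ gives $\arctan\sigma\equiv 0$ on $\operatorname{spt}|\mathbf{v}_{\hat u_0,\C}|$, the cone-defect term $|\nabla^{\mathcal{P}}\arctan\sigma|^2$ vanishes automatically, and constancy of $\Theta$ then forces also the radial defect $|\nabla^{\mathcal{P}^\perp}\r|^2$ to vanish on support. Hence $\mathbf{v}_{\hat u_0,\C}$ is $\delta_t$-invariant, and since $\delta_t$ acts on $\C^2\times\{0\}$ by Euclidean scaling, $\mathcal{C}:=\hat u_0(\C)\subset\C^2\times\{0\}$ is a Euclidean cone.

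Next I would apply the inductive hypothesis. By Proposition \ref{rigidity}, if $\tilde N_0(x)=\nu$ at some $x\in\C\setminus\{0\}$ then $\hat u_0$ is holomorphic into a Lagrangian plane $\mathcal{P}\subset\C^2\times\{0\}$ and we are done. Otherwise $\tilde N_0<\nu$ strictly on $\C\setminus\{0\}$, so by \eqref{adm.nu.minus.one}, the openness of $\mathcal{A}_0\setminus\{0\}$ from Proposition \ref{adm.open}, and the inductive assumption, $\hat u_0$ restricts to a smooth branched immersion on $\mathcal{A}_0\setminus\{0\}$, with at most isolated branch points and Schoen--Wolfson conical singularities. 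The exceptional non-admissible set in $\C\setminus\{0\}$ has image of zero $d_K$-Hausdorff dimension by Proposition \ref{zero.dim}, so the smooth portion of $\hat u_0$ parametrizes $\mathcal{C}$ away from a set of zero $\mathcal{H}^2_K$-measure.

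Finally, to identify $\mathcal{C}$, I would invoke the classification of smooth H-stationary Lagrangian cones in $\C^2$ from \cite[Section 7]{SW2}. The link $\mathcal{C}\cap S^3$ is a piecewise smooth closed Legendrian curve in the contact $3$-sphere, with only finitely many non-smooth points (by the conical structure together with the local finiteness of the singular set upstairs). On each smooth arc, the cone condition combined with H-stationarity forces the Lagrangian angle to be constant, and the Schoen--Wolfson ODE analysis then identifies the arc as either an arc of a great circle (a piece of a flat plane) or an arc of a $(p,q)$-torus knot (a piece of $u_{p,q}(\C)$). Connectedness of $\mathcal{C}$, inherited from that of $\C$, rules out mixing the two types and yields either a Lagrangian plane or a non-flat Schoen--Wolfson cone. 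The main obstacle is the rigidity case of Heisenberg monotonicity used in the first step: while classical in the isotropic setting, here the subtle interaction between the $\arctan\sigma$-term and the radial term in \eqref{V.56} requires care to extract $\delta_t$-invariance from the mere constancy of $\Theta$; once that is in place, the remaining steps essentially package the inductive regularity theorem and the Schoen--Wolfson classification.
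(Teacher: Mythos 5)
Your overall architecture (conicity of the image, inductive regularity on $\mathcal{A}_0\setminus\{0\}$, then the Schoen--Wolfson classification of the link) matches the paper's, but the final step has a genuine gap. You reduce to a link $\mathcal{C}\cap S^3$ that is only "piecewise smooth with finitely many non-smooth points," and you then invoke "connectedness of $\mathcal{C}$" to rule out mixing types. This does not work: a union of several distinct cones (two planes, or a plane and a non-flat Schoen--Wolfson cone) through the common vertex $0$ is connected, so connectedness of the image excludes nothing. Moreover, a piecewise smooth critical curve could a priori concatenate arcs of different solution types at its corners, and nothing in your argument controls the corners — the non-admissible set is only known to have image of Hausdorff dimension zero, not to be finite, so even "finitely many non-smooth points" on the link is unjustified. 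The paper's proof is organized precisely to avoid this: it chooses a generic level $\tau\nin\mathfrak{r}\circ\hat u_0(\C\setminus\mathcal{A}_0)$, regular and avoiding branch points, so that the level curve $\Gamma$ lies entirely in the region where $\hat u_0$ is a smooth immersion and $\eta=\hat u_0|_\Gamma$ is a globally smooth solution of the critical-curve ODE (hence of a single type); it then proves $\Gamma$ is a single circle via a convex-hull-type test with the Hamiltonian $W_{\varphi\chi(\rho^2)}$ (killing inner boundary components of $\omega$) and a separate properness/density argument excluding other connected components of $\{\mathfrak{r}\circ\hat u_0<\tau\}$. These two connectedness arguments are the substantive content of the proposition and are entirely missing from your proposal.

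On your first step: the conclusion is correct but the route is more roundabout than needed, and the emphasis is misplaced. In the monotonicity formula the defect is the single term $|\nabla^{\mathcal P}\arctan\sigma|^2$; the "radial defect" is not an independent quantity whose vanishing must be extracted from constancy of $\Theta$, since $|\nabla^{\mathcal P^\perp}\r|^2=\frac{\r^2}{4}|\nabla^{\mathcal P}\arctan\sigma|^2$ (this identity appears in the proof of Corollary \ref{mono.cor2}), and the vanishing of $\nabla^{\mathcal P}\arctan\sigma$ on blow-ups is already recorded in Remark \ref{blow}. Even granting this, passing from a varifold-level tangency statement to the set-level assertion that $\hat u_0(\C)$ is a Euclidean cone needs the parametrization. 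The paper instead gets conicity in one line: since $\varphi\circ\hat u_0=0$, the Legendrian condition gives $J\hat u_0\perp\operatorname{img}\nabla\hat u_0$, and because that image is a Lagrangian plane $\mathcal{Q}$ with $J\mathcal{Q}=\mathcal{Q}^\perp$, the position vector $\hat u_0$ itself lies in $\mathcal{Q}$; a tubular-neighborhood flow argument along $\Gamma$ then exhibits the image as $(\tau-\epsilon,\tau+\epsilon)\cdot\eta(\Gamma)$. So the step you flag as "the main obstacle" is in fact elementary, while the real work — showing the link is a single smooth closed curve — is where your argument breaks down.
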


\begin{proof}
Indeed, thanks to Proposition \ref{zero.dim}, we can select $\tau>0$ such that
$$\tau\not\in\mathfrak{r}\circ \hat u_0(\C\setminus\mathcal{A}_0).$$
We can also assume that $(\mathfrak{r}\circ \hat u_0)^{-1}(\tau)$ consists only of points where $\nabla\hat u_0\neq0$, since the set $$(\mathcal{A}_0\setminus \{0\})\cap\{\nabla \hat u_0=0\}$$ is at most countable,
and that $\tau$ is a regular value for $\mathfrak{r}\circ \hat u_0$, when this function is restricted to the open set $\mathcal{A}_0\setminus\{0\}$.

Then we can let $\omega$ be the connected component of $\{\r\circ\hat u_0<\tau\}$ containing $0$.
Let $\Gamma$ denote its smooth boundary and let $\eta:=\hat u_0|_\Gamma$.
Since $\hat u_0$ is a smooth immersion near $\Gamma$,
by the inverse function theorem we can find a tubular neighborhood of $\Gamma$,
diffeomorphic to $\Gamma\times(-\epsilon,\epsilon)$ through a diffeomorphism $\zeta$, such that
$$\hat u_0\circ\zeta^{-1}:\Gamma\times(-\epsilon,\epsilon)\to\C^2\times\{0\}=\C^2$$
is an immersion and the second component of $\psi$ is precisely $\mathfrak{r}\circ\hat u_0-\tau$.
For $s\in(-\epsilon,\epsilon)$, we let $\eta_s$ be the slice $\hat u_0\circ\psi^{-1}(\cdot,s)$.

Thus, $\eta_s$ takes values in the sphere $\partial B_{\tau+s}(0)\subset\C^2$ (note that in $\C^2$
the Kor\'anyi metric is just the Euclidean one). We consider
$$w:=\Pi\circ \hat u_0\circ\psi^{-1},$$
where $\Pi$ is the nearest point projection onto $\partial B_\tau(0)$.
Since $\hat u_0$ is Lagrangian and $J\hat u_0$ is orthogonal to the image of $\nabla \hat u_0$ at all points, 
the latter image contains the position vector $\hat u_0$.
Thus, the image of $\nabla(\hat u_0\circ\psi^{-1})(\theta,s)$ is the span of $\eta_s'(\theta)$ and the position vector $\hat u_0\circ\psi^{-1}(\theta,s)$, and we deduce that
the partial derivative $\partial_s w(\theta,s)$ is a multiple of $\eta_s'$. We deduce that $\eta_s$ is just a reparametrization of $\eta_0=\eta$,
up to decreasing $\epsilon$.

As a consequence, the image of $\hat u_0\circ\psi^{-1}$ takes the form
$$(\tau-\epsilon,\tau+\epsilon)\cdot \eta(\Gamma)\subset\C^2.$$
Since $w$ is an immersion, the PHSLV condition says that it is critical for the area among Lagrangian maps; hence, $\eta$ is an immersed curve on the three-dimensional sphere, critical for the length
with respect to the sub-Riemannian constraint that $\operatorname{span}\{\eta(\theta),\eta'(\theta)\}$ is a Lagrangian plane.
It follows that the image of each connected component of $\Gamma$ is the cross-section of a plane or a non-flat Schoen--Wolfson cone (cf. \cite[Section 7]{SW2}).

Moreover, $\Gamma$ must actually be connected: since $\omega$ is a smooth connected domain,
$\omega$ equals a disk $\tilde\omega$ with a finite number of inner disks $\omega_j$ removed.
It suffices to show that we cannot have any such domain $\omega_j$.
Since $\hat u_0^{-1}(0)=\{0\}$, we have $0\not\in \hat u_0(\omega_j)$.
In the same spirit of the convex hull property for minimal surfaces, we can consider the Hamiltonian vector field
$$W:=W_{\varphi\chi(\rho^2)},$$
where $\chi\ge0$, $\chi'\ge0$, $\chi=0$ on $[0,\tau^2]$, and $\chi>0$ on $(\tau^2,\infty)$.
Testing it with the PHSLV localized at $\omega_j$,
we obtain that $$\mathfrak{r}\circ\hat u_0\le\tau\quad\text{on }\omega_j.$$ This contradicts the fact that $\tau$ is a regular value for $\mathfrak{r}\circ \hat u_0$, which would imply that $\r\circ\hat u_0$ increases as we enter $\omega_j\subseteq\C\setminus\om$.

We now show that, in fact, there is no other connected component of $\{\mathfrak{r}\circ\hat u_0<\sigma\}$. Given another component $\omega'$, we can repeat the same argument: the image of its boundary is an immersed curve, which has positive $\mathcal{H}^1$-measure,
and hence we can find a unit vector $a\in S^3$ such that, for each $x$ in
$$S_a:=\lf\{x\in\omega'\,:\,\frac{\hat u_0}{|\hat u_0|}(x)=a\rg\},$$
we have $x\in\mathcal{A}_0\setminus\{0\}$ and $\nabla \hat u_0(x)\neq0$ (as well as  $S_a\neq\emptyset$). Since $a$ belongs to the image of $\nabla \hat u_0(x)$ at such points, it is easy to conclude that
$\omega'\cap S_a$ contains a curve whose composition with $\hat u_0$ converges to the origin.
Since $\hat u_0$ is a proper map, we deduce that $\omega'$ intersects $\hat u_0^{-1}(0)=\{0\}$, a contradiction.

In summary, $\{\mathfrak{r}\circ \hat u_0<\sigma\}$ is diffeomorphic to a disk.
Moreover, if we take two different regular values $\tau<\tau'$ as above, yielding two curves $\Gamma,\Gamma'\subset\C$,
the last argument shows that
$$\frac{\hat u_0(\Gamma)}{\tau}=\frac{\hat u_0(\Gamma')}{\tau'}.$$
Thus, the image of $\hat u$ includes a dense subset of $[0,\infty)\cdot \hat u_0(\Gamma)$,
and hence this set itself (as $\hat u_0$ is proper and hence has closed image).

In fact, the previous argument also shows that the image $S$ of $\frac{\hat u_0}{|\hat u_0|}$ (as a map $\C\setminus\{0\}\to S^3$)
satisfies
\begin{equation}\label{neg.remainder}\frac{\hat u_0(\Gamma)}{\tau}\subseteq S,\quad\mathcal{H}^1\lf(S\setminus\frac{\hat u_0(\Gamma)}{\tau}\rg)=0.\end{equation}
Since $S$ is connected, we must have $S=\frac{\hat u_0(\Gamma)}{\tau}$, as otherwise its composition with the distance function from $\frac{\hat u_0(\Gamma)}{\tau}$ would include an interval $(0,\ep')$, a contradiction to \eqref{neg.remainder}.
\end{proof}

\begin{Co}\label{par.cone.imm}
The map $\hat u_0$ is a smooth immersion away from $0$. \hfill $\Box$
\end{Co}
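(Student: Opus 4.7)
I plan to prove the corollary by the dichotomy provided by Proposition~\ref{par.cone}. In the Lagrangian-plane case, Proposition~\ref{planar} gives that $\hat u_0$ is holomorphic after a linear isometric identification of the plane with $\C$. Combined with the properness and the fact $\hat u_0^{-1}(0)=\{0\}$ asserted by Proposition~\ref{pr-tangent-cone}, this forces $\hat u_0(z)=cz^k$ for some $c\in\C\setminus\{0\}$ and $k\in\N^*$, which is a smooth immersion on $\C\setminus\{0\}$.

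In the remaining case of a non-flat Schoen--Wolfson cone $\mathcal{C}$, recall that $\mathcal{C}\setminus\{0\}$ is a smooth Lagrangian surface in $\C^2$. The proof of Proposition~\ref{par.cone} already established that for every $\tau>0$ outside a set of Lebesgue measure zero in $\R_+$, the level set $\Gamma_\tau=(\r\circ\hat u_0)^{-1}(\tau)$ is a smooth Jordan curve on which $\hat u_0$ is a smooth immersion; the inverse function theorem then extends this to a tubular neighborhood $T_\tau\subset\C$ of $\Gamma_\tau$. Thus for every $x_0\in\C\setminus\{0\}$ such that $\r\circ\hat u_0(x_0)$ is a generic value, a suitable choice of $\tau$ close enough to $\r\circ\hat u_0(x_0)$ places $x_0\in T_\tau$, immediately giving that $\hat u_0$ is a smooth immersion near $x_0$.

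To handle the remaining exceptional points, I plan to use that near $p_0:=\hat u_0(x_0)\in\mathcal{C}\setminus\{0\}$ the image is a smooth Lagrangian 2-submanifold $L\subset\C^2$, so that $\hat u_0|_U$ is a $W^{1,2}$ weakly conformal Legendrian map into $L$. Testing the PHSLV stationarity condition \eqref{V.2rep} against Hamiltonian vector fields $W_F$ on $\mathbb{H}^2$ carefully engineered from smooth Hamiltonian functions on $L$ (exploiting that tangent vector fields on the Lagrangian $L\subset\C^2$ arise as $J\nabla F$ for local Hamiltonians $F$ on $\C^2$, then lifted to $\mathbb{H}^2$ by adjusting the $\varphi$-dependence) will reduce $\hat u_0|_U$ to a weakly conformal weakly harmonic map into the smooth Riemannian 2-surface $L$. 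Helein's regularity theorem, or a direct Cauchy--Riemann argument in local conformal coordinates on $L$, then yields smoothness, and the standard structure theorem for weakly conformal weakly harmonic maps from a 2-disk into a Riemannian 2-surface gives that $\hat u_0|_U$ is a smooth branched immersion.

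The main obstacle is excluding branch points of $\hat u_0$ at nonzero points. If $\nabla\hat u_0(x_0)=0$ at some $x_0\in\C\setminus\{0\}$, then $x_0$ is admissible with $\tilde N_0(x_0)\ge2$, and by Proposition~\ref{pr-tangent-cone} together with Proposition~\ref{planar} any parametrized blow-up of $\hat u_0$ at $x_0$ takes the form $z\mapsto cz^k$ into the tangent Lagrangian plane $T_{p_0}\mathcal{C}$, with $k=\tilde N_0(x_0)\in\{2,\dots,\nu-1\}$ (the upper bound coming from \eqref{adm.nu.minus.one}). Combining the $\delta_t$-invariance of the cone image $\mathcal{C}$ with the isolatedness of branch points in $\mathcal{A}_0\setminus\{0\}$ provided by the inductive regularity hypothesis (applicable since $\tilde N_0\le\nu-1$ on this open set), via a propagation argument along the radial rays of $\mathcal{C}$ under $\delta_t$, will produce a continuous one-parameter family of branch points, contradicting isolatedness. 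Making this propagation precise -- in particular, continuously selecting preimages of $\delta_t(p_0)$ in $\C\setminus\{0\}$ and verifying that each is a branch point -- is the technically delicate step.
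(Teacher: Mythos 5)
The dichotomy and the planar case are fine and agree with the paper. In the non-flat case, however, your argument has a genuine gap precisely at the step you flag as delicate: the exclusion of branch points at $x_0\in\C\setminus\{0\}$. The proposed "propagation along the radial rays of $\mathcal{C}$ under $\delta_t$" has no mechanism behind it. The dilations act on the \emph{target}, and $\hat u_0$ is not equivariant under them, so knowing that $\delta_t(p_0)$ lies in the image for all $t$ only tells you that these points have preimages; it gives no reason whatsoever for any of those preimages to be critical points of $\hat u_0$. A locally finite set of genuine branch points on $\C\setminus\{0\}$ is perfectly consistent with everything your local analysis (weak conformality into the smooth surface $L$, branched-immersion structure) produces, so no local or dilation-based argument can rule them out.

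The paper's proof is global and much shorter, and it is the global structure that kills the branch points. The punctured cone is an embedded surface conformally equivalent to $\C\setminus\{0\}$ via a map $h$ (extended by $h(0):=0$); since $\hat u_0$ is weakly conformal with values in the cone, and is already known to be a smooth conformal immersion on the dense open set $\mathcal{A}_0\setminus\{0\}$ minus a locally finite set (which fixes the orientation), the composition $f:=h\circ\hat u_0$ is a continuous $W^{1,2}$ function with $\p_{\bar z}f=0$ a.e., hence holomorphic on all of $\C$. Properness and $f^{-1}(0)=\{0\}$ then force $f(z)=cz^k$, and $\hat u_0=h^{-1}\circ f$ is a smooth immersion away from $0$ because $f'\neq0$ there. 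Note also that your intermediate reduction to a weakly \emph{harmonic} map into $L$ is unnecessary: weak conformality into a two-dimensional target already yields the Cauchy--Riemann equations in a conformal chart of $L$, which is all the paper uses. If you want to keep your local route, you must replace the dilation-propagation step by this global holomorphicity argument (or something equivalent using properness and the fiber over $0$); as written, the proof does not close.
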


\begin{proof}
Indeed, the punctured Schoen--Wolfson cone, i.e., the image of
$$(0,\infty)\times\Gamma\to\C^2,\quad (s,t)\mapsto s\cdot\eta(t),$$
for the immersed curve $\eta$ found above, is an embedded surface conformally equivalent to $\C\setminus\{0\}$, via a map $h$.
We extend $h$ to the origin by $h(0):=0$.
Since $\hat u_0$ is a conformal immersion on $\C\setminus\{0\}$,
we have $\partial_{\bar z}(h\circ \hat u_0)=0$ away from $0$ and the composition $h\circ\hat u_0$ is continuous at $0$. Hence, $f:=h\circ\hat u_0$ is a proper holomorphic function on $\C$, with $f^{-1}(0)=\{0\}$.

Hence, $f(z)$
is a polynomial vanishing only at the origin, giving $f(z)=cz^k$ for some $c\in\C\setminus\{0\}$ and $k\in\N^*$,
proving the claim since $\hat u_0=h^{-1}\circ f$.
\end{proof}

\begin{Co}\label{tilde.n.cone}
The multiplicity $\tilde N_0$ is a constant integer on $\C\setminus\{0\}$. \hfill $\Box$
\end{Co}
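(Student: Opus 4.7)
The plan is to apply the inductive hypothesis (Theorem \ref{reg.thm.chart} for $\sup\tilde N\le\nu-1$) to the restriction of $(\C,\hat u_0,\hat N_0)$ to the punctured plane $\C\setminus\{0\}$, which is connected.

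First, I would observe that by Corollary \ref{par.cone.imm} the map $\hat u_0$ is a smooth immersion on $\C\setminus\{0\}$. Every such point is therefore strongly admissible, so $\C\setminus\{0\}\subseteq\mathcal{A}_0$, and then \eqref{adm.nu.minus.one} yields the crucial multiplicity drop
\[
\sup_{\C\setminus\{0\}}\tilde N_0\le\nu-1.
\]
The global mass bound \eqref{upper.bd.assumption} is also inherited by $\hat u_0$: it is exactly \eqref{teofrasto.quadris} for the centre $p=0$, and for a general $p$ one combines this with Corollary \ref{mono.cor2} applied to the HSLV induced by $\hat u_0$ on large domains (whose existence is guaranteed by the properness of $\hat u_0$ together with $\hat u_0^{-1}(0)=\{0\}$, established in Proposition \ref{pr-tangent-cone}). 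The inductive assumption then applies to the PHSLV $(\C\setminus\{0\},\hat u_0,\hat N_0)$; since $\C\setminus\{0\}$ is connected, conclusion (iv) of Theorem \ref{reg.thm.chart} produces an integer $m\in\N^*$ with $\hat N_0=m$ a.e.\ on $\C\setminus\{0\}$.

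It remains to upgrade this almost-everywhere statement to the pointwise identity $\tilde N_0\equiv m$ on $\C\setminus\{0\}$. Fix $x\in\C\setminus\{0\}$; from the factorisation $\hat u_0=h^{-1}\circ(cz^k)$ used in Corollary \ref{par.cone.imm}, the fibre $\hat u_0^{-1}(\hat u_0(x))$ is a finite set of points, each a smooth embedding point. Pick a connected neighbourhood $\omega\subset\subset\C\setminus\{0\}$ of $x$ such that $\hat u_0|_\omega$ is an embedding onto a smooth Lagrangian surface patch and $\omega$ is disjoint from the other preimages of $\hat u_0(x)$. Since $\hat N_0=m$ a.e.\ on $\omega$, the induced varifold $\mathbf v_\omega$ coincides with $m$ times the classical integer varifold carried by the smooth surface $\hat u_0(\omega)$, for which an explicit computation (a simplified version of the argument in Proposition \ref{dens.lb}) gives $\theta^\chi(\mathbf v_\omega,\hat u_0(x))=2\pi m$. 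By the definition of $\tilde N_0$ and the choice of $\omega$, this is precisely $\tilde N_0(x)=m$, and integrality follows as a byproduct.

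The main (and really the only) obstacle is verifying the last step carefully, because $x$ need not be a Lebesgue point of $\hat N_0$ and hence \eqref{V.88-a} cannot be invoked directly at $x$; the smooth-embedding structure of $\hat u_0$ near $x$ circumvents this by reducing the density computation to that of a constant-multiplicity smooth varifold, where the density at every surface point equals the multiplicity.
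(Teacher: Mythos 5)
Your proof is correct, but it takes a genuinely different route from the paper's. The paper does not re-invoke the inductive regularity theorem at this point: it argues that, by Corollary \ref{par.cone.imm}, the blow-up of $\hat u_0$ at any $x_0'\neq0$ is the linear map $\nabla\hat u_0(x_0')$ with constant multiplicity $\tilde N_0(x_0')$ (hence $\tilde N_0$ is integer-valued off the origin), and then runs a connectedness argument: if $\tilde N_0$ were non-constant, one takes a disk $D$ inside the open sublevel set $\{\tilde N_0\le\mu\}$ with a boundary point $\bar x$ of multiplicity $\ge\mu+1$; since the blow-up at $\bar x$ is a strong $W^{1,2}_{loc}$ limit of the rescalings, \eqref{rad.limit} forces its constant multiplicity to be $\mu$, contradicting $\tilde N_{0,0}(0)=\tilde N_0(\bar x)\ge\mu+1$. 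You instead observe that Corollary \ref{par.cone.imm} makes every point of $\C\setminus\{0\}$ strongly admissible, so \eqref{adm.nu.minus.one} gives $\sup_{\C\setminus\{0\}}\tilde N_0\le\nu-1$, and you then apply conclusion (iv) of the inductive hypothesis to the connected PHSLV $(\C\setminus\{0\},\hat u_0,\hat N_0)$ to get $\hat N_0=m$ a.e., finishing with a pointwise density computation. Both steps are legitimate: the paper itself licenses applying the inductive assumption to the blow-up restricted to $\mathcal{A}_0\setminus\{0\}$ and asserts that \eqref{upper.bd.assumption} passes to blow-ups, and your final upgrade is cleanest if phrased as: modify $N$ on a null set to the constant $m$ (this does not change the induced varifolds), so that every point of the finite fiber $\hat u_0^{-1}(\hat u_0(x))\subseteq\mathcal{G}_{\hat u_0}^f$ is a Lebesgue point of $N$ and \eqref{V.88-a} together with Remark \ref{rm-rob-rep} gives $\tilde N_0(x)=m$ directly. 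What each approach buys: yours is shorter given the machinery but leans on the full strength of conclusion (iv) of the induction; the paper's is more self-contained, needing only the blow-up classification at points of $\C\setminus\{0\}$ and the strong convergence encoded in \eqref{rad.limit}, which is also the mechanism reused later in Section X.
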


\begin{proof}
Given $x_0'\in\C\setminus\{0\}$, the blow-up at $x_0'$ is unique and is given by the linear map $\nabla\hat u_0(x_0')$,
with constant multiplicity $\tilde N_0(x_0')$, which must then be an integer. Thus, $\tilde N_0$ takes integer values on $\C\setminus\{0\}$.
If it is not constant, then we can find $\mu\in\N$ such that the open set
$$\Omega:=(\C\setminus\{0\})\cap\{\tilde N_0\le\mu\}=(\C\setminus\{0\})\cap\{\tilde N_0<\mu+1\}$$
is not all of $\C\setminus\{0\}$. We can then find a disk $D\subset\subset\C\setminus\{0\}$
such that $D\subset\Omega$ and a boundary point $\bar x\in\de D$ has $\tilde N_0(\bar x)\ge\mu+1$.
Since the blow-up at $\bar x$ obviously arises as a strong $W^{1,2}_{loc}$ limit, \eqref{rad.limit}
implies that the constant multiplicity $\tilde N_{0,0}$ in the blow-up equals $\mu$. Since $\tilde N_{0,0}(0)=\tilde N_0(x_0')\ge\mu+1$,
we arrive at a contradiction.
\end{proof}

Note that, calling $2\pi\kappa>0$ the length of the cross-section of the cone and
$$\tilde\eta:\R/2\pi\kappa\Z\to S^3$$
its parametrization by arclength, we can take
$h$ to be the inverse of
\begin{equation}\label{h.expl}
h^{-1}(re^{i\theta}):=r^\kappa \tilde\eta(\kappa \theta).
\end{equation}
By repeating the proof of Proposition \ref{nc.full.stop} verbatim and using the fact that
$h$ and $h^{-1}$ have polynomial growth at $0$, we obtain the following fact.

\begin{Prop}\label{all.admissible}
In fact, any $x_0\in\Omega$ is strongly admissible. \hfill $\Box$
\end{Prop}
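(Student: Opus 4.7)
The plan is to repeat the argument of Proposition \ref{nc.full.stop} essentially verbatim, with Proposition \ref{par.cone.imm} (and the explicit conformal conjugation $h$ from \eqref{h.expl}) replacing the role played by planarity in the base case $\nu=1$. Concretely, I would fix $x_0\in\Omega$, assume for contradiction that it is not strongly admissible, and select $r_j\to 0$ and $\ep_j\to 0$ with
\[
\int_{B_{r_j/2}(0)}|\nabla u|^2\,dx^2=\ep_j^4 s_j^2,\quad s_j^2:=\int_{B_{r_j}(0)}|\nabla u|^2\,dx^2.
\]
Constructing the connected components $D'_j(\tau_j)$ of the level sets $\{\r\circ u<\tau_j\}$ as before, the inclusions \eqref{incl.claim} continue to hold, since their derivation rests only on Proposition \ref{pr-cont}, Lemma \ref{lm-energy-quant}, and the Cauchy--Schwarz argument of Proposition \ref{in.tilde.omega}, none of which depends on the local image structure of $u$. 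I would then build the smooth domains $D_j$ and $\Delta_j$ and the conformal parametrizations $\phi_j:B_1(0)\to r_j^{-1}D_j$, and extract the $C^\infty_{loc}$ limit diffeomorphism $\phi_\infty$ via Lemma \ref{rouche}.

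The analogue of Lemma \ref{lm-sep-lev} goes through line by line, because its proof invokes only the modulus-of-annulus comparison combined with the universal bound $|\nabla^H\r|^2\le 1$ from \eqref{nabla.r.norm}, which is insensitive to whether the image of $u$ lies in a plane or in a Schoen--Wolfson cone. This gives a universal $\alpha<1$ with $\phi_j^{-1}(r_j^{-1}\Delta_j)\subseteq B_\alpha(0)$. Theorem \ref{th-sequential-weak-closure} then produces a limit PHSLV $(B_1(0),\hat u_\infty,\hat N_\infty)$, and monotonicity applied to $\mathbf{v}_{\Delta_j}$ still delivers the uniform lower bound $\int_{B_\alpha(0)} N_j|\nabla u_j|^2\,dx^2\ge c>0$.

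The genuinely new point, and the main obstacle, is the final step: verifying that $\hat u_\infty$ is not constant in any neighborhood of $0$ in the domain. In the $\nu=1$ case this was immediate from planar rigidity, but now a further blow-up of $\hat u_\infty$ at $0$ may parametrize a non-flat Schoen--Wolfson cone. This is exactly where the polynomial growth of $h$ and $h^{-1}$, namely $|h^{-1}(w)|\asymp|w|^\kappa$ and $|h(p)|\asymp|p|^{1/\kappa}$, enters. Composing any such cone blow-up with $h$ yields a proper holomorphic map $\C\to\C$ vanishing only at $0$, hence a nonzero monomial; transporting the energy lower bound back via the polynomial comparison between target and parameter scales, I obtain $\int_{B_\rho(0)}|\nabla\hat u_\infty|^2\,dx^2\ge c(\rho)>0$ for every fixed $\rho>0$, with quantitative dependence $c(\rho)\gtrsim\rho^{2\kappa}$ dictated by the Schoen--Wolfson exponent. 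The careful bookkeeping here, ensuring that the strong convergence used in the $\nu=1$ argument can be replaced by the polynomial comparison afforded by $h$, is the most delicate part of the argument.

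Combining this lower bound with the convergence of energy measures \eqref{rad.limit}, the $C^\infty_{loc}$ convergence $\phi_j\to\phi_\infty$, and $\phi_\infty(0)=0$, I obtain $\int_{B_{C_0\rho r_j}(0)}|\nabla u|^2\,dx^2\ge c(\rho)(\ep_j s_j)^2$ for a universal $C_0>0$ and every small fixed $\rho>0$. Taking $\rho$ with $C_0\rho\le 1/2$ produces a direct contradiction with \eqref{too.fast.decay}, since $(\ep_j s_j)^2\gg\ep_j^4 s_j^2$ as $\ep_j\to 0$.
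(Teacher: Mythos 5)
Your overall strategy coincides with the paper's: Proposition \ref{all.admissible} is obtained there precisely by ``repeating the proof of Proposition \ref{nc.full.stop} verbatim and using the fact that $h$ and $h^{-1}$ have polynomial growth at $0$,'' and the steps you list (the sublevel-set domains $D_j,\Delta_j$, the conformal parametrizations $\phi_j$, the separation Lemma \ref{lm-sep-lev}, the compactness theorem, and the final contradiction with \eqref{too.fast.decay}) do carry over exactly as you say.

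The one place where you depart from that proof is the step you single out as ``the genuinely new point,'' and there your argument has a problem. First, you mischaracterize the base case: in Proposition \ref{nc.full.stop} the nonconstancy of $\hat u_\infty$ near $0$ is \emph{not} obtained from planar rigidity (the authors explicitly chose to avoid that route; see the Remark following that proof). It is obtained by the soft argument of Propositions \ref{pr-tangent-cone} and \ref{in.tilde.omega}: one shows that $0$ lies in the domain of $\tilde N_\infty$ and that $\hat u_\infty^{-1}(0)$ cannot contain a neighborhood of $0$, using only Lemma \ref{lm-energy-quant}, monotonicity, and the density lower bound. That argument is insensitive to the structure of tangent cones and transfers verbatim to the inductive step. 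Second, your proposed replacement --- forming a blow-up of $\hat u_\infty$ at $0$, classifying it as a Schoen--Wolfson cone, and composing with $h$ --- is circular as written: the classification of Proposition \ref{reg.tg.cone} is available only at \emph{admissible} points, and admissibility of $0$ for the limit map $\hat u_\infty$ is exactly the kind of statement the proposition is meant to establish; you never verify it. Fortunately you do not need any of this: the qualitative bound $c(\rho):=\int_{\psi(B_\rho(0))}\hat N_\infty|\nabla\hat u_\infty|^2\,dx^2>0$ for each fixed $\rho$, supplied by the soft nonconstancy argument together with \eqref{rad.limit}, already produces the contradiction with \eqref{too.fast.decay}; no quantitative rate $c(\rho)\gtrsim\rho^{2\kappa}$ is required. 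If you remove the circular detour and simply quote the argument already present in the base case at that step, your proof is the paper's proof.
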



\subsection{\texorpdfstring{Local finiteness of $\bm{\mathcal{S}_{SW}}$ in pinched-density regions}{Local finiteness of $\mathcal{S}_{SW}$ in pinched-density regions}}
In the sequel, we fix an arbitrary point $x_0\in\Omega$. Recall that $x_0$ is strongly admissible, by Proposition \ref{all.admissible}.
We claim that all blow-ups at a given point have isometric images. To show this, we first prove some technical lemmas.

\begin{Lm}
There exists a constant $M>1$ such that, if $x\neq x_0$ is close enough to $x_0$, then
$$\frac{d_K(u(x),u(x_0))}{s(|x-x_0|)}\in[M^{-1},M],$$
where we recall that $s(r)^2=s(x_0,r)^2:=\int_{B_r(x_0)}|\nabla u|^2\,dx^2$. \hfill $\Box$
\end{Lm}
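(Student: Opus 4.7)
The plan is to handle the upper and lower bounds separately, both by reducing to the normalized rescaling $u_{x_0,t}$ from \eqref{not.res}. Three general facts will be used throughout. First, by construction $\int_{B_1(0)}|\nabla u_{x_0,t}|^2\,dx^2 = 1$, and the dilation identity $\r(\delta_{1/s(t)}(p)) = s(t)^{-1}\r(p)$ gives
$$ d_K(u(x_0+ty),u(x_0)) = s(t)\,d_K(u_{x_0,t}(y),0)\quad\text{for all }y. $$
Second, since $N=\tilde N\le\nu$ a.e. on $\mathcal{G}_u^f$ by Proposition \ref{pr-robust} and \eqref{def.nu}, and $\nabla u = 0$ elsewhere, a change of variables yields $\int_{B_1(0)}N_{x_0,t}|\nabla u_{x_0,t}|^2\,dx^2 \le \nu$. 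Third, by Proposition \ref{all.admissible} the point $x_0$ is strongly admissible, so there is a doubling constant $C_{\mathrm{doub}}$ with $s(2t) \le C_{\mathrm{doub}}\,s(t)$ for $t$ small (and consequently \eqref{adm.implicit} holds along any sequence $r_k\to 0$).

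The upper bound is then immediate: setting $r:=|x-x_0|$ and $y:=(x-x_0)/(2r)$, so $|y|=1/2$, Proposition \ref{pr-cont} applied to the rescaled map $u_{x_0,2r}$ (together with the bounds above and $u_{x_0,2r}(0)=0$) gives
$$ d_K(u(x),u(x_0)) = s(2r)\,d_K(u_{x_0,2r}(y),0) \le s(2r)\,\operatorname{diam}_K u_{x_0,2r}(B_{1/2}(0)) \le \sqrt{C_1\nu}\,s(2r) \le C_{\mathrm{doub}}\sqrt{C_1\nu}\,s(r). $$

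For the lower bound I would argue by contradiction. Suppose there is a sequence $x_k\to x_0$ with $x_k\neq x_0$ and $\epsilon_k := d_K(u(x_k),u(x_0))/s(r_k)\to 0$, where $r_k := |x_k-x_0|$. Set $y_k := (x_k-x_0)/r_k\in S^1$; up to a subsequence, $y_k\to y_\infty\in S^1$, so in particular $y_\infty\neq 0$. By Propositions \ref{all.admissible} and \ref{pr-tangent-cone}, up to a further subsequence the rescalings $u_{x_0,r_k}$ converge in $C^0_{\mathrm{loc}}$ to a proper map $u_\infty:\C\to\mathbb{H}^2$ satisfying $u_\infty^{-1}(0)=\{0\}$. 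On the one hand, $d_K(u_{x_0,r_k}(y_k),0) = \epsilon_k\to 0$ by construction; on the other hand, uniform convergence on the closed unit ball combined with $y_k\to y_\infty$ forces $u_{x_0,r_k}(y_k)\to u_\infty(y_\infty)$, so $u_\infty(y_\infty)=0$, contradicting $u_\infty^{-1}(0)=\{0\}$ since $y_\infty\neq 0$.

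The decisive input here is the blow-up machinery of Proposition \ref{pr-tangent-cone}, in particular the property $u_\infty^{-1}(0)=\{0\}$ built into the parametrized tangent cone; modulo this, neither direction presents a real obstacle, and the substance of the statement is the uniformity in $x$, delivered by the compactness argument above.
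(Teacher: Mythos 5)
Your proof is correct and the lower bound is argued exactly as in the paper: rescale, extract a blow-up via Proposition \ref{pr-tangent-cone}, and use $u_\infty^{-1}(0)=\{0\}$ together with $y_\infty\in S^1$. The only (harmless) difference is that the paper disposes of the upper bound by the same compactness argument (if the ratio blew up, $\r\circ u_k(y_k)$ would tend to $\infty$ instead of to $\r(u_\infty(y_\infty))\in(0,\infty)$), whereas you give a direct quantitative bound via Proposition \ref{pr-cont} and doubling; both work.
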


\begin{proof}
If not, then we can find a sequence $x_k\to x_0$ such that
$$\frac{d_K(u(x_k),u(x_0))}{s(|x_k-x_0|)}\to 0\quad\text{or}\quad \frac{d_K(u(x_k),u(x_0))}{s(|x_k-x_0|)}\to\infty.$$
Letting $r_k:=|x_k-x_0|$ and $u_k:=u_{x_0,r_k}$ (as in \eqref{not.res}), writing $x_k=x_0+r_ky_k$,
we have
$$\mathfrak{r}\circ u_k(y_k)\to0 \quad\text{or}\quad \mathfrak{r}\circ u_k(y_k)\to\infty.$$
Since $x_0$ is strongly admissible, by Proposition \ref{pr-tangent-cone}
we can extract a blow-up PHSLV $(\C,\hat u_\infty,\hat N_\infty)$ such that $u_k\to u_\infty=\hat u_\infty\circ\psi$ locally uniformly on $\C$, for a suitable
quasiconformal homeomorphism $\psi:\C\to\C$.
Since $|y_k|=1$, we can also assume that $y_k\to y_\infty\in S^1$
up to a subsequence.
We then have
$$u_k(y_k)\to u_\infty(y_\infty)\in\mathbb{H}^2\setminus\{0\},$$
since $u_\infty^{-1}(0)=\{0\}$. In particular, $\mathfrak{r}\circ u_k(y_k)$ converges to a limit in $(0,\infty)$,
a contradiction.
\end{proof}

\begin{Lm}
There exists a constant $\hat M>1$ such that if
$$\hat M|x-x_0|\le|x'-x_0|$$
then
$$d_K(u(x),u(x_0))\le\frac{1}{2M}d_K(u(x'),u(x_0)),$$
provided that $x,x'\neq x_0$ are close enough to $x_0$. \hfill $\Box$
\end{Lm}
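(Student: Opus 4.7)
The plan is to argue by contradiction through a single blow-up at $x_0$. Assuming the claim fails, I would extract sequences $x_k,x_k'\to x_0$ with $|x_k-x_0|/|x_k'-x_0|\to 0$ and yet
$$d_K(u(x_k),u(x_0))>\tfrac{1}{2M}\,d_K(u(x_k'),u(x_0)).$$
Setting $r_k:=|x_k'-x_0|$, the strong admissibility of $x_0$ (Proposition \ref{all.admissible}) allows me to apply Proposition \ref{pr-tangent-cone} along the sequence $r_k\to 0$, producing a blow-up PHSLV $(\C,\hat u_\infty,\hat N_\infty)$ with
$$u_{x_0,r_k}\longrightarrow u_\infty=\hat u_\infty\circ\psi\quad\text{in }C^0_{loc}(\C)$$
for some quasiconformal homeomorphism $\psi:\C\to\C$, and crucially satisfying $u_\infty^{-1}(0)=\{0\}$.

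Writing $x_k=x_0+r_ky_k$ and $x_k'=x_0+r_ky_k'$, we would have $|y_k|\to 0$ and $|y_k'|=1$. By the local uniform convergence, $u_{x_0,r_k}(y_k)\to u_\infty(0)=0$; passing to a further subsequence so that $y_k'\to y_\infty'\in S^1$, we also get $u_{x_0,r_k}(y_k')\to u_\infty(y_\infty')$, which is nonzero precisely because $u_\infty^{-1}(0)=\{0\}$. The dilation built into the definition of $u_{x_0,r}$ provides the exact identity
$$d_K(u(x_0+r_ky),u(x_0))=s(r_k)\,d_K(u_{x_0,r_k}(y),0),$$
so the factors $s(r_k)$ cancel in the target ratio, leaving
$$\frac{d_K(u(x_k),u(x_0))}{d_K(u(x_k'),u(x_0))}=\frac{d_K(u_{x_0,r_k}(y_k),0)}{d_K(u_{x_0,r_k}(y_k'),0)}\longrightarrow\frac{0}{d_K(u_\infty(y_\infty'),0)}=0,$$
contradicting the assumed lower bound $\tfrac{1}{2M}$.

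I do not anticipate any real obstacle here: the entire argument rests on the properness of the blow-up (namely $u_\infty^{-1}(0)=\{0\}$, already established in Proposition \ref{pr-tangent-cone}), the $C^0_{loc}$ convergence of the rescalings, and the trivial scaling identity above. In this blow-up formulation the previous lemma is not even required, since the factors $s(r_k)$ cancel directly in the ratio and we only use that $u_\infty$ is continuous at the origin and vanishes nowhere else.
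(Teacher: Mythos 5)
Your argument is correct and is essentially the paper's own proof: both proceed by contradiction, rescale at $x_0$ by $r_k=|x_k'-x_0|$ using strong admissibility to invoke Proposition \ref{pr-tangent-cone}, and then use the $C^0_{loc}$ convergence together with $u_\infty^{-1}(0)=\{0\}$ to force the ratio of distances to zero. The observation that the factors $s(r_k)$ cancel, so the preceding lemma is not needed, matches how the paper handles it as well.
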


\begin{proof}
If not, then we can find sequences $x_k,x_k'\to x_0$ and $\hat M_k\to\infty$ such that
$$\hat M_k|x_k-x_0|\le|x_k'-x_0|,\quad d_K(u(x_k),u(x_0))>\frac{1}{2M}d_K(u(x_k'),u(x_0)).$$
Letting $r_k:=|x_k'-x_0|$ and defining $u_k$ as above,
we can extract a subsequential limit $u_\infty$. Writing $x_k=x_0+r_ky_k$ and $x_k'=x_0+r_kz_k$,
we have
$$y_k\to0,\quad |z_k|=1.$$
Up to a subsequence, we can assume that $z_k\to z_\infty\in S^1$, so that
$$u_k(y_k)\to u_\infty(0)=0,\quad u_k(z_k)\to u_\infty(z_\infty)\neq0$$
as in the previous proof.
In particular, we see that $\r\circ u_k(y_k)<\frac{1}{2M}\r\circ u_k(z_k)$ eventually, a contradiction.
\end{proof}

In the sequel, with abuse of notation, given $p\in\mathbb{H}^2$ and $\lambda>0$ we will denote
$$\frac{p}{\lambda}:=\delta_{1/\lambda}(p).$$

\begin{Lm}
Letting $\mathcal{C}(r)\subseteq \p B_1^\r(0)$ denote the image of the map
$$\bar B_{\hat M r}(x_0)\setminus B_{r/\hat M}(x_0)\to \p B_1^\r(0),\quad x\mapsto\frac{u(x_0)^{-1}*u(x)}{d_K(u(x_0),u(x))},$$
any subsequential limit $\lim_{r_k\to0}\mathcal{C}(r_k)$ in the Hausdorff topology is the cross-section
of a (possibly flat) Schoen--Wolfson cone. \hfill $\Box$
\end{Lm}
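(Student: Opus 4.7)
The plan is to rewrite $\mathcal{C}(r)$ in terms of the rescalings from \eqref{not.res} and then pass to a blow-up. Writing $x = x_0 + ry$, the definition \eqref{not.res} gives
\[
\frac{u(x_0)^{-1}*u(x)}{d_K(u(x_0), u(x))} \;=\; \frac{u_{x_0,r}(y)}{\r \circ u_{x_0,r}(y)},
\]
so that $\mathcal{C}(r) = \Phi_r(A)$ for $\Phi_r := u_{x_0,r}/\r \circ u_{x_0,r}$ and the compact annulus $A := \bar B_{\hat M}(0) \setminus B_{1/\hat M}(0)$. Given a subsequence $r_k \to 0$ along which $\mathcal{C}(r_k)$ Hausdorff-converges, I would use Proposition \ref{all.admissible} and Proposition \ref{pr-tangent-cone} to extract a further subsequence along which $u_{x_0,r_k} \to u_\infty = \hat u_\infty \circ \psi$ in $C^0_{loc}(\C)$, where $(\C, \hat u_\infty, \hat N_\infty)$ is a blow-up PHSLV and $\psi:\C \to \C$ is a quasiconformal homeomorphism with $\psi(0)=0$. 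By Proposition \ref{par.cone} and Corollary \ref{par.cone.imm}, the image of $\hat u_\infty$, and hence that of $u_\infty$, is a (possibly flat) Schoen--Wolfson cone $\mathcal{S} \subset \C^2\times\{0\}$ parametrized by \eqref{h.expl}.

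Next I would transport the two preceding lemmas to the limit. Combining strong admissibility of $x_0$ with the first lemma applied at radius $r_k|y|$ forces $\r \circ u_{x_0,r_k}(y)$ to stay in a fixed compact subinterval of $(0,\infty)$, uniformly for $y \in A$; in particular $\r \circ u_\infty \ge c_0 > 0$ on $A$, so $\Phi_{r_k} \to \Phi_\infty := u_\infty/\r \circ u_\infty$ uniformly on $A$ and the Hausdorff limit satisfies
\[
\mathcal{C}_\infty \;=\; \Phi_\infty(A).
\]
Applying the second lemma on either side of $A$ gives moreover the region-wise bounds $\r \circ u_\infty \le 1/2$ for $|y| \le 1/\hat M$ and $\r \circ u_\infty \ge 2$ for $|y| \ge \hat M$. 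Since $\mathcal{S}$ is invariant under positive dilations, this shows the easy inclusion $\mathcal{C}_\infty \subseteq \mathcal{S} \cap \p B_1^\r(0)$.

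The main step, and the mild obstacle, is the reverse inclusion. Given $p \in \mathcal{S} \cap \p B_1^\r(0)$, the explicit form $\hat u_\infty = h^{-1}\circ f$ from Corollary \ref{par.cone.imm} with $f(z) = cz^k$ and $h^{-1}$ as in \eqref{h.expl} shows that $\hat u_\infty^{-1}([0,\infty)\cdot p)$ is a disjoint union of finitely many proper continuous arcs in $\C \setminus \{0\}$, each joining $0$ to $\infty$; transporting through the homeomorphism $\psi^{-1}$, the same holds for $u_\infty^{-1}([0,\infty)\cdot p)$. Along any such arc, $\r \circ u_\infty$ is continuous and ranges from $0$ at the endpoint $0\in\C$ to $+\infty$ at $\infty$, so the intermediate value theorem yields a point $y_*$ on the arc with $u_\infty(y_*) = p$. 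The bounds above force $\{\r\circ u_\infty = 1\} \subset \{1/\hat M < |y| < \hat M\}$, hence $y_* \in A$, giving $p \in \Phi_\infty(A) = \mathcal{C}_\infty$ and completing the identification of the Hausdorff limit as the full cross-section of $\mathcal{S}$.
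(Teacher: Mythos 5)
Your proof is correct and follows essentially the same route as the paper: pass to a blow-up at the strongly admissible point $x_0$, identify its image as a Schoen--Wolfson cone via Proposition \ref{par.cone} and Corollary \ref{par.cone.imm}, and use the two preceding lemmas to confine the level set $\{\r\circ u_\infty=1\}$ to the annulus $\bar B_{\hat M}(0)\setminus B_{1/\hat M}(0)$. The only difference is cosmetic: where the paper tersely writes $\mathcal{C}=u_\infty(\C)\cap S^3$ (implicitly using that the image of $u_\infty$ is the full cone), you make the surjectivity of the cross-section explicit via the intermediate value theorem along the rays $\hat u_\infty^{-1}([0,\infty)\cdot p)$, which is a welcome clarification.
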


\begin{proof}
Assume that $\mathcal{C}(r_k)\to\mathcal{C}_\infty$ along a subsequence, for some compact set $\mathcal{C}_\infty\subseteq S^3\subset\R^4=\C^2\times\{0\}$.
We consider the rescaled maps $u_k$, defined as above,
and, up to a subsequence, the limit map $u_\infty$.
Letting $\mathcal{C}$ be the image of $\frac{v_\infty}{|v_\infty|}|_{\C\setminus\{0\}}$,
we know that $\mathcal{C}$ is the cross-section of a (possibly flat) Schoen--Wolfson cone.

We claim that $\mathcal{C}_0=\mathcal{C}$.
Given points $y_k\in \bar B_{M}(x_0)\setminus B_{1/M}(x_0)$ converging to $y_\infty$, we know that $u_k(y_k)\to u_\infty(y_\infty)\in(0,\infty)\cdot\mathcal{C}$,
and hence
$$\frac{u_k(y_k)}{\mathfrak{r}\circ u_k(y_k)}
\to\frac{u_\infty(y_\infty)}{\mathfrak{r}\circ u_\infty(y_\infty)}
=\frac{u_\infty(y_\infty)}{|u_\infty(y_\infty)|},$$
showing the inclusion $\mathcal{C}_0\subseteq\mathcal{C}$.
To see that this inclusion is an equality, we fix a reference point $\bar y\in S^1$ and we observe that by the first lemma above we have
$$|u_\infty(\bar y)|\in[M^{-1},M],$$
while from the second lemma we get
$$|u_\infty(y)|\le\frac{1}{2M}|u_\infty(\bar y)|\le\frac12\quad\text{for $|y|\le\hat M^{-1}$}$$
and
$$|u_\infty(y)|\ge 2M|u_\infty(\bar y)|\ge 2\quad\text{for $|y|\ge\hat M$}.$$
Thus, we have
$$\mathcal{C}=u_\infty(\C)\cap S^3=u_\infty(\bar B_{\hat M}(0)\setminus B_{1/\hat M}(0))\cap S^3
=\frac{u_\infty}{|u_\infty|}(\bar B_{\hat M}(0)\setminus B_{1/\hat M}(0)).$$
Using the uniform convergence $\frac{u_k}{\mathfrak{r}\circ u_k}\to \frac{u_\infty}{|u_\infty|}$ on this annulus, we deduce that $\mathcal{C}_0=\mathcal{C}$.
\end{proof}

\begin{Prop}
The images of two blow-ups at $x_0$ are isometric to each other. In particular,
if at least one blow-up at $x_0$ takes values in a plane, then the same holds for all blow-ups. \hfill $\Box$
\end{Prop}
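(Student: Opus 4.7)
The plan is to combine the Hausdorff-continuity of the map $r\mapsto\mathcal C(r)$ from the third lemma above with the discreteness of isometry classes of Schoen--Wolfson cross-sections.

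First I would check that
\[F\colon(0,r_0]\to\mathcal{K}(\p B_1^\r(0)),\qquad F(r):=\mathcal C(r),\]
is continuous with respect to the Hausdorff distance on the hyperspace $\mathcal K(\p B_1^\r(0))$ of non-empty compact subsets. The first lemma above gives $d_K(u(x),u(x_0))\ge s(|x-x_0|)/M$ whenever $x$ is close to, but distinct from, $x_0$, and we have $s(r)>0$ for every small $r>0$ by Proposition \ref{nc.is.omega}; hence the map $x\mapsto(u(x_0)^{-1}*u(x))/d_K(u(x_0),u(x))$ is continuous on a punctured neighborhood of $x_0$. Since the annulus $\bar B_{\hat M r}(x_0)\setminus B_{r/\hat M}(x_0)$ depends continuously on $r$, this yields the continuity of $F$.

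Then I would invoke the standard topological fact that the cluster set
\[\mathcal Z:=\bigcap_{0<\ep<r_0}\overline{F((0,\ep])}\]
of a continuous map from $(0,r_0]$ to a compact metric space is non-empty, compact and \emph{connected}. By the third lemma above, every element of $\mathcal Z$ is the cross-section of a (possibly flat) Schoen--Wolfson cone, and its proof shows that the cross-section of any blow-up of $u$ at $x_0$ arises exactly in this way, as the Hausdorff limit of $\mathcal C(r_k)$ along the corresponding sequence of radii.

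Finally I would argue that, inside the space of all Schoen--Wolfson cross-sections sitting on $\p B_1^\r(0)\cap(\C^2\times\{0\})=S^3$, the connected components coincide with the isometry classes. Each isometry class (including the degenerate class of great circles in $S^3$, which correspond to Lagrangian planes) is a $U(2)$-orbit of a single smooth closed curve and is hence path-connected in the Hausdorff topology; the nontrivial direction is closedness, namely that any Hausdorff limit of cross-sections of a fixed $(p,q)$-type remains of the same type. For this I would use the explicit torus-knot parametrizations from \cite[Section~7]{SW2}: every Schoen--Wolfson cross-section is a smooth immersion $S^1\to S^3$ of bounded $C^1$-norm (after normalization by arclength), and an Arzel\`a--Ascoli argument on reparametrizations produces a $C^1$-subsequential limit which is again a Schoen--Wolfson cross-section of the same geometric type. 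Consequently $\mathcal Z$ lies in a single isometry class, all blow-up images --- each being the (filled) cone over its cross-section inside $\C^2\subset\mathbb{H}^2$ --- are mutually isometric, and the second assertion is the special case in which this isometry class is the one of Lagrangian planes. The main obstacle will be this last rigidity/closedness step; everything else reduces to continuity and general topology.
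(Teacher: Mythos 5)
Your overall strategy is the same as the paper's: the cluster set of $r\mapsto\mathcal C(r)$ is a continuum consisting of Schoen--Wolfson cross-sections, and one then argues it cannot meet two distinct isometry classes. The continuity of $F$ and the connectedness of the cluster set are fine, as is the identification of subsequential Hausdorff limits with cross-sections of blow-ups.

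The gap is in the final separation step, precisely the one you flag as the main obstacle. You claim that the connected components of the space of all Schoen--Wolfson cross-sections coincide with the isometry classes, and you justify this only by path-connectedness and closedness of each class (each being a compact $U(2)$-orbit, closedness is in fact immediate and the Arzel\`a--Ascoli argument is not needed). But ``each class is closed and connected'' does not make the classes the connected components of their union: for that you also need each class to be relatively open, i.e.\ you must rule out that infinitely many distinct classes accumulate near the ones appearing in $\mathcal Z$. A priori there are countably many types $(p,q)$, and a connected set can meet infinitely many pairwise disjoint closed sets unless they are uniformly separated (one could invoke Sierpi\'nski's theorem that a continuum cannot be partitioned into countably many disjoint nonempty closed sets, but you do not, and your stated justification does not substitute for it). The paper closes this gap by observing that the mass bound \eqref{upper.bd.assumption} forces the density at the vertex, and hence the admissible parameters $(p,q)$, to range over a \emph{finite} set; finitely many disjoint compact isometry classes have positive mutual distance, so the connected cluster set lies in exactly one of them. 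You should add this finiteness observation; with it, your argument is complete and essentially identical to the paper's.
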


\begin{proof}
Indeed, the set of subsequential limits $\lim_{r\to0}\mathcal{C}(r)$ (endowed with the Hausdorff topology) is connected
and consists of cross-sections of Schoen--Wolfson cones.
By \eqref{upper.bd.assumption}, the parameters defining such cones take value in a finite set (cf. \cite[Section 7]{SW2}). Hence, the possible isometric classes that can be attained as limits have positive distance from each other. Since the set of limits is connected, it must be a subset of just one of them.
\end{proof}

The previous facts were proved at fixed center, but the same ideas can be used to obtain the following result,
whose proof is just outlined.

\begin{Lm}\label{planar.preserved}
Given $\Lambda>0$,
there exist $k_0\in\N^*$ and $\zeta,\eta>0$, depending only on $\Lambda$, $\nu$, and the constant $C_0$ in \eqref{upper.bd.assumption}, such that the following holds. 
If $B_{2^{k_0}r}(x)\subseteq\Omega$ and
\begin{itemize}[leftmargin=8mm]
\item[(i)] the image of the map $\frac{u(x)^{-1}*u}{d_K(u(x),u)}$ on $\bar B_{\hat M r}(x)\setminus B_{r/\hat M}(x)$ is $\zeta$-close to a circle in the Hausdorff metric,
\item[(ii)] $\int_{B_{2^kr}(x)}|\nabla u|^2\,dx^2\le \Lambda^k\int_{B_r(x)}|\nabla u|^2\,dx^2$ for $k=1,\dots,k_0$,
\item[(iii)] $\Theta(\mathbf{v}_{\om},u(x),s(x,r)/\eta)<2\pi\tilde N(x)+\eta$
for some open $x\in\om\subset\subset\Omega$ such that $\bar B_{2s(x,r)/\eta}^\r(u(x))\cap u(\p\om)=\emptyset$,
\end{itemize}
then (i) holds also at a smaller radius $r'\in(\eta r,r/2)$. \hfill $\Box$
\end{Lm}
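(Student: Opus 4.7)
I will argue by contradiction and compactness, in the spirit of the proofs of Proposition \ref{par.cone} and Proposition \ref{nc.full.stop}. Suppose the conclusion fails: then there exist sequences $\zeta_k\downarrow 0$, $\eta_k\downarrow 0$, $k_0^{(k)}\uparrow\infty$, points $x_k\in\Omega$, and radii $r_k>0$ with $B_{2^{k_0^{(k)}}r_k}(x_k)\subseteq\Omega$, such that hypotheses (i)--(iii) hold at $(x_k,r_k)$ with these parameters, yet the image in (i) fails to be $\zeta_k$-close to any circle at every radius $r'\in(\eta_k r_k, r_k/2)$. Up to a left translation in $\mathbb{H}^2$, I may assume $u(x_k)=0$. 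Setting $u_k:=u_{x_k,r_k}$ and $N_k:=N_{x_k,r_k}$ as in \eqref{not.res}, condition (ii) rescales to $\int_{B_{2^j}(0)}|\nabla u_k|^2\,dx^2\le \Lambda^j$ for $j\le k_0^{(k)}$ (with the normalization $\int_{B_1(0)}|\nabla u_k|^2\,dx^2=1$), while a direct change of variables shows that \eqref{upper.bd.assumption} is inherited by $u_k$ with the same constant $C_0$; this yields the bound \eqref{teofrasto} required by Theorem \ref{th-sequential-weak-closure}. Applying the latter on the exhausting domains $B_{2^{k_0^{(k)}}}(0)\uparrow\C$, I extract a subsequential limit: a PHSLV $(\C,\hat u_\infty,\hat N_\infty)$, a quasiconformal homeomorphism $\psi:\C\to\C$ with $\psi(0)=0$, and the $C^0_{loc}$ convergence $u_k\to u_\infty:=\hat u_\infty\circ\psi$. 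Since weak conformality is stable under $W^{1,2}$-weak limits, $u_\infty$ is weakly conformal, so I may take $\psi(z)=az$ for some $a\neq 0$.

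The next step is to identify $\hat u_\infty$ as a planar holomorphic polynomial. Hypothesis (iii) with $\eta_k\to 0$, combined with Theorem \ref{mono.cor} and Corollary \ref{dens.limits}, forces the induced limit varifold $\mathbf{v}_\infty$ to satisfy $\Theta(\mathbf{v}_\infty,0,R)\equiv\theta^\chi(\mathbf{v}_\infty,0)$ for all $R>0$, and hence $\nabla^{\mathcal P}\arctan\sigma\equiv 0$ on $\operatorname{spt}(\mathbf{v}_\infty)\setminus\Pi^{-1}(0)$. Since $\hat u_\infty$ is proper with $\hat u_\infty^{-1}(0)=\{0\}$ (by the argument used in the proof of Proposition \ref{pr-tangent-cone}, using the inherited bound \eqref{teofrasto}), the final step of that same proof yields $\varphi\circ\hat u_\infty\equiv 0$. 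On the other hand, hypothesis (i) with $\zeta_k\to 0$, together with the uniform convergence $u_k\to u_\infty$ on the annulus $A:=\bar B_{\hat M}(0)\setminus B_{1/\hat M}(0)$ (where $|u_\infty|$ is bounded away from $0$ because $u_\infty^{-1}(0)=\psi^{-1}(\{0\})=\{0\}$), forces the image of $u_\infty/|u_\infty|$ on $A$ to be a round circle in $S^3$. Combining these two facts, the classification of cross-sections via Proposition \ref{par.cone} and Corollary \ref{par.cone.imm} (the circle case corresponds to $\kappa=1$ in \eqref{h.expl}) forces $\hat u_\infty$ to take values in a Lagrangian plane $\mathcal P\subset\C^2\times\{0\}$. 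Proposition \ref{planar} then shows that after identifying $\mathcal P\cong\C$, $\hat u_\infty$ is holomorphic; being proper with $\hat u_\infty^{-1}(0)=\{0\}$, it has the form $\hat u_\infty(z)=cz^\ell$ with $\ell\in\{1,\dots,\nu\}$, the upper bound coming from the density bound $\tilde N_\infty(0)\le\nu$.

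The contradiction now materializes at the fixed intermediate scale $t:=1/4$. Because $u_\infty(z)=c(az)^\ell$ is smooth and zero-free on the annulus $A':=\bar B_{\hat M/4}(0)\setminus B_{1/(4\hat M)}(0)$, the uniform convergence $u_k\to u_\infty$ on $A'$ gives
\begin{equation*}
\frac{u_k}{\mathfrak{r}\circ u_k}\bigg|_{A'}\longrightarrow \frac{u_\infty}{|u_\infty|}\bigg|_{A'}
\end{equation*}
in Hausdorff distance, and the right-hand side is precisely a great circle of $S^3\cap\mathcal P$. Since $\eta_k<1/4$ eventually, the radius $r'_k:=r_k/4$ lies in $(\eta_k r_k, r_k/2)$, and for $k$ large enough this Hausdorff distance drops below $\zeta_k$, so hypothesis (i) \emph{does} hold at scale $r'_k$, contradicting its assumed failure. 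The main obstacle I anticipate is the identification step of the middle paragraph: one must simultaneously leverage the pinched density (which forces the vertical coordinate $\varphi$ to vanish in the limit) and the near-circular cross-section (which constrains the horizontal image to lie in a Lagrangian plane) to conclude that $\hat u_\infty$ is precisely the polynomial $cz^\ell$, as opposed to a generic planar map. Additionally, the choice of $k_0$ must be made large enough---depending only on $\nu$ and $C_0$---that the limit PHSLV is defined on balls substantially larger than $\hat M$, absorbing the quasiconformal distortion arising from Theorem \ref{th-sequential-weak-closure} through the bound of Lemma \ref{distortion}.
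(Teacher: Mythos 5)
Your overall strategy (contradiction, rescaling, compactness via Theorem \ref{th-sequential-weak-closure}, identification of the limit as a planar Schoen--Wolfson cone using the pinched density and Proposition \ref{par.cone}) is the same as the paper's, and everything up to the last paragraph is sound in spirit. But the final step contains a genuine gap. You send $\zeta_k\to0$ along the contradiction sequence, and then claim that at the fixed intermediate scale $r_k/4$ the Hausdorff distance of the cross-section from a circle ``drops below $\zeta_k$.'' This does not follow: by the very definition of your counterexamples, that distance $d_k$ satisfies $d_k\ge\zeta_k$ (since (i) fails with parameter $\zeta_k$ at scale $r_k/4$), while the convergence to the planar limit only gives $d_k\to0$. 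Both statements are compatible because $\zeta_k\to0$ as well; there is no control on the relative rates, so no contradiction is reached. The adversary can always supply counterexamples with, say, $d_k=2\zeta_k$.

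The fix — and the point your proposal misses — is that $\zeta$ must be \emph{fixed}, not sent to zero, and its value is dictated by a rigidity/gap phenomenon: by \eqref{upper.bd.assumption} the cross-sections of the possible limiting Schoen--Wolfson cones form a finite set of isometry classes, so there is $\delta_0(C_0)>0$ such that every non-circular such cross-section is at Hausdorff distance at least $\delta_0$ from every circle. One then fixes $\zeta<\delta_0$ and runs the compactness arguments only in $\eta\to0$ and $k_0\to\infty$. The limit on $\C$ is an exact cone whose cross-section inherits, from scale $1$, distance $\le\zeta$ to a circle (hence, by the gap, it \emph{is} a circle) and inherits, from the failure at scale $1/4$, distance $\ge\zeta>0$ from every circle — a genuine contradiction. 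Your argument never invokes this discreteness and therefore cannot close.

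A secondary slip: weak conformality is \emph{not} stable under weak $W^{1,2}$ limits (this is precisely why Theorem \ref{th-sequential-weak-closure} produces the quasiconformal reparametrization $\psi$), so you cannot take $\psi(z)=az$. This happens to be harmless for your purposes, since the image of $u_\infty/|u_\infty|$ on an annulus around $0$ is unchanged by precomposition with a homeomorphism of $\C$ fixing the origin, but the justification you give is false.
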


\begin{proof}
We fix $k_0$ and $\zeta$, to be determined later. Arguing by contradiction as in the proof of Proposition \ref{adm.open},
in the limit $\eta\to0$ we get a nontrivial PHSLV $(\psi(B_{2^{k_0}}(0)),\hat w,\hat N_{\hat w})$
with $\varphi\circ\hat w=0$ and $\hat w^{-1}(0)=\{0\}$, such that the induced varifold satisfies
$$\mathbf{v}_{\hat w,\psi(B_{2^{k_0}}(0))}\le\tilde{\mathbf{v}}$$
for a HLSV $\tilde{\mathbf{v}}$ which has $\Theta(\tilde{\mathbf{v}},0,R)$ constant in $R$ (and bounded by a constant $C(C_0)$),
equal to $2\pi\tilde N_{\hat w}(0)$.

Moreover, letting $w=\hat w\circ\psi$, the image of $\bar B_{\hat M r}(x)\setminus B_{r/\hat M}(x)$ through $\hat w/(\r\circ\hat w)=\hat w/|\hat w|$ is $\zeta$-close to a circle. We now claim that the statement holds for a radius $r'\in(0,1/2)$,
provided that $k_0$ is large enough and $\zeta$ is small enough, giving the desired contradiction.
If not, then letting $k_0\to\infty$ and employing a diagonal argument we obtain a new PHSLV defined on $\C$
with the same properties.

As seen in the proof of Proposition \ref{par.cone}, the image of its parametrization is a Schoen--Wolfson cone
(with parameters bounded in terms of $C_0$). Since its cross-section is $\eta$-close to a circle,
by choosing $\eta$ small enough we see that it is forced to be a circle (and this limit PHSLV is planar),
obtaining a contradiction. 
\end{proof}

Assume now that we have a sequence of points $x_k\to x_0$ with $x_k\in\mathcal{S}_{SW}$ and $x_k\neq x_0$.

\begin{Prop}\label{sw.countable}
For $k$ large enough we have $\tilde N(x_k)<\tilde N(x_0)-\frac{\eta}{4\pi}$. As a consequence, $\mathcal{S}_{SW}$
is at most countable. \hfill $\Box$
\end{Prop}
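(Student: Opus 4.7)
The plan is to argue by contradiction, assuming along a subsequence that $\tilde N(x_k) \ge \tilde N(x_0) - \eta/(4\pi)$. Setting $r_k := |x_k - x_0|$ and $y_k := (x_k - x_0)/r_k \in S^1$, by Proposition \ref{all.admissible} and Proposition \ref{pr-tangent-cone} I extract, up to subsequence, a parametrized blow-up PHSLV $(\C, \hat u_\infty, \hat N_\infty)$ with a quasiconformal homeomorphism $\psi:\C \to \C$ such that the rescalings $u_k := u_{x_0, r_k}$ converge to $u_\infty := \hat u_\infty \circ \psi$ in $C^0_{loc}$, and $y_k \to y_\infty \in S^1$. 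By Proposition \ref{par.cone}, Corollary \ref{par.cone.imm}, and Corollary \ref{tilde.n.cone}, the map $\hat u_\infty$ is a smooth immersion on $\C \setminus \{0\}$ with constant integer multiplicity $n_0 \in \N^*$, and its image is either a non-flat Schoen--Wolfson cone $u_{p,q}$ with $pq \ge 2$ or a Lagrangian plane parametrized as $\hat u_\infty(z) = cz^k$ for some $k \in \N^*$.

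Picking a small neighborhood $W$ of $\psi(y_\infty) \ne 0$ on which $\hat u_\infty$ is a diffeomorphism with $\hat u_\infty(\psi(y_\infty)) \nin \hat u_\infty(\partial W)$, and applying Corollary \ref{dens.limits} to the varifold convergence $\mathbf{v}_{k, \omega} \rightharpoonup \mathbf{v}_{\infty, W}$, where $\omega := \psi^{-1}(W)$, at the centers $u_k(y_k) \to \hat u_\infty(\psi(y_\infty))$, I obtain $\limsup_k \tilde N(x_k) \le n_0$. In the non-flat Schoen--Wolfson case at $x_0$, since $\tilde N(x_0) = \tilde N_\infty(0) = n_0 \sqrt{pq}$ with $pq \ge 2$, we get
\[ \tilde N(x_0) - \limsup_k \tilde N(x_k) \ge n_0(\sqrt 2 - 1) \ge \sqrt 2 - 1, \]
which exceeds $\eta/(4\pi)$ since we may freely shrink $\eta$ in Lemma \ref{planar.preserved}, contradicting the assumption.

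The main obstacle is the planar case $\hat u_\infty(z) = cz^k$, where the analogous density gap $\tilde N(x_0) - n_0 = (k-1) n_0$ degenerates when $k = 1$. To handle all planar cases uniformly, I apply Lemma \ref{planar.preserved} iteratively at $x_0$: the image of $u_\infty$ being a plane forces the cross-section map appearing in (i) at $x_0$ to approach the unit circle in $\mathcal{P}$ as the scale shrinks, so (i) holds at $x_0$ at every sufficiently small scale; strong admissibility of $x_0$ yields (ii); monotonicity combined with the definition of $\tilde N(x_0)$ gives (iii). A first application propagates (i) at $x_0$ to all smaller scales. For $k$ large, $C^0$-continuity of $u$ transfers (i) at some fixed small scale $r$ from $x_0$ to $x_k$; (ii) transfers by uniform strong admissibility near $x_0$; and (iii) at $x_k$ follows from $\tilde N(x_k) \ge \tilde N(x_0) - \eta/(4\pi)$, combined with monotonicity and the continuity of $\Theta(\mathbf{v}_\omega, \cdot, a)$ in the center (for $a$ small enough that $s(x_k, r)/\eta \le a$ and $\Theta(\mathbf{v}_\omega, u(x_0), a) < 2\pi \tilde N(x_0) + \eta/2$). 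A second application of the lemma at $x_k$ then propagates (i) to arbitrarily small scales at $x_k$, incompatible with the fact that $x_k \in \mathcal{S}_{SW}$ has a non-flat Schoen--Wolfson cone as blow-up, whose cross-section is a $(p_k, q_k)$-torus knot (with $p_k q_k \ge 2$), not $\zeta$-close to a circle.

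For the countability consequence, the mass bound \eqref{upper.bd.assumption} restricts the admissible SW cone parameters $(n, p, q)$ to a finite set (cf.\ \cite[Section 7]{SW2}), so $\tilde N$ takes only finitely many values on $\mathcal{S}_{SW}$. By the first part just proved, each level set $\{\tilde N = c\} \cap \mathcal{S}_{SW}$ admits no non-trivial accumulation points in $\mathcal{S}_{SW}$, hence is locally discrete and therefore countable; the finite union of countable sets is countable, whence $\mathcal{S}_{SW}$ is at most countable.
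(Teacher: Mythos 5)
Your proof reaches the right conclusion and its main ingredients are sound, but it is organized quite differently from the paper's argument, and one step deserves more care. The paper does not case-split on the type of blow-up at $x_0$: it verifies hypothesis (i) of Lemma \ref{planar.preserved} directly \emph{at the points $x_k$}, at scale $\sim r\,r_k$, using only that $\hat u_\infty$ is a smooth immersion near $\psi(y_\infty)\neq 0$ (Corollary \ref{par.cone.imm}) — so that in blow-up coordinates the normalized annular image around $y_k$ converges to the cross-section of a tangent plane, i.e.\ a circle — and then iterates the lemma downward at $x_k$ ((ii) holds at all scales by Lemma \ref{ahlfors}, and (iii) persists under shrinking the radius by monotonicity). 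Your non-flat case replaces this with a density-gap argument ($\limsup_k\tilde N(x_k)\le n_0$ by Corollary \ref{dens.limits} versus $\tilde N(x_0)=\tilde N_\infty(0)\ge n_0\sqrt{pq}\ge n_0\sqrt2$), which is a legitimate and arguably cleaner shortcut; note only that $\tilde N_\infty(0)$ equals $m\,n_0\sqrt{pq}$ where $m$ is the covering degree of the cone by $\hat u_\infty=h^{-1}\circ(cz^m)$, not necessarily $n_0\sqrt{pq}$ — the inequality you need survives. Your planar case is where you pay for the case split: you must transfer condition (i) from $x_0$ to $x_k$ at a fixed scale $r$, which requires checking that the renormalized map $x\mapsto\frac{u(x_k)^{-1}*u(x)}{d_K(u(x_k),u(x))}$ on the annulus stays Hausdorff-close to the one centered at $u(x_0)$; this is true (the centers converge while $d_K(u(x_0),u(\cdot))$ is bounded below on the annulus by the $[M^{-1},M]$ lemma), but it is an extra continuity estimate that the paper's direct verification at $x_k$ avoids entirely. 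For countability, your route (finitely many values of $\tilde N$ on $\mathcal{S}_{SW}$, each level set discrete) works, whereas the paper stratifies by density intervals of width $\gamma=\frac{\eta}{4\pi}$ inside the open sets $\{\tilde N<\nu-1+\ell\gamma\}$; just be aware that what the first part actually yields is that each level set has no accumulation points \emph{within itself} — it may still accumulate at a point of $\mathcal{S}_{SW}$ with strictly larger density — which is nonetheless enough for countability.
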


\begin{proof}
If the claim fails, we can assume without loss of generality that $\tilde N(x_k)\ge\tilde N(x_0)-\frac{\eta}{4\pi}$
for all $k$.
Letting $r_k:=|x_k-x_0|$, we can define $u_k$ and the limit $u_\infty=\hat u_\infty\circ\psi$ as above, where $\psi:\C\to\C$
is a suitable quasiconformal homeomorphism.
Writing $x_k=x_0+r_ky_k$ and assuming $y_k\to y_\infty\in S^1$,
since $\hat u_\infty$ is a smooth embedding near $y_\infty$ by Corollary \ref{par.cone.imm}, the first assumption of the previous lemma holds eventually,
for any fixed $r>0$ small enough.

Also, the second assumption holds for free (see Lemma \ref{ahlfors} below).
Finally, we can select $\omega$ such that the third assumption holds for $x=x_0$ and $\eta/2$ in place of $\eta$, for $r>0$ small enough.
Since $2\pi\tilde N(x_0)\le2\pi\tilde N(x_k)+\frac{\eta}{2}$, we obtain the third assumption eventually also for $x=x_k$.
By iterating the lemma (note that the third assumption automatically holds for smaller radii), we obtain that any blow-up at $x_k$ has cross-section $\zeta$-close to a circle, and hence is planar (assuming that $\zeta$ is small enough), contradicting the fact that $x_k\in\mathcal{S}_{SW}$.

Hence, setting $\gamma:=\frac{\eta}{4\pi}$, the set $\mathcal{S}_{SW}\cap\{\tilde N\in[\nu-\gamma,\nu]\}$
is locally finite. Looking now at $\{\tilde N<\nu-\gamma\}$, which is open by upper semi-continuity of $\tilde N$,
the set $\mathcal{S}_{SW}\cap\{\tilde N\in[\nu-2\gamma,\nu-\gamma)\}$ is again locally finite here, and so on.
This shows that $\mathcal{S}_{SW}$ is at most countable.
\end{proof}

\section{Inductive step: conclusion}
We now want to mimic the reasoning used in Step 3 of the proof of \cite[Theorem 5.7]{PiRi1}.
Differently from that situation, we already reduced ourselves to the case where all points are (strongly) admissible. However, more work is needed in the present situation since,
even on the regular set, the map $u$ does not satisfy a PDE as simple as the Laplace equation,
due to the lack of a priori bounds on the Lagrange multiplier $\beta$ (cf. \eqref{ell.sys})
and on the cardinality of the singular set $\mathcal{S}_{SW}$.

\subsection{\texorpdfstring{The case $\bm{\mathcal{S}_{SW}=\emptyset}$}{The case $\mathcal{S}_{SW}=\emptyset$}}
If $\tilde N>\nu-1$ at all points of $\Omega$, then $N=\nu$ a.e. on $\mathcal{G}_u^f$ (since $N=\tilde N$ a.e. here and $N$ takes integer values),
and thus $\tilde N=\nu$ on $\Omega$ (see the proof of Proposition \ref{co-dens}).
Since the value of $N$ matters only on $\mathcal G_u^f$, we can also assume that $N=\nu$.
Dividing $N$ by $\nu$, we are back to the base case of the induction, where $\tilde N=1$.
In the sequel, we can then assume that
$$\{\tilde N\le\nu-1\}\neq\emptyset.$$
Since $\mathcal{S}_{SW}=\emptyset$, any blow-up at any point $x\in\Omega$ (recall Proposition \ref{all.admissible})
is given by a homogeneous polynomial, showing that $\tilde N(x)\in\N$. Since $\tilde N$ is upper semi-continuous, $\{\tilde N\le\nu-1\}\subset\Omega$ is open.

We let $\Omega_{reg}$ be the largest open set where the regularity theorem holds.
By inductive assumption, we then have
$$\{\tilde N\le\nu-1\}\subseteq\Omega_{reg}.$$
We assume by contradiction that $\Omega_{reg}\subsetneq\Omega$
and we let $\Omega'$ be a connected component of $\Omega_{reg}$ intersecting $\{\tilde N\le\nu-1\}$.
By the regularity theorem, we have $N=\mu$ a.e. on $\Omega'$, for some integer $1\le\mu<\nu$.

We can now find a disk $D\subset\Omega'$ such that $\overline{D}\subset\Omega$ and $\partial D$
contains a point $\bar x\not\in\Omega'$, i.e., $\bar x\not\in\Omega_{reg}$, where necessarily $\tilde N(\bar x)=\nu$.
Recall that, for any blow-up $(\C,\hat u_0,\hat N_0)$ at $\bar x$, the multiplicity $\tilde N_0$ equals $\nu$ at the origin,
while it is a constant $\mu'\ge\mu$ elsewhere (since $\hat u_0$ is a homogeneous polynomial;
the fact that $\mu'\ge\mu$ is a simple consequence of Corollary \ref{dens.limits}).

Let $H$ be the half-plane obtained by blowing up $D$ at $\bar x$. It is enough to show the following fact.

\begin{Prop}\label{blow.up.strong}
For any blow-up $(\C,\hat u_0,\hat N_0)$ at $\bar x$, the map $u_0$ arises as a strong $W^{1,2}_{loc}$ limit on $H$. As a consequence,
$\mu'=\mu$. \hfill $\Box$
\end{Prop}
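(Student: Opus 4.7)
Under the hypothesis that $\mathcal{S}_{SW}=\emptyset$, the inductive hypothesis $\{\tilde N\le\nu-1\}\subseteq\Omega_{reg}$ applied to $\Omega'$ tells us that on $D$ the map $u$ is a smooth Hamiltonian stationary Legendrian immersion with constant multiplicity $N=\mu$. Rescaling, the maps $u_k$ from \eqref{not.res} are smooth Hamiltonian stationary Legendrian immersions on the simply connected domains $r_k^{-1}(D-\bar x)$, which exhaust any fixed compact subset of the open half-plane $H$ as $k\to\infty$. The plan is to exploit the elliptic system satisfied by these smooth representatives, together with uniform bounds coming from the compactness of PHSLVs (Theorem \ref{th-sequential-weak-closure}) and the strong admissibility of $\bar x$ (Proposition \ref{all.admissible}), in order to upgrade the weak $W^{1,2}_{loc}(H)$ convergence $u_k\rightharpoonup u_\infty = \hat u_0\circ\psi$ to a strong one, after which $\mu'=\mu$ will follow by comparing the Radon-measure limit \eqref{rad.limit} with what strong convergence provides.

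Concretely, letting $v_k:=\pi\circ u_k$ and noting that the rescaled domain is simply connected, the Lagrangian angle $\mathfrak g_k=e^{i\alpha_k}$ admits a single-valued lift, and Hamiltonian stationarity reduces to
\[
\Delta \alpha_k = 0, \qquad \Delta v_k = i\,\nabla\alpha_k\cdot\nabla v_k.
\]
Assuming a uniform $L^\infty_{loc}(H)$ bound on the harmonic gradient $\nabla\alpha_k$, $L^p$-elliptic regularity applied to the second equation, combined with the uniform $W^{1,2}_{loc}(H)$ bound on $v_k$ (which follows from strong admissibility and \eqref{upper.bd.assumption}), yields uniform $W^{2,p}_{loc}(H)$ estimates for some $p>2$. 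A compact embedding then promotes the convergence to strong $W^{1,2}_{loc}(H)$ along any subsequence, hence along the full sequence. The Legendrian identity recovers $u_k$ from $v_k$, so $u_k\to u_\infty$ strongly in $W^{1,2}_{loc}(H,\mathbb{H}^2)$.

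The heart of the argument, and the main obstacle, is therefore the uniform $L^\infty_{loc}$ bound on $\nabla\alpha_k$: this one-form is a Lagrange multiplier for the pointwise Legendrian constraint and admits no obvious a priori control, as emphasized in the Introduction. The key geometric input is that, since $\mathcal{S}_{SW}=\emptyset$, Proposition \ref{par.cone} forces every tangent cone at $\bar x$ to be a Lagrangian plane, on which the Lagrangian angle is constant. Combining this with the doubling bounds on the Dirichlet energy furnished by the sequential compactness of PHSLVs, I will run a Liouville-type argument: the oscillation of the harmonic scalar $\alpha_k$ on any large ball in the rescaled domain is controlled by the deviation of the image of $u_k$ from a single Lagrangian plane, which tends to zero at the blow-up scale; classical gradient estimates for harmonic functions (mean-value inequality on discs) then convert this oscillation bound into the sought pointwise estimate $\|\nabla\alpha_k\|_{L^\infty(K)}\le C_K$ on every compact $K\subset H$, uniformly in $k$.

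Finally, once strong convergence on $H$ is in hand, the weak conformality of each $u_k$ passes to the limit, so $u_\infty$ is weakly conformal and we may take $\psi=\mathrm{id}$ and $\hat u_0=u_\infty$ on $H$. The convergence of Radon measures \eqref{rad.limit} then reads
\[
\mu\,\tfrac12|\nabla u_k|^2\,dx^2 \longrightarrow \mu'\,\tfrac12|\nabla \hat u_0|^2\,dx^2,
\]
while strong $W^{1,2}_{loc}$ convergence of $u_k\to\hat u_0$ gives the same limit with $\mu$ in place of $\mu'$; comparing on a set of positive measure where $\nabla\hat u_0\neq 0$ (which exists since $\hat u_0$ is a nontrivial homogeneous polynomial by Proposition \ref{reg.tg.cone}) immediately yields $\mu'=\mu$.
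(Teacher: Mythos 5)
Your overall skeleton --- reduce the statement to strong $W^{1,2}_{loc}(H)$ convergence of the rescalings, note that on the rescaled simply connected domains the smooth system \eqref{ell.sys} holds with a single-valued harmonic Lagrangian angle, identify the uniform $L^\infty_{loc}$ bound on the harmonic gradient $\nabla\alpha_k$ as the crux, then bootstrap and compare Radon measures to get $\mu'=\mu$ --- coincides with the paper's, which isolates the convergence claim as the more general Proposition \ref{strong.conv}. The gap sits in the only step that matters: your claimed oscillation bound for $\alpha_k$. The $C^0_{loc}$ convergence of $u_k$ to a planar map controls the \emph{position} of the image, not the tangent planes; and the convergence of the tangent planes to $\mathcal{P}$ furnished by varifold convergence is only an $L^2$ (tilt-excess) statement. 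Neither controls the oscillation of the continuous lift $\alpha_k$: the angle $e^{i\alpha_k}$ can be $L^2$-close to a constant while $\alpha_k$ winds through arbitrarily many branches of $2\pi$ on sets of vanishing measure, so $\operatorname{osc}\,\alpha_k$ may blow up even as the tilt excess vanishes. Upgrading the tilt excess to an $L^\infty$ bound on the tangent planes (which would indeed pin the lift to a single branch and make your harmonic gradient estimate work) is essentially equivalent to the strong convergence you are trying to prove, so the argument as proposed is circular. This uncontrolled Lagrange multiplier is precisely the difficulty flagged in the introduction.

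The paper closes this gap with a different mechanism in the proof of Proposition \ref{strong.conv}: assuming $m_j:=\max_{\mathcal{D}'}(\rho-|x-\hat x|)|\nabla\beta_j|\to\infty$, one rescales around a maximizing point $x_j$ at a rate $\tilde\lambda_j$ intermediate between $|\nabla\beta_j|(x_j)^{-1}$ and the distance to $\p\mathcal{D}'$. The normalized angles then converge in $C^\infty_{loc}$ to a harmonic function $\gamma_\infty$ with $|\nabla\gamma_\infty|\equiv1$ (interior maximum of the subharmonic $|\nabla\gamma_\infty|^2$ forces it to be constant), while the rescaled maps converge --- via the doubling bounds of Lemma \ref{ahlfors} and Theorem \ref{th-sequential-weak-closure} --- to a nontrivial weakly conformal limit $w_\infty$ satisfying $\nabla\gamma_\infty\cdot\nabla w_\infty=0$ in the limit of the rescaled PDE. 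A weakly conformal map whose differential is everywhere degenerate in a fixed direction must have vanishing differential, contradicting the normalization $\int_{B_1(0)}|\nabla w_j|^2\,dx^2=1$. This compactness/Liouville argument at the scale of the maximal normalized gradient is the ingredient you would need to substitute for the unproven oscillation bound.
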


\begin{Co}
We have $\tilde N\le\mu$ in a punctured neighborhood of $\bar x$. In particular, here the regularity theorem holds
and we have $N=\mu$ a.e. \hfill $\Box$
\end{Co}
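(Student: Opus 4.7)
The plan is to argue by contradiction. First I would note that, since $\mathcal S_{SW}=\emptyset$ in the present regime, Proposition \ref{reg.tg.cone} and Corollary \ref{tilde.n.cone}, combined with Proposition \ref{pr-tangent-cone} (which identifies $\tilde N(x)$ with the value at the origin of the multiplicity of any blow-up at $x$) and Proposition \ref{all.admissible}, imply that $\tilde N:\Omega\to\N^*$ is integer-valued. Hence if the claim $\tilde N\le\mu$ on a punctured neighborhood of $\bar x$ fails, I can extract a sequence $x_k\to\bar x$ with $x_k\ne\bar x$ and $\tilde N(x_k)\ge\mu+1$.

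I would then form the blow-up at $\bar x$ at scales $r_k:=|x_k-\bar x|$: Propositions \ref{pr-tangent-cone} and \ref{blow.up.strong} yield a subsequential PHSLV blow-up $(\C,\hat u_0,\hat N_0)$ with $\hat u_0$ a homogeneous polynomial and $\tilde N_0\equiv\mu'=\mu$ on $\C\setminus\{0\}$, while Theorem \ref{th-sequential-weak-closure} gives a $C^0_{loc}$ limit $u_\infty=\hat u_0\circ\psi$ of the rescalings $u_k:=u_{\bar x,r_k}$ (with $\psi:\C\to\C$ a quasiconformal homeomorphism) together with the varifold convergence
\begin{equation*}
\mathbf{v}_{k,\omega}\rightharpoonup\mathbf{v}_{\infty,\psi(\omega)}\quad\text{on }\mathbb{H}^2\setminus u_\infty(\partial\omega)
\end{equation*}
for any $\omega\subset\subset\C$. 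Setting $y_k:=(x_k-\bar x)/r_k\in S^1$ and assuming $y_k\to y_\infty\in S^1$ along a subsequence, the point $z_\infty:=\psi(y_\infty)\ne 0$ has $\tilde N_0(z_\infty)=\mu$, and by Remark \ref{rm-rob-rep} I can choose $\omega'\subset\subset\C\setminus\{0\}$ containing $z_\infty$ with $\hat u_0(z_\infty)\notin\hat u_0(\partial\omega')$ and realizing this minimum, i.e., $\theta^\chi(\mathbf{v}_{\infty,\omega'},\hat u_0(z_\infty))=2\pi\mu$. Taking $\omega:=\psi^{-1}(\omega')$ and using that $u_k(y_k)\to\hat u_0(z_\infty)$ eventually lies in $\mathbb{H}^2\setminus u_\infty(\partial\omega)$, Corollary \ref{dens.limits} combined with the definition of $\tilde N$ as an infimum yields
\begin{equation*}
2\pi\mu=\theta^\chi(\mathbf{v}_{\infty,\omega'},\hat u_0(z_\infty))\ge\limsup_{k\to\infty}\theta^\chi(\mathbf{v}_{k,\omega},u_k(y_k))\ge\limsup_{k\to\infty}2\pi\tilde N(x_k)\ge 2\pi(\mu+1),
\end{equation*}
the desired contradiction. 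The main subtlety will be the quasiconformal reparametrization $\psi$, which I handle by selecting $\omega$ on the domain side as the $\psi$-preimage of the neighborhood $\omega'$ on which $\tilde N_0$ is read off.

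For the second assertion, once $\tilde N\le\mu$ is established on a punctured neighborhood $U$ of $\bar x$, I shrink $U$ to a small connected punctured disk so that $\mu\le\nu-1$ makes the inductive hypothesis (Theorem \ref{reg.thm.chart} for $\sup\tilde N\le\nu-1$) applicable to $(U,u|_U,N|_U)$: this gives in particular that $N$ is a.e. constant on $U$. Since $U$ necessarily meets $D\subset\Omega'$, where $N\equiv\mu$ a.e., this constant must be $\mu$.
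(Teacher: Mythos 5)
Your proof is correct and follows essentially the same route as the paper: a contradiction argument transferring the hypothetical lower bound $\tilde N(x_k)\ge\mu+1$ to a point $\bar y\in S^1$ of the blow-up, where Proposition \ref{blow.up.strong} forces $\tilde N_0=\mu$. The paper compresses your careful chain through $\theta^\chi$, Corollary \ref{dens.limits}, and the choice of domains $\omega=\psi^{-1}(\omega')$ into the phrase "upper semi-continuity of $\tilde N$," but the mechanism is identical (it is the same one used in Propositions \ref{pr-robust} and \ref{pr-tangent-cone}), and your treatment of the second assertion via the inductive hypothesis and connectedness matches the intended argument.
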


\begin{proof}
This corollary follows from upper semi-continuity of $\tilde N$: if we had a sequence of points
$x_j\to\bar x$ with $\tilde N(x_j)\ge\mu+1$ and $x_j\neq\bar x$, then letting $r_j:=|x_j-\bar x|$
we would have $\tilde N_{u_{r_j}}(y_j)\ge\mu+1$ at the point $y_j:=\frac{x_j-\bar x}{|x_j-\bar x|}$.
Thus, we would have $\tilde N_0(\bar y)\ge\mu+1$ at a subsequential limit $\bar y=\lim_{j\to\infty}y_j\in S^1$,
for a certain blow-up $\hat u_0$ at $\bar x$, a contradiction.
\end{proof}

\begin{Co}\label{cor.con}
Any blow-up $(\C,\hat u_0,\hat N_0)$ at $\bar x$ arises as a strong $W^{1,2}_{loc}$ limit on all of $\C$. As a consequence,
$u$ is a smooth immersion in a punctured neighborhood of $\bar x$. \hfill $\Box$
\end{Co}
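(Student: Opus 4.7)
The plan is to deduce both assertions from Proposition~\ref{blow.up.strong}, the preceding corollary, and the sequential compactness Theorem~\ref{th-sequential-weak-closure}, using the equality case of the lower semi-continuity of the Dirichlet energy to upgrade the a priori weak $W^{1,2}_{loc}$ convergence of a blow-up sequence to a strong one on all of $\C$.

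First I would identify every relevant multiplicity with $\mu$. Writing $u_k := u_{\bar x, r_k}$ and $u_k \to u_\infty = \hat u_0 \circ \psi$ with $\psi : \C \to \C$ quasiconformal, Corollary~\ref{tilde.n.cone} combined with Proposition~\ref{blow.up.strong} gives that $\tilde N_0$ is constantly equal to $\mu$ on $\C \setminus \{0\}$; since $\hat u_0$ is a smooth immersion off the origin by Corollary~\ref{par.cone.imm}, Proposition~\ref{pr-robust} then yields $\hat N_0 = \mu$ a.e.\ on $\C$, and hence $N_\infty = \hat N_0 \circ \psi = \mu$ a.e.\ as well. The previous corollary simultaneously supplies a punctured neighborhood $U$ of $\bar x$ on which $N = \mu$ a.e., so for each compact $K \subset \C$ one has $N_k = \mu$ a.e.\ on $K$ once $r_k$ is small enough.

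Next I would feed these identifications into the Radon measure convergence from Theorem~\ref{th-sequential-weak-closure}. Since that theorem writes the limit measure as the pushforward under the quasiconformal homeomorphism $\psi^{-1}$ of an absolutely continuous measure, the limit is itself absolutely continuous and in particular carries no atom at $0$; the measure convergence therefore specializes to
\begin{equation*}
\int_K |\nabla u_k|^2 \, dx^2 \to 2 \int_K |\p_{x_1} u_\infty \wedge \p_{x_2} u_\infty| \, dx^2
\end{equation*}
for every compact $K \subset \C$. Sandwiching with the pointwise inequality $|\nabla u_\infty|^2 \ge 2 |\p_{x_1} u_\infty \wedge \p_{x_2} u_\infty|$ (with equality precisely when $u_\infty$ is weakly conformal) and the lower semi-continuity of the Dirichlet integral forces equality throughout the chain, yielding simultaneously strong $W^{1,2}_{loc}$ convergence on $\C$ and weak conformality of $u_\infty$; as at the end of the proof of Theorem~\ref{th-sequential-weak-closure}, $\psi$ may then be taken to be the identity, so that $u_\infty = \hat u_0$. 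The final assertion that $u$ is a smooth immersion in a punctured neighborhood of $\bar x$ is already contained in the previous corollary.

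I do not expect any genuine obstacle here: the real work has already been absorbed by Proposition~\ref{blow.up.strong}, which provides the key identification $\mu' = \mu$. Without it no multiplicity comparison would be possible; with it in hand, the remainder of the argument reduces to routine applications of the sequential compactness theorem and the equality case in the Dirichlet lower semi-continuity.
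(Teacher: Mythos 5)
The first assertion is handled correctly and by essentially the same route as the paper: identify $\hat N_0=\tilde N_0=\mu'=\mu$ a.e.\ (via Proposition \ref{blow.up.strong}, Corollary \ref{tilde.n.cone}, and the preceding corollary giving $N=\mu$ a.e.\ near $\bar x$), then use the measure convergence \eqref{rad.limit} together with $|\nabla u_\infty|^2\ge 2|\p_{x_1}u_\infty\wedge\p_{x_2}u_\infty|$ and lower semi-continuity to force equality, hence strong $W^{1,2}_{loc}$ convergence, weak conformality of $u_\infty$, $\psi=\operatorname{id}$, and $u_\infty=\hat u_0$.

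Your treatment of the second assertion, however, contains a genuine gap. You claim it is ``already contained in the previous corollary,'' but that corollary only says that $\tilde N\le\mu$ on a punctured neighborhood of $\bar x$ and that the \emph{regularity theorem} (Theorem \ref{reg.thm.chart}) holds there. That theorem permits a set of branch points $\mathcal{S}_{branch}$ which is only locally finite \emph{in the punctured neighborhood}, so a priori branch points could accumulate at $\bar x$; the content of the second assertion of Corollary \ref{cor.con} is precisely to exclude this, and it is needed later to apply the removability argument of \cite[Section 4]{SW2} across $\bar x$. The missing step — and the reason the corollary is stated together with the strong-convergence claim — is an application of the small tilt-excess regularity result, Proposition \ref{ep.reg}: since $\hat u_0(z)=cz^k$ is a smooth conformal immersion away from $0$ and the rescalings $u_r$ now converge to it strongly in $W^{1,2}_{loc}$ and in $C^0_{loc}$, on any small disk $\mathcal{D}\subset\subset\C\setminus\{0\}$ (where $\hat u_0$ is close to an affine Legendrian map after a further rescaling) the maps $u_r$ eventually satisfy the hypotheses of Proposition \ref{ep.reg}, hence are smooth embeddings on a smaller disk. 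Since the subsequence of radii was arbitrary, covering an annulus around the origin by such disks shows that $u$ is a smooth immersion on a full punctured neighborhood of $\bar x$. You should add this argument; without it the second assertion is unproved.
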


\begin{proof}
Let $u_0=\hat u_0\circ\psi$ denote the limit of the maps $u_r=u_{\bar x,r}$ along a subsequence.
Recall from \eqref{rad.limit} that we have the convergence of measures
$$\mu\frac{|\nabla u_r|^2}{2}\,dx^2=N_r\frac{|\nabla u_r|^2}{2}\,dx^2\rightharpoonup
N_0|\partial_{x_1} u_0\wedge\partial_{x_2} u_0|\,dx^2\le\mu\frac{|\nabla u_0|^2}{2}\,dx^2,$$
where $N_0:=\hat N_0\circ\psi$ and we used the fact that $\hat N_0=\tilde N_0=\mu'=\mu$ a.e. on $\C$.
Hence, the inequality must be an equality, and this immediately implies the first claim. Thus, we have $\psi=\operatorname{id}$
and $u_0=\hat u_0$.

Moreover, we know that $u_0=\hat u_0$ is a smooth conformal immersion outside the origin, taking values in $\C^2\subset\mathbb{H}^2$.
Taking any a small disk $\mathcal{D}\subset\subset\C\setminus\{0\}$, where $u_0$ is close to an affine function,
we can apply Proposition \ref{ep.reg}, showing that $u_r$ is a smooth embedding on a smaller disk, for $r$ small enough
along our subsequence. Since the subsequence was arbitrary, this proves the second claim.
\end{proof}

Since $\bar x\nin\mathcal{S}_{SW}$, smoothness extends across $\bar x$, as shown at the end of \cite[Section 4]{SW2}, contradicting the fact that $\bar x\not\in\Omega_{reg}$.
It remains to prove the previous proposition.

\begin{proof}[Proof of Proposition \ref{blow.up.strong}]
It suffices to see that, calling $u_r=u_{\bar x,r}$ the usual dilated maps,
we have $u_r\to u_0$ strongly in $W^{1,2}_{loc}(H)$ (along the subsequence defining the blow-up).
Indeed, this will force $N_0=\mu$ a.e. on $H$, where $N_0:=\hat N_0\circ\psi$,
or equivalently $\hat N_0=\mu$ a.e. on $\psi(H)$, and hence $\mu'=\mu$.
The strong $W^{1,2}_{loc}$ convergence on $H$ follows from the following more general result.
%
\end{proof}

We state and prove separately the following more general result, since it will be useful also to deal with the general case $\mathcal{S}_{SW}\neq\emptyset$.

\begin{Prop}\label{strong.conv}
Given $C_0,\Lambda>0$,
assume that $(B_1(0),u_j,N_j)$ are PHSLVs in $\mathbb{H}^2$ with $u_j\to u_\infty$ in $C^0_{loc}$ and weakly in $W^{1,2}_{loc}$, satisfying \eqref{upper.bd.assumption}, as well as the doubling bound
\begin{equation}\label{doubling}
\int_{B_{1}(0)}|\nabla u_j|^2\,dx^2\le\Lambda\int_{B_{1/2}(0)}|\nabla u_j|^2\,dx^2.
\end{equation}
Moreover, assume that the regularity theorem holds for all $j$ on some fixed disk $\mathcal{D}\subseteq B_{1/2}(0)$, with $\mathcal{S}_{SW}(u_j)\cap \mathcal{D}=\emptyset$.
Then we have $u_j\to u_\infty$ strongly in $W^{1,2}_{loc}(\mathcal{D})$. \hfill $\Box$
\end{Prop}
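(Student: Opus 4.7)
First, since by hypothesis Theorem \ref{reg.thm.chart} holds for each $u_j$ on $\mathcal{D}$ with empty Schoen--Wolfson set, each $u_j$ is a smooth Hamiltonian stationary Legendrian branched immersion on $\mathcal{D}$ with $N_j$ constant on every connected component; passing to a subsequence via \eqref{upper.bd.assumption}, I would take $N_j\equiv m\in\N^*$. Theorem \ref{th-sequential-weak-closure} then yields (along a further subsequence) a limit PHSLV $(\mathcal{D},\hat u_\infty,\hat N_\infty)$ with $u_\infty=\hat u_\infty\circ\psi$ for a quasiconformal $\psi$, and the target strong $W^{1,2}_{loc}(\mathcal{D})$ convergence is equivalent to the convergence of Dirichlet energies $\int_U|\nabla u_j|^2\,dx^2\to\int_U|\nabla u_\infty|^2\,dx^2$ for all $U\subset\subset\mathcal{D}$.

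Next, I would upgrade the single-scale doubling \eqref{doubling} to multiscale doubling: for any $D_0\subset\subset\mathcal{D}$ there should exist $C_1,r_0>0$ (independent of $j$) with
\[
\int_{B_{2r}(x)}|\nabla u_j|^2\,dx^2\le C_1\int_{B_r(x)}|\nabla u_j|^2\,dx^2\quad\text{for all }x\in D_0,\ r\in(0,r_0).
\]
The argument will be a contradiction via blow-up: a sequence of failing triples $(x_j,r_j,u_j)$ would, after rescaling and invoking Proposition \ref{pr-tangent-cone}, produce a nontrivial blow-up PHSLV on $\C$. By Proposition \ref{reg.tg.cone} such a blow-up is either a flat plane or a Schoen--Wolfson cone; the latter is ruled out by the standing hypothesis $\mathcal{S}_{SW}(u_j)\cap\mathcal{D}=\emptyset$, inherited in the blow-up, leaving a linear planar map (a holomorphic polynomial $z\mapsto cz^k$ of degree at most $m$) for which doubling holds trivially at every scale, the desired contradiction.

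The heart of the proof is then a uniform $L^\infty_{loc}(D_0)$ bound on the Lagrange multiplier associated to the Legendrian constraint. Working in a simply connected conformal chart $D_0'\subset D_0$ away from the (locally finite) branch set of $u_j$, I would consider $v_j:=\pi\circ u_j$, its Lagrangian angle $\mathfrak{g}_j=e^{-i\beta_j}$ and the Hamiltonian stationarity equation $\operatorname{div}(\mathfrak{g}_j^{-1}\nabla v_j)=0$, equivalently $\Delta v_j=i\,\nabla\beta_j\cdot\nabla v_j$. Combining the multiscale doubling with a Liouville / three-annulus type argument (every blow-up being a linear planar map, where $d\beta\equiv 0$, and doubling forcing this planarity to spread quantitatively across dyadic scales) I would derive the uniform bound $\|d\beta_j\|_{L^\infty(D_0')}\le C$.

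With $d\beta_j$ uniformly bounded in $L^\infty_{loc}$, the PDE for $v_j$ has uniformly $L^\infty$ coefficients; Calder\'on--Zygmund and a standard bootstrap yield uniform $W^{2,p}_{loc}(D_0')$ bounds for every $p<\infty$, and Rellich--Kondrachov upgrades the weak $W^{1,2}_{loc}$ convergence to strong. The Legendrian identity $|\nabla u_j|^2=|\nabla v_j|^2$ transfers this to $u_j$, and since the branch points form a set of measure zero not contributing to Dirichlet energy, covering $\mathcal{D}$ by such chart domains produces the desired strong convergence. The hard part is the third step: obtaining quantitative control on $\beta_j$ is exactly the analytic price of the Legendrian constraint, for which the PDE itself offers no a priori estimate, so the argument must route through doubling plus the rigidity of blow-ups, which is available here thanks to the inductive regularity and the absence of Schoen--Wolfson cones on $\mathcal{D}$.
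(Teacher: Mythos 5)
Your overall architecture coincides with the paper's: pass to $v_j=\pi\circ u_j$, write the Hamiltonian stationary system $\Delta v_j+i\nabla\beta_j\cdot\nabla v_j=0$ with $\beta_j$ harmonic, reduce everything to a uniform local bound on $\nabla\beta_j$, and then conclude by elliptic theory. Your first two steps (regularity of each $u_j$ on $\mathcal{D}$ and the upgrade of \eqref{doubling} to uniform multiscale doubling, i.e.\ Lemma \ref{ahlfors}) are essentially right, modulo the remark that the doubling step does not actually require excluding Schoen--Wolfson cones, since those are conformally $z\mapsto cz^k$ and double just as well.

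The genuine gap is in the step you yourself flag as the heart of the matter: the uniform bound $\|d\beta_j\|_{L^\infty_{loc}}\le C$. You propose to obtain it from ``a Liouville / three-annulus type argument'' plus the fact that blow-ups of each $u_j$ are planar with $d\beta\equiv0$, but this does not produce the bound. The planarity of blow-ups of a \emph{fixed} $u_j$ is purely qualitative and carries no uniformity in $j$; and a three-annulus (or frequency) argument only compares the size of a harmonic function across scales --- to convert it into an absolute bound you need an anchor at some scale, and there is none: $d\beta_j$ is exactly the Lagrange multiplier on which no a priori estimate is available, which is the central analytic difficulty of the whole Legendrian problem. The paper's actual mechanism is a Schoen-type point-picking argument: set $m_j:=\max_{x}(\rho-|x-\hat x|)|\nabla\beta_j|(x)$, assume $m_j\to\infty$, pick $x_j$ realizing the maximum, and rescale at a scale $\tilde\lambda_j=\lambda_j/\delta_j$ with $\lambda_j:=|\nabla\beta_j|(x_j)^{-1}$ and $\delta_j\to0$ chosen so that $\tilde\lambda_j=o(\rho-|x_j-\hat x|)$. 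The maximality of $x_j$ then forces $\delta_j\gamma_j$ (with $\gamma_j$ the rescaled multiplier) to converge to a harmonic function with $|\nabla\gamma_\infty|\equiv1$, while the doubling bounds and \eqref{rad.limit} guarantee that the rescaled maps converge to a \emph{nontrivial} limit PHSLV; passing to the limit in the degenerating equation $\delta_j\Delta w_j+i(\delta_j\nabla\gamma_j)\cdot\nabla w_j=0$ yields $\nabla\gamma_\infty\cdot\nabla w_\infty=0$, so the differential of the limit is everywhere degenerate, contradicting weak conformality and nontriviality. This two-scale rescaling and the contradiction with nondegeneracy of the limit are the content of the step, and neither appears in your sketch; without them the claimed bound on $d\beta_j$ is unsupported.
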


\begin{proof}
Replacing $u_j$ with its projection onto $\C^2$, which we denote by $v_j:=\pi\circ u_j$, recall (see, e.g., \cite[pp. 4--5]{SW2}) that we have the elliptic system
\begin{equation}\label{ell.sys}\begin{cases}
\Delta v_j+i\nabla\beta_j\cdot\nabla v_j=0\\
\Delta\beta_j=0,
\end{cases}\end{equation}
for a suitable harmonic function $\beta_j:\mathcal{D}\to\R$ (indeed, $v_j$ is smooth on $\mathcal{D}$,
with the mean curvature $H_j$ extending smoothly across branch points, as shown in \cite[Section 4]{SW2}).
We would like to show that $\nabla\beta_j$ stays bounded locally on $\mathcal{D}$. After this is done, it is immediate to conclude the strong $W^{1,2}_{loc}$ convergence $v_j\to v_\infty$ on $\mathcal{D}$ (cf. Step 3 of the proof of \cite[Theorem 5.7]{PiRi1}),
and hence the same for $u_j\to u_\infty$.

Taking $\mathcal{D}'\subset\subset \mathcal{D}$ and
writing $\mathcal{D}'=B_\rho(\hat x)$, we define
$$m_j:=\max_{x\in \mathcal{D}'}(\rho-|x-\hat x|)|\nabla\beta_j|(x).$$
Note that the maximum is not attained at the boundary $\partial \mathcal{D}'$, since here $\rho-|x-\hat x|=0$.
To conclude, it is enough to show that $m_j$ stays bounded. Let us assume by contradiction that $m_j\to\infty$ up to a subsequence.
For each $j$, we let $x_j$ be one of the points realizing the maximum and we let
$$\lambda_j:=|\nabla\beta_j|(x_j)^{-1}.$$
We have
$$\lambda_j=\frac{\rho-|x_j-\hat x|}{m_j}=o(\rho-|x_j-\hat x|),$$
and hence we can find another sequence $\tilde\lambda_j=\lambda_j/\delta_j$ such that
$$\delta_j\to0,\quad\tilde\lambda_j=o(\rho-|x_j-\hat x|).$$
Recalling \eqref{not.res}, we consider the rescaled maps
$$w_j:=\pi\circ(u_j)_{x_j,\tilde\lambda_j},$$
as well as
$$\gamma_j:=\beta_j(x_j+\tilde\lambda_jx).$$
Clearly, we still have the equation
$$\Delta w_j+i\nabla\gamma_j\cdot\nabla w_j=0$$
after the rescaling. We observe that $|\nabla\gamma_j|(0)=\delta_j^{-1}$ and that,
given any $x\in\C$, we have $x_j+\tilde\lambda_jx\in\mathcal{D}'$ eventually (as $\tilde\lambda_j=o(\rho-|x_j-\hat x|)$),
as well as
$$\delta_j|\nabla\gamma_j|(x)
=\lambda_j|\nabla\beta_j|(x_j+\tilde\lambda_jx)\le\lambda_j\frac{\rho-|x_j-\hat x|}{\rho-|x_j+\tilde\lambda_jx-\hat x|}|\nabla\beta_j|(x_j)
=\frac{\rho-|x_j-\hat x|}{\rho-|x_j+\tilde\lambda_jx-\hat x|},$$
and this upper bound converges to $1$ uniformly on compact sets,
giving
$$\limsup_{j\to\infty}\delta_j|\nabla\gamma_j|\le1$$
uniformly on compact sets.
Thus, assuming without loss of generality that $\gamma_j(0)=0$, we can extract a limit harmonic map
$$\delta_j\gamma_j\to\gamma_\infty\quad\text{in }C^\infty_{loc}(\C)$$
with $|\nabla\gamma_\infty|(0)=1$ and $|\nabla\gamma_\infty|(x)\le1$ everywhere, and hence $|\nabla\gamma_\infty|\equiv1$.

By the lemma below, for any fixed integer $k\ge1$, for $j$ large enough we have uniform upper bounds of the form
$$\int_{B_{2^k}(0)}|\nabla w_j|^2\,dx^2\le C^k\int_{B_{1}(0)}|\nabla w_j|^2\,dx^2=C^k.$$
By Theorem \ref{th-sequential-weak-closure}, we can then extract a limit PHSLV $(\C,\hat w_\infty,\hat N_\infty)$, and a limit map
$$w_\infty=\lim_{j\to\infty}w_j=\hat w_\infty\circ\psi$$
in $C^0_{loc}$ and weakly in $W^{1,2}_{loc}$, for a suitable quasiconformal homeomorphism $\psi:\C\to\C$.
Since
$$\delta_j\Delta w_j+i(\delta_j\nabla\gamma_j)\cdot\nabla w_j=0,$$
we can pass to the limit and obtain
$$\nabla\gamma_\infty\cdot\nabla w_\infty=0.$$
Since $\nabla\gamma_\infty\neq0$ is constant, this means that the differential $\nabla w_\infty$ is
never invertible. By the chain rule (see, e.g., \cite[Lemma III.6.4]{LV})
and its consequence that quasiconformal homeomorphisms preserve the class of negligible sets,
it follows that the same holds for $\hat w_\infty$. Since the latter is weakly conformal, we deduce that $\nabla\hat w_\infty=0$.
However, this contradicts \eqref{rad.limit} and the fact that
$$\int_{B_1(0)}|\nabla w_j|^2\,dx^2=1,$$
by our definition of a rescaling (recall \eqref{not.res}).
\end{proof}

Lemma \ref{ahlfors} below, used in the previous proof, shows that the measures $|\nabla u_r|^2\,dx^2$ satisfy an Ahlfors-type regularity (in a rather weak sense).

\begin{Lm}\label{ahl.pre}
Given $C_0,\Lambda>0$,
assume that $(B_1(0),u,N)$ is a PHSLV in $\mathbb{H}^2$ satisfying \eqref{upper.bd.assumption}, as well as
\begin{equation}\label{doubling.bis}
\int_{B_{1}(0)}|\nabla u|^2\,dx^2\le\Lambda\int_{B_{1/2}(0)}|\nabla u|^2\,dx^2.
\end{equation}
Then there exist two constants $\eta(C_0,\Lambda)\in(0,1/2)$ and $\Lambda'(C_0)>0$ such that
$$\int_{B_{s'}(0)}|\nabla u|^2\,dx^2\le\Lambda'\int_{B_{s'/2}(0)}|\nabla u|^2\,dx^2$$
for some radius $s'\in(\eta,1/2)$. \hfill $\Box$
\end{Lm}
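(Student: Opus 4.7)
I will argue by contradiction and compactness. Set $\Lambda'(C_0)$ to be a large constant (depending only on $C_0$, via the universal density bound $\tilde N\le C_0'(C_0)$ from Proposition \ref{pr-robust} and the universal quasiconformality constant appearing in Theorem \ref{th-sequential-weak-closure}), to be chosen precisely at the end. Suppose by contradiction that the lemma fails: for some $\Lambda$ there exist PHSLVs $(B_1(0),u_j,N_j)$ satisfying \eqref{upper.bd.assumption} and \eqref{doubling.bis}, together with $\eta_j\downarrow 0$, such that no $s'\in(\eta_j,1/2)$ realizes the doubling inequality with this $\Lambda'$. After left-translating to $u_j(0)=0$ and rescaling by $\delta_{1/s_j}$, with $s_j^2:=\int_{B_1}|\nabla u_j|^2$, I assume $\int_{B_1}|\nabla u_j|^2=1$; the bound \eqref{upper.bd.assumption} is preserved, and \eqref{doubling.bis} yields $\int_{B_{1/2}}|\nabla u_j|^2\ge\Lambda^{-1}$. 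Using $\tilde N_{u_j}\le C_0'$ from Proposition \ref{pr-robust}, I replace $N_j$ by $\min(N_j,C_0')$ off $\mathcal{G}^f_{u_j}$, obtaining $N_j\le C_0'$ pointwise without affecting the PHSLV structure.

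Theorem \ref{th-sequential-weak-closure} produces, along a subsequence, a limit PHSLV $(B_1(0),\hat u_\infty,\hat N_\infty)$ and a universally quasiconformal $\psi:B_1(0)\to B_1(0)$ with $\psi(0)=0$, the $C^0_{loc}\cap W^{1,2}_{loc}$-weak convergence $u_j\to u_\infty=\hat u_\infty\circ\psi$, and the weak-$^*$ limit $N_j|\nabla u_j|^2/2\,dx^2\rightharpoonup\mu_\infty$ (absolutely continuous on $B_1$ by Lemma \ref{lm-abs-cont}). Iterating the failure at dyadic scales $s'=2^{-k}$ with $2^{-k}\in(\eta_j,1/2)$ yields $\int_{B_{2^{-k-1}}}|\nabla u_j|^2\le(\Lambda')^{-k}$, and the weak-$^*$ liminf gives, for each fixed $k\ge 1$,
$$\mu_\infty(B_r(0))\le C(C_0)\,r^{\log_2\Lambda'}\qquad\text{for all }r\in(0,\tfrac14].$$
At the same time, weak-$^*$ limsup on compacts plus absolute continuity give $\mu_\infty(B_{1/2}(0))\ge 1/(2\Lambda)>0$, so by \eqref{rad.limit} $\hat u_\infty$ is non-constant on $B_1$. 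Proposition \ref{nc.is.omega} then forces non-constancy on every neighborhood of $0$, and Proposition \ref{in.tilde.omega} places $0\in\tilde\Omega_{\hat u_\infty}$, with $1\le\tilde N_{\hat u_\infty}(0)\le C_0'$.

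To contradict the decay I produce a matching lower bound near $0$. By Proposition \ref{pr-fiber} I shrink to a ball $B_{r_0}(0)$ in which the $\hat u_\infty$-fiber of $0$ has $\{0\}$ as an isolated component. For each small $b>0$, the connected component $\omega_b$ of $\hat u_\infty^{-1}(B^{\r}_b(0))\cap B_{r_0}(0)$ containing $0$ shrinks to $\{0\}$, so $\omega_b\subset B_{\delta(b)}(0)$ with $\delta(b)\downarrow 0$. Proposition \ref{co-dens} gives $\theta^\chi(\mathbf{v}_{\omega_b},0)\ge 2\pi$; Theorem \ref{mono.cor} yields $|\mathbf{v}_{\omega_b}|(B^{\r}_{b/2}(0))\ge c_1 b^2$ for a universal $c_1>0$; transporting this mass through $\psi$ gives $\mu_\infty(B_{\tilde\delta(b)}(0))\ge c_1 b^2$, where $\tilde\delta(b)\le C\delta(b)^{\alpha}\downarrow 0$ by the H\"older continuity at the origin of the universally quasiconformal $\psi^{-1}$ (for some $\alpha=\alpha(C_0)>0$).

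The heart of the argument is to establish a polynomial scale relation $\delta(b)\lesssim b^{1/\tilde N_{\hat u_\infty}(0)}$; combined with the upper bound this produces
$$c_1 b^2\;\le\;\mu_\infty(B_{\tilde\delta(b)}(0))\;\le\;C\tilde\delta(b)^{\log_2\Lambda'}\;\lesssim\;b^{\alpha\log_2\Lambda'/\tilde N_{\hat u_\infty}(0)},$$
and choosing $\Lambda'(C_0)$ so large that $\alpha\log_2\Lambda'/C_0'>2$ forces the right-hand side to vanish strictly faster than $b^2$ as $b\downarrow 0$, yielding the contradiction. To secure the scale relation without invoking the not-yet-proven regularity theorem for $\hat u_\infty$, I plan to perform a further parametric blow-up of $\hat u_\infty$ at $0$ via Proposition \ref{pr-tangent-cone}: admissibility holds along a suitable subsequence of radii by Proposition \ref{zero.dim} together with the finite density $\tilde N_{\hat u_\infty}(0)<\infty$; the resulting blow-up PHSLV on $\C$ carries the constant density $\tilde N_{\hat u_\infty}(0)\le C_0'$ and a universal target-to-domain scaling $\delta\sim b^{1/\tilde N_{\hat u_\infty}(0)}$, which transfers to the behavior of $\hat u_\infty$ at small scales. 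This last step is the main obstacle, as the circularity with the regularity theory has to be avoided by working only with the structural tools available at this stage of the inductive argument.
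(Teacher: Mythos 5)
Your setup — contradiction, normalization, extraction of a limit PHSLV via Theorem \ref{th-sequential-weak-closure}, and the transfer of the reverse-doubling inequality through the uniformly quasiconformal reparametrization — matches the paper's strategy and is sound. However, you have correctly flagged that the proof has a hole, and the hole sits exactly where the content of the lemma lies: the ``polynomial scale relation'' $\delta(b)\lesssim b^{1/\tilde N_{\hat u_\infty}(0)}$ is never established, and your proposed route to it does not work as stated. First, Proposition \ref{zero.dim} only asserts that the \emph{image} of the set of non-admissible points has Hausdorff dimension zero; it does not allow you to conclude that the specific point $0$ is admissible for $\hat u_\infty$, so you cannot invoke Proposition \ref{pr-tangent-cone} on that basis. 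Second, even granting a blow-up at $0$ of the form $h\circ\hat w(z)=cz^k$, a scaling relation valid for the blow-up does not automatically transfer to $\hat u_\infty$ at small but fixed scales without a uniform rate of convergence, which you do not have.

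The paper closes this gap more directly, and without the circularity you were worried about: at this stage of the induction Proposition \ref{all.admissible} is already available (it is proved in Section IX, before the present lemma is used), so the origin \emph{is} a strongly admissible point of the limit PHSLV. Any blow-up at $0$ is then of the form $h\circ\hat w(z)=cz^k$, where $h$ ranges over finitely many cone parametrizations (by \eqref{upper.bd.assumption}) and $k$ is bounded in terms of $C_0$; such a map satisfies a genuine doubling bound $\int_{B_r}|\nabla\hat w|^2\,dx^2\le C(C_0)\int_{B_{r/2}}|\nabla\hat w|^2\,dx^2$. On the other hand, the reverse-doubling with constant $\Lambda'$ inherited from the contradiction hypothesis — transferred to honest balls via Lemma \ref{distortion} and then passed to the blow-up — is incompatible with this doubling bound once $\Lambda'=\Lambda'(C_0)$ is chosen large, using $1\le\hat N_{\hat w}\le C_0/\pi$ from \eqref{dist.bd}. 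You should replace your measure-theoretic upper/lower bound scheme at the origin by this direct comparison of doubling constants; completing your scheme would in any case require proving essentially the same blow-up classification that the direct argument uses, so nothing is gained by the detour.
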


\begin{proof}
This follows by a straightforward compactness argument: we fix $\Lambda'$ to be specified later and assume that
the claim does not hold for any $\eta>0$. In the limit $\eta\to0$, we obtain a PHSLV $(\psi(B_1(0)),\hat u_\infty,\hat N_\infty)$,
which is nontrivial thanks to \eqref{doubling.bis} and \eqref{rad.limit}.

By our contradiction assumption, we have
$$
\int_{\psi(B_{s'}(0))}\hat N_\infty|\nabla\hat u_\infty|^2\,dx^2
\ge\Lambda'\int_{\psi(B_{s'/2}(0))}\hat N_\infty|\nabla\hat u_\infty|^2\,dx^2
$$
for all radii $s'\in(0,1/2)$.
In turn, recalling \eqref{dist.bd} and using Lemma \ref{distortion}, this implies that
\begin{equation}\label{contr.ahl}
\int_{B_{s'}(0)}\hat N_\infty|\nabla\hat u_\infty|^2\,dx^2
\ge\Lambda'\int_{B_{cs'}(0)}\hat N_\infty|\nabla\hat u_\infty|^2\,dx^2
\end{equation}
for a suitable $c=c(C_0)\in(0,1)$ and all $s'>0$ small enough.

However, by Proposition \ref{all.admissible},
the origin is an admissible point for the limit PHSLV. As in its proof, any blow-up map $\hat w$ takes values in a (possibly flat)
cone, conformally equivalent to the plane via a map $h$ given by \eqref{h.expl}, and the composition $h\circ \hat w(z)=cz^k$. By \eqref{upper.bd.assumption}, there are finitely many possibilities for the map $h$ (up to precomposition with isometries). This gives an upper bound on $k$ and, in turn, a doubling bound of the form
$$\int_{B_{r}(0)}|\nabla\hat w|^2\,dx^2\le C(C_0)\int_{B_{r/2}(0)}|\nabla\hat w|^2\,dx^2$$
for $0<r<1$.
As above, \eqref{contr.ahl} implies
$$\int_{B_{s''}(0)}\hat N_{\hat w}|\nabla\hat w|^2\,dx^2
\ge\Lambda'\int_{B_{c^2s''}(0)}\hat N_{\hat w}|\nabla\hat w|^2\,dx^2$$
for all $s''>0$.
Since $1\le\hat N_{\hat w}\le\frac{C_0}{\pi}$ by \eqref{dist.bd}, this contradicts the previous doubling bound once we take $\Lambda'=\Lambda'(C_0)$ large enough.
\end{proof}

\begin{Lm}
In the situation of the previous statement, we have
$$\int_{B_{1}(0)}|\nabla u|^2\,dx^2\le C\int_{B_{s'}(0)}|\nabla u|^2\,dx^2,$$
for a constant $C$ depending only on $C_0$ and $\Lambda$. \hfill $\Box$
\end{Lm}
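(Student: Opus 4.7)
The plan is to argue by contradiction. Suppose the conclusion fails: then there exists a sequence of PHSLVs $(B_1(0),u_k,N_k)$, each satisfying the hypotheses of Lemma \ref{ahl.pre} with the same constants $C_0,\Lambda$ and with associated good radii $s_k'\in(\eta,1/2)$, such that
\[
\frac{\int_{B_1(0)}|\nabla u_k|^2\,dx^2}{\int_{B_{s_k'}(0)}|\nabla u_k|^2\,dx^2}\longrightarrow\infty.
\]
After a left translation in $\mathbb{H}^2$ (sending $u_k(0)$ to $0$) and a target dilation $\delta_{\lambda_k}$ (both symmetries of the PHSLV class preserving \eqref{upper.bd.assumption} with the same constant $C_0$), I may normalize $\int_{B_1(0)}|\nabla u_k|^2\,dx^2=1$, so that $\int_{B_{s_k'}(0)}|\nabla u_k|^2\,dx^2\to 0$; up to extraction $s_k'\to s^*\in[\eta,1/2]$.

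Next I would apply Theorem \ref{th-sequential-weak-closure}, whose hypotheses hold uniformly in $k$: \eqref{teofrasto} follows from \eqref{upper.bd.assumption} with the common $C_0$, and the multiplicities $N_k$ may be assumed to be bounded in $L^\infty$ by $C(C_0)$ thanks to the estimate \eqref{N-bound} of Proposition \ref{pr-robust}. This yields (up to a further subsequence) a limit PHSLV $(B_1(0),\hat u_\infty,\hat N_\infty)$ endowed with a possibly new conformal class, and a quasiconformal homeomorphism $\psi$ such that $u_k\to u_\infty=\hat u_\infty\circ\psi$ in $C^0_{loc}$ and weakly in $W^{1,2}_{loc}$; moreover the measures $\nu_k:=N_k|\nabla u_k|^2/2\,dx^2$ converge as Radon measures to $\nu_\infty$ which, by Lemma \ref{lm-integer-mult}, equals $N_\infty|\p_{x_1}u_\infty\wedge\p_{x_2}u_\infty|\,dx^2$. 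Weak lower semi-continuity of the Dirichlet energy then gives, for every $r<s^*$,
\[
\int_{B_r(0)}|\nabla u_\infty|^2\,dx^2\le\liminf_k\int_{B_{s_k'}(0)}|\nabla u_k|^2\,dx^2=0,
\]
so $u_\infty$ is constant on $B_{s^*}(0)\supseteq B_\eta(0)$, and consequently $\hat u_\infty$ is locally constant on the non-empty open set $\psi(B_\eta(0))$.

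On the other hand, from \eqref{doubling.bis} we obtain $\int_{B_{1/2}(0)}|\nabla u_k|^2\,dx^2\ge 1/\Lambda$, and since $N_k\ge 1$ this gives $\nu_k(\overline{B_{1/2}(0)})\ge 1/(2\Lambda)$ for every $k$. Upper semi-continuity of the mass of weakly convergent Radon measures on compact sets then forces $\nu_\infty(\overline{B_{1/2}(0)})\ge 1/(2\Lambda)>0$; by the identification $d\nu_\infty=N_\infty|\p_{x_1}u_\infty\wedge\p_{x_2}u_\infty|\,dx^2$, the map $u_\infty$ is non-constant on $\overline{B_{1/2}(0)}$, and hence $\hat u_\infty$ is non-constant on the connected open set $\psi(B_1(0))$. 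The contradiction is then supplied by Proposition \ref{nc.is.omega} applied to the PHSLV $(\psi(B_1(0)),\hat u_\infty,\hat N_\infty)$: such a non-constant PHSLV cannot be locally constant on any non-empty open subset.

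The main subtle step is the last one, namely checking that the limit $\hat u_\infty$ inherits \eqref{upper.bd.assumption} with constant $C_0$, as required in order to invoke Proposition \ref{nc.is.omega}; but this is a direct consequence of \eqref{rad.limit} applied to the test sets $B_R^\r(p)$ (whose boundaries are $\nu_\infty$-negligible for a.e. $R$), together with the uniform bound \eqref{upper.bd.assumption} for the sequence $u_k$.
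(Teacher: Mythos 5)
Your proof is correct, and it is precisely the ``direct compactness argument'' that the paper invokes without giving any details: normalize by a left translation and a target dilation, extract a limit PHSLV via Theorem \ref{th-sequential-weak-closure}, observe that the limit is constant on $B_\eta(0)$ yet non-constant on $B_{1/2}(0)$ by the doubling bound, and contradict Proposition \ref{nc.is.omega}. You also correctly flag the one point that needs checking, namely that the limit inherits the local mass bound \eqref{teofrasto.bis} needed to invoke Proposition \ref{nc.is.omega}, which indeed follows from \eqref{rad.limit} and the uniform bound \eqref{upper.bd.assumption} along the sequence.
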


\begin{proof}
The statement follows from a direct compactness argument.
\end{proof}

Thanks to the universal $\Lambda'$ appearing in the conclusion of Lemma \ref{ahl.pre},
which depends only on $C_0$, we can iterate its statement infinitely many times, replacing $B_1(0)$ with $B_{s'}(0)$ and so on,
and the constant $\Lambda'$ (and hence also $\eta(C_0,\Lambda')$) stabilizes immediately after the first iteration.
Thus, the two previous lemmas easily imply the following corollary.

\begin{Lm}\label{ahlfors}
Given $C_0,\Lambda>0$,
assume that $(B_1(0),u,N)$ is a PHSLV in $\mathbb{H}^2$ satisfying \eqref{upper.bd.assumption}
and \eqref{doubling.bis}. Then there exists a constant $C(C_0,\Lambda)>0$ such that
$$\int_{B_{s}(x)}|\nabla u|^2\,dx^2\le C\int_{B_{s/2}(x)}|\nabla u|^2\,dx^2$$
for all $x\in B_{1/2}(0)$ and $0<s<1/4$. \hfill $\Box$
\end{Lm}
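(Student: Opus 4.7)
The plan is to derive the conclusion by a contradiction-plus-compactness argument, combining Theorem~\ref{th-sequential-weak-closure} with the rigidity of Proposition~\ref{nc.is.omega}. Suppose the statement fails: then for each $j\in\N$ there exist a PHSLV $(B_1(0),u_j,N_j)$ in $\mathbb{H}^2$ satisfying \eqref{upper.bd.assumption} with the same $C_0$ and \eqref{doubling.bis} with the same $\Lambda$, together with $x_j\in B_{1/2}(0)$ and $s_j\in(0,1/4)$ such that
\[
\int_{B_{s_j}(x_j)}|\nabla u_j|^2\,dx^2 \;>\; j\int_{B_{s_j/2}(x_j)}|\nabla u_j|^2\,dx^2.
\]
Form the rescalings $w_j:=(u_j)_{x_j,s_j}$ from \eqref{not.res}. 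Each $w_j$ is a PHSLV on $\Omega_j:=B_{1/s_j}(-x_j/s_j)\supset B_2(0)$; since \eqref{upper.bd.assumption} is preserved by left translations and dilations of $\mathbb{H}^2$, it holds for $w_j$ with the same $C_0$, and in particular \eqref{teofrasto} holds uniformly in $j$. By construction,
\[
\int_{B_1(0)}|\nabla w_j|^2\,dy^2 = 1\quad\text{and}\quad \int_{B_{1/2}(0)}|\nabla w_j|^2\,dy^2 < \tfrac{1}{j}.
\]

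Up to a subsequence, I would take $x_j\to\bar x\in\bar B_{1/2}(0)$ and $s_j\to\bar s\in[0,1/4]$, and set $\Omega_\infty:=B_{1/\bar s}(-\bar x/\bar s)$ if $\bar s>0$ or $\Omega_\infty:=\C$ if $\bar s=0$ (exhausting by balls in the latter case), noting $\Omega_\infty\supset B_2(0)$. Theorem~\ref{th-sequential-weak-closure} then yields a limit PHSLV $(\Omega_\infty,\hat w_\infty,\hat N_\infty)$ and a quasiconformal homeomorphism $\psi$ with $w_j\to\hat w_\infty\circ\psi$ in $C^0_{loc}$ and weakly in $W^{1,2}_{loc}$, together with the measure convergence \eqref{rad.limit}. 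Using the bound $N_j\le C_0/\pi$ a.e.\ on $\mathcal{G}^f_{w_j}$ (from \eqref{dist.bd} and Proposition~\ref{pr-robust}), we get $\int_{B_{1/2}(0)}N_j|\nabla w_j|^2\to 0$; then Portmanteau lower-semicontinuity on the open $B_{1/2}(0)$, combined with the continuity from Proposition~\ref{pr-cont}, forces $\hat w_\infty$ to be constant on the open connected set $\psi(B_{1/2}(0))$. Conversely, Portmanteau upper-semicontinuity on the compact $\bar B_1(0)$ together with $\int_{B_1(0)}N_j|\nabla w_j|^2\ge 1$ gives $\nabla\hat w_\infty\not\equiv 0$ on $\psi(\Omega_\infty)$.

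These two properties of $\hat w_\infty$ contradict Proposition~\ref{nc.is.omega}, which asserts that a PHSLV on a connected domain which is not identically constant cannot be locally constant on any open subset. The main obstacle will be verifying that the limit $\hat w_\infty$ inherits \eqref{upper.bd.assumption}, so that Proposition~\ref{in.tilde.omega} and hence Proposition~\ref{nc.is.omega} apply; this follows routinely by passing to the limit in the uniform \eqref{teofrasto}-type bound satisfied by $w_j$ via the measure convergence \eqref{rad.limit}. Unlike the iteration of Lemma~\ref{ahl.pre} at the fixed center $0$---which yields doubling only at $x=0$---this single compactness argument handles every center $x\in B_{1/2}(0)$ and every scale $s\in(0,1/4)$ simultaneously and uniformly in the constant $C=C(C_0,\Lambda)$.
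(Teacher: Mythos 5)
Your overall strategy (compactness plus the rigidity of Proposition \ref{nc.is.omega}) is the right spirit, but the argument has a genuine gap at the compactness step, and it is exactly the gap that forces the paper to take a two-step route. After rescaling to $w_j=(u_j)_{x_j,s_j}$, you have $\int_{B_1(0)}|\nabla w_j|^2=1$ by normalization, but Theorem \ref{th-sequential-weak-closure} also requires $\limsup_j\int_\omega N_j|\nabla w_j|^2<\infty$ for compact subsets $\omega$ of the limit domain, and you only verify \eqref{teofrasto}. For any $\omega\supseteq B_{1+\delta}(0)$ this first hypothesis amounts to bounding $\int_{B_{(1+\delta)s_j}(x_j)}|\nabla u_j|^2\big/\int_{B_{s_j}(x_j)}|\nabla u_j|^2$, i.e.\ precisely the doubling ratio the lemma is trying to establish; neither \eqref{upper.bd.assumption} nor \eqref{doubling.bis} (which lives only at the top scale and at the center $0$) controls it. So the compactness theorem can only be invoked with $\Sigma=B_1(0)$. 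But then your ``Portmanteau upper-semicontinuity on the compact $\bar B_1(0)$'' is not available: $\nu_\infty(K)\ge\limsup_j\nu_j(K)$ holds only for compacts $K$ contained in the open set on which the weak-$^*$ convergence has been established, and $\bar B_1(0)$ is not such a set. Nothing in your setup prevents the unit of energy from concentrating on thinner and thinner annuli near $\p B_1(0)$, in which case $\nu_\infty\equiv 0$ on $B_1(0)$, the limit is constant, and no contradiction with Proposition \ref{nc.is.omega} arises. (Your flagged ``main obstacle'' --- passing \eqref{upper.bd.assumption} to the limit --- is indeed routine; the real obstruction is upstream.)

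The paper's proof is designed around this. First, a compactness argument of exactly your type is run at the \emph{fixed} intermediate scale: one shows $\int_{B_{1/4}(x)}|\nabla u|^2\le\tilde\Lambda\int_{B_{1/8}(x)}|\nabla u|^2$ for all $x\in B_{1/2}(0)$. There the argument closes because both the ball where constancy is forced ($B_{1/8}(\bar x)$) and the ball carrying a definite fraction $\ge\Lambda^{-1}$ of the energy ($B_{1/2}(0)$, thanks to \eqref{doubling.bis}) are compactly contained in $B_1(0)$, where the hypotheses of Theorem \ref{th-sequential-weak-closure} genuinely hold. Second, the passage to arbitrarily small scales is not done by compactness at all, but by iterating Lemma \ref{ahl.pre} together with the lemma following it: the crucial point is that the doubling constant $\Lambda'$ in the conclusion of Lemma \ref{ahl.pre} depends only on $C_0$, so the iteration stabilizes after one step and produces a multiplicatively dense set of radii at which doubling holds, from which the full statement follows. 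Note also that the contradiction hypothesis in Lemma \ref{ahl.pre} is that doubling fails at \emph{every} radius in $(\eta,1/2)$ simultaneously --- a much stronger statement in the limit than your single-radius failure --- and it is contradicted by the explicit doubling of blow-ups (homogeneous parametrizations of Schoen--Wolfson cones), not by Proposition \ref{nc.is.omega}. To repair your write-up you would need to either restrict your contradiction argument to a fixed scale and then supply the iteration, or find an independent source of energy bounds at the doubled scale; as written, the proof does not go through.
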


\begin{proof}
Indeed, another simple compactness argument shows that \eqref{doubling.bis} is also
satisfied replacing $B_1(0)$ with $B_{1/4}(x)$ (and $B_{1/2}(0)$ with $B_{1/8}(x)$), for all $x\in B_{1/2}(0)$, up to replacing $\Lambda$ with another constant $\tilde\Lambda$ depending on $C_0,\Lambda$. The statement now follows by iterating the two previous facts.
\end{proof}

\begin{Rm}
This argument could mislead the reader to think that
one can obtain a controlled decay of Dirichlet energy, and hence admissibility of all points,
in a much easier way compared to the analysis of the previous sections. However,
this is not the case since the proof of Lemma \ref{ahl.pre} did use the fact that any point is admissible. \hfill $\Box$
\end{Rm}

\subsection{General case}
As seen in Proposition \ref{sw.countable}, there exists $\gamma>0$ such that, for any $\ell\in\N^*$, the set of points
$$\mathcal{S}_{SW}\cap\{\tilde N\in[\nu-1+(\ell-1)\gamma,\nu-1+\ell\gamma)\}$$
is locally finite in $\Omega_\ell:=\{\tilde N<\nu-1+\ell\gamma\}$.
In fact, up to shrinking $\gamma$, we can also guarantee the following.

\begin{Prop}\label{gamma.small}
If $\tilde N(x)\le\nu-1+\gamma$ then $\tilde N(x)\le\nu-1$. \hfill $\Box$
\end{Prop}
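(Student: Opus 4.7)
The plan is to show that the set $V_0 := \{\tilde N(x) : x \in \Omega\}$ is contained in a \emph{finite} subset of $[1,\bar N(C_0)]$ depending only on the bound in \eqref{upper.bd.assumption}. Once this is established, shrinking $\gamma$ from Proposition \ref{sw.countable} to any positive number strictly less than $\min\{s-(\nu-1) : s\in V_0,\ s>\nu-1\}$ (with the convention that an empty minimum equals $+\infty$) gives the claim, since shrinking $\gamma$ preserves the conclusion of Proposition \ref{sw.countable}.

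To produce this finite $V_0$, I would combine the following earlier inputs. First, by Proposition \ref{all.admissible} every $x\in\Omega$ is strongly admissible, so Proposition \ref{pr-tangent-cone} furnishes a blow-up PHSLV $(\C,\hat u_0,\hat N_0)$ at $x$ with $\tilde N_0(0)=\tilde N(x)$. Second, Proposition \ref{par.cone} and Corollary \ref{par.cone.imm} tell me that the image of $\hat u_0$ is either a Lagrangian plane in $\C^2\times\{0\}$ or a non-flat Schoen--Wolfson cone, with $\hat u_0$ a smooth immersion away from $0$; Corollary \ref{tilde.n.cone} then says that $\hat N_0$ equals a constant integer $m\in\N^*$ on $\C\setminus\{0\}$, and after composing with the conformal equivalence $h$ of \eqref{h.expl} the map $\hat u_0$ reads as a polynomial $z\mapsto cz^k$.

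Finally, I would invoke \eqref{upper.bd.assumption} together with monotonicity to cap the mass of the blow-up uniformly on balls, which in turn caps $m$, $k$, and (in the non-flat case) the Schoen--Wolfson parameters $p+q$ by a constant depending only on $C_0$; the finiteness of admissible $(p,q)$ is exactly the fact recorded in the proof of Proposition \ref{par.cone} (cf. \cite[Section 7]{SW2}). Thus only finitely many quadruples $(p,q,k,m)$ occur, each producing a single value of the density at the origin, so $V_0$ is finite. The sole nontrivial step is this last finiteness of Schoen--Wolfson parameters under the a priori mass bound, which is essentially a known fact already used in Proposition \ref{par.cone}; the rest is a direct gap-from-finiteness argument.
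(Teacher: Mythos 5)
Your proposal is correct and follows essentially the same route as the paper: the paper's own proof observes that every blow-up at $x$ is a (possibly flat) Schoen--Wolfson cone with constant multiplicity whose density at the origin equals $2\pi\tilde N(x)$, and that only finitely many such cones have density at most $2\pi\nu$, so the possible values of $\tilde N$ form a finite set and $\gamma$ may be shrunk below the resulting gap. Your write-up merely makes explicit the inputs (Propositions \ref{all.admissible}, \ref{pr-tangent-cone}, \ref{par.cone}, Corollary \ref{tilde.n.cone}, and the mass bound \eqref{upper.bd.assumption}) that the paper leaves implicit.
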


\begin{proof}
Indeed, any blow-up at $x$ parametrizes a Schoen--Wolfson cone with constant multiplicity,
whose density at the origin is $\theta^\chi(0)=2\pi\tilde N(x)$. Since there is a finite number of such cones with $\theta^\chi(0)\le2\pi\nu$ up to isometries,
the claim follows.
\end{proof}

We prove the regularity theorem on $\Omega_\ell$ by induction over $\ell=0,1,\dots$, up to reaching $\ell=\lfloor\frac{1}{\gamma}\rfloor+1$
(in which case
$\Omega_\ell=\Omega$).

The base case $\ell=0$ holds, since we are assuming the validity of the regularity theorem on $\{\tilde N<\nu-1\}$.
We now assume inductively that regularity holds on $\Omega_\ell$ and show it on $\Omega_{\ell+1}$.

If $\Omega_\ell=\emptyset$ then we have $\tilde N\ge\nu-1+\ell\gamma$ on all of $\Omega_{\ell+1}$.
Hence, in this case $\mathcal{S}_{SW}\cap\Omega_{\ell+1}$ is locally finite. Applying the previous case on
the relative complement $\Omega_{\ell+1}\setminus\mathcal{S}_{SW}$, we obtain that $u$ is smooth here,
and an immersion away from a set $\mathcal{S}_{branch}$ which is locally finite in $\Omega_{\ell+1}\setminus\mathcal{S}_{SW}$.
To conclude, this set is locally finite also in $\Omega_{\ell+1}$: indeed, the proof of
Corollary \ref{cor.con} applies also here, and shows that $u$ must be a smooth immersion on a punctured neighborhood of each $x\in\mathcal{S}_{SW}$,
and thus points in $\mathcal{S}_{brach}$ cannot accumulate towards a point in $\mathcal{S}_{SW}$.

In the sequel, by localizing we can assume that $\Omega=\Omega_{\ell+1}$ is connected and that $$\Omega_\ell\neq\emptyset.$$
Thus, letting $\Omega_{reg}$ be the largest open subset where the regularity theorem holds, we have $\Omega_{reg}\neq\emptyset$. As in the previous case, we let $\Omega'$ be a connected component of it intersecting $\Omega_\ell$.

As before, we can find a disk $D\subset\Omega'$ with $\bar D\subset\Omega=\Omega_{\ell+1}$, such that the boundary $\partial D$
contains a point $\bar x\not\in\Omega_{reg}$. In particular, we must have
$$\tilde N(\bar x)\ge\nu-1+\ell\gamma,$$
while by the regularity theorem we have $N=\mu$ a.e. on $\Omega'$ for some integer $\mu\le\nu-1$.

We want to show that Proposition \ref{blow.up.strong} holds also in this case. Namely,
letting $H$ denote the half-plane obtained by blowing up $D$ at $\bar x$, we want to show that any blow-up $(\C,\hat u_0,\hat N_0)$ at $\bar x$ arises as a strong $W^{1,2}_{loc}$ limit on $H$.

Once this is done, we conclude as in the previous case: using Corollary \ref{tilde.n.cone}, we deduce that any blow-up at $\bar x$ 
has density $\tilde N_0=\mu$ on $\C\setminus\{0\}$.
By upper semi-continuity, $\tilde N<\nu-1+\gamma$ on a punctured neighborhood of $\bar x$.
By Proposition \ref{gamma.small}, this implies that $\tilde N\le\nu-1$ here. Thus, by the outer induction,
the regularity theorem holds on this punctured neighborhood, and hence
we have $N=\mu$ a.e. here (since it intersects $D$). Again, this implies that any blow-up at $\bar x$ arises
as a strong $W^{1,2}_{loc}$ limit on all of $\C$, and we conclude exactly as in the proof of Corollary \ref{cor.con}
(by using Corollary \ref{par.cone.imm}) that $u$ is a smooth immersion in a punctured neighborhood of $\bar x$.
If $\bar x\in\mathcal{S}_{SW}$, then we get $\bar x\in\Omega_{reg}$, a contradiction.
If instead any blow-up at $\bar x$ is planar, then $u$ is smooth across $\bar x$ (see \cite[Section 4]{SW2}), and hence $\bar x\in\Omega_{reg}$, which is again a contradiction.

The rest of the paper is devoted to the proof of Proposition \ref{blow.up.strong} in this more general case.
The main difficulty is that, when looking at the rescalings $u_r=u_{\bar x,r}$ of $u$ around $\bar x$,
we could see more and more points in $\mathcal{S}_{SW}$.
While we have the \emph{qualitative} information that they are locally finite in (the rescalings of) $D$,
they could become denser and denser,
preventing uniform bounds on the one-forms $d\beta_r$ defined on their complement.

Let us consider radii $r_j\to0$ and rescalings $u_j:=u_{\bar x,r_j}$ around $\bar x$, and assume by contradiction that the claim fails.
Then, by Corollary \ref{tilde.n.cone}, the blow-up has $\tilde N_0=\mu'$ on $\C\setminus\{0\}$,
for some integer $\mu'>\mu$, and recalling the proof of \eqref{rad.limit} we have
$$\mu\frac{|\nabla u_j|^2}{2}\,dx^2\rightharpoonup\mu'|\partial_{x_1}u_0\wedge\partial_{x_2}u_0|\,dx^2\quad\text{on }H,$$
where $u_0$ is the limit of $u_j=u_{\bar x,r_j}$ and equals $\hat u_0\circ\tilde\psi$ for a suitable
quasiconformal homeomorphism $\tilde\psi:\C\to\C$. Recall from \eqref{dist.bd} that the latter is
$K_0$-quasiconformal with $K_0:=(\frac{C_0}{\pi})^2$, where $C_0$ is the constant in \eqref{upper.bd.assumption}.


\begin{Dfi}
We let $\mathcal{Q}$ denote the class of $K_0$-quasiconformal homeomorphisms $\xi:\C\to\C$ with $\xi(0)=0$ and $\xi(1)=1$. Recall from \cite[Lemma A.3]{PiRi2}
or the proof of Lemma \ref{distortion}
that $\mathcal{Q}$ is sequentially compact in $C^0_{loc}(\C)$. \hfill $\Box$
\end{Dfi}

Fix any $x_0\in H$ and let $s_k:=\tilde\psi(x_0+2^{-k})-\tilde\psi(x_0)\in\C^*$.
Defining
$$\psi_k(x):=s_k^{-1}[\tilde\psi(x_0+2^{-k}x)-\tilde\psi(x_0)],\quad f_k(x):=|s_k|^{-1}\hat u_0(\tilde\psi(x_0)+s_kx),$$
we see that $|s_k|^{-1}u_0(x_0+2^{-k}x)=f_k\circ\psi_k(x)$ and that $\psi_k\in\mathcal{Q}$, while $f_k$ converges
to a (nontrivial) linear conformal map $\tilde L$ up to a subsequence, since $\hat u_0$ is an immersion near $\tilde\psi(x_0)$.
By sequential compactness of $\mathcal{Q}$, the rescaled maps $f_k\circ\psi_k$
converge in $C^0_{loc}$ to a map of the form $\tilde L\circ\psi_\infty$, for some $\psi_\infty\in\mathcal{Q}$, up to another subsequence.

Further, by the chain rule (see, e.g., \cite[Lemma III.6.4]{LV}), for the Jacobians we have
\begin{equation}\label{jac.conv}
\int_\omega J(f_k\circ\psi_k)\,dx^2=\int_{\psi_k(\omega)}J(f_k)\,dx^2\to\int_{\psi_\infty(\omega)}J(\tilde L)\,dx^2=\int_\omega J(\tilde L\circ\psi_\infty)\,dx^2
\end{equation}
for any smooth domain $\om\subset\subset\C$, showing the convergence
$$J((\hat u_0)_{x_0,2^{-k}})\,dx^2\rightharpoonup c^2J(\tilde L\circ\psi_\infty)\,dx^2$$
for some $c\in(0,\infty)$.

Hence, letting $L:=c\tilde L$, $\psi:=\psi_\infty$, and applying a diagonal argument, we can find suitable rescalings $w_j$ of $u_j$ (around points converging to $\bar x$) such that $w_j\to L\circ\psi$
in $C^0_{loc}$ and weakly in $W^{1,2}_{loc}$,
but with
$$\mu\frac{|\nabla w_j|^2}{2}\,dx^2\rightharpoonup\mu'|\partial_{x_1}(L\circ\psi)\wedge\partial_{x_2}(L\circ\psi)|\,dx^2.$$
In particular, the maps $w_j$ do not converge strongly in $W^{1,2}(\omega)$ to $L\circ\psi$, for any $\omega\subset\subset\C$.

\begin{Lm}
There exists a sequence $\ep_j\to0$ and points $x_j\in B_1(0)$ such that
$$\int_{B_r(x_j)}|\nabla(\pi_{\mathcal P^\perp}\circ w_j)|^2\,dx^2\le\ep_j\int_{B_r(x_j)}|\nabla w_j|^2\,dx^2$$
for any $r\in(0,1]$, where $\mathcal P$ is the Legendrian plane spanned by $L$. \hfill $\Box$
\end{Lm}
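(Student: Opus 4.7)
The strategy is to prove the stronger uniform claim
\[
\ep_j := \sup_{x \in B_1(0),\, r \in (0,1]} \frac{\tau_j(B_r(x))}{\sigma_j(B_r(x))} \longrightarrow 0,
\]
where $\tau_j := |\nabla(\pi_{\mathcal P^\perp} \circ w_j)|^2\,dx^2$ and $\sigma_j := |\nabla w_j|^2\,dx^2$; the lemma then follows with \emph{any} choice of $x_j \in B_1(0)$. The argument exploits the explicit diagonal construction of $w_j$ from the model sequence $\tilde w_k := f_k \circ \psi_k = |s_k|^{-1}u_0(x_0+2^{-k}\cdot)$ (the rescaling of the weak limit $u_0$ at parameters $(x_0,2^{-k})$).

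First I would establish the uniform bound on the \emph{model}. Since $x_0 \in H$ and $\tilde\psi(x_0) \ne 0$, Corollary~\ref{par.cone.imm} guarantees that $\hat u_0$ is a smooth immersion in a neighborhood of $\tilde\psi(x_0)$, and its tangent plane there lies in the Legendrian plane $\mathcal P$ (by the classification in Proposition~\ref{par.cone}, the image of $\hat u_0$ is a Schoen--Wolfson cone whose smooth tangent planes are Lagrangian). Hence the rescalings $f_k$ converge in $C^\infty_{loc}$ to the linear conformal map $\tilde L$ with $\operatorname{Im}(\tilde L) = \mathcal P$, giving
\[
\eta_k := \|\nabla(\pi_{\mathcal P^\perp}\circ f_k)\|_{L^\infty(K)} \longrightarrow 0
\]
on any fixed compact $K \subset \C$. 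Since each $\psi_k$ is $K_0$-quasiconformal (by \eqref{dist.bd}) with $\psi_k \to \operatorname{id}$ in $C^0_{loc}$, the image $\psi_k(B_r(x))$ lies in a fixed compact $K$ for $x \in B_1(0)$, $r \in (0,1]$, and $k$ large. The chain rule then yields the pointwise bound $|\nabla(\pi_{\mathcal P^\perp}\circ \tilde w_k)| \le \eta_k\,|\nabla\psi_k|$, while the approximate conformality $f_k \to \tilde L$ with factor $c>0$ gives $|\nabla \tilde w_k|^2 = (c^2+o(1))|\nabla\psi_k|^2$; integrating on any ball,
\[
\sup_{x,r}\frac{\int_{B_r(x)}|\nabla(\pi_{\mathcal P^\perp}\circ \tilde w_k)|^2\,dx^2}{\int_{B_r(x)}|\nabla \tilde w_k|^2\,dx^2}\le \frac{\eta_k^2}{c^2+o(1)} \longrightarrow 0.
\]

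The central step is to transfer this bound from $\tilde w_{k(j)}$ (a rescaling of $u_0$) to $w_j$ (a rescaling of $u_j$). A priori the convergence $u_j \to u_0$ is only weak in $W^{1,2}_{loc}$, so the transfer is not automatic. The remedy is to upgrade it to $C^\infty_{loc}$ convergence on a smooth neighborhood of $x_0$ by combining the inductive regularity hypothesis at strictly lower multiplicities with Proposition~\ref{strong.conv} and elliptic bootstrapping for the system~\eqref{ell.sys}; the hypotheses of these results are available off the locally finite rescaled Schoen--Wolfson set of $u_j$ within the relevant pinched-multiplicity stratum, as guaranteed by Proposition~\ref{sw.countable}. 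Arranging the diagonal extraction so that $k(j) \to \infty$ slowly, with the base ball $B_{2^{-k(j)}R}(x_0)$ remaining in such a smooth neighborhood of $u_j$, the rescalings $w_j$ converge to $\tilde w_{k(j)}$ in $C^1_{loc}(B_1(0))$, and the uniform model bound transfers with $\ep_j := \eta_{k(j)}^2/c^2+o(1)\to 0$.

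The principal difficulty is verifying that the diagonal extraction can indeed avoid the rescaled Schoen--Wolfson singularities of $u_j$ clustering near $\bar x$ at the appropriate scale: a priori these could invade the base ball $B_{2^{-k(j)}R}(x_0)$ and break the $C^1_{loc}$ approximation $w_j \approx \tilde w_{k(j)}$. The key mechanism is Proposition~\ref{sw.countable}, which ensures such singularities form a locally finite set within each pinched-multiplicity stratum of $\hat u_0$, so that $k(j)$ can always be chosen large enough to exclude them. Once the smooth region is secured, the elliptic bootstrap of \eqref{ell.sys} and the chain-rule bound on the normal part are both routine.
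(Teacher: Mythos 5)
Your proposal has a genuine gap, and it sits at the heart of the argument. You propose to transfer a pointwise bound from rescalings of the weak limit $u_0$ to the maps $w_j$ by upgrading the convergence to $C^1_{loc}$ via Proposition \ref{strong.conv} and elliptic bootstrapping of \eqref{ell.sys}. This cannot work here, for two reasons. First, it would contradict the standing contradiction hypothesis of this part of the proof: the energy measures $\mu\frac{|\nabla w_j|^2}{2}\,dx^2$ converge to $\mu'|\p_{x_1}(L\circ\psi)\wedge\p_{x_2}(L\circ\psi)|\,dx^2$ with $\mu'>\mu$, so there is a genuine energy defect and, as the paper states explicitly just before the lemma, $w_j$ does \emph{not} converge strongly in $W^{1,2}(\omega)$ for any $\omega\subset\subset\C$; a $C^1_{loc}$ approximation of $w_j$ by rescalings of $u_0$ away from a negligible set would force the multiplicities to agree and make the entire remaining continuity argument superfluous. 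Second, the tools you invoke are unavailable at this stage: Proposition \ref{strong.conv} requires a \emph{fixed} disk $\mathcal{D}$ with $\mathcal{S}_{SW}(w_j)\cap\mathcal{D}=\emptyset$ for all $j$, whereas the difficulty flagged right before this lemma is precisely that the rescaled Schoen--Wolfson sets, though locally finite for each fixed $j$ (Proposition \ref{sw.countable}), may become denser and denser as $j\to\infty$, and no choice of $k(j)$ can exclude them; moreover, even off the singular set the harmonic one-forms $d\beta_j$ carry no a priori bounds. This is exactly why the lemma only asserts the estimate at one well-chosen point $x_j$, around which the subsequent argument then works down to the scale $\rho_j$ of the nearest singularity.

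The intended proof is soft and uses none of this machinery. The varifold convergence of $(B_2(0),w_j,\mu)$ to the planar varifold induced by $(\psi(B_2(0)),L,\mu')$ forces the tilt-excess to vanish, i.e. $\int_{B_2(0)}|\nabla(\pi_{\mathcal P^\perp}\circ w_j)|^2\,dx^2\to0$; one chooses $\ep_j\to0$ so that this integral is $o(\ep_j)$, covers the bad set $S_j$ (where the claimed inequality fails for some radius) by a Besicovitch family of balls on each of which the inequality is reversed, and concludes $\int_{S_j}|\nabla w_j|^2\,dx^2\le C\ep_j^{-1}\cdot o(\ep_j)=o(1)$. Since $\int_{B_1(0)}|\nabla w_j|^2\,dx^2$ stays bounded away from zero, $S_j$ is eventually a proper subset of $B_1(0)$ and any $x_j\in B_1(0)\setminus S_j$ works. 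You should discard the regularity-upgrade strategy and argue along these measure-theoretic lines; note also that your proposed stronger uniform statement (valid for all $x\in B_1(0)$) is not what the covering argument yields and is not needed downstream.
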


In the statement, we let $\pi_{\mathcal{P}}(v):=\ang{v,Z_1}Z_1+\ang{v,Z_2}Z_2$ and $\pi_{\mathcal{P}^\perp}(v):=v-\pi_{\mathcal{P}}(v)$
for all $v\in\R^5$, where $(Z_1,Z_2)$ is an orthonormal basis of $\mathcal{P}$.

\begin{proof}
Since the varifolds induced by $(B_2(0),w_j,\mu)$
converge to the one induced by $(\psi(B_2(0)),L,\mu')$, we have
$$\int_{B_2(0)}|\nabla(\pi_{\mathcal P^\perp}\circ w_j)|^2\,dx^2\to0.$$
Hence, we can find a sequence $\ep_j\to0$ such that the previous integral is $o(\ep_j)$. Now we let $S_j\subseteq B_{1/2}^2$ be the set of points where the claim fails. By the classical Besicovitch covering theorem,
we can cover $S_j$ with a family $\mathcal{F}_j$ of balls such that $\sum_{B\in\mathcal{F}_j}\mathbf{1}_{B}\le C$
and $\int_B|\nabla(\pi_{\mathcal P^\perp}\circ w_j)|^2\,dx^2>\ep_j\int_B|\nabla w_j|^2\,dx^2$.
Hence, we get
$$\int_{S_j}|\nabla w_j|^2\,dx^2\le\ep_j^{-1}\sum_{B\in\mathcal{F}_j}\int_B|\nabla(\pi_{\mathcal P^\perp}\circ w_j)|^2\,dx^2
\le C\ep_j^{-1}\int_{B_2(0)}|\nabla(\pi_{\mathcal P^\perp}\circ w_j)|^2\,dx^2=o(1).$$
On the other hand, we also have
$$\int_{B_1(0)}\mu|\nabla w_j|^2\,dx^2\to\int_{B_1(0)}\mu'|\partial_{x_1}(L\circ\psi)\wedge\partial_{x_2}(L\circ\psi)|\,dx^2>0,$$
showing that $S_j$ is (eventually) a proper subset of $B_1(0)$.
\end{proof}

Since the regularity theorem holds for $(B_2(0),w_j,\mu)$, the set $B_2(0)\cap\mathcal{S}_{SW}(w_j)$ is locally finite.
Thus, considering the points $x_j$ given by the previous lemma, regardless of whether $x_j\in \mathcal{S}_{SW}(w_j)$ or not, we can let
$$\rho_j:=\min\{\operatorname{dist}(x_j,\mathcal{S}_{SW}(w_j)\setminus\{x_j\}),1\}>0.$$

The idea now is that at each scale $\rho_j<r\le1$ the map $w_j$ is almost flat on $B_r(x_j)$,
forcing the corresponding PHSLV to be close to a planar one with constant (a.e.) integer multiplicity. At the largest scale this multiplicity is $\mu'$,
while at the smaller scale $\rho_j$ it will be $\mu$, essentially thanks to an application of Proposition \ref{strong.conv},
yielding a contradiction. We now make this idea precise.

In the sequel, we fix two constants $\hat c,\hat C>0$ such that, recalling \eqref{not.res}, all rescalings
$$w_{j,r}:=(w_j)_{x_j,r}$$
satisfy
$$\|\r\circ w_{j,r}\|_{L^\infty(B_1(0))}\le \hat C,\quad\operatorname{diam}\pi_{\C^2}\circ w_{j,r}(B_{1/2}(0))\ge\hat c,$$
for all $r\in(0,1/4]$. It is clear that such constants exist, thanks to a trivial compactness argument
using the uniform doubling bounds valid for the Dirichlet energy of these maps, established in Lemma \ref{ahlfors}
(note that, although this result was proved in the previous part, its proof did not use the assumption $\mathcal{S}_{SW}=\emptyset$).

\begin{Dfi}
We let $\mathcal{M}$ be the set of maps $f:B_1(0)\to\C^2$ of the form
$f=h\circ\xi$, for a homeomorphism $\xi\in\mathcal{Q}$ and a weakly conformal map $h:\xi(B_1(0))\to\C^2$ such that the bounds
$$\|\nabla f\|_{L^2}\le1,\quad\|f\|_{L^\infty}\le\hat C,\quad\operatorname{diam}f(B_{1/2}(0))\ge\hat c$$
hold true. \hfill $\Box$
\end{Dfi}

\begin{Dfi}
We consider the set $\mathcal{X}$ consisting of pairs $(f,\lambda)$ where $f\in\mathcal{M}$, while $\lambda$ is a (positive) measure on $B_1(0)$ with total mass at most $\nu_0\pi$, where
$$\nu_0:=\lf[\frac{C_0}{\pi}\rg].$$
We endow it
with the product of the $C^0_{loc}$ topology and the weak-$^*$ topology on measures and we let
$d$ be a distance on $\mathcal{X}$ metrizing this product topology. \hfill $\Box$
\end{Dfi}

\begin{Dfi}
For $k=1,\dots,\nu_0$,
we consider the subset ${\mathcal{X}}_k\subset\mathcal{X}$ consisting of those pairs $(f,\lambda)$
with $f\in\mathcal{M}$ smooth and taking values in $\mathcal{P}$, and $\lambda$ given by
$$d\lambda=k|\partial_{x_1}f\wedge\partial_{x_2}f|\,dx^2,$$
whose total mass is at most $\nu_0\pi$ since $|\partial_{x_1}f\wedge\partial_{x_2}f|\le\frac{|\nabla f|^2}{2}$
has integral at most $\pi$. \hfill $\Box$
\end{Dfi}

\begin{Prop}
Each ${\mathcal{X}}_k\subset\mathcal{X}$ is a compact subset of $\mathcal{X}$. \hfill $\Box$
\end{Prop}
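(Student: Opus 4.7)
Since $\mathcal{X}$ is metrizable, it suffices to prove sequential compactness of $\mathcal{X}_k$. Given a sequence $(f_j,\lambda_j)\in\mathcal{X}_k$, I would write $f_j=h_j\circ\xi_j$ with $\xi_j\in\mathcal{Q}$ and $h_j$ weakly conformal on $\xi_j(B_1(0))$ taking values in $\mathcal{P}$. Fixing an orientation-compatible isometric identification $\mathcal{P}\cong\C$ and passing to a subsequence, each $h_j$ is holomorphic. The sequential compactness of $\mathcal{Q}$ (recalled right after its definition) yields a subsequence along which $\xi_j\to\xi_\infty\in\mathcal{Q}$ in $C^0_{loc}(\C)$; since $\xi_j^{-1}\in\mathcal{Q}$ as well, the same compactness gives $\xi_j^{-1}\to\xi_\infty^{-1}$ locally uniformly.

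\textbf{Limit map.} Because $\|h_j\|_{L^\infty(\xi_j(B_1(0)))}=\|f_j\|_{L^\infty}\le\hat C$ and each $h_j$ is holomorphic, the Cauchy estimates make $\{h_j\}$ a normal family on any compact subset of $\operatorname{int}\xi_\infty(B_1(0))$ (eventually contained in $\xi_j(B_1(0))$ by the previous step). A diagonal extraction yields $h_j\to h_\infty$ in $C^\infty_{loc}(\operatorname{int}\xi_\infty(B_1(0)))$ with $h_\infty$ holomorphic, hence weakly conformal. Setting $f_\infty:=h_\infty\circ\xi_\infty$, the joint convergence of $(h_j,\xi_j)$ gives $f_j\to f_\infty$ in $C^0_{loc}(B_1(0))$, so $\|f_\infty\|_{L^\infty}\le\hat C$; the weak $W^{1,2}_{loc}$ convergence of $f_j$ to $f_\infty$ (granted by $\|\nabla f_j\|_{L^2}\le 1$) combined with lower semi-continuity of the Dirichlet energy yields $\|\nabla f_\infty\|_{L^2}\le1$; and the diameter bound $\operatorname{diam}f_\infty(B_{1/2}(0))\ge\hat c$ follows by extracting convergent subsequences of quasi-maximizers $x_j,y_j\in\bar B_{1/2}(0)$ of $|f_j(x_j)-f_j(y_j)|$ and perturbing slightly into the open ball by continuity of $f_\infty$. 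Hence $f_\infty\in\mathcal{M}$ with values in $\mathcal{P}$, and the decomposition $f_\infty=h_\infty\circ\xi_\infty$ exhibits the ``smooth'' holomorphic-composition structure required by $\mathcal{X}_k$.

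\textbf{Measure convergence and main obstacle.} The remaining step, and the true heart of the proof, is to show $\lambda_j\rightharpoonup\lambda_\infty:=k|\partial_{x_1}f_\infty\wedge\partial_{x_2}f_\infty|\,dx^2$ as Radon measures on $B_1(0)$. For any $\phi\in C^0_c(B_1(0))$, the chain rule and the area formula for $K_0$-quasiconformal homeomorphisms give, in the spirit of \eqref{jac.conv},
$$\int \phi\,d\lambda_j\;=\;k\int_{B_1(0)}\phi(x)\,J(f_j)(x)\,dx^2\;=\;k\int_{\xi_j(B_1(0))}\phi(\xi_j^{-1}(y))\,|h_j'(y)|^2\,dy.$$
The $C^0_{loc}$-convergence of $\xi_j^{-1}$ to $\xi_\infty^{-1}$, combined with the $C^\infty_{loc}$-convergence of $h_j'$ on $\operatorname{int}\xi_\infty(B_1(0))$, allows one to pass to the limit in the right-hand side and (after another change of variables) identify the limit with $k\int\phi\,J(f_\infty)\,dx^2=\int\phi\,d\lambda_\infty$. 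The mass bound $\lambda_\infty(B_1(0))\le\nu_0\pi$ is inherited from the uniform bound $\lambda_j(B_1(0))\le\nu_0\pi$ by lower semi-continuity for weak-$^*$ convergence on open sets, proving $(f_\infty,\lambda_\infty)\in\mathcal{X}_k$. The main delicacy here is controlling the varying domains of integration $\xi_j(B_1(0))$, which are only Hausdorff-close to $\xi_\infty(B_1(0))$ and whose boundaries have no smoothness; however, since $\phi$ has compact support in $B_1(0)$, its image $\phi(\xi_j^{-1}(\cdot))$ is eventually supported in a fixed compact subset of $\operatorname{int}\xi_\infty(B_1(0))$, on which the uniform convergence of the integrand trivially applies, exactly as in the derivation of \eqref{jac.conv}.
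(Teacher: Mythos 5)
Your proposal is correct and follows essentially the same route as the paper's proof: compactness of $\mathcal{Q}$ to extract $\xi_j\to\xi_\infty$, a normal-family argument for the uniformly bounded holomorphic maps $h_j$ to get $h_\infty$ in $C^\infty_{loc}$, and the change-of-variables identity \eqref{jac.conv} to pass to the limit in the Jacobian measures. The extra details you supply (verification of the $\mathcal{M}$-bounds for $f_\infty$ and the localization of $\operatorname{spt}(\phi\circ\xi_j^{-1})$ in a fixed compact subset of $\xi_\infty(B_1(0))$) are precisely what the paper leaves implicit.
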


\begin{proof}
Given a sequence $(f_j=h_j\circ\xi_j,\lambda_j)\in\mathcal{X}_k$, we can assume that $\xi_j$ converges to a limit $\xi_\infty\in \mathcal{Q}$ in $C^0_{loc}$,
thanks to the compactness of $\mathcal{Q}$ (endowed with the $C^0_{loc}$ topology).
Since the maps $h_j$ are harmonic and uniformly bounded, it follows that a subsequence converges to a limit $h_\infty$ in $C^\infty_{loc}$ on $\xi_\infty(B_1(0))$, and it easily follows that $f_\infty:=h_\infty\circ\xi_\infty\in\mathcal{M}$. To conclude, we just need
to observe that the Jacobians $J(h_j\circ\xi_j)\,dx^2\rightharpoonup J(h_\infty\circ\xi_\infty)\,dx^2$ as measures, which is obtained as in \eqref{jac.conv}.
\end{proof}

Since these sets $\mathcal{X}_k$ are clearly disjoint (the lower bound involving $\hat c$ prevents the problematic pair $(0,0)$, which would otherwise be the common intersection point), we can find $\epsilon>0$ such that
$$d((f,\lambda),(f',\lambda'))>2\epsilon\quad\text{whenever }(f,\lambda)\in{\mathcal{X}}_k,\ (f',\lambda')\in{\mathcal{X}}_{k'},\ k\neq k'.$$

We now define the map
$$F_j:(0,1]\to\mathcal{X},\quad F_j(r):=\lf(\pi_{\C^2}\circ w_{j,r},\mu\frac{|\nabla w_{j,r}|^2}{2}\,dx^2\rg).$$
By continuity of this map, the contradiction will arise once we prove the next two propositions,
which show that $F_j(r)$ is $\epsilon$-close to one of the sets ${\mathcal{X}}_k$ for every $r$
and that, specifically, $k=\mu$ for $r=\rho_j$, while by assumption we know that $k=\mu'>\mu$ for $r=1$
(since $w_j$ is close to $L\circ\psi$ and the measure $\mu\frac{|\nabla w_{j,r}|^2}{2}\,dx^2$
is close to $\mu'|\partial_{x_1}(L\circ\psi)\wedge\partial_{x_2}(L\circ\psi)|\,dx^2$).

\begin{Prop}
For $j$ large enough, for any $0<r\le1$ there exists $k\in\{1,\dots,\nu_0\}$
such that
$$d(F_j(r),(f,\lambda))<\epsilon$$
for some $(f,\lambda)\in{\mathcal{X}}_k$. \hfill $\Box$
\end{Prop}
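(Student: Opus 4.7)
The plan is to argue by contradiction and combine the sequential compactness of Theorem \ref{th-sequential-weak-closure} with the rigidity for PHSLVs taking values in a Lagrangian plane (Proposition \ref{planar}).

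Suppose the statement fails. Then one finds sequences $j_n\to\infty$ and $r_n\in(0,1]$ such that
$$F_{j_n}(r_n)=\lf(\pi_{\C^2}\circ w_{j_n,r_n},\,\mu\frac{|\nabla w_{j_n,r_n}|^2}{2}\,dx^2\rg)$$
stays at distance at least $\epsilon$ from every $\mathcal{X}_k$. By the normalization \eqref{not.res}, the maps $w_{j_n,r_n}$ have unit Dirichlet energy on $B_1(0)$; by the choice of $\hat c,\hat C$ they satisfy $\|\r\circ w_{j_n,r_n}\|_{L^\infty(B_1)}\le\hat C$ and $\operatorname{diam}\pi_{\C^2}\circ w_{j_n,r_n}(B_{1/2})\ge\hat c$; and the rescaled version of the key property selecting $x_j$ yields
$$\int_{B_1(0)}|\nabla(\pi_{\mathcal P^\perp}\circ w_{j_n,r_n})|^2\,dx^2\le\epsilon_{j_n}\to 0.$$
Hypothesis \eqref{teofrasto} on the induced varifolds is verified by invoking Corollary \ref{mono.cor2} together with the global upper bound \eqref{upper.bd.assumption}.

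Next, Theorem \ref{th-sequential-weak-closure} provides, after extraction, a limit PHSLV $(B_1(0),\hat w_\infty,\hat N_\infty)$ and a quasiconformal homeomorphism $\psi_\infty\in\mathcal{Q}$ such that $w_{j_n,r_n}\to w_\infty:=\hat w_\infty\circ\psi_\infty$ in $C^0_{loc}$ and weakly in $W^{1,2}_{loc}$, with the measure convergence \eqref{rad.limit}. Lower semi-continuity and the vanishing of the perpendicular energy force $\hat w_\infty$ to take values in $\mathcal P\subset\C^2\times\{0\}$, while the diameter bound $\hat c$ rules out triviality. Proposition \ref{planar} then yields that $\hat w_\infty$ is holomorphic under an isometric identification $\mathcal P\cong\C$, and that $\hat N_\infty\equiv k$ is a.e. constant for some integer $k\ge 1$; the bound $k\le\nu_0$ follows from Proposition \ref{pr-robust} and \eqref{upper.bd.assumption}. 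Combining \eqref{rad.limit} with the quasiconformal change of variables gives
$$\mu\frac{|\nabla w_{j_n,r_n}|^2}{2}\,dx^2\;\rightharpoonup\;k\,|\partial_{x_1}w_\infty\wedge\partial_{x_2}w_\infty|\,dx^2,$$
so that the limit in $\mathcal X$ of $F_{j_n}(r_n)$ has the form $(w_\infty, k|\partial_{x_1}w_\infty\wedge\partial_{x_2}w_\infty|\,dx^2)$, belonging to $\mathcal{X}_k$. This contradicts the standing assumption $d(F_{j_n}(r_n),\mathcal{X}_k)\ge\epsilon$.

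The main technical obstacle is reconciling the quasiconformal reparametrization $\psi_\infty$ supplied by Theorem \ref{th-sequential-weak-closure} with the smoothness requirement built into the definition of $\mathcal{X}_k$. Since the limit PHSLV is rectifiable and supported on the holomorphic image of $\hat w_\infty$ in $\mathcal P$, one may replace the pair $(w_\infty,\lambda)$ by $(\hat w_\infty, k|\partial_{x_1}\hat w_\infty\wedge\partial_{x_2}\hat w_\infty|\,dx^2)$, which lies in $\mathcal{X}_k$ and realizes the same image and mass as the actual limit of $F_{j_n}(r_n)$; the corresponding $C^0_{loc}$ and weak-$^*$ proximity is then controlled through the distortion estimates of Lemma \ref{distortion} applied to $\psi_\infty$ (which itself is controlled via the $K_0$-quasiconformal bound \eqref{dist.bd}). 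A secondary concern is the uniformity in $r_n$: it could shrink, stay comparable to $\rho_{j_n}$, or remain of order one; in each regime the normalizations imposed by $\hat c$ and $\hat C$, together with the uniform doubling bounds of Lemma \ref{ahlfors}, prevent loss of compactness and legitimize the diagonal extraction that produces the contradicting limit.
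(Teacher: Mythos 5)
Your argument is correct and follows essentially the same route as the paper's proof: contradiction along an arbitrary sequence of radii, sequential compactness of the PHSLVs $(B_1(0),w_{j,r},\mu)$, identification of any subsequential limit as a planar pair $(h\circ\xi,\,k|\partial_{x_1}(h\circ\xi)\wedge\partial_{x_2}(h\circ\xi)|\,dx^2)\in\mathcal{X}_k$, and the bound $k\le\nu_0$ from \eqref{upper.bd.assumption}. The only remark worth making is that the ``technical obstacle'' you describe at the end is not actually present: $\mathcal{X}_k$ is by definition made of maps of the form $h\circ\xi$ with $\xi\in\mathcal{Q}$, so the limit $\hat w_\infty\circ\psi_\infty$ (after normalizing $\psi_\infty$ by an affine conformal factor absorbed into the weakly conformal part) already belongs to $\mathcal{X}_k$ and no replacement or distortion estimate is needed.
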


\begin{proof}
This follows immediately from the fact that, by our choice of $x_j$, we have
$$\int_{B_1(0)}|\nabla(\pi_{\mathcal P^\perp}\circ w_{j,r})|^2\,dx^2\le\ep_j.$$
Indeed, given any sequence of radii $r_j'\in(0,1]$, the PHSLVs $(B_1(0),w_{j,r_j'},\mu)$
will necessarily converge (along a subsequence) to a planar PHSLV of the form $(\xi(B_1(0)),h,k)$ for some integer $k\ge1$,
with $h:\xi(B_1(0))\to\mathcal{P}$ weakly conformal and $w_{j,r_j'}\to h\circ\xi$ in $C^0_{loc}(B_1(0))$, as well as
$$\mu\frac{|\nabla w_{j,r_j'}|^2}{2}\,dx^2\rightharpoonup|\p_{x_1}(h\circ\xi)\wedge\p_{x_2}(h\circ\xi)|\,dx^2$$
on $B_1(0)$. Finally, thanks to \eqref{upper.bd.assumption}, the density of the limit induced varifold
is at most $\frac{C_0}{\pi}$ at each point. By definition of $\nu_0$, we deduce that $k\le\nu_0$.
\end{proof}

\begin{Prop}
For $j$ large enough, for $r=\rho_j$ the previous index $k$ equals $\mu$. \hfill $\Box$
\end{Prop}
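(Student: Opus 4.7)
The plan is to show that every subsequential limit of $F_j(\rho_j)$ in $\mathcal{X}$ actually lies in $\mathcal{X}_\mu$; combined with the compactness of $\mathcal{X}_\mu$ and the $2\epsilon$-separation of the families $\mathcal{X}_k$, this will force $k=\mu$ in the preceding proposition for $j$ large. The decisive feature of the scale $\rho_j$ is that the only Schoen--Wolfson singularity of $w_{j,\rho_j}$ which can lie in $\bar B_1(0)$ is the origin itself, which will let me use the elliptic machinery of Proposition \ref{strong.conv} away from $0$; the delicate issue is to rule out a singular concentration of Dirichlet energy precisely at $0$, which could a priori raise the limit multiplicity above $\mu$.

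First I would apply Theorem \ref{th-sequential-weak-closure} to the PHSLVs $(B_1(0),w_{j,\rho_j},\mu)$: the Dirichlet energies are normalized by construction and \eqref{teofrasto} follows from \eqref{upper.bd.assumption}. Along a subsequence this yields a limit PHSLV and a quasiconformal homeomorphism $\psi_\infty\in\mathcal{Q}$ such that $w_{j,\rho_j}\to f_\infty=\hat f_\infty\circ\psi_\infty$ in $C^0_{loc}$ and weakly in $W^{1,2}_{loc}$, together with the Radon-measure convergence
\begin{equation*}
\mu\tfrac{|\nabla w_{j,\rho_j}|^2}{2}\,dx^2 \rightharpoonup \lambda_\infty := (\psi_\infty^{-1})_*\!\left[\hat N_\infty\tfrac{|\nabla \hat f_\infty|^2}{2}\,dx^2\right].
\end{equation*}
Since the construction of $x_j$ gives $\int_{B_1(0)}|\nabla(\pi_{\mathcal{P}^\perp}\circ w_{j,\rho_j})|^2\,dx^2 \le \epsilon_j\to 0$, lower semicontinuity of the Dirichlet energy forces $\pi_{\mathcal{P}^\perp}\circ f_\infty$ to be constant, and since $f_\infty(0)=0\in\mathcal{P}$ we conclude that $f_\infty$ takes values in $\mathcal{P}$; hence $(f_\infty,\lambda_\infty)\in\mathcal{X}$ is of planar type.

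Next I would identify $\lambda_\infty$ on $B_1(0)\setminus\{0\}$. On any disk $\mathcal{D}\subset\subset B_1(0)\setminus\{0\}$, the sequence $(B_1(0),w_{j,\rho_j},\mu)$ meets the hypotheses of Proposition \ref{strong.conv}: the doubling \eqref{doubling} follows from Lemma \ref{ahlfors}, $\mathcal{S}_{SW}(w_{j,\rho_j})\cap\mathcal{D}=\emptyset$ by the choice of $\rho_j$, and the regularity theorem is available on $\mathcal{D}$ since each $w_j$ was ultimately produced by rescaling $u$ on a neighborhood contained in the regular component $\Omega'\subset\Omega_{reg}$. Thus $w_{j,\rho_j}\to f_\infty$ strongly in $W^{1,2}_{loc}(B_1(0)\setminus\{0\})$, and the weak conformality of each $w_{j,\rho_j}$ gives $\tfrac{1}{2}|\nabla w_{j,\rho_j}|^2 = |\partial_{x_1}w_{j,\rho_j}\wedge\partial_{x_2}w_{j,\rho_j}|$ a.e., so passing to the limit yields
\begin{equation*}
\lambda_\infty\res(B_1(0)\setminus\{0\}) = \mu\,|\partial_{x_1}f_\infty\wedge\partial_{x_2}f_\infty|\,dx^2.
\end{equation*}

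The hard part, and the main obstacle, is to rule out a singular concentration of $\lambda_\infty$ at the origin. The key observation is that, in view of \eqref{rad.limit}, $\lambda_\infty$ is displayed as the $\psi_\infty^{-1}$-pushforward of an absolutely continuous measure; since both $\psi_\infty$ and $\psi_\infty^{-1}$ are $K_0$-quasiconformal, they preserve Lebesgue-negligible sets, so $\lambda_\infty$ is itself absolutely continuous with respect to Lebesgue and in particular $\lambda_\infty(\{0\})=0$. Consequently the identity above extends to all of $B_1(0)$, giving $(f_\infty,\lambda_\infty)\in\mathcal{X}_\mu$; by compactness of $\mathcal{X}_\mu$ every subsequential limit of $F_j(\rho_j)$ lies in $\mathcal{X}_\mu$, hence $d(F_j(\rho_j),\mathcal{X}_\mu)\to 0$, so eventually $d(F_j(\rho_j),\mathcal{X}_\mu)<\epsilon$, and the $2\epsilon$-separation of the classes $\mathcal{X}_k$ forces $k=\mu$, yielding the desired contradiction $\mu=\mu'$.
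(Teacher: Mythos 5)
Your proof is correct and follows essentially the same route as the paper, whose own argument is just the observation that the choice of $\rho_j$ gives $\mathcal{S}_{SW}(w_{j,\rho_j})\subseteq\{0\}$ followed by an appeal to Proposition \ref{strong.conv}. You merely make explicit the two steps the paper leaves implicit — applying Proposition \ref{strong.conv} on disks avoiding the origin, and invoking the absolute continuity of the limit measure from \eqref{rad.limit} to exclude concentration at $0$ — both of which are the intended reading.
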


\begin{proof}
By our choice of $\rho_j$, the map $w_{j,\rho_j}$ (defined on $B_1(0)$) has $\mathcal{S}_{SW}(w_{j,\rho_j})\subseteq\{0\}$.
The conclusion now follows directly from Lemma \ref{strong.conv}.
\end{proof}

\appendix
\section*{Appendix: a counterexample to the closure of PHSLVs with degenerating conformal class}
\renewcommand{\thesection}{A}
\setcounter{equation}{0}
\setcounter{Th}{0}

In this appendix, inspired by a counterexample by Orriols from \cite{Orr},
we provide an explicit example where a sequence of parametrized Hamiltonian stationary Legendrian varifolds,
given by embedded tori in $S^5$ with degenerating conformal class,
converges to a varifold whose weight is supported on a \emph{one-dimensional} Hopf fiber, and is thus \emph{non-rectifiable}.
This should not be too surprising, since this fiber has nonetheless Hausdorff dimension $2$ for the Carnot--Carath\'eodory distance
associated with the canonical horizontal distribution on $S^5$.

As discussed earlier in the paper, this is a new phenomenon compared to the isotropic setting,
and shows in particular that the control of the conformal class has to be assumed in order to guarantee compactness of PHSLVs,
unless the stronger notion of PHSLV$^*$ is adopted (see Definition \ref{strong.phslv}, Remark \ref{bubbling}, and Remark \ref{neck}).
It also shows that compactness of integral HSLVs fails, while we still know that these are closed among rectifiable varifolds
by Theorem \ref{cpt.int}.

\begin{Th}
\label{th-count-ex}
There exist a sequence of flat rectangular tori  $T_{1,R_k}:={\R}^2/2\pi({\Z}\oplus R_k\Z)$, with $R_k\to\infty$, and a sequence of conformal Hamiltonian stationary Legendrian embeddings
$$u_k:T_{1,R_k}\to S^5\subset\C^3,$$
where $S^5$ is equipped with the canonical contact form, such that the PHSLV
$$(T_{1,R_k},u_k,1)$$
induces a varifold $\mathbf{v}_k$ converging to a \emph{non-rectifiable} Hamiltonian stationary Legendrian varifold.
More precisely, calling $\frak H:S^5\to\mathbb{CP}^2$ the canonical Hopf fibration and
$$\mathcal{C}:=\mathfrak{H}^{-1}([0,0,1])=\{(0,0,e^{i\alpha})\mid \alpha\in\R/2\pi\Z\}$$
one of the Hopf fibers,
the limit of $\mathbf{v}_k(\mathcal P,p)$ disintegrates as
\[
{\bf v}_k \rightharpoonup2\pi\cdot\mu(e^{i\alpha})\otimes\mathcal H^1\res\mathcal{C}(0,0,e^{i\alpha}),
\]
where $\mu(e^{i\alpha})$ is the uniform measure on the set ${\frak P}(e^{i\alpha})$ of Legendrian planes in $T_{(0,0,e^{i\alpha})}S^5$
given by
\[
{\frak P}(e^{i\alpha}):=\{(e^{i\alpha})_*\mathcal{P}_{\tau,\eta} \mid (\tau,\eta)\in(\R/2\pi\Z)^2\},
\]
with $\mathcal{P}_{\tau,\eta}:=\operatorname{span}\{(\cos\tau)e_1+(\sin\tau)e_2,(\cos\eta)e_3+(\sin\eta)e_4\}\subset T_{(0,0,1)}S^5$
and $(e^{i\alpha})_*\mathcal{P}_{\tau,\eta}\subset T_{(0,0,e^{i\alpha})}S^5$ its image through the differential of the map $x\mapsto e^{i\alpha}x$. \hfill $\Box$
\end{Th}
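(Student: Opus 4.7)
The overall strategy is to construct $u_k$ explicitly as a family of flat, affinely parametrized Legendrian tori in $S^5$ that collapse onto $\mathcal{C}$ while winding $N_k \to \infty$ times in the Hopf direction, mirroring Orriols' Clifford-torus example in $\mathbb{H}^2$ from Remark \ref{orr}. For $N_k \in \N^*$ with $N_k \to \infty$, I set $\epsilon_k := 1/\sqrt{N_k+1}$ and $R_k := \sqrt{N_k(N_k+1)}$, and on $T_{1,R_k} = \R^2/2\pi(\Z \oplus R_k\Z)$ define
\[
u_k(y_1,y_2) := \lf(\tfrac{\epsilon_k}{\sqrt{2}}\, e^{i(y_1 + \sqrt{1-\epsilon_k^2}\, y_2)},\ \tfrac{\epsilon_k}{\sqrt{2}}\, e^{i(-y_1 + \sqrt{1-\epsilon_k^2}\, y_2)},\ \sqrt{1-\epsilon_k^2}\, e^{-iy_2/R_k}\rg),
\]
whose image is a single $T^2$-orbit in $S^5$ under the natural diagonal phase action.

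The routine verifications come first: evaluating the three phases $\phi_1,\phi_2,\phi_3$ at the lattice generators $(2\pi,0)$ and $(0,2\pi R_k)$ gives integer multiples of $2\pi$ in all cases, so $u_k$ descends to $T_{1,R_k}$; a direct computation yields $|\p_{y_1}u_k|^2 = |\p_{y_2}u_k|^2 = \epsilon_k^2$ and $\p_{y_1}u_k \cdot \p_{y_2}u_k = 0$, giving conformality with $|\nabla u_k|^2 = 2\epsilon_k^2$; the Legendrian condition $u_k^*\alpha = 0$ reduces to the relations $\sum_j r_j^2\p_{y_i}\phi_j = 0$, verified via the identity $\epsilon_k^2\sqrt{1-\epsilon_k^2} = (1-\epsilon_k^2)/R_k$; and injectivity modulo the lattice is elementary. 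Hamiltonian stationarity follows by observing that $u_k$ is the Legendrian lift (through the Hopf fibration $\mathfrak H: S^5 \to \mathbb{CP}^2$) of a $T^2$-invariant Lagrangian torus, whose Lagrangian angle is affine in $(y_1,y_2)$ by $T^2$-equivariance, hence has harmonic differential --- the standard H-minimality criterion invoked in Remark \ref{h.minimal} and \cite{Oh2}.

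For the varifold limit, the total mass $\epsilon_k^2 \cdot 4\pi^2 R_k = 4\pi^2\sqrt{N_k/(N_k+1)} \to 4\pi^2 = 2\pi \cdot \mathcal{H}^1(\mathcal{C})$ matches the proposed limit. Changing variables to $t := -y_2/R_k \in (-2\pi,0]$ so that the third component of $u_k$ tends uniformly to $e^{it}$, one has $\phi_1 = y_1 - N_k t$ and $\phi_2 = -y_1 - N_k t$; the normalized tangent vectors at $u_k(y_1,y_2(t))$ converge to $e_1 = \tfrac{i}{\sqrt{2}}(e^{i\phi_1}, -e^{i\phi_2}, 0)$ and $e_2 = \tfrac{i}{\sqrt{2}}(e^{i\phi_1}, e^{i\phi_2}, 0)$ at $(0,0,e^{it})$, whose span equals $(e^{it})_*\mathcal{P}_{\tau,\eta}$ with $\tau = y_1 - (N_k+1)t + \pi/2$, $\eta = -y_1 - (N_k+1)t - \pi/2$. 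Testing against any continuous $F$ on $G(S^5)$ and computing Fourier coefficients of the pushforward of $dy_1\,dt$ under the affine map $(y_1,t) \mapsto (\tau,\eta,t) \in T^3$, one sees that the oscillatory integral of $e^{i(m\tau+n\eta+\ell t)}$ over $[0,2\pi)\times(-2\pi,0]$ vanishes unless $m=n$ and $\ell = 2m(N_k+1)$; for any fixed $(m,n,\ell)\neq(0,0,0)$ this fails for $k$ large, so by density of trigonometric polynomials the pushforward converges weak-$^*$ to $(2\pi)^{-1}d\tau\,d\eta\,d\alpha$ on $T^3$. Unfolding the 4-to-1 cover $(\tau,\eta) \mapsto \mathcal{P}_{\tau,\eta}$ from $(\R/2\pi\Z)^2$ to $\mathfrak{P}(e^{i\alpha})$ then identifies the limit with $2\pi\,\mu \otimes \mathcal{H}^1\res\mathcal{C}$, which is manifestly non-rectifiable since its weight $4\pi^2 \mathcal{H}^1\res\mathcal{C}$ is supported on the one-dimensional set $\mathcal{C}$.

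The main technical obstacle is the equidistribution step: for each \emph{fixed} $t$ the map $y_1 \mapsto (\tau(y_1,t),\eta(y_1,t))$ traces only the circle $\tau+\eta \equiv -2(N_k+1)t \pmod{2\pi}$ in the $2$-torus of Legendrian planes, so the full two-dimensional support $\mathfrak{P}(e^{i\alpha})$ emerges only after the simultaneous averaging in $t$ --- a mixing phenomenon that relies crucially on the fast winding of $\phi_1+\phi_2$ relative to $\phi_3 = t$ by the factor $N_k+1$. Once this equidistribution is in hand, all remaining steps reduce to elementary periodic-function integrals and the standard fact that $T^2$-invariant Legendrian tori in Sasakian five-manifolds are H-minimal.
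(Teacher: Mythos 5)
Your family of embeddings is, after the substitution $N_k=k-1$, $\epsilon_k^2=\tfrac1k$, $(y_1,y_2)=(\theta,\phi_t)$, literally the same one used in the paper (the paper first writes a Lagrangian torus $\hat v_t$ in $\mathbb{CP}^2$, lifts it, and then passes to conformal coordinates, arriving at exactly your formula), so the construction itself is identical and your verifications of periodicity, conformality, the Legendrian condition and embeddedness all check out. Where you genuinely diverge is in two sub-arguments. For Hamiltonian stationarity the paper computes directly that $\Delta u_t+u_t|\nabla u_t|^2+i\nabla\beta_t\cdot\nabla u_t=0$ with $d\beta_t$ a harmonic one-form, whereas you invoke the criterion that a $T^2$-orbit Lagrangian torus has affine (hence harmonic) Lagrangian angle; this is a legitimate shortcut via Remark \ref{h.minimal} and \cite{Oh2,Cas}, though it trades an explicit two-line computation for an appeal to the K\"ahler--Einstein version of Dazord's formula on $\mathbb{CP}^2$. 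For the varifold limit the paper changes variables to $(\tau,\eta)$, tiles the resulting lattice quotient by squares $Q_\ell$ on which the slowly-varying phase $e^{-i(\tau+\eta)/(2k)}$ is essentially constant, and recognizes a Riemann sum; you instead push $dy_1\,dt$ forward to $T^3$ and apply Weyl's criterion, observing that the character $e^{i(m\tau+n\eta+\ell t)}$ integrates to zero unless $m=n$ and $\ell=2m(N_k+1)$, which fails for fixed $(m,n,\ell)\neq0$ once $k$ is large. Your Fourier argument is cleaner and makes the "mixing" mechanism (the factor $N_k+1$ coupling the plane-angle to the Hopf phase) more transparent, while the paper's Riemann-sum argument is more self-contained. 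Both yield the same limit $\tfrac{1}{2\pi}\,d\tau\,d\eta\,d\alpha$ on $T^3$; note only that the resulting weight is $2\pi\,\mathcal H^1\res\mathcal{C}$ (total mass $4\pi^2$), not $4\pi^2\,\mathcal H^1\res\mathcal{C}$ as written in your last sentence --- a harmless normalization slip that does not affect the non-rectifiability conclusion.
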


\begin{Rm}
In fact, for a Legendrian immersion $u:\Sigma\to S^5$, the triple $(\Sigma,u,1)$ is a PHSLV if and only if $u$ is Hamiltonian stationary,
which is equivalent to the fact that $\mathfrak{H}\circ u$ is H-minimal (i.e., $\operatorname{div}(JH)=0$; see \cite{Cas}).
Moreover, the stronger notion of PHSLV$^*$ assumed in Remark \ref{neck}
is in fact equivalent to the notion of PHSLV if $\Sigma=S^2$ (since we can write any smooth $\om\subsetneq S^2$ as the difference of two disjoint unions of disks),
but not for higher genus. Thus, this counterexample does not contradict Remark \ref{neck}.
\hfill $\Box$
\end{Rm}

\begin{proof}
We equip $S^5\subset\R^6$ with the canonical contact structure $\al:=\sum_{\ell=1}^3 [x_{2\ell-1}\,dx_{2\ell}-x_{2\ell}\,dx_{2\ell-1}]$ and let $H:=\operatorname{ker}(\al)$. We denote by $\frak H:S^5\,(\subset{\C}^3)\to\mathbb{CP}^2$ the tautological Hopf fibration given by
$\frak H(w_1,w_2,w_3):=[w_1,w_2,w_3]$ and recall that $\nabla{\frak H}(z)$ gives an isometry $H_z\to T_{\pi(z)}\mathbb{CP}^2$. For any $t\in(0,1]$ we consider the following map $\hat{v}_t$  from $T_{1,1}:={\R}^2/2\pi({\Z}\oplus\Z)$ 
to $\mathbb{CP}^2$:
\[
\hat{v}_t(\theta,\phi):=[ te^{i(\theta+\phi)} ,  te^{-i(\theta-\phi)}, 1 ].
\]
In the chart
\[
\Xi:\mathbb{CP}^2\setminus \{[w_1,w_2, 0]\,:\, [w_1,w_2]\in \mathbb{CP}^1\}\to\C^2,\quad [z_1,z_2,1]\mapsto(z_1,z_2),
\]
the K\"ahler form $\om:=i\p\ov{\p}\log(1+|z|^2)$ of $\mathbb{CP}^2$ reads
$$\om=\frac{i}{(1+|z|^2)^2}[(1+|z|^2) (dz_1\wedge d\ov{z}_1+ dz_2\wedge d\ov{z}_2)-( \ov{z}_1\,dz_1+\ov{z}_2\,dz_2)\wedge( {z}_1\,d\ov{z}_1+{z}_2\,d\ov{z}_2) ];$$
in particular, we see that $d\al=\mathfrak{H}^*\om$ (as this holds at $z=0$ in the chart and these objects are $U(3)$-invariant).
Also, we easily compute that
$\hat{v}_t^\ast\om=0$
and thus $\hat{v}_t:T_{1,1}\to\mathbb{CP}^2$ is a Lagrangian embedding for $t\in(0,1]$.
We now look for a Legendrian lift of $\hat{v}_t$, i.e.,  for a real-valued function $\al_t(\theta,\phi)$ such that
\[
u_t(\theta,\phi):= \frac{e^{i\al_t}}{\sqrt{1+2t^2}} ( te^{i(\theta+\phi)} ,  te^{-i(\theta-\phi)}, 1 )\in S^5
\]
is Legendrian. We have respectively
\[
\p_\theta u_t=i(\p_\theta\al_t)u_t+ \frac{e^{i\al_t}}{\sqrt{1+2t^2}} (i te^{i(\theta+\phi)} , -it e^{-i(\theta-\phi)}, 0)
\]
and
\[
\p_\phi u_t=i(\p_\phi\al_t)u_t+\frac{e^{i\al_t}}{\sqrt{1+2t^2}} (ite^{i(\theta+\phi)} , i t e^{-i(\theta-\phi)}, 0).
\]
The Legendrian condition $iu_t\cdot \p_\theta u_t=iu_t\cdot \p_\phi u_t=0$ is then equivalent to
\[
\p_\theta\al_t=0,\quad\p_\phi\al_t+\frac{2t^2}{1+2t^2}=0.
\]
Hence,
\[
u_t(\theta,\phi):= \frac{e^{-i\frac{2t^2}{1+2t^2}\phi }}{\sqrt{1+2t^2}} ( te^{i(\theta+\phi)} , te^{-i(\theta-\phi)}, 1 )
\]
is a Legendrian lift of $\hat{v}_t$. We have 
\[
\p_\theta u_t= \frac{e^{-i\frac{2t^2}{1+2t^2}\phi }}{\sqrt{1+2t^2}} (ite^{i(\theta+\phi)} , -ite^{-i(\theta-\phi)}, 0)
\]
and
$$\p_\phi u_t
=i\lf[ \frac{1}{1+2t^2} u_t - \frac{e^{-i\frac{2t^2}{1+2t^2}\phi }}{\sqrt{1+2t^2}} (0,0,1)\rg],$$
which give
\[
|\p_\theta u_t|^2=\frac{2t^2}{1+2t^2},\quad |\p_\phi u_t|^2=\frac{2t^2}{(1+2t^2)^2},\quad\p_\theta u_t\cdot\p_\phi u_t=0.
\]
Hence, in the new local coordinates $(\theta,{\phi}_t:=\phi/\sqrt{1+2t^2})$, the map $u_t$ is conformal and reads
\[
u_t= \frac{e^{-i\frac{2t^2}{\sqrt{1+2t^2}}\phi_t }}{\sqrt{1+2t^2}} ( te^{i(\theta+\sqrt{1+2t^2}\phi_t)} , te^{-i(\theta-\sqrt{1+2t^2}\phi_t)}, 1 ).
\]
We easily compute
$$\p^2_{\theta^2} u_t
=- u_t+\frac{e^{-i\frac{2t^2}{\sqrt{1+2t^2}}\phi_t }}{\sqrt{1+2t^2}}(0,0,1),$$
as well as
$$\p_{\phi_t} u_t
=i\frac{1}{\sqrt{1+2t^2}} u_t-i{e^{-i\frac{2t^2}{\sqrt{1+2t^2}}\phi_t }} (0,0,1)$$
and
$$\p^2_{\phi^2_t} u_t
=-\frac{1}{{1+2t^2}} u_t+\frac{1-2t^2}{\sqrt{1+2t^2}}{e^{-i\frac{2t^2}{\sqrt{1+2t^2}}\phi_t }} (0,0,1).$$
Combining these identities, we obtain
$$\p^2_{\theta^2} u_t+\p^2_{\phi^2_t} u_t
= -u_t [|\p_\theta u_t|^2+|\p_{\phi_t} u_t|^2]+i \frac{2-2t^2}{\sqrt{1+2t^2}} \p_{\phi_t}u_t.$$
The function $\beta_t:=\frac{2t^2-2}{\sqrt{1+2t^2}}\phi_t$ can be defined only locally, but $d\beta_t$ still gives a globally defined harmonic one-form. Also, in $(\theta,\phi_t)$ coordinates we have
\[
\Delta u_t+u_t|\nabla u_t|^2+i\nabla\beta_t\cdot\nabla u_t=0.
\]
We deduce that $u_t$ is a conformal Hamiltonian stationary immersion. For any $k\in{\N}\setminus\{0,1\}$ we introduce $t_k\in(0,1]$ such that
\[
\frac{2t^2_k}{1+2t_k^2}=\frac{1}{k},\quad\text{i.e.},\quad 1+2t_k^2=\frac{k}{k-1},\quad\text{i.e.},\quad t_k:=\frac{1}{\sqrt{2k-2}}.
\]
Letting $u_k:=u_{t_k}$, $\gamma_k:=\sqrt{\frac{k}{k-1}}$, and writing $\phi$ in place of $\phi_{t_k}$, we have
\[
u_{k}(\theta,\phi)=\gamma_k^{-1} {e^{-i\gamma_k\phi/k }} \lf(\frac{1}{\sqrt{2k-2}} e^{i(\theta+\gamma_k\phi)} , \frac{1}{\sqrt{2k-2}} e^{-i(\theta-\gamma_k\phi)}, 1 \rg).
\]
Observe that $u_k$ is conformal and defines an Hamiltonian stationary Legendrian embedding of
$T_{1, k/\gamma_k}=\R^2/2\pi(\Z\oplus (k/\gamma_k)\Z)$, whose area is
\[
\frac{1}{2}\int_{T_{1, k/\gamma_k}}|\nabla u_k|^2\, dx^2=\int_0^{2\pi}\int_0^{2\pi k/\gamma_k} \frac{2t^2_k}{1+2t_k^2}\,d\phi\,d\theta=4\pi^2\gamma_k^{-1},
\]
which stays bounded.
Hence, calling ${\bf v}_k$ the corresponding rectifiable varifold of multiplicity $1$, which is induced by the PHSLV $(T_{1, k/\gamma_k},u_k,1)$, modulo extraction of a subsequence we have
\[
{\bf v}_{k} \rightharpoonup {\bf v}_\infty.
\]
Since the image of $u_k$ converges to the Hopf fiber $\pi^{-1}([0,0,1])$ in the Hausdorff topology,
we know that $|{\mathbf v}_\infty|$ is supported on this fiber. Let $\Phi:G\to\R$ be a continuous function, where $\Pi:G\to S^5$ is the bundle of Legendrian planes. Clearly,
$
{u}_k=  {e^{-i\gamma_k\phi/k }} (0,0,1)+o(1)
$
and, denoting $\tau:=\theta+\gamma_k\phi$ and $\eta:=-\theta+\gamma_k\phi$, we have
$$\frac{\p_\theta u_k}{|\p_\theta u_k|}
= \frac{1}{\sqrt{2}} {e^{-i\gamma_k\phi/k }}(-\sin\tau,\cos\tau,\sin\eta, -\cos\eta,0,0)+o(1)$$
and 
$$\frac{\p_\phi u_k}{|\p_\phi u_k|}
= \frac{1}{\sqrt{2}}{e^{-i\gamma_k\phi/k }} (-\sin\tau,\cos\tau,-\sin\eta, \cos\eta,0,0)+o(1).$$
We consider the two-dimensional family of Legendrian planes in $T_{(0,0,1)}S^5$ given by
$${\tilde{\mathcal P}}_{\tau,\eta}:=\frac{1}{2} (-\sin\tau,\cos\tau,\sin\eta, -\cos\eta,0,0)\wedge (-\sin\tau,\cos\tau,-\sin\eta, \cos\eta,0,0).$$
The image of the Legendrian plane ${\tilde{\mathcal P}}_{\tau,\eta}$ via the differential of $w\mapsto e^{i\alpha}w$ (as a map from $\C^3$ to $\C^3$) will be simply denoted by $(e^{i\al})_\ast {\tilde{\mathcal P}}_{\tau,\eta}$. The previous computations show that
$$\langle{\bf v}_k,\Phi\rangle
=\frac1k\int_{T_{1,k/\gamma_k}}
\Phi( (e^{-i\gamma_k\phi/k})_\ast {\tilde{\mathcal P}}_{\tau,\eta}, {e^{-i\gamma_k\phi/k}} (0,0,1) ) \, d\theta\,d\phi+o(1).$$
We have $\gamma_k\phi=\frac{1}{2}(\tau+\eta)$ and $\theta:=\frac{1}{2}(\tau-\eta)$. Hence, we get $d\theta\wedge d\phi= \frac{1+o(1)}{2}\, d\tau\wedge d\eta$, and the previous integral becomes
$$\frac1{2k}\int_{\R^2/\Gamma}\Phi( (e^{-i(\tau+\eta)/(2k)})_\ast {\tilde{\mathcal P}}_{\tau,\eta}, {e^{-i(\tau+\eta)/(2k)}} (0,0,1) )\,d\tau\,d\eta,$$
where $\Gamma:=2\pi\Z(1,-1)\oplus2\pi\Z(k,k)$, or equivalently
$$\frac1{2k}\sum_{\ell=1}^k\int_{Q_\ell}\Phi( (e^{-2\pi i\ell/k})_\ast {\tilde{\mathcal P}}_{\tau,\eta}, {e^{-2\pi i\ell/k}} (0,0,1) )\,d\tau\,d\eta+o(1),$$
where $Q_1$ is the square with vertices $(0,0)$, $(2\pi,-2\pi)$, $(4\pi,0)$, and $(2\pi,2\pi)$,
while $Q_\ell$ is its translation by $(\ell-1)(2\pi,2\pi)$ (note that $\frac{\tau+\eta}{2k}=\frac{2\pi\ell}{k}+O(k^{-1})$ on $Q_\ell$).
Thus, we have
\begin{align*}\langle{\bf v}_\infty,\Phi\rangle&=\frac{1}{4\pi}\int_{Q_1}\lf[\int_0^{2\pi}\Phi( (e^{i\al})_\ast {\tilde{\mathcal P}}_{\tau,\eta}, {e^{i\al}} (0,0,1) )\,d\al\rg]\,d\tau\,d\eta\\
&=\frac{1}{4\pi^2}\int_{[0,2\pi]^2}\lf[\int_0^{2\pi}2\pi\Phi( (e^{i\al})_\ast {\tilde{\mathcal P}}_{\tau,\eta}, {e^{i\al}} (0,0,1) )\,d\al\rg]\,d\tau\,d\eta,
\end{align*}
as claimed.
\end{proof}

\end{document}